\newcommand{\MATLAB}{\textsc{Matlab}\xspace}
\theoremstyle{definition}
\newtheorem{thm}{Theorem}[chapter]
\newtheorem{defn}[thm]{Definition}
\newtheorem{lem}[thm]{Lemma}
\newtheorem{prop}[thm]{Proposition}
\newtheorem{rem}[thm]{Remark}
\newtheorem{cor}[thm]{Corollary}
\newtheorem{exmp}[thm]{Example}
\renewcommand{\Vec}[1]{\renewcommand*{\arraystretch}{1.2}\begin{pmatrix*}[r]#1\end{pmatrix*}}
\renewcommand{\vec}[1]{\begin{psmallmatrix}#1\end{psmallmatrix}}
\renewcommand{\b}[1]{\mathbf #1}
\newcommand{\bA}{\b \Lambda}
\newcommand{\bby}{\bar{\b y}}
\def\code#1{\texttt{#1}}
\newcommand{\ie}{i.\,e.\ }
\newcommand{\dt}{\Delta t}
\newcommand{\ii}{\mathrm{i}}
\newcommand{\from}{\colon}
\newcommand{\bxi}{\mathbf \xi}
\newcommand{\sgn}{\text{sgn}}
\renewcommand{\O}{\mathcal O}
\newcommand{\R}{\mathbb R}
\newcommand{\C}{\mathbb C}
\newcommand{\N}{\mathbb{N}}
\newcommand{\dd}{\mathrm{d}}
\newcommand{\Cminus}{\overline{\C^-}}
\newcommand{\tm}{\subseteq}
\newcommand{\abs}[1]{\lvert #1\rvert}
\newcommand{\norm}[1]{\lVert #1\rVert}
\newcommand{\qta}{\quad\text{ and }\quad}
\newcommand{\tend}{t_\mathrm{end}}
\newcommand{\ns}{\mkern-5mu}
\DeclareMathOperator{\diag}{diag}
\DeclareMathOperator{\tr}{trace}
\DeclareMathOperator{\NB}{NB}
\DeclareMathOperator{\re}{Re}
\DeclareMathOperator{\im}{Im}
\DeclareMathOperator{\Span}{span}
\DeclareDocumentCommand\ct{o}{\Forest{decision tree [#1]}}
\DeclareDocumentCommand\rt{o}{\Forest{rooted tree [#1]}}
\newcommand{\et}[2]{\resizebox{#1 cm}{!}{\marginbox*{0pt -0.45\height pt 0pt 0pt}{#2}}}
\renewcommand{\tt}[2]{\resizebox{#1 cm}{!}{\marginbox*{0pt -0.25\height pt 0pt 0pt}{#2}}}
\newcommand{\byi}{\b y^{(i)}}
\newcommand{\byj}{\b y^{(j)}}
\newcommand{\yi}{y^{(i)}}
\newcommand{\D}{\b D}
\newcommand{\Fnu}{\mathbf{f}^{[\nu]}}
\newcommand{\Fmu}{\mathbf{f}^{[\mu]}}
  \newcommand{\eq}{\text{EQ}}
  \newcommand{\gl}{\text{GL}}
\newcommand{\fnu}{\mathbf{f}^{[\nu]}}
\newcommand{\fmu}{\mathbf{f}^{[\mu]}}
\newcommand{\dF}{\mathcal{F}}
\newcommand{\tn}{t_n}
\definecolor{colorA}{rgb}{0,0.447,0.741}
\definecolor{colorB}{rgb}{0.85,0.325,0.098}
\definecolor{colorE}{rgb}{0.929,0.694,0.125}
\definecolor{colorF}{rgb}{0.494,0.184,0.556}
\definecolor{colorD}{rgb}{0.466,0.674,0.188}
\definecolor{colorC}{rgb}{0.301,0.745,0.933}
\definecolor{colorG}{rgb}{0.635,0.078,0.184}
\newlength{\tickl}    
\tikzset{axes/.style={thick,-latex}}
\tikzset{lineplot/.style={thick}}
\tikzset{arrow/.style={thick,-latex}} 
\tikzset{thick arrow/.style={ultra thick,-latex}} 
\tikzset{grid lines/.style={very thin,gray!30}}	
\tikzset{point/.style={radius=2pt}}
\tikzset{help line/.style={black,thin,dashed}} 
\newsavebox{\measure@tikzpicture}
	\def\tikz@width{#1}%
	\def\tikzscale{1}\begin{lrbox}{\measure@tikzpicture}%
	\edef\tikzscale{\pgfmathresult}%
\tikzset{mylabel/.style  args={at #1 #2  with #3}{
		postaction={decorate,
			decoration={
				markings,
				mark= at position #1
				with  \node [#2] {#3};
} } } }
\begin{document}
	\pagenumbering{gobble}

\onecolumn

\begin{center}
	~
	\bigskip
	
	
	\bigskip  
	
	\bigskip

	\bigskip
	\textbf{\textsc{\fontsize{20}{1.5\baselineskip}\selectfont A Unifying Theory for Runge--Kutta-like Time Integrators:\\[0.3\baselineskip] Convergence and Stability}} 
	\bigskip  
	\bigskip
	\bigskip
	\bigskip
	\bigskip
	
	\text{\fontsize{14}{\baselineskip}\selectfont By}
	
	\bigskip
	\textbf{\textsc{\fontsize{14}{\baselineskip}\selectfont Thomas Izgin}}
	
	
	\bigskip
	\bigskip
	\bigskip
	\bigskip
	\bigskip
	
	\text{\fontsize{14}{\baselineskip}\selectfont A thesis submitted in partial fulfillment for the degree of }\\[0.2\baselineskip]
	\text{\fontsize{14}{\baselineskip}\selectfont Doktor der Naturwissenschaften (Dr. rer. nat.)}
	
	\bigskip
	\bigskip
	\text{\fontsize{14}{\baselineskip}\selectfont in the}
	\bigskip
	\bigskip
	
	\bigskip
	\text{\fontsize{14}{\baselineskip}\selectfont Faculty of Mathematics and Natural Sciences}
	
	\bigskip
	\text{\fontsize{14}{\baselineskip}\selectfont University of Kassel}
	\bigskip

	\bigskip
	\text{\fontsize{12}{\baselineskip}\selectfont Date of Submission: November 10, 2023}\\[0.2\baselineskip]
	\text{\fontsize{12}{\baselineskip}\selectfont Disputation Date: February 2, 2024}
	\vfill
	\text{\fontsize{14}{\baselineskip}\selectfont First Reviewer:} \textbf{\fontsize{14}{\baselineskip}\selectfont Prof.\ Dr.\ Andreas Meister}\\[\baselineskip]
	\text{\fontsize{14}{\baselineskip}\selectfont Second Reviewer:} \textbf{\fontsize{14}{\baselineskip}\selectfont Prof.\ Dr.\ Chi-Wang Shu}\\[\baselineskip]
	
	\bigskip
	\bigskip
	
	Kassel,  February 14, 2024
	
\end{center}

\clearpage

{\pagestyle{plain}
	\hypersetup{linkcolor=black}
	\tableofcontents
}

\addcontentsline{toc}{chapter}{Acknowledgements}

\chapter*{Acknowledgements}
This work was done during my doctoral studies at the University of Kassel and was largely funded by the German Research Foundation (DFG, project number 466355003), for which I would like to express my gratitude at this point.

First and foremost, I would like to thank my doctoral supervisor Prof.\ Dr.\ Andreas Meister for his excellent supervision and extraordinary commitment to my advancement. He has not only enabled me to further my education but also to travel to many international conferences and scientists, which have supported me to an incomparable extent. With individual support and a multitude of scientific discussions, he has influenced the present work in many ways. 

I would also like to thank Dr.\ Stefan Kopecz very much for his constructive criticism, which significantly helped me to present research results in a more structured and transparent way. In this context, I would also like to thank him very much for the many scientific discussions, each of which had great value for me.

At this point, I would also like to express my gratitude to all my colleagues from Department 10, who made me feel very welcome. In particular, I would like to single out Veronika Straub, Stephanie Thomas, Stefan Dingel, and Andreas Linß, who became close friends during my doctoral studies and with whom I spent many hours in fruitful discussions.

However, my deepest gratitude also goes to the international scientists who have been invaluable to my research questions. At this point, I would especially like to thank Prof.\ Dr.\ Chi-Wang Shu from Brown University, Prof.\ Dr.\ Juntao Huang from Texas Tech University, and Prof.\ Dr.\ David I.\ Ketcheson from King Abdullah University of Science and Technology (KAUST). I would also like to thank Dr.\ Philipp Öffner for many good advices and great cooperation.

Finally, I would like to thank my wife Daniela from the bottom of my heart, who lovingly accompanied me through every phase of my PhD time.
\mainmatter
\chapter{Introduction}
Many realistic phenomena in the natural sciences, epidemiology and ecology are  modeled  by systems of differential equations that are constrained by restrictions linked to the nature of the problem \cite{lacitignola2021using, colonna2016plasma,  kooijman2000dynamic}.   Solving these equations analytically is not possible in general, necessitating the use of numerical methods to approximate the solution. However, given the model assumptions and the presence of measurement errors, an exact representation of reality cannot be expected anyway. Rather, the goal of the numerical approximation is to retain all properties of the underlying process while achieving approximations within the limits of measurement accuracy.
Two important examples of physical properties are the conservation of quantities and the positivity of certain solution components. For instance in the context of chemical reactions such as the stratospheric reaction problem \cite{sandu2001positive} or the Robertson problem \cite{HNWII}, the total mass is conserved and the modeled densities are non-negative. 

Often, the underlying process to be modeled consists of converting one quantity into the other, which can be represented in a more abstract framework using a special system of ordinary differential equations (ODEs), a so-called \emph{conservative production-destruction system} (PDS). Conservativity in this context means that the production of one quantity is equivalent to the destruction of another, and vice versa. As a result of conservativity, the sum of constituents remains constant in time. A numerical method that mimics this behavior on a discrete level for every chosen time step size $\dt>0$ is called \emph{unconditionally conservative}. Similarly, if the method produces positive approximations for any $\dt>0$ whenever the initial value is positive, the scheme is called \emph{unconditionally positive}.
In many cases additional terms exist that have no counterpart. In such a situation, the corresponding non-conservative PDS may be understood as the sum of a conservative PDS and rest terms. Hence, a non-conservative PDS can always be interpreted as a so-called \emph{production-destruction-rest system} (PDRS) with a conservative PDS part. 
%

Besides the scientifically induced requirement of preserving specific solution properties such as conservativity and positivity, the preservation of these two particular properties also hold significant importance from a purely numerical perspective. First, a numerical method that does not preserve all linear invariants such as conservativity may produce a qualitatively wrong behavior \cite{shampine1986conservation, BDM03, lacitignola2021using}. Second, the preservation of positivity is a desirable property because negative approximations can lead to the failure of the method, see for instance \cite{STKB} and the literature mentioned therein. Preserving the positivity of certain solution components is also crucial in the context of partial differential equations (PDEs). For instance, the calculation of the speed of sound when solving the compressible Euler equations requires the positivity of pressure and density. Another system of PDEs that emphasizes the importance of generating positive approximations is given in \cite{KMpos}, where the right-hand sides of the so-called NPZD model (\emph nutrients, \emph phytoplankton, \emph zooplankton, and \emph detritus) \cite{BDM2005} were used as stiff source terms. In the numerical solution of the resulting PDE, the occurrence of negative approximations can lead to the divergence of the method and therefore necessitates a severe time step constraint for methods that are not unconditionally positive, see \cite{KMpos}.

While high order general linear methods \cite{HNWII,J09} such as Runge--Kutta and linear multistep schemes \cite{B16, HNW1993, HNWII} preserve all linear invariants of the system, unconditional positivity is much harder to obtain. Among the class of linear integrators, unconditional positivity is restricted to first order \cite{Sandu02,BC78}. The implicit Euler method indeed grants the positivity, although methods for solving nonlinear systems coming from implicit schemes do not guarantee positive approximations. 
Higher order linear methods can only guarantee positivity by restricting the time step size, leading to
a significant increase in computational time \cite{Sandu02,bertolazzi1996positive}.

Positive and linear invariants preserving  schemes based on projection techniques were proposed  in \cite{sandu2001positive, nusslein2021positivity}, where at each time step, the negative approximations or the weights of the Runge--Kutta method are changed to guarantee positivity while maintaining the order of the method.  More recently, the issue of positivity preservation was addressed in \cite{blanes2021positivity}, where splitting and exponential methods were combined to construct positive and conservative integrators up to 3rd order for solving nonlinear mass conservative systems of the type $\b y'(t)=\b A(\b y(t),t)\b y(t)$, where $\b A(\b y(\cdot),\cdot)$ is an $N\times N$ matrix-valued function. 

Another approach for preserving positivity is to apply the Patankar-trick \cite{Patankar1980} to an RK method resulting in  a Patankar--Runge--Kutta (PRK) scheme, which guarantees the unconditional positivity of the numerical approximation. However, the PRK method in general does not preserve linear invariants such as conservativity anymore. Still, PRK methods are of interest due to their unconditional positivity.  Furthermore, 
in the context of conservative production-destruction systems, it is possible to improve the PRK method obtaining modified PRK (MPRK) schemes, originally introduced in \cite{BDM03}, which additionally are unconditionally conservative. Second and third order MPRK schemes have been developed and numerically investigated in \cite{KM18,KM18Order3,MPRK3ex}. The idea was then carried out in the context of strong-stability preserving (SSP) Runge--Kutta methods in \cite{SSPMPRK2,SSPMPRK3}, where the resulting SSPMPRK schemes have been applied to solve reactive Euler equations. In \cite{MPDeC}, the authors used the Patankar-trick to develop MPDeC methods, which are modified Patankar (MP) schemes of arbitrary order based on deferred correction methods (DeC). It is worth mentioning that the 5th order MPDeC method was used to preserve a positive water height when solving the shallow water equations \cite{CMOT21}.  Furthermore, an implicit first order MP scheme based on a 3rd order SDIRK method was presented in \cite{MeisterOrtleb2014} and applied to the shallow water equations to guarantee a positive water height. Thereby, it was also proven that the method is of third order away from the wet-dry transition zone.  All these schemes are mass conservative and unconditionally positive. Moreover their efficiency and robustness was proven numerically while integrating stiff PDS.

Among the positive and linear invariants preserving integrators for biochemical systems, 1st and 2nd order  generalized BBKS (gBBKS), which were developed in \cite{BBKS2007,BRBM2008,gBBKS} and named after the authors Bruggeman, Burchard, Kooi, and Sommeijer, and Geometric Conservative (GeCo) schemes \cite{martiradonna2020geco} have been introduced in recent literature. 
These methods fall in the class of  non-standard integrators \cite{mickens1994nonstandard}, as they result as  non-standard versions of explicit first and second order Runge--Kutta schemes, where the advancement in time is modulated by a nonlinear functional dependency on the temporal step size and on the  approximation itself. 
The step size modification thereby guarantees the numerical solution to be unconditionally positive while keeping the accuracy of the underlying method. 
While GeCo schemes are explicit integrators, the gBBKS step size modification function leads to an implicit scheme. 
Nevertheless, nonlinear implicit equations that arise from gBBKS schemes may be reduced to a  scalar nonlinear equation in one single unknown \cite{gBBKS}.

We want to emphasize that the application of the modified Patankar approach on an RK scheme has a great impact on its structure. Indeed, the resulting MPRK scheme is not an RK method anymore. Even more, MPRK schemes do not belong to the class of general linear methods. Therefore, the excessive theory for RK schemes cannot be applied directly to deduce the properties of MPRK methods. As a result, the first constructions of 2nd and 3rd order MPRK schemes in \cite{KM18,KM18Order3} were interlinked with technical proofs using Taylor series expansions. Moreover, due to the nonlinear nature of Patankar-type methods, also a stability analysis for these schemes is not straightforward, yet of high importance.

The first part of my thesis is concerned with developing a comprehensive theory for deriving order conditions of Patankar-type methods. To that end, we generalized the theory of NB-series \cite{Ntrees} by interpreting Patankar-type methods as Runge--Kutta-like schemes with solution-dependent Butcher tableau, which we referred to as non-standard additive Runge--Kutta (NSARK) methods in \cite{NSARK}. Thereby, the main idea was to revisit Butchers approach from \cite{B16} concerning order conditions for RK schemes and apply his techniques to the results for additive Runge--Kutta methods \cite{Ntrees}. Furthermore, we adapted Butcher's proofs in such a way that they remain valid even for the case of solution-dependent Butcher tableaux. In particular, we provided a theorem for arbitrary high order NSARK methods. However, these order conditions may be implicit or not fully reduced. Nevertheless, we were able to trace the reduction of order conditions back to the investigation of polynomial systems of equations, which we were able to solve using the Gröbner basis theory from commutative algebra. We applied this approach deriving the known order conditions for GeCo and MPRK methods from \cite{martiradonna2020geco,KM18,KM18Order3} in a compact manner. Moreover, within the same work \cite{NSARK}, we derived for the first time explicit conditions for 3rd and 4th order GeCo methods as well as 4th order MPRK schemes. 

Even though the first MPRK schemes were introduced about two decades ago in \cite{BDM03} and followed by many further works on positivity-preserving methods, the corresponding theories for a stability analysis and deriving order conditions were first developed in my PhD project. 
In particular, I present in this work a unifying theory for the analysis of Patankar-type schemes concerning their stability and convergence. To that end, we review and extend the corresponding results that were already published during my PhD time. 

A first step in my approach of investigating the stability of MPRK schemes was the observation that the scalar Dahlquist equation $y'(t)=\lambda y(t)$ with $\lambda\in \C^-$ could not be used for the analysis. The reason for that is the fact that MP schemes are applied to real valued systems of equations. One is thus tempted to consider the decoupled PDS
\[\vec{y_1'(t)\\y_2'(t)}=\vec{\lambda y_1(t)\\-\lambda y_1(t)},\quad  \lambda\in \R^-,\] whose first component represents the Dahlquist equation with $\lambda\in \R^-$. However, it turned out that the analysis of this equation is not even sufficient to understand the stability behavior in a more general system with two equations \cite{IKM2122, izgin2022stability}, let alone larger systems. Instead, the main idea was to use the theory of center manifolds for maps from dynamical systems \cite{carr1982,SH98,mccracken1976hopf} to analyze the behavior of the numerical method near steady states when applied to general linear autonomous problems. This approach was first carried out for systems of two equations \cite{IKM2122} and later generalized to arbitrary large linear PDS \cite{izgin2022stability}, already analyzing a second order family of MPRK schemes. The very first stability analysis of further Patankar-type methods followed shortly, which resulted in several publications \cite{IOE22StabMP,gecostab,HIKMS22} during my PhD time. We also want to note that the theory is not limited to linear problems, but can also be applied in the context of certain nonlinear PDS \cite{IKMnonlin22}. Furthermore, we derived a necessary condition for avoiding unrealistic oscillations in \cite{ITOE22}, underlining the numerical results from \cite{IssuesMPRK}, where different modified Patankar methods from \cite{MPDeC,KM18,KM18Order3} were analyzed with respect to oscillatory behavior. Also, recently we investigated the hypothesis that the stability properties may be of global nature when the MPRK scheme is based on a non-negative Butcher tableau \cite{izgin2023nonglobal}, which is mostly based on the master thesis \cite{Schilling2023}.

Altogether, this thesis represents a collection of my work as first author with several collaborators on the stability and convergence of nonlinear time stepping methods. Additionally, I unify in this framework the stability analysis for the above mentioned MPRK schemes by deriving a stability function for NSARK methods. Moreover, we also investigate RK schemes generalizing the notion of $A$-stability. 

The remainder of the thesis is divided into six chapters and an appendix. 

We first review the theoretical fundamentals in Chapter~\ref{chap:Fundamentals}. In particular, RK and additive RK (ARK) methods are introduced. Additionally, we recall the main theorems concerning their stability and order of convergence. Furthermore, we introduce the 
notation for the production-destruction-rest systems together with the main properties of interest.

In Chapter~\ref{chap:NumSchemes} we present the previously mentioned Patankar-type schemes and write them as Runge--Kutta-like methods with solution-dependent Butcher tableau.  

In the following Chapter~\ref{chap:order} we then turn to order conditions for Patankar-type methods giving a unifying and comprehensive theory based on the order conditions for ARK methods. In particular, we investigate GeCo and MPRK reproducing the known order conditions in a compact manner. Furthermore, we give explicit formulations for the conditions of 3rd and 4th order GeCo and 4th order MPRK methods. We also construct a 4th order MPRK method and confirm its order of convergence numerically.

In Chapter~\ref{chap:stab} we present the stability theory based on the center manifold theorem for maps and investigate Patankar-type methods as well as Runge--Kutta schemes. We also provide necessary conditions for non-oscillatory schemes and validate the theoretical results with numerical experiments.

Finally, we come to a conclusion in Chapter~\ref{chap:conclusion}, where we also discuss open questions for future work.

\chapter{Theoretical Fundamentals}\label{chap:Fundamentals}
\section{Runge--Kutta Methods}
Runge--Kutta (RK) methods are numerical schemes to approximate the solution  $\b y\colon [t_0,\tend]\to \R^N$ of the initial value problem (IVP)
\begin{equation}
	\b y'(t) = \b f(\b y(t),t),\quad \b y(t_0) =\b y^0. \label{eq:initprob}	
\end{equation}
Hereafter, we use superscript indices for vectors to better distinguish between iterates of a numerical method and their respective components.
For the sake of simplicity, let us consider a fixed time step size $\dt$ and set $t_{n}=t_0+n\dt$ for $n=1,\dotsc,k$ so that $t_n\in [t_0,\tend]$. A time integrator such as a Runge--Kutta method aims to generate approximations $\b y^n$ to $\b y(t_n)$. In the case of RK schemes, intermediate times \[\xi_j=t_n +c_j\dt, \quad c_j\in [0,1],\quad j=1,\dotsc,s\]
are introduced and a quadrature formula is used to obtain
\[\b y(t_{n+1})-\b y(t_n)=\int_{t_n}^{t_{n+1}}\b y'(t)\dd t=\int_{t_n}^{t_{n+1}}\b f(\b y(t),t)\dd t\approx\dt\sum_{j=1}^sb_j\b f(\b y(\xi_j),\xi_j), \]
where $b_j$ depend on the particular quadrature formula, and $\sum_{j=1}^sb_j=1$ holds true if an interpolatory quadrature formula is used. Since the value of $\b y$ at the intermediate times $\xi_i$, $i=1,\dotsc,s$, is not known in general, we approximate them in a similar manner, \ie
\[\b y(\xi_i)-\b y(t_n)=\int_{t_n}^{\xi_i}\b y'(t)\dd t=\int_{t_n}^{t_n+c_i\dt} \b f(\b y(t),t)\dd t\approx\dt\sum_{j=1}^sa_{ij}\b f(\b y(\xi_j),\xi_j), \]
where $a_{ij}$ again depend on the chosen quadrature rule and $\sum_{\nu=1}^sa_{ij}=c_i$ holds true for interpolatory quadrature formulae. 

Now, denoting the approximation to $\b y(\xi_i)$ by $\byi$, the corresponding $s$-stage Runge--Kutta method for the solution of the IVP \eqref{eq:initprob} is given by
\begin{subequations}\label{eq:rk}
	\begin{align}
		\byi &= \b y^n + \dt\sum_{j=1}^{s} a_{ij} \b f(\byj,t_n+c_j \dt),\quad i=1,\dots,s,\label{eq:stageRK}\\
		\b y^{n+1}&=\b y^n+\dt \sum_{j=1}^s b_j 
		\b f(\byj,t_n+c_j\dt).
	\end{align}
\end{subequations}
It is worth mentioning that a Runge--Kutta method is characterized by its coefficients $a_{ij}$, $b_j$, $c_j$ for $i,j=1,\dots,s$ and can be represented 
by the \emph{Butcher tableau}
\[
\begin{array}{c|c}
	\b c        &    \b A\\\hline
	& \b b
\end{array}
\]
with $\b A = (a_{ij})_{i,j=1,\dots,s}$, $\b b=(b_1,\dots,b_s)$ and $\b c=(c_1,\dots,c_s)^T$. If $\b A$ is a strict lower left triangular matrix, the \emph{stage vectors} $\byi$ can be computed explicitly using \eqref{eq:stageRK}, which is why the corresponding RK method is called \emph{explicit}. Otherwise, the scheme is called \emph{implicit}. If $\b f$ is nonlinear and the RK scheme is implicit, the stage vectors $\byi$ are the solution to a nonlinear system of equations. Nevertheless, the existence of a unique solution can be guaranteed under some time step constrains for Lipschitz continuous (with respect to $\b y$) right-hand sides $\b f$ \cite[Theorem~7.2]{HNW1993}. 
\begin{rem}[{\cite[Section II.2]{HNW1993},\cite{DB2002}}]\label{rem:RKautonom}
	Given the non-autonomous IVP \eqref{eq:initprob}, one may rather consider solving the corresponding autonomous problem 
	\begin{equation}\label{eq:initprob_autonom}
		\b Y'(t)=\b F(\b Y(t))
	\end{equation} with $\b Y(t)=\vec{\b y(t)\\ t}$ and $\b F(\b Y(t))=\vec{\b f(\b Y(t))\\ 1}$.  If the stage vectors are uniquely determined and \begin{equation}\label{eq:sum_aij}
		\sum_{j=1}^sa_{ij}=c_i
	\end{equation}holds, then the approximations of an RK method to the solution $\b y$ of \eqref{eq:initprob} are identical regardless of whether the method was applied to \eqref{eq:initprob} or \eqref{eq:initprob_autonom}.
\end{rem}
\subsection{Additive Runge--Kutta Methods}
A generalization of Runge--Kutta methods are \emph{additive Runge--Kutta} (ARK) schemes, which approximate the solution of the initial value problem, where the right-hand side is split into a sum, that is
\begin{align}\label{ivp}
	\b y'(t)  =\b f(\b y(t),t)= \sum_{\substack{\nu=1}}^N  \fnu(\b y(t),t), \quad  \b y(t_0) & = \b y^0\in \R^d.
\end{align}

The main idea of an ARK method is to apply very
different RK schemes determined by $\b A^{[\nu]}, \b b^{[\nu]}, \b c^{[\nu]}$ to the different addends $\fnu$. A popular class of ARK schemes are Implicit-Explicit (IMEX) RK methods \cite{Crouzeix1980,ARS1997}. For internal consistency, we require that the different RK schemes actually do not differ in $\b c$, \ie 
\begin{equation}
	c_i=c_i^{[\nu]}=\sum_{j=1}^sa_{ij}^{[\nu]}\label{eq:sum_aij_ARK}
\end{equation}
for $i=1,\dotsc,s$ and $\nu=1,\dotsc, N$, see \cite{SG2015}. For standard RK methods this reduces to \eqref{eq:sum_aij}. 
The resulting ARK method reads
\begin{equation}\label{eq:ark}
	\begin{aligned}
		\byi & = \b y^n + \dt \sum_{j=1}^s  \sum_{\substack{\nu=1}}^N a^{[\nu]}_{ij}\fnu(\byj, t_n + c_j\dt), \quad i=1,\dotsc,s,\\
		\b y^{n+1} & = \b y^n + \dt \sum_{j=1}^s \sum_{\substack{\nu=1}}^N  b^{[\nu]}_j\fnu(\byj, t_n + c_j\dt),
	\end{aligned}
\end{equation} 
and the corresponding extended Butcher tableau is given by
\[\arraycolsep=1.4pt\def\arraystretch{1.5}
\begin{array}{c|c|c|c|c}
	\b c &	\b A^{[1]}        &    \b A^{[2]} & \cdots & \b A^{[N]}\\ \hline
	&	\b b^{[1]}        &    \b b^{[2]} & \cdots & \b b^{[N]}
\end{array},
\]where $\b A^{[\nu]}=(a_{ij}^{[\nu]})_{i,j=1,\dotsc,s}$ and $\b b^{[\nu]}=(b_1^{[\nu]},\dotsc, b_s^{[\nu]})$.
The statement of Remark~\ref{rem:RKautonom} follows also in the case of ARK methods from the internal consistency condition \eqref{eq:sum_aij_ARK}, see \cite{SG2015}. As a consequence, it suffices to investigate autonomous systems to understand the order of the method, if \eqref{eq:sum_aij_ARK} is satisfied. 

\section[NB-series and Order Conditions for ARK Methods]{NB-Series and Order Conditions for ARK Methods}
Runge--Kutta (RK) and additive RK schemes belong to \emph{one-step} methods since 
there exists an \emph{incremental map} $\b \Phi$ generating the iterates according to \begin{equation}\label{eq:inc_map}\b y^{n+1}=\b y^n+\dt \b \Phi(t_n,\b y^n,\dt), \quad \b y^0=\b y(t_0),\end{equation}
where implicit schemes are formally represented in their explicit form. For one-step methods, we consider the following notions and results.
\begin{defn}[\cite{HNW1993, HNWII}] Let $\b y\colon [t_0,\tend]\to \R^N$ be the solution to the IVP \eqref{ivp}.  A one-step method for solving \eqref{ivp}
	\begin{enumerate}
		\item has and \emph{order of consistency $p$}, if the \emph{local truncation error} \[\bm \eta(t,\dt) =\b y(t)+\dt \b \Phi(t,\b y(t),\dt) -\b y(t+\dt)\] with $t\in[t_0,\tend]$ and $0\leq \dt\leq \tend-t$ satisfies 
		\[\bm \eta(t,\dt)=\O( \dt^{p+1}), \quad \dt\to 0\] for all $t\in[ t_0,\tend]$.
		\item has an \emph{order of convergence $p$}, if the \emph{global error} \[\b e(t_n,\dt) =\b y^{n} -\b y(t_n)\] satisfies 
		\[\b e(t_n,\dt)=\O( \dt^{p}), \quad \dt\to 0\] for any $t_n=t_0+n\dt\in[t_0,\tend]$.
	\end{enumerate}
\end{defn}
While the local error represents the error of the method generated by a single step starting with exact data, the global error is determined by the difference of the numerical and analytical solutions after $n$ steps. These two notions are deeply interlinked by the following result.
\begin{thm}[{\cite[Theorem 12.2, 12.3]{SM2003},\cite[Theorem 4.10]{DB2002}}]\label{thm:ConsistencyImpConvergence} \hfill
	
	Let $\b y\colon [t_0,\tend]\to \R^N$ be the sufficiently smooth solution to the IVP \eqref{ivp}.
	Furthermore, let the incremental map $\b \Phi$ of the one-step method \eqref{eq:inc_map} for solving \eqref{ivp} be continuous. In addition let $\b \Phi$ be  locally Lipschitz with respect to its second input argument in the sense that
	\[\Vert \b \Phi(t,\b x,\dt)-\b \Phi(t,\b z,\dt)\Vert \leq L_{\b \Phi}\Vert \b x-\b z\Vert\quad \text{ on }\quad  D\times [0,\dt_0] \] for some $0<\dt_0\leq \tend$ and \[D=\{(t,\b z)\mid t_0\leq t\leq t_M, \Vert \b z-\b y^0\Vert \leq C \} \] with some $t_M\leq \tend $ and $C>0$. If the one-step method is consistent of order $p$, then it is also convergent of order $p$.
\end{thm}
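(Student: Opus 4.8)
The plan is to prove convergence by deriving a recursion for the global error and closing it with a discrete Grönwall argument, following the classical ``consistency plus stability implies convergence'' strategy. First I would rewrite the exact solution as a perturbed one-step iteration: by the very definition of the local truncation error one has $\b y(\tn + \dt) = \b y(\tn) + \dt\,\b \Phi(\tn, \b y(\tn), \dt) - \bm\eta(\tn, \dt)$. Subtracting this from the method $\b y^{n+1} = \b y^n + \dt\,\b\Phi(\tn, \b y^n, \dt)$ and writing $\b e_n = \b e(\tn,\dt)$ yields
\[
\b e_{n+1} = \b e_n + \dt\bigl(\b\Phi(\tn,\b y^n,\dt) - \b\Phi(\tn,\b y(\tn),\dt)\bigr) + \bm\eta(\tn,\dt),
\]
so the propagation of the global error is governed by the difference of the incremental map evaluated at the numerical and the exact states, plus the local error injected at each step.

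Next I would take norms and invoke the two hypotheses. The local Lipschitz bound gives $\norm{\b\Phi(\tn,\b y^n,\dt)-\b\Phi(\tn,\b y(\tn),\dt)}\le L_{\b\Phi}\norm{\b e_n}$, while consistency of order $p$ gives a uniform bound $\norm{\bm\eta(\tn,\dt)}\le C_\eta\,\dt^{p+1}$ with $C_\eta$ independent of $n$, using that $[t_0,\tend]$ is compact so the $\O(\dt^{p+1})$ estimate holds uniformly in $t$. Setting $E_n = \norm{\b e_n}$ and noting $\b e_0 = \b 0$ since $\b y^0 = \b y(t_0)$, this produces the scalar difference inequality
\[
E_{n+1}\le (1+\dt L_{\b\Phi})\,E_n + C_\eta\,\dt^{p+1},\qquad E_0 = 0.
\]
Solving it explicitly (discrete Grönwall) gives $E_n \le C_\eta\,\dt^{p+1}\tfrac{(1+\dt L_{\b\Phi})^n - 1}{\dt L_{\b\Phi}}$, and bounding $(1+\dt L_{\b\Phi})^n \le e^{n\dt L_{\b\Phi}} = e^{(\tn-t_0)L_{\b\Phi}} \le e^{(\tend-t_0)L_{\b\Phi}}$ makes one factor of $\dt$ cancel, leaving $E_n \le \tfrac{C_\eta}{L_{\b\Phi}}\bigl(e^{(\tend-t_0)L_{\b\Phi}}-1\bigr)\dt^p = \O(\dt^p)$, which is precisely convergence of order $p$.

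The main obstacle is not the algebra but the domain restriction: the Lipschitz estimate only holds on $D\times[0,\dt_0]$, so the step above is legitimate only as long as both $(\tn,\b y^n)$ and $(\tn,\b y(\tn))$ lie in $D$, i.e. $\norm{\b y^n - \b y^0}\le C$ with $\tn\le t_M$. The exact solution stays in such a neighborhood by continuity, but for the numerical iterates this must be established by a bootstrapping induction: assuming $\b y^0,\dotsc,\b y^n$ all lie in $D$, the derived error bound shows $E_n=\O(\dt^p)$ and can therefore be forced below any prescribed margin by shrinking $\dt$, which together with $\norm{\b y(\tn)-\b y^0}<C$ keeps $\b y^{n+1}$ inside $D$. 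I would thus fix $\dt\le\dt_0$ small enough that this induction closes for all $n$ with $\tn\le t_M$, invoking continuity of $\b\Phi$ to guarantee the iterates and the truncation error are well defined throughout, and only then read off the final estimate.
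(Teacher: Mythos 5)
Your proposal is correct and is essentially the argument behind the theorem as cited: the paper gives no proof of its own but refers to the classical results in the literature, whose proof is exactly your error recursion combined with the Lipschitz bound, the uniform truncation-error estimate, and a discrete Gr\"onwall inequality. You also correctly identify and resolve the one genuinely delicate point, namely the bootstrapping induction needed to keep the iterates inside the domain $D$ where the Lipschitz estimate is valid, so nothing is missing.
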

If the incremental map satisfies a certain Lipschitz condition specified in Theorem~\ref{thm:ConsistencyImpConvergence}, it thus suffices to study the local truncation error of a method to understand its accuracy, \ie to deduce the order of convergence.

The accuracy of standard RK methods can be understood through the use of trees and B-series, which are formal power series used to represent exact
and approximate solutions of an autonomous initial value problem \cite{B16,HWButcherTrees}.
Similarly, ARK methods can be studied using colored trees and NB-Series \cite{Ntrees}, which we briefly review in the upcoming subsection.  
\subsection{Colored Rooted Trees}
A \emph{rooted tree} is a cycle-free, connected graph with one node designated as the root \cite{B16}. More precisely, a rooted tree can be understood as the underlying undirected graph of an arborescence, for which the root is the uniquely determined node with no incoming arc \cite{KV2012}.  We consider colored rooted trees, in which each node possesses one of $N$ possible colors from the set $\{1,\dotsc, N\}$.  We denote the set of all colored rooted trees, the so-called \emph{$N$-trees}, by $NT$. We indicate the color $\nu\in \{1,\dotsc, N\}$ of the tree represented by $\rt[]$ by writing $\rt[]^{[\nu]}$. In general, a colored rooted tree $\tau$ with a root color $\nu$ can be written in terms of its colored \emph{children} $\tau_1,\dotsc,\tau_k$ by writing 
\begin{equation}\label{eq:tau}
\tau=[\tau_1,\dotsc,\tau_k]^{[\nu]}=[\tau_1^{m_1},\dotsc,\tau_r^{m_r}]^{[\nu]},
\end{equation}
where the children $\tau_1,\dotsc,\tau_k$ are the connected components of $\tau$ when the root together with its edges are removed. Moreover, the neighbors of the root of $\tau$ are the roots of the corresponding children.
In the latter representation of $\tau$ in \eqref{eq:tau}, $m_i$ is the number of copies of $\tau_i$ within $\tau_1,\dotsc,\tau_k$, which already includes the fact that we do not distinguish between trees whose children are permuted.

\begin{exmp}
For simplicity, we consider only one color in this example, that is $N=1$ and $\rt[]^{[1]}=\rt[]$. The children of the tree $\tau=\rt[[],[[],[]]]$ are given by $\tau_1=\rt[]$ and $\tau_2=\rt[[],[]]$ and the respective roots are the lowest nodes. In terms of the representation \eqref{eq:tau} we can write $\rt[[],[[],[]]]=[\rt[],\rt[[],[]]]=[\rt[],[\rt[],\rt[]]]=[\rt[],[\rt[]^2]]=[[\rt[]^2],\rt[]]$. 
\end{exmp}

The \emph{order} of a colored tree $\tau$ is denoted by $\lvert\tau\rvert$ and equals the number of its nodes. We introduce the set $NT_q$ of all $N$-trees up to order $q$. We set $NT_0=\emptyset$ and note that the sets $NT_q$ for $q=1,2,3$ read 
\begin{equation}    
\begin{aligned}
	NT_1&=\{\rt[]^{[\nu]}\mid \nu=1,\dotsc,N\},\\
	NT_2&=NT_1\cup \left\{\et{0.6}{\ct[\hphantom{.}^{[\mu]}[\hphantom{.}^{[\nu]}]]}\bigg| \nu,\mu=1,\dotsc,N\right\},\\
	NT_3&=NT_2\cup \Biggl\{\et{0.6}{\ct[\hphantom{.}^{[\mu]}[\hphantom{.}^{[\nu]}[\hphantom{.}^{[\xi]}]]]}\Bigg| \nu,\mu,\eta=1,\dotsc,N\Biggr\}\cup \Biggl\{\et{1.2}{\ct[\hphantom{.}^{[\mu]}[\hphantom{.}^{[\nu]}][\hphantom{.}^{[\xi]}]]}\Bigg|\nu,\mu,\eta=1,\dotsc,N\Biggr\},
\end{aligned}
\end{equation}
where we used the representation $\left[\rt[]^{[\nu]}\right]^{[\mu]}=\tt{0.6}{\ct[\hphantom{.}^{[\mu]}[\hphantom{.}^{[\nu]}]]}$, $\left[\left[\rt[]^{[\xi]}\right]^{[\nu]}\right]^{[\mu]}=\tt{0.6}{\ct[\hphantom{.}^{[\mu]}[\hphantom{.}^{[\nu]}[\hphantom{.}^{[\xi]}]]]}$ as well as $\left[\rt[]^{[\nu]},\rt[]^{[\xi]}\right]^{[\mu]}=\tt{1.1}{\ct[\hphantom{.}^{[\mu]}[\hphantom{.}^{[\nu]}][\hphantom{.}^{[\xi]}]]}$.
Lastly, the \emph{symmetry} $\sigma$ and \emph{densitity} $\gamma$ of $\tau$ from \eqref{eq:tau} are defined by
\begin{equation}\label{eq:sigmagamma}
\begin{aligned}
	\sigma(\tau)&=\prod_{j=1}^rm_j!\sigma(\tau_j), &&\sigma(\rt[]^{[\nu]})=1,\quad \nu=1,\dotsc,N,\\
	\gamma(\tau)&=\lvert \tau\rvert \prod_{i=1}^k\gamma(\tau_i),&&\gamma(\rt[]^{[\nu]})=1,\quad \nu=1,\dotsc,N.
\end{aligned}
\end{equation}
Observe that $\sigma$ depends on the coloring of $\tau$, while $\gamma$ does not since already $\lvert \tau \rvert$ is independent of the coloring. For instance we find $\sigma(\left[\rt[]^{[1]},\rt[]^{[2]}\right]^{[3]})=1$ since the children are not identical, while $\sigma(\left[\rt[]^{[1]},\rt[]^{[1]}\right]^{[3]})=2$. Meanwhile, we observe $\gamma(\left[\rt[]^{[1]},\rt[]^{[2]}\right]^{[3]})=\gamma(\left[\rt[]^{[1]},\rt[]^{[1]}\right]^{[3]})=3$. The symmetry and density are crucial quantities to describe the expansions of the analytical solution as we will see in the next subsection.

\subsection{Elementary Differentials}
For the following analysis, we assume for simplicity that the system \eqref{ivp} is autonomous, \ie $\fnu(\b y,t)=\fnu(\b y)$. 
We first introduce elementary differentials $\dF\colon NT\to \mathcal C(\R^d,\R^d)$ for colored trees, see \cite{Ntrees}, which are recursively defined by
\begin{equation}\label{eq:elemdiff}
\begin{aligned}
	\dF(\rt[]^{[\nu]})(\b y)&=\Fnu(\b y), \\
	\dF([\tau_1,\dotsc,\tau_k]^{[\nu]})(\b y)&=\sum_{i_1,\dotsc,i_k=1}^d\partial_{i_1\dotsc i_k}\Fnu(\b y)\dF_{i_1}(\tau_1)(\b y)\cdots \dF_{i_k}(\tau_k)(\b y)
\end{aligned}
\end{equation}
for $\nu=1,\dotsc, N$.
An important result in \cite{Ntrees,B16} is the representation of the analytical solution of \eqref{ivp} in terms of an NB-series
\[\NB(u,\b y)=\b y+ \sum_{\tau\in NT}\frac{\dt^{\lvert \tau\rvert}}{\sigma(\tau)}u(\tau)\dF(\tau)(\b y),\]
where $u\colon NT\to \R$, $\b y\in \R^d$ and $\sigma$ is the previously introduced symmetry. Note that $\NB(u,\b y)$ is defined only if $\Fmu\in \mathcal C^\infty$ for $\mu=1,\dotsc, N$. For $\Fmu\in \mathcal C^{p+1}$, we truncate the NB-series and introduce
\[\NB_p(u,\b y)=\b y+ \sum_{\tau\in NT_p}\frac{\dt^{\lvert \tau\rvert}}{\sigma(\tau)}u(\tau)\dF(\tau)(\b y),\]
and point out that $\NB_0(u,\b y)=\b y.$ With that, we can formulate a theorem concerning the NB-series expansion of the solution to the differential equation \eqref{ivp} at some time $t+\dt$.
\begin{thm}[{\cite[Theorem 1]{Ntrees}}]\label{thm:anasolNB}
Let the functions $\Fmu$ from \eqref{ivp} satisfy $\Fmu\in \mathcal C^{p+1}$ for $\mu=1,\dotsc,N$. If $\b y$ solves \eqref{ivp}, then
\[\b y(t+\dt)=\NB_p(\tfrac{1}{\gamma},\b y(t)) +\O(\dt^{p+1}),\]
where $\gamma$ is the density defined in \eqref{eq:sigmagamma}.
\end{thm}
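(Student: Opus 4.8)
The plan is to compare the Taylor expansion of the exact solution at $t+\dt$ with the truncated NB-series term by term. Since each $\fnu\in\mathcal C^{p+1}$ forces $\b y$ to be of class $\mathcal C^{p+1}$, Taylor's theorem with remainder gives
\[\b y(t+\dt)=\sum_{q=0}^{p}\frac{\dt^q}{q!}\b y^{(q)}(t)+\O(\dt^{p+1}),\quad \dt\to 0.\]
Comparing this with the definition of $\NB_p(\tfrac1\gamma,\b y(t))$, and noting that the $q=0$ term reproduces $\NB_0(\tfrac1\gamma,\b y)=\b y$, the theorem reduces to the single identity
\[\b y^{(q)}(t)=\sum_{\substack{\tau\in NT\\ \lvert\tau\rvert=q}}\frac{\lvert\tau\rvert!}{\sigma(\tau)\gamma(\tau)}\dF(\tau)(\b y(t)),\qquad q=1,\dotsc,p,\]
because dividing the $\tau$-term by the Taylor factor $q!=\lvert\tau\rvert!$ leaves exactly $\tfrac{1}{\sigma(\tau)}u(\tau)$ with $u(\tau)=\tfrac1{\gamma(\tau)}$.

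To establish this identity I would argue by induction on $q$. The base case $q=1$ is immediate, since $\b y'=\sum_{\nu=1}^N\fnu(\b y)=\sum_{\nu=1}^N\dF(\rt[]^{[\nu]})(\b y)$ and every order-one tree satisfies $\sigma=\gamma=1$, matching the coefficient $1!/(\sigma\gamma)=1$. For the inductive step, the key computation is to differentiate an elementary differential along the flow: applying the chain rule to $\dF(\tau)(\b y(t))$ and substituting $\b y'=\sum_{\mu}\fmu$ shows that $\frac{\dd}{\dd t}\dF(\tau)(\b y(t))$ equals the sum of $\dF(\tau')(\b y(t))$ over all $N$-trees $\tau'$ obtained from $\tau$ by grafting a single new leaf of an arbitrary color $\mu\in\{1,\dotsc,N\}$ onto some node of $\tau$ (each such grafting replaces one factor $\Fnu$ by its directional derivative against $\fmu$). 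Differentiating the inductive formula for $\b y^{(q)}$ therefore expresses $\b y^{(q+1)}$ as a sum of elementary differentials indexed by $N$-trees of order $q+1$, and the coefficient of a fixed $\tau'$ is obtained by collecting, over every way of deleting a leaf of $\tau'$, the coefficients of the resulting parents $\tau$.

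The main obstacle is the bookkeeping of these grafting multiplicities, namely verifying that the accumulated coefficient of $\tau'$ is again $\lvert\tau'\rvert!/(\sigma(\tau')\gamma(\tau'))$. Concretely, the step reduces to the purely combinatorial identity $\sum_{\tau\to\tau'}\frac{1}{\sigma(\tau)\gamma(\tau)}=\frac{\lvert\tau'\rvert}{\sigma(\tau')\gamma(\tau')}$, which I would verify using the recursive definitions \eqref{eq:sigmagamma}: the factor $\lvert\tau'\rvert$ in $\gamma(\tau')=\lvert\tau'\rvert\prod_i\gamma(\tau'_i)$ absorbs the $1/\lvert\tau'\rvert$ arising from $\lvert\tau\rvert!=(\lvert\tau'\rvert-1)!$, while the symmetry factor $\sigma$, together with the multiplicities $m_i$ of repeated children, exactly accounts for the number of distinct leaves whose removal yields a given parent. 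The coloring plays no special role beyond being carried along by each node and is already encoded in $\sigma$; since $\gamma$ is color-blind, the count proceeds precisely as in Butcher's classical B-series argument. Equivalently, one may introduce the number $\alpha(\tau)$ of monotone labelings of $\tau$, show it obeys the same leaf-removal recursion so that $\b y^{(q)}=\sum_{\lvert\tau\rvert=q}\alpha(\tau)\dF(\tau)$, and then prove $\alpha(\tau)=\lvert\tau\rvert!/(\sigma(\tau)\gamma(\tau))$ by induction on the tree structure. Substituting the verified identity into the Taylor expansion, and invoking $\mathcal C^{p+1}$ to control the remainder and to guarantee that the elementary differentials up to order $p$ are well defined and continuous, completes the proof.
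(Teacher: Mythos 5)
First, a point of comparison: the paper does not prove Theorem~\ref{thm:anasolNB} at all --- it is quoted from the literature (Theorem 1 of the cited N-trees reference) as part of the review of fundamentals, so there is no in-paper proof to measure your argument against. That said, your strategy --- Taylor-expand $\b y(t+\dt)$, reduce the theorem to the identity $\b y^{(q)}(t)=\sum_{\lvert\tau\rvert=q}\frac{\lvert\tau\rvert!}{\sigma(\tau)\gamma(\tau)}\dF(\tau)(\b y(t))$, and prove that identity by induction, differentiating elementary differentials along the flow --- is exactly the classical B-series argument used in the cited source (and by Butcher and Hairer--N{\o}rsett--Wanner in the uncolored case), so in spirit you are reconstructing the intended proof.

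However, your key combinatorial identity is false as literally stated, because you sum over the wrong objects. Differentiating $\dF(\tau)(\b y(t))$ along the flow produces one term per \emph{grafting}: for each node $v$ of $\tau$ and each color $\mu$, the tree obtained by attaching a new leaf of color $\mu$ at $v$. Hence the coefficient of a fixed $\tau'$ in $\b y^{(q+1)}$ is the sum of the parent coefficients over graftings $(\tau,v,\mu)$ whose result is isomorphic to $\tau'$ --- not, as you write, over ``every way of deleting a leaf of $\tau'$''; the two counts differ by symmetry factors. Concretely, take $N=1$ and $\tau'=\rt[[],[]]$ (a root carrying two leaves), so that $\sigma(\tau')=2$ and $\gamma(\tau')=3$; its only parent is the two-node tree $\tau=\rt[[]]$ with $\sigma(\tau)=1$, $\gamma(\tau)=2$. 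There is exactly one grafting producing $\tau'$ (attach a leaf to the root of $\tau$), and indeed $\tfrac{1}{\sigma(\tau)\gamma(\tau)}=\tfrac12=\tfrac{\lvert\tau'\rvert}{\sigma(\tau')\gamma(\tau')}$, consistent with your displayed identity; but $\tau'$ has \emph{two} leaves whose deletion yields $\tau$, so under your leaf-deletion reading the left-hand side equals $1\neq\tfrac12$. When one genuinely sums over leaf deletions, the correct recursion involves only the density, $\tfrac{\lvert\tau'\rvert}{\gamma(\tau')}=\sum_{\ell}\tfrac{1}{\gamma(\tau'\setminus\ell)}$ (the recursion behind the count $\lvert\tau\rvert!/\gamma(\tau)$ of increasing labelings of a concrete tree), with no $\sigma$ at all; the symmetry factors enter only when converting deletions into graftings, via $g(\tau,\tau')\,\sigma(\tau')=d(\tau,\tau')\,\sigma(\tau)$, where $g$ and $d$ denote the grafting and deletion counts. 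Your closing remark that $\sigma$ ``exactly accounts for the number of distinct leaves whose removal yields a given parent'' is precisely this relation, and your fallback route via the monotone-labeling count $\alpha(\tau)$ is the clean way to organize the induction; so the gap is repairable, but before the inductive step closes you must either restate the identity as a sum over graftings or replace it by the $\gamma$-only deletion recursion --- as written, the two formulations have been conflated.
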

The numerical solution given by one step of an ARK method can also be written as an NB-series $\NB(u,\b y^n)$, with coefficients $u$ recursively determined by
\begin{equation}\label{eq:cond}
\begin{aligned}
	u(\tau)&=\sum_{\nu=1}^N\sum_{i=1}^sb_i^{[\nu]} g_i^{[\nu]}(\tau),\\ 
	g_i^{[\nu]}(\rt[]^{[\mu]})&=\delta_{\nu\mu},&&  \nu,\mu=1,\dotsc,N,  \\
	g_i^{[\nu]}([\tau_1,\dotsc,\tau_l]^{[\mu]})&=\delta_{\nu\mu}\prod_{j=1}^ld_i(\tau_j),&& \nu,\mu=1,\dotsc,N\text{ and } 
	\\
	d_i(\tau)&=\sum_{\nu=1}^N\sum_{j=1}^sa_{ij}^{[\nu]} g_j^{[\nu]}(\tau)
\end{aligned}
\end{equation}
with the Kronecker delta $\delta_{\nu\mu}$, see \cite{Ntrees}. From Theorem \ref{thm:anasolNB} and the fact that elementary differentials are linearly independent \cite{B16,Ntrees}, we obtain the following result.
\begin{thm}[\cite{Ntrees}]\label{thm:ARKcon} An ARK method \eqref{eq:ark} applied to \eqref{ivp} with $\Fnu\in \mathcal C^{p+1}$ is of order $p$ if and only if $u$ determined by \eqref{eq:cond} satisfies
\begin{equation}
	u(\tau)=\frac{1}{\gamma(\tau)} \quad \text{ for all $\tau\in NT_p$.}\label{eq:c=1/gamma}
\end{equation}
\end{thm}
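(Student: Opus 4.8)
The plan is to reduce the order of convergence to the order of consistency and then to compare two NB-series term by term. By Theorem~\ref{thm:ConsistencyImpConvergence}, once the incremental map $\b\Phi$ of the ARK scheme is seen to be continuous and locally Lipschitz in its second argument — which it is whenever the $\fnu$ are smooth, since $\b\Phi$ is then a finite composition of such maps on a compact neighbourhood of the exact trajectory — it suffices to show that the order conditions \eqref{eq:c=1/gamma} are equivalent to consistency of order $p$, i.e.\ to $\bm\eta(t,\dt)=\O(\dt^{p+1})$. I would therefore start from the local truncation error $\bm\eta(t,\dt)=\b y(t)+\dt\b\Phi(t,\b y(t),\dt)-\b y(t+\dt)$, noting that $\b y(t)+\dt\b\Phi(t,\b y(t),\dt)$ is precisely one ARK step issued from the exact value $\b y(t)$.

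Next I would insert the two series expansions. The exact solution is handled by Theorem~\ref{thm:anasolNB}, giving $\b y(t+\dt)=\NB_p(\tfrac1\gamma,\b y(t))+\O(\dt^{p+1})$, while the numerical step from $\b y(t)$ equals $\NB(u,\b y(t))$ with $u$ recursively defined by \eqref{eq:cond}. Using $\fnu\in\mathcal C^{p+1}$ to bound the elementary differentials on a compact neighbourhood of the trajectory, the tail of the numerical series beyond order $p$ is $\O(\dt^{p+1})$, so $\NB(u,\b y(t))=\NB_p(u,\b y(t))+\O(\dt^{p+1})$. Subtracting the two truncated series and recalling $\NB_0(u,\b y)=\b y$ yields
\[\bm\eta(t,\dt)=\sum_{\tau\in NT_p}\frac{\dt^{\lvert\tau\rvert}}{\sigma(\tau)}\Bigl(u(\tau)-\frac1{\gamma(\tau)}\Bigr)\dF(\tau)(\b y(t))+\O(\dt^{p+1}).\]

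To finish I would exploit that this is a finite sum. Grouping the trees by their order $q=\lvert\tau\rvert$ and matching powers of $\dt$, the remainder is $\O(\dt^{p+1})$ if and only if, for each $q\in\{1,\dotsc,p\}$, the block $\sum_{\lvert\tau\rvert=q}\sigma(\tau)^{-1}(u(\tau)-\gamma(\tau)^{-1})\dF(\tau)(\b y(t))$ vanishes. Since $\sigma(\tau)\neq0$ and the elementary differentials of a fixed order are linearly independent, each coefficient must be zero, i.e.\ $u(\tau)=1/\gamma(\tau)$ for every $\tau\in NT_p$; conversely these identities make every block vanish. Combined with Theorem~\ref{thm:ConsistencyImpConvergence} this gives the asserted equivalence with the order of convergence.

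The delicate point is the necessity direction, where linear independence must be used correctly. Linear independence of $\{\dF(\tau)\mid\lvert\tau\rvert=q\}$ is a statement over all admissible right-hand sides, so to conclude that a nonvanishing coefficient actually obstructs consistency I would argue by contraposition: if \eqref{eq:c=1/gamma} first fails at some minimal order $q\le p$, the leading term of $\bm\eta$ is a nontrivial combination of independent elementary differentials, hence nonzero for a suitable choice of $\fnu$, which drops the consistency — and therefore the convergence — order below $p$. Making this quantifier over right-hand sides precise, together with the uniform $\O(\dt^{p+1})$ control of the series tail, is where the real work lies; the rest is bookkeeping with $\sigma$, $\gamma$ and the recursion \eqref{eq:cond}.
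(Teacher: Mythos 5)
Your proposal is correct and follows essentially the same route the paper indicates for this cited result: compare the NB-series of one ARK step (coefficients $u$ from \eqref{eq:cond}) with the expansion of the exact solution from Theorem~\ref{thm:anasolNB}, and conclude via the linear independence of elementary differentials, with Theorem~\ref{thm:ConsistencyImpConvergence} supplying the passage from consistency to convergence. The only minor caveat is that for $\Fnu\in\mathcal C^{p+1}$ the full series $\NB(u,\b y)$ is not defined, so one should work directly with the truncated expansion $\NB_p(u,\b y(t))+\O(\dt^{p+1})$ (as in Lemma~\ref{lem:hfnu}) rather than truncating an infinite series; this does not affect the argument.
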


\begin{rem}\label{rem:compute_u}
Based on \cite[Lemma 312B]{B16}, the value of $u$ can be read off from a colored and labeled rooted tree $\tau$. Thereby, a node labeled by $i$ and colored in $\mu$ is represented by $\ct[]_i^{[\mu]}$. It is convenient to also associate with each edge a color; we denote the edge connecting parent node $i$ to child node $j$ by $e_{ij}^{[v]}$, where $\nu$ is the color of node $j$.  We denote the set of labels by $L(\tau)$ and the set of colored edges by $E(\tau)$.

For computing $u(\tau)$, let the root of $\tau$ be labeled by $i$ and colored in $\mu$. Then form the product \[b_{i}^{[\mu]}\prod_{e_{jk}^{[\nu]}\in E(\tau)} a_{jk}^{[\nu]}\] and sum over all elements of $L(\tau)$ ranging over the index set $\{1,\dotsc, s\}$. The result of the sum equals $u(\tau)$.
\end{rem}\newpage
\begin{exmp}
We label the colored rooted tree $\tau=\left[\left[\rt[]^{[\xi]}\right]^{[\nu]}\right]^{[\mu]}$ and represent the result by
\[\ct[\hphantom{.}_i^{[\mu]}[\hphantom{.}_j^{[\nu]}[\hphantom{.}_k^{[\xi]}]]]\]
so that $u(\tau)=\sum_{i,j,k=1}^sb_i^{[\mu]} a_{ij}^{[\nu]} a_{jk}^{[\xi]}$ since $E(\tau)=\left\{e_{ij}^{[\nu]},e_{jk}^{[\xi]}\right\}$ and $L(\tau)=\{i,j,k\}$. 

For the tree $\tau=\left[\rt[]^{[\nu]},\rt[]^{[\xi]}\right]^{[\mu]}$, which we label and represent by
\[\ct[\hphantom{.}_i^{[\mu]}[\hphantom{.}_j^{[\nu]}][\hphantom{.}_k^{[\xi]}]]\]
the value of $u(\tau)$ is $\sum_{i,j,k=1}^sb_i^{[\mu]} a_{ij}^{[\nu]} a_{ik}^{[\xi]}$ as $E(\tau)=\left\{e_{ij}^{[\nu]},e_{ik}^{[\xi]}\right\}$ and $L(\tau)=\{i,j,k\}$. 
\end{exmp}
\section{Linear Stability of Runge--Kutta Methods}\label{sec:stabRK}
The linear stability of a time integration method is usually tackled by the application of the scheme to the linear test equation 
\begin{align}
y'(t)=\lambda y(t), \quad\lambda\in \C^-=\{z\in \C\mid \re(z)<0\}, \label{dahlquist}
\end{align}
which was introduced in 1963 by Dahlquist \cite{D63}. The basic idea behind stability is that the numerical method should replicate the qualitative behavior of the analytic solution in some sense. The central notion linked to the Dahlquist equation is $A$-stability.
\begin{defn}[\cite{D63}]\label{Def:A-stab} A time integration method is called \emph{$A$-stable}, if the sequence of iterates $y^n$ of the method tends to zero, as $n\to \infty$, when applied with fixed $\dt>0$ to any differential equation of the form \eqref{dahlquist}.
\end{defn}
The reason why $A$-stability is of interest may be based on the following heuristic. Consider the difference of two solutions $\b w,\b y$, denoted by $\b u$, of a nonlinear system $\b y'=\b f(\b y)$. Note that $\b u=\b w-\b y$ can be seen as a perturbation. We then linearize the disturbed system $(\b y+\b u)'=\b{w}'=\b f(\b{w})=\b f(\b y+\b u)$, which results in 
\[\b u'=(\b y+\b u)'-\b y'=\b f(\b y+\b u)-\b f(\b y)\approx \b f(\b y)+\b D \b f(\b y)\b u-\b f(\b y)=\b D \b f(\b y)\b u.\]

Freezing the Jacobian $\b D\b f(\b y)$ at a given time $T$ yields a linear system $\b u'=\bA\b u$ for the perturbation, where $\bA$ possibly has complex eigenvalues $\lambda$.  Moreover, the perturbation should disappear as $t\to \infty$, and hence, we consider $\lambda\in \C^-$ in (\ref{dahlquist}) rather than $\lambda\in \C$.
Since this heuristic is not rigorous, I would rather prefer to point out the following motivation. A numerical method that is not capable of mimicking the behavior of the analytical solution to a (scalar) linear test problem is not worth considering for more complex problems.  

Later, the notion of $L$-stability was introduced \cite{HNW1993}. Moreover, for the case of $\lambda\in \R^-$, the notions $A_0$-stable \cite{C73} and $L_0$-stable arise \cite{TGA96}. We also note that more theories have been developed, some of which are suitable for the analysis of RK schemes applied to stiff nonlinear ODEs \cite{DK06, SVV18}. 

For multistep methods zero-stability is a fundamental notion \cite{SM2003}. Some stability properties even introduce a class of schemes, e.g.\ so-called positive and elementary stable non-standard (PESN) schemes \cite{DK06}. 

In this work we focus on $A$-stability.
In the case of an RK method, there exists a rational function $R=\frac{P}{Q}$ such that the method applied to the Dahlquist equation \eqref{dahlquist} reads $y^{n+1}=R(\dt\lambda)y^n$. Hence, the RK method is $A$-stable if and only if $\lvert R(z)\rvert <1$ for all $z\in \C^-$, which is why $R$ is also called the \emph{stability function} of the Runge--Kutta method. Indeed, if we apply the RK method to a linear system 
\begin{equation}\label{eq:Dahlquist_system}
\b y'=\b\Lambda\b y,\quad  \b y(0)=\b y^0,\quad \sigma(\bA)\tm \C^-,
\end{equation} where $\sigma(\bA)$ denotes the spectrum, then the RK method has the same stability properties as applied to the Dahlquist equation with $\lambda$ passing through the eigenvalues of $\bA$, see for instance \cite[Chapter 6]{DB2002}. Hence, if an RK method is $A$-stable, then $\lim_{n\to\infty}\b y^n= \b 0$ holds also for general linear problems \eqref{eq:Dahlquist_system}. 

Even though Definition~\ref{Def:A-stab} does not require the method to be linear, some nonlinear schemes are constructed only for systems of equations as is the case for modified Patankar (MP) methods, see Chapter~\ref{chap:NumSchemes}. Even more, as mentioned in the introduction, the investigation of \[\Vec{y_1'(t)\\y_2'(t)}=\Vec{\lambda y_1(t)\\-\lambda y_1(t)},\quad  \lambda\in \R^-,\] whose first component represents the Dahlquist equation with $\lambda\in \R^-$ is not enough for understanding the stability properties of an MP method applied to more complex linear systems \cite{IKM2122, izgin2022stability}.
Hence, for nonlinear methods it is necessary to investigate general linear systems $\b y'=\b\Lambda\b y$ rather than a scalar equation.
To generalize the notion of $A$-stability in a meaningful way also for nonlinear methods, we consider stability in the sense of Lyapunov, which we recall in the upcoming section.   


\section{Stability in the Sense of Lyapunov}\label{sec:intro_dyn_sys}
In the following, we use $\norm{\ \cdot\  }$ to represent an arbitrary norm in $\R^l$ for $l\in \N$ and $\b D\b f$ denotes the Jacobian of a $\mathcal C^1$-map $\b f$.

Dahlquist already considered to generalize the notion of $A$-stability in \cite{D63} by considering stability in the sense of Lyapunov, which is defined for arbitrary systems of ODEs. Here, the stability near steady states is investigated.
\begin{defn}\label{Def Lyapunov Cont}
Let $\b y^*\in \R^N$ be a steady state solution of a differential equation $\b y'={\b f}(\b y)$, that is ${\b f}(\b y^*)=\b 0$.
\begin{enumerate}
	\item\label{item:lyap_stab} Then $\b y^*$ is called \emph{Lyapunov stable} if, for any $\epsilon>0$, there exists a $\delta=\delta(\epsilon)>0$ such that $\norm{\b y(0)-\b y^*}<\delta$ implies $\norm{\b y(t)-\b y^*}<\epsilon$ for all $t\geq 0$.
	\item If in addition to a), there exists a constant $c>0$ such that $\Vert \b y(0)-\b y^*\Vert<c$ implies  $\Vert \b y(t)-\b y^*\Vert \to 0$ for $t\to \infty$,  we call  $\b y^*$ \emph{asymptotically stable.}
	\item A steady state solution that is not Lyapunov stable is said to be \emph{unstable}.
\end{enumerate}
\end{defn}
In the following, we will also briefly speak of stability instead of Lyapunov stability. Note that in contrast to $A$-stability, these notions are only \emph{global} if $\delta$ and $c$ can be chosen arbitrarily large. Considering the linear system \eqref{eq:Dahlquist_system}, the stability of $\b y^*$ is fully determined by the spectrum $\sigma(\bA)$.

\begin{thm}{(\cite[Theorem 3.23]{DB2002})}\label{Thm:StabLin}
A steady state $\b y^*$ of $\b y'=\bA\b y$ with a matrix $\bA\in \R^{N\times N}$
\begin{enumerate}
	\item 	 is stable if and only if $\max_{\lambda\in \sigma(\bA)}\operatorname{Re}(\lambda)\leq 0$ and all $\lambda$ with $\operatorname{Re}(\lambda)=0$ are associated with a Jordan block of size 1.
	\item  is asymptotically stable if and only if $\max_{\lambda\in \sigma(\bA)}\operatorname{Re}(\lambda)< 0$.
\end{enumerate}
\end{thm}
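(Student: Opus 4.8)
The plan is to use the explicit representation of solutions via the matrix exponential together with the Jordan canonical form of $\bA$. First I would reduce to the origin: since $\bA\b y^*=\b 0$, the shifted quantity $\b z=\b y-\b y^*$ solves $\b z'=\bA\b z$, hence $\b z(t)=e^{\bA t}\b z(0)$ and $\b y(t)-\b y^*=e^{\bA t}(\b y(0)-\b y^*)$. Thus every stability statement about $\b y^*$ reduces to the growth behavior of $\norm{e^{\bA t}}$ as $t\to\infty$ and, for asymptotic stability, to whether $e^{\bA t}\to\b 0$.

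Next I would pass to the Jordan form. Writing $\bA=\mathbf{T}\mathbf{J}\mathbf{T}^{-1}$ with $\mathbf{J}$ the (complex) Jordan form gives $e^{\bA t}=\mathbf{T}e^{\mathbf{J}t}\mathbf{T}^{-1}$, and since $\mathbf{T}$ is a fixed invertible matrix, boundedness and decay of $\norm{e^{\bA t}}$ are equivalent to those of $\norm{e^{\mathbf{J}t}}$ up to the constant factor $\norm{\mathbf{T}}\,\norm{\mathbf{T}^{-1}}$. The key computation is the exponential of a single Jordan block $\mathbf{J}_k(\lambda)$ of size $k$: its nonzero entries are exactly $e^{\lambda t}t^{j}/j!$ for $j=0,\dotsc,k-1$, so each has modulus $e^{\operatorname{Re}(\lambda)t}t^{j}/j!$. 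From this the trichotomy is immediate: if $\operatorname{Re}(\lambda)<0$ every entry tends to $0$; if $\operatorname{Re}(\lambda)=0$ the entries stay bounded precisely when $k=1$ and grow like $t^{k-1}$ otherwise; if $\operatorname{Re}(\lambda)>0$ every entry diverges.

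With this in hand I would assemble the four implications. For the sufficiency directions I collect the blockwise estimates: under the hypothesis of b) all blocks decay, hence $e^{\bA t}\to\b 0$ and $\b y(t)\to\b y^*$ for every initial value, which is even global asymptotic stability; under the hypothesis of a) every block stays bounded by some $M<\infty$, so $\norm{\b y(t)-\b y^*}\leq M\norm{\b y(0)-\b y^*}$, and choosing $\delta=\epsilon/M$ yields Lyapunov stability. For the necessity directions I argue by contraposition, constructing an initial perturbation that witnesses instability: if some eigenvalue has positive real part, or a purely imaginary eigenvalue sits in a block of size at least $2$, I take $\b y(0)-\b y^*$ proportional to the appropriate (generalized) eigenvector so that the corresponding solution component grows without bound. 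Scaling the perturbation to be arbitrarily small then shows a) fails, and the same construction with $\operatorname{Re}(\lambda)\geq 0$ shows that non-convergence to $\b y^*$ rules out b).

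The main obstacle is the necessity direction, and specifically keeping the construction real. Since $\bA$ is real, nonreal eigenvalues occur in conjugate pairs; to exhibit a real escaping initial datum I would either work with the real Jordan form or, equivalently, take real and imaginary parts of the complex (generalized) eigenvectors and verify that the resulting real solution still exhibits the growth $t^{k-1}$ modulated by an oscillation. The delicate bookkeeping lies in extracting the dominant growing component of $e^{\mathbf{J}t}\b z(0)$ — ensuring the chosen vector genuinely activates the top of a nontrivial Jordan chain rather than landing in a faster-decaying subspace; everything else follows mechanically from the explicit block exponential.
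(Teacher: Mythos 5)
The paper does not prove this theorem at all: it is quoted as a known result from \cite{DB2002} (Theorem 3.23 therein), so there is no internal proof to compare against. Your argument is the standard and correct one — shift the steady state to the origin, pass to the Jordan form so that everything reduces to the explicit block exponential with entries $e^{\lambda t}t^{j}/j!$, read off the trichotomy, and prove necessity by activating the top of a nontrivial Jordan chain — and you correctly flag the only delicate point, namely obtaining a \emph{real} escaping perturbation, which is handled by taking real and imaginary parts of the complex generalized eigenvector (at least one of the two resulting real solutions inherits the unbounded growth, and it can be scaled arbitrarily small).
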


%

As we are interested in numerical schemes mimicking the stability behavior of the exact solution, we shall consider the following definition, noting that steady states should correspond to fixed points of the method.
\begin{defn}\label{Def_Lyapunov_Diskr}
Let $\b y^*$ be a fixed point of an iteration scheme $\b y^{n+1}=\b g(\b y^n)$, that is $\b y^*=\b g(\b y^*)$. 
\begin{enumerate}
	\item\label{def:stab} Then $\b y^*$ is called \emph{Lyapunov stable} if, for any $\epsilon>0$, there exists a $\delta=\delta(\epsilon)>0$ such that $\norm{\b y^0-\b y^*}<\delta$ implies $\norm{\b y^n- \b y^*}<\epsilon$ for all $n\geq 0$.
	\item If in addition to a), there exists a constant $c>0$ such that $\Vert \b y^0-\b y^*\Vert<c$ implies $\Vert \b y^n-\b y^*\Vert \to 0$ for $n\to \infty$, we call $\b y^*$ \emph{asymptotically stable.}
	\item A fixed point that is not Lyapunov stable is said to be \emph{unstable}.
\end{enumerate}
\end{defn}As before, we may only speak of stability in the following.
For linear methods, such as RK schemes, we have the following result.
\begin{thm}[{\cite[Theorem 3.33]{DB2002}}]\label{Thm:asymStablin}
A fixed point $\b y^*$ of $\b y^{n+1}=\b R\b y^n$ with $\b R\in \R^{N\times N}$
\begin{enumerate}
	\item  is stable if and only if the spectral radius $\rho$ satisfies $\rho(\b R) \leq 1$ and all $\lambda\in \sigma(\b R)$ with $\lvert \lambda\rvert =1$ are associated with a Jordan block of size 1.
	\item  is asymptotically stable if and only if $\rho(\bm R)< 1$. 
\end{enumerate}
\end{thm}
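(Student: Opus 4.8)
The plan is to reduce the problem to the asymptotic behavior of the matrix powers $\b R^n$ and then analyze these through the Jordan canonical form. Writing $\b e^n = \b y^n - \b y^*$ for the deviation from the fixed point, the relation $\b y^* = \b R\b y^*$ gives $\b e^{n+1} = \b R\b y^n - \b R\b y^* = \b R\b e^n$, hence $\b e^n = \b R^n\b e^0$. Consequently Lyapunov stability is equivalent to the uniform boundedness of the operators $\b R^n$, while asymptotic stability is equivalent to $\b R^n \to \b 0$. First I would make these equivalences precise: if $M = \sup_n \norm{\b R^n} < \infty$, then $\norm{\b e^n}\leq M\norm{\b e^0}$ and choosing $\delta = \epsilon/M$ establishes stability; and $\norm{\b R^n}\to 0$ gives $\norm{\b e^n}\leq \norm{\b R^n}\norm{\b e^0}\to 0$, yielding asymptotic stability.

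Next I would pass to the Jordan form $\b R = \b S\b J\b S^{-1}$, so that $\b R^n = \b S\b J^n\b S^{-1}$ and the behavior of $\b R^n$ is governed blockwise by the powers of each Jordan block $\b J_k(\lambda)$. Using the explicit formula, in which the entries of $\b J_k(\lambda)^n$ are the quantities $\binom{n}{j}\lambda^{n-j}$ for $0\leq j\leq k-1$, I would record the three decisive cases: for $\abs{\lambda}<1$ every such entry tends to $0$ since geometric decay dominates polynomial growth; for $\abs{\lambda}=1$ with $k=1$ the block power has modulus $1$ and stays bounded without converging; and for either $\abs{\lambda}>1$, or $\abs{\lambda}=1$ with $k\geq 2$, the entries are unbounded (in the latter case because $\abs{\binom{n}{1}\lambda^{n-1}} = n\to\infty$). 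This immediately characterizes boundedness of $\{\b R^n\}$ by the condition $\rho(\b R)\leq 1$ together with size-one Jordan blocks for all unimodular eigenvalues, and convergence $\b R^n\to\b 0$ by $\rho(\b R)<1$.

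For the converse implications I would exhibit explicit unstable initial perturbations. If an eigenvalue $\lambda$ with $\abs{\lambda}\geq 1$ exists, choosing $\b e^0 = \eta\b v$ along the corresponding eigenvector gives $\b e^n = \lambda^n\eta\b v$ with $\norm{\b e^n} = \abs{\lambda}^n\eta\norm{\b v}$, which fails to tend to $0$ for every $\eta>0$ and is unbounded when $\abs{\lambda}>1$. The delicate case is $\abs{\lambda}=1$ paired with a Jordan block of size at least two: here I would use a generalized eigenvector $\b v_2$ with $\b R\b v_2 = \lambda\b v_2 + \b v_1$ and $\b R\b v_1 = \lambda\b v_1$, for which $\b R^n\b v_2 = \lambda^n\b v_2 + n\lambda^{n-1}\b v_1$ grows linearly in $n$, so that $\b e^0 = \eta\b v_2$ violates stability for arbitrarily small $\eta$. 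Assembling these observations with the forward directions completes both equivalences.

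I expect the main obstacle to be exactly this boundary case $\abs{\lambda}=1$ with a nontrivial Jordan block: it separates parts a) and b), the growth there is merely polynomial rather than geometric, and exhibiting the offending direction requires the generalized eigenvector chain rather than a plain eigenvector.
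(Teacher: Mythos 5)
This theorem is not proved in the paper at all: it is quoted verbatim from \cite{DB2002} (Theorem 3.33 there), so there is no internal proof to compare your argument against; I can only assess it on its own merits. Your proof is the standard one — reduce to the powers $\b R^n$ via $\b e^n=\b R^n\b e^0$, characterize boundedness and decay of $\b R^n$ blockwise through the Jordan form, and exhibit growing directions when the spectral conditions fail — and it is essentially correct and complete, including the correct identification of the delicate case $\abs{\lambda}=1$ with a Jordan block of size at least two. One technical point needs repair: since $\b R$ is real, the iteration lives in $\R^N$, but an eigenvalue $\lambda$ with $\abs{\lambda}\geq 1$ may be complex, so its eigenvector $\b v$ (or generalized eigenvector $\b v_2$) is complex and $\b e^0=\eta\b v$ is not an admissible real perturbation. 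The fix is routine: writing $\b v=\b u+\ii\b w$ with $\b u,\b w\in\R^N$, the identity $\b R^n\b v=\b R^n\b u+\ii\,\b R^n\b w$ together with the triangle inequality shows that at least one of $\norm{\b R^n\b u}$, $\norm{\b R^n\b w}$ is bounded below by $\tfrac12\norm{\b R^n\b v}$, so one of the two real directions already witnesses non-decay, respectively unbounded growth; alternatively, unboundedness of $\norm{\b R^n}$ forces unboundedness of $\b R^n\b e_j$ for some standard basis vector $\b e_j$, which supplies real destabilizing initial data directly. With that one-line patch your two equivalences stand as stated.
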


\begin{rem}\label{rem:RKstab_dynsys}
According to Theorem~\ref{Thm:StabLin}, $y^*=0$ is the unique globally asymptotically stable solution of the Dahlquist equation. Also, Theorem~\ref{Thm:asymStablin} tells us that an RK method is $A$-stable if and only if $0$ is an asymptotically stable fixed point of the method when applied to the Dahlquist equation. This also demonstrates that $0$ is a globally asymptotically stable fixed point of the $A$-stable RK method.  We also note that in some literature, such as \cite{B16,HNWII}, $A$-stability of an RK scheme is defined by requiring $\lvert R(z)\rvert \leq 1$ for all $z\in \overline{\C^-}=\{z\in \C\mid \re(z)\leq 0\}$. The idea behind this adaptation is that we may only require that the numerical solution is bounded for bounded solutions of the Dahlquist equation. However, with this notion of $A$-stability, the generalization to linear systems is more involved as Theorem~\ref{Thm:asymStablin} suggests.
\end{rem}
If the method is not linear, the stability properties are a priori only of local nature and their investigation is more complex. As stated by the next theorem, it is in some cases sufficient to investigate the linearized method in order to understand the stability properties of a fixed point.\newpage
\begin{thm}[{\cite[Theorem 1.3.7]{SH98}}]\label{Thm:_Asym_und_Instabil}
Let  $\b y^{n+1}=\b g(\b y^n)$ be an iteration scheme with fixed point $\b y^*$. Suppose the Jacobian $\b D\b g(\b y^*)$ exists. Then
\begin{enumerate}
	\item $\b y^*$ is asymptotically stable if $\rho(\b D\b g(\b y^*))<1$. 
	\item $\b y^*$ is unstable if $\rho(\b D\b g(\b y^*))>1$.
\end{enumerate}
\end{thm}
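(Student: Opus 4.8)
The plan is to reduce both statements to the study of the linearised error recursion and then apply, respectively, a contraction argument and an invariant-cone argument. Write $\b A=\b D\b g(\b y^*)$ and set $\b e^n=\b y^n-\b y^*$. Since $\b g$ is differentiable at $\b y^*$, we have $\b g(\b y)=\b y^*+\b A(\b y-\b y^*)+\b r(\b y)$, where the remainder satisfies the following: for every $\eta>0$ there is a $\delta>0$ such that $\norm{\b y-\b y^*}<\delta$ implies $\norm{\b r(\b y)}\leq \eta\norm{\b y-\b y^*}$. Subtracting $\b y^*=\b g(\b y^*)$ yields the exact recursion $\b e^{n+1}=\b A\b e^n+\b r(\b y^n)$, which is the object I would analyse. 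Note that only differentiability at the single point $\b y^*$ is needed, not $\mathcal C^1$-regularity.

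For part a), the idea is to turn $\b A$ into a contraction by choosing the right norm. Since $\rho(\b A)<1$, a standard fact (Householder's lemma) provides a vector norm $\norm{\ \cdot\ }_*$ whose induced operator norm satisfies $\norm{\b A}_*\leq q$ for some $q$ with $\rho(\b A)\leq q<1$. I would then fix $\eta>0$ so small that $L:=q+\eta<1$ and take the corresponding $\delta$ from the remainder estimate measured in $\norm{\ \cdot\ }_*$. For $\norm{\b e^n}_*<\delta$ this gives $\norm{\b e^{n+1}}_*\leq \norm{\b A}_*\norm{\b e^n}_*+\norm{\b r(\b y^n)}_*\leq L\norm{\b e^n}_*$, so the ball of radius $\delta$ is forward invariant and, by induction, $\norm{\b e^n}_*\leq L^n\norm{\b e^0}_*\to 0$. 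Both Lyapunov and asymptotic stability follow, and the equivalence of all norms on $\R^N$ transfers the conclusion to the original norm.

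For part b), I would instead build an expanding invariant cone. Choose $\beta$ with $1<\beta<\rho(\b A)$, chosen to differ from the modulus of every eigenvalue of $\b A$, and split $\R^N=E_u\oplus E_s$ into the $\b A$-invariant generalised eigenspaces for $\lvert\lambda\rvert>\beta$ and $\lvert\lambda\rvert<\beta$, respectively. Adapted norms on the two summands give an expansion $\norm{\b A\b u}\geq \alpha\norm{\b u}$ on $E_u$ with $\alpha>\beta>1$ and a contraction $\norm{\b A\b v}\leq\gamma\norm{\b v}$ on $E_s$ with $\gamma<\beta<\alpha$; combining them via $\norm{\b e}=\max(\norm{\b u},\norm{\b v})$ and using the projections $P_u,P_s$, the recursion splits, with $\b r^n:=\b r(\b y^n)$, into $\b u^{n+1}=\b A\b u^n+P_u\b r^n$ and $\b v^{n+1}=\b A\b v^n+P_s\b r^n$. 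On the cone $C=\{\,\b e\mid \norm{\b u}\geq\norm{\b v}\,\}$ one has $\norm{\b e^n}=\norm{\b u^n}$, and choosing $\eta$ small enough that $\alpha-\eta'>\max(1,\gamma+\eta')$ (with $\eta'$ absorbing the projection norms) yields $\norm{\b u^{n+1}}\geq(\alpha-\eta')\norm{\b u^n}$ together with $\norm{\b v^{n+1}}\leq(\gamma+\eta')\norm{\b u^n}\leq\norm{\b u^{n+1}}$. Thus $C$ is forward invariant while the orbit stays in the ball, and $\norm{\b e^n}$ grows by a factor $\geq\alpha-\eta'>1$ each step. Hence, for the fixed radius $\epsilon_0$ of this ball, every nonzero $\b e^0\in C$ — which may be taken arbitrarily close to $\b 0$ — eventually leaves the $\epsilon_0$-ball, contradicting Lyapunov stability and proving $\b y^*$ unstable.

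The routine ingredients are the two facts that $\rho(\b A)<1$ (resp.\ a spectral gap at $\beta$) yields an adapted norm making $\b A$ a contraction (resp.\ a hyperbolic splitting into expanding and contracting directions), which I would cite rather than reprove. The main obstacle is part b): unlike the contraction in a), instability cannot be read off a single scalar estimate, and one must control the coupling of the expanding and contracting directions through the nonlinear remainder, which is exactly what the cone-invariance estimate $\alpha-\eta'>\gamma+\eta'$ is designed to handle.
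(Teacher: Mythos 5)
Your proof is correct, but note that there is nothing in the paper to compare it against: the statement is quoted from \cite[Theorem 1.3.7]{SH98} and used as a black box, with no in-paper proof. Measured against the standard textbook treatment, your part a) is exactly the classical argument — an adapted norm with $\norm{\b A}_*\leq q<1$ via Householder's lemma, plus the differentiability remainder bound $\norm{\b r(\b y)}\leq \eta\norm{\b y-\b y^*}$, giving a contraction factor $L=q+\eta<1$ on a small ball — and it is complete; you are also right that differentiability at the single point $\b y^*$ suffices, which is slightly weaker than the $\mathcal C^1$ hypotheses some references impose. Your part b) is where you genuinely deviate from what many texts do (instability is often deduced from an unstable-manifold theorem or a Chetaev-type function): instead you give a self-contained invariant-cone argument, splitting $\R^N=E_u\oplus E_s$ at a radius $\beta\in(1,\rho(\b A))$ avoiding all eigenvalue moduli, taking adapted norms with expansion $\alpha>\beta$ on $E_u$ and bound $\gamma<\beta$ on $E_s$, and using $\alpha-\eta'>\max(1,\gamma+\eta')$ to make the cone $\{\norm{\b u}\geq \norm{\b v}\}$ forward invariant with geometric growth of $\norm{\b e^n}$ as long as the orbit stays in the $\epsilon_0$-ball. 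Since $E_u\neq\{\b 0\}$, nonzero initial errors in the cone exist arbitrarily close to $\b 0$, and each such orbit must exit the fixed ball, negating Lyapunov stability; note your argument correctly tolerates $\gamma>1$, since only $\gamma<\alpha$ is needed. The two facts you cite without proof (adapted norms realizing the spectral radius up to $\epsilon$, and the spectral-gap splitting) are indeed routine, so the proof stands as written; what the cone route buys over the manifold-based proofs is that it needs no smoothness beyond differentiability at $\b y^*$ and no existence theory for invariant manifolds.
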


The above theorem gives sufficient conditions for the stability of fixed points that are \emph{hyperbolic} in the following sense.

\begin{defn}[{\cite[Definition 1.3.6]{SH98}}]\label{Def hyperbolic}
A fixed point $\b y^*$ of an iteration scheme $\b y^{n+1}=\b g(\b y^n)$ is called \emph{hyperbolic} if $\abs\lambda\ne 1$ for all eigenvalues $\lambda$ of $\b D\b g(\b y^*)$. If a fixed point is not hyperbolic, it is called \emph{non-hyperbolic}.
\end{defn}

A generalization of Theorem~\ref{Thm:_Asym_und_Instabil} is the Hartman-Grobman Theorem, which states that a nonlinear iteration scheme and its linearization share the same behavior near hyperbolic fixed points, see \cite[Theorem 1.6.2]{SH98} for the precise statement.

In this work, we will also analyze schemes that require $\dim(\ker(\bA))=k>0$. In such a case, the linear system $\b y'=\bA\b y$ possesses a subspace of steady state solutions, each of which can be stable according to Theorem~\ref{Thm:StabLin} but none of them is asymptotically stable. If the numerical method is steady state preserving, it thus possesses a subspace of fixed points, each of them being non-hyperbolic as we will find out in Chapter~\ref{chap:stab}. Hence, it is also of high importance to understand the stability of non-hyperbolic fixed points. However, for schemes outside the class of general linear methods the stability behavior of a single non-hyperbolic fixed point is in general not captured by the eigenvalues of the corresponding Jacobian, \ie is not guaranteed by $\rho(\b D(\b g(\b y^*)))= 1$ as the following example illustrates.
\begin{exmp}[\cite{Osipenko2009}]\label{exmp:nonhyper}
Consider the generating map defined by
\begin{equation}\label{eq:ex_g}
	\b g(x,y)=\left(x+xy,\frac12(y+x^2)+2x^2y+y^3\right)^T.
\end{equation} We observe $\b g(0,0)=(0,0)^T$ and $\b D\b g(0,0)=\vec{1 & 0\\0 &\tfrac12}$
Now, defining $h(x)=x^2$, we see that the graph of $h$ is invariant under $\b g$ since
\[ \b g(x,h(x))=(x+x^3, x^2+2x^4+x^6)^T= (x+x^3,h(x+x^3))^T.\]
Focusing on the $x$-component, \ie $x+x^3=x(1+x^2)$, we find that the iterates distance from the origin along the graph of $h$, see Figure~\ref{fig:nonhyper} for an illustration.
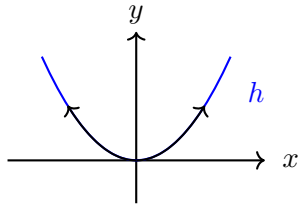
\begin{figure}[!h]
	\centering
	\begin{scaletikzpicturetowidth}{0.3\textwidth}
		\begin{tikzpicture}[scale=\tikzscale]
			\draw [->, thick] (-1.5,0) -- (0,0) -- (1.5,0);
			\node at (1.8,0) {$x$};
			\draw [->, thick] (0,-0.5) -- (0,1.5);
			\node at (0,1.7) {$y$};
			\draw [thick, domain=-1.1:1.1, blue] plot (\x, {\x*\x});
			\draw [->, thick, domain=0:0.8, black] plot (\x, {\x*\x});
			\draw [<-, thick, domain=-0.8:0, black] plot (\x, {\x*\x});
			\node[blue] at (1.4,0.8) {$h$};
		\end{tikzpicture}
	\end{scaletikzpicturetowidth}
	\caption{Graph of $h$ defined by $h(x)=x^2$. The arcs indicate the action of $\b g$ from \eqref{eq:ex_g} on the graph of $h$.}\label{fig:nonhyper}
\end{figure}
From this, we can conclude that the origin is unstable even though the eigenvalues of the Jacobian are $1$ and $\frac12$. 
\end{exmp}
This example demonstrates that, in general, higher-order terms have to be included within the stability analysis of nonlinear methods.
One possibility to decrease the complexity of such a stability analysis is to use the center manifold theory, which allows to assess the stability based on a corresponding iteration on a lower dimensional manifold. Indeed, in Example~\ref{exmp:nonhyper} the map $h$ represents the center manifold.
\section{Center Manifold Theory}\label{sec:stab_dyn_sys}
To study the stability of a non-hyperbolic fixed point $\b y^*$ of an iteration scheme with $\mathcal C^1$-map $\b g$, we make use of an affine linear transformation\footnote{See the proof of Theorem~\ref{Thm_MPRK_stabil} for the details of this transformation.} to obtain a $\mathcal{C}^1$-map $\b G\colon \mathcal M\to \R^{N}$, with $\mathcal M\subset \R^{N}$ being a neighborhood of the origin, which has the form
\begin{equation}
\b G(\b w_1,\b w_2)=\Vec{\b U\b w_1+\b u(\b w_1,\b w_2)\\\b V\b w_2 +\b v(\b w_1,\b w_2)},\label{Form_of_G}
\end{equation}
with $\b w_1\in \R^m$, $\b w_2\in \R^l$ and $m+l=N$. The square matrices $\b U\in\R^{m\times m}$ and $\b V\in\R^{l\times l}$ are such that $\abs\lambda = 1$ holds for all eigenvalues $\lambda$ of $\b U$ and each eigenvalue $\mu$ of $\b V$  satisfies $\abs\mu <1$. The functions $\b u$ and $\b v$ are in $\mathcal{C}^1$ and $\b u,\b v$ as well as their first order derivatives vanish at the origin, that is
\begin{align*}
\b u(\b 0,\b 0)&=\b 0, & \b D\b u(\b 0,\b 0)&=\b 0, & 
\b v(\b 0,\b 0)&=\b 0, & \b D\b v(\b 0,\b 0)&=\b 0,
\end{align*} where $\b 0$ stands for the zero vector or matrix of appropriate size, respectively. 
In particular, the fixed point $\b y^*$ of $\b g$ is mapped to $\b 0$, which is a fixed point of $\b G$ with equal stability properties as $\b y^*$ as  we point out in the proof of Theorem \ref{Thm_MPRK_stabil}. 

Hence, it is sufficient to study the stability of the origin with respect to $\b G$, which is a simplification due to the existence of a center manifold. 

\begin{thm}{(Center Manifold Theorem, \cite[Theorem 2.1, Remark 2.6]{mccracken1976hopf})}\label{Thm:Ex_CM}
Let $\b G$ be defined as in \eqref{Form_of_G} with Lipschitz continuous derivatives on $\mathcal M$.
\begin{enumerate}
	\item\label{item:existence}  (Existence):
	There exists a center manifold for $\b G$, which is locally representable as the graph of a function $\b h\colon\R^m\to \R^l$.  This means, for some $\epsilon>0$ there exists a $\mathcal{C}^1$-function $\b h\colon\R^m\to\R^l$ with $\b h(\b 0)=\b 0$ and $\b D\b h(\b 0)=\b 0$ such that $\Vert \b w_1^0\Vert,\Vert \b w_1^1\Vert  <\epsilon$ and $(\b w_1^1, \b w_2^1)^T=\b G(\b w_1^0,\b h(\b w_1^0))$ imply $\b w_2^1=\b h(\b w_1^1)$.
	\item\label{item:attractivity}  (Local Attractivity): If in addition to \ref{item:existence} the iterates $(\b w_1^n,\b w_2^n)^T$ generated by 
	\begin{align}
		\Vec{\b w_1^{n+1}\\\b w_2^{n+1}}=\b G(\b w_1^n,\b w_2^n)=\Vec{\b U\b w_1^n+\b u(\b w_1^n,\b w_2^n)\\\b V\b w_2^n +\b v(\b w_1^n,\b w_2^n)},\quad \Vec{\b w_1^0\\ \b w_2^0}\in \mathcal M\label{Form_of_G_transformed}
	\end{align}  
	satisfy $\Vert \b w_1^n\Vert,\Vert \b w_2^n\Vert<\epsilon$ for all $n\in \N_0$, then the distance of $(\b w_1^n,\b w_2^n)$ to the center manifold tends to zero for $n\to \infty$, i.\,e.\ $\Vert \b w_2^n-\b h(\b w_1^n)\Vert\to 0$ for $n\to \infty$.
\end{enumerate}
\end{thm}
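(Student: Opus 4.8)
The plan is to realize $\b h$ as the fixed point of a graph-transform operator and then to prove attractivity by a direct contraction estimate on the distance to the manifold. Since the assertion is local, I would first \emph{globalize the nonlinearity by a cutoff}: pick a smooth $\chi$ equal to $1$ on the ball of radius $\rho$ about the origin and vanishing outside radius $2\rho$, and replace $\b u,\b v$ by $\chi\cdot\b u,\chi\cdot\b v$. Because $\b u,\b v$ and their first derivatives vanish at the origin and $\b G$ has Lipschitz derivatives, the modified nonlinearities are globally defined with Lipschitz constant $\varepsilon$ that is as small as we like for small $\rho$; the modified map $\widetilde{\b G}$ coincides with $\b G$ on the ball of radius $\rho$, so any invariant graph for $\widetilde{\b G}$ restricts to a local center manifold for $\b G$. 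I would also pass to an \emph{adapted norm} in which $\norm{\b V}\leq\beta<1$ while $\norm{\b U},\norm{\b U^{-1}}\leq\alpha$ with $\alpha$ as close to $1$ as desired; this is possible since $\sigma(\b V)$ lies strictly inside and $\sigma(\b U)$ lies on the unit circle, and it supplies the spectral gap $\beta\alpha<1$ driving every estimate below.

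\textbf{Existence.} On the complete metric space $X$ of Lipschitz maps $\phi\colon\R^m\to\R^l$ with $\phi(\b 0)=\b 0$, $\norm{\phi}_\infty\leq\rho$ and Lipschitz constant at most $L\leq 1$, I would define the graph transform $\mathcal T$ as follows. For $\phi\in X$ the base map $F_\phi(\b w_1)=\b U\b w_1+\b u(\b w_1,\phi(\b w_1))$ is a small-Lipschitz perturbation of the invertible linear map $\b U$, hence a bi-Lipschitz homeomorphism of $\R^m$; set
\[(\mathcal T\phi)(\bar{\b w}_1)=\b V\,\phi\bigl(F_\phi^{-1}(\bar{\b w}_1)\bigr)+\b v\bigl(F_\phi^{-1}(\bar{\b w}_1),\phi(F_\phi^{-1}(\bar{\b w}_1))\bigr).\]
A fixed point of $\mathcal T$ satisfies the invariance relation $\phi(F_\phi(\b w_1))=\b V\phi(\b w_1)+\b v(\b w_1,\phi(\b w_1))$, i.e.\ the graph of $\phi$ is invariant under $\widetilde{\b G}$. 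Using $\beta<1$, the bound $\alpha$ on $\norm{\b U^{-1}}$ and the smallness of $\varepsilon$, one checks that $\mathcal T$ maps $X$ into itself and is a sup-norm contraction with factor controlled by $\beta+C\varepsilon$ for a constant $C=C(\alpha,L)$. The Banach fixed point theorem yields a unique $\b h\in X$, the Lipschitz center manifold, with $\b h(\b 0)=\b 0$ by construction. To upgrade to $\mathcal C^1$ with $\b D\b h(\b 0)=\b 0$, I would run the analogous contraction on pairs $(\phi,\Psi)$, with $\Psi$ the candidate derivative, via the fiber-contraction theorem; evaluating the differentiated invariance relation at the origin gives $\b D\b h(\b 0)=\b V\,\b D\b h(\b 0)\,\b U^{-1}$, and $\norm{\b V}\norm{\b U^{-1}}\leq\beta\alpha<1$ forces $\b D\b h(\b 0)=\b 0$.

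\textbf{Local attractivity.} For the second part I would track the distance to the manifold $\b z^n=\b w_2^n-\b h(\b w_1^n)$ along a trajectory staying in $\mathcal M$, so that $\widetilde{\b G}=\b G$. Writing out $\b z^{n+1}$ and inserting the invariance relation for $\b h$ evaluated at the on-manifold image $\tilde{\b w}_1^{n+1}=\b U\b w_1^n+\b u(\b w_1^n,\b h(\b w_1^n))$, the terms $\b V\b h(\b w_1^n)+\b v(\b w_1^n,\b h(\b w_1^n))$ collapse and leave
\[\b z^{n+1}=\b V\b z^n+\bigl[\b v(\b w_1^n,\b w_2^n)-\b v(\b w_1^n,\b h(\b w_1^n))\bigr]+\bigl[\b h(\tilde{\b w}_1^{n+1})-\b h(\b w_1^{n+1})\bigr].\]
The first bracket is bounded by $\varepsilon\norm{\b z^n}$ (Lipschitz $\b v$) and the second by $L\varepsilon\norm{\b z^n}$ (Lipschitz $\b h$ composed with Lipschitz $\b u$), whence $\norm{\b z^{n+1}}\leq(\beta+\varepsilon+L\varepsilon)\norm{\b z^n}$. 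Choosing $\rho$ small enough that $\theta:=\beta+\varepsilon+L\varepsilon<1$ gives $\norm{\b z^n}\leq\theta^n\norm{\b z^0}\to 0$, which is exactly $\norm{\b w_2^n-\b h(\b w_1^n)}\to 0$.

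The step I expect to be the main obstacle is the $\mathcal C^1$ \emph{regularity}: the Lipschitz existence part is a clean single contraction, whereas proving differentiability of the fixed point itself — rather than of each iterate — is delicate, which is why I would treat the derivative simultaneously through the fiber-contraction theorem. The hypothesis that $\b G$ has Lipschitz continuous derivatives on $\mathcal M$ is precisely what makes this bootstrap, and the tangency $\b D\b h(\b 0)=\b 0$, go through.
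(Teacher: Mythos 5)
The paper does not actually prove this statement: it is imported verbatim from the literature (Marsden--McCracken, Theorem 2.1 and Remark 2.6 of the cited reference), so there is no internal proof to compare against. Your sketch --- cutoff to globalize the nonlinearity, adapted norms to secure the spectral gap, the Hadamard graph transform to produce a Lipschitz invariant graph, the fiber-contraction theorem to upgrade to $\mathcal C^1$ and to force the tangency $\b D\b h(\b 0)=\b 0$, and the direct one-step contraction estimate for the distance to the manifold --- is the standard proof of exactly this theorem and is essentially the argument found in the cited source; it is sound as outlined. Two small points worth making explicit if you write it up in full: the attractivity estimate is only valid while the iterates remain in the ball where the cutoff is inactive (which the theorem's hypothesis $\Vert\b w_1^n\Vert,\Vert\b w_2^n\Vert<\epsilon$ guarantees once $\epsilon$ is taken no larger than your cutoff radius $\rho$), and your argument in fact delivers more than the statement asks, namely geometric decay $\Vert\b w_2^n-\b h(\b w_1^n)\Vert\leq\theta^n\Vert\b w_2^0-\b h(\b w_1^0)\Vert$ with $\theta<1$, not merely convergence to zero.
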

As will be seen in Theorem \ref{Thm:Stab_CM}, the existence of a center manifold enables the investigation of the stability properties of the origin based on a system with reduced dimension.  This reduced system is obtained by restricting \eqref{Form_of_G} to the center manifold, i.\,e.\  using $\b w_2=\b h(\b w_1)$ which leads to the map
\begin{equation}\label{Flow_center_manifold} \mathcal G(\b w_1)=\b U \b w_1 + \b u(\b w_1,\b h(\b w_1)).
\end{equation}

\begin{thm}{(\cite[Theorem 8]{carr1982})}\label{Thm:Stab_CM} (Stability): Suppose the fixed point $\b 0\in \R^m$ of $\mathcal G$ from \eqref{Flow_center_manifold} is stable, asymptotically stable or unstable. Then the fixed point $\b 0 \in \R^N$ of $\b G$ from \eqref{Form_of_G} is stable, asymptotically stable or unstable, respectively.
\end{thm}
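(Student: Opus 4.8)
The plan is to analyze the full map $\b G$ through the single quantity that measures the distance of an orbit to the center manifold, namely $\b z^n = \b w_2^n - \b h(\b w_1^n)$, and to exploit the invariance of the graph of $\b h$ under $\b G$. Invariance means precisely that whenever a point lies on the graph its image does too, which yields the identity $\b h(\mathcal G(\b w_1)) = \b V\b h(\b w_1) + \b v(\b w_1, \b h(\b w_1))$; this relation is the algebraic backbone of every case. I would treat the three assertions (stable, asymptotically stable, unstable) separately, the \emph{instability} case being by far the simplest.

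For instability I would use invariance directly. Choosing an initial value on the manifold, $\b w_2^0 = \b h(\b w_1^0)$, the orbit of $\b G$ stays on the manifold as long as it remains in the neighborhood of Theorem~\ref{Thm:Ex_CM}, so $\b w_2^n = \b h(\b w_1^n)$ and the first component evolves exactly by the reduced map, $\b w_1^{n+1} = \mathcal G(\b w_1^n)$. If $\b 0$ is unstable for $\mathcal G$, there are such orbits starting arbitrarily close to the origin that leave a fixed ball in the $\b w_1$-coordinate; since $\norm{(\b w_1^n, \b w_2^n)} \ge \norm{\b w_1^n}$, the full orbit leaves a ball as well, so $\b 0$ is unstable for $\b G$.

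For the two stability statements the first step is to upgrade the qualitative attractivity of Theorem~\ref{Thm:Ex_CM} to an exponential rate. Writing the recursion for $\b z^{n+1}$ and subtracting the invariance identity, the linear part is $\b V\b z^n$, while the remaining terms are differences of $\b u$, $\b v$ and $\b h$ evaluated at nearby points; since these maps have vanishing first derivatives at the origin, their Lipschitz constants on a small ball $B_\delta$ are arbitrarily small. Passing to a norm adapted to $\b V$ (so that $\norm{\b V} \le \beta_0 < 1$, possible because $\rho(\b V) < 1$), I obtain a contraction $\norm{\b z^{n+1}} \le \beta \norm{\b z^n}$ with some $\beta \in (\beta_0, 1)$ while the orbit stays in $B_\delta$, hence $\norm{\b z^n} \le \beta^n \norm{\b z^0}$. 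I would then view the $\b w_1$-dynamics as a perturbed reduced system, $\b w_1^{n+1} = \mathcal G(\b w_1^n) + \b r^n$ with $\b r^n = \b u(\b w_1^n, \b w_2^n) - \b u(\b w_1^n, \b h(\b w_1^n))$, so that $\norm{\b r^n} \le L\norm{\b z^n} \le L\beta^n\norm{\b z^0}$ by the Lipschitz bound on $\b u$. The forcing is therefore geometrically summable.

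It remains to deduce stability, resp. asymptotic stability, of $\b w_1^n$ from the same property of the unperturbed reduced map $\mathcal G$ together with this exponentially small, summable perturbation $\b r^n$, and then to combine with $\b z^n \to 0$ to control the full state. This last step is the main obstacle: because $\b U$ has all eigenvalues on the unit circle, the neutral $\b w_1$-directions provide no linear contraction, so errors cannot be shown to decay by a naive estimate and may a priori accumulate. The standard remedy (as in \cite[Theorem 8]{carr1982}) is a shadowing argument: one constructs a genuine orbit $\bar{\b w}_1^n$ of $\mathcal G$, with a suitably chosen initial point, that the perturbed orbit tracks with geometric error $\norm{\b w_1^n - \bar{\b w}_1^n} \le C\beta^n$. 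Controlling this accumulation against the merely neutral (nonlinear) stability of $\mathcal G$ is the delicate part; once it is in place, smallness of $\bar{\b w}_1^0$ keeps $\bar{\b w}_1^n$ small in the \emph{stable} case and drives it to zero in the \emph{asymptotically stable} case, and the uniformly small tracking error together with $\b z^n \to 0$ transfers the conclusion to the full orbit $(\b w_1^n, \b w_2^n)$ of $\b G$.
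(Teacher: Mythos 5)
The paper offers no proof of this theorem: it is quoted verbatim from Carr \cite[Theorem 8]{carr1982}, so there is no in-paper argument to compare yours against; I can only judge the proposal on its own merits. Two of its three pieces are sound. The instability case is complete: starting on the graph of $\b h$, the invariance property from Theorem~\ref{Thm:Ex_CM} forces $\b w_1^{n+1}=\mathcal G(\b w_1^n)$ up to the first exit time, and $\norm{(\b w_1^n,\b w_2^n)}\geq\norm{\b w_1^n}$ transfers the escape to the full map $\b G$. The exponential attraction estimate is also correct: subtracting the invariance identity from the recursion for $\b w_2^{n+1}$ leaves $\b V\b z^n$ plus terms controlled by the (arbitrarily small near $\b 0$) Lipschitz constants of $\b u$, $\b v$ and $\b h$, so an adapted norm gives $\norm{\b z^{n+1}}\leq\beta\norm{\b z^n}$ while the orbit stays in a small ball.

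The genuine gap is exactly where you flag it, and flagging it is not the same as closing it: the entire content of the ``stable'' and ``asymptotically stable'' assertions lies in the step you postpone. The difficulty is not merely ``delicate'' bookkeeping. Lyapunov stability of the fixed point $\b 0$ of $\mathcal G$ controls orbits \emph{starting near $\b 0$}; it gives no uniform control on how fast two nearby reduced orbits separate from \emph{each other}, which is what a forward-in-time accumulation estimate for the perturbed recursion $\b w_1^{n+1}=\mathcal G(\b w_1^n)+\b r^n$ would require, even with geometrically summable $\b r^n$. Carr's proof does not bound this accumulation forward from $\bar{\b w}_1^0=\b w_1^0$; it first establishes the asymptotic phase property, constructing the shadow orbit $\bar{\b w}_1^n$ by a contraction-mapping argument on a weighted sequence space (equivalently, by solving a summation equation backward in time), which in particular selects the shadow orbit's \emph{initial condition} as part of the fixed point rather than prescribing it. Only with that lemma in hand does stability (respectively asymptotic stability) of $\mathcal G$ plus $\norm{\b z^n}\leq C\beta^n$ yield the conclusion for $\b G$. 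As written, your proposal assumes precisely this lemma, so the two main cases of the theorem remain unproven; there is also a secondary circularity to resolve (the contraction of $\b z^n$ is only valid while the orbit remains in the small ball, which must be established simultaneously, e.g.\ by induction on $n$), and that resolution itself hinges on the missing shadowing estimate.
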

In summary,  the stability of a non-hyperbolic fixed point $\b y^*\in\R^N$ of a map $\b g$ can be determined by investigating the fixed point $\b 0\in\R^m$ of $\mathcal G$, which has a lower complexity due to the reduced dimension $m<N$.

To actually calculate the center manifold we need to solve
\begin{equation*}
(\b w_1^1, \b h(\b w_1^1))^T=\b G(\b w_1^0,\b h(\b w_1^0))=\Vec{\b U\b w_1^0+\b u(\b w_1^0,\b h(\b w_1^0))\\\b V\b h(\b w_1^0) +\b v(\b w_1^0,\b h(\b w_1^0))},
\end{equation*} 
which can be rewritten as
\[\b h(\b U\b w_1^0+\b u(\b w_1^0,\b h(\b w_1^0)))=\b V\b h(\b w_1^0) +\b v(\b w_1^0,\b h(\b w_1^0)).\]
This invariance property offers a way to approximate the center manifold up to an arbitrary order.
\begin{thm}{(\cite[Theorem 7]{carr1982})}\label{Thm:Comp_func_h}
Let $\b h$ be a center manifold for $\b G$ and $\bm\Phi$ be a $\mathcal{C}^1(\R^m, \R^l)$-map with $\bm\Phi(\b 0)=\b 0$ and $\b D\bm\Phi(\b 0)=\b 0$. If
\[\bm\Phi(\b U\b w_1+\b u(\b w_1,\bm\Phi(\b w_1)))-\left(\b V\bm\Phi(\b w_1) +\b v(\b w_1,\bm\Phi(\b w_1))\right)=\mathcal{O}(\Vert \b w_1 \Vert^q)\]  as $\b w_1\to \b 0$ for some $q>1$, then $\b h(\b w_1)=\bm\Phi(\b w_1)+\mathcal{O}(\Vert \b w_1 \Vert^q)$ as $\b w_1\to \b 0$.
\end{thm}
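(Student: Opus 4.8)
The plan is to turn the invariance equation stated just before the theorem into a contraction estimate for the difference $\b h-\bm\Phi$. Introduce the nonlinear operator
\[(\mathcal M\bm\psi)(\b w_1)=\bm\psi\bigl(\b U\b w_1+\b u(\b w_1,\bm\psi(\b w_1))\bigr)-\b V\bm\psi(\b w_1)-\b v(\b w_1,\bm\psi(\b w_1)),\]
acting on $\mathcal C^1$-maps $\bm\psi\colon\R^m\to\R^l$ with $\bm\psi(\b 0)=\b 0$ and $\b D\bm\psi(\b 0)=\b 0$. By the invariance property preceding the statement, the center manifold satisfies $\mathcal M\b h=\b 0$, while the hypothesis reads $\b R:=\mathcal M\bm\Phi=\O(\norm{\b w_1}^q)$. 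Writing $\b e:=\b h-\bm\Phi$ and abbreviating the center-manifold dynamics $\mathcal G(\b w_1)=\b U\b w_1+\b u(\b w_1,\b h(\b w_1))$ from \eqref{Flow_center_manifold} together with $\mathcal G_{\bm\Phi}(\b w_1)=\b U\b w_1+\b u(\b w_1,\bm\Phi(\b w_1))$, subtracting the two relations and inserting the telescoping term $\bm\Phi(\mathcal G(\b w_1))$ yields the exact recursion
\[\b e(\mathcal G(\b w_1))=\b V\b e(\b w_1)+\b E(\b w_1)-\b R(\b w_1),\]
where the coupling term $\b E(\b w_1)=\bigl(\b v(\b w_1,\b h)-\b v(\b w_1,\bm\Phi)\bigr)-\bigl(\bm\Phi(\mathcal G)-\bm\Phi(\mathcal G_{\bm\Phi})\bigr)$ collects precisely the differences that vanish once $\b h=\bm\Phi$.

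Next I would fix an adapted norm on $\R^N$ in which $\norm{\b V}\le\nu<1$ (possible since every eigenvalue of $\b V$ lies strictly inside the unit disk) and $\norm{\b U},\norm{\b U^{-1}}\le 1+\delta$ with $\delta>0$ as small as desired (the eigenvalues of $\b U$ lie on the unit circle, so its Jordan blocks can be rescaled to make the nilpotent part arbitrarily small). Because $\b u,\b v$ and $\bm\Phi$ all have vanishing value and first derivative at the origin, their relevant Lipschitz constants on a ball $B_{r_0}$ are $\O(r_0)$; using $\norm{\bm\Phi(\mathcal G)-\bm\Phi(\mathcal G_{\bm\Phi})}\le\mathrm{Lip}(\bm\Phi)\,\norm{\b u(\b w_1,\b h)-\b u(\b w_1,\bm\Phi)}$, this gives $\norm{\b E(\b w_1)}\le\varepsilon_0\norm{\b e(\b w_1)}$ on $B_{r_0}$ with $\varepsilon_0=\varepsilon(r_0)\to 0$ as $r_0\to 0$. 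In the same norm $\mathcal G$ and its local inverse are near-isometries, so $\norm{\mathcal G^{-1}(\b z)}\le(1+\delta)\norm{\b z}$.

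The core step is to iterate the recursion backward along $\mathcal G^{-1}$. With $\b z_k=\mathcal G^{-k}(\b z)$, telescoping gives
\[\b e(\b z)=\b V^{n}\b e(\b z_n)+\sum_{k=1}^{n}\b V^{\,k-1}\bigl(\b E(\b z_k)-\b R(\b z_k)\bigr),\]
and taking norms with $\norm{\b z_k}\le(1+\delta)^k\norm{\b z}$ and $\norm{\b R(\b z_k)}\le C\norm{\b z_k}^q$ yields
\[\norm{\b e(\b z)}\le\nu^{n}\norm{\b e(\b z_n)}+\sum_{k=1}^{n}\nu^{\,k-1}\bigl(\varepsilon_0\norm{\b e(\b z_k)}+C(1+\delta)^{qk}\norm{\b z}^{q}\bigr).\]
I would then close the estimate by a bootstrap: setting $\omega(r)=\sup_{\norm{\b w_1}\le r}\norm{\b e(\b w_1)}$ and $\theta=\nu(1+\delta)^q$, insert the ansatz $\omega(s)\le K s^q$, use $\norm{\b e(\b z_k)}\le\omega((1+\delta)^k\norm{\b z})$, and sum the geometric series to obtain schematically $\omega(r)\le(\theta^{\,n}+\varepsilon_0 S)K r^q+C S r^q$ with $S=(1+\delta)^q/(1-\theta)$. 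Choosing $\delta$ small enough that $\theta<1$ and $r_0$ small enough that $\theta+\varepsilon_0 S<1$, this reproduces $\omega(r)\le K r^q$, carried out as a downward induction over the shells $r_0/(1+\delta)^{j}$ so that every argument $(1+\delta)^k\norm{\b z}$ has already been bounded.

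The hard part is twofold. First, the backward iterates $\b z_k$ grow like $(1+\delta)^k\norm{\b z}$ and eventually leave the ball $B_{r_0}$ on which all local bounds hold, so one cannot simply send $n\to\infty$; I would instead stop after $n=n(\norm{\b z})$ steps, the largest $n$ with $(1+\delta)^n\norm{\b z}\le r_0$, for which $\nu^n\le C\norm{\b z}^{\,\abs{\log\nu}/\log(1+\delta)}$, an exponent exceeding $q$ once $\delta$ is small, making the boundary term $\nu^n\norm{\b e(\b z_n)}$ genuinely $\O(\norm{\b z}^q)$ rather than merely bounded. Second, the coupling $\b E$ still contains the unknown $\b e$, rendering the estimate self-referential — this is exactly what the bootstrap resolves, the balance $\nu(1+\delta)^q<1$ being the quantitative heart of the argument, where the contraction of $\b V$ must beat the at-most-geometric growth of $\norm{\b R}$ along backward orbits. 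A cleaner alternative that avoids the domain-escape issue is to first replace $\b u,\b v$ by globally Lipschitz modifications with arbitrarily small Lipschitz constant — the same cutoff used to prove existence in Theorem~\ref{Thm:Ex_CM} — and to run the identical contraction in the weighted space with norm $\norm{\bm\psi}_q=\sup_{\b w_1\ne\b 0}\norm{\bm\psi(\b w_1)}/\norm{\b w_1}^q$, in which $\b h$ is the fixed point and $\bm\Phi$ an approximate fixed point with residual of finite weighted size.
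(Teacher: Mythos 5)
The paper contains no proof of this statement: it is imported verbatim from Carr's book, so the only meaningful comparison is with Carr's argument, which is essentially the ``cleaner alternative'' you sketch in your final sentences --- not the backward-iteration bootstrap that forms the body of your proposal. That bootstrap has a genuine gap. Your recursion $\b e(\mathcal G(\b w_1))=\b V\b e(\b w_1)+\b E(\b w_1)-\b R(\b w_1)$, the bound $\norm{\b E(\b w_1)}\le\varepsilon_0\norm{\b e(\b w_1)}$, the telescoped identity, and the treatment of the boundary term $\nu^n\norm{\b e(\b z_n)}$ are all fine. The failure is in the self-referential sum $\sum_{k}\nu^{k-1}\varepsilon_0\norm{\b e(\b z_k)}$. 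Your shell induction needs every $\b z_k=\mathcal G^{-k}(\b z)$ to lie in a region where $\norm{\b e}\le K\norm{\cdot}^q$ is already established, but $\mathcal G$ is a perturbation of $\b U$, all of whose eigenvalues have modulus one: in the adapted norm one only gets $(1+\delta)^{-k}\norm{\b z}\le\norm{\b z_k}\le(1+\delta)^{k}\norm{\b z}$, so backward iterates need not move outward at all --- for $\b U$ a rotation they stay in essentially the same shell and may even drift inward. The substitute $\norm{\b e(\b z_k)}\le\omega((1+\delta)^k\norm{\b z})$ does not repair this, because $\omega(s)$ is a supremum over the whole ball of radius $s$, which contains the shell of $\b z$ itself and all inner shells the ``downward'' induction has not yet reached; invoking the ansatz $\omega(s)\le Ks^q$ there is circular. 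Nor can the circularity be broken by an a priori estimate: since $\b h,\bm\Phi\in\mathcal C^1$ with vanishing value and derivative at $\b 0$, one only knows $\b e(\b w_1)=o(\norm{\b w_1})$, i.e. $\Omega_\rho:=\sup_{\rho\le\norm{\b w_1}\le r_0}\norm{\b e(\b w_1)}/\norm{\b w_1}^q\lesssim\rho^{1-q}$, and the annulus version of your estimate, $\Omega_\rho\le A+\varepsilon_0 S\,\Omega_{c\rho^2}$ (after $n(\norm{\b z})$ backward steps the norm can shrink to order $\norm{\b z}^2/r_0$), pits a doubly exponentially growing term $\Omega_{\rho_M}$ against the merely exponentially small prefactor $(\varepsilon_0 S)^M$, so the iteration never closes. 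The contraction of $\b V$ cannot be played off pointwise along orbits of a neutral base dynamics; it has to act through a function-space operator.

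That is exactly what Carr's proof does and what your alternative gestures at: after cutting off $\b u,\b v$ to have globally small Lipschitz constants, $\b h$ is the fixed point of a contraction $T$ (graph-transform, or Lyapunov--Perron, operator). But note that the approximate-fixed-point inequality $\norm{\b h-\bm\Phi}_q\le\kappa\norm{\b h-\bm\Phi}_q+\epsilon$ can only be rearranged if $\norm{\b h-\bm\Phi}_q<\infty$, which is precisely the a priori finiteness that is missing --- so your alternative, as stated, inherits the same circularity. Carr's formulation avoids it: one shows $T$ maps the set $S_K=\{\bm\psi:\norm{\bm\psi(\b w_1)-\bm\Phi(\b w_1)}\le K\norm{\b w_1}^q\text{ for all }\b w_1\}$ into itself for suitable $K$ (this is where $\nu(1+\delta)^q<1$ and the globally finite weighted residual of $\bm\Phi$ after the cutoff enter), observes that $S_K$ is closed under the convergence in which $T^n\bm\Phi\to\b h$, and concludes $\b h\in S_K$, i.e. $\b h-\bm\Phi=\O(\norm{\b w_1}^q)$. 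With that invariant-set formulation your alternative becomes a complete proof; the main argument, as written, does not.
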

Before we go to theoretical fundamentals on production-destruction-rest systems, let us summarize the sections on stability. We started with $A$-stability which is the central notion for capturing the linear stability properties of general linear methods such as Runge--Kutta schemes. However, we discussed that analyzing a scalar equation is not sufficient to capture the stability behavior of nonlinear methods. Hence, we generalized $A$-stability by considering stability in the sense of Lyapunov. Moreover, we presented tools for analyzing general numerical methods with hyperbolic and non-hyperbolic fixed points, where the analysis of the latter is more challenging as more techniques such as the approximation of the center manifold is required. However, for our purposes this is the interesting case when analyzing Patankar-type methods.
\section{Production-Destruction-Rest Systems}\label{sec:PDRS}
In this work we are interested in methods that are capable of producing positive approximations for any chosen time step size. First focusing on autonomous problems, it is convenient to rewrite the system of ODEs into the form of a \emph{production-destruction system} (PDS)
\begin{equation} \label{eq:PDS}
y_k'(t)= f_k(\b y(t))=\sum_{\nu=1}^N (p_{k\nu}(\b y(t))-d_{k\nu}(\b y(t))),\quad \b y(0)=\b y^0>\b 0,
\end{equation}
where $p_{k\nu}(\b y(t)), d_{k\nu}(\b y(t)) \geq 0$ for all $\b y(t)\geq 0$.  Note that every real valued right-hand side $f_k$ can be split into production and destruction terms setting \[p_{k1}(\b y)=\max\{0, f_k(\b y)\}, \quad d_{k1}(\b y)=-\min\{0, f_k(\b y)\},\quad p_{k\nu}=d_{k\nu}=0,\quad \nu\neq 1.\]
However, using this splitting the production and destruction terms are generally not differentiable. Nevertheless, in view of Theorem~\ref{thm:ConsistencyImpConvergence} we note that if $f_k$ is locally Lipschitz continuous, then so are $p_{k1}$ and $d_{k1}$ as they are the composition of two locally Lipschitz mappings.
\begin{defn}
The PDS \eqref{eq:PDS} is called \emph{positive}, if $\b y(0)>\b 0$ implies $\b y(t)>\b 0$ for all $t>0$. Similarly, a \emph{non-negative} PDS are defined.
\end{defn}
\begin{prop}[ \cite{BDM03}]\label{prop:PDSnonneg}
For non-negative initial data, the PDS \eqref{eq:PDS} is non-negative if $d_{k\nu}(\b y)\to 0$ as $y_k\to 0$ for $k,\nu=1,\dotsc,N$.
\end{prop}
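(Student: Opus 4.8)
The statement asserts positive invariance of the non-negative orthant $K=\{\b y\in\R^N\mid \b y\geq \b 0\}$ under the flow of \eqref{eq:PDS}. The geometric heart of the matter is a sub-tangentiality (Nagumo) condition: on the boundary face where the $k$-th coordinate vanishes, the vector field may not point outward. Concretely, if $\b y\geq \b 0$ with $y_k=0$, then the hypothesis $d_{k\nu}(\b y)\to 0$ as $y_k\to 0$, together with continuity and non-negativity of the destruction terms, forces $d_{k\nu}(\b y)=0$ for every $\nu$, so that
\[
f_k(\b y)=\sum_{\nu=1}^N\bigl(p_{k\nu}(\b y)-d_{k\nu}(\b y)\bigr)=\sum_{\nu=1}^N p_{k\nu}(\b y)\geq 0.
\]
Thus on each bounding face the corresponding velocity component is non-negative, which is exactly the inward-pointing condition one expects. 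One could invoke Nagumo's invariance theorem directly, but since it is not available in the excerpt I would give a self-contained argument via a perturbed comparison system and a first-hitting-time analysis, assuming (as throughout the chapter, cf.\ the remark following the splitting and Theorem~\ref{thm:ConsistencyImpConvergence}) that the $p_{k\nu},d_{k\nu}$ are continuous and locally Lipschitz, so that solutions exist, are unique, and depend continuously on parameters.

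Fix an existence time $T>0$ and, for $\epsilon>0$, let $\b z=\b z_\epsilon$ solve the perturbed system $z_k'=f_k(\b z)+\epsilon$ with $\b z(0)=\b y^0\geq \b 0$. First I would check that $\b z$ is strictly positive for small positive times: if $y^0_k=0$ then the computation above gives $z_k'(0)=f_k(\b y^0)+\epsilon\geq \epsilon>0$, while if $y^0_k>0$ then $z_k$ stays positive by continuity; hence $\b z(t)>\b 0$ on some interval $(0,\rho)$. Next, set $t^*=\inf\{t\in(0,T]\mid z_j(t)<0\text{ for some }j\}$ and suppose $t^*\leq T$. By continuity $\b z(t^*)\geq\b 0$, and since components that are strictly positive at $t^*$ remain so nearby, there is an index $k$ with $z_k(t^*)=0$ that becomes negative along a sequence $t_m\downarrow t^*$. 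Applying the boundary computation at $\b z(t^*)$ yields $z_k'(t^*)=\sum_\nu p_{k\nu}(\b z(t^*))+\epsilon\geq\epsilon>0$, so $z_k$ is strictly increasing at $t^*$ and therefore positive on a right neighborhood of $t^*$, contradicting $z_k(t_m)<0$. Hence $t^*>T$, i.e.\ $\b z_\epsilon(t)\geq \b 0$ for all $t\in[0,T]$.

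Finally I would pass to the limit $\epsilon\to 0$: by continuous dependence on the parameter, $\b z_\epsilon\to\b y$ uniformly on $[0,T]$, so $\b y(t)=\lim_{\epsilon\to0}\b z_\epsilon(t)\geq\b 0$ for all $t\in[0,T]$, and $T$ was arbitrary. The main obstacle is precisely the degenerate tangency: the naive first-hitting-time argument applied to the \emph{unperturbed} system only delivers $y_k'(t^*)\geq 0$, which does not by itself prevent higher-order terms from pushing the trajectory out of $K$ when the derivative vanishes (cf.\ the phenomenon in Example~\ref{exmp:nonhyper}). Introducing the additive $\epsilon$ upgrades this to the strict inequality $z_k'(t^*)\geq\epsilon>0$, which cleanly rules out escape; the price is the extra continuous-dependence limit, and the one point requiring care is ensuring that $\b z_\epsilon$ exists on all of $[0,T]$ for small $\epsilon$.
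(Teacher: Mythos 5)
There is nothing in the paper to compare against: the thesis states this proposition without proof, citing \cite{BDM03}, and only prepares the ground by remarking just above it that locally Lipschitz $f_k$ yields locally Lipschitz production and destruction terms. Your proof therefore has to stand on its own, and it does. The boundary computation ($y_k=0$ together with the hypothesis and continuity forces $d_{k\nu}(\b y)=0$, hence $f_k(\b y)=\sum_{\nu}p_{k\nu}(\b y)\geq 0$) is exactly the quasi-positivity condition underlying the cited result, and the $\epsilon$-perturbation correctly repairs the real defect of the naive hitting-time argument, namely that $y_k'(t^*)\geq 0$ alone does not rule out escape through a degenerate tangency. The hitting-time analysis for $\b z_\epsilon$ is sound: positivity for small times gives $t^*>0$, continuity gives $\b z(t^*)\geq \b 0$ with $z_k(t^*)=0$ for the offending index, and $z_k'(t^*)\geq \epsilon>0$ contradicts $z_k(t_m)<0$ for $t_m\downarrow t^*$; the final limit $\epsilon\to 0$ is standard Gronwall-type continuous dependence. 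Two remarks. First, the local Lipschitz assumption you import is not cosmetic but necessary: without uniqueness the statement is false as written, since $y'=-\sqrt{\lvert y\rvert}$ is a one-component PDS with $p_{11}=0$ and $d_{11}(y)=\sqrt{\lvert y\rvert}\to 0$ as $y\to 0$, yet $y(t)=-t^2/4$ solves it with $y(0)=0$; so making the regularity explicit (the paper keeps it implicit, cf.\ its appeal to \cite{FSpos}) is a strength, not a gap. Second, besides the existence of $\b z_\epsilon$ on all of $[0,T]$, which you flag, the only technicality left implicit is that Picard--Lindel\"of needs the right-hand side on an open set containing the trajectory; this is handled by the standard device of replacing $\b f(\b y)$ by $\b f$ composed with the componentwise positive part $\b y\mapsto (\max(y_1,0),\dotsc,\max(y_N,0))$, which is locally Lipschitz, agrees with $\b f$ on the orthant, and leaves your boundary computation untouched.
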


\begin{defn}
We call the PDS \eqref{eq:PDS} \emph{conservative}, if $p_{k\nu}=d_{\nu k}$ for all $k,\nu=1,\dotsc,N$. If in addition we have $p_{kk}=d_{kk}=0$, the PDS is called \emph{fully conservative}.
\end{defn}
\begin{rem}\label{rem:fullyconservative}
Since $p_{kk}=d_{kk}$ cancel out in \eqref{eq:PDS} for a conservative PDS we can assume without loss of generality that $p_{kk}=d_{kk}=0$, \ie that the PDS is always fully conservative. 
\end{rem}
For a conservative PDS, we know that $\sum_{k=1}^Ny_k'(t)=0$, and hence, the sum of the constituents remains constant in time. In general, if a linear combination $\b n^T\b y$ remains constant 
in time, we call it a \emph{linear invariant}. 

It is also worth mentioning that the additive splitting into production and destruction terms is not uniquely determined. For instance, considering
\begin{equation*}
\b y'=\Vec{y_2 + y_4 - y_1\\
	y_1-y_2\\
	y_1 - y_3\\
	y_3 -y_1-y_4},
\end{equation*}
the terms $p_{14}(\b y)=y_4$, $p_{12}(\b y)=y_2$ and $p_{43}(\b y)=y_3$ are a straightforward choice, however, both,
\[p_{21}(\b y)=y_1,\quad p_{34}(\b y)=y_1  \]
and
\[p_{31}(\b y)=y_1,\quad p_{24}(\b y)=y_1\]
complete the splitting into a PDS, where we set $p_{mn}=0$ for the remaining production terms and $p_{k\nu}=d_{\nu k}$.

In this work, we are also interested in positive PDS which are non-autonomous and not conservative. For a transparent notation we split the PDS into a conservative part and rest terms, leading to a \emph{production-destruction-rest system} (PDRS)
\begin{equation}
y_k'(t)= f_k(\b y(t), t)=r_k(\b y(t), t) + \sum_{\nu=1}^N (p_{k\nu}(\b y(t), t)-d_{k\nu}(\b y(t), t)),\quad \b y(0)=\b y^0>\b 0 \label{eq:PDRS_ODE}
\end{equation}
with $k=1,\dotsc,N$ and $p_{k\nu}=d_{\nu k}$. Additionally, the rest term is also split according to
\begin{equation}
r_k(\b y(t), t) = r_k^p(\b y(t), t) - r_k^d(\b y(t), t)\label{eq:rp_rd}
\end{equation}
with $r_k^p,r_k^d\geq 0$ for $t\geq 0,$ $\b y(t)\geq \b 0$.
Note that $r_k^p$ and $r_k^d$ can always be constructed, for example by using the functions $\max$ and $\min$ as above. 
The autonomous version of the PDRS \eqref{eq:PDRS_ODE} was already considered in \cite{IssuesMPRK} and the existence, uniqueness and positivity of the solution of \eqref{eq:PDRS_ODE} was discussed in \cite{FSpos}. In what follows, we are assuming that such a positive solution exists. For later use it is also beneficial to rewrite the PDS as an additive splitting of the form  \eqref{ivp}. 
\begin{rem}\label{rem:PDStoAdditive} Any PDRS \eqref{eq:PDRS_ODE}, \eqref{eq:rp_rd} may be rewritten as an additive splitting of the form
\begin{align*}
	\b f(\b y(t),t)= \sum_{\nu=1}^{N+1}  \Fnu(\b y(t),t)\in \R^N
\end{align*} 
using  $\b f^{[N+1]}(\b y(t),t)=(r_1^p(\b y(t),t),\dotsc,r_N^p(\b y(t),t))^T$ and \[f^{[\nu]}_k(\b y(t),t)=\begin{cases}p_{k\nu}(\b y(t),t), &k\neq \nu, \\-\left(r_k^d(\b y(t),t) + \sum_{\mu=1}^Nd_{k\mu}(\b y(t),t)\right), &k=\nu\end{cases}\]
for $k,\nu=1,\dotsc, N$. To see this, we first point out that $p_{kk}=d_{kk}=0$ can be assumed, see Remark~\ref{rem:fullyconservative}. Hence,
\begin{equation*}
	\begin{aligned}
		f_k&=\sum_{\nu=1}^{N+1}f_k^{[\nu]}
		=\sum_{\substack{\nu=1\\\nu\neq k}}^{N}f_k^{[\nu]} + f_k^{[k]}+ f_k^{[N+1]} \\
		&=\sum_{\nu=1}^Np_{k\nu} -\left(r_k^d + \sum_{\mu=1}^Nd_{k\mu}\right) + r_k^p=r_k+\sum_{\nu=1}^N(p_{k\nu}-d_{k\nu}).
	\end{aligned}
\end{equation*}

\end{rem}

\chapter{Numerical Schemes}\label{chap:NumSchemes}
In this chapter we review  positivity-preserving schemes that additionally preserve at least one linear invariant. For other recent approaches which facilitate positive and conservative numerical approximations, we refer to \cite{AGKM_Oliver,nuesslein2021positivitypreserving,blanes2021positivitypreserving}, some of which even conserve all linear invariants. The following schemes are one-step methods, for which we briefly recall the definition of unconditional conservativity and positivity.
\begin{defn}
	Let $\b y^n$ denote an approximation of $\b y(\tn)$ at time level $\tn$. 
	The corresponding one-step method
	is called
	\begin{itemize}
		\item \textit{unconditionally conservative}, if 
		\[
		\sum_{k=1}^N y_k^{n+1}=\sum_{k=1}^Ny_k^n
		\]
		is satisfied for all $n\in\N_0$ and $\dt>0$.
		\item \textit{unconditionally positive}, if $\b y^n>0$ implies $\b y^{n+1}>0$ for all $n\in \N_0$ and $\dt>0$.
	\end{itemize}
\end{defn}
\section{Non-standard Additive Runge--Kutta Methods}\label{sec:NSARK}
Non-standard additive Runge--Kutta (NSARK) methods are based on ARK schemes \eqref{eq:ark}, where the Butcher tableau is allowed to also depend on the step size and the solution. In particular, NSARK methods are of the form
\begin{equation} \tag{NSARK}\label{eq:nsark}
	\begin{aligned}
		\byi & = \b y^n + \dt \sum_{j=1}^s  \sum_{\substack{\nu=1}}^N a^{[\nu]}_{ij}(\b y^n,\tn,\dt)  \fnu(\byj, \tn + c_j\dt), \quad i=1,\dotsc,s,\\
		\b y^{n+1} & = \b y^n + \dt \sum_{j=1}^s \sum_{\substack{\nu=1}}^N b^{[\nu]}_j(\b y^n,\tn,\dt) \fnu(\byj, \tn + c_j\dt).
	\end{aligned}
\end{equation}
Note that the stages $\byi=\byi(\b y^n)$ may be interpreted as functions of $\b y^n$, so that the dependence of $a^{[\nu]}_{ij}, b^{[\nu]}_j$ on $\b y^n$ might be given implicitly. As a result of this notation, an NSARK method is called \emph{explicit}, if the matrices $\b A^{[\nu]}$ are strict lower left triangular matrices and the dependence of $\b A^{[\nu]}$ as well as $\b b^{[\nu]}$ on $\b y^n$ is only explicit. Otherwise, the NSARK method is called \emph{implicit}.

As we will discover in this chapter, all MP methods based on RK schemes can be written as an NSARK method. Moreover, given a Butcher tableau defined by $\b A,\b b,\b c$, the corresponding MP methods are of the form 
\begin{equation}\label{eq:NSweights}
	\begin{aligned}
		a^{[\nu]}_{ij}(\b y^n,\tn,\dt)&=a_{ij}\gamma_\nu^{[i]}(\b y^n,\tn,\dt),\\
		b^{[\nu]}_j(\b y^n,\tn,\dt)&=b_j\delta_\nu(\b y^n,\tn,\dt)
	\end{aligned}
\end{equation}
for some scheme-dependent functions $\gamma_\nu^{[i]}$ and $\delta_\nu$, which we refer to as \emph{non-standard weights} (NSWs). Investigating NSARK methods allows the comprehensive derivation of a general stability function as well as order conditions for different families of methods. In particular, it turns out that NSARK methods are a valuable formulation for the analysis of so-called modified Patankar--Runge--Kutta (MPRK) methods. Nevertheless, we will be able to deduce also some results for  Geometric Conservative (GeCo) schemes in this work and discuss how to generalize or adapt NSARK schemes to investigate even more nonlinear methods.

	The following proposition formulates sufficient conditions under which an NSARK scheme produces the same approximations for the transformed autonomous system mentioned in Remark~\ref{rem:RKautonom}. 
	\begin{prop}\label{prop:nsarkautonom}Let $\b A,\b b,\b c$ describe an RK method satisfying $\sum_{j=1}^sa_{ij}=c_i$ and $\sum_{j=1}^sb_j=1$.
		Let the stages of the corresponding NSARK method \eqref{eq:nsark} be uniquely determined for some $\dt>0$  and transform the IVP \eqref{ivp}
		into the autonomous system $\b Y'(t)=\sum_{\nu=1}^{N+1}\b F^{[\nu]}(\b Y(t))$ using
		\[\b Y(t)=\vec{\b y(t)\\t},\quad \b F^{[\mu]}(\b Y(t))=\vec{
			\fmu(\b Y(t))\\0
		},\,\, 1\leq \mu\leq N, \quad \b F^{[N+1]}(\b Y(t))=\vec{
			\b 0\\1
		}.\]
		If $\gamma_{N+1}^{[i]},\delta_{N+1}^{[j]}=1$, then the approximations for the solution of the IVP \eqref{ivp} using the NSARK method coincide irrespective of whether the autonomous or non-autonomous system is solved.
		
	\end{prop}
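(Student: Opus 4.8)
The plan is to mimic the classical argument behind Remark~\ref{rem:RKautonom} for standard RK methods, while carefully tracking the solution-dependent non-standard weights \eqref{eq:NSweights}. I would write the augmented stage and step vectors as $\mathbf Y^{(i)}=(\mathbf y^{(i)},\tau^{(i)})^T$ and $\mathbf Y^n=(\mathbf y^n,t_n)^T$, apply \eqref{eq:nsark} (now with $N+1$ addends) to the autonomous system $\mathbf Y'=\sum_{\nu=1}^{N+1}\mathbf F^{[\nu]}(\mathbf Y)$, insert $a^{[\nu]}_{ij}=a_{ij}\gamma_\nu^{[i]}$ and $b^{[\nu]}_j=b_j\delta_\nu$, and then split every stage and update equation into its first $N$ components and its last (time) component. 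This split is the bookkeeping backbone of the whole argument.

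First I would analyze the last component. Since $[\mathbf F^{[\nu]}]_{N+1}=0$ for $\nu\le N$ and $[\mathbf F^{[N+1]}]_{N+1}=1$, the time component of the $i$-th stage decouples completely and reduces to $\tau^{(i)}=t_n+\dt\,\gamma_{N+1}^{[i]}\sum_{j=1}^s a_{ij}=t_n+\dt\,\gamma_{N+1}^{[i]}c_i$ by $\sum_j a_{ij}=c_i$. The hypothesis $\gamma_{N+1}^{[i]}=1$ is precisely what forces $\tau^{(i)}=t_n+c_i\dt$, so the stage times coincide with the intermediate times of the non-autonomous scheme; without it the time arguments fed into $\mathbf f^{[\nu]}$ would be shifted and the two computations would diverge.

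Next I would treat the first $N$ components. Because $[\mathbf F^{[\nu]}]_{1:N}=\mathbf f^{[\nu]}$ for $\nu\le N$ and $[\mathbf F^{[N+1]}]_{1:N}=\mathbf 0$, the $(N+1)$-st addend drops out, and substituting $\tau^{(j)}=t_n+c_j\dt$ turns the stage equation into $\mathbf y^{(i)}=\mathbf y^n+\dt\sum_{j=1}^s\sum_{\nu=1}^N a_{ij}\gamma_\nu^{[i]}\mathbf f^{[\nu]}(\mathbf y^{(j)},t_n+c_j\dt)$, which is exactly the non-autonomous stage system \eqref{eq:nsark} --- provided the autonomous weights $\gamma_\nu^{[i]}$, evaluated at $\mathbf Y^n=(\mathbf y^n,t_n)^T$, coincide with the non-autonomous $\gamma_\nu^{[i]}(\mathbf y^n,t_n,\dt)$. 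By the assumed unique solvability of the non-autonomous stages, the first $N$ components of the autonomous stages then agree with $\mathbf y^{(i)}$.

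Finally I would handle the update and close an induction on $n$. The same split applied to the update line gives that the first $N$ components of $\mathbf Y^{n+1}$ reproduce $\mathbf y^{n+1}$, while the time component advances by $\dt\,\delta_{N+1}\sum_j b_j=\dt$ thanks to $\delta_{N+1}=1$ and $\sum_j b_j=1$; hence $\mathbf Y^{n+1}=(\mathbf y^{n+1},t_{n+1})^T$, and the invariant that the time component equals $t_n$ is preserved from the base case $\mathbf Y^0=(\mathbf y^0,t_0)^T$. The main obstacle I anticipate is the weight-consistency step: since $\gamma_\nu^{[i]},\delta_\nu$ may depend implicitly on the stages, justifying that both formulations evaluate them at identical data relies exactly on the stage-time identity $\tau^{(j)}=t_n+c_j\dt$ and the step-time identity $t_{n+1}=t_n+\dt$, which is why the hypotheses $\gamma_{N+1}^{[i]}=1$ and $\delta_{N+1}=1$ are indispensable and the induction cannot be bypassed.
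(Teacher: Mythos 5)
Your proposal is correct and follows essentially the same route as the paper's proof: decompose the augmented system into the time component (which, using $\gamma_{N+1}^{[i]},\delta_{N+1}=1$ together with $\sum_j a_{ij}=c_i$ and $\sum_j b_j=1$, yields $t_{(i)}=t_n+c_i\dt$ and $t_{n+1}=t_n+\dt$) and the first $N$ components (which, after substituting these times, reproduce the non-autonomous scheme), then conclude via unique solvability of the stage equations. Your explicit induction on $n$ and the remark about evaluating the solution-dependent weights at identical data merely spell out bookkeeping the paper leaves implicit.
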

	\begin{proof}
		Since $\gamma_{N+1}^{[i]},\delta_{N+1}^{[j]}=1$, the NSARK method applied to the autonomous system reads
		\begin{equation}\label{eq:NSARK_autonom}
			\begin{aligned}
				Y_k^{(i)}=& Y_k^n + \dt\sum_{j=1}^{s}\left(\sum_{\nu=1}^Na_{ij}^{[\nu]}(\b Y^n,\dt) F_k^{[\nu]}(\b Y^{(j)})+  a_{ij} F_k^{[N+1]}(\b Y^{(j)})\right),\\
				Y_k^{n+1}=& Y_k^n + \dt\sum_{j=1}^{s}\left(\sum_{\nu=1}^Nb_j^{[\nu]}(\b Y^n,\dt) F_k^{[\nu]}(\b Y^{(j)})+  b_j F_k^{[N+1]}(\b Y^{(j)})\right).
			\end{aligned}
		\end{equation}
		Thus, for $k=N+1$ we find
		\begin{equation} \label{eq:tk_tn+1}
			\begin{aligned}
				t_{(i)}&=\tn + \dt \sum_{j=1}^{s}a_{ij}=\tn+c_i\dt,\\
				t_{n+1} &= \tn +\dt \sum_{j=1}^sb_j=\tn+\dt.
			\end{aligned}
		\end{equation}
		Furthermore, for $k\leq N$, we end up with
		{\allowdisplaybreaks
			\begin{align*}
				y_k^{(i)}=& y_k^n + \dt\sum_{j=1}^{s}\sum_{\nu=1}^Na_{ij}^{[\nu]}(\b y^n,\tn,\dt) f_k^{[\nu]}(\b y^{(j)},t_{(j)}),\\
				y_k^{n+1}=& y_k^n + \dt\sum_{j=1}^{s}\sum_{\nu=1}^Nb_j^{[\nu]}(\b y^n,\tn,\dt) f_k^{[\nu]}(\b y^{(j)},t_{(j)}).
		\end{align*}}
		
		Substituting \eqref{eq:tk_tn+1} into these equations, the proof is finished by noting that the resulting systems always possess a unique solution due to our preconditions.
	\end{proof}
	As a consequence of Proposition~\ref{prop:nsarkautonom} we may consider only autonomous problems for deriving order conditions, if the method satisfies the assumptions of the proposition.
	
	\section{Modified Patankar--Runge--Kutta}\label{sec:MPRK}
	
	The main idea of modified Patankar--Runge--Kutta (MPRK) methods  \cite{BDM03,KM18,KM18Order3,MPRK3ex} is to apply an explicit Runge--Kutta (RK) method to a production-destruction systems (PDS) \eqref{eq:PDS} and use the modified Patankar-trick. We extend this approach also to production-destruction-rest systems (PDRS) \eqref{eq:PDRS_ODE}, \eqref{eq:rp_rd} where we only apply the Patankar-trick to the rest term. This means, that $r_k^p$ will not be weighted and $r_k^d$ will be treated like a destruction term. 
	\begin{defn}\label{def:MPRKdefn}
		Given an explicit $s$-stage RK method described by a non-negative Butcher array, \ie $\b A,\b b,\b c \geq\b 0$ we define the corresponding MPRK schemes applied to \eqref{eq:PDRS_ODE}, \eqref{eq:rp_rd} by
		\begin{equation}\tag{MPRK}\label{eq:MPRK_PDRS}
			\begin{aligned}
				y_k^{(i)}=& y_k^n + \dt\sum_{j=1}^{i-1}a_{ij}\left( r_k^p(\b y^{(j)}, \tn + c_j\dt) + \sum_{\nu=1}^N  p_{k\nu}(\b y^{(j)}, \tn + c_j\dt)\frac{y_\nu^{(i)}}{\pi_\nu^{(i)}}\right. \\
				& \left. - \left(r_k^d(\b y^{(j)}, \tn + c_j\dt) +\sum_{\nu=1}^Nd_{k\nu}(\b y^{(j)}, \tn + c_j\dt) \right)\frac{y_k^{(i)}}{\pi_k^{(i)}}\right),\quad k=1,\dotsc,s,\\
				y_k^{n+1}=& y_k^n + \dt\sum_{j=1}^s b_j\left( r_k^p(\b y^{(j)}, \tn + c_j\dt) + \sum_{\nu=1}^N  p_{k\nu}(\b y^{(j)}, \tn + c_j\dt)\frac{y_\nu^{n+1}}{\sigma_\nu}\right. \\
				& \left. - \left(r_k^d(\b y^{(j)}, \tn + c_j\dt) +\sum_{\nu=1}^Nd_{k\nu}(\b y^{(j)}, \tn + c_j\dt) \right)\frac{y_k^{n+1}}{\sigma_k}\right),
			\end{aligned}
		\end{equation}
		where $\pi_\nu^{(i)},\sigma_\nu$ are the so-called \emph{Patankar-weight denominators} (PWDs) and positive for any $\dt\geq 0$ as well as independent of the corresponding numerators $y_k^{(i)}$ and $y_k^{n+1}$, respectively. 
	\end{defn}
	MPRK schemes are of considerable interest and widely used such as in the context of ecosystems \cite{HenseBeckmann2010,HenseBurchard2010,WHK2013,BMZ2007,BMZ2009,MeisterBenz2010} or ocean models \cite{SemeniukDastoor2017,BBKMNU2006}. Further applications can be found in the context of magneto-thermal winds \cite{Gressel2017} or warm-hot intergalactic mediums \cite{KlarMuecket2010} as well as in that of the SIR epidemic model \cite{WS21}. 
	\begin{rem}
		In matrix notation, \eqref{eq:MPRK_PDRS} can be rewritten as
		\begin{equation}\label{eq:MPRK_PDRS_matrix}
			\begin{aligned}
				\b M^{(i)}\byi&= \b y^n+\dt \sum_{j=1}^{i-1}a_{ij} \b r^p(\b y^{(j)}, \tn + c_j\dt),\quad i=1,\dotsc,s, \\
				\b M\b y^{n+1}&= \b y^n + \dt\sum_{j=1}^{s}b_{j} \b r^p(\b y^{(j)}, \tn + c_j\dt),
			\end{aligned}
		\end{equation}
		where $\b r^p=(r_1^p,\dotsc, r_N^p)^T$ and $\b M^{(i)}=(m^{(i)}_{k\nu})_{1\leq k,\nu\leq N}$ with
		\begin{equation*}
			\begin{aligned}
				m^{(i)}_{kk}&=1+ \dt \sum_{j=1}^{i-1}a_{ij}\left(r_k^d(\b y^{(j)}, t_n + c_j\dt) +\sum_{\nu=1}^Nd_{k\nu}(\b y^{(j)}, t_n + c_j\dt) \right)\frac{1}{\pi_k^{(i)}}, \\
				m^{(i)}_{k\nu}&= -\dt \sum_{j=1}^{i-1}a_{ij}p_{k\nu}(\b y^{(j)}, \tn + c_j\dt)\frac{1}{\pi_\nu^{(i)}}, \quad k\neq \nu
			\end{aligned}
		\end{equation*}
		as well as, using $\b M=(m_{k\nu})_{1\leq k,\nu\leq N}$, 
		\begin{equation*}
			\begin{aligned}
				m_{kk}&= 1+\dt \sum_{j=1}^sb_j\left(r_k^d(\b y^{(j)}, t_n + c_j\dt) +\sum_{\nu=1}^Nd_{k\nu}(\b y^{(j)}, t_n + c_j\dt) \right)\frac{1}{\sigma_k}, \\
				m_{k\nu}&= -\dt \sum_{j=1}^sb_jp_{k\nu}(\b y^{(j)}, \tn + c_j\dt)\frac{1}{\sigma_\nu}, \quad k\neq \nu.
			\end{aligned}
		\end{equation*}
		
	\end{rem}
	\begin{rem}
		We require $\sigma_\nu$ to be independent of $y_\nu^{n+1}$ to ensure that the scheme is positive and linear implicit. To see this, recall that the choice $\sigma_\nu=y_\nu^{n+1}$ and $\pi_\nu^{(i)}=y_\nu^{(i)}$ would lead to the original Runge--Kutta scheme, which is not unconditionally positive. Moreover, if $\sigma_\nu$ would allowed to be a nonlinear function of $y_\nu^{n+1}$ we would have to solve a nonlinear system instead of a linear one to compute $y_\nu^{n+1}$. For the same reason we require $\pi_\nu^{(i)}$ to be independent of $y_\nu^{(i)}$.
	\end{rem}
	The following two lemmas state that MPRK schemes as defined in Definition~\ref{def:MPRKdefn} are indeed unconditionally positive and conservative. 
	Both lemmas are slight generalizations of lemmas from \cite{BDM03, KM18}.
	\begin{lem}\label{lem:MPRK}
		An MPRK scheme \eqref{eq:MPRK_PDRS} applied to a conservative PDS, \ie $\b r=\b 0$, is unconditionally conservative. The same holds for all stage values, that is
		$\sum_{k=1}^N y_k^{(i)}=\sum_{k=1}^Ny_k^n$
		for $i=1,\dots,s$.
	\end{lem}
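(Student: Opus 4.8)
The plan is to verify both identities directly by summing the defining relations \eqref{eq:MPRK_PDRS} over the component index $k$ and exploiting the conservativity relation $p_{k\nu}=d_{\nu k}$. First I would set $\b r=\b 0$, i.\,e.\ $r_k^p=r_k^d=0$, and invoke Remark~\ref{rem:fullyconservative} to assume $p_{kk}=d_{kk}=0$, so that the $i$-th stage equation reduces to
\[
y_k^{(i)} = y_k^n + \dt\sum_{j=1}^{i-1}a_{ij}\left(\sum_{\nu=1}^N p_{k\nu}(\b y^{(j)},\tn+c_j\dt)\frac{y_\nu^{(i)}}{\pi_\nu^{(i)}} - \sum_{\nu=1}^N d_{k\nu}(\b y^{(j)},\tn+c_j\dt)\frac{y_k^{(i)}}{\pi_k^{(i)}}\right).
\]
Summing this over $k=1,\dotsc,N$ leaves $\sum_{k}y_k^{(i)}=\sum_k y_k^n$ plus a remainder consisting of the two nested double sums over $k$ and $\nu$.

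The key step is to show that these two double sums coincide and therefore cancel. Using $p_{k\nu}=d_{\nu k}$ in the production term and then relabeling the summation indices $k\leftrightarrow\nu$ gives
\[
\sum_{k=1}^N\sum_{\nu=1}^N p_{k\nu}\frac{y_\nu^{(i)}}{\pi_\nu^{(i)}} = \sum_{k=1}^N\sum_{\nu=1}^N d_{\nu k}\frac{y_\nu^{(i)}}{\pi_\nu^{(i)}} = \sum_{k=1}^N\sum_{\nu=1}^N d_{k\nu}\frac{y_k^{(i)}}{\pi_k^{(i)}},
\]
which is precisely the destruction double sum. Hence the bracketed remainder vanishes identically for every $j$, and the weighting by $\dt\,a_{ij}$ is irrelevant, yielding $\sum_{k=1}^N y_k^{(i)}=\sum_{k=1}^N y_k^n$ for each $i=1,\dotsc,s$. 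I would emphasize here that this cancellation holds regardless of the particular values of the Patankar-weight denominators $\pi_\nu^{(i)}$, since the factor $\tfrac{1}{\pi_\nu^{(i)}}$ is attached to the index $\nu$ that is preserved under the relabeling; conservativity is thus insensitive to the specific choice of PWDs.

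For the unconditional conservativity of the update I would repeat the identical argument on the equation for $y_k^{n+1}$, now summing the full stage range $\sum_{j=1}^s b_j$ and replacing $\pi_\nu^{(i)}$ by $\sigma_\nu$; the same index swap collapses the production and destruction contributions and gives $\sum_{k=1}^N y_k^{n+1}=\sum_{k=1}^N y_k^n$. I do not anticipate a genuine obstacle: the proof is a routine interchange-of-summation computation. The only point requiring care is the bookkeeping in the relabeling step, making explicit that the surviving weight $\tfrac{1}{\pi_\nu^{(i)}}$ (respectively $\tfrac{1}{\sigma_\nu}$) travels with the correct index so that the production and destruction sums match term by term, and noting that it is exactly the defining property $p_{k\nu}=d_{\nu k}$ of a conservative PDS that drives the cancellation.
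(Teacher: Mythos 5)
Your proof is correct, and it is exactly the standard argument: the paper itself omits the proof of this lemma (deferring to \cite{BDM03, KM18}), where precisely this summation-and-index-relabeling computation is used. Your emphasis on the fact that the Patankar weight $\tfrac{y_\nu^{(i)}}{\pi_\nu^{(i)}}$ (resp. $\tfrac{y_\nu^{n+1}}{\sigma_\nu}$) is attached to the index that survives the swap $k\leftrightarrow\nu$ — so that the production term in the equation for $k$ cancels term by term against the destruction term in the equation for $\nu$, independently of the choice of PWDs — is exactly the key point.
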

	\begin{lem}\label{lem:MPRKpos}
		An MPRK scheme \eqref{eq:MPRK_PDRS} is unconditionally positive. 
		The same holds for all the stages of the scheme, this is for all $\Delta t>0$ and $\b y^n>0$ we have $\b y^{(i)}>0$ for $i=1,\dots,s$. In particular, the inverses $(\b M^{(i)})^{-1}, (\b M)^{-1}$ exist and their entries lie in the interval $[0,1]$
	\end{lem}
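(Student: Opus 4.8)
The plan is to read everything off from the matrix form \eqref{eq:MPRK_PDRS_matrix} and to argue by induction over the stage index $i$, the inductive hypothesis being that $\b y^n>\b 0$ together with the positivity of all previously computed stages $\b y^{(1)},\dotsc,\b y^{(i-1)}$. Under this hypothesis every production, destruction and rest term occurring in $\b M^{(i)}$ is evaluated at a nonnegative argument and is hence nonnegative, while the Patankar-weight denominators $\pi_\nu^{(i)}$ are positive and independent of the numerators by definition. For $i=1$ the defining sums are empty, so $\b M^{(1)}=\b I$ and $\b y^{(1)}=\b y^n>\b 0$, which settles the base case. Everything then reduces to showing that $\b M^{(i)}$ is invertible with $(\b M^{(i)})^{-1}\geq \b 0$ and all entries in $[0,1]$: once this holds, positivity of $\b y^{(i)}$ follows because the right-hand side of \eqref{eq:MPRK_PDRS_matrix} equals $\b y^n$ plus a nonnegative vector (here $\b A\geq \b 0$ and $\b r^p\geq \b 0$ enter) and is therefore strictly positive, and a nonsingular nonnegative matrix sends a strictly positive vector to a strictly positive one, since a vanishing component would force an entire row of the inverse to be zero.

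I would then record the structure of $\b M^{(i)}=(m^{(i)}_{k\nu})$. Its diagonal entries satisfy $m^{(i)}_{kk}\geq 1>0$ and its off-diagonal entries satisfy $m^{(i)}_{k\nu}\leq 0$ for $k\neq\nu$, because $\b A\geq \b 0$, the production and destruction terms are nonnegative and $\pi^{(i)}_\nu>0$; hence $\b M^{(i)}$ is a $Z$-matrix. The decisive computation is the $\nu$-th column sum: summing the column and invoking the conservativity relation $p_{k\nu}=d_{\nu k}$ with $d_{\nu\nu}=0$, the production and destruction contributions cancel and only the rest destruction survives,
\[
\sum_{k=1}^N m^{(i)}_{k\nu}=1+\frac{\dt}{\pi^{(i)}_\nu}\sum_{j=1}^{i-1}a_{ij}\,r_\nu^d(\b y^{(j)},\tn+c_j\dt)\geq 1 .
\]
Since the off-diagonal entries are nonpositive, this is exactly strict column diagonal dominance with positive diagonal, so $\b M^{(i)}$ is a nonsingular $M$-matrix and therefore $(\b M^{(i)})^{-1}\geq \b 0$ (for instance via a Neumann series after diagonally scaling the—now strictly row diagonally dominant—transpose $(\b M^{(i)})^T$).

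For the upper bound I would convert the column-sum estimate into a bound on the inverse. The displayed inequality says $\b 1^T\b M^{(i)}\geq \b 1^T$ componentwise, with $\b 1$ the all-ones vector. Multiplying on the right by $(\b M^{(i)})^{-1}\geq \b 0$ preserves the inequality, and because $\b 1^T\b M^{(i)}(\b M^{(i)})^{-1}=\b 1^T$ this yields $\b 1^T\geq \b 1^T(\b M^{(i)})^{-1}$; that is, every column sum of $(\b M^{(i)})^{-1}$ is at most $1$. Being nonnegative, each entry is dominated by its column sum and thus lies in $[0,1]$. The update matrix $\b M$ is treated verbatim, replacing $a_{ij},\pi^{(i)}_\nu$ by $b_j,\sigma_\nu$ and using $\b b\geq \b 0$, which finally gives $\b y^{n+1}>\b 0$ and closes the induction.

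The sign bookkeeping and the appeal to the $M$-matrix property are routine. The single load-bearing step is the column-sum identity: it is precisely here that conservativity $p_{k\nu}=d_{\nu k}$ forces the production and destruction terms to cancel, leaving only the nonnegative rest destruction $r_\nu^d$. This one computation simultaneously secures strict diagonal dominance (hence invertibility and a nonnegative inverse) and delivers the sharp estimate $\b 1^T\b M^{(i)}\geq \b 1^T$ needed to bound the inverse entries by $1$, with everything else following directly.
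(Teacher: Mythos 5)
Your proof is correct and is essentially the argument the paper relies on: the paper omits the proof of this lemma and refers to \cite{BDM03, KM18}, where the reasoning proceeds exactly as yours does --- conservativity $p_{k\nu}=d_{\nu k}$ (with $p_{kk}=d_{kk}=0$) makes each column sum of $\b M^{(i)}$ and $\b M$ equal to $1$ plus a nonnegative rest-destruction contribution, so these are strictly column diagonally dominant matrices with nonpositive off-diagonal entries, hence nonsingular with nonnegative inverses whose column sums (and therefore entries) are bounded by $1$, and positivity of the iterates follows by induction over the stages. Your handling of the PDRS setting, in which only $r_\nu^d$ survives the cancellation in the column sum while $\b r^p$ is absorbed into the positive right-hand side, is precisely the ``slight generalization'' of the cited lemmas that the statement asserts.
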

	
	We also note that this scheme always produces positive approximations, if $\b y^0>\b 0$. However, if it is known that the analytic solution is not positive due to the existence of the rest term $\b r$, then one may consider choosing $\b r^d=\b 0$ and $\b r^p = \b r$ in the MPRK scheme \eqref{eq:MPRK_PDRS}. This essentially means that we drop the non-negativity constrain on $\b r^p$, so that the right-hand sides in \eqref{eq:MPRK_PDRS_matrix} are allowed to be negative, and thus, the stage vectors and iterates of the MPRK scheme are not forced to stay positive anymore. However, the production and destruction terms may have to be adjusted in order to guarantee that the matrices $\b M^{(i)}$ and $\b M$ remain invertible.
	
	\begin{rem}\label{rem:negweights}
		Definition~\ref{def:MPRKdefn} is formulated for non-negative Runge--Kutta parameters.
		But MPRK schemes with negative Runge--Kutta parameters can be devised as well.
		In this case, the weighting of the production and destruction terms which get multiplied by the negative weight must be interchanged. To be precise, the index $\nu$ of the PWDs $\pi^{(i)}_\nu$ and $\sigma_\nu$ in the formula \eqref{eq:MPRK_PDRS} is replaced by the value of the \emph{index function} 
		\begin{equation}\label{eq:indexfun}
			\gamma(\nu,k,x)=\begin{cases}
				\nu, &x\geq 0\\
				k, & x< 0
			\end{cases}
		\end{equation} at $x=a_{ij}$ and $x=b_j$, respectively. Similarly, the index $k$ is replaced by $\gamma(k,\nu,a_{ij})$ and $\gamma(k,\nu,b_j)$ for $\pi^{(i)}_k$ and $\sigma_k$, respectively.
		
		This procedure will ensure the unconditional positivity of the scheme, but one may argue that this has an impact on the necessary requirements to obtain a certain order of accuracy. Fortunately, we will discover that this is not the case in Chapter~\ref{chap:order}. To avoid multiple case distinctions we demand for positive Runge--Kutta parameters in the remainder of this thesis.
	\end{rem}
	
	%
	
	Next, we want to explain in what sense the given definition of MPRK schemes generalizes the existing ones from \cite{KM18,IssuesMPRK}. First, MPRK schemes can be understood as NSARK methods using the splitting of the right-hand side mentioned in Remark~\ref{rem:PDStoAdditive}. Substituting this into \eqref{eq:MPRK_PDRS} and setting $t_j= t_n + c_j\dt$, we see
	\begin{align*} 
		y^{(i)}_k & = y^n_k +\! \dt \sum_{j=1}^{i-1} a_{ij}\!\! \left(\sum_{\substack{\nu=1\\ \nu\neq k}}^N  f_k^{[\nu]}(\byj, t_j) \frac{\yi_\nu}{\pi_{\nu}^{(i)}} + f_k^{[k]}(\byj, t_j)\frac{\yi_k}{\pi_{k}^{(i)}} +f_k^{[N+1]}(\byj, t_j)\!\right) \\
		&=y^n_k + \dt \sum_{j=1}^{i-1}a_{ij}\left( \sum_{\substack{\nu=1}}^N \frac{\yi_\nu}{\pi_{\nu}^{(i)}}  f_k^{[\nu]}(\byj) + f_k^{[N+1]}(\byj, t_j)\right)\\
		& =y^n_k + \dt \sum_{j=1}^{i-1} \sum_{\substack{\nu=1}}^{N+1} a^{[\nu]}_{ij}(\b y^n,\dt)  f_k^{[\nu]}(\byj),\\
		y^{n+1}_k & = y^n_k + \dt \sum_{j=1}^s b_j \left( \sum_{\substack{\nu=1\\ \nu\neq k}}^N  f_k^{[\nu]}(\byj)\frac{y^{n+1}_\nu}{\sigma_\nu}+ f_k^{[k]}(\byj) \frac{y^{n+1}_k}{\sigma_k} +f_k^{[N+1]}(\byj, t_j)\right)\\
		&= y^n_k + \dt \sum_{j=1}^s\sum_{\substack{\nu=1}}^{N+1} b^{[\nu]}_j(\b y^n,\dt)   f_k^{[\nu]}(\byj),
	\end{align*} where the solution-dependent coefficients are given by
	\begin{equation}\label{eq:pertcoeff}
		a_{ij}^{[\nu]}(\b y^n,\dt)=\begin{cases}
			a_{ij}\frac{\yi_\nu}{\pi_{\nu}^{(i)}}, & \nu\leq N,\\
			a_{ij}, & \nu =N+1
		\end{cases} \,\, \text{ and }\,\, b_j^{[\nu]}(\b y^n,\dt)=\begin{cases}
			b_j\frac{y^{n+1}_\nu}{\sigma_\nu}, & \nu\leq N,\\
			b_j, & \nu =N+1.
		\end{cases}
	\end{equation}\label{eq:NS_weightsMPRK}
	This means that the NSWs are \begin{equation}
		\gamma_\nu^{[i]}=\begin{cases}
			\frac{\yi_\nu}{\pi_{\nu}^{(i)}}, & \nu\leq N,\\
			1, & \nu =N+1
		\end{cases} \,\, \text{ and }\,\,\delta_\nu=\begin{cases}
			\frac{y^{n+1}_\nu}{\sigma_\nu}, & \nu\leq N,\\
			1, & \nu =N+1,
		\end{cases}
	\end{equation}
	see \eqref{eq:NSweights}.
	\begin{rem}\label{rem:negButcher}
		In view of the index function \eqref{eq:indexfun}, the NSWs for MPRK schemes based on RK methods with negative entries in the Butcher tableau not only depend on the step size, solution, and splitting of the right-hand side but also vary with its components. Hence, our formulation \eqref{eq:nsark} actually does not capture this case as we used vector notation. However, for the sake of simplicity and the reading flow, we rather discuss this special case in the particular sections than complicating the notation at this point.
	\end{rem}
	If in the context of an MPRK method, constant addends in the right-hand side splitting are treated as rest terms, then $\b F^{[N+1]}$ in Proposition~\ref{prop:nsarkautonom} will be integrated explicitly, which means that the condition $\gamma_{N+1}^{[i]},\delta_{N+1}^{[j]}=1$ is satisfied as this term is not multiplied with a PWD. Hence, with this convention it suffices to study autonomous problems for deriving order conditions. Moreover we are also in the position to apply Theorem~\ref{thm:ConsistencyImpConvergence}, if the production, destruction and rest terms as well as the PWDs are in $\mathcal C^1$ because of the following. The linear systems always possess a unique solution and the implicit function theorem tells us that the resulting incremental map is in $\mathcal C^1$, and hence, locally Lipschitz with respect to its second argument. We will later see that the PWDs fulfill these requirements for the particular MPRK schemes. 
	
	Hereafter, we present schemes for the conservative and autonomous PDS \eqref{eq:PDS}. The formulation for general PDRS is straightforward. In particular, \eqref{eq:MPRK_PDRS} reduces in this case to
	\begin{subequations}\label{eq:MPRK}
		\begin{align} \label{MPRKstage}
			\yi_k & = y^n_k + \dt \sum_{j=1}^{i-1} a_{ij} \sum_{\nu=1}^N \left( p_{k\nu}(\byj) \frac{\yi_\nu}{\pi_\nu^{(i)}} - d_{k\nu}(\byj)\frac{\yi_k}{\pi_k^{(i)}} \right),  \quad i=1,\dotsc,s, \\
			y^{n+1}_k & = y^n_k + \dt \sum_{j=1}^s b_j \sum_{\nu=1}^N \left( p_{k\nu}(\byj)\frac{y^{n+1}_\nu}{\sigma_\nu} - d_{k\nu}(\byj) \frac{y^{n+1}_k}{\sigma_k} \right),\quad  k=1,\dotsc,N.
		\end{align}
	\end{subequations}

	\subsection*{First Order MPRK Scheme}
	Based on the explicit Euler method, the first MPRK method, the so-called modified Patankar Euler (MPE) scheme was developed in \cite{BDM03}. It is proven to be first order accurate when applied to \eqref{eq:PDS} within the same work and reads 
	\begin{equation}\tag{MPE}\label{eq:MPE}
		\begin{aligned}
			y_k^{(1)} =&\, y_k^n,\\ 
			y_k^{n+1} =&\, y_k^n + \Delta t\sum_{\nu=1}^N\left(p_{k\nu}(\b y^{(1)})\frac{y_\nu^{n+1}}{y_\nu^n}-d_{k\nu}(\b y^{(1)})\frac{y_k^{n+1}}{y_k^n}\right),
		\end{aligned}
	\end{equation}
	for $k=1,\dots,N$, that is $\sigma_\nu=y_\nu^n$. Here, $\sigma_\nu$ is obviously a $\mathcal C^1$-mapping.
	\subsection*{Second Order MPRK Schemes}
	The explicit 2-stage RK method based on the Butcher array
	\begin{equation*}
		\begin{aligned}
			\def\arraystretch{1.2}
			\begin{array}{c|ccc}
				0 &  & \\
				\alpha & \alpha &  \\
				\hline
				& 1-\frac{1}{2\alpha} &\frac{1}{2\alpha}
			\end{array}
		\end{aligned}
	\end{equation*}
	is second order accurate. Moreover, the entries of the array are non-negative for $\alpha\geq \frac12$. With that as a starting point, the authors from \cite{KM18} derived a 1-parameter family of second order accurate MPRK schemes using $\pi_\nu^{(2)}=y_\nu^n$ and $\sigma_\nu= (y_\nu^{(2)})^{\frac{1}{\alpha}}(y_\nu^n)^{1-\frac{1}{\alpha}}$ for $i=1,\dotsc,N.$ For simplicity, we again present the resulting MPRK22($\alpha$) scheme for solving \eqref{eq:PDS}, \ie
	\begin{align}
		y_k^{(1)} =&\, y_k^n,\nonumber\\ 
		y_k^{(2)} =&\, y_k^n + \alpha\Delta t\sum_{\nu=1}^N\left(p_{k\nu}(\b y^{(1)})\frac{y_\nu^{(2)}}{y_\nu^n}-d_{k\nu}(\b y^{(1)})\frac{y_k^{(2)}}{y_k^n}\right),\nonumber\\ 
		y_k^{n+1} =&\, y_k^n + \Delta t\sum_{\nu=1}^N\left( \Biggl(\biggl(1-\frac1{2\alpha}\biggr) p_{k\nu}(\b y^{(1)})+\frac1{2\alpha} p_{k\nu}(\b y^{(2)})\Biggr)\frac{y_\nu^{n+1}}{(y_\nu^{(2)})^{\frac{1}{\alpha}}(y_\nu^n)^{1-\frac{1}{\alpha}}}\right.\nonumber\\
		& \left. - \Biggl(\biggl(1-\frac1{2\alpha}\biggr) d_{k\nu}(\b y^{(1)})+ \frac1{2\alpha} d_{k\nu}(\b y^{(2)})\Biggr)\frac{y_k^{n+1}}{(y_k^{(2)})^{\frac{1}{\alpha}}(y_k^n)^{1-\frac{1}{\alpha}}}\right)\tag{MPRK22}\label{eq:MPRK22b}
	\end{align}	
	for $k=1,\dots,N$ with $\alpha\geq\frac12$. Since $y_\nu^{(2)}=y_\nu^{(2)}(\b y^n)$ is in $\mathcal C^1$ due to the implicit function theorem, the same holds for the PWDs.
	
	\subsection*{Third Order MPRK Schemes}
	
	Assuming a non-negative Butcher tableau from an explicit 3-stage RK method, third order MPRK schemes have been constructed in \cite{KM18Order3} for solving \eqref{eq:PDS} using the denominator weights
	\begin{equation}\label{eq:PWDsMPRK43}
		\begin{aligned}
			\pi_\nu^{(2)} =\, &y_\nu^n,\\
			\pi_\nu^{(3)}  =\, &(y_\nu^{(2)})^{\frac{1}{p}}(y_\nu^n)^{1-\frac{1}{p}},\quad p=3a_{21}(a_{31}+a_{32})b_3,\\
			\sigma_k=\, &y_k^n+ \dt \sum_{\nu=1}^N\left( \left( \beta_1p_{k\nu}(\b y^n)+\beta_2p_{k\nu}(\b y^{(2)})\right)\frac{\sigma_\nu}{(y_\nu^{(2)})^{\frac{1}{a_{21}}}(y_\nu^n)^{1-\frac{1}{a_{21}}}}\right. \\
			& - \left.\left( \beta_1d_{k\nu}(\b y^n)+\beta_2d_{k\nu}(\b y^{(2)})\right)\frac{\sigma_k}{(y_k^{(2)})^{\frac{1}{a_{21}}}(y_k^n)^{1-\frac{1}{a_{21}}}}\right)
		\end{aligned}
	\end{equation}
	for $\nu,k=1,\dotsc,N$, $\beta_1=1-\beta_2$ and $\beta_2=\frac{1}{2a_{21}}$. Note, that solving another system of linear equations is necessary to calculate $\bm \sigma=(\sigma_1,\dotsc,\sigma_N)$. Hence, the resulting MPRK scheme may be based on 3-stage RK methods but can be viewed as 4-stage schemes, where we note that $\bm \sigma$ can be computed simultaneously with $\b y^{(3)}$. We also point out that there are no additional right-hand side evaluations required for computing $\bm\sigma$. The final scheme for conservative and autonomous PDS takes the form
	{\allowdisplaybreaks
		\begin{align}
			y^{(1)}_k &= y^n_k,\nonumber\\
			y^{(2)}_k &= y^n_k
			+  a_{21} \dt \sum_{\nu=1}^N\left(
			p_{k\nu}\bigl( \b y^n \bigr) \frac{y^{(2)}_\nu}{y^n_\nu}
			- d_{k\nu}\bigl( \b y^n \bigr) \frac{y^{(2)}_k}{y^n_k}
			\right),
			\nonumber\\
			y^{(3)}_k &= y^n_k
			+\Delta t \sum_{\nu=1}^N
			\Biggl(\left(a_{31} p_{k\nu}\bigl(\b y^n\bigr)+ a_{32} p_{k\nu} \bigl(\b y^{(2)}\bigr) \right)  \frac{  y_\nu^{(3)}
			}{\bigl(y_\nu^{(2)}\bigr)^{\frac1p } \bigl(y_\nu^n\bigr)^{1-\frac1p} }
			\nonumber\\
			& \qquad\qquad\qquad
			-\left(a_{31} d_{k\nu}\bigl(\b y^n\bigr)+ a_{32} d_{k\nu} \bigl(\b y^{(2)}\bigr) \right)  \frac{  y_k^{(3)}
			}{\bigl(y_k^{(2)}\bigr)^{\frac1p } \bigl(y_k^n\bigr)^{1-\frac1p} }
			\Biggr),
			\nonumber\\
			\hphantom{baselineskip}&\nonumber\\
			\sigma_k &= y_k^n + \Delta t \sum_{\nu=1}^N
			\Biggl(\left( \beta_1 p_{k\nu} \bigl( \b y^n\bigr) +\beta_2 p_{k\nu} \bigl(\b y^{(2)}\bigr)  \right) \frac{\sigma_\nu}{\bigl(y_\nu^{(2)} \bigr)^{\frac1q}
				\bigl(y_\nu^n\bigr)^{1-\frac1q}}
			\nonumber\\ & \qquad-
			\left( \beta_1 d_{k\nu} \bigl( \b y^n\bigr) +\beta_2 d_{k\nu} \bigl(\b y^{(2)}\bigr)  \right) \frac{\sigma_k}{\bigl(y_k^{(2)} \bigr)^{\frac1q}
				\bigl(y_k^n\bigr)^{1-\frac1q}}\Biggr),
			\nonumber\\
			y^{n+1}_k &= y^n_k
			+ \dt \sum_{\nu=1}^N \Biggl(
			\left( b_1 p_{k\nu}\bigl( \b y^n \bigr) +b_2p_{k\nu}\bigl( \b y^{(2)} \bigr)
			+ b_3 p_{k\nu}\bigl( \b y^{(3)} \bigr)
			\right) \frac{y^{n+1}_\nu}{\sigma_\nu}
			\nonumber\\&\qquad\qquad\qquad
			- \left( b_1 d_{k\nu}\bigl( \b y^n \bigr) +b_2d_{k\nu}\bigl( \b y^{(2)} \bigr)
			+ b_3 d_{k\nu}\bigl( \b y^{(3)} \bigr)
			\right) \frac{y^{n+1}_k}{\sigma_k}
			\Biggr),\tag{MPRK43}
			\label{eq:MPRK43-family}
	\end{align}}
	where $p=3 a_{21}\left(a_{31}+a_{32} \right)b_3,\; q=a_{21},\;\beta_2=\frac{1}{2a_{21}}$ and $\beta_1= 1-\beta_2$. As before, the PWDs are in $\mathcal C^1$, if the production and destruction terms are.
	\subsubsection{MPRK43($\alpha, \beta$)}
	All entries of the Butcher array
	\begin{equation}\label{array:MPRK43alphabeta}
		\begin{aligned}
			\def\arraystretch{1.2}
			\begin{array}{c|ccc}
				0 &  & & \\
				\alpha & \alpha & & \\
				\beta & \frac{3\alpha\beta (1-\alpha)-\beta^2}{\alpha(2-3\alpha)}& \frac{\beta (\beta-\alpha)}{\alpha(2-3\alpha)}& \\
				\hline
				& 1+\frac{2-3(\alpha+\beta)}{6 \alpha \beta } &\frac{3 \beta-2}{6\alpha (\beta-\alpha)} & \frac{2-3\alpha}{6\beta(\beta-\alpha)}
			\end{array}
		\end{aligned}
	\end{equation}
	with
	\begin{equation}\label{eq:cond:alpha,beta}
		\begin{cases}
			2/3 \leq \beta \leq 3\alpha(1-\alpha)\\
			3\alpha(1-\alpha)\leq\beta \leq 2/3 \\
			\tfrac{3\alpha-2}{6\alpha-3}\leq \beta \leq 2/3
		\end{cases}
		\text{ for }
		\begin{cases}
			1/3 \leq \alpha<\frac23,\\
			2/3 < \alpha<\alpha_0,\\
			\alpha>\alpha_0,
		\end{cases}
	\end{equation}
	and $\alpha_0\approx 0.89255$  are non-negative \cite[Lemma 6]{KM18Order3}, see Figure \ref{fig:RK3_pos} for an illustration of the feasible domain. 
	\begin{figure}[!h]
		\centering
		\includegraphics[width=.7\textwidth]{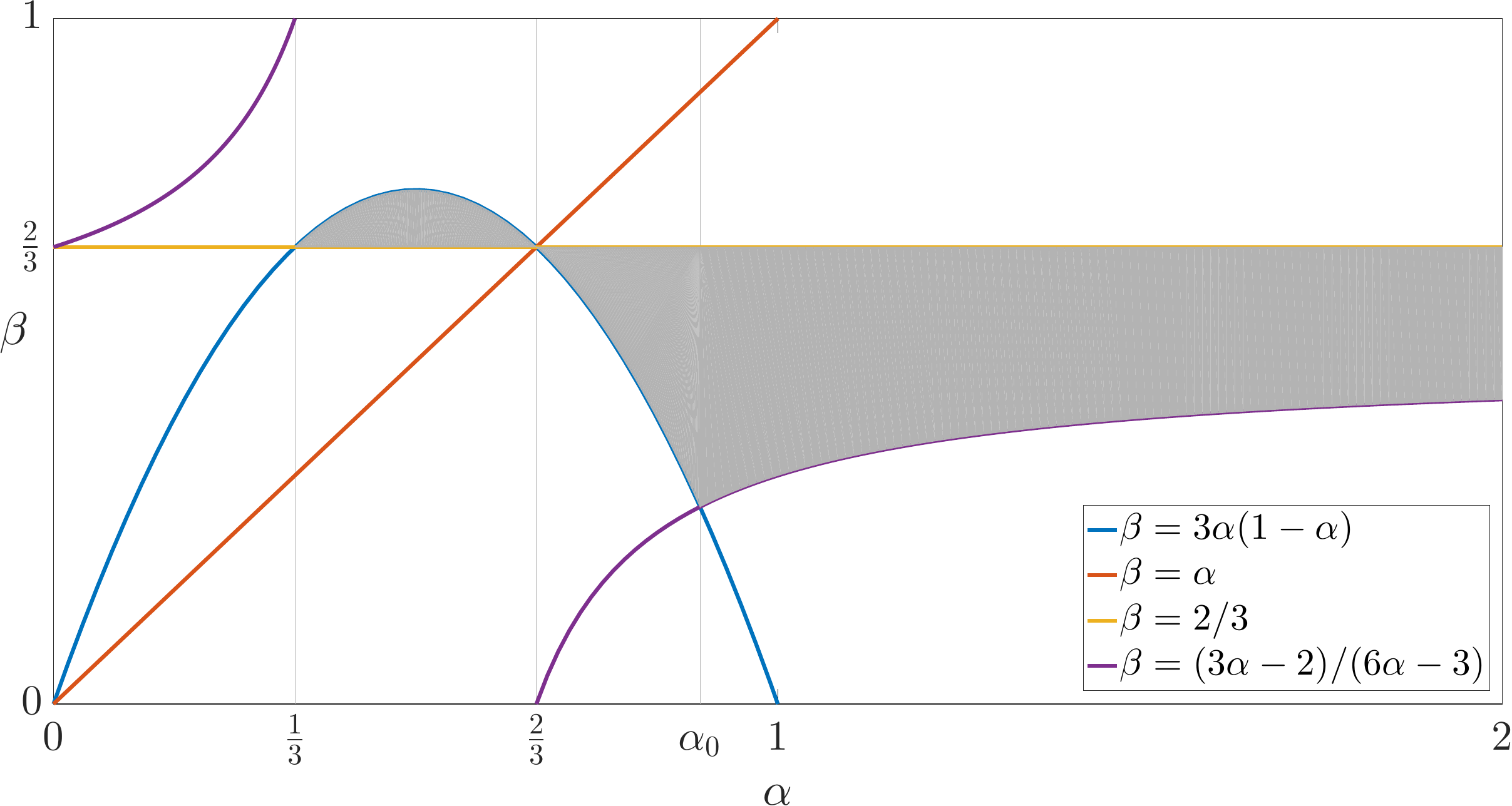}
		\caption{The gray area represents all $(\alpha,\beta)$ pairs which fulfill the conditions \eqref{eq:cond:alpha,beta}, \ie for which the Butcher tableau \eqref{array:MPRK43alphabeta} is non-negative \cite{KM18Order3}.}\label{fig:RK3_pos}
	\end{figure}
	
	The resulting MPRK43$(\alpha,\beta)$ method is determined by \eqref{eq:MPRK43-family} using
	\eqref{array:MPRK43alphabeta} and
	\begin{equation}\label{eq:parametersMPRK43alphabeta}
		\begin{aligned}
			p&=3a_{21}(a_{31}+a_{32})b_3=\alpha\frac{2-3\alpha}{2(\beta-\alpha)},&
			q&=a_{21}=\alpha,\\
			\beta_2&=\frac{1}{2a_{21}}= \frac{1}{2\alpha},& \beta_1&=1-\beta_2=1-\frac{1}{2\alpha}.
		\end{aligned}
	\end{equation}

	\subsubsection{MPRK43($\gamma$)}
	It was also proven in \cite[Lemma 6]{KM18Order3} that all entries of the tableau
	\begin{equation}\label{array:MPRK43gamma}
		\begin{aligned}
			\def\arraystretch{1.2}
			\begin{array}{c|ccc}
				0 &  & & \\
				\frac23 & \frac23 & & \\
				\frac23 & \frac23-\frac{1}{4\gamma}& \frac{1}{4\gamma}& \\
				\hline
				& \frac14 &\frac34-\gamma & \gamma
			\end{array}
		\end{aligned}
	\end{equation}
	are non-negative for $\frac38\leq \gamma \leq \frac34.$ The corresponding MPRK scheme is denoted by MPRK43$(\gamma)$ and can be obtained from \eqref{eq:MPRK43-family} by substituting \eqref{array:MPRK43gamma} and
	\begin{equation}\label{eq:parametersMPRK43gamma}
		\begin{aligned}
			p&=3a_{21}(a_{31}+a_{32})b_3=\frac43\gamma,&
			q&=a_{21}=\frac23,\\
			\beta_2&=\frac{1}{2a_{21}}=\frac34, &\beta_1&=1-\beta_2=\frac14.
		\end{aligned}
	\end{equation}

			\section[Strong-Stability Preserving MPRK]{Strong-Stability Preserving MPRK}
			Strong-stability preserving Runge--Kutta (SSPRK) methods were introduced in \cite{SO88} and developed for the time integration of the semi-discretization of hyperbolic conservation laws. The main idea was to rewrite an explicit RK method into \emph{Shu--Osher form}. With that, the authors in \cite{SO88} present higher order methods that preserve any convex functional bound such as positivity or total variation diminishing (TVD) property whenever the forward Euler method possesses the respective property. To obtain unconditional positivity, strong-stability preserving modified Patankar--Runge--Kutta (SSPMPRK) methods were constructed in \cite{SSPMPRK2} and proven to be of second order. Later, also third order methods were constructed \cite{SSPMPRK3}. Moreover, the schemes are also conservative and there exist analogues of Lemma~\ref{lem:MPRK} and Lemma~\ref{lem:MPRKpos} for these methods.
			
			In order to adapt SSPMPRK methods into our framework of NSARK schemes, we would have to introduce the ARK methods in Shu--Osher form and then consider solution-dependent coefficients. This together with the corresponding generalization of the results from \cite{SSPMPRK2,SSPMPRK3,HIKMS22} along the theory developed in \cite{NSARK} is object to future work. We also want to note here that in \cite{SSPMPRK2,SSPMPRK3}, the SSPMPRK methods were also used as time integrators in the context of reactive Euler equations.
			\paragraph{Second Order SSPMPRK Schemes}
			The second order SSPMPRK scheme for solving \eqref{eq:PDS}, introduced in \cite{SSPMPRK2}, is given by
			\begin{equation}\tag{SSPMPRK2}\label{eq:SSPMPRK2}
				\begin{aligned}
					y_i^{(1)}={}& y_i^n+\beta \dt\left(\sum_{j=1}^Np_{ij}(\b y^n)\frac{y_j^{(1)}}{y_j^n}- \sum_{j=1}^Nd_{ij}(\b y^n)\frac{y_i^{(1)}}{y_i^n}\right),\\
					y_i^{n+1}={}& (1-\alpha)y_i^n+\alpha y_i^{(1)}+\dt\Biggl(\sum_{j=1}^N\left(\beta_{20}p_{ij}(\b y^n)+\beta_{21}p_{ij}(\b y^{(1)})\right)\frac{y_j^{n+1}}{(y_j^n)^{1-s}(y_j^{(1)})^s} \\& - \sum_{j=1}^N\left(\beta_{20}d_{ij}(\b y^n)+\beta_{21}d_{ij}(\b y^{(1)})\right)\frac{y_i^{n+1}}{(y_i^n)^{1-s}(y_i^{(1)})^s}\Biggr),
				\end{aligned}
			\end{equation}
			where $\beta_{20}=1-\frac{1}{2\beta}-\alpha\beta$, $\beta_{21}=\frac{1}{2\beta}$ and $s=\frac{1-\alpha\beta+\alpha\beta^2}{\beta(1-\alpha\beta)}$. Thereby, the free parameters $\alpha$ and $\beta$ are subject to
			\begin{equation}
				0\leq \alpha\leq 1,\quad \beta>0,\quad \alpha\beta+\frac{1}{2\beta}\leq 1.\label{eq:alphabeta_conditions}
			\end{equation}
			We refer to the above scheme as SSPMPRK2($\alpha,\beta$).
			\paragraph{Third Order SSPMPRK Schemes}
			The third order method applied to \eqref{eq:PDS} can be written as
			{\allowdisplaybreaks
				\begin{align}
					y_i^{(1)}=&\alpha_{10}y_i^n+\beta_{10} \dt\left(\sum_{j=1}^Np_{ij}(\b y^n)\frac{y_j^{(1)}}{y_j^n}- \sum_{j=1}^Nd_{ij}(\b y^n)\frac{y_i^{(1)}}{y_i^n}\right),\nonumber\\
					\rho_i=&n_1y_1^{(1)}+n_2y_i^n\left(\frac{y_i^{(1)}}{y_i^n}\right)^2,\nonumber\\
					y_i^{(2)}=&\alpha_{20}y_i^n+\alpha_{21} y_i^{(1)}+\dt\Biggl(\sum_{j=1}^N\left(\beta_{20}p_{ij}(\b y^n)+\beta_{21}p_{ij}(\b y^{(1)})\right)\frac{y_j^{(2)}}{\rho_j}\nonumber\\& - \sum_{j=1}^N\left(\beta_{20}d_{ij}(\b y^n)+\beta_{21}d_{ij}(\b y^{(1)})\right)\frac{y_i^{(2)}}{\rho_i}\Biggr),\nonumber\\
					\gamma_i=&\eta_1y_i^n+\eta_2 y_i^{(1)}+\dt\Biggl(\sum_{j=1}^N\left(\eta_3p_{ij}(\b y^n)+\eta_{4}p_{ij}(\b y^{(1)})\right)\frac{\gamma_j}{(y_j^n)^{1-s}(y_j^{(1)})^s}\\&-\sum_{j=1}^N\left(\eta_3d_{ij}(\b y^n)+\eta_{4}d_{ij}(\b y^{(1)})\right)\frac{\gamma_i}{(y_i^n)^{1-s}(y_i^{(1)})^s}\Biggr),\nonumber\\
					\sigma_i=&\gamma_i+\zeta y_i^n\frac{y_i^{(2)}}{\rho_i},\nonumber\\
					y_i^{n+1}=&\alpha_{30}y_i^n+\alpha_{31} y_i^{(1)}+\alpha_{32}y_i^{(2)}\nonumber\\
					&+\dt\Biggl(\sum_{j=1}^N\left(\beta_{30}p_{ij}(\b y^n)+\beta_{31}p_{ij}(\b y^{(1)})+\beta_{31}p_{ij}(\b y^{(2)})\right)\frac{y_j^{n+1}}{\sigma_j}\\&- \sum_{j=1}^N\left(\beta_{30}d_{ij}(\b y^n)+\beta_{31}d_{ij}(\b y^{(1)})+\beta_{32}d_{ij}(\b y^{(2)})\right)\frac{y_i^{n+1}}{\sigma_i}\Biggr),\tag{SSPMPRK3}\label{eq:SSPMPRK3}
			\end{align}}
			where we use the parameters
			\begin{equation}\label{eq:SSPMPRK3Parameters}
				\begin{aligned}
					\alpha_{10}&=1,&\alpha_{20} &= 9.2600312554031827\cdot10^{-1}, \\\alpha_{21} &= 7.3996874459681783\cdot10^{-2},&
					\alpha_{30} &= 7.0439040373427619\cdot10^{-1},\\
					\alpha_{31} &= 2.0662904223744017\cdot10^{-10},
					&\alpha_{32} &= 2.9560959605909481\cdot10^{-1},\\
					\beta_{10} &= 4.7620819268131703\cdot10^{-1},
					&\beta_{20} &= 7.7545442722396801\cdot10^{-2},\\
					\beta_{21} &= 5.9197500149679749\cdot10^{-1},&
					\beta_{30} &= 2.0044747790361456\cdot10^{-1},\\
					\beta_{31} &= 6.8214380786704851\cdot10^{-10},
					&\beta_{32} &= 5.9121918658514827\cdot10^{-1},\\
					\zeta&=0.62889380778287493358,
					&\eta_1&=0.37110619221712506642 - \eta_2, \\
					\eta_3&=-1.2832127371313151768\eta_2& 	\eta_4&=2.2248760403511226405,\\ &\hphantom{=} + 0.6146025595987523739,&
					\\
					n_1&=0.25690460257320105191, &
					n_2&=1-n_1
				\end{aligned}
			\end{equation}
			in accordance with \cite{SSPMPRK3}.
			Here, $\eta_2$ is a free parameter satisfying $\eta_2\in[0,r_1]$ with $r_1=0.37110619221712506642$, so that we refer to this scheme as SSPMPRK3($\eta_2$). For more details on the parameters we refer to the Maple code in the reproducibility
			repository \cite{repoSSPMPRK}.
			
			\section{Modified Patankar Deferred Correction}
			Arbitrarily high-order conservative and positive modified Patankar Deferred
			Correction schemes (MPDeC) were introduced in \cite{MPDeC} which are based on the Deferred Correction (DeC) approach developed in \cite{dutt2000spectral}. 
			To that end, a time step $[\tn, t_{n+1}]$ is transformed to $[0,1]$ and then divided into $M$ subintervals determined by $0=t_{n,0}<\cdots <t_{n,M}=1$. The idea of the scheme is to mimic the Picard iterations on a discrete level
			as follows. At each subtime step $t_{n,m}$ an approximation $y^{m}$ is calculated.
			An iterative procedure of $K$ correction steps improves the approximation by one
			order of accuracy at each iteration.
			The modified Patankar-trick is introduced inside the basic scheme to guarantee  positivity and conservation of the intermediate approximations.
			
			The MPDeC correction steps
			can be rewritten for $k=1,\dots,K$, $m =1,\dots, M$ and $i=1,\dotsc,N$ as
			\begin{equation}
				\tag{MPDeC}
				\label{eq:explicit_dec_correction}
				y_i^{m,(k)}=y^0_i +\sum_{r=0}^M \theta_r^m \dt  \sum_{j=1}^N
				\left( p_{ij}(y^{r,(k-1)})
				\frac{y^{m,(k)}_{\gamma(j,i, \theta_r^m)}}{y_{\gamma(j,i, \theta_r^m)}^{m,(k-1)}}
				- d_{ij}(y^{r,(k-1)})  \frac{y^{m,(k)}_{\gamma(i,j, \theta_r^m)}}{y_{\gamma(i,j, \theta_r^m)}^{m,(k-1)}} \right),
			\end{equation}
			where $\theta_r^m=\int_{0}^{t_{n,m}}\varphi_r(t)\dd t$ are the \emph{correction weights}, and \[\gamma(j,i,\theta_r^m)=\begin{cases}
				j, & \theta_r^m\geq 0\\
				i, & \theta_r^m< 0
			\end{cases}\] is
			the index function \eqref{eq:indexfun}. Here, $\varphi_r$ is the $r$th Lagrangian polynomial defined by the subtime nodes $\lbrace t_{n,m} \rbrace_{m=0}^M$. As a result of $\theta_r^0=0$, the initial states $y_i^{0,(k)}=y_i^n$
			are identical for any correction $k$.  The new numerical solution at time $\tn+\dt$ is $\b y^{n+1}=\b y^{M,(K)}$.
			\newpage
			\begin{rem}\label{rem:MPDeC_NSARK}
				Formally, MPDeC methods can be interpreted as RK schemes by viewing the correction steps as additional stages. Consequently, MPDeC methods can be written as NSARK schemes. However, similarly to the case discussed in Remark~\ref{rem:negButcher}, the NSWs of MPDeC depend on the components of the vector $\b y^{m,(k)}$ whenever the correction weights are negative, which is already the case for $K>2$. Nevertheless, since the weights are similar to those of MPRK methods we can conclude that the order of accuracy of MPDeC methods is also maintained for non-autonomous PDS and that Theorem~\ref{thm:ConsistencyImpConvergence} can be applied.
			\end{rem}
			
			The order of accuracy of the MPDeC scheme is the minimum between $K$ and the accuracy of the quadrature formula given by the $M$ subtime steps. In view of the existing literature, we will focus on equispaced (\eq) and Gauss--Lobatto (\gl) subtime steps \cite{MPDeC}. To obtain order $p$, a number of $K=p$ iterations is required, while we need  $M=\max\{p-1,1\}$ \eq\, subtime steps or $M=\left\lceil \frac{p}{2} \right \rceil$ \gl\, subtime steps. To indicate the quadrature formula we introduce the notation MPDeCGL($p$) and MPDeCEQ($p$) for MPDeC methods of order $p$ using \gl\, or \eq\, subtime steps, respectively.

			Obviously \eqref{eq:explicit_dec_correction} is due to this iterative process more complicated than the previous schemes, especially since the index function changes productive and destructive part inside the underlying PDS. However, these methods are arbitrary high order, unconditionally positive and conservative. Additionally, they have been applied successfully in the context of the shallow water equations guaranteeing a positive water height \cite{CMOT21}.
			
			\section{Geometric Conservative}\label{sec:GeCo}
			A class of numerical methods that preserve all linear invariants and still guarantee positivity is given by GeCo schemes introduced in \cite{martiradonna2020geco}. These methods fall in the class of  non-standard integrators \cite{mickens1994nonstandard} as they result as  non-standard versions of explicit first and second order Runge--Kutta schemes, where the advancement in time is modulated by a nonlinear functional dependency on the temporal step size and on the  approximation itself. 
			The step size modification thereby guarantees the numerical solution to be unconditionally positive while keeping the accuracy of the underlying method. GeCo schemes are applied to  general biochemical systems \cite{formaggia2011positivity,BBKS2007} 
			\begin{equation}\label{eq:bio}
				\b y'=\b f(\b y,t)=\b S\b r(\b y,t),  \qquad \b y(0)=\b y^0,
			\end{equation}
			where  $\b S\in\mathbb R^{N\times M}$ is the \emph{stoichiometric} matrix with entries $s_{ij}$ for $i=1,\dotsc, N$ and $j=1,\dotsc,M$, and $\b r(\b y)=(r_1(\b y),\dots,r_M(\b y))^T$ is the vector of the \emph{reaction} functions. The following assumptions, stated in \cite{formaggia2011positivity}, assure the well-posedness of the system \eqref{eq:bio} and the positivity of the solutions.
			\begin{enumerate}
				\item For $j=1,\dots,M$ we have $r_j\in \mathcal{C}^0 \left(\mathbb{R}_{\geq 0}^N,\mathbb{R}_{\geq0}\right)$ and $r_j(\cdot,t)$ is locally Lipschitz in $\mathbb{R}^N$, uniformly in $t$.
				\item There holds $\b r(\b y,t)>\b 0$ if $\b y>\b 0$, and     $\b r(\b y,t)=\b 0$ if $\b y=\mathbf 0$ for all $t>0$.
				\item If $s_{ij}<0$, there exists a 
				$q_j\in \mathcal{C}^0 \left(\mathbb{R}_{\geq 0}^N,\mathbb{R}_{\geq0} \right)$
				such that $r_j(\b y,t)=q_j(\b y,t)y_i$.
			\end{enumerate}
			The GeCo methods are of the form
			\begin{equation}\tag{GeCo}\label{eq:GeCoscheme}
				\begin{aligned}
					\byi&=\b y^n+\phi_i(\b y^n,\tn,\dt) \dt\sum_{j=1}^{i-1}a_{ij}\b f(\byj),\quad i=1,\dotsc,s,\\
					\b y^{n+1}&=\b y^n+\phi_{n+1}(\b y^n,\tn,\dt) \dt\sum_{j=1}^sb_j\b f(\byj),
				\end{aligned}
			\end{equation}
			see \cite{martiradonna2020geco}, where we point out that our formulation includes non-autonomous biochemical problems.
			Note that $\phi$ here corresponds to the function $\Phi$ of \cite{martiradonna2020geco} divided by $\dt$, and that the value of $\phi_1$ has no effect since $a_{1j}=0$.
			The idea is to choose the functions  $\phi_i$ and $\phi_{n+1}$
			in a way that guarantees the positivity of the stages
			and the updated solution.  At the same time, these functions
			must be chosen in a way that does not compromise the order
			of accuracy.
			Up to now, only conditions for first and second order GeCo schemes are available. 
			
			To interpret \eqref{eq:GeCoscheme} as a non-standard RK (NSRK) method,
			we absorb the factors $\phi_i, \phi_{n+1}$ into the RK coefficients, which we can write formally in the notation of Section~\ref{sec:NSARK} via the coefficients:
			\begin{equation}\label{eq:Geco_NS}
				a^{[1]}_{ij}(\b y^n,\tn,\dt)=a_{ij}\phi_i(\b y^n,\tn, \dt),\quad b^{[1]}_j(\b y^n,\tn, \dt)=b_j\phi_{n+1}(\b y^n,\tn, \dt)
			\end{equation}
			for  $i,j=1,\dotsc, s.$ This means that the NSWs are $\gamma_i^{[1]}=\phi_i$ and $\delta_i =\phi_{n+1}$.
			\paragraph{First Order GeCo Scheme}For the construction of the NSWs of GeCo methods, the vector field $\b f(\b y,t)=\b S\mathbf r(\b y,t)$ is split into production and destruction parts as
			\begin{equation}\label{eq:biosplitting}
				\b f(\b y,t)=\b f^{[P]}(\b y,t)-\b f^{[D]}(\b y,t),\quad \b f^{[P]}(\b y,t)=\b S^+\b r(\b y,t),\quad \b f^{[D]}(\b y,t)=\b S^-\b r(\b y,t)
			\end{equation}
			with $\b S^+,\b S^-\geq \b 0$.
			The first order GeCo scheme (GeCo1) applied to a general biochemical system \eqref{eq:bio}, \eqref{eq:biosplitting}
			is defined as
			\begin{equation}
				\b y^{n+1}=\b y^n+\dt\varphi\left(\dt \sum_{i=1}^N  \dfrac{f_i^{[D]}(\b y^n,\tn)}{y_i^n}\right)\b f(\b y^n,\tn),\tag{GeCo1}\label{eq:GeCo1scheme}
			\end{equation}
			where the function $\varphi\in \mathcal C^2$ is defined as
			\begin{equation}\label{eq:phi}
				\varphi(x)=\begin{cases}
					\dfrac{1-e^{-x}}{x},& x>0,\\
					1, &x=0.
			\end{cases}   \end{equation}
			In the notation of a general GeCo method, we have \[\phi_{n+1}(\b y^n,\tn ,\dt)=\varphi\left(\dt \sum_{i=1}^N  \dfrac{f_i^{[D]}(\b y^n,\tn)}{y_i^n}\right).\] 
			\begin{rem}\label{rem:GeCononautonomous}
				Even though \eqref{eq:GeCo1scheme} can be interpreted as an NSARK method with $\varphi$ being the NSW, the scheme is not an additive method since the whole right-hand side $\b f$ is weighted by the same factor. Hence, we are not in the position to apply Proposition~\ref{prop:nsarkautonom} directly. However, considering the autonomous problem with $\b F(\b Y)=(\b f(\b Y),1)^T$ and $\b Y=(\b y,t)^T$, one can see from \eqref{eq:GeCo1scheme} that the last component of the method reads \[t_{n+1}=\tn+\dt \varphi\left(\dt \sum_{i=1}^N  \dfrac{f_i^{[D]}(\b y^n,\tn)}{y_i^n}\right),\] which is why it is not clear whether or not the condition for first order from \cite{martiradonna2020geco} is sufficient also for non-autonomous problems. 
			\end{rem}
			We also note that the NSW is in $\mathcal C^1$ whenever $\b f^{[D]}\in \mathcal C^1$, so that we can also apply Theorem~\ref{thm:ConsistencyImpConvergence} to prove the order of convergence.
			\paragraph{Second Order GeCo Scheme}
			The second order GeCo (GeCo2) scheme for a general biochemical system \eqref{eq:bio}, \eqref{eq:biosplitting} is based on Heun's methods and takes the form
			\begin{equation}\tag{GeCo2}\label{eq:GeCo2scheme}
				\begin{aligned}
					\b y^{(1)}&=\b y^n,\\
					\b y^{(2)}&=\b y^n+\dt\varphi\left(\dt \sum_{i=1}^N  \dfrac{f_i^{[D]}(\b y^n,\tn)}{y_i^n}\right)\b f(\b y^n,\tn),\\
					\b y ^{n+1}&=\b y^n + \dfrac{\dt}{2} \varphi\left(\dt \sum_{i=1}^N \dfrac{w_i^+(\b y^n,\tn)}{y^n_i} \right) \left(\mathbf{f}(\b y^n,\tn)+ \mathbf f(\mathbf y^{(2)},\tn+\dt)\right),
				\end{aligned}
			\end{equation}
			where
			\begin{equation*}
				w_i^+(\b y^n,\tn)=\max(0,w_i(\b y^n,\tn)), \quad i=1,\dots,N
			\end{equation*}
			with 
			\begin{equation*}
				\mathbf w(\b y^n,\tn)=2\varphi\left(\dt \sum_{j=1}^N  \dfrac{f_i^{[D]}(\b y^n,\tn)}{y_i^n}\right)\b f(\mathbf y^n,\tn) - \b f(\b y^n,\tn)-\b f(\b y^{(2)},\tn+\dt).
			\end{equation*}
			Since $\varphi$ is in $\mathcal C^1$ we see that $w_i^+$ is the composition of locally Lipschitz continuous mappings if $\b f^{[D]}\in \mathcal C^1$, and hence, itself locally Lipschitz continuous. Thus, we can apply Theorem~\ref{thm:ConsistencyImpConvergence} to deduce the order of convergence. 
			
			Moreover, as discussed in Remark~\ref{rem:GeCononautonomous} for GeCo1, the order conditions derived in \cite{martiradonna2020geco} for GeCo2 hold for autonomous problems and it is not clear if the method is of second order for non-autonomous problems. Investigating this question for GeCo methods will be part of future work.
			\section{Generalized BBKS}
			The generalized BBKS  (gBBKS) schemes, named after the authors Bruggeman, Burchard, Kooi and Sommeijer, were developed in \cite{BBKS2007,BRBM2008, gBBKS} and represent a class of schemes that are unconditionally positive while preserving all linear invariants of the underlying ordinary differential equation $\b y'=\b f(\b y, t)$. Similarly to GeCo methods, the idea is to weight the function $\b f\colon \R^N\times \R\to \R^N$ with a positivity-preserving factor. As a result, gBBKS schemes can also be interpreted as NSRK methods with the positivity-preserving factor being the NSW. So far, first and second order accurate methods have been constructed which we briefly review in the following.
			\paragraph{First Order gBBKS Schemes}
			The first order gBBKS schemes (gBBKS1) can be written as
			\begin{equation}\tag{gBBKS1}\label{eq:gBBKS1Intro}
				\b y^{n+1}=\b y^n + \dt\b f(\b y^n, \tn) \Bigg(\prod_{m\in M^{n}} \frac{y^{n+1}_m}{\sigma^{n}_m}\Bigg)^{\ns r^{n}},
			\end{equation}
			where $r^n,\sigma_m^n>0$ are free parameters, but need to be chosen independently of $\b y^{n+1}$, and
			\begin{equation*}
				M^{n}=\{ m\in \{1,\dotsc,N\} \mid  f_m(\b y^n,\tn)<0\}.
			\end{equation*}
			For instance, the BBKS1 scheme from \cite{BBKS2007,gBBKS} is given by setting $\sigma^n_m=y^n_m$ and $r^n=1$. As discussed for GeCo methods in Remark~\ref{rem:GeCononautonomous}, it is not straightforward to see whether or not the proven first order of \eqref{eq:gBBKS1Intro} is maintained for non-autonomous problems.
			
			Moreover, as the number of factors in the NSW $\Bigg(\prod_{m\in M^{n}} \frac{y^{n+1}_m}{\sigma^{n}_m}\Bigg)^{\ns r^{n}}$ depends on $\b y^n$, further investigation is needed to conclude the order of convergence of the method by means of Theorem~\ref{thm:ConsistencyImpConvergence}. 
			\paragraph{Second Order gBBKS Schemes}
			The second order gBBKS schemes, denoted by gBBKS2($\alpha$), have a free parameter $\alpha\geq \frac12$ and can be written as
			\begin{equation}\tag{gBBKS2}\label{eq:gBBKS2Intro}
				\begin{aligned}
					\b y^{(1)}&=\b y^n,\\
					\b y^{(2)} &= \b y^n+\alpha \dt\b f(\b y^{n},\tn) \Bigg(\prod_{j\in J^{n}} \frac{y^{(2)}_j}{\pi^{n}_j}\Bigg)^{\ns q^{n}},\\
					\b y^{n+1} &= \b y^n+\dt \left(\Big(1-\frac{1}{2\alpha}\Big) \b f(\b y^{n},\tn)+ \frac{1}{2\alpha}\b f(\b y^{(2)},\tn+\alpha \dt)\right)\Bigg(\prod_{m\in M^{n}} \frac{y^{n+1}_m}{\sigma^{n}_m}\Bigg)^{\ns r^{n}}
				\end{aligned}
			\end{equation}
			with  $\pi_j^n,q^n>0$ being free parameters chosen independently of $\b y^{(2)}$, while we require $\sigma_m^n,r^n>0$ to be independent of $\b y^{n+1}$. To give an example, the BBKS2(1) scheme from \cite{BRBM2008, gBBKS} uses $\pi^n_m=\sigma^n_m=y^n_m$ and $q^n=r^n=1$. Moreover, the sets $J^n$ and $M^n$ are given by
			\begin{subequations}
				\begin{align*}
					J^{n}&=\left\{ j\in \{1,\ldots,N\}\mid f_j(\b y^{n},\tn)<0\right\},\\
					M^{n}&=\left\{ m\in \{1,\ldots,N\}\;\Big |\; \Big(1-\frac{1}{2\alpha}\Big) f_m(\b y^{n},\tn)+ \frac{1}{2\alpha}f_m(\b y^{(2)},\tn+\alpha \dt)<0\right\}.
				\end{align*}
			\end{subequations}
			We want to note that $M^n$ always refers to the last step of the corresponding method. As before, the same concerns arise for \eqref{eq:gBBKS2Intro} when it comes to the order of convergence in general and in the case of non-autonomous problems. 
			
			

\chapter{Order Conditions for NSARK Methods}\label{chap:order}
\chaptermark{Order Conditions for NSARK methods}
In this chapter we are interested in deriving order conditions for general NSARK methods. As an application of the upcoming theory, we will reproduce known order conditions for MPRK and GeCo methods from \cite{KM18,KM18Order3,martiradonna2020geco}. Additionally, we present reduced conditions for MPRK and GeCo schemes up to order four.

The main idea is to follow \cite{B16} and to adapt Theorem~\ref{thm:ARKcon} for schemes with solution-dependent coefficients.

\section{Main Result on Order Conditions}
In the appendix, we prove modified versions of theorems from \cite{B16} to demonstrate that for an NSARK scheme we can take the formula for $u$ from \eqref{eq:cond} and replace the constant coefficients with the solution-dependent ones from \eqref{eq:nsark}, i.\,e.\ that the solution-dependent $u=u(\tau,\b y^n,\dt)$ in the case of an NSARK method is given by
\begin{equation}\label{eq:pertcond}
	\begin{aligned}
		u(\tau,\b y^n,\dt)&=\sum_{\nu=1}^N\sum_{i=1}^sb_i^{[\nu]}(\b y^n,\dt) g_i^{[\nu]}(\tau,\b y^n,\dt),\\ 
		g_i^{[\nu]}(\rt[]^{[\mu]},\b y^n,\dt)&=\delta_{\nu\mu},\quad \nu,\mu=1,\dotsc,N, \\
		g_i^{[\nu]}([\tau_1,\dotsc,\tau_l]^{[\mu]},\b y^n,\dt)&=\delta_{\nu\mu}\prod_{j=1}^ld_i(\tau_j,\b y^n,\dt),\quad \nu,\mu=1,\dotsc,N\text{ and } \\
		d_i(\tau,\b y^n,\dt)&=\sum_{\nu=1}^N\sum_{j=1}^sa_{ij}^{[\nu]}(\b y^n,\dt) g_j^{[\nu]}(\tau,\b y^n,\dt).
	\end{aligned}
\end{equation} 
As a result of this claim, we would be in the position to formulate an analogous condition to \eqref{eq:c=1/gamma} for an NSARK method to have an order of at least $p$.

To prove our main result, we introduce in Theorem \ref{thm:main} a generalization of NB-series, in which the coefficients of the series are allowed to depend on $\b y^n$ and $\dt$.  We note that such a series is \emph{not} a Taylor expansion in $\dt$, but instead can be understood as an asymptotic expansion in expressions depending on powers of $\dt$ and the solution-dependent coefficients of the Butcher tableau. As a result of this approach, we do not require at this point any regularity of $a_{ij}^{[\nu]}(\b y^n,\dt)$ or $b^{[\nu]}_j(\b y^n,\dt)$. But for our present purposes the current representation is more convenient. The results in this section are analogous to results in \cite{B16}, and we follow many of the ideas employed therein.
The proofs of the intermediate results can be found in the appendix, so that we directly present and prove the main theorem analogously to Theorem 313B in \cite{B16}.

Moreover, since we have already discussed the circumstances under which the analysis of the convergence order can be reduced to the study of autonomous problems, we will limit ourselves to this case for the sake of simplicity. 
\begin{thm}\label{thm:main}
	Let $d_i$, $g_i^{[\nu]}$ and $u$ be defined as in \eqref{eq:pertcond} for $i=1,\dotsc,s$ and $\nu=1,\dotsc, N$. Suppose that for small enough $\dt$ there exists a solution to the stage equations \eqref{eq:nsark} of the NSARK method,
	that $\Fnu\in \mathcal C^{p+1}$ for $p\in \N$ is Lipschitz continuous,
	and that $a_{ij}^{[\nu]}=\O(1)$ (with respect to $\dt$, as $\dt\to 0$) for all $\nu=1,\dotsc,N$.
	Then
	the stages, stage derivatives and output of the NSARK method can be expressed as
	\begin{subequations}
		\begin{align}
			\byi&=\b y^n+\sum_{\tau\in NT_{p}}\frac{\dt^{\lvert \tau\rvert}}{\sigma(\tau)}d_i(\tau,\b y^n,\dt) \dF(\tau)(\b y^n)+\O(\dt^{p+1}),\label{eq:yithm}\\
			\dt\Fnu(\byi)&=\sum_{\tau\in NT_{p}}\frac{\dt^{\lvert \tau\rvert}}{\sigma(\tau)}g^{[\nu]}_i(\tau,\b y^n,\dt) \dF(\tau)(\b y^n)+\O(\dt^{p+1}),\label{eq:hFlthm}\\
			\b y^{n+1}&=\b y^n+\sum_{\tau\in NT_{p}}\frac{\dt^{\lvert \tau\rvert}}{\sigma(\tau)}u(\tau,\b y^n,\dt) \dF(\tau)(\b y^n)+\O(\dt^{p+1}).\label{eq:expyn+1}
		\end{align}
	\end{subequations} for $i=1,\dotsc, s$ and $\nu=1,\dotsc,N$.
\end{thm}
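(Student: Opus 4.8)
The plan is to establish the three expansions \eqref{eq:yithm}--\eqref{eq:expyn+1} simultaneously by induction on the truncation order $p$, since the stage expansion and the stage-derivative expansion are coupled through the defining relations \eqref{eq:nsark} and \eqref{eq:pertcond}. The factor $\dt$ multiplying every occurrence of $\Fnu$ is what makes this mutual induction well-founded: computing $\dt\Fnu(\byi)$ to order $p$ only needs $\byi-\b y^n$ to order $p-1$, which in turn needs the stage derivatives to order $p-1$. For the base case $p=0$ we have $NT_0=\emptyset$, so the sums are empty and the claims reduce to $\byi=\b y^n+\O(\dt)$, $\dt\Fnu(\byi)=\O(\dt)$ and $\b y^{n+1}=\b y^n+\O(\dt)$; these hold because each increment carries a factor $\dt$ and the bracketed terms are $\O(1)$, using $a_{ij}^{[\nu]}=\O(1)$ and the boundedness of $\Fnu$ on the region in which the (assumed) stages lie for small $\dt$.

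For the inductive step I would assume the three expansions hold with $NT_{p-1}$ in place of $NT_p$ and focus first on \eqref{eq:hFlthm}, which carries the analytic content. Taylor-expanding each $\Fnu$ about $\b y^n$,
\[\Fnu(\byi)=\Fnu(\b y^n)+\sum_{k\geq1}\frac1{k!}\,\mathrm{D}^{k}\Fnu(\b y^n)\bigl[(\byi-\b y^n)^{\otimes k}\bigr]+\O(\dt^{p}),\]
where $\mathrm{D}^{k}\Fnu$ denotes the $k$-th total derivative and the $\mathcal C^{p+1}$ assumption controls the remainder, I then substitute the inductive expansion $\byi-\b y^n=\sum_{\tau\in NT_{p-1}}\tfrac{\dt^{\lvert\tau\rvert}}{\sigma(\tau)}d_i(\tau)\dF(\tau)(\b y^n)+\O(\dt^{p})$. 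Expanding the tensor powers yields products $\dF(\tau_1)(\b y^n)\cdots\dF(\tau_k)(\b y^n)$ which, with $\mathrm{D}^{k}\Fnu$ supplying the root, assemble by the recursive definition \eqref{eq:elemdiff} into $\dF([\tau_1,\dotsc,\tau_k]^{[\nu]})(\b y^n)$; multiplying by $\dt$ raises the order of each tree by one, so only trees in $NT_p$ contribute below the remainder.

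The decisive step, and what I expect to be the main obstacle, is the matching of symmetry factors: I must check that the scalar coefficient accumulated in front of $\tfrac{\dt^{\lvert\tau\rvert}}{\sigma(\tau)}\dF(\tau)(\b y^n)$ is exactly $g_i^{[\nu]}(\tau)$ for $\tau=[\tau_1^{m_1},\dotsc,\tau_r^{m_r}]^{[\mu]}$. Here the multinomial coefficient produced when identical children are selected from the tensor power combines with the Taylor factor $\tfrac1{k!}$ and with the factorials $\prod_i m_i!$ in the definition \eqref{eq:sigmagamma} of $\sigma(\tau)$ to leave precisely $\delta_{\nu\mu}\prod_{j}d_i(\tau_j)=g_i^{[\nu]}(\tau)$. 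This is the NB-series analogue of Butcher's argument for Theorem 313B in \cite{B16}; the only genuinely new feature is that the ``coefficients'' $d_i(\tau)$ and $g_i^{[\nu]}(\tau)$ now depend on $\b y^n$ and $\dt$, so the identity has to be read as one between $\O(1)$ quantities rather than between constants. I would isolate this combinatorial identity as an intermediate lemma relegated to the appendix.

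Granting \eqref{eq:hFlthm}, the remaining two expansions drop out by substitution into the method's defining equations. Inserting \eqref{eq:hFlthm} into $\byi=\b y^n+\sum_{j}\sum_{\nu}a_{ij}^{[\nu]}\bigl(\dt\Fnu(\byj)\bigr)$ and reading off the coefficient of each $\tfrac{\dt^{\lvert\tau\rvert}}{\sigma(\tau)}\dF(\tau)(\b y^n)$ gives $\sum_{\nu}\sum_{j}a_{ij}^{[\nu]}g_j^{[\nu]}(\tau)=d_i(\tau)$, which is \eqref{eq:yithm}; the same manipulation with $b_j^{[\nu]}$ in the output equation gives $u(\tau)$ and hence \eqref{eq:expyn+1}. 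The delicate point throughout is uniformity of the $\O$-estimates: because $a_{ij}^{[\nu]}$, and likewise $b_j^{[\nu]}$ for \eqref{eq:expyn+1}, are only of order $\O(1)$ and need not be differentiable, I must argue that multiplying a valid expansion by them preserves both the $\O(1)$ size of every coefficient and the $\O(\dt^{p+1})$ size of the remainder, and that the Taylor remainders are uniform on a fixed neighborhood---which is exactly where the existence of the stages for small $\dt$, the Lipschitz continuity, and the $\mathcal C^{p+1}$ bound on $\Fnu$ are used. I expect this uniform remainder accounting, rather than the combinatorics itself, to be the most error-prone part to write out rigorously.
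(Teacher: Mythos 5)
Your strategy shares its combinatorial core with the paper's proof: both rest on the Taylor-expansion lemma (Lemma~\ref{lem:hfnu} in the appendix, the NB-series analogue of the argument for Theorem 313B in \cite{B16}) that upgrades an order-$(p-1)$ expansion of the stages to an order-$p$ expansion of $\dt\Fnu(\byi)$, and both then obtain \eqref{eq:yithm} and \eqref{eq:expyn+1} by inserting this into \eqref{eq:nsark}, using $\sum_{j,\nu}a_{ij}^{[\nu]}g_j^{[\nu]}(\tau)=d_i(\tau)$, the definition of $u$, and the $\O(1)$ bounds on the coefficients to absorb remainders. The scaffolding is genuinely different, though: you induct on the truncation order $p$ applied directly to the exact stages, whereas the paper introduces the Picard-type iteration \eqref{eq:byim}, proves the expansion for the iterates $\byi_{[m]}$ by induction on $m$, and then shows that, by the global Lipschitz continuity of $\Fnu$ and $a_{ij}^{[\nu]}=\O(1)$, this iteration is a contraction for small $\dt$, so that the exact stages are its unique limit and inherit the expansion. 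Your route is shorter and applies verbatim to \emph{any} solution of the stage equations without invoking uniqueness; the paper's route delivers uniqueness of the stages for small $\dt$ as a byproduct and, more importantly, keeps every object in the argument automatically within $\O(\dt)$ of $\b y^n$, since the iterates are built starting from $\b y^n$.

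That last point is exactly where your proposal has a gap. In the base case $p=0$ you justify $\byi=\b y^n+\O(\dt)$ by invoking ``the boundedness of $\Fnu$ on the region in which the (assumed) stages lie for small $\dt$''. This is circular: the hypothesis grants only the \emph{existence} of a solution to the (possibly implicit) stage equations, not that this solution remains in any bounded set as $\dt\to 0$, so you may not assume $\Fnu(\byj)=\O(1)$ before knowing that $\byj$ stays near $\b y^n$ --- which is precisely what the base case is supposed to establish. The paper's contraction argument is designed to circumvent this issue. Your argument can, however, be repaired with the stated hypotheses: by the global Lipschitz continuity, $\norm{\Fnu(\byj)}\leq\norm{\Fnu(\b y^n)}+L\norm{\byj-\b y^n}$, so writing $M=\max_i\norm{\byi-\b y^n}$, the stage equations give $M\leq \dt\, C\,(F_0+L M)$ with $C$ coming from $a_{ij}^{[\nu]}=\O(1)$ and $F_0=\max_\nu\norm{\Fnu(\b y^n)}$; absorbing $\dt\, C\, L\, M$ into the left-hand side for $\dt$ small yields $M=\O(\dt)$. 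With this a priori estimate supplying the base case (and the analogous $\O(1)$ bound assumed for $b_j^{[\nu]}$ to control the remainder in \eqref{eq:expyn+1}), your induction on $p$ goes through.
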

\begin{proof}
	We follow the idea from \cite[Theorem 313B]{B16}. For approximating the stage $\byi$, define the sequence
	\begin{equation}\label{eq:byim}
		\begin{aligned}
			\byi_{[0]}&=\b y^n,\\
			\byi_{[m]}&=\b y^n+\dt\sum_{j=1}^s\sum_{\nu=1}^Na_{ij}^{[\nu]}(\b y^n,\dt)\Fnu(\byj_{[m-1]}),
		\end{aligned}
	\end{equation}
	where we want to point out that $a_{ij}^{[\nu]}(\b y^n,\dt)$ here only depends on the solution $\b y^n$, the step size $\dt$ and, potentially, the assumed solution to the stage equations, but \emph{not} on the iterates $\byi_{[m]}$.
	
	Next, we demonstrate that for $m\leq p$, this expression for $\byi_{[m]}$ agrees with the expression for $\byi$ from \eqref{eq:yithm} within an error of $\O(\dt^{m+1})$. For $m=0$, this is obvious. By induction we suppose that \[\byi_{[m-1]}=\b y^n+\sum_{\tau\in NT_{m-1}}\frac{\dt^{\lvert \tau\rvert}}{\sigma(\tau)}d_i(\tau,\b y^n,\dt) \dF(\tau)(\b y^n)+\O(\dt^{m}).\] By Lemma \ref{lem:hfnu}, we see that 
	\[\dt\Fnu(\byi_{[m-1]})=\sum_{\tau\in NT_{m}}\frac{\dt^{\lvert \tau\rvert}}{\sigma(\tau)}g^{[\nu]}_i(\tau,\b y^n,\dt) \dF(\tau)(\b y^n)+\O(\dt^{m+1}).\]
	Substituting this into \eqref{eq:byim}, we see from \eqref{eq:pertcond} that
	\begin{equation}\label{eqthm:yim}
		\begin{aligned}
			\byi_{[m]}&=\b y^n+\sum_{\tau\in NT_{m}}\frac{\dt^{\lvert \tau\rvert}}{\sigma(\tau)}\sum_{j=1}^s\sum_{\nu=1}^Na_{ij}^{[\nu]}(\b y^n,\dt)g^{[\nu]}_j(\tau,\b y^n,\dt) \dF(\tau)(\b y^n)+\O(\dt^{m+1}) \\
			&=\b y^n+\sum_{\tau\in NT_{m}}\frac{\dt^{\lvert \tau\rvert}}{\sigma(\tau)}d_i(\tau,\b y^n,\dt) \dF(\tau)(\b y^n)+\O(\dt^{m+1}).
		\end{aligned}
	\end{equation}
	We have shown now that \eqref{eqthm:yim} is true for all $m\leq p$. Indeed, by the same reasoning we have even proven that 
	\[\byi_{[m]}=\b y^n+\sum_{\tau\in NT_{p}}\frac{\dt^{\lvert \tau\rvert}}{\sigma(\tau)}d_i(\tau,\b y^n,\dt) \dF(\tau)(\b y^n)+\O(\dt^{p+1}) \quad\text{ for all $m\geq p$. }\]
	Moreover, for $\dt$ small enough we know that $a_{ij}^{[\nu]}$ is bounded since we assumed $a^{[\nu]}_{ij}(\b y^n,\dt)=\O(1)$ as $\dt\to 0$.
	Together with the Lipschitz continuity of $\Fnu$, we thus conclude that for small enough $\dt$ the iteration \eqref{eq:byim} is a contraction with $\byi=\lim_{m\to \infty}\byi_{[m]}$ being the unique limit. Thus, for $\dt$ small enough and $\epsilon=\dt^{p+1}>0$, there exist $N_\epsilon\in \N$ such that $\Vert \byi_{[m]}-\byi\Vert<\dt^{p+1}$ for all $m\geq N_\epsilon$. Without loss of generality we can choose $N_\epsilon\geq p$, so that we find $m\geq p$. This implies that 
	\[\byi=\byi_{[m]}+\O(\dt^{p+1})=\b y^n+\sum_{\tau\in NT_{p}}\frac{\dt^{\lvert \tau\rvert}}{\sigma(\tau)}d_i(\tau,\b y^n,\dt) \dF(\tau)(\b y^n)+\O(\dt^{p+1}),\] from which
	equation \eqref{eq:yithm} follows. Furthermore, \eqref{eq:hFlthm} then follows from Lemma~\ref{lem:hfnu}. Finally, computing $\b y^{n+1}$ according to \eqref{eq:nsark}, also taking into account equation \eqref{eq:pertcond}, we obtain
	\begin{equation*}
		\begin{aligned}
			\b y^{n+1} &= \b y^n +  \sum_{j=1}^s \sum_{\substack{\nu=1}}^N b^{[\nu]}_j(\b y^n,\dt) \dt\Fnu(\byj)\\
			&=\b y^n+\sum_{\tau\in NT_{p}}\frac{\dt^{\lvert \tau\rvert}}{\sigma(\tau)}\sum_{j=1}^s \sum_{\substack{\nu=1}}^Nb^{[\nu]}_j(\b y^n,\dt)g^{[\nu]}_j(\tau,\b y^n,\dt) \dF(\tau)(\b y^n)+\O(\dt^{p+1})\\
			&=\b y^n+\sum_{\tau\in NT_{p}}\frac{\dt^{\lvert \tau\rvert}}{\sigma(\tau)}u(\tau,\b y^n,\dt) \dF(\tau)(\b y^n)+\O(\dt^{p+1}),
		\end{aligned}
	\end{equation*}
	finishing the proof.
\end{proof}
Note that under the assumptions of this theorem, any solution of the stage equations has the same expansion up to the order $p$. Moreover,  we obtain the following order conditions as a result of this theorem, where the expression $\b A^{[\nu]}(\b y^n,\dt)=\O(1)$ should be understood component-wise and in the limit $\dt\to 0$.\newpage
\begin{cor}\label{cor:orderNSARK}
	Let $u$ be defined as in \eqref{eq:pertcond} and $\Fnu\in \mathcal C^{p+1}$ for $\nu=1,\dotsc,N$ be Lipschitz continuous. Furthermore, let $\b A^{[\nu]}(\b y^n,\dt)=\O(1)$. If the stage equations of the NSARK method possess a solution for small enough $\dt$, then the NSARK scheme \eqref{eq:nsark} is of order at least $p$ if and only if 
	\begin{equation}\label{eq:u=1:gamma}
		u(\tau, \b y^n,\dt)=\frac{1}{\gamma(\tau)}+\O(\dt^{p+1-\lvert \tau \rvert}), \quad \forall \tau\in NT_p.
	\end{equation}
\end{cor}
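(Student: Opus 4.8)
The plan is to characterize the order of convergence through the order of the local truncation error and then identify the coefficients of the resulting NB-series by invoking the linear independence of elementary differentials, exactly as in the proof of Theorem~\ref{thm:ARKcon}, but now carefully tracking the dependence of the coefficients on $\dt$. First I would apply Theorem~\ref{thm:main}, whose hypotheses coincide verbatim with those of the corollary, to a single step of the NSARK method started from exact data $\b y^n=\b y(t)$. The quantity $\b y(t)+\dt\b\Phi(t,\b y(t),\dt)$ appearing in the local truncation error is then precisely the output $\b y^{n+1}$ of the method, so combining the expansion \eqref{eq:expyn+1} with the NB-series of the exact solution from Theorem~\ref{thm:anasolNB} yields
\[
\bm\eta(t,\dt)=\sum_{\tau\in NT_p}\frac{\dt^{\lvert\tau\rvert}}{\sigma(\tau)}\left(u(\tau,\b y(t),\dt)-\frac{1}{\gamma(\tau)}\right)\dF(\tau)(\b y(t))+\O(\dt^{p+1}).
\]
Under the stated regularity the incremental map $\b\Phi$ is locally Lipschitz, so Theorem~\ref{thm:ConsistencyImpConvergence} applies and, as for one-step methods of this type, the order of convergence is at least $p$ exactly when $\bm\eta(t,\dt)=\O(\dt^{p+1})$.

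For the sufficiency direction I would argue directly from the display above. If \eqref{eq:u=1:gamma} holds, then for each fixed $\tau\in NT_p$ the scalar factor satisfies $\dt^{\lvert\tau\rvert}(u(\tau,\b y(t),\dt)-1/\gamma(\tau))=\dt^{\lvert\tau\rvert}\O(\dt^{p+1-\lvert\tau\rvert})=\O(\dt^{p+1})$. Since $\sigma(\tau)$ is a fixed positive constant and $NT_p$ is a finite set, the entire sum is $\O(\dt^{p+1})$, whence $\bm\eta=\O(\dt^{p+1})$ and convergence of order at least $p$ follows from Theorem~\ref{thm:ConsistencyImpConvergence}.

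For the necessity direction I would exploit the linear independence of the elementary differentials $\{\dF(\tau)(\b y(t)):\tau\in NT_p\}$. Assuming order at least $p$, the displayed expansion gives $\sum_{\tau\in NT_p}c_\tau(\dt)\,\dF(\tau)(\b y(t))=\O(\dt^{p+1})$ with scalar coefficients $c_\tau(\dt)=\dt^{\lvert\tau\rvert}\sigma(\tau)^{-1}(u(\tau,\b y(t),\dt)-1/\gamma(\tau))$. The vectors $\dF(\tau)(\b y(t))$ are independent of $\dt$ and, for a sufficiently rich right-hand side, linearly independent in the sense of \cite{B16,Ntrees}; applying a dual family of linear functionals therefore isolates each term and forces $c_\tau(\dt)=\O(\dt^{p+1})$ for every $\tau\in NT_p$ simultaneously. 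Dividing by $\dt^{\lvert\tau\rvert}\sigma(\tau)^{-1}$ then gives $u(\tau,\b y(t),\dt)-1/\gamma(\tau)=\O(\dt^{p+1-\lvert\tau\rvert})$, which is \eqref{eq:u=1:gamma}.

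The hard part will be this last peeling-off step. In the classical B-series setting one separates the individual order conditions by matching equal powers of $\dt$, but here the coefficients $c_\tau$ carry their own, a priori unstructured dependence on $\dt$ through the solution-dependent Butcher tableau, so no such power-matching is available. The argument must instead rest entirely on the linear independence of the \emph{fixed} vectors $\dF(\tau)(\b y(t))$, treated as an honest statement about elements of $\R^d$ rather than about formal symbols, so that the $\dt$-dependent scalars can be extracted one at a time. Once this is justified, the remainder of the proof reduces to substituting the two NB-series expansions and invoking Theorem~\ref{thm:ConsistencyImpConvergence}.
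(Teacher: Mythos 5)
Your skeleton is the one the paper intends: Corollary~\ref{cor:orderNSARK} is stated there without proof, as a direct consequence of Theorem~\ref{thm:main} combined with Theorem~\ref{thm:anasolNB}, Theorem~\ref{thm:ConsistencyImpConvergence}, and the linear independence of elementary differentials invoked before Theorem~\ref{thm:ARKcon}. Your local-error expansion and your sufficiency direction are correct and complete: under \eqref{eq:u=1:gamma} each of the finitely many terms in the expansion is $\O(\dt^{p+1})$, and consistency of order $p$ passes to convergence of order $p$ via Theorem~\ref{thm:ConsistencyImpConvergence}.

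The necessity direction, however, contains a genuine gap, located exactly where you flag it — but your proposed repair is the wrong one. The linear independence of elementary differentials in \cite{B16,Ntrees} is independence \emph{as functions of the right-hand side}: no nontrivial linear combination with constant coefficients vanishes for every admissible $\b f$ and every $\b y$. It is emphatically not a statement about the fixed vectors $\dF(\tau)(\b y^n)\in\R^d$ of the single problem at hand: for a scalar equation ($d=1$) all elementary differentials are collinear real numbers, and in general $\lvert NT_p\rvert$ exceeds $d$, so the dual family of linear functionals you want to apply does not exist and the coefficients $c_\tau(\dt)$ cannot be peeled off one at a time. This failure is not merely technical, because the NSARK coefficients $u(\tau,\b y^n,\dt)$ may themselves depend on the problem, so cancellation across trees can genuinely occur for a fixed problem. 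Concretely, take $N=d=1$, $s=1$, $a_{11}=0$, $b_1(y^n,\dt)=1+\tfrac{\dt}{2}$, and apply the scheme to $y'=y$: this is the second-order Taylor method for that problem, hence of order two, yet by \eqref{eq:pertcond} the coefficient $u$ of the order-one tree equals $1+\tfrac{\dt}{2}\neq 1+\O(\dt^2)$ and the coefficient of the order-two tree equals $0\neq\tfrac12+\O(\dt)$, so \eqref{eq:u=1:gamma} fails. Thus the "only if" half is simply false when read problem-by-problem, which is the reading your argument commits to; it can only be established by quantifying over a sufficiently rich class of problems while holding the coefficient functions fixed, and then using witness problems for which the relevant elementary differentials \emph{are} independent vectors in $\R^d$ — this is what the functional independence of \cite{B16,Ntrees} actually delivers — together with an account of how the solution-dependent coefficients relate between the given problem and the witness. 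As written, your final peeling-off step cannot be completed.
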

\begin{cor}\label{cor:suff_cond}
	Under the assumptions of Theorem~\ref{thm:main}, if  \[a_{ij}^{[\nu]}(\b y^n,\dt)=a^{[\nu]}_{ij}+\O(\dt^{p-1}) \qta b_j^{[\nu]}(\b y^n,\dt)=b^{[\nu]}_j+\O(\dt^p),\] for $i,j=1,\dotsc,s$ and $\nu=1,\dotsc,N$, the NSARK method \eqref{eq:nsark}  applied to autonomous problems is of order $p$, if $\b A^{[\nu]}=(a^{[\nu]}_{ij})_{i,j=1,\dotsc,s},$ $\b b^{[\nu]}=(b^{[\nu]}_1,\dotsc,b^{[\nu]}_N)$ define an ARK method of order $p$.
\end{cor}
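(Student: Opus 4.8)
The plan is to reduce the statement to Corollary~\ref{cor:orderNSARK}. Since $\b A^{[\nu]},\b b^{[\nu]}$ describe an ARK method of order $p$, Theorem~\ref{thm:ARKcon} already yields $u(\tau)=\tfrac1{\gamma(\tau)}$ for every $\tau\in NT_p$, where $u$ is built from the \emph{constant} coefficients via \eqref{eq:cond}. Thus it suffices to show that replacing the constant coefficients by the solution-dependent ones propagates through the recursion \eqref{eq:pertcond} in a controlled way, namely
\[u(\tau,\b y^n,\dt)-u(\tau)=\O(\dt^{p+1-\lvert\tau\rvert})\qta\forall\tau\in NT_p,\]
which is exactly the order condition \eqref{eq:u=1:gamma}. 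Note first that the assumption $a_{ij}^{[\nu]}(\b y^n,\dt)=a^{[\nu]}_{ij}+\O(\dt^{p-1})$ together with the boundedness of the constant $a^{[\nu]}_{ij}$ gives $\b A^{[\nu]}(\b y^n,\dt)=\O(1)$, so the hypotheses of Corollary~\ref{cor:orderNSARK} are met.

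The core is an induction on the tree order $q=\lvert\tau\rvert$ establishing the two estimates
\[g_i^{[\nu]}(\tau,\b y^n,\dt)=g_i^{[\nu]}(\tau)+\O(\dt^{p-1})\qta d_i(\tau,\b y^n,\dt)=d_i(\tau)+\O(\dt^{p-1}).\]
I would first record the structural fact that $g_i^{[\nu]}(\tau)$ is a polynomial of degree $q-1$ in the entries $a^{[\rho]}_{ij}$ and $d_i(\tau)$ one of degree $q$, both being $\O(1)$ since the constant coefficients are bounded. The induction is well founded because the recursion for $g_i^{[\nu]}(\tau)$, with $\tau=[\tau_1,\dotsc,\tau_l]^{[\mu]}$, only invokes $d_i$ of the strictly smaller children $\tau_m$, while $d_i(\tau)$ invokes $g_j^{[\nu]}$ of the same $\tau$. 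In the inductive step I would expand $\prod_m d_i(\tau_m,\b y^n,\dt)$ and then $\sum_{\nu,j}a_{ij}^{[\nu]}(\b y^n,\dt)g_j^{[\nu]}(\tau,\b y^n,\dt)$, in each case inserting $a_{ij}^{[\nu]}(\b y^n,\dt)=a_{ij}^{[\nu]}+\O(\dt^{p-1})$ and keeping only the dominant first-order correction; the higher cross terms are of strictly higher $\dt$-order and are absorbed. For the single-node trees the $g$-estimate is an exact equality, since there $g_i^{[\nu]}=\delta_{\nu\mu}$ carries no coefficient at all.

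Finally I would combine these in $u(\tau,\b y^n,\dt)=\sum_{\nu,i}b_i^{[\nu]}(\b y^n,\dt)g_i^{[\nu]}(\tau,\b y^n,\dt)$, using $b_j^{[\nu]}(\b y^n,\dt)=b_j^{[\nu]}+\O(\dt^{p})$. The error then splits into a $b$-contribution of size $\O(\dt^p)$ and a $g$-contribution of size $\O(\dt^{p-1})$, so that $u(\tau,\b y^n,\dt)-u(\tau)=\O(\dt^{p-1})$ for $q\ge2$, whereas for $q=1$ only the $b$-contribution survives and the error is exactly $\O(\dt^{p})$. The reconciliation with the target $\O(\dt^{p+1-q})$ is the crux: for $q\ge2$ one has $p-1\ge p+1-q$, so the uniform bound $\O(\dt^{p-1})$ is at least as strong as required, while for $q=1$ the sharper $\O(\dt^p)$ coming from the $b$-perturbation matches precisely. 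Together with $u(\tau)=1/\gamma(\tau)$ this establishes \eqref{eq:u=1:gamma}, and Corollary~\ref{cor:orderNSARK} then yields order $p$.

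I expect the only real obstacle to be the order bookkeeping rather than any analytic subtlety: one must carry the single uniform estimate $\O(\dt^{p-1})$ through the whole recursion and then correctly translate it into the tree-dependent target, treating the single-node trees separately because they are the one case where the weaker perturbation of $\b b$ (order $\dt^p$ rather than $\dt^{p-1}$) is exactly what the condition demands.
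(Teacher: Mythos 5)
Your proof is correct, but it takes a genuinely different route from the paper's. You perturb the \emph{coefficient recursion} \eqref{eq:pertcond}: by induction on the tree order you propagate $d_i(\tau,\b y^n,\dt)=d_i(\tau)+\O(\dt^{p-1})$ and $g_i^{[\nu]}(\tau,\b y^n,\dt)=g_i^{[\nu]}(\tau)+\O(\dt^{p-1})$ through the trees (the induction is well founded exactly as you say, since $g$ of a tree calls $d$ of strictly smaller trees and $d$ calls $g$ of the same tree), obtain $u(\tau,\b y^n,\dt)=u(\tau)+\O(\dt^{p-1})$ for $\lvert\tau\rvert\geq 2$ and $u(\tau,\b y^n,\dt)=u(\tau)+\O(\dt^{p})$ for $\lvert\tau\rvert=1$, and then verify \eqref{eq:u=1:gamma} tree by tree using $u(\tau)=1/\gamma(\tau)$ from Theorem~\ref{thm:ARKcon}, closing with Corollary~\ref{cor:orderNSARK}. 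The paper instead perturbs the \emph{scheme} \eqref{eq:nsark} itself: inserting the assumptions shows that the NSARK stages satisfy the constant-coefficient ARK stage equations up to $\O(\dt^{p})$ and the update up to $\O(\dt^{p+1})$, whence Theorem~\ref{thm:main} and Lemma~\ref{lem:hfnu} yield the constant-coefficient NB-expansion $\b y^{n+1}=\b y^n+\sum_{\tau\in NT_p}\frac{\dt^{\lvert\tau\rvert}}{\sigma(\tau)}u(\tau)\dF(\tau)(\b y^n)+\O(\dt^{p+1})$, and the known order of the underlying ARK method finishes the argument without ever touching \eqref{eq:u=1:gamma}. The paper's version is shorter, but it leaves the step producing the constant $g_i^{[\nu]}(\tau)$ in the expansion of $\dt\Fnu(\byi)$ rather terse; your version is longer but makes the bookkeeping fully explicit and, in particular, explains \emph{why} the hypotheses on $\b A^{[\nu]}$ and $\b b^{[\nu]}$ can be asymmetric ($\O(\dt^{p-1})$ versus $\O(\dt^{p})$): trees of order $\geq 2$ only require accuracy $\O(\dt^{p+1-\lvert\tau\rvert})$, and the single-node trees, where the full $\O(\dt^{p})$ is needed, see only the $\b b$-perturbation.
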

\begin{proof}
	Inserting the assumptions into \eqref{eq:nsark} yields
	\begin{equation*}
		\begin{aligned}
			\byi & = \b y^n + \dt \sum_{j=1}^s  \sum_{\substack{\nu=1}}^N a^{[\nu]}_{ij}  \fnu(\byj) +\O(\dt^p), \\
			\b y^{n+1} & = \b y^n + \dt \sum_{j=1}^s \sum_{\substack{\nu=1}}^N b^{[\nu]}_j \fnu(\byj) +\O(\dt^{p+1}).
		\end{aligned}
	\end{equation*}
	According to Theorem~\ref{thm:main} and  Lemma~\ref{lem:hfnu} we see 
	\[\dt\Fnu(\byi)=\sum_{\tau\in NT_{p}}\frac{\dt^{\lvert \tau\rvert}}{\sigma(\tau)}g^{[\nu]}_i(\tau) \dF(\tau)(\b y^n)+\O(\dt^{p+1}). \]
	Consequently, \eqref{eq:cond} implies that \[\b y^{n+1}=\b y^n+\sum_{\tau\in NT_{p}}\frac{\dt^{\lvert \tau\rvert}}{\sigma(\tau)}u(\tau) \dF(\tau)(\b y^n)+\O(\dt^{p+1}).\]
	Finally, since the corresponding underlying ARK scheme is of order $p$ the claim follows.
\end{proof}

In order to grasp the condition \eqref{eq:u=1:gamma} from Corollary~\ref{cor:orderNSARK}, we collect the value of $u$ for  all $\tau\in NT_4$ in Table~\ref{tab:NSARKcond}.
\begin{table}[!h]
	\centering
	{\def\arraystretch{2}\tabcolsep=10pt	\begin{tabular}{|>{\centering\arraybackslash}m{0.13\linewidth}|>{\centering\arraybackslash}m{.05\linewidth}|>{\centering\arraybackslash}m{.6\linewidth}|}
			\hline
			$\tau$          &  $\gamma(\tau)$ & $u(\tau,\b y^n,\dt)$ \\ \hline
			$\rt[]^{[\mu]}$ &   1                       &     $\sum_{i=1}^s b_i^{[\mu]}(\b y^n,\dt)$               \\ \hline
			$\ct[\hphantom{.}^{[\mu]}[\hphantom{.}^{[\nu]}]]$	&   2                        &   $\sum_{i,j=1}^s b_i^{[\mu]}(\b y^n,\dt) a_{ij}^{[\nu]}(\b y^n,\dt)$                 \\ \hline
			$	\ct[\hphantom{.}^{[\mu]}[\hphantom{.}^{[\nu]}[\hphantom{.}^{[\xi]}]]]$              &      6        &     $\sum_{i,j,k=1}^s b_i^{[\mu]}(\b y^n,\dt) a_{ij}^{[\nu]}(\b y^n,\dt)a_{jk}^{[\xi]}(\b y^n,\dt)$               \\ \hline
			$\ct[\hphantom{.}^{[\mu]}[\hphantom{.}^{[\nu]}][\hphantom{.}^{[\xi]}]]$	          &         3     &   $\sum_{i,j,k=1}^s b_i^{[\mu]}(\b y^n,\dt) a_{ij}^{[\nu]}(\b y^n,\dt)a_{ik}^{[\xi]}(\b y^n,\dt)$                 \\ \hline
			$	\ct[\hphantom{.}^{[\mu]}[\hphantom{.}^{[\nu]}[\hphantom{.}^{[\xi]}[\hphantom{.}^{[\eta]}]]]]$              &  24            &    $	\smashoperator{\sum_{i,j,k,l=1}}^s b_i^{[\mu]}(\b y^n,\dt) a_{ij}^{[\nu]}(\b y^n,\dt)a_{jk}^{[\xi]}(\b y^n,\dt)a_{kl}^{[\eta]}(\b y^n,\dt)$                \\ \hline
			$\ct[\hphantom{.}^{[\mu]}[\hphantom{.}^{[\eta]}][\hphantom{.}^{[\nu]}][\hphantom{.}^{[\xi]}]]$	             &  4            &     $		\smashoperator{\sum_{i,j,k,l=1}}^s b_i^{[\mu]}(\b y^n,\dt)a_{il}^{[\eta]}(\b y^n,\dt) a_{ij}^{[\nu]}(\b y^n,\dt)a_{ik}^{[\xi]}(\b y^n,\dt)$               \\ \hline
			$	\ct[\hphantom{.}^{[\mu]}[\hphantom{.}^{[\xi]}[\hphantom{.}^{[\eta]}]][\hphantom{.}^{[\nu]}]]$              &   8           &    $	\smashoperator{\sum_{i,j,k,l=1}}^s b_i^{[\mu]}(\b y^n,\dt)a_{il}^{[\nu]}(\b y^n,\dt) a_{ij}^{[\xi]}(\b y^n,\dt)a_{jk}^{[\eta]}(\b y^n,\dt)$                \\ \hline
			$	\ct[\hphantom{.}^{[\mu]}[\hphantom{.}^{[\nu]}[\hphantom{.}^{[\xi]}][\hphantom{.}^{[\eta]}]]]$              &   12           &    $	\smashoperator{\sum_{i,j,k,l=1}}^s b_i^{[\mu]}(\b y^n,\dt) a_{ij}^{[\nu]}(\b y^n,\dt)a_{jk}^{[\xi]}(\b y^n,\dt)a_{jl}^{[\eta]}(\b y^n,\dt)$                \\ \hline
	\end{tabular}}\caption{Density $\gamma$ from \eqref{eq:sigmagamma} and value of $u$ from \eqref{eq:pertcond} for $\tau\in NT_4$.}\label{tab:NSARKcond}
\end{table}
\begin{rem}
	Using Corollary \ref{cor:orderNSARK} and Table~\ref{tab:NSARKcond}, the condition for $p=1$ reads
	\begin{equation}\label{eq:condp=1}
		\begin{aligned}
			\sum_{i=1}^s b_i^{[\mu]}(\b y^n,\dt)=1 +\O(\dt),&& \mu=1,\dotsc,N.
		\end{aligned}
	\end{equation}
	For $p=2$ we find the conditions
	\begin{equation}\label{eq:condp=2}
		\begin{aligned}
			\sum_{i=1}^s b_i^{[\mu]}(\b y^n,\dt)&=1 +\O(\dt^2),& \mu&=1,\dotsc,N,\\
			\sum_{i,j=1}^s b_i^{[\mu]}(\b y^n,\dt) a_{ij}^{[\nu]}(\b y^n,\dt)&=\frac12 +\O(\dt),& \mu,\nu&=1,\dotsc,N,\\
		\end{aligned}
	\end{equation}
	and for $p=3$ we obtain
	\begin{equation}\label{eq:condp=3}
		\begin{aligned}
			\sum_{i=1}^s b_i^{[\mu]}(\b y^n,\dt)&=1 +\O(\dt^3), &\mu&=1,\dotsc,N,\\
			\sum_{i,j=1}^s b_i^{[\mu]}(\b y^n,\dt) a_{ij}^{[\nu]}(\b y^n,\dt)&=\frac12 +\O(\dt^2), &\mu,\nu&=1,\dotsc,N,\\
			\sum_{i,j,k=1}^s b_i^{[\mu]}(\b y^n,\dt) a_{ij}^{[\nu]}(\b y^n,\dt)a_{ik}^{[\xi]}(\b y^n,\dt)&=\frac13 +\O(\dt), &\mu,\nu,\xi&=1,\dotsc,N,\\
			\sum_{i,j,k=1}^s b_i^{[\mu]}(\b y^n,\dt) a_{ij}^{[\nu]}(\b y^n,\dt)a_{jk}^{[\xi]}(\b y^n,\dt)&=\frac16 +\O(\dt), &\mu,\nu,\xi&=1,\dotsc,N.
		\end{aligned}
	\end{equation}
	As we derive also 4th order conditions for GeCo and MPRK schemes in the next sections, we also present the general conditions for $p=4$ reading
	\begin{equation}\label{eq:condp=4}
		\begin{aligned}
			\sum_{i=1}^s b_i^{[\mu]}(\b y^n,\dt)&=1 +\O(\dt^4),\\
			\sum_{i,j=1}^s b_i^{[\mu]}(\b y^n,\dt) a_{ij}^{[\nu]}(\b y^n,\dt)&=\frac12 +\O(\dt^3),\\
			\sum_{i,j,k=1}^s b_i^{[\mu]}(\b y^n,\dt) a_{ij}^{[\nu]}(\b y^n,\dt)a_{ik}^{[\xi]}(\b y^n,\dt)&=\frac13 +\O(\dt^2), \\
			\sum_{i,j,k=1}^s b_i^{[\mu]}(\b y^n,\dt) a_{ij}^{[\nu]}(\b y^n,\dt)a_{jk}^{[\xi]}(\b y^n,\dt)&=\frac16 +\O(\dt^2), \\
			\sum_{i,j,k,l=1}^s b_i^{[\mu]}(\b y^n,\dt)a_{il}^{[\nu]}(\b y^n,\dt) a_{ij}^{[\xi]}(\b y^n,\dt)a_{jk}^{[\eta]}(\b y^n,\dt)&=\frac18 +\O(\dt), \\
			\sum_{i,j,k,l=1}^s b_i^{[\mu]}(\b y^n,\dt)a_{il}^{[\eta]}(\b y^n,\dt) a_{ij}^{[\nu]}(\b y^n,\dt)a_{ik}^{[\xi]}(\b y^n,\dt)&=\frac14 +\O(\dt), \\
			\sum_{i,j,k,l=1}^s b_i^{[\mu]}(\b y^n,\dt) a_{ij}^{[\nu]}(\b y^n,\dt)a_{jk}^{[\xi]}(\b y^n,\dt)a_{kl}^{[\eta]}(\b y^n,\dt)&=\frac{1}{4!} +\O(\dt), \\
			\sum_{i,j,k,l=1}^s b_i^{[\mu]}(\b y^n,\dt) a_{ij}^{[\nu]}(\b y^n,\dt)a_{jk}^{[\xi]}(\b y^n,\dt)a_{jl}^{[\eta]}(\b y^n,\dt)&=\frac{1}{12} +\O(\dt)
		\end{aligned}
	\end{equation}
	for $\mu,\nu,\xi,\eta=1,\dotsc,N$.
\end{rem}

\subsection{Application to Geometric Conservative Methods}
In this section we derive the known order conditions for Geometric Conservative (GeCo) schemes \cite{martiradonna2020geco} and present for the first time order conditions for 3rd and 4th order. 
As GeCo schemes are NSRK methods we can interpret them formally as NSARK methods in order to use Corollary \ref{cor:orderNSARK}.
The resulting order conditions can easily be simplified somewhat, using the fact that the original
coefficients $a_{ij}, b_j$ satisfy traditional RK order conditions. In view of \eqref{eq:Geco_NS}, the first condition is
\[
\sum_{i=1}^s b_i \phi_{n+1}(\b y^n,\dt) = 1 + \O(\dt^{p}),
\]
which implies simply $\phi_{n+1}(\b y^n,\dt) = 1 + \O(\dt^p)$.  This turns out
to allow us to neglect the factor $\phi_{n+1}$ in all the remaining
order conditions.  For instance, the next condition is
\[
\sum_{i=1}^s b_i c_i \phi_{n+1}(\b y^n,\dt) \phi_i(\b y^n,\dt) = \frac 12 + \O(\dt^{p-1}),
\]
which is equivalent to 
\[\sum_{i=1}^s b_i c_i \phi_i(\b y^n,\dt) =\frac12 +\O(\dt^{p-1}).\]
With more work, we can use these conditions to obtain direct conditions
on the functions $\phi$ for specific cases of $s$ and $p$, as demonstrated
in the following theorem.
\begin{thm}
	Let $\b A,\b b$ be the coefficients of an explicit RK scheme of order $p$ with $s$ stages satisfying $\sum_{j=1}^s a_{ij}=c_i$. Assume $\phi_i(\b y^n,\dt)=\O(1)$ as $\dt\to 0$ for $i=2,\dotsc,s$ and that $\b f\in \mathcal C^{p+1}$ is Lipschitz continuous. Then
	\begin{enumerate}
		\item if $p=1$, \eqref{eq:GeCoscheme}  is of order $1$ if and only if $\phi_{n+1}(\b y^n,\dt)=1+\O(\dt)$.
		\item if $p=s=2$, \eqref{eq:GeCoscheme} is of order $2$ if and only if $\phi_2(\b y^n,\dt)=1+\O(\dt)$ and $ \phi_{n+1}(\b y^n,\dt)=1+\O(\dt^2)$.
		\item if $p=s=3$, \eqref{eq:GeCoscheme} is of order 3 if and only if    \begin{align*}
			\phi_{n+1}(\b y^n,\dt)&=1 +\O(\dt^3),\\
			\sum_{i=2}^3 b_ic_i\phi_i(\b y^n,\dt)&=\frac 1 2 +\O(\dt^2),\\
			\phi_i(\b y^n,\dt)&=1 +\O(\dt),\quad i=2,3.
		\end{align*}
		\item if $p=s=4$, \eqref{eq:GeCoscheme} is of order $4$ if and only if    \begin{align*}
			\phi_{n+1}(\b y^n,\dt)&=1 +\O(\dt^4),\\
			\sum_{i=2}^4 b_ic_i\phi_i(\b y^n,\dt)&=\frac 1 2 +\O(\dt^3),\\
			\phi_i(\b y^n,\dt)&=1 +\O(\dt^2),\quad i=2,3,4.
		\end{align*}
	\end{enumerate}
\end{thm}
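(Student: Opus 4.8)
The plan is to regard \eqref{eq:GeCoscheme} as an NSARK method with a single color, $N=1$, and solution-dependent coefficients $a_{ij}^{[1]}=a_{ij}\phi_i$, $b_j^{[1]}=b_j\phi_{n+1}$ as in \eqref{eq:Geco_NS}, so that Corollary~\ref{cor:orderNSARK} applies: the scheme is of order $p$ if and only if $u(\tau,\b y^n,\dt)=\frac{1}{\gamma(\tau)}+\O(\dt^{p+1-\lvert\tau\rvert})$ for all $\tau\in NT_p$. Reading off $u(\tau)$ via Remark~\ref{rem:compute_u}, the root labeled $i$ contributes $b_i\phi_{n+1}$ and every edge $e_{jk}$ contributes $a_{jk}\phi_j$; hence $\phi_{n+1}$ factors out of $u(\tau)$ entirely, while each $\phi_j$ appears with exponent equal to the number of children of the node labeled $j$. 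Since $\b A,\b b$ already satisfy the classical order-$p$ conditions, setting all $\phi\equiv 1$ recovers the bare elementary weight $1/\gamma(\tau)$, and this observation drives the whole reduction.

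Two conditions then come for free. The one-node tree gives $u(\rt[])=\phi_{n+1}\sum_i b_i=\phi_{n+1}$, so the order condition reads $\phi_{n+1}=1+\O(\dt^{p})$, and dividing all remaining conditions by $\phi_{n+1}$ eliminates it to the required order; the two-node tree then yields $\sum_i b_i c_i\phi_i=\frac12+\O(\dt^{p-1})$. For the forward implication I would substitute the listed conditions into $u(\tau)$: a tree of order $q$ carries finitely many factors $\phi_j=1+\O(\dt^{p-2})$ and one factor $\phi_{n+1}=1+\O(\dt^{p})$, so $u(\tau)=\frac{1}{\gamma(\tau)}\bigl(1+\O(\dt^{p-2})\bigr)$, and $\O(\dt^{p-2})\subseteq\O(\dt^{p+1-q})$ exactly when $q\ge 3$, while $q=1,2$ are handled by the two conditions just derived. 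In particular, for $s=2$ the only nontrivial stage satisfies $b_2c_2=\frac12$, so $\sum_i b_ic_i\phi_i=\frac12+\O(\dt^{p-1})$ already forces $\phi_2=1+\O(\dt)$, settling (a) and (b).

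The converse in (c) and (d) is the crux, since the pointwise conditions $\phi_i=1+\O(\dt^{p-2})$ must be teased out of the coupled tree conditions. I would first establish the leading order $\lim_{\dt\to0}\phi_i=1$. Evaluating the broom trees (a root carrying $k$ leaves) and using $c_1=0$ together with $\sum_i b_ic_i^{k}=\frac{1}{k+1}$ turns their order conditions into the weighted power-sum identities $\sum_i b_i(c_i\phi_i)^{k}=\sum_i b_ic_i^{k}+\O(\dt^{p-k})$ for $k=1,\dots,p-1$. At leading order these state that the weighted power sums of $\{c_i\phi_i\}$ and of $\{c_i\}$ coincide up to order $p-1=s-1$, a system whose linearization is a weighted Vandermonde in the $c_i$.

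Given non-degeneracy, this forces $c_i\phi_i\to c_i$, i.e.\ $\phi_i\to1$, and I would then bootstrap: writing $\delta_i:=c_i(\phi_i-1)$, a first pass through all of $NT_p$ gives $\delta_i=\O(\dt)$, after which the quadratic remainders in the power-sum identities are $\O(\dt^2)$ and the relations coming only from trees of order $\le p-1$ (tolerance $\O(\dt^{p-2})$, e.g.\ the cherry and path trees for $p=4$) can be inverted to sharpen this to $\delta_i=\O(\dt^{p-2})$, hence $\phi_i=1+\O(\dt^{p-2})$. The genuine obstacle is precisely the invertibility and uniqueness at this step: the weighted Vandermonde is nonsingular only when the abscissae $c_i$ are distinct and the weights $b_i$ break the relevant symmetry, and $\phi_i\equiv1$ must be singled out from competing roots — for degenerate tableaux (for instance $b_2=b_3$ with $c_2\ne c_3$) the power sums also admit the spurious solution $\phi_i=c_{\pi(i)}/c_i$ for a transposition $\pi$. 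The argument therefore rests on the non-degeneracy of the underlying RK coefficients, after which carrying the precision bookkeeping through the elimination reproduces the stated tolerances, namely $\O(\dt)$ for $p=3$ and $\O(\dt^2)$ for $p=4$.
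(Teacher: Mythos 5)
Your framework and the easy cases coincide with the paper's proof: the one-color NSARK interpretation, Corollary~\ref{cor:orderNSARK}, reading off $u(\tau)$ so that $\phi_{n+1}$ factors out, and parts (a), (b) together with the sufficiency direction of (c), (d) by substitution are exactly the paper's route. The genuine gap is in the necessity direction of (c) and (d). Your leading-order uniqueness argument rests on the nonsingularity of a weighted Vandermonde built from the broom-tree power sums $\sum_i b_i(c_i\phi_i)^k=\sum_i b_ic_i^k+\O(\dt^{p-k})$, and you yourself concede that this "rests on the non-degeneracy of the underlying RK coefficients". That concession is fatal to the proof of the theorem as stated: the theorem assumes nothing beyond $p=s$, the row-sum condition and $\phi_i=\O(1)$, and the tableaux of actual interest are degenerate in your sense. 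The classical RK4 tableau has $c_2=c_3=\tfrac12$ and $b_2=b_3=\tfrac13$ (it is the very tableau the thesis uses to build its fourth-order MPRK scheme), and the third-order tableau \eqref{array:MPRK43gamma} has $c_2=c_3=\tfrac23$; in these cases your Vandermonde is singular and the argument simply does not apply. Worse, for third-order three-stage tableaux with $b_2=b_3$ and $c_2\neq c_3$ (these exist, with a negative $a_{31}$, and are not excluded by the hypotheses) the power-sum system genuinely admits the swap solution $\phi_i=c_{\pi(i)}/c_i$ you mention, so no refinement of an argument based on power sums alone can recover uniqueness there; and your bootstrap to $\phi_i=1+\O(\dt^{p-2})$ inherits the same defect, since it must invert the same possibly singular linearized system.

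The paper closes exactly this hole by never isolating the power sums: after dividing out $\phi_{n+1}$, the necessity direction is reduced to Lemma~\ref{lem:equivalent} and Lemma~\ref{lem:equivalent4}, whose proofs use the full polynomial system coming from \emph{all} trees in $NT_p$. For $p=s=3$, explicitness collapses the tall-tree weight to the single product $b_3a_{32}c_2\phi_3\phi_2=\tfrac16\phi_3\phi_2$, so the accumulation points (finite because $\phi_i=\O(1)$) satisfy the additional equation $\Gamma^{(3)}\Gamma^{(2)}=1$ alongside the power sums, and uniqueness of the solution $\Gamma^{(i)}=1$ of the combined system is quoted from \cite[Lemma 7]{KM18Order3}. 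For $p=s=4$, the combined system \eqref{eq:redPGS_cond4} (which includes $\Gamma^{(4)}\Gamma^{(3)}\Gamma^{(2)}=1$ and the mixed conditions from the nested trees) is shown by a reduced Gr\"obner basis computation to force $\Gamma^{(i)}=1$, and the sharpening to $\phi_i=1+\O(\dt^2)$ is obtained by substituting the ansatz $\gamma^{(i)}=1+x^{(i)}\dt+\O(\dt^2)$ into the conditions \eqref{eq:cond4b}--\eqref{eq:cond4d} and performing a second Gr\"obner computation showing $x^{(i)}=0$; no distinctness of the $c_i$ or injectivity of the weights enters anywhere. To repair your proof you would have to feed the non-broom trees into the leading-order uniqueness argument itself, as the paper does, rather than using them only for the subsequent bookkeeping.
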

\begin{proof} First of all, the assumptions of Theorem \ref{thm:main} and Corollary \ref{cor:orderNSARK} are met. Thus, we can use the order conditions \eqref{eq:condp=1} to \eqref{eq:condp=4} as a basis of this proof.
	\begin{enumerate}
		\item Substituting $\sum_{i=1}^sb_i=1$ into \eqref{eq:condp=1} yields $\phi_{n+1}(\b y^n,\dt)=1+\O(\dt)$.
		\item Using $\sum_{i=1}^sb_i=1$ now in \eqref{eq:condp=2} together with $\sum_{j=1}^s a_{ij}=c_i$, the order conditions reduce to 
		\[\phi_{n+1}(\b y^n,\dt)=1+\O(\dt^2),\]
		and
		\[\sum_{i=1}^s b_i\phi_{n+1}(\b y^n,\dt)c_i\phi_i(\b y^n,\dt)=\frac12 +\O(\dt).\]
		The latter condition can be further simplified to 
		\[b_2c_2\phi_2(\b y^n,\dt)=\frac12 +\O(\dt),\] since $s=2$ and $\phi_{n+1}(\b y^n,\dt)=1+\O(\dt^2)$. As $b_2c_2=\frac12$, this means that 
		\[ \phi_2(\b y^n,\dt)=1+\O(\dt).\]
		\item Similar as before we obtain from \eqref{eq:condp=3} the simplified conditions
		\begin{align*}
			\phi_{n+1}(\b y^n,\dt)&=1 +\O(\dt^3),\\
			\sum_{i=2}^3 b_ic_i\phi_i(\b y^n,\dt)&=\frac12 +\O(\dt^2),\\
			\sum_{i=2}^3 b_ic_i^2(\phi_i(\b y^n,\dt))^2&=\frac13 +\O(\dt), \\
			\sum_{i,j=2}^3 b_i a_{ij}c_j\phi_i(\b y^n,\dt)\phi_j(\b y^n,\dt)&=\frac16 +\O(\dt), 
		\end{align*}
		which by Lemma \ref{lem:equivalent} with $N=1$, $\gamma^{(i)}_1=\phi_i(\b y^n,\dt)$ and $\delta_1=\phi_{n+1}(\b y^n,\dt)$ are equivalent to
		the conditions stated in the Theorem.
			\item As in the previous part, the conditions \eqref{eq:condp=4} are simplified resulting in \eqref{eq:MPRKcondp=4} with $N=1$, $\gamma^{(i)}_1$ and $\delta_1$ as before. These conditions are then reduced by Lemma~\ref{lem:equivalent4} resulting in the conditions given in this theorem.
		\end{enumerate}
	\end{proof}
	With this result, we have shown that the conditions from \cite[Theorem~1]{martiradonna2020geco} are also necessary. Moreover, we provided the very first necessary and sufficient order conditions for the construction of 3rd and 4th order GeCo schemes.
	\subsection{Application to Modified Patankar--Runge--Kutta Methods}
	As we have discussed in Section~\ref{sec:MPRK}, modified Patankar--Runge--Kutta methods (MPRK) originally were constructed for conservative and positive PDS of the form \eqref{eq:PDS}. Moreover, we concluded in that section that we may assume without loss of generality that the PDS is autonomous.

	Until now, sufficient and necessary order conditions for MPRK schemes only up to order three were constructed and in the context of autonomous PDS. However, these order conditions are also valid in the context of a non-autonomous PDRS, as the NSWs are either the same as in the PDS case or equal to $1$, see \eqref{eq:NS_weightsMPRK}. 
	
	Thus, in order to obtain order conditions for even higher order MPRK schemes in the context of a PDRS, we actually can restrict to autonomous PDS where the solution-dependent Butcher tableau is determined by
	\begin{equation*}
		a_{ij}^{[\nu]}(\b y^n,\dt)=a_{ij}\frac{\yi_\nu}{\pi_{\nu}^{(i)}} \quad \text{ and } \quad b_j^{[\nu]}(\b y^n,\dt)=b_j\frac{y^{n+1}_\nu}{\sigma_\nu}.
	\end{equation*}
	Note that, since $a_{1j}=0$, the value of
	$\pi_\nu^{(1)}$ has no effect.
	In order to apply Theorem~\ref{thm:main} and Corollary~\ref{cor:orderNSARK}, we show in the next lemma that the stages are uniquely determined for any $\dt\geq 0$ and that $\b A^{[\nu]}(\b y^n,\dt)=\O(1)$. The key observation to prove this is that $\pi_\nu^{(i)},\sigma_\nu$ are positive even for $\dt = 0$ by definition. Moreover, we assume that the PWDs are continuous functions of $\b y^n$ and the stages, that is $\pi_{\nu}^{(i)}=\pi_{\nu}^{(i)}(\b y^n,\b y^{(1)},\dotsc,\b y^{(i-1)})$ and $\sigma_\nu=\sigma_\nu(\b y^n,\b y^{(1)},\dotsc,\b y^{(s)})$, which is fulfilled by the PWDs introduced so far.   Also, as we will apply the lemma in the context of the local error analysis, we may start with some $\b y^n$ representing the exact solution at a given time level $t_n$.
	\begin{lem}\label{lem:aij=O(1)}  An MPRK scheme \eqref{eq:MPRK} with a given positive vector $\b y^n$ has uniquely determined stages and satisfies $\byi,\b y^{n+1}=\O(1)$ as $\dt\to 0$. Moreover, if $p_{k\nu},d_{k\nu}\in\mathcal C$ and $\pi_\nu^{(i)}(\b y^n,\b y^{(1)},\dotsc,\b y^{(i-1)}),\sigma_\nu(\b y^n,\b y^{(1)},\dotsc,\b y^{(s)})>0$ are continuous functions of $\b y^n$ and the stages, then $\pi_\nu^{(i)},\sigma_\nu=\O(1)$ for $i,\nu=1,\dotsc,N$.
		In addition, the modified Butcher coefficients from \eqref{eq:pertcoeff} satisfy $\b A^{[\nu]}(\b y^n,\dt)=\O(1)$ and $\b b^{[\nu]}(\b y^n,\dt)=\O(1)$.
	\end{lem}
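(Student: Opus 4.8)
The plan is to proceed stage by stage, exploiting the explicit triangular structure of the underlying Butcher tableau together with the matrix formulation \eqref{eq:MPRK_PDRS_matrix}, which for the conservative autonomous scheme \eqref{eq:MPRK} reduces to $\b M^{(i)}\byi=\b y^n$ and $\b M\b y^{n+1}=\b y^n$. First I would establish uniqueness of the stages by induction. Since $a_{1j}=0$, the first stage equals $\b y^n$. Assuming $\b y^{(1)},\dotsc,\b y^{(i-1)}$ are already uniquely determined, the matrix $\b M^{(i)}$ is fully determined, because its entries involve only the production and destruction terms evaluated at previous stages and the PWD $\pi_\nu^{(i)}=\pi_\nu^{(i)}(\b y^n,\b y^{(1)},\dotsc,\b y^{(i-1)})$, which by assumption does not depend on $\byi$. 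By Lemma~\ref{lem:MPRKpos} the matrix $\b M^{(i)}$ is invertible, so $\byi=(\b M^{(i)})^{-1}\b y^n$ is the unique stage; the same argument applied to $\b M$ yields a unique $\b y^{n+1}$.

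Next I would show $\byi,\b y^{n+1}=\O(1)$. Lemma~\ref{lem:MPRKpos} guarantees that the entries of $(\b M^{(i)})^{-1}$ lie in $[0,1]$, so each component satisfies $0<y_k^{(i)}=\sum_{\nu}[(\b M^{(i)})^{-1}]_{k\nu}\,y_\nu^n\leq \sum_{\nu}y_\nu^n$, a bound independent of $\dt$; hence $\byi=\O(1)$, and likewise $\b y^{n+1}=\O(1)$. Moreover, every $\dt$-dependent entry of $\b M^{(i)}$ vanishes at $\dt=0$, so $\b M^{(i)}\to \b I$ and therefore $\byi\to \b y^n$ as $\dt\to 0$. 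Since the entries of $\b M^{(i)}$ are continuous in $\dt$ (they are affine in $\dt$ with coefficients depending continuously on the previous stages, which are continuous in $\dt$ by induction), matrix inversion shows that each stage is a continuous function of $\dt$ on some interval $[0,\dt_0]$, converging to the fixed positive vector $\b y^n$.

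For the PWDs I would combine boundedness of the stages with the continuity assumption. As $\dt\to 0$ the arguments $(\b y^n,\b y^{(1)},\dotsc,\b y^{(i-1)})$ converge to $(\b y^n,\dotsc,\b y^n)$, and since $\pi_\nu^{(i)}$ is continuous, $\pi_\nu^{(i)}\to \pi_\nu^{(i)}(\b y^n,\dotsc,\b y^n)$, a finite value; hence $\pi_\nu^{(i)}=\O(1)$, and analogously $\sigma_\nu=\O(1)$.

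Finally, to obtain $\b A^{[\nu]}(\b y^n,\dt)=\O(1)$ and $\b b^{[\nu]}(\b y^n,\dt)=\O(1)$ I would use the explicit formulas \eqref{eq:pertcoeff}. For $\nu=N+1$ these coefficients equal the constants $a_{ij}$ and $b_j$, which are trivially $\O(1)$. For $\nu\leq N$ we have $a_{ij}^{[\nu]}=a_{ij}\,y_\nu^{(i)}/\pi_\nu^{(i)}$ and $b_j^{[\nu]}=b_j\,y_\nu^{n+1}/\sigma_\nu$, whose numerators are $\O(1)$ by the boundedness of the stages. The crucial point, and the main obstacle, is to control the denominators from below: by Definition~\ref{def:MPRKdefn} the PWDs are strictly positive for every $\dt\geq 0$, in particular the limit $\pi_\nu^{(i)}(\b y^n,\dotsc,\b y^n)>0$ at $\dt=0$. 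Since $\pi_\nu^{(i)}$ is continuous in $\dt$ and converges to this positive limit, it is bounded away from zero on a sufficiently small interval $[0,\dt_0]$, whence $1/\pi_\nu^{(i)}=\O(1)$; the same holds for $1/\sigma_\nu$. Therefore each ratio is a product of bounded factors, giving $\b A^{[\nu]}(\b y^n,\dt),\b b^{[\nu]}(\b y^n,\dt)=\O(1)$ and completing the argument.
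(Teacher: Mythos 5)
Your proposal is correct and follows essentially the same route as the paper's proof: both rely on Lemma~\ref{lem:MPRKpos} for invertibility and boundedness of the matrices $\b M^{(i)},\b M$ (hence uniqueness and $\O(1)$ of the stages), an induction establishing continuity of the stages in $\dt$, and the positivity of the PWDs at $\dt=0$ combined with continuity to bound the denominators away from zero for small $\dt$. Your version merely spells out some steps the paper compresses (the $[0,1]$ entry bound, $\b M^{(i)}\to\b I$), so there is nothing substantive to add.
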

	\begin{proof}
		According to Lemma~\ref{lem:MPRKpos}, there exist unique matrices $\mathbf M^{(i)}$, $i=1,\dotsc,s$ and $\mathbf{M}$, with inverses in $\O(1)$ as $\dt\to 0$, such that the stage vectors satisfy the equations $\byi=\left(\mathbf M^{(i)}\right)^{-1}\b y^n=\O(1)$ and $\b y^{n+1}=\mathbf{M}^{-1}\b y^n=\O(1)$. Now, since $p_{k\nu},d_{k\nu},\pi_\nu^{(i)},\sigma_\nu\in\mathcal C$, we conclude by induction over $i$ that the stage vectors are continuous functions of $\dt$ themselves by pointing out that every entry in $\mathbf M^{(i)},\mathbf M$ is a continuous function of $\dt$. Hence, even $\pi_\nu^{(i)}$ and $\sigma_\nu$ are continuous functions of $\dt$, so that we conclude $\pi_\nu^{(i)}=\O(1)$ and $\sigma_{\nu}=\O(1)$ as $\dt\to 0$. Since even $\pi_\nu^{(i)},\sigma_\nu>0$ for $\dt=0$, we deduce from the continuity in $\dt$ that there is a positive lower bound also for $\dt>0$ small enough. This gives us $\tfrac{\yi_\nu}{\pi_\nu^{(i)}}=\O(1)$ and  $\tfrac{y_\nu^{n+1}}{\sigma_{\nu}}=\O(1)$, from which the claim follows. 
	\end{proof}
	Using this lemma and Corollary~\ref{cor:orderNSARK} we are able to provide necessary and sufficient conditions for arbitrary high order NSARK schemes, to which MPRK methods belong. However, those conditions are in general implicit, since the NSWs depend on the stages.  In the next two subsections, for specific classes of MPRK methods we reformulate these conditions to be explicit.
	\begin{rem}\label{rem:negativeMPRK}
		At this point we should discuss the situation mentioned in Remark~\ref{rem:negButcher}. To prove an analogue of Corollary~\ref{cor:orderNSARK} for MPRK schemes based on a Butcher tableau with partially negative entries, we first note that Remark~\ref{rem:compute_u} tells us that using the index function \eqref{eq:indexfun} to switch the PWDs corresponds to switching the colors in the corresponding N-tree. Now, since the condition \eqref{eq:u=1:gamma} needs to be satisfied for all colored  N-trees in $NT_p$, the order conditions for MPRK schemes do not depend on the sign of the Butcher tableau.
	\end{rem}

	\subsubsection{Order Conditions for MPRK Schemes from the Literature}
	In this subsection we focus on reformulating the order conditions from our theory deriving the sufficient and necessary conditions from the literature, that is the conditions up to order three. 
	Note that, as discussed in Remark \ref{rem:negativeMPRK}, the order conditions for an MPRK scheme do not depend on the sign of the entries of the Butcher tableau. In the following, we thus assume without loss of generality that $\b A,\b b\geq \bm 0$, so that we can use the representation \eqref{eq:MPRK} of the MPRK scheme. Furthermore, we assume throughout this section that $\Fnu$ is Lipschitz continuous for all $\nu=1,\dotsc,N$ and $\sum_{j=1}^s a_{ij}=c_i$.
	
	First we give a lemma that we will repeatedly use throughout this section without further notice.
	\begin{lem}
		For given scalars $x,y$ with $y\neq 0$ the identity
		\[\frac{x+\O(\dt^p)}{y+\O(\dt^p)}=\frac{x}{y}+\O(\dt^p)\] holds true.
	\end{lem}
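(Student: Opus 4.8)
The plan is to directly estimate the difference between the two sides, since the claim is an algebraic identity about Landau symbols. First I would introduce names for the remainder terms: write the numerator as $x+a$ and the denominator as $y+b$, where $a,b=\O(\dt^p)$ abbreviate the respective error terms. Because $y\neq 0$ is a fixed nonzero scalar and $b=\O(\dt^p)\to 0$ as $\dt\to 0$, there exists a threshold $\dt_0>0$ such that $\abs{b}\leq \tfrac12\abs{y}$ for all $0\leq \dt\leq \dt_0$; in particular $y+b\neq 0$ on this range, so that the quotient on the left-hand side is well defined for all sufficiently small $\dt$.

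Next I would form the difference with $\tfrac{x}{y}$ and bring it over a common denominator,
\[\frac{x+a}{y+b}-\frac{x}{y}=\frac{y(x+a)-x(y+b)}{y(y+b)}=\frac{ya-xb}{y(y+b)}.\]
The numerator $ya-xb$ is a fixed linear combination of $a$ and $b$ with coefficients $y$ and $-x$ independent of $\dt$, hence itself $\O(\dt^p)$. For the denominator, the bound $\abs{b}\leq\tfrac12\abs{y}$ yields $\abs{y+b}\geq\tfrac12\abs{y}$ and therefore $\abs{y(y+b)}\geq \tfrac12\abs{y}^2>0$ uniformly for $\dt\leq\dt_0$, so its reciprocal is $\O(1)$.

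Combining these two observations, the whole difference is a product $\O(\dt^p)\cdot\O(1)=\O(\dt^p)$, which is precisely the assertion $\tfrac{x+a}{y+b}=\tfrac{x}{y}+\O(\dt^p)$. The only step that requires any care is establishing the uniform lower bound on the denominator; this is exactly where the hypothesis $y\neq 0$ is used, and without it the quotient could fail to be bounded. Everything else reduces to the elementary arithmetic of the $\O$-symbol, so I expect no substantive obstacle.
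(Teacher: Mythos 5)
Your proof is correct and follows essentially the same route as the paper: both reduce the claim to the identity $\tfrac{x+a}{y+b}-\tfrac{x}{y}=\tfrac{ya-xb}{y(y+b)}$ and then bound, the only cosmetic difference being that the paper splits the fraction in two and argues via limsups while you keep a single fraction and use the explicit uniform lower bound $\lvert y+b\rvert\geq\tfrac12\lvert y\rvert$. Your explicit treatment of well-definedness of the quotient for small $\dt$ is a minor point the paper leaves implicit, but it does not constitute a different argument.
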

	\begin{proof}
		Let $f,g=\O(\dt^p)$. Then, as $y\neq 0$, we find
		\[ \frac{x+f(\dt)}{y+g(\dt)}-\frac{x}{y}= \frac{yf(\dt)-xg(\dt)}{y(y+g(\dt))}=\frac{f(\dt)}{y+g(\dt)}-\frac{xg(\dt)}{y(y+g(\dt))}.\]
		Now since $\lim_{\dt\to 0}g(\dt)=0$, the denominators of both fractions on the right-hand side tend to constants as $\dt\to 0$. By definition, we know $\limsup_{\dt \to 0}\frac{f(\dt)}{\dt^p}<\infty$ and  $\limsup_{\dt \to 0}\frac{g(\dt)}{\dt^p}<\infty$. Thus, the claim follows from 
		\[\limsup_{\dt \to 0}\frac{\frac{x+f(\dt)}{y+g(\dt)}-\frac{x}{y}}{\dt^p}\leq\limsup_{\dt \to 0}\frac{f(\dt)}{(y+g(\dt))\dt^p}+\left\lvert\limsup_{\dt \to 0}\frac{xg(\dt)}{y(y+g(\dt))\dt^p}\right\rvert <\infty.\qedhere\]
	\end{proof}
	
	To formulate the conditions up to the order $p=3$, we observe from the general conditions \eqref{eq:condp=1}, \eqref{eq:condp=2} and \eqref{eq:condp=3} that we should expand $a_{ij}^{[\nu]}(\b y^n,\dt)$ up to an error of $\O(\dt^2)$. As we will see, it suffices for our current purposes to assume $\frac{\yi_\nu}{\pi_\nu^{(i)}}=1+\O(\dt)$ for deriving these expansions.
	\begin{lem}\label{lem:stageweights}
		Let $a_{ij}^{[\nu]}(\b y^n,\dt)=a_{ij}\frac{\yi_\nu}{\pi_\nu^{(i)}}$, where $\byi$ is the $i$th stage of an MPRK method \eqref{eq:MPRK}. Moreover, let $\Fnu\in \mathcal C^{2}$ for $\nu=1,\dotsc,N$, and $\b f=\sum_{\nu=1}^N\Fnu$ be the right-hand side of \eqref{ivp}. If \[\frac{\yi_\nu}{\pi_\nu^{(i)}}=1+\O(\dt), \quad \nu=1,\dots,N,\] then
		\begin{equation*}
			\yi_\nu=y^n_\nu+ \dt c_if_\nu(\b y^n)+\O(\dt^2), \quad \nu=1,\dots,N,
		\end{equation*}
		and in particular,
		\begin{equation*}
			a_{ij}^{[\nu]}(\b y^n,\dt)=a_{ij} \frac{y^n_\nu+ \dt c_if_\nu(\b y^n)}{\pi_\nu^{(i)}}+\O(\dt^2),\quad \nu=1,\dots,N.
		\end{equation*}
	\end{lem}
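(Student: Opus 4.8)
The plan is to substitute the assumed expansion of the Patankar weights directly into the stage equation \eqref{MPRKstage} and to exploit the fact that its entire correction term carries an explicit factor $\dt$. Since \eqref{MPRKstage} reads $\yi_k = y^n_k + \dt \sum_{j=1}^{i-1} a_{ij} \sum_{\nu=1}^N ( p_{k\nu}(\byj) \frac{\yi_\nu}{\pi_\nu^{(i)}} - d_{k\nu}(\byj)\frac{\yi_k}{\pi_k^{(i)}})$, it suffices to evaluate the bracketed double sum only up to an error of $\O(\dt)$ in order to pin down $\yi_k$ up to $\O(\dt^2)$. First I would record, via Lemma~\ref{lem:aij=O(1)}, that the stages are $\O(1)$ and the PWDs are $\O(1)$ and bounded below by a positive constant for small $\dt$; together with the continuity of $p_{k\nu}, d_{k\nu}$ and the boundedness of the weights this makes the whole bracket $\O(1)$, so that \eqref{MPRKstage} immediately yields the crude bound $\byj = \b y^n + \O(\dt)$ for every $j$.

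Next I would feed the hypothesis $\frac{\yi_\nu}{\pi_\nu^{(i)}} = 1 + \O(\dt)$ (and likewise for the index $k$) into the bracket. Writing each weight as $1 + \O(\dt)$ and using that $p_{k\nu}(\byj), d_{k\nu}(\byj) = \O(1)$, the summand becomes $p_{k\nu}(\byj) - d_{k\nu}(\byj) + \O(\dt)$; this is the step where the hypothesis is indispensable, as it fixes the leading coefficient of every weight to be exactly $1$. Summing over $\nu$ and invoking the PDS identity $\sum_{\nu=1}^N (p_{k\nu} - d_{k\nu}) = f_k$ from \eqref{eq:PDS}, the inner structure collapses to $f_k(\byj) + \O(\dt)$. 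The crude bound $\byj = \b y^n + \O(\dt)$ from the first step, combined with the Lipschitz continuity of $\b f$, then lets me replace $f_k(\byj)$ by $f_k(\b y^n)$ at the cost of a further $\O(\dt)$. Finally, the outer sum contributes $\sum_{j=1}^{i-1} a_{ij} = c_i$ by the consistency relation $\sum_{j=1}^s a_{ij} = c_i$ (recall $a_{ij} = 0$ for $j \geq i$), and multiplying by the prefactor $\dt$ yields $\yi_k = y^n_k + \dt c_i f_k(\b y^n) + \O(\dt^2)$, which is the first claim after relabeling the dummy index $k$ as $\nu$.

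The second identity follows by division: since $\pi_\nu^{(i)}$ is bounded below by a positive constant for small $\dt$, substituting the expansion of $\yi_\nu$ into $a_{ij}^{[\nu]}(\b y^n,\dt) = a_{ij}\frac{\yi_\nu}{\pi_\nu^{(i)}}$ propagates the $\O(\dt^2)$ remainder unchanged, exactly as quantified by the preceding elementary quotient lemma on $\frac{x+\O(\dt^p)}{y+\O(\dt^p)}$. I expect the only genuinely delicate point to be the bookkeeping that justifies replacing the argument $\byj$ of $\b f$ by $\b y^n$ and that keeps track of where each $\O(\dt)$ is amplified by the leading $\dt$; this rests on combining the a priori boundedness of the stages and PWDs from Lemma~\ref{lem:aij=O(1)} with the smoothness of $\Fnu$. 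Everything else is routine manipulation of Landau symbols.
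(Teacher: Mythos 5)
Your argument is correct, but it follows a genuinely different route from the paper's own proof. The paper does not touch the production--destruction form of the stage equation at all: it invokes Theorem~\ref{thm:main} with $p=1$ (justified via Lemma~\ref{lem:aij=O(1)}) to write $\yi_\nu=y^n_\nu+\dt\sum_{\mu=1}^Nd_i(\rt[]^{[\mu]},\b y^n,\dt)(\dF(\rt[]^{[\mu]})(\b y^n))_\nu+\O(\dt^2)$, then feeds the hypothesis into \eqref{eq:pertcond} to get $a_{ij}^{[\mu]}=a_{ij}+\O(\dt)$, hence $d_i(\rt[]^{[\mu]},\b y^n,\dt)=c_i+\O(\dt)$, and concludes with $\sum_{\mu=1}^N\dF(\rt[]^{[\mu]})(\b y^n)=\b f(\b y^n)$. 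You instead work directly with \eqref{MPRKstage}: a bootstrap ($\byj=\b y^n+\O(\dt)$ from the $\O(1)$ bounds of Lemma~\ref{lem:aij=O(1)}), collapse of the weighted bracket to $f_k(\byj)+\O(\dt)$ via the hypothesis and the PDS identity $\sum_{\nu}(p_{k\nu}-d_{k\nu})=f_k$ from \eqref{eq:PDS}, Lipschitz replacement of $\byj$ by $\b y^n$, and $\sum_{j=1}^{i-1}a_{ij}=c_i$; the division step for the second identity is handled correctly, since the positive lower bound on $\pi_\nu^{(i)}$ for small $\dt$ (established inside the proof of Lemma~\ref{lem:aij=O(1)}) is exactly what makes the $\O(\dt^2)$ remainder survive the quotient. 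Your version is more elementary and self-contained, and it makes transparent that the hypothesis is only needed for the weights of stage $i$ itself while earlier stages enter only through $\O(1)$ bounds. What the paper's route buys is scalability: the same NB-series template with $p=2$ yields the $\O(\dt^3)$ expansion of Lemma~\ref{lem:stageweights4} with almost no extra work, whereas your direct computation would require a second-order Taylor expansion of the stage equations by hand.
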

	\begin{proof}
		The conditions for applying Theorem \ref{thm:main} with $k=1$ are met due to Lemma~\ref{lem:aij=O(1)}, so that we can use the expansion of the stages to obtain
		\begin{equation}\label{proofeq:aijnu}
			\yi_\nu=y^n_\nu+ \dt\sum_{\mu=1}^Nd_i(\rt[]^{[\mu]},\b y^n,\dt)(\dF(\rt[]^{[\mu]})(\b y^n))_\nu+\O(\dt^2),\quad \nu=1,\dots,N.
		\end{equation}
		Next, we substitute our assumption for the Patankar weights into $a^{[\mu]}_{ij}(\b y^n,\dt)$ to receive \[a^{[\mu]}_{ij}(\b y^n,\dt)=a_{ij}\frac{\yi_\mu}{\pi_\mu^{(i)}}=a_{ij}+\O(\dt), \quad \mu=1,\dots,N\] and find
		\begin{equation*}
			\begin{aligned}
				d_i(\rt[]^{[\mu]},\b y^n,\dt)&=\sum_{\nu=1}^N\sum_{j=1}^sa^{[\nu]}_{ij}(\b y^n,\dt)g_j^{[\nu]}(\rt[]^{[\mu]},\b y^n,\dt)=\sum_{j=1}^sa^{[\mu]}_{ij}(\b y^n,\dt)\\
				&=\sum_{j=1}^sa_{ij}+\O(\dt)=c_i+\O(\dt).
			\end{aligned}
		\end{equation*}
		Finally, the claim follows from $\sum_{\mu=1}^N\dF(\rt[]^{[\mu]})(\b y^n)=\sum_{\mu=1}^N\Fmu(\b y^n)=\b f(\b y^n)$.
	\end{proof}
	Another helpful result for deriving the known order conditions from the literature is the following.
	\begin{lem}\label{lem:sigma}
		Let $\b A,\b b,\b c$ describe an explicit $s$-stage Runge--Kutta method of at least order $p$ for some $p\in \N$. Consider the corresponding MPRK scheme \eqref{eq:MPRK} and assume $\Fnu\in \mathcal{C}^{p+1}$ for $\nu=1,\dotsc,N$. If the MPRK method is of order $p$, then
		\[\sigma_\mu=(\NB_{p-1}(\tfrac1\gamma,\b y^n))_\mu+\O(\dt^{p}),\quad \mu=1,\dotsc, N. \]
		This means that $\sigma_\mu$ defines an embedded method of order $p-1$.
	\end{lem}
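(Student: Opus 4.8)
The plan is to extract the required estimate on $\sigma_\mu$ from the single order condition attached to the one-node tree $\rt[]^{[\mu]}$ and then to substitute the NB-series expansion of $\b y^{n+1}$. First I would check that the machinery applies: by Lemma~\ref{lem:aij=O(1)} the stages are uniquely determined, $\b A^{[\nu]}(\b y^n,\dt)=\O(1)$ and $\sigma_\mu=\O(1)$ as $\dt\to 0$, while $\Fnu\in\mathcal C^{p+1}$ is assumed. Hence the hypotheses of Theorem~\ref{thm:main} and Corollary~\ref{cor:orderNSARK} are met, and since the MPRK method is of order $p$, the condition \eqref{eq:u=1:gamma} holds for every $\tau\in NT_p$.

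Next I would evaluate \eqref{eq:u=1:gamma} on $\tau=\rt[]^{[\mu]}$. Because $\lvert\rt[]^{[\mu]}\rvert=1$ and $\gamma(\rt[]^{[\mu]})=1$, this reads $u(\rt[]^{[\mu]},\b y^n,\dt)=1+\O(\dt^{p})$. On the other hand, from \eqref{eq:pertcond} with $g_i^{[\nu]}(\rt[]^{[\mu]})=\delta_{\nu\mu}$ and the MPRK non-standard weights \eqref{eq:NSweights}, namely $b_j^{[\mu]}(\b y^n,\dt)=b_j\delta_\mu$ with $\delta_\mu=\frac{y_\mu^{n+1}}{\sigma_\mu}$, I obtain
\[u(\rt[]^{[\mu]},\b y^n,\dt)=\sum_{i=1}^s b_i^{[\mu]}(\b y^n,\dt)=\delta_\mu\sum_{i=1}^s b_i=\frac{y_\mu^{n+1}}{\sigma_\mu},\]
using $\sum_{i=1}^s b_i=1$. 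Comparing the two expressions gives $\frac{y_\mu^{n+1}}{\sigma_\mu}=1+\O(\dt^{p})$, i.e. $y_\mu^{n+1}=\sigma_\mu\bigl(1+\O(\dt^{p})\bigr)$. Since $\sigma_\mu=\O(1)$, the correction $\sigma_\mu\,\O(\dt^p)$ is itself $\O(\dt^p)$, so that $\sigma_\mu=y_\mu^{n+1}+\O(\dt^{p})$ for $\mu=1,\dotsc,N$.

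Finally I would replace $y_\mu^{n+1}$ by its NB-series. Inserting \eqref{eq:u=1:gamma} into the output expansion \eqref{eq:expyn+1} of Theorem~\ref{thm:main} yields $\b y^{n+1}=\NB_p(\tfrac1\gamma,\b y^n)+\O(\dt^{p+1})$, because each $\tau\in NT_p$ contributes $\frac{\dt^{\lvert\tau\rvert}}{\sigma(\tau)}u(\tau)\dF(\tau)=\frac{\dt^{\lvert\tau\rvert}}{\sigma(\tau)\gamma(\tau)}\dF(\tau)+\O(\dt^{p+1})$. As every tree in $NT_p\setminus NT_{p-1}$ has order $p$, we get $\NB_p(\tfrac1\gamma,\b y^n)=\NB_{p-1}(\tfrac1\gamma,\b y^n)+\O(\dt^{p})$, and therefore $y_\mu^{n+1}=(\NB_{p-1}(\tfrac1\gamma,\b y^n))_\mu+\O(\dt^{p})$. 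Combining with the previous step proves the claim, and the interpretation as an embedded method of order $p-1$ is immediate from Theorem~\ref{thm:anasolNB}, which identifies $\NB_{p-1}(\tfrac1\gamma,\b y^n)$ with $\b y(t_n+\dt)$ up to $\O(\dt^{p})$.

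I expect no genuine obstacle here; the only step requiring care is the passage from the multiplicative estimate $\frac{y_\mu^{n+1}}{\sigma_\mu}=1+\O(\dt^{p})$ to the additive bound $\sigma_\mu-y_\mu^{n+1}=\O(\dt^{p})$, which is legitimate precisely because Lemma~\ref{lem:aij=O(1)} guarantees that $\sigma_\mu$ remains bounded (and, being a positive PWD, bounded away from zero) as $\dt\to 0$.
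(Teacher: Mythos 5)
Your proposal is correct and follows essentially the same route as the paper's proof: both invoke Lemma~\ref{lem:aij=O(1)} to justify applying Corollary~\ref{cor:orderNSARK}, evaluate the order condition on the one-node tree to get $\tfrac{y^{n+1}_\mu}{\sigma_\mu}=1+\O(\dt^p)$ via $\sum_j b_j=1$, use $\sigma_\mu=\O(1)$ to pass to the additive bound $\sigma_\mu=y^{n+1}_\mu+\O(\dt^p)$, and then replace $y^{n+1}_\mu$ by $(\NB_{p-1}(\tfrac1\gamma,\b y^n))_\mu+\O(\dt^p)$ using the order-$p$ assumption. The only cosmetic difference is that you re-derive $\b y^{n+1}=\NB_p(\tfrac1\gamma,\b y^n)+\O(\dt^{p+1})$ from \eqref{eq:expyn+1}, whereas the paper takes it directly as the meaning of order $p$; the content is identical.
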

	\begin{proof}
		The MPRK scheme is of order $p$, \ie $\b y^{n+1}=\NB_p(\tfrac{1}{\gamma},\b y^n)+\O(\dt^{p+1}).$ Next, according to Lemma~\ref{lem:aij=O(1)} we can apply Corollary~\ref{cor:orderNSARK} to see \[ u(\rt[]^{[\mu]},\b y^n,\dt)=\sum_{j=1}^s b_j\frac{y^{n+1}_\mu}{\sigma_\mu}=1 +\O(\dt^p)\] for $ \mu=1,\dotsc,N,$ which is equivalent to $\frac{y^{n+1}_\mu}{\sigma_\mu}=1 +\O(\dt^p)$ for $ \mu=1,\dotsc,N$ as $\sum_{j=1}^sb_j=1$. Using Lemma~\ref{lem:aij=O(1)} once again we find $\sigma_\mu=\O(1)$ yielding\[\sigma_\mu=y^{n+1}_\mu+\O(\dt^p)=(\NB_{p-1}(\tfrac1\gamma,\b y^n))_\mu+\O(\dt^{p}),\quad \mu=1,\dotsc, N.\]
	\end{proof}
	We are now in the position to derive the known order conditions from \cite{KM18,KM18Order3} for MPRK schemes up to order 3.
	\begin{thm}\label{thm:MPRKp=1} Let $\Fnu\in \mathcal C^2$ for $\nu=1,\dotsc,N$ and $\b A,\b b,\b c$ describe an explicit $s$-stage Runge--Kutta method of order at least 1.
		The corresponding MPRK scheme \eqref{eq:MPRK} is of order at least 1 if and only if 
		\begin{equation}\label{eq:MPRKcondp=1}
			\sigma_\mu=y^n_\mu+\O(\dt), \quad \mu=1,\dotsc,N.
		\end{equation}
	\end{thm}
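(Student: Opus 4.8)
The plan is to reduce everything to Corollary~\ref{cor:orderNSARK} specialized to $p=1$, and then to translate the single resulting order condition into the claimed statement about the Patankar-weight denominators. First I would check that the hypotheses of the corollary are in force: the functions $\Fnu$ are Lipschitz continuous by assumption, while the uniqueness of the stages together with $\b A^{[\nu]}(\b y^n,\dt)=\O(1)$ is exactly the content of Lemma~\ref{lem:aij=O(1)}. Since $NT_1=\{\rt[]^{[\mu]}\mid\mu=1,\dotsc,N\}$ and $\gamma(\rt[]^{[\mu]})=1$, the only conditions to verify, reading off $u$ from Table~\ref{tab:NSARKcond}, are
\[
u(\rt[]^{[\mu]},\b y^n,\dt)=\sum_{i=1}^s b_i^{[\mu]}(\b y^n,\dt)=1+\O(\dt),\quad \mu=1,\dotsc,N.
\]

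Next I would simplify this using the explicit non-standard weights $b_i^{[\mu]}(\b y^n,\dt)=b_i\frac{y^{n+1}_\mu}{\sigma_\mu}$ from \eqref{eq:pertcoeff}. Because the underlying RK scheme has order at least $1$, we have $\sum_{i=1}^s b_i=1$, so
\[
\sum_{i=1}^s b_i^{[\mu]}(\b y^n,\dt)=\frac{y^{n+1}_\mu}{\sigma_\mu}\sum_{i=1}^s b_i=\frac{y^{n+1}_\mu}{\sigma_\mu}.
\]
Hence the MPRK scheme is of order at least $1$ if and only if $\frac{y^{n+1}_\mu}{\sigma_\mu}=1+\O(\dt)$ for all $\mu$, and it remains only to show that this ratio condition is equivalent to $\sigma_\mu=y^n_\mu+\O(\dt)$.

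The remaining step is where the boundedness facts enter. From Lemma~\ref{lem:aij=O(1)} we have $\sigma_\mu=\O(1)$, and since $\sigma_\mu>0$ even at $\dt=0$ together with its continuity in $\dt$ (via Lemma~\ref{lem:MPRKpos}), $\sigma_\mu$ is bounded below by a positive constant for small $\dt$. Moreover, since every production and destruction term and every ratio $\frac{y^{n+1}_\nu}{\sigma_\nu}$ is $\O(1)$, the update equation in \eqref{eq:MPRK} yields $y^{n+1}_\mu=y^n_\mu+\O(\dt)$ unconditionally. With these two observations the equivalence is a one-line computation: if $\frac{y^{n+1}_\mu}{\sigma_\mu}=1+\O(\dt)$ then $\sigma_\mu=y^{n+1}_\mu+\O(\dt)=y^n_\mu+\O(\dt)$; conversely, if $\sigma_\mu=y^n_\mu+\O(\dt)=y^{n+1}_\mu+\O(\dt)$, then $\frac{y^{n+1}_\mu}{\sigma_\mu}=1+\frac{y^{n+1}_\mu-\sigma_\mu}{\sigma_\mu}=1+\O(\dt)$, where the positive lower bound on $\sigma_\mu$ justifies the last estimate.

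I expect the main obstacle to be not the algebra but the careful justification of the two boundedness claims, namely that $\sigma_\mu$ stays bounded away from zero and that $\b y^{n+1}$ is a first-order perturbation of $\b y^n$; both rest squarely on Lemmas~\ref{lem:aij=O(1)} and \ref{lem:MPRKpos}. As an alternative for the ``only if'' direction, one may invoke Lemma~\ref{lem:sigma} with $p=1$ directly, since $\NB_0(\tfrac1\gamma,\b y^n)=\b y^n$ immediately gives $\sigma_\mu=y^n_\mu+\O(\dt)$.
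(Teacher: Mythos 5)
Your proof is correct and takes essentially the same route as the paper: both reduce to the $p=1$ case of Corollary~\ref{cor:orderNSARK}, use $\sum_{i=1}^s b_i=1$ to simplify the single order condition to $\frac{y^{n+1}_\mu}{\sigma_\mu}=1+\O(\dt)$, and translate this into $\sigma_\mu=y^n_\mu+\O(\dt)$ via the boundedness facts of Lemma~\ref{lem:aij=O(1)} together with $y^{n+1}_\mu=y^n_\mu+\O(\dt)$. Your direct computation for the necessity direction is precisely the argument underlying Lemma~\ref{lem:sigma}, which is what the paper invokes there (and which you yourself note as an alternative).
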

	\begin{proof}
		The condition \eqref{eq:condp=1} for an MPRK scheme to be at least of order $p=1$ reads
		\begin{equation*}
			\begin{aligned}
				u(\rt[]^{[\mu]},\b y^n,\dt)=\sum_{j=1}^s b_j\frac{y^{n+1}_\mu}{\sigma_\mu}=1 +\O(\dt),&& \mu=1,\dotsc,N,
			\end{aligned}
		\end{equation*}
		which can be simplified to $ u(\rt[]^{[\mu]},\b y^n,\dt)=\frac{y^{n+1}_\mu}{\sigma_\mu}=1+\O(\dt)$  for $\mu=1,\dotsc,N$ as $\sum_{j=1}^sb_j=1$. From Lemma~\ref{lem:sigma} the condition $\sigma_\mu=y^n_\mu+\O(\dt)$ for $\mu=1,\dotsc,N$ can be deduced.
		
		Now let \eqref{eq:MPRKcondp=1} be satisfied. It follows immediately from Lemma \ref{lem:aij=O(1)} and \eqref{eq:expyn+1} that $y^{n+1}_\nu=y^n_\nu+\O(\dt).$ Comparing with \eqref{eq:MPRKcondp=1}, this gives us \[u(\rt[]^{[\mu]},\b y^n,\dt)=\frac{y^{n+1}_\mu}{\sigma_\mu}=1+\O(\dt)\]  for $\mu=1,\dotsc,N$ proving that \eqref{eq:MPRKcondp=1} is sufficient and necessary.
	\end{proof}\newpage
	\begin{thm}\label{thm:MPRKp=2}Let $\Fnu\in \mathcal C^3$ for $\nu=1,\dotsc,N$ and $\b A,\b b,\b c$ describe an explicit $2$-stage Runge--Kutta method of order two.
		Then the corresponding MPRK scheme \eqref{eq:MPRK} is of order two if and only if  
		\begin{subequations}\label{eq:MPRKcondp=2}
			\begin{align}
				\pi_\nu^{(2)}&= y^n_\nu+\O(\dt), && \nu=1,\dotsc,N,\label{eq:MPRKp=2a}\\
				\sigma_\mu&=(\NB_1(\tfrac{1}{\gamma},\b y^n))_\mu+\O(\dt^2),&& \mu=1,\dotsc,N.\label{eq:MPRKp=2b}
			\end{align}
		\end{subequations}
	\end{thm}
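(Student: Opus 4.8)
The plan is to verify the two second-order conditions \eqref{eq:condp=2} supplied by Corollary~\ref{cor:orderNSARK}. For the MPRK weights $b_i^{[\mu]}=b_i\frac{y^{n+1}_\mu}{\sigma_\mu}$ and $a_{ij}^{[\nu]}=a_{ij}\frac{\byi_\nu}{\pi_\nu^{(i)}}$, and using $\sum_i b_i=1$, $\sum_j a_{ij}=c_i$, together with $c_1=0$ and $b_2c_2=\tfrac12$ for the explicit two-stage scheme, these conditions collapse to
\begin{equation*}
\frac{y^{n+1}_\mu}{\sigma_\mu}=1+\mathcal{O}(\dt^2)\qquad\text{and}\qquad \frac{y^{n+1}_\mu}{\sigma_\mu}\cdot\frac12\frac{\b y^{(2)}_\nu}{\pi_\nu^{(2)}}=\frac12+\mathcal{O}(\dt).
\end{equation*}
Throughout I would exploit that $\b y^n>\b 0$ and that, by Lemma~\ref{lem:aij=O(1)}, the denominators $\pi_\nu^{(2)},\sigma_\mu=\mathcal{O}(1)$ are bounded away from zero, so that quotients may be inverted at no cost in order, and that the stage expansion \eqref{eq:yithm} gives $\b y^{(2)}=\b y^n+\mathcal{O}(\dt)$.

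For necessity, suppose the scheme has order two. Then Lemma~\ref{lem:sigma} with $p=2$ yields \eqref{eq:MPRKp=2b} at once. Since order two implies order one, the first condition already gives $\frac{y^{n+1}_\mu}{\sigma_\mu}=1+\mathcal{O}(\dt)$, so the second condition reduces to $\tfrac12\frac{\b y^{(2)}_\nu}{\pi_\nu^{(2)}}=\tfrac12+\mathcal{O}(\dt)$, \ie $\frac{\b y^{(2)}_\nu}{\pi_\nu^{(2)}}=1+\mathcal{O}(\dt)$. Combining this with $\b y^{(2)}_\nu=y^n_\nu+\mathcal{O}(\dt)$ and the boundedness of $\pi_\nu^{(2)}$ rearranges to $\pi_\nu^{(2)}=y^n_\nu+\mathcal{O}(\dt)$, which is \eqref{eq:MPRKp=2a}.

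For sufficiency, assume \eqref{eq:MPRKcondp=2}. From \eqref{eq:MPRKp=2b} one reads off $\sigma_\mu=y^n_\mu+\mathcal{O}(\dt)$, so Theorem~\ref{thm:MPRKp=1} makes the scheme at least first order and hence $\frac{y^{n+1}_\mu}{\sigma_\mu}=1+\mathcal{O}(\dt)$. Condition \eqref{eq:MPRKp=2a} with $\b y^{(2)}_\nu=y^n_\nu+\mathcal{O}(\dt)$ gives $\frac{\b y^{(2)}_\nu}{\pi_\nu^{(2)}}=1+\mathcal{O}(\dt)$, and the product of these two first-order estimates verifies the second order condition. To sharpen the first condition to $\mathcal{O}(\dt^2)$ I would reinsert the order-one estimate into the output expansion \eqref{eq:expyn+1}: its leading term is $\dt\sum_\nu \frac{y^{n+1}_\nu}{\sigma_\nu}\,(\Fnu(\b y^n))_\mu=\dt\, f_\mu(\b y^n)+\mathcal{O}(\dt^2)$, so that
\begin{equation*}
y^{n+1}_\mu=y^n_\mu+\dt\, f_\mu(\b y^n)+\mathcal{O}(\dt^2)=(\NB_1(\tfrac1\gamma,\b y^n))_\mu+\mathcal{O}(\dt^2)=\sigma_\mu+\mathcal{O}(\dt^2),
\end{equation*}
the last equality being \eqref{eq:MPRKp=2b}. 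Dividing yields $\frac{y^{n+1}_\mu}{\sigma_\mu}=1+\mathcal{O}(\dt^2)$, so both conditions hold and Corollary~\ref{cor:orderNSARK} delivers order two.

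The main obstacle is precisely this final bootstrapping: the direct estimate only secures the first-order condition to $\mathcal{O}(\dt)$, and upgrading it to $\mathcal{O}(\dt^2)$ requires re-expanding $\b y^{n+1}$ through the already-established first-order accuracy while using that $\sigma_\mu$ is itself an embedded first-order approximation via \eqref{eq:MPRKp=2b}. The only other place demanding care is the bookkeeping for the two solution-dependent weight factors and the inversion of the $\mathcal{O}(1)$, bounded-below denominators, which is routine once positivity of the stages and PWDs from Lemma~\ref{lem:aij=O(1)} is invoked.
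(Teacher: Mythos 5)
Your proof is correct and takes essentially the same approach as the paper's: both reduce the conditions of Corollary~\ref{cor:orderNSARK} using $\sum_i b_i=1$, $c_1=0$ and $b_2c_2=\tfrac12$, prove necessity via Lemma~\ref{lem:sigma} together with the stage expansion and boundedness of the PWDs, and prove sufficiency by invoking Theorem~\ref{thm:MPRKp=1} and then comparing $\b y^{n+1}$ with \eqref{eq:MPRKp=2b}. The only cosmetic difference is that you re-derive $y^{n+1}_\mu=(\NB_1(\tfrac1\gamma,\b y^n))_\mu+\O(\dt^2)$ by bootstrapping the estimate $\tfrac{y^{n+1}_\nu}{\sigma_\nu}=1+\O(\dt)$ through the expansion \eqref{eq:expyn+1}, whereas the paper reads this identity off directly from the established first-order accuracy; the two steps are equivalent.
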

	\begin{proof}
		First we reduce the necessary and sufficient conditions for $p=2$ from \eqref{eq:condp=2}, which state
		\begin{equation*}
			\begin{aligned}
				u(\rt[]^{[\mu]},\b y^n,\dt)=\sum_{i=1}^s b_i\frac{y^{n+1}_\mu}{\sigma_\mu}&=1 +\O(\dt^2),& \mu&=1,\dotsc,N,\\
				u([\rt[]^{[\nu]}]^{[\mu]},\b y^n,\dt)=\sum_{i,j=1}^s b_i\frac{y^{n+1}_\mu}{\sigma_\mu} a_{ij}\frac{\yi_\nu}{\pi_\nu^{(i)}}&=\frac12 +\O(\dt),& \mu,\nu&=1,\dotsc,N.\\
			\end{aligned}
		\end{equation*}
		Since $\sum_{i=1}^sb_i=1$, the first equation can be reduced to $\frac{y^{n+1}_\mu}{\sigma_\mu}=1 +\O(\dt^2)$ for $\mu=1,\dotsc,N$. Plugging this information into the second condition and using $\sum_{j=1}^sa_{ij}=c_i$, we end up with the condition
		$\sum_{i=1}^s b_ic_i\frac{\yi_\nu}{\pi_\nu^{(i)}}=\frac12 +\O(\dt)$ for $\nu=1,\dotsc,N.$
		Since we assumed $s=2$, we can use $c_1=0$ to obtain the equivalent conditions
		\begin{align*}
			\frac{y^{n+1}_\mu}{\sigma_\mu}&=1 +\O(\dt^2),\\
			\frac{y^{(2)}_\nu}{\pi_\nu^{(2)}}&=1+\O(\dt)
		\end{align*}
		for $\mu,\nu=1,\dotsc,N.$
		
		To prove the claim, first assume that the MPRK scheme has order $p=2$. Then Lemma \ref{lem:stageweights} and Lemma~\ref{lem:sigma} yield the conditions from \eqref{eq:MPRKcondp=2}.
		
		Now let \eqref{eq:MPRKcondp=2} be satisfied. Using \eqref{eq:MPRKp=2a} and Lemma \ref{lem:aij=O(1)} together with the expansion \eqref{eq:yithm} of the stages we see $y^{(2)}_\nu=y^n_\nu+\O(\dt)$, and hence, \[\frac{y^{(2)}_\nu}{\pi_\nu^{(2)}}=1+\O(\dt).\] Moreover, with \eqref{eq:MPRKp=2b} we can apply Theorem \ref{thm:MPRKp=1} to find that the scheme is at least first order accurate, that is $y^{n+1}_\nu=(\NB_1(\frac{1}{\gamma},\b y^n))_\nu+\O(\dt^2)$. Comparing with \eqref{eq:MPRKp=2b} we end up with $\frac{y^{n+1}_\nu}{\sigma_{\nu}}=1+\O(\dt^2).$
	\end{proof}
	As one can see, the stage-dependent conditions for second order are
	\begin{align*}
		\frac{y^{n+1}_\mu}{\sigma_\mu}&=1 +\O(\dt^2),\\
		\frac{y^{(2)}_\nu}{\pi_\nu^{(2)}}&=1+\O(\dt)
	\end{align*}
	and were reformulated in the above Theorem.
	Similarly, the simplified conditions from \eqref{eq:MPRKcondp=3} with $\gamma^{(i)}_\nu=\frac{\yi_\nu}{\pi_\nu^{(i)}}$ and $\delta_\mu= \frac{y^{n+1}_\mu}{\sigma_\mu}$ are, by means of Lemma \ref{lem:equivalent}, equivalent to 
	{\allowdisplaybreaks
		\begin{subequations}\label{eq:KM18Order3}
			\begin{align}
				\frac{y^{n+1}_\mu}{\sigma_\mu}&=1 +\O(\dt^3), &\mu&=1,\dotsc,N, \label{eq:KM18Order3a}\\
				\sum_{i=2}^3 b_ic_i\frac{\yi_\nu}{\pi_\nu^{(i)}}&=\frac12 +\O(\dt^2), &\nu&=1,\dotsc,N,\label{eq:KM18Order3b}\\
				\frac{\yi_\nu}{\pi_\nu^{(i)}}&=1+\O(\dt),&\nu&=1,\dotsc,N,\quad i=2,3.\label{eq:KM18Order3c}
			\end{align}
	\end{subequations} }
	The next theorem decodes these conditions reformulating them in an explicit form.
	\begin{thm}\label{thm:MPRKp=3}
		Let $\b A,\b b,\b c$ describe an explicit 3-stage RK scheme and let $\Fnu\in \mathcal C^4$ for $\nu=1,\dotsc,N$. Then the corresponding MPRK scheme \eqref{eq:MPRK} is at least of order $p=s=3$ if and only if  
		\begin{subequations}\label{eq:KM18Order3condlit}
			\begin{align}
				\sigma_\mu&=(\NB_2(\tfrac{1}{\gamma},\b y^n))_\mu+\O(\dt^3),&& \mu=1,\dotsc,N,\label{eq:KM18Order3condlitc}\\
				\sum_{i=2}^3b_ic_i\frac{y^n_\nu+ \dt c_if_\nu(\b y^n)}{\pi_\nu^{(i)}}&=\frac12+\O(\dt^2),&& \nu=1,\dots,N,\label{eq:KM18Order3condlitb}\\
				\pi_\nu^{(i)}&=y^n_\nu+\O(\dt), && \nu=1,\dotsc,N, && i=2,3.\label{eq:KM18Order3condlita}       
			\end{align}
		\end{subequations}   
	\end{thm}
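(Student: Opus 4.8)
The plan is to exploit that the preceding discussion has already reduced the general order conditions to \eqref{eq:KM18Order3} and shown, via Lemma~\ref{lem:equivalent}, that \eqref{eq:KM18Order3} is necessary and sufficient for order $p=3$. Hence it suffices to establish the equivalence of the explicit conditions \eqref{eq:KM18Order3condlit} with the reduced conditions \eqref{eq:KM18Order3}. Throughout I would lean on Lemma~\ref{lem:aij=O(1)}, which guarantees $\byi,\b y^{n+1},\pi_\nu^{(i)},\sigma_\nu=\O(1)$, that $\b A^{[\nu]}=\O(1)$, and that the PWDs stay bounded away from zero for small $\dt$; together with the positivity $\b y^n>\b 0$ this lets me expand all quotients with these denominators freely.

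For the implication from order $3$ to \eqref{eq:KM18Order3condlit}, I assume \eqref{eq:KM18Order3}. Condition \eqref{eq:KM18Order3condlitc} is read off directly from Lemma~\ref{lem:sigma} with $p=3$. For \eqref{eq:KM18Order3condlita}, I combine the stage condition \eqref{eq:KM18Order3c}, $\frac{\yi_\nu}{\pi_\nu^{(i)}}=1+\O(\dt)$, with the unconditional leading expansion $\yi_\nu=y^n_\nu+\O(\dt)$ from \eqref{eq:yithm}, so that $\pi_\nu^{(i)}=\yi_\nu+\O(\dt)=y^n_\nu+\O(\dt)$. Finally, \eqref{eq:KM18Order3c} activates Lemma~\ref{lem:stageweights}, whose expansion $\yi_\nu=y^n_\nu+\dt c_i f_\nu(\b y^n)+\O(\dt^2)$ lets me replace $\frac{\yi_\nu}{\pi_\nu^{(i)}}$ by $\frac{y^n_\nu+\dt c_i f_\nu(\b y^n)}{\pi_\nu^{(i)}}+\O(\dt^2)$ inside \eqref{eq:KM18Order3b}, giving \eqref{eq:KM18Order3condlitb}.

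For the converse I assume \eqref{eq:KM18Order3condlit}. From \eqref{eq:KM18Order3condlita} and the leading expansion $\yi_\nu=y^n_\nu+\O(\dt)$, together with $y^n_\nu>0$, I recover \eqref{eq:KM18Order3c}; this activates Lemma~\ref{lem:stageweights} once more, turning \eqref{eq:KM18Order3condlitb} back into \eqref{eq:KM18Order3b}. The remaining condition \eqref{eq:KM18Order3a} is obtained by a bootstrapping argument in the order, which also uses the underlying RK order conditions. Indeed, \eqref{eq:KM18Order3condlitc} gives in particular $\sigma_\mu=y^n_\mu+\O(\dt)$, so Theorem~\ref{thm:MPRKp=1} yields first order, $y^{n+1}_\mu=(\NB_1(\tfrac1\gamma,\b y^n))_\mu+\O(\dt^2)$; comparing with $\sigma_\mu=(\NB_1)_\mu+\O(\dt^2)$ (also from \eqref{eq:KM18Order3condlitc}) and using the positive lower bound on $(\NB_1)_\mu$ gives $\frac{y^{n+1}_\mu}{\sigma_\mu}=1+\O(\dt^2)$. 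With \eqref{eq:KM18Order3b} this verifies both second-order conditions \eqref{eq:condp=2}, hence second order, $y^{n+1}_\mu=(\NB_2)_\mu+\O(\dt^3)$. A final comparison with \eqref{eq:KM18Order3condlitc} then yields \eqref{eq:KM18Order3a}, completing the equivalence.

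The step I expect to be the main obstacle is precisely this last bootstrapping: condition \eqref{eq:KM18Order3a} involves the output $\b y^{n+1}$, which is only controlled once an order statement is in hand, creating an apparent circularity. The resolution, already used in Theorems~\ref{thm:MPRKp=1} and \ref{thm:MPRKp=2}, is that the leading expansions $\yi_\nu=y^n_\nu+\O(\dt)$ and $y^{n+1}_\nu=y^n_\nu+\O(\dt)$ hold unconditionally by Theorem~\ref{thm:main}; these break the circularity at first order and let one climb stepwise to second order and then to \eqref{eq:KM18Order3a}. Throughout, the positivity $\b y^n>\b 0$ and the boundedness from Lemma~\ref{lem:aij=O(1)} are what keep the relevant denominators (and the corresponding $\NB$-truncations) away from zero, so that each quotient expansion is legitimate.
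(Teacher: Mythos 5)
Your proposal is correct and follows essentially the same route as the paper's proof: Lemma~\ref{lem:sigma} and Lemma~\ref{lem:stageweights} for the forward direction, the stage expansion \eqref{eq:yithm} plus Lemma~\ref{lem:stageweights} for recovering \eqref{eq:KM18Order3c} and \eqref{eq:KM18Order3b}, and the same first-to-second-order bootstrapping via Theorem~\ref{thm:MPRKp=1} and the conditions \eqref{eq:condp=2} to obtain \eqref{eq:KM18Order3a}. The only cosmetic difference is that you invoke \eqref{eq:KM18Order3b} where the paper invokes \eqref{eq:KM18Order3c} to verify the second condition in \eqref{eq:condp=2}; both suffice at that point.
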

	\begin{proof}
		We now show that the conditions \eqref{eq:KM18Order3} are equivalent to \eqref{eq:KM18Order3condlit}. First, assuming \eqref{eq:KM18Order3} is fulfilled, the MPRK scheme is of order 3. Thus, Lemma \ref{lem:sigma} implies \eqref{eq:KM18Order3condlitc}. Finally, with \eqref{eq:KM18Order3c} we are in the position to apply Lemma \ref{lem:stageweights}, which, together with \eqref{eq:KM18Order3b}, yield the conditions \eqref{eq:KM18Order3condlita} and \eqref{eq:KM18Order3condlitb}.
		
		Let's now suppose that \eqref{eq:KM18Order3condlit} holds. The condition \eqref{eq:KM18Order3c} follows from \eqref{eq:KM18Order3condlita} and the expansion \eqref{eq:yithm} for the stages. Having derived \eqref{eq:KM18Order3c}, we can apply Lemma \ref{lem:stageweights} to obtain 
		\begin{equation*}
			\yi_\nu=y^n_\nu+ \dt c_if_\nu(\b y^n)+\O(\dt^2), \quad \nu=1,\dots,N.
		\end{equation*}
		Together with \eqref{eq:KM18Order3condlitb} we can thus conclude \eqref{eq:KM18Order3b}. Therefore, it remains to deduce condition \eqref{eq:KM18Order3a}. 
		
		First of all, \eqref{eq:KM18Order3condlitc} and Theorem \ref{thm:MPRKp=1} imply that the MPRK scheme is of order at least $1$, which means that $\b y^{n+1}=\NB_1(\frac1\gamma,\b y^n)+\O(\dt^2).$ Comparing with \eqref{eq:KM18Order3condlitc}, we see \[\frac{y^{n+1}_\mu}{\sigma_\mu}=1 +\O(\dt^2),\quad \mu=1,\dotsc,N.\] Moreover, since we have already shown \eqref{eq:KM18Order3c}, we can now verify that condition \eqref{eq:condp=2} is fulfilled which means that the MPRK scheme is even second order accurate. Therefore, we find $\b y^{n+1}=\NB_2(\frac1\gamma,\b y^n)+\O(\dt^3)$, so that a comparison with \eqref{eq:KM18Order3condlitc} gives us \eqref{eq:KM18Order3a}.
	\end{proof}
	We have now derived all known order conditions for MPRK schemes from the literature and even proved that they are valid for MPRK schemes based on $\b A$ and $\b b$ with negative entries, see Remark~\ref{rem:negativeMPRK}.

	\subsubsection{Reduced Order Conditions for 4th Order MPRK Methods}
	The main idea in deriving the known conditions for 3rd order MPRK schemes was to use Lemma \ref{lem:equivalent} for reducing the order conditions \eqref{eq:condp=3} and then substituting the expansions for the stages to obtain conditions depending only on $\b y^n$ and $\dt$. 
	
	Similarly, we are in the position to derive conditions for 4th order by first using Lemma \ref{lem:equivalent4} to reduce the order conditions \eqref{eq:condp=4}. Now, in order to eliminate the dependency of the conditions on the stages, we need to expand $a_{ij}^{[\nu]}(\b y^n,\dt)$ up to an error of $\O(\dt^3)$ giving an analogue to Lemma \ref{lem:stageweights}. However, since equation \eqref{eq:MPRKcondp'=4} in Lemma \ref{lem:equivalent4} gives us $\gamma^{(i)}_\nu=1+\O(\dt^2)$, we will see that it suffices to prove the following lemma assuming $\frac{\yi_\nu}{\pi_\nu^{(i)}}=1+\O(\dt^2)$.
	\begin{lem}\label{lem:stageweights4}
		Let $a_{ij}^{[\nu]}(\b y^n,\dt)=a_{ij}\frac{\yi_\nu}{\pi_\nu^{(i)}}$, where $\byi$ is the $i$th stage of an MPRK method \eqref{eq:MPRK}. Moreover, let $\Fnu\in \mathcal C^{3}$ for $\nu=1,\dotsc,N$, and $\b f=\sum_{\nu=1}^N\Fnu$ be the right-hand side of \eqref{ivp}. If \[\frac{\yi_\nu}{\pi_\nu^{(i)}}=1+\O(\dt^2), \quad \nu=1,\dots,N,\] then
		\begin{equation*}
			\yi_\nu=y^n_\nu+ \dt c_if_\nu(\b y^n)+\frac12\dt^2\sum_{k=1}^sa_{ik}c_k(\D\b f(\b y^n)\b f(\b y^n))_\nu+\O(\dt^3), \quad \nu=1,\dots,N,
		\end{equation*}
		and in particular,
		\begin{equation*}
			a_{ij}^{[\nu]}(\b y^n,\dt)=a_{ij} \frac{y^n_\nu+ \dt c_if_\nu(\b y^n)+\frac12\dt^2\sum_{k=1}^sa_{ik}c_k(\D\b f(\b y^n)\b f(\b y^n))_\nu}{\pi_\nu^{(i)}}+\O(\dt^3)
		\end{equation*}
		for $\nu=1,\dotsc,N$.
	\end{lem}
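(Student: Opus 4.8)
The plan is to mirror the proof of Lemma~\ref{lem:stageweights}, carrying the NB-expansion of the stages one order higher. Since $\Fnu\in\mathcal C^{3}$ and, by Lemma~\ref{lem:aij=O(1)}, the stage equations possess a unique solution with $\b A^{[\nu]}(\b y^n,\dt)=\O(1)$, the hypotheses of Theorem~\ref{thm:main} are met for $p=2$. Applying \eqref{eq:yithm} then yields
\[\byi=\b y^n+\sum_{\tau\in NT_2}\frac{\dt^{\lvert\tau\rvert}}{\sigma(\tau)}\,d_i(\tau,\b y^n,\dt)\,\dF(\tau)(\b y^n)+\O(\dt^3).\]
Here $NT_2=NT_1\cup\{[\rt[]^{[\nu]}]^{[\mu]}\mid \mu,\nu=1,\dotsc,N\}$, and every tree occurring has $\sigma=1$, so the whole task reduces to evaluating $d_i$ and the elementary differentials on the single-node trees $\rt[]^{[\mu]}$ and the two-node chains $[\rt[]^{[\nu]}]^{[\mu]}$.

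Next I would feed the hypothesis $\tfrac{\yi_\nu}{\pi_\nu^{(i)}}=1+\O(\dt^2)$ into the modified coefficients, obtaining $a_{ij}^{[\nu]}(\b y^n,\dt)=a_{ij}\tfrac{\yi_\nu}{\pi_\nu^{(i)}}=a_{ij}+\O(\dt^2)$, and then read off the two needed tree-coefficients from the recursion \eqref{eq:pertcond}. For the single node one finds $d_i(\rt[]^{[\mu]},\b y^n,\dt)=\sum_{j}a_{ij}^{[\mu]}=c_i\tfrac{\yi_\mu}{\pi_\mu^{(i)}}=c_i+\O(\dt^2)$, and for the chain, using $g_j^{[\mu]}([\rt[]^{[\nu]}]^{[\mu]},\b y^n,\dt)=d_j(\rt[]^{[\nu]},\b y^n,\dt)$, one gets
\[d_i([\rt[]^{[\nu]}]^{[\mu]},\b y^n,\dt)=\sum_{j}a_{ij}^{[\mu]}(\b y^n,\dt)\,d_j(\rt[]^{[\nu]},\b y^n,\dt)=\sum_{k}a_{ik}c_k+\O(\dt^2).\]
The reason the weakened hypothesis $1+\O(\dt^2)$ (rather than a finer expansion) is exactly enough is that this chain-coefficient is multiplied by $\dt^2$, so only its leading value $\sum_{k}a_{ik}c_k$ survives the $\O(\dt^3)$ truncation.

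It then remains to identify the elementary differentials and collapse the color sums. By \eqref{eq:elemdiff}, $\dF(\rt[]^{[\mu]})(\b y^n)=\Fmu(\b y^n)$ and $\dF([\rt[]^{[\nu]}]^{[\mu]})(\b y^n)=\D\Fmu(\b y^n)\,\Fnu(\b y^n)$. Because $\sum_{\mu}\Fmu=\b f$ and $\sum_{\mu}\D\Fmu=\D\b f$, summing over all colors gives $\sum_{\mu}\dF(\rt[]^{[\mu]})(\b y^n)=\b f(\b y^n)$ and $\sum_{\mu,\nu}\dF([\rt[]^{[\nu]}]^{[\mu]})(\b y^n)=\D\b f(\b y^n)\b f(\b y^n)$. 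Substituting the coefficients above — the $\O(\dt^2)$ slack in $d_i(\rt[]^{[\mu]},\cdot)$ being absorbed into $\O(\dt^3)$ after multiplication by $\dt$ — assembles the second-order expansion of $\byi$ claimed in the statement. The ``in particular'' identity then follows by multiplying by $a_{ij}$ and dividing by $\pi_\nu^{(i)}$, which is legitimate since $\pi_\nu^{(i)}=\O(1)$ and is bounded away from zero by Lemma~\ref{lem:aij=O(1)}.

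The main obstacle, and essentially the only nonroutine part, is the order-two bookkeeping: one must verify that the $\O(\dt^2)$ errors entering simultaneously through the hypothesis and through $d_i$ contaminate the result only at $\O(\dt^3)$, and that the two color sums collapse cleanly to $\b f$ and $\D\b f(\b y^n)\b f(\b y^n)$, so that the $\dt^2$-term is carried by the single chain tree $[\rt[]^{[\nu]}]^{[\mu]}$ with coefficient $\sum_{k}a_{ik}c_k$. The first-order part is identical in spirit to Lemma~\ref{lem:stageweights}, so no new ideas are needed there.
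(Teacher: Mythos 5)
Your proposal follows the paper's proof step for step: it invokes Theorem~\ref{thm:main} (with expansion order $2$, justified by Lemma~\ref{lem:aij=O(1)}), evaluates the coefficients $d_i$ on the one- and two-node trees via $a_{ij}^{[\mu]}(\b y^n,\dt)=a_{ij}+\O(\dt^2)$, obtains $d_i(\rt[]^{[\mu]},\b y^n,\dt)=c_i+\O(\dt^2)$ and $d_i([\rt[]^{[\eta]}]^{[\mu]},\b y^n,\dt)=\sum_{k}a_{ik}c_k+\O(\dt^2)$, collapses the color sums to $\b f(\b y^n)$ and $\D\b f(\b y^n)\b f(\b y^n)$, and finally divides by $\pi_\nu^{(i)}$, which Lemma~\ref{lem:aij=O(1)} keeps bounded away from zero. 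This is exactly the paper's route; your treatment of the ``in particular'' step is, if anything, more explicit than the paper's.

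However, there is one step you assert without verifying, and as written it fails: you correctly note that every tree in $NT_2$ has symmetry $\sigma(\tau)=1$, so \eqref{eq:yithm} places the weight $\dt^{\lvert\tau\rvert}/\sigma(\tau)=\dt^2$, with no factor $\tfrac12$, in front of the chain-tree contribution. Your own coefficients therefore assemble to
\begin{equation*}
\yi_\nu=y^n_\nu+\dt c_i f_\nu(\b y^n)+\dt^2\sum_{k=1}^s a_{ik}c_k\bigl(\D\b f(\b y^n)\b f(\b y^n)\bigr)_\nu+\O(\dt^3),
\end{equation*}
which differs from the asserted conclusion, $\tfrac12\dt^2\sum_{k}a_{ik}c_k(\D\b f(\b y^n)\b f(\b y^n))_\nu$, by a factor of $2$ in the $\dt^2$-term; your closing claim that the substitution ``assembles the expansion claimed in the statement'' is therefore not true of what you actually computed. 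For comparison, the paper's own proof reaches the stated formula by writing the initial NB-expansion of the stages with an explicit $\tfrac12\dt^2$ in front of the order-two trees --- an insertion that is itself inconsistent with \eqref{eq:yithm}, since $\sigma(\tau)=1$ for those trees as well. The classical Taylor expansion of an explicit RK stage, $\byi=\b y^n+\dt c_i\b f+\dt^2\sum_k a_{ik}c_k\,\D\b f\,\b f+\O(\dt^3)$, confirms that no $\tfrac12$ should appear, so your bookkeeping is the mathematically correct one. The gap is that you neither derived the $\tfrac12$ the statement demands nor flagged the mismatch; a careful write-up has to do one or the other.
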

	\begin{proof}
		The conditions for applying Theorem \ref{thm:main} with $k=2$ are met due to Lemma~\ref{lem:aij=O(1)}, so that we can use the expansion of the stages to obtain 
		\begin{equation}\label{proofeq:aijnu4}
			\begin{aligned}
				\yi_\nu=&y^n_\nu+ \dt\sum_{\mu=1}^Nd_i(\rt[]^{[\mu]},\b y^n,\dt)(\dF(\rt[]^{[\mu]})(\b y^n))_\nu\\&+\frac12\dt^2\sum_{\mu,\eta=1}^Nd_i\left(\left[\rt[]^{[\eta]}\right]^{[\mu]},\b y^n,\dt\right)\left(\dF\left(\left[\rt[]^{[\eta]}\right]^{[\mu]}\right)(\b y^n)\right)_\nu+\O(\dt^3)
			\end{aligned}
		\end{equation}
		for $\nu=1,\dots,N$.
		Moreover, we know that
		\begin{equation}\label{eq:proofaij}
			a^{[\mu]}_{ij}(\b y^n,\dt)=a_{ij}\frac{\yi_\mu}{\pi_\mu^{(i)}}=a_{ij}+\O(\dt^2), \quad \mu=1,\dots,N
		\end{equation}
		as well as 
		\begin{equation}\label{eq:proofdi}
			d_i(\rt[]^{[\mu]},\b y^n,\dt)=\sum_{j=1}^sa^{[\mu]}_{ij}(\b y^n,\dt)=\sum_{j=1}^sa_{ij}+\O(\dt^2)=c_i+\O(\dt^2)
		\end{equation}
		by following the lines of the proof of Lemma \ref{lem:stageweights}. Moreover, in that proof we have already seen that $\sum_{\mu=1}^N\dF(\rt[]^{[\mu]})(\b y^n)=\b f(\b y^n)$ so that we obtain the intermediate result
		\begin{equation*}
			\begin{aligned}
				\yi_\nu=&y^n_\nu+ \dt c_if_\nu(\b y^n)\\&+\frac12\dt^2\sum_{\mu,\eta=1}^Nd_i\left(\left[\rt[]^{[\eta]}\right]^{[\mu]},\b y^n,\dt\right)\left(\dF\left(\left[\rt[]^{[\eta]}\right]^{[\mu]}\right)(\b y^n)\right)_\nu+\O(\dt^3).
			\end{aligned}
		\end{equation*} 
		Turning to the coefficient of $\dt^2$, we first point out that, according to \eqref{eq:pertcond}, we have
		\begin{equation*}
			\begin{aligned}
				d_i\left(\left[\rt[]^{[\eta]}\right]^{[\mu]},\b y^n,\dt\right)&=\sum_{\nu=1}^N\sum_{j=1}^sa^{[\nu]}_{ij}(\b y^n,\dt)g_j^{[\nu]}\left(\left[\rt[]^{[\eta]}\right]^{[\mu]},\b y^n,\dt\right)\\&=\sum_{j=1}^sa^{[\mu]}_{ij}(\b y^n,\dt)d_j\left(\rt[]^{[\eta]},\b y^n,\dt\right)=\sum_{j=1}^sa_{ij}c_j+\O(\dt^2),
			\end{aligned}
		\end{equation*}
		where we used \eqref{eq:proofaij} and \eqref{eq:proofdi}.
		Finally, using \eqref{eq:elemdiff} we obtain
		\begin{align*}
			\sum_{\mu,\eta=1}^N\dF\left(\left[\rt[]^{[\eta]}\right]^{[\mu]}\right)(\b y^n)&=\sum_{\mu,\eta=1}^N\sum_{i_1=1}^d\partial_{i_1}\Fmu(\b y^n)\dF_{i_1}(\rt[]^{[\eta]})(\b y)\\
			&=\sum_{\mu=1}^N\D\Fmu(\b y^n)\sum_{\eta=1}^N\b f^{[\eta]}(\b y^n)=\D\b f(\b y^n)\b f(\b y^n).
		\end{align*}
		The claim follows after substituting these equations into \eqref{proofeq:aijnu4}.
	\end{proof}
	With that lemma we now derive sufficient and necessary conditions for 4th order MPRK schemes. 
	
	The simplified conditions for an MPRK method of order 4 are given by \eqref{eq:MPRKcondp=4} with $\gamma^{(i)}_\nu=\frac{\yi_\nu}{\pi_\nu^{(i)}}$ and $\delta_\mu= \frac{y^{n+1}_\mu}{\sigma_\mu}$. Using Lemma \ref{lem:equivalent4} these conditions are equivalent to 
	\begin{subequations}\label{eq:mprk4}
		\begin{align}
			\frac{y^{n+1}_\mu}{\sigma_\mu}&=1 +\O(\dt^4), &\mu&=1,\dotsc,N, \label{eq:mprk4a}\\
			\sum_{i=2}^4 b_ic_i\frac{\yi_\nu}{\pi_\nu^{(i)}}&=\frac12 +\O(\dt^3), &\nu&=1,\dotsc,N,\label{eq:mprk4b}\\
			\frac{\yi_\nu}{\pi_\nu^{(i)}}&=1+\O(\dt^2),&\nu&=1,\dotsc,N,\quad i=2,3,4.\label{eq:mprk4c}
		\end{align}
	\end{subequations} 
	However, these conditions again depend on the stages. The next theorem gives us equivalent conditions depending only on $\b y^n$ and $\dt$.
	\newpage
	\begin{thm}\label{thm:MPRKp=4}
		Let $\b A,\b b,\b c$ describe an explicit 4-stage RK scheme of order 4  with $\sum_{j=1}^s a_{ij}=c_i$, and let $\Fnu\in \mathcal C^5$ for $\nu=1,\dotsc,N$. Then the corresponding MPRK scheme \eqref{eq:MPRK} is at least of order $p=s=4$ if and only if  for $\mu,\nu=1,\dotsc,N$ and $i=2,3,4$ we have
		\begin{subequations}\label{eq:MPRK4condlit}
			\begin{align}
				\sigma_\mu&=(\NB_3(\tfrac{1}{\gamma},\b y^n))_\mu+\O(\dt^4),\label{eq:MPRK4condlitc}\\
				\sum_{i=2}^4b_ic_i&\frac{y^n_\nu+ \dt c_if_\nu(\b y^n)+\frac12\dt^2\sum_{k=1}^4a_{ik}c_k(\D\b f(\b y^n)\b f(\b y^n))_\nu}{\pi_\nu^{(i)}}=\frac12+\O(\dt^3),\label{eq:MPRK4condlitb}\\
				\pi_\nu^{(i)}&=y^n_\nu+ \dt c_if_\nu(\b y^n)+\O(\dt^2).\label{eq:MPRK4condlita}       
			\end{align}
		\end{subequations}   
	\end{thm}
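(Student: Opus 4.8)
The plan is to prove the two implications \eqref{eq:mprk4} $\Rightarrow$ \eqref{eq:MPRK4condlit} and \eqref{eq:MPRK4condlit} $\Rightarrow$ \eqref{eq:mprk4}, mirroring the structure of the proof of Theorem~\ref{thm:MPRKp=3} but carrying every asymptotic expansion one order further. Throughout I would rely on Lemma~\ref{lem:aij=O(1)}, which secures $\b A^{[\nu]}(\b y^n,\dt)=\O(1)$ and PWDs that are $\O(1)$ and bounded away from zero, so that Theorem~\ref{thm:main} and Corollary~\ref{cor:orderNSARK} are applicable and the (unnamed) quotient lemma may be invoked freely to turn expressions $\frac{x+\O(\dt^p)}{y+\O(\dt^p)}$ into $\frac{x}{y}+\O(\dt^p)$.

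For the direction \eqref{eq:mprk4} $\Rightarrow$ \eqref{eq:MPRK4condlit}, I note first that \eqref{eq:mprk4} is, via Corollary~\ref{cor:orderNSARK} and Lemma~\ref{lem:equivalent4}, equivalent to the scheme having order four; hence Lemma~\ref{lem:sigma} with $p=4$ yields \eqref{eq:MPRK4condlitc} at once. Next, condition \eqref{eq:mprk4c} is precisely the hypothesis needed to apply Lemma~\ref{lem:stageweights4}, giving the second-order expansion of the stages $\yi_\nu$. Reading \eqref{eq:mprk4c} only to precision $\O(\dt^2)$ then gives $\pi_\nu^{(i)}=\yi_\nu+\O(\dt^2)$, which after inserting the stage expansion is \eqref{eq:MPRK4condlita}; substituting the same expansion into the numerator of \eqref{eq:mprk4b} and using that $\pi_\nu^{(i)}$ is bounded below produces \eqref{eq:MPRK4condlitb}.

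For the converse, the recovery of \eqref{eq:mprk4c} is a two-layer bootstrap. The coarse form $\pi_\nu^{(i)}=y^n_\nu+\O(\dt)$ of \eqref{eq:MPRK4condlita} together with the leading-order expansion \eqref{eq:yithm} gives $\frac{\yi_\nu}{\pi_\nu^{(i)}}=1+\O(\dt)$; feeding this into Lemma~\ref{lem:stageweights} produces $\yi_\nu=y^n_\nu+\dt c_i f_\nu(\b y^n)+\O(\dt^2)$, and comparing with the full \eqref{eq:MPRK4condlita} upgrades the ratio to $\frac{\yi_\nu}{\pi_\nu^{(i)}}=1+\O(\dt^2)$, i.e.\ \eqref{eq:mprk4c}. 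With \eqref{eq:mprk4c} available, Lemma~\ref{lem:stageweights4} furnishes the sharper stage expansion, and substituting it into \eqref{eq:MPRK4condlitb} yields \eqref{eq:mprk4b}.

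It remains to deduce \eqref{eq:mprk4a}, and this is the step I expect to be the main obstacle, as it requires ratcheting the order of the scheme up one level at a time. Starting from the truncation $\sigma_\mu=y^n_\mu+\O(\dt)$ of \eqref{eq:MPRK4condlitc}, Theorem~\ref{thm:MPRKp=1} gives first order, so $\b y^{n+1}=\NB_1(\tfrac1\gamma,\b y^n)+\O(\dt^2)$ and thus, comparing with \eqref{eq:MPRK4condlitc}, $\frac{y^{n+1}_\mu}{\sigma_\mu}=1+\O(\dt^2)$. Together with \eqref{eq:mprk4c} this lets me verify the order-two conditions \eqref{eq:condp=2} (using $\sum_j a_{ij}=c_i$ and $\sum_i b_i=1$), upgrading to $\b y^{n+1}=\NB_2(\tfrac1\gamma,\b y^n)+\O(\dt^3)$ and hence $\frac{y^{n+1}_\mu}{\sigma_\mu}=1+\O(\dt^3)$; one further round, now invoking \eqref{eq:mprk4b} and \eqref{eq:mprk4c} to check \eqref{eq:condp=3}, gives $\b y^{n+1}=\NB_3(\tfrac1\gamma,\b y^n)+\O(\dt^4)$, and a final comparison with \eqref{eq:MPRK4condlitc} delivers \eqref{eq:mprk4a}. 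The delicate point is the bookkeeping in this bootstrap, namely ensuring that each tree condition is met to exactly the precision required before claiming the next order; once the orders are obtained in sequence, the remaining simplifications are routine applications of the quotient lemma.
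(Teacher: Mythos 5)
Your proposal is correct and follows essentially the same route as the paper's own proof: both directions use Lemma~\ref{lem:sigma}, Lemma~\ref{lem:stageweights}, and Lemma~\ref{lem:stageweights4} in the same way, and the recovery of \eqref{eq:mprk4a} proceeds via the identical order-by-order bootstrap through Theorem~\ref{thm:MPRKp=1}, \eqref{eq:condp=2}, and \eqref{eq:condp=3}. The only cosmetic difference is that you additionally cite \eqref{eq:mprk4b} when verifying \eqref{eq:condp=3}, where \eqref{eq:mprk4c} alone already suffices, which is harmless.
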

	\begin{proof}
		We start by assuming that \eqref{eq:mprk4} is fulfilled and note that this part works along the same lines as in Theorem \ref{thm:MPRKp=3}. Nevertheless, we present it here for the sake of completeness. 
		
		Now, since \eqref{eq:mprk4} holds, the MPRK scheme is of order 4. Thus, Lemma \ref{lem:sigma} implies \eqref{eq:MPRK4condlitc}. Finally, with \eqref{eq:mprk4c} we are in the position to apply Lemma \ref{lem:stageweights4}, which, together with \eqref{eq:mprk4b}, yield the conditions \eqref{eq:MPRK4condlita} and \eqref{eq:MPRK4condlitb}.
		
		Let's now suppose that \eqref{eq:MPRK4condlit} holds. Using the expansion \eqref{eq:yithm} for the stages, we first observe with \eqref{eq:MPRK4condlita} that $\frac{\yi_\nu}{\pi_\nu^{(i)}}=1+\O(\dt)$. Applying Lemma~\ref{lem:stageweights}, we see $\yi_\nu=y^n_\nu+\dt c_if_\nu(\b y^n)+\O(\dt^2)$, and thus, comparing with \eqref{eq:MPRK4condlita}, we derived \eqref{eq:mprk4c}. As a result, we can now apply Lemma \ref{lem:stageweights4} to obtain 
		\begin{equation*}
			\yi_\nu=y^n_\nu+ \dt c_if_\nu(\b y^n)+\frac12\dt^2\sum_{k=1}^4a_{ik}c_k(\D\b f(\b y^n)\b f(\b y^n))_\nu+\O(\dt^3), \quad \nu=1,\dots,N.
		\end{equation*}
		As a direct consequence of this and \eqref{eq:MPRK4condlitb}, we thus  conclude \eqref{eq:mprk4b}. Therefore, it remains to deduce condition \eqref{eq:mprk4a}. First of all, \eqref{eq:MPRK4condlitc} and Theorem \ref{thm:MPRKp=1} imply that the MPRK scheme is of order at least $1$, i.\,e.\ $\b y^{n+1}=\NB_1(\frac1\gamma,\b y^n)+\O(\dt^2).$ Comparing with \eqref{eq:MPRK4condlitc}, we see \[\frac{y^{n+1}_\mu}{\sigma_\mu}=1 +\O(\dt^2),\quad \mu=1,\dotsc,N.\] Moreover, since we have already shown \eqref{eq:mprk4c}, we can now verify that condition \eqref{eq:condp=2} is fulfilled which means that the MPRK scheme is even second order accura$\frac{\yi_\nu}{\pi_\nu^{(i)}}=1+\O(\dt^3)$te. Therefore, we find $\b y^{n+1}=\NB_2(\frac1\gamma,\b y^n)+\O(\dt^3)$, so that a comparison with \eqref{eq:MPRK4condlitc} gives us 
		\[\frac{y^{n+1}_\mu}{\sigma_\mu}=1 +\O(\dt^3),\quad \mu=1,\dotsc,N.\] Finally, using this and \eqref{eq:mprk4c} once again, we even fulfill the conditions \eqref{eq:condp=3} proving the 3rd order accuracy of the scheme, that is $\b y^{n+1}=\NB_3(\frac1\gamma,\b y^n)+\O(\dt^4)$. Comparing a last time with \eqref{eq:MPRK4condlitc} gives us \eqref{eq:mprk4a}.
	\end{proof}
	With this proof, we obtain for the first time necessary and sufficient order conditions
	for 4th order MPRK methods. A first intuitive, yet rather expensive way of achieving $4$th order would be to use lower order MPRK methods for the computation of the PWDs. In particular, we propose the following method based on the classical Runge--Kutta method described by the Butcher tableau
	\begin{equation*}
		\begin{aligned}
			\def\arraystretch{1.2}
			\begin{array}{c|cccc}
				0 &  & & & \\
				\frac12 & \frac12 & & &\\
				\frac12 &0 &\frac12 & &\\
				1 &0 & 0 &1 &\\
				\hline
				& \frac16 &\frac13&\frac13 & \frac16
			\end{array}
		\end{aligned}
	\end{equation*}
	as a proof of concept scheme. We know that the PWD $\bm\sigma$ needs to be a third order approximation to $\b y^{n+1}$, for which we use the MPRK43($0.5,0.75$) method derived in \cite{KM18Order3}. Within this method, there is a second order scheme embedded, which we denote by $\hat{\bm\sigma}$ and use to compute $\bm \pi^{(i)}$ for $i=2,3,4$ using $c_i\dt$ as a time step, resulting in . Now, according to Corollary~\ref{cor:suff_cond} the overall method is of order 4. 
	
	The third order scheme  returns $\hat{\b y}$ and consists of solving $4$ linear systems, where $\hat{\bm\sigma}$ requires solving $2$ systems. However, as $c_4=1$, we can actually use $\bm\pi^{(4)}=\hat{\bm\sigma}$, and since $c_2=c_3$ we can use $\bm \pi^{(2)}=\bm \pi^{(3)}$. Finally, the MP trick applied to the classical RK method also adds $4$ linear systems to our list.  Altogether $\bm \pi^{(2)}$ and $\bm \pi^{(3)}$ yield a total of 2 linear systems, $\bm \pi^{(4)}=\hat{\bm\sigma}$ and $\bm\sigma=\hat{\b y}$ need the solution of $4$ linear systems and the MP approach applied to the classical RK scheme results in another $4$ linear systems giving us a total of $10$ stages and linear systems to solve. The optimal amount of linear systems of course would be $4$ and to reduce the number of linear systems to be solved will be part of my future work. An indication that this is possible is given by MPDeC methods where fourth order is obtained by $7$ stages for Gauss--Lobatto nodes. Nevertheless, our first attempt has as many stages as MPDeCEQ(4). 
	
	The experimental order of convergence of our first fourth order MPRK method, denoted by MPRKord4, is verified in Figure~\ref{fig:MPRK4EOC}, where the linear system 
	\begin{equation}\label{eq:EOC_testprob}
		\b y'(t)=\vec{-5 & \hphantom{-}1\\ \hphantom{-}5& -1}\b y(t), \quad \b y(0)=\vec{0.9\\0.1}
	\end{equation}
	is solved on $[0,1.75]$ as suggested in \cite{KM18Order3}. We plot the error of the numerical solution at the final time $\tend=1.75$, where the reference solution was computed with the \MATLAB ODE solver \code{ode45} using \code{RelTol = 1e-13} and \code{AbsTol = 1e-13}.
	\begin{figure}[!h]
		\centering
		\includegraphics[width = 0.5\textwidth]{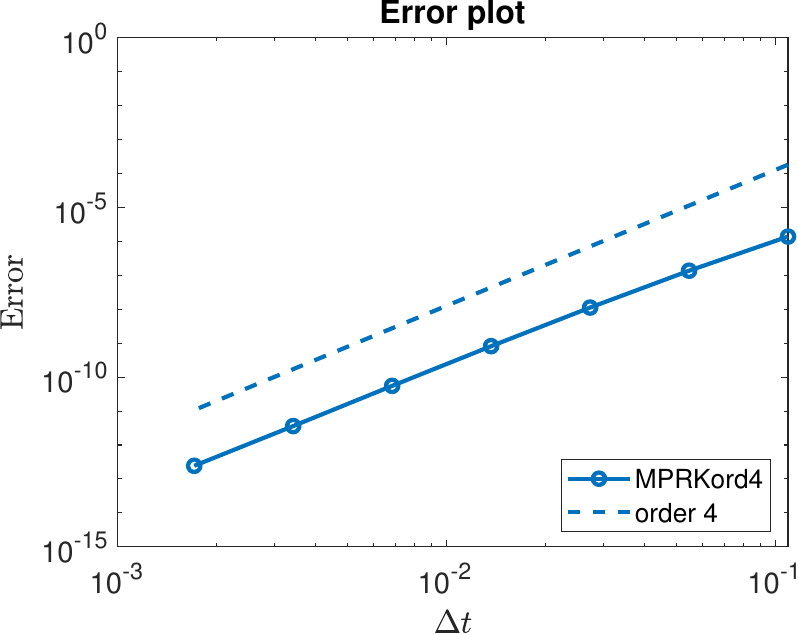}\caption{Error plot of MPRKord4 applied to \eqref{eq:EOC_testprob}. The error was computed at $\tend=1.75$ using \code{ode45} as a reference solution.}\label{fig:MPRK4EOC}
	\end{figure}

\chapter{Stability Theory}\label{chap:stab}
\section{Test Equations and Properties}
As discussed in Section~\ref{sec:stabRK} and Section~\ref{sec:intro_dyn_sys}, we are interested in the stability properties of the positivity-preserving methods reviewed in Chapter~\ref{chap:NumSchemes} when applied to positive linear systems of ordinary differential equations $\b y'=\bA\b y$. Before we formulate assumptions on the system matrix $\bA=(\lambda_{ij})_{1\leq i,j\leq N}$, we introduce the  algebraic multiplicity $\mu_\bA(\lambda)$ of the eigenvalue $\lambda\in \sigma(\bA)$ as well as the corresponding geometric multiplicity $\gamma_\bA(\lambda)$, where \[\sigma(\bA)\tm \Cminus=\{z\in \C\mid \re(z)\leq 0\}\] denotes the spectrum of $\bA$. 

In view of Theorem~\ref{Thm:_Asym_und_Instabil} for the hyperbolic case, we are particularly interested in problems possessing linear invariants such as conservativity. As mentioned in Section~\ref{sec:PDRS}, the presence of $k>0$ linear invariants means that there exist linearly independent vectors $\b n_1,\dotsc,\b n_k\in \R^N\setminus\{\b 0\}$ such that $\b n_i^T\b y(t)=\b n_i^T\b y^0$ for all $t\geq 0$, or equivalently $\b n_i^T\bA=\b 0$ for $i=1,\dotsc,k$. 
Note that the existence of $k$ linear invariants is given if and only if $k=\dim(\ker(\bA^T))=\dim(\ker(\bA))$. 
The presence of $k$ linear invariants means that $\gamma_\bA(0)=k$, so that we consider in the following systems of the form
\begin{equation}\label{eq:PDS_Sys}
	\b y'=\bA\b y,\quad \bA\neq\b 0,\quad  \bA-\diag(\bA)\geq \b 0,\quad \mu_\bA(0)=\gamma_\bA(0)=k,
\end{equation}
together with the initial condition
\begin{equation}
	\b y(0)=\b y^0>\b 0, \label{eq:IC}
\end{equation}
where $\diag(\bA)$ denotes the diagonal of $\bA$. In particular, $\bA-\diag(\bA)\geq \b 0$ means that $\bA$ is a so-called \emph{Metzler} matrix \cite{Luen79}, which is sufficient and necessary to guarantee the positivity of the analytic solution. Moreover, in the presence of linear invariants, the conditions $\mu_\bA(0)=\gamma_\bA(0)$ and $\sigma(\bA)\tm \Cminus$ are necessary for the stability of steady states of $\b y'=\bA\b y$, see Theorem~\ref{Thm:StabLin}. To give an example, the IVP
\begin{align}
	\b y'(t)=\begin{pmatrix}
		-a &\hphantom{-}b\\ \hphantom{-}a &-b
	\end{pmatrix}\b y(t),\quad\b y(0)=\b y^0\in \R^2_{> 0},\label{PDS}
\end{align} 
with $a,b\geq 0$ and $a+b>0$ describes all nontrivial positive and conservative linear problems in $N=2$. To include also non-conservative systems with a linear invariant we may consider
\begin{align}
	\b y'(t)=\begin{pmatrix}
		-a &\hphantom{-}bc\\ \hphantom{-}ac &-b
	\end{pmatrix}\b y(t),\label{PDS_test}
\end{align} 
where $c>0$.

We want to note that if $k=0$, then the only steady state is $\b y^*=\b 0$. As we discuss in the following remark, this steady state is then asymptotically stable.

\begin{rem}\label{rem:Aneg}
	First, we want to mention that at least one diagonal element of $\bA$ is negative. Otherwise we find $\diag(\bA)\geq \b 0$, and hence, $\bA\geq \b 0$. Then, due to  $\mu_\bA(0)=\gamma_\bA(0)=k$ and $\bA\neq \b 0$ we find that $k<N$, and thus, there exists a nonzero eigenvalue of $\bA$. Therefore, $\bA$ is not similar to a strictly upper triangular matrix. Utilizing a generalization of the Perron--Frobenius Theorem \cite[Theorem 2.20]{V00} yields that $\bA$ possesses a positive eigenvalue contradicting $\sigma(\bA)\tm \Cminus$. This means that $\bA$ is a so-called proper Metzler Matrix, i.\,e.\ a Metzler matrix $\bA$ with at least one negative diagonal element. Consequently, \cite[Theorem 10, Corollary 11]{StabMetz} yields \[\sigma(\bA)\tm\mathcal B= \left\{ z\in \C \, \Big| \, \abs{z-r}\leq \abs r, r=\min_{j=1,\dotsc N} \lambda_{jj}\right\},\] where $r<0$ follows since $\bA$ is a proper Metzler matrix. Thus, we obtain $\re(\lambda)<0$ as well as $\arg(\lambda)\in(\tfrac\pi2,\tfrac32\pi)$ for all $0\neq\lambda\in\sigma(\bA)$. Finally, if $k=0$ holds in \eqref{eq:PDS_Sys}, it follows from Theorem~\ref{Thm:StabLin} that $\b y^*=\b 0$ is asymptotically stable.
\end{rem}

Now, since we want to generalize $A$-stability, we may speak of stable methods rather than stating that all steady states become stable fixed points. A precise definition for positivity-preserving methods is given in the following.
\begin{defn}\label{Def:uncondstab} Let \eqref{eq:PDS_Sys}, \eqref{eq:IC} with $k>0$ fulfill the requirements for the application of a given one-step method with generating map $\b g\colon \R^N_{>0}\to \R^N_{>0}$.
	\begin{itemize}
		\item The one-step method is called \emph{conditionally stable}, if there exists a $c>0$ such that any $\b y^*\in \ker(\bA)\cap \R^N_{>0}$ is a Lyapunov stable fixed point of $\b g$ for all $0<\dt< c$.
		\item If the method is conditionally stable and $c>0$ can be chosen arbitrarily large, we call the method \emph{unconditionally stable}.  
		\item If all $\b y^*\in \ker(\bA)\cap \R^N_{>0}$ are unstable fixed points of $\b g$, we call the method \emph{unstable}.
	\end{itemize} 
\end{defn}
\begin{rem}
	In the above definition it is assumed that the one-step method can be applied to the system to \eqref{eq:PDS_Sys}, \eqref{eq:IC}. For conservative schemes this requires $\b 1\in \ker(\bA^T)$. 
	It is also worth mentioning that not being conditionally stable implies instability if the method is linear. However, this does not need to be true for nonlinear one-step methods as we will discuss later. 
\end{rem}

	%
		%


\section{Main Theorem for Stability}

In this section we provide a theorem for the investigation of stability, defined in Section~\ref{sec:intro_dyn_sys}, of the numerical methods from Chapter~\ref{chap:NumSchemes} applied to stable positive linear systems  \eqref{eq:PDS_Sys} with $k>0$. 

As a consequence of the presence of linear invariants, $0$ is always an eigenvalue of $\bA$ which implies the existence of nontrivial steady state solutions $\b y^*$. For every reasonable  time integration scheme $\b y^{n+1}=\b g(\b y^n)$, these steady state solutions have to be fixed points. The common way to study the stability of a fixed point $\b y^*$ of $\b g$ is to compute the eigenvalues of the Jacobian $\b D\b g(\b y^*)$. It is well-known that the fixed point $\b y^*$ is asymptotically stable if the spectral radius $\rho$ of the Jacobian satisfies $\rho(\b D\b g(\b y^*))<1$, see Theorem~\ref{Thm:_Asym_und_Instabil}. Unfortunately, the existence of linear invariants leads to non-hyperbolic fixed points $\b y^*$ of the numerical scheme, \ie the Jacobian $\b D\b g(\b y^*)$ has at least one eigenvalue $\lambda$ with $\abs\lambda = 1$. 

If the time integration scheme applied to \eqref{eq:PDS_Sys} results in a linear iteration
\[\b y^{n+1}=\bm R(\dt,\bA)\b y^n,\]
as is the case for Runge--Kutta schemes, the stability of the non-hyperbolic fixed point $\b y^*$ is again fully determined by the eigenvalues of the Jacobian \[\b D\b g(\b y^*)=\bm R(\dt,\bA)\] as discussed in Remark~\ref{rem:RKstab_dynsys}. 

Unfortunately, the application of higher-order positivity-preserving schemes to the linear system \eqref{eq:PDS_Sys}
results in a nonlinear iteration of the form 
\begin{equation*}
	\b y^{n+1}=\bm R(\dt, \bA,\b y^n)\b y^n,
\end{equation*}
see \cite{OH17} for an illustrative example. 
For such iterations the stability is not fully determined by the eigenvalues of the Jacobian, see for instance Example~\ref{exmp:nonhyper}.
Hence, the stability analysis of these numerical methods requires the investigation of non-hyperbolic fixed points of a nonlinear iteration. This is significantly more demanding compared to the linear case.

One way to study the stability of non-hyperbolic fixed points of nonlinear iterations is the center manifold theory from \cite{mccracken1976hopf,carr1982,iooss1979}, reviewed in Section~\ref{sec:stab_dyn_sys}. This theory states that the stability of a non-hyperbolic fixed point can be determined by studying the iteration on a lower-dimensional invariant manifold, the center manifold.

To avoid the application of the center manifold theory to each positivity-preserving scheme separately, we present a theorem which provides sufficient conditions for the stability of all such methods. Thereby, the main assumption of this new theorem published in \cite{izgin2022stability} is 
that the fixed points of the nonlinear iteration form a linear subspace of $\R^N$. This is a reasonable requirement due to the fact that the steady states of the underlying differential equation \eqref{eq:PDS_Sys} also form a linear subspace of dimension $k>0$, whenever $k$ linear invariants are present.
The theorem contains two main statements. First, the existence of $k$ linear invariants implies that $\lambda=1$ is an eigenvalue of the Jacobian $\b D\b g(\b y^*)$ of multiplicity at least $k$ and the non-hyperbolic fixed point $\b y^*$ is stable, if the remaining $N-k$ eigenvalues have absolute value less than one. Second, if the numerical scheme preservers all $k$ linear invariants, then the iterates locally converge to the unique steady state of the initial value problem \eqref{eq:PDS_Sys}, \eqref{eq:IC}.
Furthermore, it is worth mentioning that the new theorem can directly be used for the stability analysis of time integration schemes in the context of nonlinear systems of differential equations as we will discuss in Remark~\ref{Rem:NonlinearAppl}.

In addition, we want to emphasize at this point that it is not sufficient to assess the stability of a higher-order positivity-preserving scheme in terms of a linear system of the form  
\begin{equation}\label{eq:2x2Dahlquist}
	\b y'=\Vec{
		\lambda & 0\\-\lambda&0} \b y, \quad \b y(0)=\b y^0>\b 0,\quad \lambda\in \R^-,
\end{equation}
which can be seen as a adaptation of Dahlquist's equation
\begin{equation*}
	y'=\lambda y,\quad \lambda\in \C^-,
\end{equation*}
originally introduced in \cite{D63}, to linear conservative systems.
One example for this fact is given in \cite{IKM2122}, where the so-called MPRK22ncs($\alpha$) schemes are investigated. These methods differ from original MPRK schemes in the non-conservative stages (ncs). To be precise, the stages are only treated with the Patankar-trick for guaranteeing unconditional positivity while the modification is only applied to the last step. The total method is still conservative and positive, however the linear systems for the stages are easier to solve. Now, these methods are proven to be $L_0$-stable in the following sense. Applied to the conservative system \eqref{eq:2x2Dahlquist}
the state variable $y_1^n$ satisfies $y_1^{n+1}=R(\dt \lambda)y_1^n$ with 
\begin{equation*}
	R(z)=\frac{(1-\alpha z)^{1-\frac{1}{\alpha}}}{(1-\alpha z)^{1-\frac{1}{\alpha}}-z\bigl(1-(\alpha-\frac12)z\bigr)},
\end{equation*}
so that $\lim_{z\to -\infty}R(z)=0$ and $\lvert R(z)\rvert\leq1$ for all $z\leq0$ and $\alpha\geq \frac12$. In total this means that the first component represents the behavior of the numerical scheme applied to the Dahlquist equation for $\lambda\in \R^-$ and satisfies all conditions for a scheme to be $L_0$-stable, see \cite{TGA96}.
Nevertheless, in \cite{IKM2122} it is proved that MPRK22ncs($\alpha$) face severe time step restrictions for $\alpha<1$ in order to be stable when applied to a general two--dimensional linear positive and conservative system \eqref{PDS},
which was also used in \cite{IKM21} for studying the linearization of MPRK22 schemes.
Hence, to understand the stability behavior of such nonlinear schemes, one should directly investigate the system \eqref{eq:PDS_Sys}.

\subsection*{Main Result for Stability}
In this subsection we make use of the center manifold theory to investigate the stability of fixed points $\b y^*$ of a numerical scheme $\b y^{n+1}=\b g(\b y^n)$ with $\b g\from D\to D$ and $D\tm \R^N$. To that end, we assume that there exists a neighborhood $\mathcal D\tm D$ of $\b y^*$ such that $\b g\big|_\mathcal{D}\in \mathcal C^1$ has first derivatives that are Lipschitz continuous on $\mathcal{D}$, so we can apply Theorem~\ref{Thm:Ex_CM}. Based on this assumption, Theorem~\ref{Thm_MPRK_stabil} below yields a sufficient condition for the Lyapunov stability of $\b y^*$ based on the eigenvalues of the corresponding Jacobian $\b D\b g(\b y^*)$. If $\b g$ in addition conserves all linear invariants of $\bA$ from \eqref{eq:PDS_Sys}, \ie $\b n^T\b g(\b y)=\b n^T\b y$ for all $\b y\in D$ whenever $\b n^T\bA=\b 0$, then Theorem~\ref{Thm_MPRK_stabil} also states that the numerical scheme locally converges towards the unique steady state $\b y^*$ of \eqref{eq:PDS_Sys}, \eqref{eq:IC}.


For a compact notation we introduce the matrix
\begin{equation}\label{eq:N}
	\b N=\begin{pmatrix}
		\b n_1^T\\
		\vdots\\
		\b n_k^T
	\end{pmatrix}\in \R^{k\times N} 
\end{equation}
with $\b n_1,\dotsc,\b n_k$ being a basis of $\ker(\bA^T)$ as well as the set
\begin{equation}
	H=\{\b y\in \R^N\mid \b N\b y=\b N\b y^*\}\label{eq:H}
\end{equation}
and point out that for $\b y\in H\cap D$ we have $\b g(\b y)\in H\cap D$, if and only if $\b g$ conserves all linear invariants.

\begin{thm}\label{Thm_MPRK_stabil}
	Let $\bA\in \R^{N\times N}$ be such that $\ker(\bA)=\Span(\b v_1,\dotsc,\b v_k)$ represents a $k$-dimensional subspace of $\R^N$ with $k>0$. Also, let $\b y^*\in \ker(\bA)$ be a fixed point of $\b g\from D\to D$ where $D\tm \R^N$ contains a neighborhood $\mathcal D$ of $\b y^*$. Moreover, let any element of $C=\ker(\bA)\cap \mathcal D$ be a fixed point of $\b g$ and suppose that $\b g\big|_\mathcal{D}\in \mathcal C^1$ as well as that the first derivatives of $\b g$ are Lipschitz continuous on $\mathcal{D}$. Then $\b D\b g(\b y^*)\b v_i=\b v_i$ for $i=1,\dotsc, k$ and the following statements hold.
	\begin{enumerate}
		\item\label{it:Thma} If the remaining $N-k$ eigenvalues of $\b D\b g(\b y^*)$ have absolute values smaller than $1$, then $\b y^*$ is stable.\label{It:Thm_Stab_a}
		\item\label{it:Thmb} Let $H$ be defined by \eqref{eq:H} and $\b g$ conserve all linear invariants, which means that $\b g(\b y)\in H\cap D$ for all $\b y\in H\cap D$. If additionally the assumption of \ref{It:Thm_Stab_a} is satisfied, then there exists a $\delta>0$ such that $\b y^0\in H\cap D$ and $\norm{\b y^0-\b y^*}<\delta$ imply $\b y^n\to \b y^*$ as $n\to \infty$.
	\end{enumerate}
\end{thm}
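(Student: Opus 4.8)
The plan is to settle the eigenvalue claim first, then prove the stability assertion \ref{it:Thma} with the center manifold machinery of Section~\ref{sec:stab_dyn_sys}, and finally obtain the local convergence \ref{it:Thmb} by restricting the iteration to the invariant affine subspace $H$. To establish $\b D\b g(\b y^*)\b v_i=\b v_i$, I would use that $\ker(\bA)$ is a subspace and $\b y^*\in\ker(\bA)$, so the curve $t\mapsto\b y^*+t\b v_i$ stays in $C=\ker(\bA)\cap\mathcal D$ for $|t|$ small and thus consists of fixed points of $\b g$ by hypothesis. Differentiating $\b g(\b y^*+t\b v_i)=\b y^*+t\b v_i$ at $t=0$ gives the claim, so $1$ is an eigenvalue of $\b J:=\b D\b g(\b y^*)$ with $\ker(\b J-\b I)\supseteq\ker(\bA)$.

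For \ref{it:Thma} I would first note that, under the hypothesis that the other $N-k$ eigenvalues of $\b J$ have modulus below $1$, the eigenvalue $1$ is semisimple of algebraic multiplicity $k$, so $\ker(\b J-\b I)=\ker(\bA)$ and $\b J$ is block-diagonalizable as $\operatorname{diag}(\b I_k,\b V)$ with $\rho(\b V)<1$ in a basis whose first $k$ vectors are $\b v_1,\dotsc,\b v_k$. Composing the shift $\b y\mapsto\b y-\b y^*$ with this change of basis produces exactly the normal form \eqref{Form_of_G} (with $m=k$, $l=N-k$ and $\b U=\b I_k$), and the $\mathcal C^1$/Lipschitz hypotheses make Theorem~\ref{Thm:Ex_CM} applicable. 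The decisive point is that $C$ is sent to the coordinate plane $\{\b w_2=\b 0\}$, all of whose points are fixed; hence $\b h\equiv\b 0$ satisfies the invariance defining a center manifold and may be used, so the reduced map \eqref{Flow_center_manifold} is the identity $\mathcal G(\b w_1)=\b w_1$. Its origin is trivially Lyapunov stable, and Theorem~\ref{Thm:Stab_CM} lifts this stability to $\b G$ and hence to $\b y^*$.

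For \ref{it:Thmb}, since $\b n_1,\dotsc,\b n_k$ span $\ker(\bA^T)$, I would write $H=\b y^*+\ker(\b N)=\b y^*+\operatorname{range}(\bA)$, an $(N-k)$-dimensional affine space with direction $\operatorname{range}(\bA)$; for the systems \eqref{eq:PDS_Sys} the zero eigenvalue of $\bA$ is semisimple, so $\R^N=\ker(\bA)\oplus\operatorname{range}(\bA)$ and $\ker(\bA)\cap\operatorname{range}(\bA)=\{\b 0\}$. Conservation of the linear invariants means $\b g(H\cap D)\tm H\cap D$, so $\b g$ restricts to a $\mathcal C^1$-map on $H\cap D$ with fixed point $\b y^*$; differentiating $\b g(H)\tm H$ shows $\operatorname{range}(\bA)$ is $\b J$-invariant, whence the linearization of the restriction is $\b J|_{\operatorname{range}(\bA)}$. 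Any eigenvector of this restriction for the eigenvalue $1$ would lie in $\ker(\b J-\b I)\cap\operatorname{range}(\bA)=\ker(\bA)\cap\operatorname{range}(\bA)=\{\b 0\}$, so $\rho(\b J|_{\operatorname{range}(\bA)})<1$. Applying Theorem~\ref{Thm:_Asym_und_Instabil} in an affine chart identifying $H$ with $\R^{N-k}$ then yields asymptotic stability of $\b y^*$ within $H$, which is exactly the asserted existence of $\delta>0$ with $\b y^0\in H\cap D$, $\norm{\b y^0-\b y^*}<\delta$, implying $\b y^n\to\b y^*$.

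The hard part will be \ref{it:Thma}: I must carefully justify the normal-form transformation (semisimplicity and block-diagonalization of $\b J$) and, above all, argue rigorously that the trivial map $\b h\equiv\b 0$ qualifies as a center manifold so that the reduced dynamics degenerates to the identity; the regularity assumptions on $\b g$ are precisely what is needed to invoke Theorems~\ref{Thm:Ex_CM} and~\ref{Thm:Stab_CM}. Part \ref{it:Thmb}, by contrast, reduces cleanly to the hyperbolic criterion of Theorem~\ref{Thm:_Asym_und_Instabil} once the transversality $\ker(\bA)\cap\operatorname{range}(\bA)=\{\b 0\}$ is in hand.
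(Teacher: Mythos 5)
Your treatment of the eigenvalue claim and of part \ref{it:Thma} is correct and follows essentially the same route as the paper: the same directional-derivative argument along the curve of fixed points $t\mapsto\b y^*+t\b v_i$, the same block form $\diag(\b I_k,\b V)$ built from a basis whose first $k$ vectors are $\b v_1,\dotsc,\b v_k$ (the paper uses the Jordan form via the matrix $\b S$ of generalized eigenvectors), the same affine transformation to the normal form \eqref{Form_of_G}, and the same identification of the trivial center manifold. The rigor point you flag at the end is handled in the paper exactly as you anticipate: since all points $(\b w_1,\b 0)$ are fixed, the candidate $\bm\Phi\equiv\b 0$ satisfies the invariance relation exactly, so Theorem~\ref{Thm:Comp_func_h} gives $\b h\equiv\b 0$, the reduced map is the identity, and Theorem~\ref{Thm:Stab_CM} lifts stability back to $\b y^*$.

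Part \ref{it:Thmb} is where you genuinely diverge, and where there is a gap. The paper stays inside the center-manifold framework: it shows (using conservation) that the stable generalized eigenvectors of $\b J:=\b D\b g(\b y^*)$ span $\im(\bA)$, so iterates started in $H$ satisfy $\b w^n\in\Span(\b e_{k+1},\dotsc,\b e_N)$ in the transformed coordinates, and then invokes the attractivity statement, Theorem~\ref{Thm:Ex_CM}~\ref{item:attractivity}, to force $\b w^n\to\b 0$. Your alternative, restricting $\b g$ to the invariant affine subspace $H$ and applying the hyperbolic criterion of Theorem~\ref{Thm:_Asym_und_Instabil}, is a legitimately different and more elementary plan, but your key spectral bound $\rho\bigl(\b J|_{\im(\bA)}\bigr)<1$ rests on $\ker(\bA)\cap\im(\bA)=\{\b 0\}$, which you justify by the semisimplicity assumption $\mu_\bA(0)=\gamma_\bA(0)$ of \eqref{eq:PDS_Sys}. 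That assumption is \emph{not} among the hypotheses of the theorem: only $\dim\ker(\bA)=k$ is assumed, and Remark~\ref{Rem:NonlinearAppl} stresses that the theorem is meant for general $\bA$. Moreover, the weak invariance $\b g(H\cap D)\tm H\cap D$ that you differentiate is genuinely insufficient to close this hole: take $\bA=\vec{0&1\\0&0}$ and the linear map $\b g(\b y)=(y_1+y_2/2,\,y_2/2)^T$. Then $\ker(\bA)=\im(\bA)=\Span(\b e_1)$, every point of $\ker(\bA)$ is a fixed point, the remaining eigenvalue of $\b D\b g$ is $\tfrac12$, and $\b g$ maps $H=\b y^*+\Span(\b e_1)$ into itself; yet every point of $H$ is fixed, so no iterate started at $\b y^0\in H$, $\b y^0\neq\b y^*$, converges to $\b y^*$, and indeed $\b J|_{\im(\bA)}=1$. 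So with only the set-invariance of $H$ your intermediate claim is false and so is the conclusion.

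The repair is to use conservation in the strong form in which the paper introduces it earlier in the section and actually uses it in the proof: $\b n_i^T\b g(\b y)=\b n_i^T\b y$ for all $\b y\in D$, not merely for $\b y\in H$. Differentiating this identity in an \emph{arbitrary} direction gives $\b J^T\b n_i=\b n_i$; then, as in the paper, $\b n_i^T(\b J-\lambda\b I)^m\b v=(1-\lambda)^m\b n_i^T\b v$ shows that every generalized eigenvector of $\b J$ for an eigenvalue $\lambda\neq 1$ lies in $(\ker(\bA^T))^\perp=\im(\bA)$, and by the multiplicity count from part \ref{it:Thma} these $N-k$ vectors span $\im(\bA)$. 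This yields $\rho\bigl(\b J|_{\im(\bA)}\bigr)<1$ directly, and it also proves the transversality $\ker(\bA)\cap\im(\bA)=\{\b 0\}$ you wanted, since the eigenvectors spanning $\ker(\bA)$ and the generalized eigenvectors spanning $\im(\bA)$ are jointly linearly independent. With that substitution your restriction-to-$H$ argument closes, and it buys something the paper's proof does not: it bypasses the attractivity part of the center manifold theorem entirely, reducing part \ref{it:Thmb} to the standard hyperbolic fixed-point theorem.
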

Before we prove the above theorem we want to emphasize in the next remark that its application is not restricted to linear systems of differential equations \eqref{eq:PDS_Sys}. 
\begin{rem}\label{Rem:NonlinearAppl}
	Let us consider a general system of autonomous ordinary differential equations $\b y'=\b f(\b y)\in \R^N$ with $k>0$ linear invariants determined by $\b n_1,\dotsc,\b n_k$ and a $k$--dimensional subspace $\mathcal V=\Span(\b v_1,\dotsc,\b v_k)\tm\{\b y\in \R^N\mid \b f(\b y)=\b 0\}$. In the following, we construct a matrix $\bA$ such that $\ker(\bA)=\mathcal V$ as well as $\ker(\bA^T)=\Span(\b n_1,\dotsc,\b n_k)$, and thus are in the position to apply Theorem~\ref{Thm_MPRK_stabil}.
	
	As $\bA$ is uniquely determined by its operation on a basis of $\R^N$ we first set $\bA\b v_i=\b 0$  for $i=1,\dotsc,k$  so that $\ker(\bA)=\mathcal V$ is satisfied. To find an expression for $\im(\bA)$ we make use  of  $\im(\bA)=(\ker(\bA^T))^\perp=(\Span(\b n_1,\dotsc,\b n_k))^\perp$. Using the matrix notation \eqref{eq:N}, this means that $\b s\in \im(\bA)$ if and only if $\b N\b s=\b 0$, or equivalently $\b s\in \ker(\b N)$. Since $\dim(\ker(\b N))=N-k$, there exist linearly independent vectors $\b s_1,\dotsc,\b s_{N-k}$ with $\im(\bA)=\Span(\b s_1,\dotsc,\b s_{N-k})$, and hence, there exist linearly independent vectors $\b w_1,\dotsc,\b w_{N-k}$ such that $\bA\b w_i=\b s_i$ for $i=1,\dotsc, N-k$. As a consequence, setting $\mathcal{W}=\Span(\b w_1,\dotsc,\b w_{N-k})\tm \R^N$ yields $\mathcal{V}\oplus\mathcal{W}=\R^N$.
	Altogether, $\mathcal V$ and $\mathcal W$ uniquely determine  the matrix $\bA$ satisfying $\ker(\bA)=\mathcal V$ and $\ker(\bA^T)=\Span(\b n_1,\dotsc,\b n_k)$. Hence, Theorem \ref{Thm_MPRK_stabil} is not restricted to linear systems.
\end{rem}

\begin{proof}[Proof of Theorem \ref{Thm_MPRK_stabil}]
	First, we show $\b D\b g(\b y^*)\b v_i=\b v_i$ for $i=1,\dotsc,k$. Since $\b g$ is differentiable in $\b y^*\in \mathcal D$ the directional derivatives $\partial_\b v \b g(\b y^*)=\b D\b g(\b y^*)\b v$ exist for all directions $\b v\in\R^N$ and for $i=1,\dotsc,k$ we find
	\[\b D\b g(\b y^*)\b v_i=\partial_{\b v_i}\b g(\b y^*)=\lim_{h\to 0}\frac1h\bigl(\b g(\b y^*+h\b v_i)-\b g(\b y^*)\bigr). \]
	For $\lvert h\rvert$ small enough, we see that $\b y^*+h\b v_i\in C$ because of the following. First of all $\b y^*+h\b v_i\in\ker(\bA)$ holds for all $h\in\R$, so that we have to show that $\b y^*+h\b v_i\in\mathcal D$ for $\lvert h\rvert$ small enough. Since $\b y^*\in \mathcal{D}$, there exists a $\gamma>0$ such that the open ball $B_\gamma(\b y^*)$ with center $\b y^*$ and radius $\gamma$ satisfies $B_\gamma(\b y^*)\tm \mathcal D$. Choosing $\lvert h\rvert<\frac{\gamma}{\Vert \b v_i\Vert}$ we find \[ \Vert \b y^*+h\b v_i-\b y^*\Vert \leq \lvert h\rvert \Vert \b v_i\Vert <\gamma,\] such that $\b y^*+h\b v_i\in\ker(\bA)\cap B_\gamma(\b y^*)\tm \ker(\bA)\cap\mathcal D=C$ is a fixed point of $\b g$. Hence,
	\[\b D\b g(\b y^*)\b v_i=\lim_{h\to 0}\frac1h\bigl(\b y^*+h\b v_i-\b y^*\bigr)=\b v_i, \]
	which shows that $\b v_i$ is an eigenvector of $\b D\b g(\b y^*)$ with associated eigenvalue $1$.
	Thus, the spectrum of $\b D\b g(\b y^*)$ contains the eigenvalue $1$ with a multiplicity of at least $k$.
	
	\begin{enumerate} 
		\item\label{item:stability}
		We now assume that the remaining $N-k$ eigenvalues  of $\b D\b g(\b y^*)$ have absolute values smaller than 1.
		Next we introduce the matrix of generalized eigenvectors $\b S$ where the first $k$ columns are given by the basis vectors $\b v_1,\dotsc, \b v_k$ of $\ker(\bA)$. Thus, we obtain
		\begin{equation}\label{eq:DG_digaonal}
			\b S^{-1}\b D\b g(\b y^*)\b S=\b J
		\end{equation}
		with the Jordan normal form $\b J$ of $\b D\b g(\b y^*)$. We want to point out that the upper left $k\times k$ block of $\b J$ is the identity matrix, since the $k$ basis vectors $\b v_1,\dotsc, \b v_k$ of $\ker(\bA)$ are eigenvectors with associated eigenvalue $1$.
		
		We want to use the Theorem~\ref{Thm:Ex_CM}~\ref{item:existence} in combination with Theorem~\ref{Thm:Stab_CM} to conclude that $\b y^*$ is a stable fixed point.
		The theorems require a map $\b G$ of form \eqref{Form_of_G}, which shall be obtained from $\b g$ by means of an affine linear transformation.
		We consider the affine transformation \[\b T\from\R^N\to\R^N,\quad \b y\mapsto\b w=\b T(\b y)=\b S^{-1}(\b y-\b y^*),\] where
		the inverse transformation $\b T^{-1}$ is given by $\b T^{-1}(\b w)=\b S\b w+\b y^*$.
		By construction, $\ker(\bA)$ is mapped onto the subspace spanned by the first $k$ unit vectors $\b e_1,\dotsc,\b e_k$ of $\R^N$, as for $\b y^*=\sum_{i=1}^kt_i\b v_i\in \ker(\bA)$ we find
		\begin{equation*}
			\begin{aligned}
				\b T\left(\sum_{i=1}^kr_i\b v_i\right)&=\b S^{-1}\left(\sum_{i=1}^kr_i\b v_i-\b y^*\right)=\b S^{-1}\left(\sum_{i=1}^k(r_i-t_i)\b v_i\right)\\
				&=\sum_{i=1}^k(r_i-t_i)\b S^{-1}\b v_i=\sum_{i=1}^k(r_i-t_i)\b e_i
			\end{aligned}
		\end{equation*}
		for arbitrary choices of $r_1,\dotsc,r_k\in \R$.
		In particular, $\b y^*$ is mapped to the origin.

		In order to use Theorem \ref{Thm:Ex_CM}, we have  to define an appropriate $\mathcal C^1$-map $\b G\from \mathcal M\to \R^N$. Therefore we define $\mathcal M=\b T(\mathcal D)$ which is a neighborhood of the origin since $\b T$ is an invertible affine linear map. In particular, we use
		\begin{equation}
			\b G\from \b T(\mathcal D)\to \R^N,\quad \b G(\b w) = \b T(\b g(\b T^{-1}(\b w)))\label{eq:TgTinv}
		\end{equation}
		and observe that the origin is a fixed point of $\b G$. To represent $\b G$ in the form \eqref{Form_of_G}, we use $\b g(\b y^*)=\b y^*$ and write $\b g$ as
		\begin{equation}
			\begin{aligned}
				\b g(\b y)&=\b g(\b y^*)+\b D\b g(\b y^*)(\b y-\b y^*)+\b R(\b y)\\&=\b y^*+\b D\b g(\b y^*)(\b y-\b y^*)+\b R(\b y),\label{yn+1_Lagrange_remainder}
			\end{aligned}
		\end{equation}
		where the remainder $\b R(\b y)$ can be written as
		\begin{align*}
			\b R(\b y)=\b g(\b y)-\b y^*-\b D\b g(\b y^*)(\b y-\b y^*).
		\end{align*}
		In particular, we have
		\begin{equation}
			{\b R}(\b y^*)=\b 0,\quad
			\b D {\b R}(\b y^*)=\b 0.\label{R(0)=0,DR(0)=0}
		\end{equation}
		By inserting \eqref{yn+1_Lagrange_remainder} in \eqref{eq:TgTinv} we obtain
		\begin{equation*}
			\begin{aligned}
				\b G(\b w) &= \b S^{-1}\bigl(\b D\b g(\b y^*)(\b T^{-1}(\b w)-\b y^*)+\b R(\b T^{-1}(\b w))\bigr)\\
				&=\b S^{-1}\b D\b g(\b y^*)\b S\b w + \b S^{-1}\b R(\b T^{-1}(\b w))
			\end{aligned}
		\end{equation*}
		and using \eqref{eq:DG_digaonal} yields
		\begin{equation}\label{eq:G}
			\b G(\b w) = \b J\b w + \b S^{-1}\b R(\b T^{-1}(\b w))=\begin{pmatrix}
				\b I &\\
				& \bm R 
			\end{pmatrix}\b w + \b S^{-1}\b R(\b T^{-1}(\b w)),
		\end{equation}
		where $\b I\in \R^{k\times k}$ and $\bm R\in \R^{(N-k) \times (N-k)}$ and $\rho(\bm R)<1$ as $N-k$ eigenvalues of $\b D\b g(\b y^*)$ have absolute values smaller than $1$.
		Setting $\b w=(\b w_1,\b w_2)^T$ with $\b w_1\in \R^{k}$, $\b w_2\in \R^{N-k}$ and $(\b w_1,\b w_2)\in \b T(\mathcal D)$, \eqref{eq:G} can be rewritten as
		\begin{equation}\label{eq:G_form}
			\b G(\b w_1,\b w_2)=\Vec{\b U \b w_1+\b u(\b w_1,\b w_2)\\\b V \b w_2+\b v(\b w_1,\b w_2)}
		\end{equation} 
		with
		\begin{equation}\label{eq:UVuv}
			\begin{aligned}
				\b U&=\b I,  & \b u(\b w_1,\b w_2)&=\bigl(\b S^{-1}{\b R}(\b T^{-1}(\b w_1,\b w_2))\bigr)_{1:k},\\
				\b V&= \bm R, & \b v(\b w_1,\b w_2)&=\bigl(\b S^{-1}{\b R}(\b T^{-1}(\b w_1,\b w_2))\bigr)_{k+1:N,}
			\end{aligned}
		\end{equation}
		where we defined $\b v_{l:m}=(v_l,\dotsc,v_m)^T$ for a vector $\b v$ and $l\leq m$.
		Each eigenvalue of $\b U$ has absolute value 1 and those of $\b V$ have absolute values smaller than $1$. Furthermore, utilizing $\b T^{-1}(\b 0,\b 0)=\b y^*$ we conclude from \eqref{R(0)=0,DR(0)=0} that
		$\b u(\b 0,\b 0) = \b v(\b 0,\b 0)=\b 0$, since $\b R(\b y^*)=\b 0$. Moreover, we have $\b D \b u(\b 0,\b 0) = \b D \b v(\b 0,\b 0)=\b 0$, since $\b D\b R(\b y^*)=\b 0$.
		Altogether this demonstrates that \eqref{eq:G} is of form \eqref{Form_of_G}, which is necessary for applying the center manifold theory.

		Now, the center manifold theorem~\ref{Thm:Ex_CM}~\ref{item:existence} states that for some $\epsilon>0$ there exists a $\mathcal{C}^1$ function $\b h\from\R^k\to~ \R^{N-k}$ with $\b h(\b 0)=\b 0$ and $\b D\b h(\b 0)=\b 0$, such that $(\b w_1^1,\b w_2^1)^T=\b G(\b w_1^0,\b h(\b w_1^0))$  implies $\b w_2^1=\b h(\b w_1^1)$ for $\norm{\b w_1^0} ,\norm{\b w_1^1}<\epsilon$. 
		
		In the following we make use of the fact that the center manifold is given by 
		\begin{equation}
			\{(\b w_1,\b w_2)\in\R^N\mid \b w_2=\b 0,\ \Vert \b w_1\Vert <\epsilon\},\label{eq:CM}
		\end{equation}
		i.\,e.\ $\b h(\b w_1)=\b 0$, for a sufficiently small $\epsilon>0$, which can be shown with Theorem~\ref{Thm:Comp_func_h}.
		The function $\bm\Phi\from\R^k\to\R^{N-k}$, $\bm\Phi(\b w_1)=\b 0$ satisfies $\bm\Phi(\b 0)=\b 0$ and $\b D\bm\Phi(\b 0)=\b 0$. In order to compute $\b h$ we first prove that all points $(\b w_1,\b 0)\in \b T(\mathcal D)$ are fixed points of $\b G$. Note, that points $(\b w_1,\b 0)\in \b T(\mathcal D)$ even satisfy
		\begin{equation*}
			\begin{aligned}
				\b T^{-1}(\b w_1,\b 0)&=\b T^{-1}\left(\sum_{i=1}^k(\b w_1)_i\b e_i\right)=\sum_{i=1}^k(\b w_1)_i\b S\b e_i+\b y^*\\
				&=\sum_{i=1}^k(\b w_1)_i\b v_i+\b y^*\in \mathcal D\cap \ker(\bA)=C.
			\end{aligned}
		\end{equation*}
		Hence, we find 
		\begin{equation}\label{eq:G(w1,0)=(w1,0)}
			\b G(\b w_1,\b 0)=  \b T\left(\b g\left(\b T^{-1}\left(\b w_1,\b 0\right)\right)\right)
			=\b T\left(\b T^{-1}\left(\b w_1,\b 0\right)\right)=(\b w_1,\b 0)^T.
		\end{equation}
		Thus, it follows that
		\begin{equation*}
			\begin{aligned}
				\bm\Phi(\b U \b w_1+\b u(\b w_1,\bm\Phi(\b w_1)) &- \left(\b V \bm\Phi(\b w_1)+\b v(\b w_1,\bm\Phi(\b w_1))\right)\\
				&\overset{\eqref{eq:G_form}}{=}-(\b G(\b w_1,\b 0))_{k+1:N}=\b 0.
			\end{aligned}
		\end{equation*}
		By Theorem~\ref{Thm:Comp_func_h}, $\bm\Phi$ is an approximation of $\b h$ for any order $q>1$. Thus, 
		\begin{equation*}
			\b h(\b w_1)=\bm\Phi(\b w_1)=\b 0 \text{ for }\norm{\b w_1}<\epsilon.
		\end{equation*}
		To investigate the stability of $\b y^*$, we can now consider the map
		\begin{equation*}
			\mathcal  G(\b w_1)=\b U \b w_1 + \b u(\b w_1,\b h(\b w_1))=\b U \b w_1 + \b u(\b w_1,\b 0)
		\end{equation*}
		for $\norm{\b w_1}<\epsilon$, where $\b U$ and $\b u$ are given in \eqref{eq:UVuv}.
		According to Theorem~\ref{Thm:Stab_CM}, the fixed point $\b 0\in\R^N$ of $\b G$ is stable, if the fixed point $\b 0\in\R^k$ is a stable fixed point of $\mathcal G$.
		From \eqref{eq:G(w1,0)=(w1,0)} we see
		\begin{equation*}
			\mathcal G(\b w_1)=\left(\b G(\b w_1,\b 0)\right)_{1:k}=\b w_1,
		\end{equation*}
		which implies $\b w_1^{n}=\mathcal G(\b w_1^{n-1})=\b w_1^0$ for all $n\in\N$ and every $\b w_1^0$ with $\norm{\b w_1^0}<\epsilon$. Consequently, for every $\widetilde\epsilon>0$ we define $\widetilde\delta = \min\{\widetilde\epsilon,\epsilon\}$ to obtain that $\norm{\b w_1^0}<\widetilde\delta$ implies $\norm{\b w_1^n}=\norm{\b w_1^0}<\widetilde\delta\leq \widetilde\epsilon$. Thus, $\b 0\in \R^k$ is a stable fixed point of $\mathcal G$ in the sense of Definition~\ref{Def_Lyapunov_Diskr}~\ref{def:stab}.
		Furthermore, by Theorem~\ref{Thm:Stab_CM} the fixed point $\b 0\in\R^N$ of $\b G$ is stable as well.
		
		As a last step, we show that the above conclusions imply that $\b y^*$ is a stable fixed point of $\b g$.
		We know that $\b 0$ is a stable fixed point of the iteration scheme $\b w^{n+1}=\b G(\b w^n)$, that is for every $\epsilon_w>0$ exists $\delta_w>0$ such that $\norm{\b w^0}<\delta_w$ implies $\norm{\b w^n}<\epsilon_w$. Now, let $\epsilon>0$ be arbitrary, we define $\epsilon_w=\epsilon/\norm{\b S}$ and  $\delta=\delta_w/\norm{\b S^{-1}}$. Hence, if $\norm{\b y^0-\b y^*}<\delta$, then \[\norm{\b w^0}=\norm{\b T(\b y^0)}=\norm{\b S^{-1}(\b y^0-\b y^*)}\leq \norm{\b S^{-1}}\norm{\b y^0-\b y^*}<\norm{\b S^{-1}}\delta=\delta_w\] and consequently $\norm{\b w^n}<\epsilon_w$. Furthermore, $\b w^n=\b T(\b y^n)=\b S^{-1}(\b y^n-\b y^*)$ is equivalent to $\b S\b w^n=\b y^n-\b y^*$ and hence,  $\norm{\b y^n-\b y^*}\leq\norm{\b S}\norm{\b w^n}<\norm{\b S}\epsilon_w=\epsilon$. Thus, we have shown that $\b y^*$ is a stable fixed point of the iteration scheme $\b y^{n+1}=\b g(\b y^n)$.
		%

		\item
		
		Recall from \eqref{eq:H} that $H=\{ \b y\in \R^N\mid \b N\b y=\b N\b y^*\}$ and let $\b y^0\in H\cap D$, where $\b N$ is given by \eqref{eq:N}. Note, that $\dim(H)=N-k$ as $\b N$ has rank $k$, and $\b y^n\in H$ for all $n\in \N_0$ since $\b g(\b y)\in H$ for all $\b y\in H\cap D$. Moreover, for all $\b y\in H$ we find \[(\b y-\b y^*)\perp \ker(\bA^T)=\Span(\b n_1,\dotsc,\b n_k)\] since $\b N(\b y-\b y^*)=\b N\b y^*-\b N\b y^*=\b 0$. Hence $\b y^n-\b y^* \in (\ker(\bA^T))^\perp=\im(\bA)$ for all $n\in \N_0$.  We now want to show that the last $N-k$ column vectors of the invertible matrix $\b S=(\b v_1 \dotsc \b v_k \b v_{k+1} \dotsc \b v_N)$ of generalized eigenvectors associated with $\b D\b g(\b y^*)$, see  \eqref{eq:DG_digaonal}, form a basis of $\im(\bA)$. Since $\b g$ conserves all linear invariants we observe
		\begin{equation*}
			\begin{aligned}
				\b n_i^T\b D\b g(\b y^*)\b v&=\lim_{h\to 0}\frac{1}{h}\Bigl(\b n_i^T\b g(\b y^*+h\b v)-\b n_i^T\b g(\b y^*)\Bigr)\\&=\lim_{h\to 0}\frac{1}{h}\Bigl(\b n_i^T(\b y^*+h\b v)-\b n_i^T\b y^*\Bigr)=\b n_i^T\b v
			\end{aligned}
		\end{equation*}
		for all $\b v\in \R^N$, and in particular we find
		\begin{equation}
			\b n_i^T(\b D\b g(\b y^*)-\lambda\b I)\b v = \b n_i^T\b D\b g(\b y^*)\b v - \lambda\b n_i^T\b v=(1-\lambda)\b n_i^T\b v.\label{eq:n_i(Dg(y*)-lambdaI)v}
		\end{equation}
		If $\b v$ is a generalized eigenvector of $\b D\b g(\b y^*)$ corresponding to an eigenvalue $\lambda\neq 1$, so that \[(\b D\b g(\b y^*)-\lambda\b I)^m\b v =\b 0\] is satisfied for some $m\in \N$, it follows from \eqref{eq:n_i(Dg(y*)-lambdaI)v} that
		\begin{equation*}0=\b n_i^T(\b D\b g(\b y^*)-\lambda\b I)^m\b v=(1-\lambda)\b n_i^T(\b D\b g(\b y^*)-\lambda\b I)^{m-1}\b v=(1-\lambda)^m\b n_i^T\b v,
		\end{equation*}
		which implies $\b n_i^T\b v=0$ as $\lambda\neq 1$. Hence, all generalized eigenvectors $\b v$ corresponding to an eigenvalue $\lambda\neq 1$ are elements of $(\ker(\bA^T))^\perp=\im(\bA)$. Now note that $\b v_{k+1},\dotsc,\b v_N$ are $N-k$ generalized eigenvectors corresponding to eigenvalues of absolute value smaller than 1. Finally, since \[\dim(\im(\bA))=N-\dim(\ker(\bA))=N-k,\]
		the vectors $\b v_{k+1},\dotsc,\b v_N$ form a basis of $\im(\bA)$. Since \[\b y^n-\b y^*\in \im(\bA)=\Span(\b v_{k+1},\dotsc,\b v_N),\] there exist coefficients $\gamma^n_i\in \R$ such that for all $n\in \N_0$ we can write
		\begin{align}
			\b y^n-\b y^*=\sum_{i=k+1}^N \gamma^n_i\b v_i.\label{eq:yn-y*}
		\end{align}
		
		In order to prove the local convergence of the iterates $\b y^n$ to $\b y^*$ we  investigate the local convergence of $\b w^n$ to the origin. According to Theorem~\ref{Thm:Ex_CM}~\ref{item:attractivity}
		the distance of the iterates $\b w^n\in\R^N$ from \ref{item:stability} to the center manifold given in \eqref{eq:CM} tends to zero for $n\to \infty$, if the iterates stay within a certain neighborhood of the origin. More precisely, this means that the sequence  $(\b w^n)_{n\in \N_0}$ approaches \[\{(\b w_1,\b w_2)\in \R^N\mid \norm{ \b w_1}<\epsilon,\b w_2=\b 0\} = \Span(\b e_1,\dotsc,\b e_k)\cap B_\epsilon(\b 0)\] for $n\to \infty$, if $\norm{ \b w^n}<\epsilon$ for $i=1,\dotsc, N$ and all $n\in \N_0$, where $\epsilon>0$ is sufficiently small. Now, since the origin is a stable fixed point of $\b G$, as shown in \ref{item:stability}, there exists $\widetilde{\delta}>0$ such that $\norm{\b w^0}<\widetilde{\delta}$ implies $\norm{\b w^n}<\epsilon$ for all $n \in\N_0$. Assuming $\norm{\b w^0}<\widetilde{\delta}$, we can conclude 
		\begin{equation}\label{eq:limwn}
			\lim_{n\to\infty}\b w^n\in \Span(\b e_1,\dotsc,\b e_k).
		\end{equation}
		Furthermore, from \eqref{eq:yn-y*} it follows
		\begin{equation*}
			\begin{aligned}
				\b w^n=\b T(\b y^n)&=\b S^{-1}(\b y^n-\b y^*)=\b S^{-1}\biggl(\sum_{i=k+1}^N \gamma^n_i\b v_i\biggr)\\&=\sum_{i=k+1}^N \gamma^n_i\b S^{-1}\b v_i=\sum_{i=k+1}^N\gamma^n_i\b e_i.
			\end{aligned}
		\end{equation*}
		In particular this means $\b w^n\in \Span(\b e_{k+1},\dots,\b e_N)$, and hence, in combination with \eqref{eq:limwn} one obtains \[\lim_{n\to\infty}\b w^n\in \Span(\b e_1,\dotsc,\b e_k)\cap\Span(\b e_{k+1},\dotsc,\b e_N)=\{\b 0\},\] i.\,e.\ $\lim_{n\to\infty}\b w^n=\b 0$.  Due to the transformation $\b T$ this is equivalent to $\lim_{n\to\infty}\b y^n=\b y^*$ for $\b y^0\in H\cap D$ satisfying $\norm{\b y^0-\b y^*}<\delta=\widetilde{\delta}/\norm{\b S^{-1}}$ since then \[\norm{\b w^0}=\norm{\b T(\b y^0)}=\norm{\b S^{-1}(\b y^0-\b y^*)}\leq \norm{\b S^{-1}}\norm{\b y^0-\b y^*}<\widetilde{\delta}\] follows.
	\end{enumerate}
\end{proof}
\begin{rem}\label{rem:th2.9}
	The novel theorem presented here is a generalization of \cite[Theorem 2.9]{IKM2122} and improves its statements considerably. First of all, \cite[Theorem 2.9]{IKM2122} is restricted to systems of size $2\times 2$, whereas here we consider the general $N\times N$ case.
	Second, \cite[Theorem 2.9]{IKM2122} is restricted to conservative numerical schemes, whereas the novel theorem can be applied to general iteration maps $\b g:D\to D$.
	Third, \cite[Theorem 2.9]{IKM2122} does not make clear that the stability of the non-hyperbolic fixed point requires less assumptions than the local convergence towards it.
	In the theorem presented above, on the other hand, it becomes evident that the preservation of linear invariants is not at all necessary to guarantee the stability of the fixed point. Therefore, the theorem can be applied to study the stability of methods that do not preserve all linear invariants. 
	Moreover, the new theorem is formulated with less restrictive assumptions on the regularity of the map generating the numerical approximations.
\end{rem}
\begin{rem}\label{rem:C2->C1}
	As a final remark, we note that if $\b g\in \mathcal{C}^2$, which is also assumed in \cite{IKM2122}, we may choose $\mathcal D\tm D$ in such a way that $\overline{\mathcal D}\tm D$. As a result the second derivatives are bounded on the compact set $\overline{\mathcal D}$, so that the first derivatives are Lipschitz continuous due to the mean value theorem. Therefore, $\b g$ restricted to $\mathcal D$ is a $\mathcal C^1$-map with Lipschitz continuous derivatives. For more details, see for example \cite[Remark 8.12 (b)]{AE08}.
	
	Moreover, we want to mention that Theorem~\ref{Thm_MPRK_stabil} recently was applied to analyze the stability properties of MPRK22($\alpha$) when applied to a nonlinear systems of ordinary differential equations, see \cite{IKMnonlin22}.
\end{rem}

\section{A Necessary Condition for Non-Oscillatory Schemes}
 In this section, we investigate the connection between oscillations \cite{IssuesMPRK} and the stability theory above for $N=2$. To that end, we first rewrite all 2--dimensional linear systems of ODEs that are positive and conservative, \ie \eqref{PDS}, with a change of variables, as the following IVP
\begin{equation}\label{eq:Testprob}
	\begin{cases}
		\b y'(t)=\bA_\theta\b y(t),\\
		\b y(0)=\b y^0>\b 0,
	\end{cases}\quad \bA_\theta=\begin{pmatrix*}[r]
		-\theta &1-\theta\\ \theta & -(1-\theta)
	\end{pmatrix*},\quad \theta\in(0,1),
\end{equation}
where this can be seen as PDS, with $p_{12}=d_{21}=(1-\theta)y_2$, $d_{12}=p_{21}=\theta y_1$ and all other entries zero. Let us also consider a one-step numerical method whose iterates are generated by a map $\b g$, i.\,e.\ $\b y^{n+1} = \b g(\b y^n)$. Note that $\b g$ might be given implicitly.

We first describe oscillations for 2--dimensional linear ODEs through the solution and the steady state. It is known that the exact solution does not overshoot the steady state, so that we require the same from the numerical approximation.
\begin{defn}
	\begin{enumerate}
		\item\label{it:defovershoota} A method is \emph{not overshooting} the steady state of \eqref{eq:Testprob} if $y_2^1<\theta$ and $y_1^1>1-\theta$ for any given initial state $\b y^0=(1-\epsilon,\epsilon)^T$ with $\epsilon<\theta$, while when $\epsilon>\theta$ the method is \emph{not overshooting} the steady state if $y_2^1>\theta$ and $y_1^1<1-\theta$.
		\item Otherwise the method is said to be \emph{overshooting} the steady state of \eqref{eq:Testprob}.
	\end{enumerate}	
\end{defn}
The following theorem extends the results from \cite{IKM2122} to statements regarding oscillatory behavior. To apply the corresponding theory, we assume $\b g$ to have the same properties as in \cite[Theorem 2.9]{IKM2122}.
\begin{thm}\label{Thm:osci}
	Let any positive steady state of \eqref{eq:Testprob} be a fixed point of a map $\b g\in \mathcal{C}^2(\R^2_{>0})$. In addition, let the iterates generated by $\b y^{n+1}=\b g(\b y^n)$ satisfy $\lVert \b y^{n+1} \rVert_1=\lVert \b y^n \rVert_1$ for all $n\in \N_0$. Finally, let $\b y^*$ be the unique positive steady state of \eqref{eq:Testprob}.
	
	Then, the spectrum of the Jacobian $\b D\b g(\b y^*)$ is $\sigma(\b D\b g(\b y^*))=\{1,R\}$ with $R\in\R$. Furthermore, if $R<0$, then the method generated by $\b g$ is overshooting the steady state of \eqref{eq:Testprob}.
\end{thm}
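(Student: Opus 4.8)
The plan is to exploit the one-dimensional reduction forced by the single linear invariant and to recognize the second eigenvalue $R$ as the derivative, at the steady state, of the scalar map that $\b g$ induces on the invariant line. This turns both assertions into short computations once the eigenstructure of $\b D\b g(\b y^*)$ has been sorted out.

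First I would pin down the steady state and prove the spectral claim. Since the iterates conserve the $1$-norm and the admissible data $\b y^0=(1-\epsilon,\epsilon)^T$ satisfy $\b 1^T\b y^0=1$ with $\b 1=(1,1)^T$, the unique positive steady state is $\b y^*=(1-\theta,\theta)^T$, which spans $\ker(\bA_\theta)$. The eigenvector argument from the proof of Theorem~\ref{Thm_MPRK_stabil} applies verbatim: every point of $\ker(\bA_\theta)$ in a neighbourhood of $\b y^*$ is a fixed point of $\b g$, hence $\b D\b g(\b y^*)\b v=\b v$ for $\b v=(1-\theta,\theta)^T$, so $1\in\sigma(\b D\b g(\b y^*))$. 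Because $\b D\b g(\b y^*)$ is a real $2\times 2$ matrix carrying a real eigenvalue, its second eigenvalue equals $R=\tr(\b D\b g(\b y^*))-1\in\R$ by Vieta's formulas; this gives $\sigma(\b D\b g(\b y^*))=\{1,R\}$ with $R\in\R$.

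For the overshoot statement I would use conservation to fix the geometry. Differentiating the invariant $\b 1^T\b g(\b y)=\b 1^T\b y$ at $\b y^*$ yields $\b 1^T\b D\b g(\b y^*)=\b 1^T$, so $\b 1^T$ is a left eigenvector for the eigenvalue $1$ and the affine line $H=\{\b y\mid \b 1^T\b y=1\}$ is invariant under $\b g$. Parametrizing $H$ near $\b y^*$ by $\b p(s)=\b y^*+s\b t$ with $\b t=(1,-1)^T$, invariance gives $\b g(\b p(s))=\b y^*+\psi(s)\b t$ for a scalar $\mathcal C^2$-map $\psi$ with $\psi(0)=0$; differentiating at $s=0$ produces $\b D\b g(\b y^*)\b t=\psi'(0)\b t$. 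Since $\b 1^T\b t=0\neq 1=\b 1^T\b v$, the vectors $\b t$ and $\b v$ are independent, and as $R<0$ forces $R\neq 1$ the direction $\b t$ must be the eigenvector of the remaining eigenvalue, so $\psi'(0)=R$. Writing the step in the $y_2$-coordinate as $y_2^1=\phi(y_2^0)$ with $\phi(\theta)=\theta$, the substitution $y_2=\theta-s$ shows $\phi'(\theta)=\psi'(0)=R$; note that, thanks to conservation, the two inequalities in the definition of \emph{not overshooting} collapse to a single condition on $y_2^1$.

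Finally, assuming $R<0$, I would upgrade this infinitesimal statement to a genuine overshoot via Taylor's theorem: $\phi(\theta+\delta)=\theta+R\delta+\O(\delta^2)$, so there is a $\delta_0>0$ with $\phi(\theta+\delta)<\theta$ for all $0<\delta<\delta_0$. Choosing the admissible datum $\b y^0=(1-\theta-\delta,\theta+\delta)^T$, which has $\epsilon=\theta+\delta>\theta$, the definition of not overshooting demands $y_2^1>\theta$, whereas $y_2^1=\phi(\theta+\delta)<\theta$; hence the method is overshooting. The step I expect to be the main obstacle is the bookkeeping of the previous paragraph, namely establishing rigorously that the tangent direction $\b t$ of $H$ (and not the kernel direction $\b v$) carries the eigenvalue $R$, and tracking the signs through the reparametrization so that $\phi'(\theta)=R$ emerges with the correct sign; the realness of $R$ and the final Taylor estimate are routine by comparison.
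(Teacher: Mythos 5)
Your proof is correct, and while it rests on the same core mechanism as the paper's -- restrict the dynamics to the conservation line and read off the overshoot from the negative derivative $R$ of the induced scalar iteration at the fixed point -- its execution is genuinely different and more elementary. The paper imports the eigenstructure $\b D\b g(\b y^*)\b y^*=\b y^*$, $\b D\b g(\b y^*)\bby=R\bby$, $R\in\R$, from the proof of \cite[Theorem~2.9]{IKM2122}, passes to coordinates $\b w=\b S^{-1}(\b y-\b y^*)$ with $\b S=(\b y^*\;\,\bby)$, and controls the nonlinearity through an explicit Lagrangian remainder: it shows $w_2^1=\bigl(R+C(\bxi_1^0,\bxi_2^0)\,w_2^0\bigr)w_2^0$, where $C$ is built from the Hessians of $\b g$ and bounded by some $K$ on a compact segment, so that $\lvert w_2^0\rvert<\lvert R\rvert/K$ forces the sign flip. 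You instead derive the spectral claim from scratch (eigenvalue $1$ by differentiating along the ray of fixed points, realness of $R$ via Vieta), obtain the left eigenvector $\b 1^T$ by differentiating the conservation identity -- which simultaneously gives invariance of the line $H$ and, since $R<0\neq 1$, identifies $(1,-1)^T$ as the eigenvector for $R$ -- and then conclude with a first-order Taylor expansion of the scalar map $\phi$ with $\phi'(\theta)=R$. The paper's route buys a little more: the quadratic bound yields the sign flip for \emph{all} sufficiently small deviations on either side of $\b y^*$, with an explicit smallness threshold, and its transformation framework is the one reused in the proof of Theorem~\ref{Thm_MPRK_stabil}. Your route buys self-containedness and brevity: no external citation, no Hessian bookkeeping, and in fact only differentiability of $\phi$ at $\theta$ (Peano remainder) is needed, so the $\mathcal C^2$ hypothesis enters only to guarantee that the restricted map is smooth enough; exhibiting one violating initial datum with $\epsilon>\theta$ indeed suffices, since overshooting is the negation of a for-all statement. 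As a minor streamlining, your case analysis establishing $\psi'(0)=R$ (``otherwise $\b D\b g(\b y^*)=\b I$'') can be replaced by biorthogonality: a right eigenvector for $R\neq 1$ must annihilate the left eigenvector $\b 1^T$ of the eigenvalue $1$, hence lies in $\ker(\b 1^T)=\Span\bigl((1,-1)^T\bigr)$.
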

\begin{proof}
	Throughout this proof, we use $\b e_1=(1,0)^T$, $\b e_2=(0,1)^T$ to denote the standard unit vectors as well as the notation $\bby=(1,-1)^T$. In the proof of \cite[Theorem 2.9]{IKM2122}, it is shown that $\b D\b g(\b y^*)\b y^*=\b y^*$ and $\b D\b g(\b y^*)\bby=R\bby$ with $R\in \R$, which means that the matrix of eigenvectors
	\begin{equation}\label{eq:Smatrix}
		\b S=(\b y^*\;\;  \bby)
	\end{equation}
	is invertible since $\bby$ cannot be a multiple of the positive vector $\b y^*$. 
	%
	Along the lines of Theorem~\ref{Thm_MPRK_stabil}, we construct a map
	\begin{equation*}
		\b G\from \b T(\R_{>0}^2)\to \b T(\R_{>0}^2),\quad \b G(\b w) = \b T(\b g(\b T^{-1}(\b w)))
	\end{equation*}
	by means of a transformation $\b T(\b y)=\b S^{-1}(\b y-\b y^*)$. To see that the method defined by $\b g$ is overshooting $\b y^*$, we show that the transformed method given by the map $\b G$	is overshooting the transformed steady state which is $\b w^*=\b 0$.
	As demonstrated in \cite[Theorem 2.9]{IKM2122}, $\b y^0$ is transformed onto the $w_2$-axis and due to the conservation of the map $\b g$, it is proven that $\b G(\b w^0)\in\Span(\b w^0)$ for $\b w^0=(0,w_2^0)^T$. Moreover, 
	\begin{equation}\label{eq:Gproof}
		\b G(\b w) = \diag(1,R)\b w + \b S^{-1}\bar{\b R}(\b T^{-1}(\b w))
	\end{equation}
	holds, where $\bar{\bm R}$ denotes the Lagrangian remainder
	\begin{align}
		(\bar{\bm R}(\b y))_i=\frac{1}{2}(\b y-\b y^*)^T\b H g_i(\b y^*+c_i(\b y-\b y^*))(\b y-\b y^*),\quad i=1,2\label{Lagrange_remainder}
	\end{align}
	for some $c_i\in(0,1)$ depending on $\b y$ and $\b y^*$, and where $\b H g_i$ are the Hessian matrices of $g_i$ for $i=1,2$.
	We consider from now on the iterates given by
	\begin{equation*}
		\b w^{n+1}=\begin{pmatrix}
			1 &0\\
			0& R
		\end{pmatrix}\b w^n+\b S^{-1}\bar{\b R}(\b T^{-1}(\b w^n)),\quad \b w^0=(0,w^0_2)^T.
	\end{equation*}
	Here, using $\b S^{-1}=(\widetilde s_{ij})_{i,j=1,2}$ and $w_1^n=0$ it follows from \eqref{eq:Gproof} that
	\begin{equation*}
		(\b S^{-1}\bar{\bm R}(\b T^{-1}(\b w^0)))_1=0
	\end{equation*}
	since $(\b G(\b w))_1=w_1$. Furthermore,
	\begin{equation}\label{eq:remainder}
		\begin{aligned}
			(\b S^{-1}\bar{\bm R}(\b T^{-1}(\b w^0)))_2=&\frac{1}{2} \sum_{i=1}^2 \widetilde s_{2i}(\b T^{-1}(\b w^0)-\b y^*)^T\b H g_i(\bxi^0_i)(\b T^{-1}(\b w^0)-\b y^*)\\
			=&\frac{1}{2}\sum_{i=1}^2 \widetilde s_{2i}(w_2^0\b S\b e_2)^T\b H g_i(\bxi^0_i)(w_2^0\b S\b e_2)\\
			=&\frac{1}{2}\sum_{i=1}^2 \widetilde s_{2i}(w_2^0\bby)^T\b H g_i(\bxi^0_i)(w_2^0\bby)\\
			=&C(\bxi^0_1,\bxi^0_2)\cdot (w_2^0)^2,
		\end{aligned}
	\end{equation}
	where $\bxi^0_i=\b y^*+c_i^0(\b y^0-\b y^*)$ and $c^0_i\in (0,1)$. Also note that the mapping \mbox{$C\colon \R^2\times \R^2\to\R$} depends on the entries of the Hessians as well as $\b S^{-1}$. 
	
	We now prove that the method defined by $\b G$ is overshooting $\b w^*=\b 0$ by proving the existence of $w_2^0\in \R$ such that $\sgn(w_2^1)\neq\sgn(w_2^0)$. 
	We set \[L=\left\{\b y\in \R^2\Big| \exists s\in\left[-\tfrac{y_1^*}{2},\tfrac{y_2^*}{2}\right]: \b y=\b y^*+s\bby\right\}\tm\R^2_{>0}\] and observe that there exists a $K>0$ such that $\sup_{\bxi\in L\times L}\{\lvert C(\bxi_1,\bxi_2)\rvert\}\leq  K<\infty$ since $\b g\in \mathcal C^2$ has bounded second derivatives on the compact set $L$. 
	
	Next, we restrict to $\b w^0$ satisfying $\lvert w_2^0\rvert< \min\left\{\tfrac{y_1^*}{2},\tfrac{y_2^*}{2},\frac{\lvert R\rvert}{K}\right\}$. As a result, $\b w^0=w_2^0\b e_2$ yields $\b y^0=\b T^{-1}(\b w^0)=\b S\b w^0+\b y^*=w_2^0\bby+\b y^*\in L$, which means that 
	\[\bxi^0_i=\b y^*+c^0_i(\b y^0-\b y^*)=\b y^*+c^0_iw_2^0\bby\in L\]
	for $i=1,2$.
	Now, according to \eqref{eq:remainder}, we have
	\begin{equation}
		w^{1}_2=Rw_2^0+C(\bxi^0_1,\bxi^0_2)\cdot(w_2^0)^2=(R+C(\bxi^0_1,\bxi^0_2) w_2^0)w_2^0\label{w2(n+1)}
	\end{equation}
	as well as
	\begin{equation}
		C(\bxi^0_1,\bxi^0_2) w_2^0\leq \lvert C(\bxi^0_1,\bxi^0_2)\rvert \lvert w_2^0\rvert <\lvert C(\bxi^0_1,\bxi^0_2)\rvert\frac{\lvert R\rvert}{K}\leq \lvert R\rvert.\label{cn w2n<R delta}
	\end{equation}
	Because of $R<0$, the inequality \eqref{cn w2n<R delta} turns into the statement
	\begin{equation*}
		R+C(\bxi^0_1,\bxi^0_2)w_2^n<0,
	\end{equation*}
	and thus, $\sgn(w_2^1)\neq\sgn(w_2^0)$ due to \eqref{w2(n+1)}. This proves that the method defined by $\b G$ is overshooting $\b w^*$ and consequently, the method with iterates given by the map $\b g$ is overshooting $\b y^*$.	
\end{proof}
\section{Lyapunov Stability Analysis}\label{sec:lyapunovAnalysis}
This section is devoted to the investigation of the numerical methods from Chapter~\ref{chap:NumSchemes} by means of the stability Theorem~\ref{Thm:_Asym_und_Instabil} and Theorem~\ref{Thm_MPRK_stabil}. We note that all schemes from Chapter~\ref{chap:NumSchemes} preserve positive steady states with the same arguments as in \cite{HIKMS22} or \cite[Proposition 2.3]{IssuesMPRK}. Thus, in order to apply Theorem \ref{Thm:_Asym_und_Instabil} and part \ref{it:Thma} of Theorem \ref{Thm_MPRK_stabil}, we need to prove a certain regularity and compute the eigenvalues of the Jacobian of $\b g$ evaluated at some steady state $\b y^*\in \ker(\bA)\cap D^\circ$ according to \eqref{eq:FormularJacobian}. However, to use also part \ref{it:Thmb} of Theorem \ref{Thm_MPRK_stabil}, we need to prove that $\b g$ additionally conserves all linear invariants.

In the case where the mapping $\b g$ satisfying $\b y^{n+1}=\b g(\b y^n)$ is implicitly given we compute $\b D\b g(\b y^*)$ as described in \cite{IKM2122, HIKMS22,IOE22StabMP} by introducing functions $\bm \Phi_i$ and several auxiliary Jacobians.
The functions $\bm \Phi_i$ arise from rearranging the equations for the $s$ stages and the updating step of the numerical method leading to
\begin{equation}\label{eq:Schemes:Phi_k}
	\begin{aligned}
		\b 0&=\bm \Phi_i(\b y^n,\b y^{(1)}(\b y^n),\dotsc, \b y^{(i)}(\b y^n)),\quad i=1,\dotsc,s\\
		\b 0&=\bm \Phi_{n+1}(\b y^n,\b y^{(1)}(\b y^n),\dotsc, \b y^{(s)}(\b y^n),\b g(\b y^n)).
	\end{aligned}
\end{equation}
Note that $\bm \Phi_i(\b x_0,\dotsc,\b x_i)$ is a function of $i+1$ vector-valued variables while $\bm \Phi_{n+1}(\b x_0,\dotsc,\b x_s,\b y)$ depends on $s+2$ variables. We will find that $\bm \Phi_k,\bm\Phi_{n+1}$ are in $\mathcal C^1$ for all schemes from Chapter~\ref{chap:NumSchemes}, so that we may define 
\begin{equation}\label{eq:jacobians1}
	\begin{aligned}
		\b D_{n}\bm\Phi_i&=\frac{\partial}{\partial \b x_0}\bm\Phi_i, &	\b D_{l}\bm\Phi_i&=\frac{\partial}{\partial \b x_l}\bm\Phi_i,
	\end{aligned}
\end{equation}
for $i,l=1,\dotsc,s$ with $l\leq i$, and
\begin{equation}\label{eq:jacobians2}
	\begin{aligned}
		\b D_{n}\bm\Phi_{n+1}&=\frac{\partial}{\partial \b x_0}\bm\Phi_{n+1}, &\b D_{l}\bm\Phi_{n+1}&=\frac{\partial}{\partial \b x_l}\bm\Phi_{n+1},& &\b D_{n+1}\bm\Phi_{n+1}=\frac{\partial}{\partial \b y}\bm\Phi_{n+1}
	\end{aligned}
\end{equation}
for $l=1,\dotsc, s$. Besides for GeCo and gBBKS, we will even be able to show that $\bm \Phi_i,\bm\Phi_{n+1}$ are in $\mathcal C^2$, and by means of the implicit function theorem, $\b g\in \mathcal C^1$ has locally Lipschitz first derivatives. For GeCo and gBBKS more effort is needed to justify the application of Theorem~\ref{Thm_MPRK_stabil}.

Moreover, we introduce operators $\b D_k^*$ indicating the evaluation of the corresponding auxiliary Jacobian at $\b y^*,\b y^{(1)}(\b y^*)$ et cetera, e.\,g.\ \[\b D^*_n\bm \Phi_2=\b D_n\bm \Phi_2(\b y^*,\b y^{(1)}(\b y^*),\b y^{(2)}(\b y^*)).\] As we interpret $\b y^{(i)}=\b y^{(i)}(\b y^n)$ we also introduce the Jacobian 
\begin{equation*}
	\b D^*\b y^{(i)}=\b D\b y^{(i)}(\b y^*).
\end{equation*}
With that we can derive a formula for computing $\b D\b g(\b y^*)$, where $\b y^{n+1}=\b g(\b y^n)$ is the unique solution to \eqref{eq:Schemes:Phi_k}. 
The chain rule yields
\begin{equation*}
	\begin{aligned}
		\b 0&=\b D^*_{n}\bm\Phi_i+\sum_{l=1}^{i}\b D_{l}^*\bm \Phi_i \b D^*\b y^{(l)},\quad i=1,\dotsc, s,\\
		\b 0&=\b D^*_{n}\bm\Phi_{n+1}+\sum_{l=1}^{s}\b D_{l}^*\bm \Phi_{n+1} \b D^*\b y^{(l)}+ \b D_{n+1}^*\bm\Phi_{n+1}\b D\b g(\b y^*),
	\end{aligned}
\end{equation*}
which can be rewritten to
\begin{equation}\label{eq:FormularJacobian}
	\begin{aligned}
		\b D^*\b y^{(i)}&=-\left(\b D^*_{i}\bm\Phi_i\right)^{-1}\left(\b D^*_{n}\bm\Phi_i+\sum_{l=1}^{i-1}\b D_{l}^*\bm \Phi_i \b D^*\b y^{(l)}\right),\quad i=1,\dotsc, s,\\
		\b D\b g(\b y^*)&=-\left(\b D^*_{n+1}\bm\Phi_{n+1}\right)^{-1}\left(\b D^*_{n}\bm\Phi_{n+1}+\sum_{l=1}^{s}\b D_{l}^*\bm \Phi_{n+1} \b D^*\b y^{(l)}\right),
	\end{aligned}
\end{equation}
if all occurring inverses exist. Also, in order to avoid long formulas in the following, we may omit to write the functions $\bm\Phi_i$ together with all their arguments.

Since we already discussed the case of Runge--Kutta methods in Section~\ref{sec:intro_dyn_sys} we start analyzing MPRK schemes. To that end, we will use the notation of MPRK as an NSARK method. In contrast, all other MP methods presented in Chapter~\ref{chap:NumSchemes} will be analyzed directly because of the following. First, the NSWs of gBBKS and GeCo are not in $\mathcal C^1$. Also, as discussed in Remark~\ref{rem:MPDeC_NSARK}, MPDeC methods can be understood as NSARK methods with potentially negative Butcher tableau entries. This is why we will focus in this work on the ansatz followed in \cite{IOE22StabMP}. Moreover, SSPMPRK methods do not fit into the form of an NSARK method.  Nonetheless, their analysis using ARK methods in Shu--Osher form will be part of my future research.
\subsection{Modified Patankar--Runge--Kutta}\label{sec:stab_MPRK}
It turns out to be convenient to derive the stability properties of MPRK methods using the notation of NSARK schemes. However, we thereby restrict to non-negative Butcher tableaux, \ie we use the vector notation \eqref{eq:NSARK_autonom}.  Moreover, we derive the Jacobian of the NSARK method in a more general context since Theorem~\ref{Thm_MPRK_stabil} is not restricted only to linear systems. In particular, let us consider  $\b y'=\b f(\b y)=\b F(\b y)\b y$, where $\b F(\b y)\in \R^{N\times N}$ consists of the columns $\ \b F^1(\b y),\dotsc, \b F^N(\b y)$. Hence, $\b F=\sum_{\nu=1}^N\b F^\nu\b e_\nu^T$, where $\b e_\nu$ is the $\nu$th column unit vector in $\R^N$. Moreover, this gives $\b f(\b y)=\sum_{\nu=1}^N\b F^\nu(\b y) y_\nu=\sum_{\nu=1}^N\fnu(\b y)$ with \[\fnu(\b y)=\b F^\nu(\b y) y_\nu.\] Note that in the case of the linear system \eqref{eq:PDS_Sys}, we have $\b F(\b y)= \bA$. We restrict to conservative problems, which means that we will assume that $\b 1^T\b f(\b y)=\b 0$ for all $\b y$ in the domain of $\b f$. With that we reproduce the results from the literature \cite{IKM2122,izgin2022stability,IOE22StabMP}. The generalization to non-conservative problems is then straightforward.

Since MPRK methods are linear implicit and based on explicit RK schemes, the stage equation for $\byi$ depends only $\b y^n,\dotsc,\byi$. even more, we have $\b y^n=\b y^{(1)}$, however, in order to keep the notation, we will not substitute this directly into the stage equations, $\bm\Phi_i$ or $\bm \Phi_{n+1}$. 

In \cite{AGKM_Oliver} it was assumed that the PWDs only depend on the $\nu$th component of the stages, \ie \begin{equation} \label{eq:pik_sigma}
	\pi_\nu^{(i)}=\pi_\nu^{(i)}(y_\nu^n,y_\nu^{(1)},\dotsc, y_\nu^{(i-1)}) \qta \sigma_\nu=\sigma_\nu(y_\nu^n, y_\nu^{(1)},\dotsc, y_\nu^{(s)}),
\end{equation}
which includes the PWDs presented in \cite{KM18,KM18Order3}. Thus we will assume this as well for our analysis. Moreover, the NSWs \[\gamma_\nu^{[i]}=\frac{\yi_\nu}{\pi_{\nu}^{(i)}}\qta\delta_\nu=\frac{y^{n+1}_\nu}{\sigma_\nu},\] see \eqref{eq:pertcoeff}, will be understood as functions of the stages in the following. Furthermore, we will assume that 
\begin{equation} \label{eq:pik_sigma(y*)}
	\pi_\nu^{(i)}(y_\nu^*,y_\nu^*,\dotsc, y_\nu^*)=y_\nu^* \qta \sigma_\nu(y_\nu^*,y_\nu^*,\dotsc, y_\nu^*)=y_\nu^*,
\end{equation}
for any steady state $\b y^*$ of the ODE, which is also fulfilled by the MPRK schemes presented so far. Therefore,
\begin{equation}\label{eq:gamma_delta*=1}
	\gamma_\nu^{[i]}(y_\nu^*,y_\nu^*,\dotsc, y_\nu^*,y_\nu^*)=1\qta \delta_\nu(y_\nu^*,y_\nu^*,\dotsc, y_\nu^*,y_\nu^*)=1.
\end{equation}
Altogether, the mappings $\bm \Phi_i,\bm\Phi_{n+1}$ of the NSARK method \eqref{eq:nsark} are 
\begin{equation}\label{eq:PhiNSARK}
	\begin{aligned}
		\bm \Phi_i&=\b y^n+\dt \sum_{j=1}^{i-1}\sum_{\nu=1}^Na_{ij}\gamma_\nu^{[i]}(y_\nu^n,y_\nu^{(1)},\dotsc, y_\nu^{(i)})\fnu(\b y^{(j)}) - \byi,\\
		\bm \Phi_{n+1}&=\b y^n+\dt \sum_{j=1}^s\sum_{\nu=1}^Nb_j\delta_\nu(y_\nu^n,y_\nu^{(1)},\dotsc, y_\nu^{(s)},y_\nu^{n+1})\fnu(\b y^{(j)}) - \b y^{n+1}.
	\end{aligned}
\end{equation}
Now, in the special case of $\b f(\b y)=\bA\b y$, we note that
\[\sum_{\nu=1}^Nx_\nu \fnu(\b y^{(j)})=\b F(\b y^{(j)})\cdot \vec{x_1y_1^{(j)}\\\vdots\\ x_Ny_N^{(j)}}=\bA \cdot \vec{x_1y_1^{(j)}\\\vdots\\ x_Ny_N^{(j)}} \]
for any values $x_1,\dotsc,x_N$. Substituting this information into \eqref{eq:PhiNSARK}, we observe that MPRK schemes preserve all linear invariants. Moreover, due to \eqref{eq:gamma_delta*=1} we see that the MPRK methods are steady state preserving as already mentioned.

Moreover, the maps $\bm \Phi_i$ and $\bm \Phi_{n+1}$ are in $\mathcal C^2$ for positive arguments, and as defined in \eqref{eq:PhiNSARK}, vanish for the argument $(\b y^n,\b y^{(1)}(\b y^n),\dotsc, \b y^{(i)}(\b y^n))$, and $(\b y^n,\b y^{(1)}(\b y^n),\dotsc,\b y^{(s)}(\b y^n),\b g(\b y^n))$, respectively. And since the computation of $\b y^{n+1}$ requires only the solution of linear systems which possess always a unique solution for any $\b y^n>\b 0$, the function $\b g$ is also a $C^2$-map. According to Remark~\ref{rem:C2->C1} and Theorem~\ref{Thm_MPRK_stabil} we thus only need to compute the eigenvalues of the Jacobian of $\b g$ to investigate the stability of MPRK schemes. The upcoming lemma is a first step towards this goal. \newpage
\begin{lem}\label{lem:DPhiNSARK}
	Assume $\b 1^T\b f=\b 0$, and that \eqref{eq:pik_sigma} and \eqref{eq:pik_sigma(y*)} hold with $\pi_\nu^{(i)},\sigma_\nu\in\mathcal C^1$, and let $\fnu\in \mathcal C^1$. Then $\b y^n=\b y^*$ implies $\byi=\b y^*$ and $\b y^{n+1}=\b y^*$ for any positive steady state $\b y^*$, and the maps $\bm\Phi_i$ and $\bm\Phi_{n+1}$ from \eqref{eq:PhiNSARK} satisfy
	{\allowdisplaybreaks
		\begin{align*}
			\b D^*_k\bm \Phi_i		&=\begin{cases}
				\b I-\dt c_i \b F(\b y^*)\b D_n^*\bm\pi^{(i)}		, & k=n,\\
				-\dt c_i \b F(\b y^*)\b D_k^*\bm\pi^{(i)}+\dt a_{ik}\b D\b f(\b y^*)		, & k=1,\dotsc, i-1,\\
				\dt c_i\b F(\b y^*)	-\b I	, &k=i,
			\end{cases}\\
			\b D^*_l\bm \Phi_{n+1}&=\begin{cases}
				\b I-\dt \b F(\b y^*)\b D_n^*\bm\sigma		, & l=n,\\
				-\dt \b F(\b y^*)\b D_l^*\bm\sigma+\dt b_l\b D\b f(\b y^*)		, & l=1,\dotsc, s,\\
				\dt\b F(\b y^*)	-\b I	, &l=n+1,
			\end{cases}
	\end{align*}}
	where $\bm \pi^{(i)}=(\pi_1^{(i)},\dotsc,\pi_N^{(i)})^T$ and  $\bm \sigma=(\sigma_1,\dotsc,\sigma_N)^T$.
\end{lem}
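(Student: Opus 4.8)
The plan is to establish the lemma in two stages: first the steady-state preservation $\byi=\b y^{n+1}=\b y^*$, proved by induction on the stage index, and then the six Jacobian-block formulas, obtained by differentiating the maps \eqref{eq:PhiNSARK} term by term and evaluating at $\b y^*$. Throughout, the two structural inputs I would lean on are the componentwise dependence \eqref{eq:pik_sigma} of the PWDs and the column decomposition $\fnu(\b y)=\b F^\nu(\b y)y_\nu$; the latter is what makes the factors $1/y_\nu^*$ cancel and reassemble into $\b F(\b y^*)$.

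For the first claim I would fix $\b y^n=\b y^*$ and suppose inductively that $\b y^{(j)}=\b y^*$ for $j<i$, so that $\fnu(\b y^{(j)})=\fnu(\b y^*)$. The candidate $\byi=\b y^*$ yields $\gamma_\nu^{[i]}=y_\nu^*/\pi_\nu^{(i)}(y_\nu^*,\dotsc,y_\nu^*)=1$ by \eqref{eq:gamma_delta*=1}; substituting into \eqref{eq:PhiNSARK} and using $\sum_{j=1}^{i-1}a_{ij}=c_i$ together with $\sum_{\nu=1}^N\fnu(\b y^*)=\b f(\b y^*)=\b 0$ shows $\b y^*$ solves $\bm\Phi_i=\b 0$. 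Since this linear system has a unique solution by Lemma~\ref{lem:MPRKpos}, we get $\byi=\b y^*$, and the identical argument with $\sum_{j=1}^s b_j=1$ and $\delta_\nu=1$ gives $\b y^{n+1}=\b y^*$.

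For the Jacobian blocks I would differentiate each summand of \eqref{eq:PhiNSARK} via the product rule, keeping three facts ready. First, since $\pi_\nu^{(i)},\sigma_\nu$ depend only on the $\nu$th components, the derivative of a scalar weight with respect to an argument $\b x_k$ has the form $(\partial_{y_\nu^{(k)}}\gamma_\nu^{[i]})\,\b e_\nu^T$, and $\b D_k^*\bm\pi^{(i)}$, $\b D_l^*\bm\sigma$ are diagonal. Second, the quotient rule on $\gamma_\nu^{[i]}=y_\nu^{(i)}/\pi_\nu^{(i)}$, evaluated where $\pi_\nu^{(i)}=y_\nu^*$, gives $-\tfrac{1}{y_\nu^*}\partial_{y_\nu^{(k)}}\pi_\nu^{(i)}$ for a denominator argument ($k=n$ and $1\le k\le i-1$) and $\tfrac{1}{y_\nu^*}$ for the numerator argument $k=i$. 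Third, $\fnu(\b y^*)=\b F^\nu(\b y^*)y_\nu^*$ cancels the $1/y_\nu^*$ from the second fact. Assembling, the weight-derivative contributions collapse through $\sum_\nu\b F^\nu(\b y^*)(\partial\pi_\nu^{(i)})\b e_\nu^T=\b F(\b y^*)\b D_k^*\bm\pi^{(i)}$ and $\sum_j a_{ij}=c_i$ into $-\dt c_i\b F(\b y^*)\b D_k^*\bm\pi^{(i)}$, while the surviving term at $j=k$, using $\gamma_\nu^{[i]}=1$ and $\sum_\nu\b D\fnu(\b y^*)=\b D\b f(\b y^*)$, produces $\dt a_{ik}\b D\b f(\b y^*)$; the $\b I$ (from the $\b y^n$ summand, $k=n$) and $-\b I$ (from $-\byi$, $k=i$) give the identity contributions, and the numerator case $k=i$ yields $\dt c_i\b F(\b y^*)-\b I$. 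The $\bm\Phi_{n+1}$ blocks follow verbatim under $c_i\mapsto 1$, $a_{ik}\mapsto b_l$, $\bm\pi^{(i)}\mapsto\bm\sigma$, $\gamma\mapsto\delta$, with $l=n+1$ handled by $\partial_{y_\nu^{n+1}}\delta_\nu=1/\sigma_\nu$.

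The bulk of the work is bookkeeping, and the one genuinely delicate point is the third fact above: one must invoke the column structure $\fnu=\b F^\nu y_\nu$ (not merely $\b f=\sum_\nu\fnu$) precisely to cancel the $1/y_\nu^*$ and to recognise the diagonal matrices $\b D_k^*\bm\pi^{(i)}$, $\b D_l^*\bm\sigma$ inside the products $\b F(\b y^*)\b D_k^*\bm\pi^{(i)}$. Carrying out this cancellation while cleanly separating the numerator argument $k=i$ from the denominator arguments is where care is needed; everything else is an application of the product and quotient rules followed by the row-sum identities $\sum_j a_{ij}=c_i$ and $\sum_j b_j=1$.
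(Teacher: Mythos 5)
Your proposal is correct and follows essentially the same route as the paper's proof: term-by-term differentiation of \eqref{eq:PhiNSARK} via the product rule, the quotient rule for the weights giving $\nabla_k^*\gamma_\nu^{[i]}=\tfrac{1}{y_\nu^*}\b e_\nu^T$ for the numerator argument and $-\tfrac{1}{y_\nu^*}\nabla_k^*\pi_\nu^{(i)}$ otherwise, the identity $\sum_{\nu}\fnu(\b y^*)\tfrac{1}{y_\nu^*}\b e_\nu^T=\b F(\b y^*)$ from the column structure $\fnu=\b F^\nu y_\nu$, and the row-sum identities $\sum_j a_{ij}=c_i$, $\sum_j b_j=1$. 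The only (harmless) difference is that you spell out the steady-state preservation by induction and uniqueness, which the paper handles by appealing to \eqref{eq:gamma_delta*=1} and the preceding discussion.
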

\begin{proof} Let $\delta_{m,l}$ denote the Kronecker delta. For $i=1,\dotsc,s$, straightforward calculations yield
	\begin{equation*}
		\begin{aligned}
			\b D^*_k\bm \Phi_i&=\begin{cases}
				\b I+\dt \sum_{j=1}^{i-1}\sum_{\nu=1}^Na_{ij}\fnu(\b y^*)\nabla_k^*\gamma_\nu^{[i]}, &k=n, \\
				\dt \sum_{j=1}^{i-1}\sum_{\nu=1}^Na_{ij}\fnu(\b y^*)\nabla_k^*\gamma_\nu^{[i]}+\dt a_{ik}\b D\b f(\b y^*), &k=1,\dotsc,i-1,\\
				\dt \sum_{j=1}^{i-1}\sum_{\nu=1}^Na_{ij}\fnu(\b y^*)\nabla_k^*\gamma_\nu^{[i]}-\b I, &k=i,
			\end{cases}\\
			\b D^*_l\bm \Phi_{n+1}&=\begin{cases}
				\b I+\dt \sum_{j=1}^{s}\sum_{\nu=1}^Nb_j\fnu(\b y^*)\nabla_l^*\delta_\nu, &l=n,\\
				\dt \sum_{j=1}^{s}\sum_{\nu=1}^Nb_j\fnu(\b y^*)\nabla_l^*\delta_\nu+\dt b_l\b D\b f(\b y^*), &l=1,\dots,s,\\
				\dt \sum_{j=1}^{s}\sum_{\nu=1}^Nb_j\fnu(\b y^*)\nabla_l^*\delta_\nu-\b I, &l=n+1,
			\end{cases}
		\end{aligned}
	\end{equation*}
	where
	\begin{equation*}
		\begin{aligned}
			\nabla_k^*\gamma_\nu^{[i]} &=\begin{cases}
				\frac{1}{y_\nu ^*}\b e_\nu^T, & k=i,\\
				-\frac{1}{y_\nu ^*}\nabla_k^*\pi_\nu^{(i)}, & k\neq i,
			\end{cases} \\
			\nabla_l^*\delta_\nu&=\begin{cases}
				\frac{1}{y_\nu ^*}\b e_\nu^T, & l=n+1,\\
				-\frac{1}{y_\nu ^*}\nabla_l^*\sigma_\nu	, & l\neq n+1.
			\end{cases}
		\end{aligned}
	\end{equation*}
	Using  \[\sum_{\nu=1}^N\fnu(\b y^*)\frac{1}{y_\nu ^*}\b e_\nu^T=\sum_{\nu=1}^N\b F^\nu(\b y^*)\b e_\nu^T=\b F(\b y^*),\] we end up with
	\begin{equation*}
		\begin{aligned}
			\b D^*_k\bm \Phi_i&=\begin{cases}\b I-\dt\sum_{j=1}^{i-1}\sum_{\nu=1}^Na_{ij}\fnu(\b y^*)\frac{1}{y_\nu ^*}\nabla_n^*\pi_\nu^{(i)}, & k=n,\\
				-\dt\sum_{j=1}^{i-1}\sum_{\nu=1}^Na_{ij}\fnu(\b y^*)\frac{1}{y_\nu ^*}\nabla_k^*\pi_\nu^{(i)}+\dt a_{ik}\b D\b f(\b y^*)	, & k=1,\dotsc, i-1,\\
				\dt\sum_{j=1}^{i-1}\sum_{\nu=1}^Na_{ij}\fnu(\b y^*)	\frac{1}{y_\nu ^*}\b e_\nu^T-\b I	, &k=i,		
			\end{cases}\\
			&=\begin{cases}
				\b I-\dt c_i \b F(\b y^*)\b D_n^*\bm\pi^{(i)}		, & k=n,\\
				-\dt c_i \b F(\b y^*)\b D_k^*\bm\pi^{(i)}+\dt a_{ik}\b D\b f(\b y^*)		, & k=1,\dotsc, i-1,\\
				\dt c_i\b F(\b y^*)	-\b I	, &k=i.
			\end{cases}
		\end{aligned}
	\end{equation*}
	Analogously, we obtain
	\begin{equation*}
		\begin{aligned}
			\b D^*_l\bm \Phi_{n+1}&=\begin{cases}
				\b I-\dt \b F(\b y^*)\b D_n^*\bm\sigma		, & l=n,\\
				-\dt \b F(\b y^*)\b D_l^*\bm\sigma+\dt b_l\b D\b f(\b y^*)		, & l=1,\dotsc, s,\\
				\dt\b F(\b y^*)	-\b I	, &l=n+1. \quad\quad \quad\qedhere
			\end{cases}
		\end{aligned}
	\end{equation*} 
\end{proof}
For linear conservative systems we thus obtain the following from \eqref{eq:FormularJacobian}.
\begin{thm}\label{thm:DgNSARK}
	In the situation of Lemma~\ref{lem:DPhiNSARK}, the Jacobian of the generating map $\b g$ of \eqref{eq:MPRK_PDRS} applied to a conservative problem  $\b y'=\bA\b y$ with $\sigma(\bA)\tm \overline{\C^-}$ reads
	\begin{equation}\label{eq:Dg(y*)NSARK}
		\begin{aligned}
			\b D^*\b y^{(i)}\!&=\left(\b I-\dt c_i\bA\right)^{-1}\!\left(\!\b I-\dt c_i \bA\b D_n^*\bm\pi^{(i)}- \dt\bA\sum_{l=1}^{i-1}\left(c_i\b D_l^*\bm \pi^{(i)}-a_{il}\b I\right) \b D^*\b y^{(l)}\!\right)\!\!,\\
			\b D\b g(\b y^*)\!&=\left(\b I-\dt\bA\right)^{-1}\left(	\b I-\dt \bA\b D_n^*\bm\sigma-\dt \bA\sum_{l=1}^{s}\left(\b D_l^*\bm\sigma-b_l\b I\right) \b D^*\b y^{(l)}\right),
		\end{aligned}
	\end{equation}
	where $i=1,\dotsc,s$.
\end{thm}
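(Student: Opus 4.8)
The plan is to specialize the general Jacobian formula \eqref{eq:FormularJacobian} to the linear conservative setting by inserting the auxiliary Jacobians computed in Lemma~\ref{lem:DPhiNSARK}. The first observation I would record is that for $\b f(\b y)=\bA\b y$ the matrix-valued function $\b F$ is constant, whence $\b F(\b y^*)=\bA$, and since $\b f$ is linear its Jacobian equals $\bA$ at every point, so $\b D\b f(\b y^*)=\bA$. Conservativity $\b 1^T\bA=\b 0$ together with the assumptions \eqref{eq:pik_sigma} and \eqref{eq:pik_sigma(y*)} on the PWDs ensures that all hypotheses of Lemma~\ref{lem:DPhiNSARK} hold, so I may use its expressions for $\b D^*_k\bm\Phi_i$ and $\b D^*_l\bm\Phi_{n+1}$ directly.

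Before substituting, I would verify that every inverse occurring in \eqref{eq:FormularJacobian} exists. By Lemma~\ref{lem:DPhiNSARK} we have $\b D^*_i\bm\Phi_i=\dt c_i\bA-\b I$ and $\b D^*_{n+1}\bm\Phi_{n+1}=\dt\bA-\b I$. Since $\sigma(\bA)\tm\overline{\C^-}$ and $c_i\geq 0$, each eigenvalue $1-\dt c_i\lambda$ of $\b I-\dt c_i\bA$ (respectively $1-\dt\lambda$ of $\b I-\dt\bA$) satisfies $\re(1-\dt c_i\lambda)=1-\dt c_i\re(\lambda)\geq 1>0$ and is therefore nonzero. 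Hence $\b I-\dt c_i\bA$ and $\b I-\dt\bA$ are invertible, and using $(-\b X)^{-1}=-\b X^{-1}$ I obtain $-(\b D^*_i\bm\Phi_i)^{-1}=(\b I-\dt c_i\bA)^{-1}$ and $-(\b D^*_{n+1}\bm\Phi_{n+1})^{-1}=(\b I-\dt\bA)^{-1}$, which will become the prefactors in the claimed formulas.

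The remaining work is purely algebraic. Substituting $\b F(\b y^*)=\b D\b f(\b y^*)=\bA$ into the stage part of Lemma~\ref{lem:DPhiNSARK} gives $\b D^*_n\bm\Phi_i=\b I-\dt c_i\bA\b D_n^*\bm\pi^{(i)}$ and, for $l=1,\dotsc,i-1$, $\b D^*_l\bm\Phi_i=-\dt\bA\bigl(c_i\b D_l^*\bm\pi^{(i)}-a_{il}\b I\bigr)$, where the two contributions $-\dt c_i\bA\b D_l^*\bm\pi^{(i)}$ and $+\dt a_{il}\bA$ have been collected into a single term. Inserting these into the recursion for $\b D^*\b y^{(i)}$ in \eqref{eq:FormularJacobian} and pulling the common factor $-\dt\bA$ out of the summation yields exactly the first displayed identity. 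The identical computation for the update step, now with $\b D^*_n\bm\Phi_{n+1}=\b I-\dt\bA\b D_n^*\bm\sigma$ and $\b D^*_l\bm\Phi_{n+1}=-\dt\bA\bigl(\b D_l^*\bm\sigma-b_l\b I\bigr)$, produces the formula for $\b D\b g(\b y^*)$. Since everything reduces to direct substitution, I do not expect a genuine obstacle; the only step requiring care is the bookkeeping of signs when converting $-(\b D^*_i\bm\Phi_i)^{-1}$ into the prefactor $(\b I-\dt c_i\bA)^{-1}$ and when factoring the summands into $-\dt\bA\bigl(c_i\b D_l^*\bm\pi^{(i)}-a_{il}\b I\bigr)$.
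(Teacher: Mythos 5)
Your proposal is correct and follows essentially the same route as the paper's proof: establish invertibility of $\b I-\dt c_i\bA$ and $\b I-\dt\bA$ from $\sigma(\bA)\tm\overline{\C^-}$ and $c_i\geq 0$, then substitute the auxiliary Jacobians of Lemma~\ref{lem:DPhiNSARK} (with $\b F(\b y^*)=\b D\b f(\b y^*)=\bA$) into \eqref{eq:FormularJacobian} and collect terms. The paper states this in two sentences; your version merely spells out the sign bookkeeping and the factoring $-\dt c_i\bA\b D_l^*\bm\pi^{(i)}+\dt a_{il}\bA=-\dt\bA\bigl(c_i\b D_l^*\bm\pi^{(i)}-a_{il}\b I\bigr)$ explicitly, which is exactly what the paper leaves implicit.
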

\begin{proof}
	The inverses exist since $\sigma(\dt c_i\bA-\b I)\tm \C^-$ for all $c_i\geq 0$. The rest follows from \eqref{eq:FormularJacobian} and Lemma~\ref{lem:DPhiNSARK}.
\end{proof}
\begin{rem}
	Our framework opens the door to a comprehensive approach of investigating even PDRS, since negative rest terms $\b r^d$ are weighted like destruction terms and positive rest terms $\b r^p$ are not modified. Hence, already at this point we may also consider PDRS with $\b r^p=\b 0$ and $\b r^d>\b 0$ and investigate the asymptotic stability of the origin using the same stability function as for $\b r=\b 0$. In the case of $\b r^p>\b 0$, one may revisit the proof of Lemma~\ref{lem:DPhiNSARK} adjusting the appearing Jacobians of the PWDs. The analysis of Patankar--Runge--Kutta methods would then also be available since production terms can formally be treated as positive rest terms. However, this together with the corresponding analyses of the stability functions and numerical experiments is beyond this work.
\end{rem}
In the following we replicate the results from \cite{IKM2122,izgin2022stability,IOE22StabMP} using this new framework. 
\paragraph{MPE}
The MPE method for conservative and autonomous PDS can be found in \eqref{eq:MPE}. Since the first stage equals $\b y^n$ and we also have $\bm\sigma=\b y^n$, we find from \eqref{eq:Dg(y*)NSARK} that
\begin{equation*}
	R(z)=\frac{1-z+z}{1-z}=\frac{1}{1-z}.
\end{equation*}
Hence, the MPE method has the same stability function as the implicit Euler scheme. As a consequence of that and Theorem~\ref{Thm_MPRK_stabil} we obtain the following results.
\begin{cor}\label{Cor:MPEstab}
	The MPE method is unconditionally stable in the sense of Definition~\ref{Def:uncondstab}.
\end{cor}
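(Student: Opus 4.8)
The plan is to verify the hypotheses of part~\ref{it:Thma} of Theorem~\ref{Thm_MPRK_stabil} for every positive steady state $\b y^*\in\ker(\bA)\cap\R^N_{>0}$ and every fixed $\dt>0$, and then to read off unconditional stability directly from Definition~\ref{Def:uncondstab}. First I would confirm the regularity and fixed-point requirements. For \eqref{eq:MPE} the denominator weights are $\sigma_\nu=y_\nu^n$, so the updating step requires only the solution of a single linear system whose matrix is invertible by Lemma~\ref{lem:MPRKpos} and depends smoothly on positive data; hence the generating map $\b g$ is a $\mathcal C^2$-map on $\R^N_{>0}$, and by Remark~\ref{rem:C2->C1} we may choose a neighbourhood $\mathcal D$ of $\b y^*$ on which $\b g$ has Lipschitz continuous first derivatives. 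Moreover, MPE is steady-state preserving and conservative, so that every element of $\ker(\bA)\cap\mathcal D$ is a fixed point, and the structural assumptions of Theorem~\ref{Thm_MPRK_stabil} are met with $k=\dim\ker(\bA)=\mu_\bA(0)$.

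Next I would identify the spectrum of $\b D\b g(\b y^*)$. Specialising \eqref{eq:Dg(y*)NSARK} to MPE, where $s=1$, $b_1=1$, $\b y^{(1)}=\b y^n$ and $\bm\sigma=\b y^n$ give $\b D^*_n\bm\sigma=\b I$, $\b D^*_1\bm\sigma=\b 0$ and $\b D^*\b y^{(1)}=\b I$, one obtains $\b D\b g(\b y^*)=(\b I-\dt\bA)^{-1}=R(\dt\bA)$ with $R(z)=\tfrac{1}{1-z}$, in agreement with the stability function already computed. Because $(\b I-\dt\bA)^{-1}$ is a function of $\bA$, it shares the generalised eigenvectors of $\bA$, so its eigenvalues are exactly $R(\dt\lambda)=\tfrac{1}{1-\dt\lambda}$ for $\lambda\in\sigma(\bA)$, with multiplicities inherited from those of $\bA$. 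Since $R(z)=1$ if and only if $z=0$, the eigenvalue $\lambda=0$ of $\bA$—which by \eqref{eq:PDS_Sys} satisfies $\mu_\bA(0)=\gamma_\bA(0)=k$—produces the eigenvalue $1$ of $\b D\b g(\b y^*)$ with multiplicity exactly $k$, matching the eigenvectors $\b v_1,\dotsc,\b v_k$ of $\ker(\bA)$ guaranteed by the theorem.

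It then remains to bound the remaining $N-k$ eigenvalues. For every $\lambda\in\sigma(\bA)\setminus\{0\}$ we have $\re(\lambda)<0$ by Remark~\ref{rem:Aneg} (indeed $\bA$ is a proper Metzler matrix), so that for any $\dt>0$
\[
\abs{1-\dt\lambda}^2=(1-\dt\re(\lambda))^2+(\dt\im(\lambda))^2\geq(1-\dt\re(\lambda))^2>1,
\]
whence $\abs{R(\dt\lambda)}<1$. Thus precisely the $N-k$ non-unit eigenvalues of $\b D\b g(\b y^*)$ lie strictly inside the unit disk, and part~\ref{it:Thma} of Theorem~\ref{Thm_MPRK_stabil} yields Lyapunov stability of $\b y^*$. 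As this argument holds for every $\dt>0$, the constant $c$ in Definition~\ref{Def:uncondstab} may be chosen arbitrarily large, giving unconditional stability.

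Since the stability function and generating Jacobian are supplied by Theorem~\ref{thm:DgNSARK}, no heavy computation remains, and the proof is essentially a direct application of the abstract theory. The only genuinely delicate points are bookkeeping ones: confirming that the eigenvalue $1$ carries multiplicity exactly $k$, so that exactly $N-k$ eigenvalues must be estimated, and that the modulus bound is strict for every nonzero $\lambda$. Both follow from $R^{-1}(\{1\})=\{0\}$ together with the strict spectral containment $\re(\lambda)<0$ for all $\lambda\in\sigma(\bA)\setminus\{0\}$ established in Remark~\ref{rem:Aneg}.
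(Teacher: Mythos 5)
Your proposal is correct and follows essentially the same route as the paper: specialising the Jacobian formula \eqref{eq:Dg(y*)NSARK} of Theorem~\ref{thm:DgNSARK} to MPE to obtain $\b D\b g(\b y^*)=(\b I-\dt\bA)^{-1}$, i.\,e.\ the implicit-Euler stability function $R(z)=\tfrac{1}{1-z}$, and then invoking Remark~\ref{rem:Aneg} together with part~\ref{it:Thma} of Theorem~\ref{Thm_MPRK_stabil}. You merely spell out the regularity, multiplicity and modulus estimates that the paper leaves implicit, which is a faithful elaboration rather than a different argument.
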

\begin{cor}\label{Cor:MPEstab1}
	Let $\b y^*$ be the unique steady state of the initial value problem \eqref{eq:PDS_Sys}, \eqref{eq:IC}  with $\b 1\in \ker(\bA^T)$. Then there exists a $\delta >0$ such that $\Vert\b y^0-\b y^*\Vert<\delta$ implies the convergence of the iterates of MPE towards $\b y^*$ as $n\to \infty$ for all $\dt >0$.
\end{cor}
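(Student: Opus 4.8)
The plan is to derive Corollary~\ref{Cor:MPEstab1} as a direct consequence of Corollary~\ref{Cor:MPEstab} together with part~\ref{it:Thmb} of Theorem~\ref{Thm_MPRK_stabil}. The key observation is that all the machinery has already been assembled: the MPE method has stability function $R(z) = \frac{1}{1-z}$, and by Corollary~\ref{Cor:MPEstab} it is unconditionally stable in the sense of Definition~\ref{Def:uncondstab}. What remains is to upgrade stability of the fixed point to local convergence of the iterates, which is exactly the additional content of statement~\ref{it:Thmb} of Theorem~\ref{Thm_MPRK_stabil}.

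First I would verify the hypotheses of part~\ref{it:Thmb} of Theorem~\ref{Thm_MPRK_stabil}. Under the assumptions \eqref{eq:PDS_Sys}, \eqref{eq:IC} with $\b 1 \in \ker(\bA^T)$, the matrix $\bA$ is a proper Metzler matrix with $\mu_\bA(0) = \gamma_\bA(0) = k > 0$, so that $\ker(\bA)$ is a $k$-dimensional subspace and $\b y^*$ is the unique positive steady state lying in $\ker(\bA) \cap \R^N_{>0}$. By the regularity discussion preceding Lemma~\ref{lem:DPhiNSARK}, the generating map $\b g$ of MPE is a $\mathcal C^2$-map on the positive orthant, hence locally $\mathcal C^1$ with Lipschitz continuous derivatives on a suitable neighborhood $\mathcal D$ of $\b y^*$ by Remark~\ref{rem:C2->C1}; every element of $\ker(\bA) \cap \mathcal D$ is a fixed point since MPE is steady state preserving. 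The eigenvalue condition of part~\ref{it:Thma} is met because the nonzero eigenvalues of $\bA$ satisfy $\re(\lambda) < 0$ by Remark~\ref{rem:Aneg}, and applying $R(z) = \frac{1}{1-z}$ to $z = \dt\lambda$ gives $|R(\dt\lambda)| < 1$ for all $\dt > 0$; thus the $N-k$ nontrivial eigenvalues of $\b D\b g(\b y^*)$ lie strictly inside the unit disk.

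Next I would check the conservation hypothesis of part~\ref{it:Thmb}, namely that $\b g$ conserves all linear invariants so that $\b g(\b y) \in H \cap D$ for all $\b y \in H \cap D$, where $H$ is defined in \eqref{eq:H}. This follows from Lemma~\ref{lem:MPRK}, since MPE is a special case of the MPRK framework and is unconditionally conservative; more precisely, the argument given after \eqref{eq:PhiNSARK} shows that the modified-Patankar weighting preserves every vector $\b n$ with $\b n^T\bA = \b 0$. With all hypotheses of part~\ref{it:Thmb} verified, the theorem directly yields a $\delta > 0$ such that $\b y^0 \in H \cap D$ with $\norm{\b y^0 - \b y^*} < \delta$ implies $\b y^n \to \b y^*$ as $n \to \infty$.

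The only subtlety — and the one point I would treat carefully rather than routinely — is the passage from the constrained convergence statement of Theorem~\ref{Thm_MPRK_stabil}~\ref{it:Thmb}, which assumes $\b y^0 \in H$, to the unconstrained statement of the corollary, which merely requires $\norm{\b y^0 - \b y^*} < \delta$. Here I would use that MPE preserves the single linear invariant $\b 1^T\b y$, so any positive initial value $\b y^0$ automatically satisfies $\b 1^T\b y^0 = \b 1^T\b y^*$ once $\b y^*$ is taken to be the steady state on the same invariant hyperplane as $\b y^0$; that is, for $k=1$ the affine subspace $H$ is exactly the conservation hyperplane through $\b y^0$, and the unique positive steady state $\b y^*$ of \eqref{eq:PDS_Sys}, \eqref{eq:IC} lies in it by construction. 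Hence every admissible $\b y^0$ already belongs to $H \cap D$, and the constraint is vacuous, which is precisely why the corollary can drop it. The statement that this holds for \emph{all} $\dt > 0$ then reflects the unconditional nature of the eigenvalue bound established above.
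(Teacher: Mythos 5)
Your proposal is correct and takes essentially the same route as the paper, which states both MPE corollaries as direct consequences of the stability function $R(z)=\frac{1}{1-z}$ together with Theorem~\ref{Thm_MPRK_stabil}, whose hypotheses (smoothness of $\b g$, steady-state preservation, and conservation of all linear invariants) are verified for general MPRK schemes in the discussion surrounding \eqref{eq:PhiNSARK} and Lemma~\ref{lem:DPhiNSARK}, exactly as you do. The only imprecision is that your final paragraph justifies the vacuousness of the constraint $\b y^0\in H$ only for $k=1$, whereas the corollary also covers systems with $k>1$ linear invariants (e.g.\ \eqref{eq:initProb4dim}); the general justification is the one you already give in passing, namely that $\b y^*$ is by definition the steady state of the IVP with initial datum $\b y^0$, so $\b N\b y^*=\b N\b y^0$ holds for \emph{all} $k$ invariants and hence $\b y^0\in H$ by construction, independently of $k$.
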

\paragraph{MPRK22($\alpha$)}
The second order MPRK method for a conservative and autonomous PDS is given in \eqref{eq:MPRK22b}. Here, we have $\b y^{(1)}=\b y^n$, $\bm\pi^{(2)}=\b y^n$ and $\sigma_\nu=(y_\nu^{(2)})^{\frac{1}{\alpha}}(y_\nu^n)^{1-\frac{1}{\alpha}}$. Hence, due to $c_1=0$, we find $\b D^*\b y^{(1)}=\b I$ and
\[\b D_n^*\bm\pi^{(2)}=\b I,\quad \b D_1^*\bm\pi^{(2)}=\b 0,\quad \b D_n^*\bm\sigma=\left(1-\frac{1}{\alpha}\right)\b I,\quad \b D_1^*\bm\sigma=\b 0,\quad \b D_2^*\bm\sigma=\frac{1}{\alpha}\b I. \]
Since $\b D\b g(\b y^*)$ is a rational function of $\bA$ and the identity matrix $\b I$, any eigenvector of $\bA$ with the eigenvalue $\lambda$ is consequently an eigenvector of $\b D\b g(\b y^*)$.
Therefore, using \eqref{eq:Dg(y*)NSARK} we see $\sigma(\b D\b g(\b y^*))=\{R(\dt\lambda)\mid \lambda\in \sigma(\bA)\}$, where
\begin{equation}\label{eq:R_MPRK}
	\begin{aligned}
		R(z)&=\frac{1-z\left(1-\frac{1}{\alpha}\right)-z\left(0-b_1+(\frac1\alpha-b_2)\frac{1-c_2 z-z(0+a_{21})}{1-c_2 z}\right)}{1-z}\\
		&=\frac{1-z\left(1-\frac{1}{\alpha}\right)-z\left(-1+\frac{1}{2\alpha}+(\frac1\alpha-\frac{1}{2\alpha})\frac{1-2\alpha z}{1-\alpha z}\right)}{1-z}=\frac{-z^2-2\alpha z+2}{2(1-\alpha z)(1-z)}.
	\end{aligned}
\end{equation}
\begin{prop}\label{Prop:MPRK22}
	The stability function $R(z)=\frac{-z^2-2\alpha z+2}{2(1-\alpha z)(1-z)}$ from \eqref{eq:R_MPRK} with $\alpha> \frac{1}{2}$ satisfies $R(0)=1$ and $\lvert R(z)\rvert<1$ for all $z\in \Cminus\setminus\{0\}$. For $\alpha=\frac12$ we have $\lvert R(z)\rvert<1$ for all $z$ with $\operatorname{Re}(z)<0$ and $\lvert R(z)\rvert =1$, if $\operatorname{Re}(z)=0$. 
\end{prop}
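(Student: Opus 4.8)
The plan is to treat $R$ as a rational function on the Riemann sphere and reduce everything to an elementary computation on the imaginary axis together with the maximum modulus principle. First I would record the trivial value $R(0)=\frac{2}{2\cdot1\cdot1}=1$. Writing $R=P/Q$ with $P(z)=-z^2-2\alpha z+2$ and $Q(z)=2(1-\alpha z)(1-z)$, I note that the only poles of $R$ are $z=1$ and $z=\frac1\alpha$, both lying in the open right half-plane. Hence $R$ is holomorphic on the closed left half-plane $\Cminus$, and since it is rational with finite limit $R(\infty)=-\frac{1}{2\alpha}$, it extends to a holomorphic function on the closed spherical disc $\Cminus\cup\{\infty\}$, with $\abs{R(\infty)}=\frac{1}{2\alpha}$.

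The key step is the evaluation on the boundary line $z=\ii y$, $y\in\R$. A direct computation gives $P(\ii y)=(y^2+2)-2\alpha y\,\ii$ and $Q(\ii y)=2(1-\alpha y^2)-2(1+\alpha)y\,\ii$, whence
\begin{equation*}
	\abs{Q(\ii y)}^2-\abs{P(\ii y)}^2=(4\alpha^2-1)\,y^4 .
\end{equation*}
Since $Q(\ii y)\neq\b 0$ for all real $y$, this yields $\abs{R(\ii y)}^2=1-\frac{(4\alpha^2-1)y^4}{\abs{Q(\ii y)}^2}$, which isolates the sign of $4\alpha^2-1$ as the decisive quantity.

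I would then split according to this sign. For $\alpha>\frac12$ the coefficient is positive, so $\abs{R(\ii y)}<1$ for every $y\neq0$ while $\abs{R(0)}=1$, and $\abs{R(\infty)}=\frac{1}{2\alpha}<1$; thus $\abs{R}\leq1$ on the entire boundary $\ii\R\cup\{\infty\}$ with equality only at the origin. For $\alpha=\frac12$ the displayed difference vanishes identically, giving $\abs{R(\ii y)}=1$ for all real $y$ and $\abs{R(\infty)}=1$, which is precisely the stated boundary behaviour for $\re(z)=0$. In both cases I would transport the closed left half-plane conformally onto the closed unit disc by a M\"obius transformation, so that $R$ becomes a rational function holomorphic on $\overline{\mathbb D}$, and invoke the maximum modulus principle: since $R$ is non-constant it cannot attain its boundary maximum $1$ at an interior point, hence $\abs{R(z)}<1$ throughout the open left half-plane. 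For $\alpha>\frac12$ this, combined with the strict boundary inequality away from the origin, yields $\abs{R(z)}<1$ on all of $\Cminus\setminus\{0\}$; for $\alpha=\frac12$ it yields $\abs{R(z)}<1$ exactly when $\re(z)<0$.

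The main obstacle I anticipate is justifying the maximum modulus principle on the \emph{unbounded} half-plane, which is precisely why I would pass to the bounded unit disc through a Cayley-type transform (using that $R$ extends holomorphically to $\infty$). The only further points needing care are verifying $Q(\ii y)\neq\b 0$ for all real $y$, so that the division defining $\abs{R(\ii y)}^2$ is legitimate, and cleanly isolating the equality cases at $z=0$ and at infinity.
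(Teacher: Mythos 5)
Your proposal is correct, and its core is the same as the paper's: an explicit modulus computation on the imaginary axis followed by a maximum-principle argument to push the boundary bound into the open left half-plane. Two points differ, both in your favor on elegance. First, your boundary computation is more compact: rather than expanding $\lvert P(\ii y)\rvert^2$ and $\lvert Q(\ii y)\rvert^2$ separately (as the paper does), you isolate the single identity $\lvert Q(\ii y)\rvert^2-\lvert P(\ii y)\rvert^2=(4\alpha^2-1)y^4$, which makes the dichotomy $\alpha>\tfrac12$ versus $\alpha=\tfrac12$ immediate. Second, and more substantively, you handle the unboundedness of the half-plane differently: the paper invokes the Phragm\'en--Lindel\"of principle (cited as a black box) together with the maximum modulus principle, whereas you exploit the fact that $R$ is rational with $\deg P\leq\deg Q$ and no poles in $\Cminus$, so it is holomorphic at $\infty$ with $\lvert R(\infty)\rvert=\tfrac{1}{2\alpha}\leq 1$; a Cayley transform then turns the problem into the classical maximum modulus principle on the compact disc $\overline{\mathbb{D}}$. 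Your route is more elementary and self-contained (no appeal to Phragm\'en--Lindel\"of), at the cost of needing the extra boundary check at infinity, which you correctly supply; the paper's route generalizes more readily to situations where the stability function is not bounded at infinity, which is presumably why the authors package it as Remark~\ref{rem:Phragmen} for reuse with other schemes. The only items to make explicit in a final write-up are the (trivial) non-constancy of $R$ and the verification $Q(\ii y)\neq 0$, both of which you already flag.
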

\begin{proof}
	We first investigate $\lvert R(z)\rvert$ for $z=\ii y$ and $y\in \R$. 
	A small calculation reveals that the numerator of $\lvert R(z)\rvert^2$ can be written as
	\begin{equation}\label{eq:numerator_R(z)}
		\lvert -z^2-2\alpha z+2\rvert^2=\lvert y^2+2+(-2\alpha y)\ii\rvert^2=(y^2+2)^2+4\alpha^2 y^2=y^4+4y^2(1+\alpha^2)+4.
	\end{equation}
	Performing a similar calculation for the denominator of $\lvert R(z)\rvert^2$ we find
	\begin{equation*}
		\begin{aligned}
			\lvert 2(1-\alpha z)(1-z)\rvert^2&=\lvert 2\alpha z^2-2z(1+\alpha) +2\rvert^2=\lvert -2\alpha y^2+2 +(- 2y(1+\alpha))\ii\rvert^2\\
			&=(-2\alpha y^2+2)^2+4y^2(1+\alpha)^2						=4\alpha^2y^4+4y^2(1+\alpha^2)+4.
		\end{aligned}
	\end{equation*}
	Using \eqref{eq:numerator_R(z)} and $\alpha= \frac12$ we see that $\lvert R(z)\rvert =  1$ on the imaginary axis, and if $\alpha>\frac12$ we find $\lvert R(\ii y)\rvert< 1$ for all $y\neq 0$.
	
	Next we note that $R$ is a holomorphic function which is defined for all $z\in \overline{\C^-}$. Since $R$ is rational we can apply the Phragmén--Lindelöf principle \cite{SS03,T39} on the union of the origin and $\C^-$ and conclude that $\lvert R(z)\rvert\leq 1$ for all $z\in \overline{\C^-}$. Furthermore, since $R$ is not constant, we conclude from the maximum modulus principle that there exist no $z_0\in\C^-$ with $\abs{R(z_0)}=1$, or equivalently, $\abs{R(z_0)}<1$ holds for all $z_0$ with $\operatorname{Re}(z_0)<0$.
	
\end{proof}

As a direct consequence of the application of Theorem \ref{Thm_MPRK_stabil} in combination with Proposition~\ref{Prop:MPRK22} we obtain the following two corollaries, where we note that all nonzero eigenvalues of $\bA$ from \eqref{eq:PDS_Sys} have a negative real part, see Remark~\ref{rem:Aneg}.

\begin{cor}\label{Cor:MPRKstab}
	The MPRK22($\alpha$) scheme is unconditionally stable for all $\alpha \geq \frac12$.
\end{cor}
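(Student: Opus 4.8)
The plan is to verify the hypotheses of Theorem~\ref{Thm_MPRK_stabil}\,\ref{it:Thma} at an arbitrary positive steady state and then read off unconditional stability directly from Proposition~\ref{Prop:MPRK22}. By Definition~\ref{Def:uncondstab} it suffices to show, for the system \eqref{eq:PDS_Sys}, \eqref{eq:IC} with $k>0$, that every $\b y^*\in\ker(\bA)\cap\R^N_{>0}$ is a Lyapunov stable fixed point of the MPRK22($\alpha$) generating map $\b g$ for \emph{all} $\dt>0$; since $\dt$ enters only through the argument of the stability function, the constant $c$ in Definition~\ref{Def:uncondstab} can then be taken arbitrarily large. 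First I would invoke the regularity and structural facts already established for MPRK methods in the discussion preceding Lemma~\ref{lem:DPhiNSARK}: the map $\b g$ is of class $\mathcal C^2$, hence $\mathcal C^1$ with locally Lipschitz first derivatives by Remark~\ref{rem:C2->C1}, it preserves all linear invariants, and it fixes every point of $\ker(\bA)\cap\mathcal D$. These are exactly the standing assumptions of Theorem~\ref{Thm_MPRK_stabil}.

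The second step is the spectral bookkeeping. As noted in the derivation of \eqref{eq:R_MPRK}, the Jacobian $\b D\b g(\b y^*)$ from \eqref{eq:Dg(y*)NSARK} is a matrix rational function of $\bA$ and $\b I$, so every eigenvector of $\bA$ is an eigenvector of $\b D\b g(\b y^*)$ and $\sigma(\b D\b g(\b y^*))=\{R(\dt\lambda)\mid\lambda\in\sigma(\bA)\}$ with $R$ the MPRK22($\alpha$) stability function. The hypothesis $\mu_\bA(0)=\gamma_\bA(0)=k$ in \eqref{eq:PDS_Sys} means the zero eigenvalue is semisimple with a $k$-dimensional eigenspace $\ker(\bA)$; since $R(0)=1$, these span a $k$-dimensional eigenspace of $\b D\b g(\b y^*)$ for the eigenvalue $1$, matching the $k$ unit eigenvectors $\b v_1,\dotsc,\b v_k$ produced by Theorem~\ref{Thm_MPRK_stabil}, while the remaining $N-k$ eigenvalues (counted with algebraic multiplicity) are the images $R(\dt\lambda)$ of the \emph{nonzero} eigenvalues $\lambda\in\sigma(\bA)$.

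The third step controls these $N-k$ values. By Remark~\ref{rem:Aneg}, $\bA$ is a proper Metzler matrix, so every nonzero $\lambda\in\sigma(\bA)$ satisfies $\re(\lambda)<0$; for any $\dt>0$ we therefore have $\re(\dt\lambda)<0$, i.e.\ $\dt\lambda$ lies strictly in the open left half-plane. Applying Proposition~\ref{Prop:MPRK22} with $\alpha\geq\tfrac12$ then yields $\lvert R(\dt\lambda)\rvert<1$ for each such $\lambda$. Crucially, this covers the boundary case $\alpha=\tfrac12$ as well, where $\lvert R\rvert=1$ only on the imaginary axis, which the strictly negative real parts avoid. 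Hence all $N-k$ remaining eigenvalues of $\b D\b g(\b y^*)$ lie inside the open unit disk, the hypothesis of Theorem~\ref{Thm_MPRK_stabil}\,\ref{it:Thma} is met, and $\b y^*$ is stable. As $\dt>0$ and $\b y^*$ were arbitrary, MPRK22($\alpha$) is unconditionally stable for every $\alpha\geq\tfrac12$.

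I expect the main obstacle to be the multiplicity and semisimplicity accounting rather than any estimate: one must be sure that the zero eigenvalue of $\bA$ is semisimple, so that $\b v_1,\dotsc,\b v_k$ really span the full eigenspace for the eigenvalue $1$ and the $N-k$ leftover eigenvalues are genuinely the nonzero-$\lambda$ images, and that these nonzero eigenvalues stay off the imaginary axis, since that is precisely where the $\alpha=\tfrac12$ estimate in Proposition~\ref{Prop:MPRK22} degenerates to $\lvert R\rvert=1$. Both points are supplied by the structural assumption $\mu_\bA(0)=\gamma_\bA(0)=k$ together with Remark~\ref{rem:Aneg}, so no new computation beyond Proposition~\ref{Prop:MPRK22} is required.
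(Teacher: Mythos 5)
Your proposal is correct and follows essentially the same route as the paper: the paper also obtains this corollary by combining Theorem~\ref{Thm_MPRK_stabil}\,\ref{it:Thma} (whose hypotheses are verified in the preceding discussion of MPRK regularity, steady-state and linear-invariant preservation, and the Jacobian formula \eqref{eq:Dg(y*)NSARK}) with Proposition~\ref{Prop:MPRK22}, noting via Remark~\ref{rem:Aneg} that all nonzero eigenvalues of $\bA$ have strictly negative real part, which is exactly how the borderline case $\alpha=\tfrac12$ is handled. Your additional care about the semisimplicity of the zero eigenvalue ensuring the remaining $N-k$ eigenvalues are the images $R(\dt\lambda)$ of the nonzero $\lambda\in\sigma(\bA)$ is a point the paper leaves implicit but is fully consistent with its argument.
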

\begin{cor}\label{Cor:MPRKstab1}
	Let  $\b y^*$ be the unique steady state of the initial value problem \eqref{eq:PDS_Sys}, \eqref{eq:IC} with $\b 1\in \ker(\bA^T)$. Then there exists a $\delta >0$ such that $\Vert\b y^0-\b y^*\Vert<\delta$ implies the convergence of the iterates of MPRK22($\alpha$) towards $\b y^*$ as $n\to \infty$ for all $\dt >0$ and $\alpha\geq\frac12$.
\end{cor}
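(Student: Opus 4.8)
The plan is to obtain Corollary~\ref{Cor:MPRKstab1} as a direct application of part~\ref{it:Thmb} of Theorem~\ref{Thm_MPRK_stabil}, feeding it the spectral information about the generating map $\b g$ of MPRK22($\alpha$) already assembled in Proposition~\ref{Prop:MPRK22} together with the structural facts from Remark~\ref{rem:Aneg}. First I would record that the standing hypotheses of Theorem~\ref{Thm_MPRK_stabil} are met: since $\b 1\in\ker(\bA^T)$ we have $k\geq 1>0$; the map $\b g$ is a $\mathcal C^2$-map on the positive orthant (as established in Section~\ref{sec:stab_MPRK}), so by Remark~\ref{rem:C2->C1} it restricts to a $\mathcal C^1$-map with Lipschitz continuous first derivatives on a suitable neighborhood $\mathcal D$ of $\b y^*$; and MPRK22($\alpha$) preserves all linear invariants, i.\,e.\ $\b g(\b y)\in H$ for all $\b y\in H\cap D$, as observed in the discussion following \eqref{eq:PhiNSARK}.

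The heart of the argument is the eigenvalue condition demanded by part~\ref{it:Thma} of Theorem~\ref{Thm_MPRK_stabil}. By Theorem~\ref{thm:DgNSARK} the Jacobian $\b D\b g(\b y^*)$ is a rational function of $\bA$, so its spectrum equals $\{R(\dt\lambda)\mid \lambda\in\sigma(\bA)\}$ with $R$ the stability function \eqref{eq:R_MPRK}. The eigenvalue $\lambda=0$ of $\bA$, of multiplicity $k=\mu_\bA(0)$, contributes $R(0)=1$, accounting for the $k$ unimodular eigenvalues already forced by the linear invariants. For each nonzero $\lambda\in\sigma(\bA)$, Remark~\ref{rem:Aneg} guarantees $\re(\lambda)<0$; since $\dt>0$ this places $\dt\lambda$ strictly in the open left half-plane, so Proposition~\ref{Prop:MPRK22} yields $\lvert R(\dt\lambda)\rvert<1$. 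Hence the remaining $N-k$ eigenvalues of $\b D\b g(\b y^*)$ lie strictly inside the unit disc, which is precisely the assumption of part~\ref{it:Thma}.

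It remains to verify the domain requirement $\b y^0\in H\cap D$ of part~\ref{it:Thmb}. Here I would use that $\b y^*$ is by definition the steady state of the IVP \eqref{eq:PDS_Sys}, \eqref{eq:IC}: since the analytic flow preserves every linear invariant while tending to $\b y^*$, we have $\b N\b y^0=\b N\b y^*$, that is $\b y^0\in H$, and for $\delta$ small enough $\b y^0\in D$ as well. Invoking part~\ref{it:Thmb} of Theorem~\ref{Thm_MPRK_stabil} then produces the asserted $\delta>0$ with $\b y^n\to\b y^*$. Because the bound $\lvert R(\dt\lambda)\rvert<1$ holds for every $\dt>0$ and every $\alpha\geq\tfrac12$, no step-size restriction is imposed, which is exactly the unconditional nature of the statement.

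I expect the only genuinely delicate point to be the boundary case $\alpha=\tfrac12$: there Proposition~\ref{Prop:MPRK22} gives only $\lvert R\rvert=1$ on the imaginary axis, so the conclusion would break down if $\bA$ had nonzero purely imaginary eigenvalues. This obstacle is resolved precisely by the Metzler structure in \eqref{eq:PDS_Sys} through Remark~\ref{rem:Aneg}, which confines the nonzero spectrum to $\arg(\lambda)\in(\tfrac\pi2,\tfrac{3\pi}2)$ and hence to $\re(\lambda)<0$. The heavier analytic work—the Phragmén--Lindelöf and maximum-modulus estimates bounding $R$ on $\Cminus$—has already been discharged in Proposition~\ref{Prop:MPRK22}, so beyond the bookkeeping above nothing further is required.
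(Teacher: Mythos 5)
Your proposal is correct and follows exactly the paper's own route: the paper obtains Corollary~\ref{Cor:MPRKstab1} as a direct consequence of Theorem~\ref{Thm_MPRK_stabil} combined with Proposition~\ref{Prop:MPRK22} and Remark~\ref{rem:Aneg}, relying on the groundwork in Section~\ref{sec:stab_MPRK} (smoothness of $\b g$ via Remark~\ref{rem:C2->C1}, preservation of linear invariants and steady states, and the spectrum $\sigma(\b D\b g(\b y^*))=\{R(\dt\lambda)\mid\lambda\in\sigma(\bA)\}$ from Theorem~\ref{thm:DgNSARK}). Your additional remarks—the verification that $\b y^0\in H$ via the linear invariants of the analytic flow, and the treatment of the boundary case $\alpha=\tfrac12$ through $\re(\lambda)<0$—are precisely the bookkeeping the paper leaves implicit, so nothing is missing.
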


\begin{rem} \label{rem:Phragmen}
	We note that as long as the stability function $R(z)=\frac{N(z)}{D(z)}$ with polynomials $N,D$ satisfying $\deg(N)\leq\deg(D)$ and $D(z)\neq 0$ for all $z\in \Cminus$, we can conclude $\lvert R(z)\rvert<1$ for $\re(z)<0$ whenever $\lvert R(z)\rvert\leq1$ holds on the imaginary axis with the same reasoning as in the proof of Proposition \ref{Prop:MPRK22}.
	
	Moreover, we point out that the Phragmén--Lindelöf principle can also be applied to different sectors $S_{(\varphi_1,\varphi_2)}=\left\{z\in \Cminus\mid \arg(z)\in(\varphi_1,\varphi_2) \right\}$ of $\overline{\C^-}$. 
\end{rem}
\paragraph{MPRK43}
We consider the two families of third order MPRK schemes presented in Chapter~\ref{chap:NumSchemes}. The PWDs can be found in \eqref{eq:PWDsMPRK43}, where $\bm \sigma$ is given only implicitly. However, interpreting $\bm \sigma=\b y^{(4)}$ and introducing $a_{41}=\beta_1$ and $a_{42}=\beta_2$ as well as $\pi_\nu^{(4)}=(y_\nu^{(2)})^{\frac{1}{a_{21}}}(y_\nu^n)^{1-\frac{1}{a_{21}}}$, we can use the derived formula from Theorem~\ref{thm:DgNSARK}. To that end, we note that the nonzero auxiliary Jacobians are
\begin{equation*}
	\begin{aligned}
		\b D_n^*\bm\pi^{(2)}&=\b I,\quad \b D_n^*\bm\pi^{(3)}=\left(1-\tfrac1p\right)\b I,\quad \b D_2^*\bm\pi^{(3)}=\tfrac1p\b I,\\
		\b D_n^*\bm\pi^{(4)}&=\left(1-\tfrac{1}{a_{21}}\right)\b I,\quad \b D_2^*\bm\pi^{(4)}=\tfrac{1}{a_{21}}\b I,\quad \b D_4^*\bm\sigma=\b I.
	\end{aligned}
\end{equation*}
As a result of \eqref{eq:Dg(y*)NSARK}, the stability function is
\begin{equation}\label{eq:R_MPRK43}
	\begin{aligned}
		R(z)=&\frac{1-z\left(-b_1-b_2  \frac{1-zc_2+za_{21}}{1-zc_2}  -b_3  \frac{1-zc_3\left(1-\frac1p\right)-z\left(-a_{31} +\left(c_3\frac1p-a_{32}\right)\frac{1-zc_2+za_{21}}{1-zc_2} \right)}{1-zc_3} \right) }{1-z}\\
		&+\frac{-z\frac{1-zc_4\left(1-\frac{1}{a_{21}}\right)-z\left(-a_{41}+\left(\frac{c_4}{a_{21}}-a_{42}\right)\frac{1-zc_2+za_{21}}{1-zc_2}\right)}{1-zc_4}}{1-z}\\
		=&\frac{1+z\left(b_1+ \frac{b_2 }{1-zc_2}  +b_3  \frac{1+zc_3\left(\frac1p-1\right)+z\left(a_{31} +\frac{a_{32}-\frac{c_3}{p}}{1-zc_2} \right)}{1-zc_3}  \right) }{1-z}\\
		&	-\frac{z\frac{1+zc_4\left(\frac{1}{a_{21}}-1\right)+z\left(a_{41}+\frac{a_{42}-\frac{c_4}{a_{21}}}{1-zc_2}\right)}{1-zc_4}}{1-z}.
	\end{aligned}
\end{equation}
\paragraph{MPRK43($\alpha,\beta$)}
In \cite{IOE22StabMP} the stability function of MPRK43$(\alpha,\beta$) was computed using a different approach. Unfortunately, there is a typo in the stability function on page 2328: Instead of writing "$-\frac{\beta}{p}$" as suggested in equation (33) on the same page, it is written "$-\frac{\beta}{q}$". This typo undermines all claims that are based on it. The stability function actually is
\begin{equation}\label{eq:Stab_fun_MPRK43(alpha,beta)}
	\begin{aligned}
		R(z)=&\frac{1+b_1 z+\frac{b_2 z}{1-\alpha z} +\frac{b_3 z\left(1+z\left( \left(\frac1p-1\right)\beta +  a_{31}\right)+\frac{z\left(-\bm{\frac{\beta}{p}} +a_{32}\right)}{1-\alpha z}\right)}{1-\beta z}}{1-z}\\
		&-	\frac{\frac{z\left(1+z\left(\frac1q -1+\beta_1\right)+\frac{z(-\frac1q+\beta_2)}{1-\alpha z} \right)}{1-z}}{1-z}\\
		=&\frac{((\frac12-\beta) \alpha-\frac16) z^{4}+((\frac12-\beta) \alpha+\frac12 \beta+\frac16) z^{3}+(( \beta+1) \alpha+ \beta-\frac12) z^{2}}{ (z-1)^{2} (z \beta-1) (\alpha z-1)}\\
		&	+\frac{-(1+ \alpha+\beta) z+1}{(z-1)^{2} (z \beta-1) (\alpha z-1)},
	\end{aligned}
\end{equation}
which can also be obtained within our framework by substituting \eqref{array:MPRK43alphabeta} and \eqref{eq:parametersMPRK43alphabeta} into \eqref{eq:R_MPRK43}.  It is thus the purpose of this subsection to correct and to extend the results from \cite{IOE22StabMP} concerning this stability analysis.

Since different cases for different $(\alpha,\beta)$ pairs need to be distinguished, see  \eqref{eq:cond:alpha,beta}, the analysis of the stability function \eqref{eq:Stab_fun_MPRK43(alpha,beta)} is more involved. Moreover, we will find out that the method is not unconditionally stable for all feasible parameters. In order to give an insight in the stability properties, we investigate the stability function numerically.
We first rewrite \eqref{eq:Stab_fun_MPRK43(alpha,beta)} to
\begin{equation*}
	R(z)=\frac{\sum_{j=0}^4n_jz^j}{\sum_{j=0}^4d_jz^j},
\end{equation*}
where
\begin{equation*}
	\begin{aligned}
		n_0&=1, &&&d_0&=1,\\
		n_1&=-(1+\alpha+\beta),&&&d_1&=-(\alpha+\beta+2),\\
		n_2&=( \beta+1) \alpha+ \beta-\tfrac12, &&& 	d_2&=(\beta+2)\alpha+2\beta+1,
	\end{aligned}
\end{equation*}
\begin{equation*}
	\begin{aligned}
		n_3&=(\tfrac12-\beta) \alpha+\tfrac12 \beta+\tfrac16,&&&d_3&=-(2\beta+1)\alpha-\beta,\\
		n_4&=(\tfrac12-\beta) \alpha-\tfrac16,&&&d_4&=\alpha\beta.
	\end{aligned}
\end{equation*}
In what follows, we investigate the polynomial $p_{\alpha,\beta,\varphi}(r)$ from Lemma~\ref{lem:stabconditionR(rexp(phi))} satisfying
\[\lvert R(re^{\ii\varphi})\rvert <1 \Longleftrightarrow p_{\alpha,\beta,\varphi}(r)<0\qta  \lvert R(re^{\ii\varphi})\rvert >1 \Longleftrightarrow p_{\alpha,\beta,\varphi}(r)>0, \]
which means that MPRK43($\alpha,\beta$) is unconditionally stable, if $ p_{\alpha,\beta,\frac\pi2}(r)<0$ for all $r>0$. Moreover, if $ p_{\alpha,\beta,\frac\pi2}(r)>0$ for some $r>0$, then the method cannot be unconditionally stable. For instance, observing $ p_{\frac13,\frac23,\frac\pi2}(r)=-\frac{11}{81} r^{6}-\frac{11}{36} r^{4}<0$ for all $r>0$, we find that MPRK$(\frac13,\frac23)$ is unconditionally stable.

We want to note that for any other feasible pair $(\alpha,\beta)$, see Figure~\ref{fig:RK3_pos}, the degree of $ p_{\alpha,\beta,\varphi}$ is $8$. To see this, we point out that the leading coefficient
\begin{equation}\label{eq:lc(p)}
	n_4^2-d_4^2=-\alpha^{2} \beta^{2}+(-\alpha \beta+\tfrac{1}{2} \alpha-\tfrac{1}{6})^{2}=-(\alpha-\tfrac{1}{3}) \alpha \beta+(\tfrac{\alpha}{2}-\tfrac{1}{6})^{2}
\end{equation}
vanishes for $(\alpha,\beta)=(\tfrac13,\tfrac23)$ and $(\alpha,\tfrac{3 \alpha-1}{12 \alpha})$. However, since $\tfrac{3 \alpha-1}{12 \alpha}< \tfrac{3 \alpha}{12 \alpha}=\tfrac14$ and all feasible values of $\beta$ lie in $[0.25,0.75]$, the pair $(\alpha,\tfrac{3 \alpha-1}{12 \alpha})$ does not lie in the feasible domain.

Altogether, the question of unconditional stability can be answered if the polynomial $ p_{\alpha,\beta,\frac\pi2}$ has no positive root because of the following. 
Suppose that all roots are non-positive. Since the coefficient of $\beta$ in the leading coefficient \eqref{eq:lc(p)} of $ p_{\alpha,\beta,\varphi}(r)$ is negative for $\alpha>\frac13$, we find due to $\beta\in[0.25,0.75]$, that
\[n_4^2-d_4^2\leq -(\alpha-\tfrac{1}{3}) \alpha \tfrac14+(\tfrac{\alpha}{2}-\tfrac{1}{6})^{2}=\tfrac{1}{36}-\tfrac{\alpha}{12}<0.\]
This means that $ \lim_{r\to\infty}p_{\alpha,\beta,\varphi}(r)=-\infty$. Finally, as we assumed that there are no positive roots, this implies that the polynomial is negative for all $r>0$.

For the investigation of the remaining parameter combinations, we create a grid for $(\alpha,\beta)\in [\frac13,2]\times[0.25,0.75]$ with a resolution of $102^2$ points in a unit square $[0,1]^2$. We chose this resolution to sample the domain for $\beta$ of length $0.5$ with about 100 points. In particular, we use 102 points so that the set of sampled pairs includes the combinations $(0.5,0.75)$ and $(1,0.5)$ which are used in the literature \cite{KM18Order3}.
Given a pair $(\alpha,\beta)$ from the grid, we also sample $\varphi_j=\frac{j}{1000}\pi,$ $j=0,1,\dotsc,1000$ and determine the smallest value of $j$ such that $ p_{\alpha,\beta,\varphi_j}$ has no positive root by using Sturm's Theorem \cite[Theorem 8.8.14]{C03}. The corresponding value
\begin{equation}
	\theta_\text{num}=\min_{j=0,1,\dotsc,1000}\{2(\pi-\varphi_j)\mid p_{\alpha,\beta,\varphi_j}(r)=0\Longrightarrow r\leq 0\}\label{eq:theta}
\end{equation} represents a lower bound for the opening angle $\theta$ of the stability domain of MPRK43($\alpha,\beta$).  Moreover, if there exist a  simple positive root, or with odd multiplicity, we know that the polynomial $p_{\alpha,\beta,\frac\pi2}$ will become positive within a neighborhood of that root, and hence, the method cannot be unconditional stable. In this case we have the  the error estimate $\theta_\text{num}\leq\theta<\theta_\text{num}+\frac{\pi}{500}$ since
\[2(\pi-\varphi_j)-2(\pi-\varphi_{j+1})=\frac{\pi}{500}. \] 
Note that in the case of $\theta_\text{num}\geq\pi$  the related method is unconditionally stable. The plot of $\theta_\text{num}$ can be found in Figure~\ref{fig:maxstab}, noting that the MPRK43$(\alpha,\beta)$ scheme is not defined for $\alpha=\beta$.
\begin{figure}
	\centering
	\includegraphics[width=0.95\textwidth]{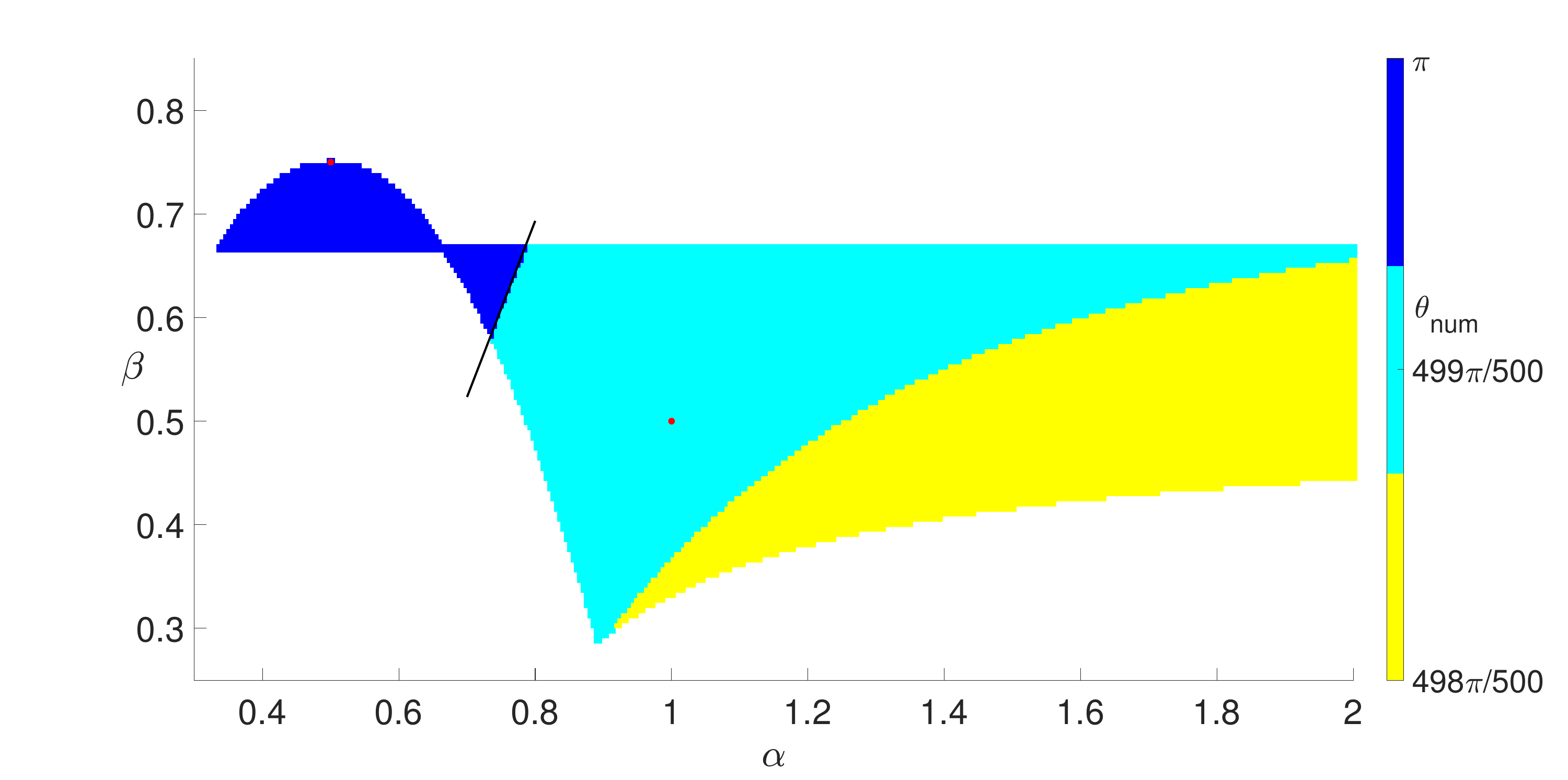}
	\caption{Plot of $\theta_\text{num}$, see \eqref{eq:theta}, estimating the opening angle of the stability domain of MPRK43($\alpha,\beta$). Pairs $(\alpha,\beta)$ colored in dark blue belong to $\theta_\text{num}=\pi$ and yield an unconditionally stable MPRK43($\alpha,\beta$) method. The black line is given by $\beta=\frac{17}{10}\alpha-\frac23$ and its intersection with the feasible domain lies in the dark blue segment. The red markers indicate the position of the pairs $(0.5,0.75)$ with an opening angle of at least $\theta_\text{num}=\pi$, and $(1,0.5)$ with an opening angle of at least $\theta_\text{num}=\pi-\frac{\pi}{500}=\frac{499}{500}\pi$.}\label{fig:maxstab}
\end{figure}

As one can see, several parameter combinations $(\alpha,\beta)$ are unconditionally stable and the smallest $\theta_\text{num}$ observed is $\frac{498}{500}\pi$. In particular the pair corresponding to $\alpha=\frac12$ and $\beta=\frac34$ already used in the literature \cite{KM18Order3} is now proved to be unconditionally stable. This cannot be said with certainty about the combination $\alpha=1$ and $\beta=\frac12$ as the corresponding value of $\theta_\text{num}$ is $\frac{499}{500}\pi$. Indeed, computing the roots of $p_{1,0.5,\frac\pi2}$ we find a simple positive root, proving that the corresponding method is not unconditionally stable.
\paragraph{MPRK43($\gamma$)}
Substituting \eqref{array:MPRK43gamma} and \eqref{eq:parametersMPRK43gamma} into \eqref{eq:R_MPRK43}, we obtain the stability function
\begin{equation}\label{eq:Stab_fun_MPRK43(gamma)}
	R(z)=\frac{-5z^4 + 7z^3 + 23z^2 - 42z + 18}{2(2z-3)^2(z-1)^2}.
\end{equation}
Note that the stability function is independent of the parameter $\gamma$, so that the following investigation is valid for all $\frac38\leq \gamma\leq \frac34.$ \newpage
\begin{prop}
	Let $R$ be defined by \eqref{eq:Stab_fun_MPRK43(gamma)}. Then $\lvert R(z)\rvert<1$ holds true for all \mbox{$z\in \Cminus\setminus\{0\}$} and $R(0)=1$.
\end{prop}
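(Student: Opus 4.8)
The plan is to follow the template of Proposition~\ref{Prop:MPRK22} and reduce the claim to an inequality on the imaginary axis, after which the Phragmén--Lindelöf and maximum modulus principles summarized in Remark~\ref{rem:Phragmen} finish the argument. First I would record the trivial evaluation $R(0) = \frac{18}{2\cdot 9\cdot 1} = 1$. Writing $R = N/D$ with $N(z) = -5z^4+7z^3+23z^2-42z+18$ and $D(z) = 2(2z-3)^2(z-1)^2$, I note that $D$ vanishes only at $z=1$ and $z=\tfrac32$, both of which lie in the open right half-plane; hence $D(z)\neq 0$ on $\Cminus$ and $R$ is holomorphic on a neighborhood of $\Cminus$. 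Moreover $\deg N = \deg D = 4$, so the hypotheses of Remark~\ref{rem:Phragmen} are met once the boundary estimate is established.

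The core step is to show $\lvert R(\ii y)\rvert \le 1$ for all $y\in\R$, with equality only at $y=0$. To this end I would substitute $z=\ii y$ and separate real and imaginary parts, obtaining $N(\ii y) = (-5y^4-23y^2+18) - \ii(7y^3+42y)$ together with $\lvert D(\ii y)\rvert = 2(9+4y^2)(1+y^2)$. Computing the squared moduli gives
\[
\lvert N(\ii y)\rvert^2 = 25y^8+279y^6+937y^4+936y^2+324
\]
and
\[
\lvert D(\ii y)\rvert^2 = 64y^8+416y^6+964y^4+936y^2+324.
\]
The decisive simplification is that the difference collapses to
\[
\lvert D(\ii y)\rvert^2 - \lvert N(\ii y)\rvert^2 = y^4\bigl(39y^4+137y^2+27\bigr),
\]
whose second factor has strictly positive coefficients. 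Thus the difference is nonnegative for every real $y$ and strictly positive for $y\neq 0$, which yields $\lvert R(\ii y)\rvert < 1$ for $y\neq 0$ and $\lvert R(0)\rvert = 1$.

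With the boundary estimate in hand, I would invoke Remark~\ref{rem:Phragmen}: since $\deg N \le \deg D$ and $D$ is zero-free on $\Cminus$, the Phragmén--Lindelöf principle upgrades $\lvert R\rvert\le 1$ on the imaginary axis to $\lvert R(z)\rvert \le 1$ throughout $\Cminus$, and the maximum modulus principle then forces $\lvert R(z)\rvert < 1$ for every $z$ with $\operatorname{Re}(z)<0$ because $R$ is nonconstant. Combined with the imaginary-axis estimate, this gives $\lvert R(z)\rvert<1$ on all of $\Cminus\setminus\{0\}$ and $R(0)=1$, as claimed. The only genuine obstacle is the bookkeeping in the two quartic-in-$y^2$ expansions; since the troublesome $y^2$ and constant terms cancel exactly and the surviving factor is manifestly positive, no Sturm-type root isolation (as was needed for MPRK43$(\alpha,\beta)$) is required here.
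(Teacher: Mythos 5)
Your proof is correct and follows essentially the same route as the paper: you verify $\lvert R(\ii y)\rvert<1$ for $y\neq 0$ by comparing squared moduli on the imaginary axis and then invoke Remark~\ref{rem:Phragmen}, exactly as the paper does via Lemma~\ref{lem:stabconditionR(rexp(phi))}. Indeed, your difference $y^4(39y^4+137y^2+27)$ is precisely $-324\,p_{\frac\pi2}(y)$, i.e.\ the paper's polynomial $-\frac{13}{108}y^8-\frac{137}{324}y^6-\frac{1}{12}y^4$ up to the normalization $n_0=d_0=1$, so the two arguments coincide.
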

\begin{proof}
	A straightforward calculation yields
	\begin{equation*}
		R(z)=\frac{-\frac{5}{18}z^4 + \frac{7}{18}z^3 + \frac{23}{18}z^2 - \frac{42}{18}z + 1}{(\frac23z-1)^2(z-1)^2}=\frac{\sum_{j=0}^4n_jz^j}{\sum_{j=0}^4d_jz^j},
	\end{equation*}
	where
	\begin{equation*}
		\begin{aligned}
			n_0&=1,&
			n_1&=- \frac{7}{3},&
			n_2&=\frac{23}{18},&
			n_3&=\frac{7}{18},&
			n_4&=-\frac{5}{18},\\
			d_0&=1,&
			d_1&=-\frac{10}{3},&
			d_2&=\frac{37}{9},&
			d_3&=-\frac{20 }{9},&
			d_4&=\frac49.
		\end{aligned}
	\end{equation*}
	Hence $R(0)=1$ and the polynomial $p_\varphi(r)$ from Lemma~\ref{lem:stabconditionR(rexp(phi))} satisfies
	\begin{equation*}
		p_{\frac\pi2}(r)=-\frac{13}{108}r^8-\frac{137}{324}r^6-\frac{1}{12}r^4<0
	\end{equation*} for all $r>0$. Hence, it follows that $\lvert R(\ii y)\rvert <1$ for all $y\neq 0$. The claim then follows from Remark~\ref{rem:Phragmen}.
\end{proof}

From this result, we can conclude as a direct consequence of Theorem \ref{Thm_MPRK_stabil} the following statements.
\begin{cor}\label{Cor:MPRK43gamma}
	\begin{enumerate}
		\item The MPRK($\gamma$) method is unconditionally stable for all $\frac{3}{8} \leq \gamma \leq \frac{3}{4}$.
		\item If $\b y^*$ is  the unique steady state of the initial value problem \eqref{eq:PDS_Sys}, \eqref{eq:IC} with $\b 1\in \ker(\bA^T)$, then there exists a $\delta >0$ such that $\Vert\b y^0-\b y^*\Vert<\delta$ implies the convergence of the iterates of MPRK($\gamma$) towards $\b y^*$ for  all $\dt>0$  and $\frac{3}{8} \leq \gamma \leq \frac{3}{4}$.
	\end{enumerate}
\end{cor}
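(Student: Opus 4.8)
The plan is to obtain both assertions as a direct application of Theorem~\ref{Thm_MPRK_stabil}, feeding in the spectral information about $\b D\b g(\b y^*)$ that is encoded in the stability function $R$ from \eqref{eq:Stab_fun_MPRK43(gamma)}. First I would recall from Theorem~\ref{thm:DgNSARK} that, for the conservative linear problem $\b y'=\bA\b y$, the Jacobian $\b D\b g(\b y^*)$ is a rational function of $\bA$ and $\b I$; consequently every eigenvector of $\bA$ is an eigenvector of $\b D\b g(\b y^*)$, and $\sigma(\b D\b g(\b y^*))=\{R(\dt\lambda)\mid \lambda\in\sigma(\bA)\}$ with $R$ given by \eqref{eq:Stab_fun_MPRK43(gamma)}.

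Next I would carry out the eigenvalue bookkeeping. By \eqref{eq:PDS_Sys} the eigenvalue $0$ of $\bA$ has algebraic and geometric multiplicity $k>0$, and by Remark~\ref{rem:Aneg} every nonzero $\lambda\in\sigma(\bA)$ satisfies $\re(\lambda)<0$, hence $\dt\lambda\in\Cminus\setminus\{0\}$ for all $\dt>0$. Applying the preceding Proposition, $R(0)=1$ produces the eigenvalue $1$ of $\b D\b g(\b y^*)$ with multiplicity $k$, while the remaining $N-k$ eigenvalues are $R(\dt\lambda)$ with $\abs{R(\dt\lambda)}<1$. Thus the spectral hypothesis of part~\ref{it:Thma} of Theorem~\ref{Thm_MPRK_stabil} holds for every $\dt>0$.

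It then remains to check the structural hypotheses of Theorem~\ref{Thm_MPRK_stabil}, all of which were already established in Section~\ref{sec:stab_MPRK}: the map $\b g$ of MPRK43($\gamma$) is $\mathcal C^2$ on the positive orthant, so by Remark~\ref{rem:C2->C1} its first derivatives are Lipschitz on a suitable neighborhood $\mathcal D$ of $\b y^*$; it is steady-state preserving, so every element of $\ker(\bA)\cap\mathcal D$ is a fixed point; and, by the computation following \eqref{eq:PhiNSARK}, it conserves all linear invariants of $\bA$. Part~\ref{it:Thma} then yields Lyapunov stability of $\b y^*$, and since this holds for arbitrary $\dt>0$, the scheme is unconditionally stable in the sense of Definition~\ref{Def:uncondstab}, proving the first assertion. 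For the second, the conservation property together with $\b 1\in\ker(\bA^T)$ lets me invoke part~\ref{it:Thmb}, which furnishes the $\delta>0$ with local convergence of the iterates to the unique steady state $\b y^*$; note that $\b y^0$ automatically lies in the invariant level set $H$ since $\b y^*$ is defined as the steady state sharing the invariants of $\b y^0$.

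There is no genuine analytic obstacle here, as the corollary is stated to be a direct consequence: the quantitative content lives entirely in the Proposition (which already guarantees $\abs{R(z)}<1$ on $\Cminus\setminus\{0\}$) and in Theorem~\ref{Thm_MPRK_stabil}. The one point requiring care is the bookkeeping that exactly $k$ eigenvalues of $\b D\b g(\b y^*)$ equal $1$ while the other $N-k$ lie strictly inside the unit disk — this is where Remark~\ref{rem:Aneg} is essential, since it excludes nonzero purely imaginary eigenvalues of $\bA$ that $R$ could otherwise map onto the unit circle.
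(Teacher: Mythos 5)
Your proposal is correct and follows essentially the same route as the paper: the corollary is obtained as a direct consequence of Theorem~\ref{Thm_MPRK_stabil} combined with the preceding proposition on the stability function \eqref{eq:Stab_fun_MPRK43(gamma)}, using Remark~\ref{rem:Aneg} to ensure all nonzero eigenvalues of $\bA$ have negative real part and the structural facts (regularity via Remark~\ref{rem:C2->C1}, steady-state preservation, and conservation of linear invariants) already established for MPRK schemes in Section~\ref{sec:stab_MPRK}. The paper leaves these checks implicit, whereas you spell them out; there is no substantive difference.
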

\subsection{Strong-Stability Preserving Modified Patankar--Runge--Kutta}\label{sec:stab_SSPMPRK}
As SSPMPRK schemes from \cite{SSPMPRK2,SSPMPRK3} are only constructed for positive and conservative PDS, we assume that the linear test equation \eqref{eq:PDS_Sys} is conservative, \ie $\b 1\in\ker(\bA^T)$. Since $\bA$ is a Metzler matrix, the test equation can be rewritten as a positive and conservative PDS with $p_{ij}(\b y) = d_{ji}(\b y) = \lambda_{ij}y_j$ for $i\neq j$ and $p_{ii}=d_{ii}=0$. 
Moreover, from $\b 1\in\ker(\bA^T)$, one can easily derive $\sum_{j=1}^N \lambda_{ji} = 0$ and thus obtain
\begin{equation}\label{eq:-sum(dij)}
	-\sum_{\substack{j=1}}^Nd_{ij}(\b y) = -\sum_{\substack{j=1\\j\neq i}}^N\lambda_{ji}y_i = \lambda_{ii}y_i,
\end{equation}
which will be used in the following to write the SSPMPRK schemes in the matrix-vector notation. \newpage
\paragraph{SSPMPRK2($\alpha,\beta$)}
When applied to a conservative system \eqref{eq:PDS_Sys}, the terms $p_{ij}$ and $d_{ij}$ fulfill \eqref{eq:-sum(dij)}. 
As a consequence, the scheme \eqref{eq:SSPMPRK2} can be rewritten as
\begin{equation}\label{eq:SSPMPRK2Matrixvector}
	\begin{aligned}
		\b 0=&\bm \Phi_1(\b y^n,\b y^{(1)}) = \b y^n+\beta \dt\bA\b y^{(1)}-\b y^{(1)},\\
		\b 0=&\bm \Phi_{n+1}(\b y^n,\b y^{(1)},\b y^{n+1}) = (1-\alpha)\b y^n+\alpha\b y^{(1)}\\&+\dt\bA\diag(\b y^{n+1})(\diag(\b y^{(1)}))^{-s}(\diag(\b y^{n}))^{s-1}(\beta_{20}\b y^n+\beta_{21}\b y^{(1)})-\b y^{n+1},\\
	\end{aligned}
\end{equation}
where we use the notation $(\diag(\b y))_{ij}=\delta_{ij}y_i$ with the Kronecker delta $\delta_{ij}$ as well as $((\diag(\b y))^{x})_{ij}=\delta_{ij}y_i^{x}$ for $x\in\R$.  Furthermore, $\b y^{(1)}=\b y^{(1)}(\b y^n)$ and $\b y^{n+1}=\b g(\b y^n)$ defined by \eqref{eq:SSPMPRK2Matrixvector} are functions of $\b y^n$. In order to apply Theorem~\ref{Thm:_Asym_und_Instabil} and Theorem~\ref{Thm_MPRK_stabil}, we have to investigate the map $\b g$ with respect to its smoothness as well as steady state and linear invariants preservation. 

First of all, we show that $\b g\in \mathcal C^2$ and then use Remark \ref{rem:C2->C1} in order to see that the first derivatives are Lipschitz continuous on an appropriately chosen neighborhood $\mathcal D$ of $\b y^*$. 

Indeed, the maps $\bm \Phi_1\colon\R^N_{>0}\times \R^N_{>0}\to \R^N$ and $\bm \Phi_{n+1}\colon\R^N_{>0}\times \R^N_{>0}\times \R^N_{>0}\to \R^N$ are in $\mathcal C^2$ for the same reasons as for MPRK schemes, which means that $\b g$ is also a $\mathcal C^2$-map.

Next, we show that any positive steady state of \eqref{eq:PDS_Sys} is a fixed point of $\b g$. To see this, we want to mention that $\b y^n=\b y^{(1)}=\b y^{n+1}=\b y^*$ is a solution to the system of equations \eqref{eq:SSPMPRK2Matrixvector} due to $\bA\b y^*=\b 0$. Since the solution for given $\b y^n$ is unique, we conclude that $\b y^n=\b y^*$ implies $\b y^{(1)}=\b y^{n+1}=\b y^*$, i.\,e.\ $\b g(\b y^*)=\b y^*$.

Moreover, $\b g$ conserves all linear invariants since $\b n^T\bA=\b 0$ and \eqref{eq:SSPMPRK2Matrixvector} imply
\begin{equation*}
	\begin{aligned}
		\b n^T\b g(\b y^n)&=\b n^T\b y^{n+1}=(1-\alpha)\b n^T\b y^n+\alpha\b n^T\b y^{(1)}+\b 0\\
		&=(1-\alpha)\b n^T\b y^n+\alpha\b n^T(\b y^{n}+\beta \dt\bA \b y^{(1)})=\b n^T\b y^n.
	\end{aligned}
\end{equation*}
Therefore, the map $\b g\colon \R^N_{>0}\to \R^N_{>0}$ meets the assumptions of Theorem \ref{Thm_MPRK_stabil}, so that we now focus on computing the Jacobian $\b D\b g(\b y^*)$ according to \eqref{eq:FormularJacobian}. In particular, we have
\begin{equation}\label{eq:Formula_Dg(y*)}
	\b D\b g(\b y^*)=-(\b D^*_{n+1}\bm \Phi_{n+1})^{-1}\left(	\b D^*_n\bm \Phi_{n+1}+ \b D^*_1\bm \Phi_{n+1}\b D^*\b y^{(1)}\right),
\end{equation}
where
\begin{equation}\label{eq:Formula_D*y^1}
	\b D^*\b y^{(1)}=-\left(\b D_1^*\bm\Phi_1\right)^{-1}\b D_n^*\bm\Phi_1,
\end{equation}
if $\b D_1^*\bm \Phi_1$ is invertible. Hence, we have to compute several auxiliary Jacobians in order to calculate $\b D\b g(\b y^*)$ and we start with
\begin{equation*}
	\begin{aligned}
		\b D^*_n\bm \Phi_1=\b I \qta \b D^*_1\bm \Phi_1=\beta \dt\bA-\b I.
	\end{aligned}
\end{equation*}
Note that $\beta>0$ and $\sigma(\bA)\tm \Cminus$, which implies that $\b D_1^*\bm \Phi_1$ is nonsingular. Thus, we can use \eqref{eq:Formula_D*y^1} and find
\begin{align*}
	\b D^*\b y^{(1)}=-(\beta \dt\bA-\b I)^{-1}\cdot \b I=(\b I-\beta \dt\bA)^{-1}.
\end{align*}
Next, we compute $\b D^*_n\bm\Phi_{n+1}$ and $\b D^*_1\bm\Phi_{n+1}$. To that end, we first define \[\b f(\b y^n,\b y^{(1)})=\diag(\b y^n)^k(\beta_{20}\b y^n+\beta_{21}\b y^{(1)})\] for some $k\in \R$ and get
\begin{equation}\label{eq:ProductRuleDiag}
	\begin{aligned}
		\left(\b D_n^*\b f\right)_{ij}&= \partial_{y_j^n}\left((y_i^n)^k(\beta_{20}y_i^n+\beta_{21}y_i^{(1)}))\right)\Big|_{\b y^n=\b y^*}\\&=\delta_{ij}\left(k(y_i^*)^{k-1}(\beta_{20}+\beta_{21})y_i^*+(y_i^*)^k\beta_{20}\right)\\&=\left(\diag(\b y^*)^{k}\right)_{ij}(k(\beta_{20}+\beta_{21})+\beta_{20}),
	\end{aligned}
\end{equation}
where we have used the fact that $\b y^{(1)}(\b y^*)=\b y^*$. Similarly, defining \[\b u(\b y^n,\b y^{(1)})=\diag(\b y^{(1)})^k(\beta_{20}\b y^n+\beta_{21}\b y^{(1)}),\] we obtain
\begin{equation}
	\b D_1^*\b u=\diag(\b y^*)^{k}(k(\beta_{20}+\beta_{21})+\beta_{21}).\label{eq:D_1u}
\end{equation}
In order to apply the formulae \eqref{eq:ProductRuleDiag} and \eqref{eq:D_1u} to compute  $\b D^*_n\bm\Phi_{n+1}$ and $\b D^*_1\bm\Phi_{n+1}$, we also make use of the fact that diagonal matrices commute, so that we end up with
\begin{align*}
	\b D^*_n\bm \Phi_{n+1}&=(1-\alpha)\b I+\dt\bA((s-1)(1-\alpha\beta)+\beta_{20}),\\
	\b D^*_1\bm \Phi_{n+1}&=\alpha \b I+\dt\bA(-s(1-\alpha \beta)+\beta_{21}),
\end{align*}
where we have exploited $\beta_{20}+\beta_{21}=1-\alpha\beta$.
Finally, to compute $\b D_{n+1}^*\bm\Phi_{n+1}$ we rewrite \eqref{eq:SSPMPRK2Matrixvector} utilizing $\diag(\b v)\b w=\diag(\b w)\b v$ to get
\begin{equation}
	\begin{aligned}\label{eq:trick}
		\bm \Phi_{n+1}=&(1-\alpha)\b y^n+\alpha\b y^{(1)}\\&+\dt\bA\diag(\beta_{20}\b y^n+\beta_{21}\b y^{(1)})(\diag(\b y^{(1)}))^{-s}(\diag(\b y^{n}))^{s-1}\b y^{n+1}-\b y^{n+1}.
	\end{aligned}
\end{equation}
From this, it is easy to see that
\begin{equation*}
	\begin{aligned}
		\b D^*_{n+1}\bm \Phi_{n+1}=(1-\alpha\beta) \dt\bA-\b I
	\end{aligned}
\end{equation*}
which is a nonsingular matrix since $\sigma(\bA)\tm \overline{\C^-}$ and $1-\alpha\beta\geq \frac{1}{2\beta}>0$, see \eqref{eq:alphabeta_conditions}.
Finally, we introduce the expressions for the auxiliary Jacobians into the formula \eqref{eq:Formula_Dg(y*)} resulting in
\begin{equation*}
	\begin{aligned}
		\b D\b g(\b y^*)=(\b I-(1-\alpha\beta)\dt\bA&)^{-1}\Bigl((1-\alpha)\b I+\dt\bA((s-1)(1-\alpha\beta)+\beta_{20})\\&+\left(\alpha \b I+\dt\bA(-s(1-\alpha \beta)+\beta_{21})\right)(\b I-\beta \dt\bA)^{-1}\Bigr).
	\end{aligned}
\end{equation*}
Since $\b D\b g(\b y^*)$ is a rational function of $\bA$ and the identity matrix $\b I$, we find $\sigma(\b D\b g(\b y^*))=\{R(\dt\lambda)\mid \lambda\in \sigma(\bA)\}$, where
\begin{equation*}
	\begin{aligned}
		R(z)=\frac{1 - \alpha + z((s - 1)(1-\alpha\beta ) + \beta_{20}) + \frac{\alpha + z(-s(1-\alpha\beta) + \beta_{21})}{1-\beta z}}{1 - (1-\alpha\beta)z}.
	\end{aligned}
\end{equation*}
From 
\begin{equation*}
	\beta_{20} = 1 - \frac{1}{2\beta} - \alpha\beta,\quad  \beta_{21} = \frac{1}{2\beta}\qta s = \frac{\alpha\beta^2 - \alpha\beta + 1}{\beta(1-\alpha\beta)}
\end{equation*}
elementary computations lead to
\begin{equation*}
	R(z)=\frac{-2 + (2\alpha\beta^2 - 2\alpha\beta + 1)z^2 - 2\beta(\alpha - 1)z}{2(1 + (\alpha\beta - 1)z)(\beta z - 1)}.
\end{equation*}
In summary, we obtain the following proposition.
\begin{prop}\label{Prop:SSPMPRK2_Dg(y*)}
	Let $\b g\from\R^N_{>0}\to\R^N_{>0}$ be the map given by the application of the second order SSPMPRK family \eqref{eq:SSPMPRK2} to the differential equation \eqref{eq:PDS_Sys} with $\bm 1\in \ker(\bA^T)$.
	Then any $\b y^*\in \ker(\bA)\cap \R^N_{>0}$ is a fixed point of $\b g$ and $\b g\in \mathcal{C}^2(\R^N_{>0},\R^N_{>0})$, where the first derivatives of $\b g$ are Lipschitz continuous in an appropriate neighborhood of $\b y^*$. Moreover, all linear invariants are conserved and an eigenvalue $\lambda$ of $\bA$ corresponds to the eigenvalue $R(\dt\lambda)$ of the Jacobian of $\b g$ where
	\begin{equation}\label{eq:StabfunSSPMPRK2}
		R(z)=\frac{-2 + (2\alpha\beta^2 - 2\alpha\beta + 1)z^2 - 2\beta(\alpha - 1)z}{2(1 + (\alpha\beta - 1)z)(\beta z - 1)}.
	\end{equation}
\end{prop}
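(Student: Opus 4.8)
The plan is to establish the four assertions of Proposition~\ref{Prop:SSPMPRK2_Dg(y*)} in turn, relying on the matrix-vector reformulation \eqref{eq:SSPMPRK2Matrixvector} and the auxiliary Jacobians already assembled above. First I would verify the regularity claim: on the open positive orthant the maps $\bm\Phi_1$ and $\bm\Phi_{n+1}$ are built from products, linear combinations and real powers of strictly positive coordinates, hence are $\mathcal C^2$, and each governing system is linear in the unknown stage (respectively output) with system matrix of the form $\b I$ minus a non-negative multiple of $\dt\bA$, which is nonsingular because $\sigma(\bA)\subseteq\Cminus$. The implicit function theorem then gives $\b g\in\mathcal C^2(\R^N_{>0},\R^N_{>0})$, and Remark~\ref{rem:C2->C1} upgrades this to Lipschitz continuity of $\b D\b g$ on a bounded neighborhood of $\b y^*$. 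For the fixed-point property I would substitute $\b y^n=\b y^{(1)}=\b y^{n+1}=\b y^*$ into \eqref{eq:SSPMPRK2Matrixvector}: every term carrying a factor of $\bA$ vanishes since $\bA\b y^*=\b 0$ and the remaining terms collapse to $\b y^*$, so uniqueness of the solution for given $\b y^n$ forces $\b g(\b y^*)=\b y^*$.

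Next I would treat the conservation of linear invariants by left-multiplying the second line of \eqref{eq:SSPMPRK2Matrixvector} by an arbitrary $\b n^T$ with $\b n^T\bA=\b 0$; this annihilates the $\bA$-term and leaves $\b n^T\b y^{n+1}=(1-\alpha)\b n^T\b y^n+\alpha\b n^T\b y^{(1)}$, while the same operation applied to the first line yields $\b n^T\b y^{(1)}=\b n^T\b y^n$, whence $\b n^T\b g(\b y^n)=\b n^T\b y^n$.

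For the eigenvalue statement I would insert the computed auxiliary Jacobians $\b D^*_n\bm\Phi_1$, $\b D^*_1\bm\Phi_1$, $\b D^*\b y^{(1)}$, $\b D^*_n\bm\Phi_{n+1}$, $\b D^*_1\bm\Phi_{n+1}$ and $\b D^*_{n+1}\bm\Phi_{n+1}$ into \eqref{eq:Formula_Dg(y*)}, using \eqref{eq:Formula_D*y^1} for the stage Jacobian. The crucial structural observation is that each of these is of the form $c\b I+d\,\dt\bA$, that is, a polynomial in $\b I$ and $\bA$; consequently $\b D\b g(\b y^*)=R(\dt\bA)$ is a rational function of $\bA$, so every eigenvector of $\bA$ associated with an eigenvalue $\lambda$ is simultaneously an eigenvector of $\b D\b g(\b y^*)$ with eigenvalue $R(\dt\lambda)$. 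Substituting the parameter values $\beta_{20}$, $\beta_{21}$ and $s$ and simplifying the resulting rational expression in $z=\dt\lambda$ then produces the closed form \eqref{eq:StabfunSSPMPRK2}.

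The only steps requiring genuine care are the invertibility of the matrices inverted in \eqref{eq:FormularJacobian} and \eqref{eq:Formula_Dg(y*)}, which I would secure from $\sigma(\bA)\subseteq\Cminus$ together with the sign conditions $\beta>0$ and $1-\alpha\beta\geq\tfrac{1}{2\beta}>0$ of \eqref{eq:alphabeta_conditions}, so that $\b I-c\,\dt\bA$ is nonsingular for every relevant $c\geq 0$, and the lengthy but routine algebraic reduction to \eqref{eq:StabfunSSPMPRK2}. The conceptual hinge that legitimizes the whole eigenvalue reduction is that the Jacobians of the Patankar-weight denominators, evaluated at the steady state, are scalar multiples of $\b I$; this is exactly what the diagonal computations \eqref{eq:ProductRuleDiag}, \eqref{eq:D_1u} and the reformulation \eqref{eq:trick} supply, and it is the point I expect to be most delicate to set up cleanly.
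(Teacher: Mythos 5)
Your proposal follows essentially the same route as the paper's proof: the same matrix--vector formulation \eqref{eq:SSPMPRK2Matrixvector}, the same $\mathcal C^2$-plus-Remark~\ref{rem:C2->C1} regularity argument, the same fixed-point and invariant-conservation computations, and the same Jacobian assembly via \eqref{eq:Formula_D*y^1}, \eqref{eq:Formula_Dg(y*)}, \eqref{eq:ProductRuleDiag}, \eqref{eq:D_1u} and \eqref{eq:trick}, culminating in the observation that $\b D\b g(\b y^*)$ is a rational function of $\bA$. The only slight imprecision is that away from the steady state the linear system for $\b y^{n+1}$ has matrix $\b I-\dt\bA\diag(\b w)$ with a positive \emph{diagonal} weight rather than a scalar multiple of $\dt\bA$, so its invertibility rests on the conservative Metzler structure (column diagonal dominance, as in the analogue of Lemma~\ref{lem:MPRKpos}) rather than on $\sigma(\bA)\tm\Cminus$ alone; at $\b y^*$ the weight collapses to $(1-\alpha\beta)\b I$ and your spectrum argument is exactly the paper's.
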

By this proposition, the SSPMPRK2($\alpha,\beta$) scheme satisfies all preconditions in order to apply Theorem \ref{Thm_MPRK_stabil}. Thus, we have to analyze the stability function $R$.
\begin{prop}\label{Prop:Stab_SSPMPRK2}
	Let $R$ be defined by \eqref{eq:StabfunSSPMPRK2} with $\alpha,\beta$ satisfying \eqref{eq:alphabeta_conditions}.
	\begin{enumerate}
		\item\label{item:a_propSSP2}  For any $\alpha>\frac{1}{2\beta}$, the set $\{z\in \Cminus\mid \lvert R(z)\rvert \leq 1\}$ is bounded.
		\item \label{item:b_propSSP2}	For all $\alpha<\frac{1}{2\beta}$ with $(\alpha,\beta)\neq(0,\frac12)$ we have $\lvert R(z)\rvert <1$ for all $z\in \Cminus\setminus\{0\}$. 
		\item\label{item:c_propSSP2} For $\alpha=\frac{1}{2\beta}$ or $(\alpha,\beta)=(0,\frac12)$ the relation $\lvert R(z)\rvert <1$ is true for all $z$ with $\re(z)<0$, and $\abs{R(z)}=1$ holds whenever $\re(z)=0$.
	\end{enumerate}
	
\end{prop}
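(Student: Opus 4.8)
The plan is to reduce all three assertions to a single algebraic identity for $|R|$ on the imaginary axis, exactly as in the proof of Proposition~\ref{Prop:MPRK22}, and then to invoke the Phragmén--Lindelöf and maximum modulus arguments collected in Remark~\ref{rem:Phragmen}. First I would write the stability function \eqref{eq:StabfunSSPMPRK2} as $R=N/D$ with $N(z)=Az^2+Bz-2$ and $D(z)=Cz^2+Ez-2$, where $A=2\alpha\beta^2-2\alpha\beta+1$, $B=2\beta(1-\alpha)$, $C=2\beta(\alpha\beta-1)$ and $E=2(1+\beta(1-\alpha))$. Since $\alpha\beta\le\tfrac12<1$ and $\beta>0$, the two roots $z=\tfrac{1}{1-\alpha\beta}$ and $z=\tfrac1\beta$ of $D$ are positive, so $D$ has no zero in $\Cminus$ and $\deg N=\deg D=2$; this is precisely the situation in which Remark~\ref{rem:Phragmen} applies.

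Next I would evaluate on $z=\ii y$, $y\in\R$. A direct computation gives $|N(\ii y)|^2=(Ay^2+2)^2+B^2y^2$ and $|D(\ii y)|^2=(Cy^2+2)^2+E^2y^2$. The crucial point — and the computational heart of the proof — is that the $y^2$-coefficients cancel, namely $4(C-A)+E^2-B^2=0$, so that
\[
|D(\ii y)|^2-|N(\ii y)|^2=(C^2-A^2)\,y^4=(C-A)(C+A)\,y^4.
\]
Here $C-A=-(1+2\beta(1-\alpha))<0$ always, while the remaining factor simplifies to $C+A=(2\beta-1)(2\alpha\beta-1)$. Thus on the imaginary axis the sign of $|R|-1$ is governed entirely by $(2\beta-1)(2\alpha\beta-1)$, which changes sign exactly at $\alpha=\tfrac1{2\beta}$, the threshold appearing in the statement.

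To convert these signs into the three claims I would use the constraint \eqref{eq:alphabeta_conditions}, which forces $\beta\ge\tfrac12$ (minimising $\alpha\beta+\tfrac1{2\beta}$ over $\alpha\ge0$ gives $\tfrac1{2\beta}\le1$). In case~\ref{item:b_propSSP2}, $\alpha<\tfrac1{2\beta}$ together with $(\alpha,\beta)\neq(0,\tfrac12)$ yields $\beta>\tfrac12$ and $2\alpha\beta<1$, hence $C+A<0$ and $|R(\ii y)|<1$ for $y\neq0$; Remark~\ref{rem:Phragmen} then upgrades this to $|R(z)|<1$ on all of $\Cminus\setminus\{0\}$. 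In case~\ref{item:c_propSSP2} either factor of $C+A$ vanishes, so $|R(\ii y)|=1$ for every $y$; since $C+A=0$ forces $A=-C$, we have $\lim_{z\to\infty}R(z)=A/C=-1\neq1=R(0)$, so $R$ is non-constant and the maximum modulus principle gives strict inequality in the open left half-plane. For case~\ref{item:a_propSSP2}, $\alpha>\tfrac1{2\beta}$ forces $\alpha\beta>\tfrac12$ and, via \eqref{eq:alphabeta_conditions}, $\beta>1$; hence both factors of $C+A$ are positive, so $A^2>C^2$ and $\lim_{z\to\infty}|R(z)|=|A/C|>1$. Consequently $|R(z)|>1$ outside a sufficiently large disk, and $\{z\in\Cminus\mid |R(z)|\le1\}$ is bounded.

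The main obstacle I anticipate is purely the algebraic bookkeeping behind the identity in the second paragraph: verifying the cancellation $4(C-A)+E^2-B^2=0$ and the factorisation $C+A=(2\beta-1)(2\alpha\beta-1)$ from the expressions for $A,B,C,E$ in terms of $\alpha$ and $\beta$. Once these two facts are in hand, the remainder is a clean sign discussion together with the complex-analytic tools already established in Remark~\ref{rem:Phragmen} and used in Proposition~\ref{Prop:MPRK22}. A further point to treat with care is the repeated use of the feasibility constraint \eqref{eq:alphabeta_conditions}: it rules out $\beta<\tfrac12$ entirely, it forces $\beta>1$ in case~\ref{item:a_propSSP2}, and it confines the degenerate line $\beta=\tfrac12$ to the single admissible pair $(\alpha,\beta)=(0,\tfrac12)$, which must therefore be grouped with case~\ref{item:c_propSSP2} rather than case~\ref{item:b_propSSP2}.
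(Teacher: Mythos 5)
Your proof is correct, and for parts \ref{item:b_propSSP2} and \ref{item:c_propSSP2} it is essentially the paper's own argument: your identity $\lvert D(\ii y)\rvert^2-\lvert N(\ii y)\rvert^2=(C-A)(C+A)y^4$ with $C-A=-(1+2\beta(1-\alpha))$ and $C+A=(2\beta-1)(2\alpha\beta-1)$ is exactly the paper's expression \eqref{eq:expProveSSP2} (up to the opposite sign convention, numerator minus denominator), and the passage from the imaginary axis to all of $\Cminus$ via Remark~\ref{rem:Phragmen} and the maximum modulus principle is the same, including the treatment of the degenerate line $\beta=\tfrac12$. For part \ref{item:a_propSSP2} you take a slightly different route: the paper evaluates $\lim_{r\to\infty}R(re^{\ii\varphi})=A/C$, notes that it equals $-1$ at $\alpha=\tfrac{1}{2\beta}$, and shows it is strictly decreasing in $\alpha$ (derivative $\tfrac{1-2\beta}{2(\alpha\beta-1)^2}<0$), whereas you read off $\lvert A/C\rvert>1$ directly from the signs $C-A<0$ and $C+A>0$ of the same factorization; your variant is arguably cleaner, since it runs all three cases through the single identity for $C\pm A$. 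One misstatement should be fixed: $\alpha\beta\le\tfrac12$ is false in general under \eqref{eq:alphabeta_conditions} (e.g.\ $(\alpha,\beta)=(0.24,3)$ is feasible with $\alpha\beta=0.72$); what the constraint actually gives is $\alpha\beta\le 1-\tfrac{1}{2\beta}<1$, which is all you need to conclude that the roots $\tfrac{1}{1-\alpha\beta}$ and $\tfrac{1}{\beta}$ of $D$ are positive, so your conclusion that $D$ has no zeros in $\Cminus$ — and hence the applicability of Remark~\ref{rem:Phragmen} — is unaffected.
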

\begin{proof}
	For proving part \ref{item:a_propSSP2}, we consider \eqref{eq:StabfunSSPMPRK2} with $z=re^{\ii \varphi}\in \Cminus\setminus\{0\}$ which yields
	\[\lim_{r\to\infty} R(z)=\frac{2\alpha\beta^2 - 2\alpha\beta + 1}{2\beta(\alpha\beta-1)}=\frac{2\alpha\beta^2 - 2\alpha\beta + 1}{2\alpha\beta^2-2\beta}.\]
	Note that for $\alpha=\frac{1}{2\beta}$, we obtain $\lim_{r\to\infty} R(z)=\frac{\beta-1+1}{\beta-2\beta}=-1$. Finally, it is straightforward to verify
	\begin{equation*}
		\begin{aligned}
			\partial_\alpha\left(\lim_{r\to\infty} R(z)\right)&=	\partial_\alpha\left(\frac{2\alpha\beta^2 - 2\alpha\beta + 1}{2\beta(\alpha\beta-1)}\right)\\
			&=\frac{2\beta(\beta-1)2\beta(\alpha\beta-1)-(2\alpha\beta(\beta-1) + 1)2\beta^2}{4\beta^2(\alpha\beta-1)^2}\\
			&=\frac{1-2\beta}{2(\alpha\beta-1)^2}<0,
		\end{aligned}
	\end{equation*}
	since $\beta\geq \frac12$. Therefore $\lim_{r\to\infty} R(z)$ decreases with increasing $\alpha$. As a result, for any $\alpha>\frac{1}{2\beta}$, we find $\lim_{r\to\infty} R(z)<-1$ and thus, there exists $z^*\in \Cminus$ so that $\lvert R(z^*)\rvert>1$. Indeed, the set $\{z\in \Cminus\mid \lvert R(z)\rvert \leq 1\}$ is bounded, as we find $\lvert R(z^*)\rvert>1$ for any $z^*\in \Cminus$ with $\lvert z^*\rvert$ large enough.
	
	We now focus on the derivation of the remaining statements, we
	investigate $\lvert R(z)\rvert$ first on the imaginary axis.
	A technical but elementary computation  for $z = \ii b$, with $b \in \R$, yields
	\[\lvert R(\ii b)\rvert^2=\frac{1+b^4(\alpha\beta^2-\alpha\beta+\frac12)^2+b^2(1+(\alpha^2+1)\beta^2-2\alpha\beta)}{(1+(\alpha\beta-1)^2b^2)(\beta^2b^2+1)}.\]
	Subtracting the denominator from the numerator leads to the expression
	\begin{equation}\label{eq:expProveSSP2}
		-(2\alpha\beta-2\beta-1)(2\alpha\beta-1)(2\beta-1)b^4.
	\end{equation}
	With respect to statement \ref{item:b_propSSP2}, we consider $\alpha<\frac{1}{2\beta}$ and $\beta>\frac12$, as $\beta=\frac12$ implies $\alpha=0$ due to equation \eqref{eq:alphabeta_conditions}. It follows that $2\beta-1>0$. Due to $\alpha<\frac{1}{2\beta}$, we see $2\alpha\beta <1$ and $2\alpha\beta-2\beta-1<1-2\beta-1<0$, so that the whole product \eqref{eq:expProveSSP2} becomes negative, whenever $z=\ii b\neq 0$. This is equivalent to $\lvert R(z)\rvert<1$ on the imaginary axis without the origin.
	Using Remark~\ref{rem:Phragmen}, we see that
	$\abs{R(z)}<1$ holds for all $z\in \Cminus\setminus\{0\}$.
	
	The assertion \ref{item:c_propSSP2} can be proved in a similar way using \eqref{eq:expProveSSP2}. Indeed, in the case of $\alpha=\frac{1}{2\beta}$ or $(\alpha,\beta)=(0,\frac12)$, the product \eqref{eq:expProveSSP2} vanishes proving $\abs{R(z)}=1$ on the imaginary axis.
	Once again taking advantage of the Phragmén--Lindelöf principle one can conclude $\abs{R(z)}<1$ in $\C^-$.
\end{proof}
As a result we obtain the following corollaries that are a direct consequence of the application of Theorem~\ref{Thm:_Asym_und_Instabil} and Theorem \ref{Thm_MPRK_stabil}, as well as Remark~\ref{rem:Aneg}.
\begin{cor}\label{Cor:SSPMPRK2stab}
	Let $\b y^*$ be a positive steady state of the differential equation \eqref{eq:PDS_Sys} with $\b 1\in \ker(\bA^T)$. Then $\b y^*$ is a fixed point of the SSPMPRK2($\alpha,\beta$) scheme and the following holds:
	\begin{enumerate}
		\item
		For any $\alpha>\frac{1}{2\beta}$, the stability region of the SSPMPRK2($\alpha,\beta$) method is bounded.
		\item  For all $\alpha\leq\frac{1}{2\beta}$, the SPPMPRK22($\alpha,\beta$) scheme is unconditionally stable.
	\end{enumerate}
\end{cor}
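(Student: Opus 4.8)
The plan is to verify that every hypothesis of Theorem~\ref{Thm_MPRK_stabil} is already supplied by Proposition~\ref{Prop:SSPMPRK2_Dg(y*)} and then to read off the two claims from the spectral information encoded in the stability function \eqref{eq:StabfunSSPMPRK2}. Concretely, Proposition~\ref{Prop:SSPMPRK2_Dg(y*)} tells us that any $\b y^*\in\ker(\bA)\cap\R^N_{>0}$ is a fixed point of the generating map $\b g$, that $\b g\in\mathcal C^2$ with first derivatives that are Lipschitz continuous on a suitable neighborhood of $\b y^*$ (via Remark~\ref{rem:C2->C1}), and that $\b g$ conserves all linear invariants. Since $\b 1\in\ker(\bA^T)$ we have $k\geq 1$, so $0\in\sigma(\bA)$ and the eigenvalue $R(0)=1$ of $\b D\b g(\b y^*)$ has multiplicity at least $k$, exactly as required by Theorem~\ref{Thm_MPRK_stabil}. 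Hence the stability of $\b y^*$ is governed entirely by the images $R(\dt\lambda)$ of the remaining, nonzero eigenvalues $\lambda\in\sigma(\bA)$.

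For part (2), I would invoke Remark~\ref{rem:Aneg}, which guarantees that every nonzero eigenvalue $\lambda$ of $\bA$ satisfies $\re(\lambda)<0$; hence $z=\dt\lambda$ lies strictly in the open left half-plane for any $\dt>0$. When $\alpha\leq\tfrac{1}{2\beta}$, parts \ref{item:b_propSSP2} and \ref{item:c_propSSP2} of Proposition~\ref{Prop:Stab_SSPMPRK2} together yield $\lvert R(z)\rvert<1$ for all $z$ with $\re(z)<0$ (the exceptional pair $(\alpha,\beta)=(0,\tfrac12)$ being covered by part \ref{item:c_propSSP2}). Consequently all $N-k$ nonzero-eigenvalue images satisfy $\lvert R(\dt\lambda)\rvert<1$, so Theorem~\ref{Thm_MPRK_stabil}~\ref{it:Thma} delivers the Lyapunov stability of $\b y^*$. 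As this holds for every $\dt>0$, the scheme is unconditionally stable in the sense of Definition~\ref{Def:uncondstab}.

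For part (1), with $\alpha>\tfrac{1}{2\beta}$ Proposition~\ref{Prop:Stab_SSPMPRK2}~\ref{item:a_propSSP2} states that $\{z\in\Cminus\mid\lvert R(z)\rvert\leq1\}$ is bounded, which is precisely the assertion that the stability region of the method is bounded. I would additionally spell out the complementary consequence: boundedness provides some $z^*\in\Cminus$ with $\lvert R(z^*)\rvert>1$, and since a nonzero eigenvalue $\lambda$ with $\re(\lambda)<0$ exists (Remark~\ref{rem:Aneg}), choosing $\dt$ large enough places $\dt\lambda$ outside the stability region, so $\rho(\b D\b g(\b y^*))>1$ and Theorem~\ref{Thm:_Asym_und_Instabil} renders $\b y^*$ unstable, exhibiting a genuine step-size restriction.

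All computations are deferred to the cited propositions, so no hard calculation remains. The one point demanding care is the interface between the two half-plane descriptions: one must confirm that the nonzero eigenvalues of $\bA$ never sit on the imaginary axis, since there $\lvert R\rvert=1$ in the borderline case $\alpha=\tfrac{1}{2\beta}$. This is exactly what Remark~\ref{rem:Aneg} secures by placing $\sigma(\bA)\setminus\{0\}$ strictly in the open left half-plane, so the boundary values of $\lvert R\rvert$ are never attained by the relevant eigenvalues and both conclusions follow cleanly.
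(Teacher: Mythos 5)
Your proposal is correct and follows exactly the route the paper intends: the paper derives this corollary as a direct consequence of Proposition~\ref{Prop:SSPMPRK2_Dg(y*)} (supplying the hypotheses of Theorem~\ref{Thm_MPRK_stabil}), Proposition~\ref{Prop:Stab_SSPMPRK2} (locating $R(\dt\lambda)$ relative to the unit circle), Remark~\ref{rem:Aneg} (placing the nonzero spectrum strictly in the open left half-plane), and Theorem~\ref{Thm:_Asym_und_Instabil} for the instability side of the bounded-region case. Your added care about the borderline $\alpha=\tfrac{1}{2\beta}$ on the imaginary axis is precisely the point Remark~\ref{rem:Aneg} is cited for, so nothing is missing.
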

\begin{cor}\label{Cor:SSPMPRK2stab1}
	Let the unique steady state $\b y^*$ of the initial value problem \eqref{eq:PDS_Sys}, \eqref{eq:IC} be positive and $\b 1\in \ker(\bA^T)$. Then there exists a $\delta >0$ such that $\Vert\b y^0-\b y^*\Vert<\delta$ implies the convergence of the iterates of the SSPMPRK2($\alpha,\beta$) scheme towards $\b y^*$ for all $\dt>0$, if $\alpha \leq\frac{1}{2\beta}$. For $\alpha >\frac{1}{2\beta}$, the method is conditionally stable.
\end{cor}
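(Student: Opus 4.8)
The plan is to apply the general stability and convergence Theorem~\ref{Thm_MPRK_stabil}, whose structural hypotheses are already supplied by Proposition~\ref{Prop:SSPMPRK2_Dg(y*)}: every $\b y^*\in\ker(\bA)\cap\R^N_{>0}$ is a fixed point of the generating map $\b g$, one has $\b g\in\mathcal C^2(\R^N_{>0},\R^N_{>0})$ with locally Lipschitz first derivatives near $\b y^*$, the scheme conserves all linear invariants, and each $\lambda\in\sigma(\bA)$ induces the eigenvalue $R(\dt\lambda)$ of $\b D\b g(\b y^*)$ with $R$ as in \eqref{eq:StabfunSSPMPRK2}. First I would record that, by \eqref{eq:PDS_Sys}, the eigenvalue $0$ of $\bA$ has $\mu_\bA(0)=\gamma_\bA(0)=k$ with $k\geq 1$ because $\b 1\in\ker(\bA^T)$, so it contributes the eigenvalue $R(0)=1$ of $\b D\b g(\b y^*)$ with multiplicity exactly $k$, while the remaining $N-k$ eigenvalues are $R(\dt\lambda)$ for the \emph{nonzero} $\lambda\in\sigma(\bA)$. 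By Remark~\ref{rem:Aneg} these nonzero eigenvalues satisfy $\re(\lambda)<0$, hence $\re(\dt\lambda)<0$ for every $\dt>0$. Note also that this spectrum depends only on $\bA$, not on the particular steady state $\b y^*$.

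For the case $\alpha\leq\tfrac1{2\beta}$ I would invoke Proposition~\ref{Prop:Stab_SSPMPRK2}~\ref{item:b_propSSP2} and~\ref{item:c_propSSP2}, which together cover all such $(\alpha,\beta)$ and give $\abs{R(z)}<1$ whenever $\re(z)<0$. Evaluated along each ray $\dt\lambda$ with $\lambda\neq 0$, this yields $\abs{R(\dt\lambda)}<1$ for all $\dt>0$, so the $N-k$ non-unit eigenvalues of $\b D\b g(\b y^*)$ lie strictly inside the unit disk. Since $\b g$ additionally conserves all linear invariants, part~\ref{it:Thmb} of Theorem~\ref{Thm_MPRK_stabil} applies and provides $\delta>0$ such that $\b y^0\in H\cap D$ with $\norm{\b y^0-\b y^*}<\delta$ forces $\b y^n\to\b y^*$. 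It then remains only to note that, because $\b y^*$ is \emph{the} steady state of the initial value problem with data $\b y^0$, the invariants match, i.e. $\b N\b y^0=\b N\b y^*$ and thus $\b y^0\in H$ by \eqref{eq:H}; hence the membership hypothesis is automatic and local convergence holds for every $\dt>0$.

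For the case $\alpha>\tfrac1{2\beta}$ the situation is genuinely different: Proposition~\ref{Prop:Stab_SSPMPRK2}~\ref{item:a_propSSP2} shows the set $\{z\in\Cminus\mid\abs{R(z)}\leq 1\}$ is bounded, so unconditional stability fails and the implication $\re(z)<0\Rightarrow\abs{R(z)}<1$ is no longer available. Here I would instead exploit that SSPMPRK2 is second order accurate, so $R(z)=1+z+\tfrac12 z^2+\mathcal O(z^3)$ near the origin and therefore $\abs{R(z)}^2=1+2\re(z)+\mathcal O(\abs{z}^2)$. For each of the finitely many nonzero eigenvalues $\lambda$, whose argument is pinned strictly inside the left half-plane, this gives $\abs{R(\dt\lambda)}^2=1+2\dt\re(\lambda)+\mathcal O(\dt^2)<1$ for $0<\dt<c_\lambda$ with some $c_\lambda>0$; setting $c=\min_\lambda c_\lambda>0$ makes all $N-k$ non-unit eigenvalues of $\b D\b g(\b y^*)$ subunitary for $0<\dt<c$, uniformly in $\b y^*$. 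Then part~\ref{it:Thma} of Theorem~\ref{Thm_MPRK_stabil} (equivalently Theorem~\ref{Thm:_Asym_und_Instabil}) renders every $\b y^*\in\ker(\bA)\cap\R^N_{>0}$ a stable fixed point for such $\dt$, which is precisely conditional stability in the sense of Definition~\ref{Def:uncondstab}.

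The main obstacle is exactly this last case: boundedness of the stability region does not by itself place $R(\dt\lambda)$ inside the open unit disk for small $\dt$, so the crux is the local expansion of $R$ at $0$ afforded by second-order consistency, together with the observation that fixing $\lambda\neq 0$ keeps its argument bounded away from the imaginary axis, so the $\mathcal O(\dt^2)$ remainder is dominated by the negative linear term $2\dt\re(\lambda)$ for $\dt$ small. A secondary point to handle cleanly throughout is the implicit identification $\b y^0\in H$ hidden in the phrase \enquote{the unique steady state of the initial value problem}, which links $\b y^*$ to $\b y^0$ through the conserved invariants encoded by $\b N$.
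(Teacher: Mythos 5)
Your proposal is correct and follows essentially the same route as the paper: the corollary is obtained there as a direct consequence of Proposition~\ref{Prop:SSPMPRK2_Dg(y*)} (regularity, fixed points, invariant conservation, eigenvalue correspondence $\lambda\mapsto R(\dt\lambda)$), Proposition~\ref{Prop:Stab_SSPMPRK2}, Remark~\ref{rem:Aneg}, and Theorems~\ref{Thm:_Asym_und_Instabil} and~\ref{Thm_MPRK_stabil}, exactly the chain you assemble, including the observation that $\b y^0\in H$ is automatic because $\b y^*$ is tied to $\b y^0$ through the conserved invariants. The one place you go beyond the paper is the case $\alpha>\tfrac{1}{2\beta}$, where the paper leaves the conditional-stability argument implicit and you supply it via the expansion of $R$ at the origin; this is a sound completion, but you should justify $R(z)=1+z+\tfrac12 z^2+\mathcal O(z^3)$ by direct computation from the explicit rational function \eqref{eq:StabfunSSPMPRK2} (which indeed gives these coefficients) rather than by appealing to second-order accuracy of the scheme, since $R$ here arises as an eigenvalue of the Jacobian $\b D\b g(\b y^*)$ of a nonlinear map and the implication \enquote{order two $\Rightarrow$ $R$ matches $e^z$ to second order} is not immediate in that setting.
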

In order to illustrate the consequences of Corollary \ref{Cor:SSPMPRK2stab} consider Figure \ref{Fig:alphabetagraph}, where due to \eqref{eq:alphabeta_conditions} all permitted pairs of $(\alpha,\beta)$ with $\beta\leq 5$ lie between the $\beta$-axis and the black curve. The blue graph is determined by $\alpha=\frac{1}{2\beta}$, and thus, separates pairs of parameters associated with unconditionally stable methods from those with bounded stability domains. As an example, here we will consider the red rectangular with vertices $(0.2,3)$, $(0.2,3.5)$, $(0.24,3)$ and $(0.24,3.5)$, which is located in that critical region, so that we further analyze the corresponding choices of parameters with the help of Figure \ref{Fig:StabregionSSPMPRK2}, where we plot the corresponding stability regions. One can observe that the chosen pairs of parameters from Figure~\ref{Fig:alphabetagraph} that are closer to the blue graph are associated with a larger stability domain. The smallest stability region among the examples from Figure~\ref{Fig:StabregionSSPMPRK2} are associated with the $(\alpha,\beta)$ pair at the top right corner of the red rectangular from Figure~\ref{Fig:alphabetagraph}.
\begin{figure}[!h]
	\centering
	\includegraphics[width=0.9\textwidth]{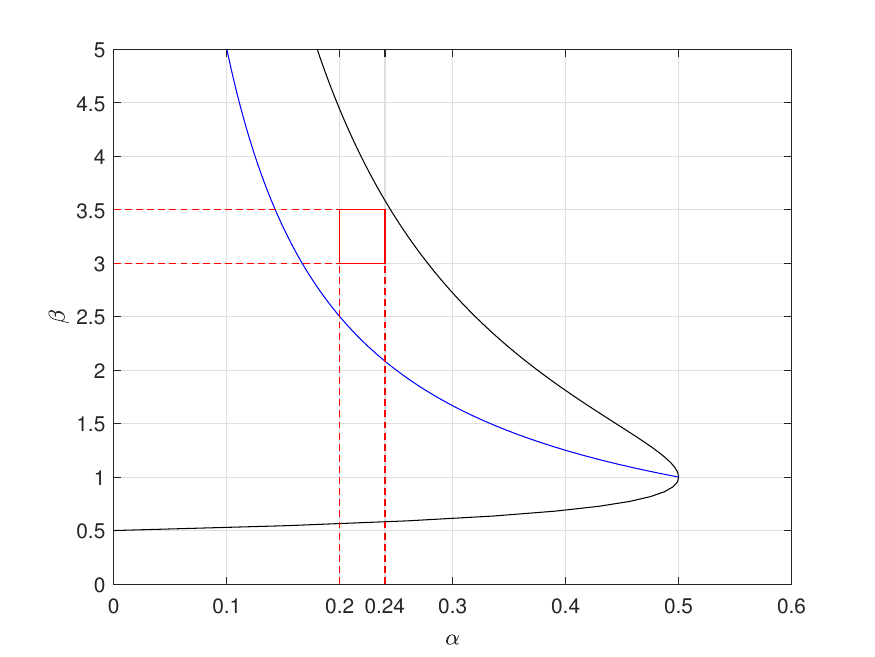}
	\caption{The black curve is implicitly given by the function $\alpha(\beta)=\frac{1-\frac{1}{2\beta}}{\beta}$. The blue graph is determined by the equation $\alpha=\frac{1}{2\beta}$ and the red rectangular possesses the vertices $(\alpha,\beta)$ with $(0.2,3)$, $(0.2,3.5)$, $(0.24,3)$ and $(0.24,3.5)$ which lie between the black and blue curve.}\label{Fig:alphabetagraph}
\end{figure}
\begin{figure}[h!]
	\begin{subfigure}[c]{0.495\textwidth}
		\includegraphics[width=\textwidth]{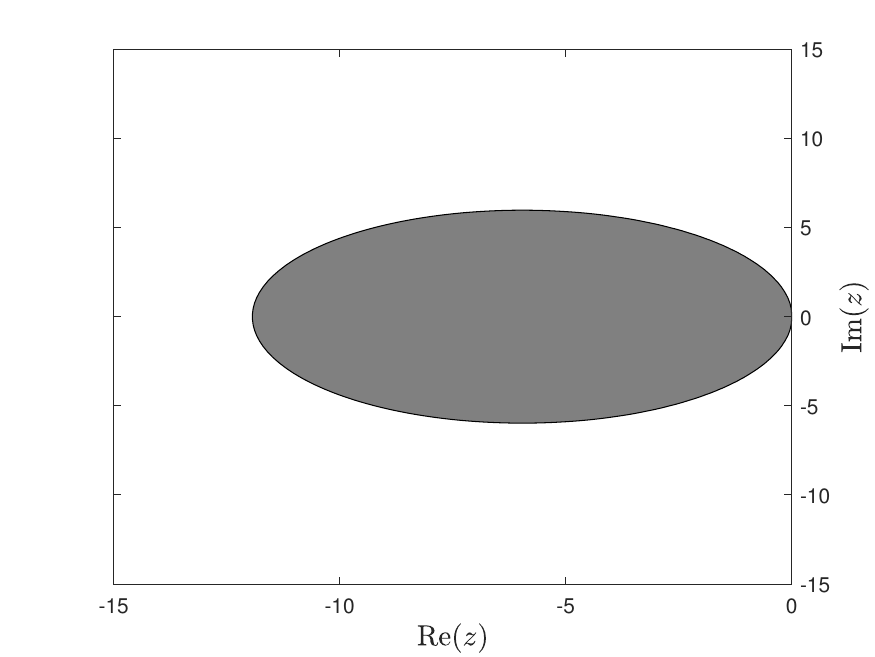}
		\subcaption{$(\alpha,\beta)=(0.2,3)$}
	\end{subfigure}
	\begin{subfigure}[c]{0.495\textwidth}
		\includegraphics[width=\textwidth]{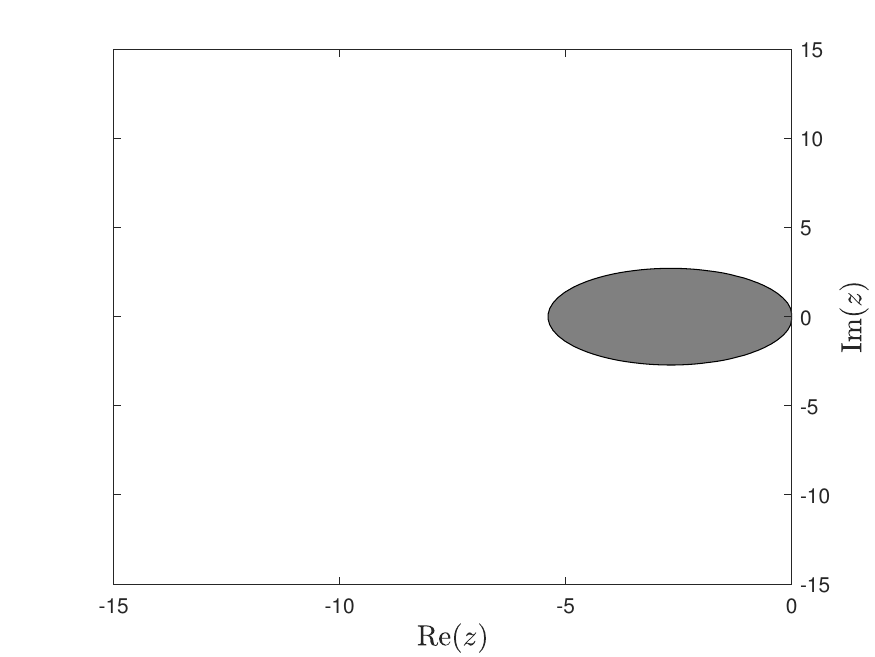}
		\subcaption{$(\alpha,\beta)=(0.24,3)$}
	\end{subfigure}\\
	\begin{subfigure}[t]{0.495\textwidth}
		\includegraphics[width=\textwidth]{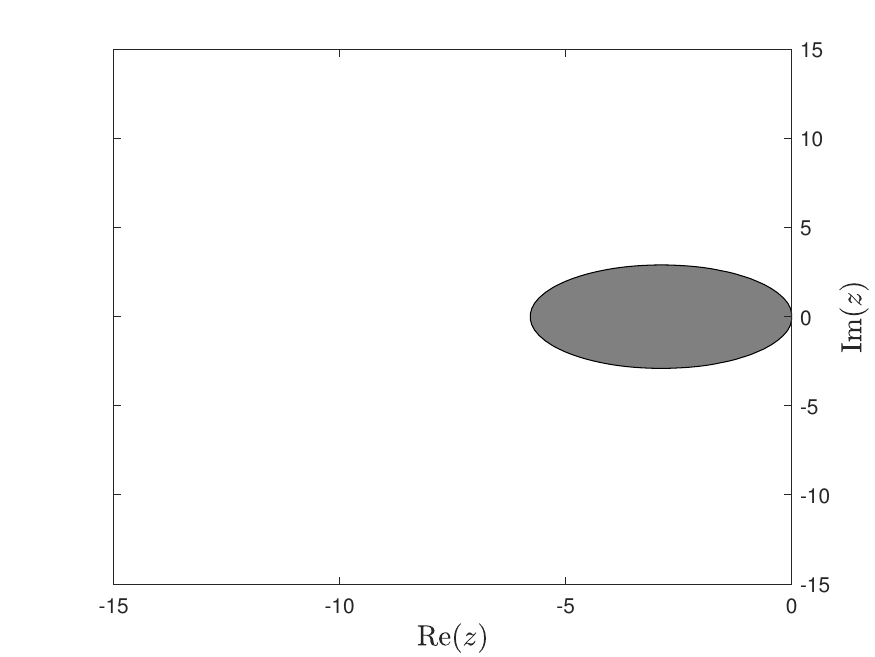}
		\subcaption{$(\alpha,\beta)=(0.2,3.5)$}
	\end{subfigure}
	\begin{subfigure}[t]{0.495\textwidth}
		\includegraphics[width=\textwidth]{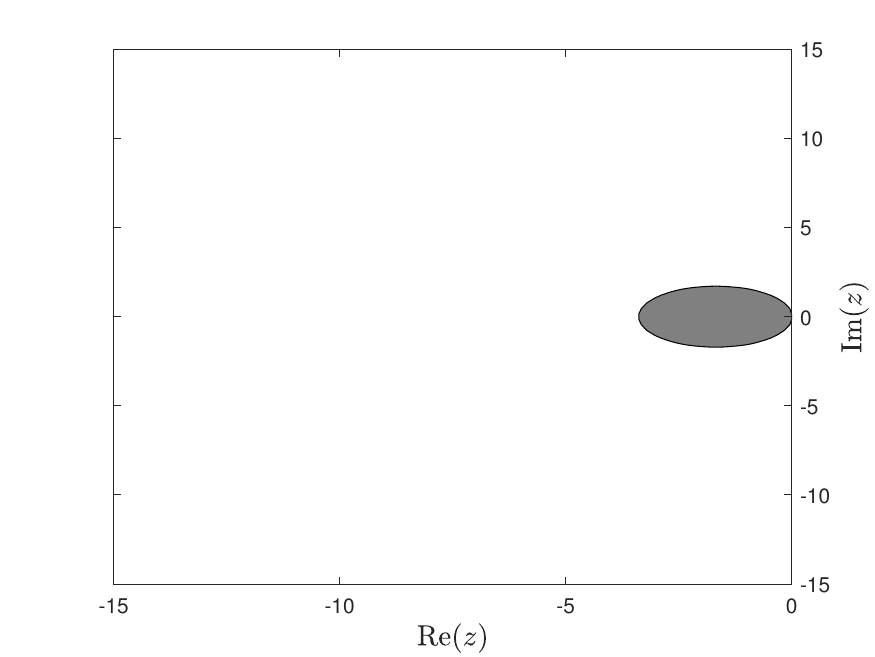}
		\subcaption{$(\alpha,\beta)=(0.24,3.5)$}
	\end{subfigure}
	\caption{Different stability domains of the SSPMPRK2($\alpha,\beta$) method are plotted for $(\alpha,\beta)$ associated with the corners of the red rectangular from  Figure \ref{Fig:alphabetagraph}.}\label{Fig:StabregionSSPMPRK2}
\end{figure}
\paragraph{SSPMPRK3($\eta_2)$}

As the first step, we apply \eqref{eq:SSPMPRK3} to the linear test problem \eqref{eq:PDS_Sys}, assuming conservativity, and rewrite it in the
matrix-vector notation. For this, we again make use of equation \eqref{eq:-sum(dij)} and the fact that the production and destruction terms are linear, which results in
{\allowdisplaybreaks
	\begin{align}
		\b 0=&\bm \Phi_1(\b y^n,\b y^{(1)}) = \alpha_{10}\b y^n+\beta_{10} \dt\bA\b y^{(1)}-\b y^{(1)},\nonumber\\
		\b 0=&\bm \Phi_{\bm\rho}(\b y^n,\b y^{(1)},\bm \rho) = n_1\b y^{(1)}+n_2(\diag(\b y^{(1)}))^{2}(\diag(\b y^{n}))^{-1}\bm 1-\bm \rho,\nonumber\\
		\b 0=&\bm \Phi_2(\b y^n,\b y^{(1)},\bm\rho,\b y^{(2)}) = \alpha_{20}\b y^n+\alpha_{21}\b y^{(1)}\nonumber\\&+\dt\bA\diag(\b y^{(2)})(\diag(\bm \rho))^{-1}(\beta_{20}\b y^n+\beta_{21}\b y^{(1)})-\b y^{(2)},\nonumber\\
		\b 0=&\bm \Phi_{\bm\gamma}(\b y^n,\b y^{(1)},\bm\gamma) = \eta_1\b y^n+\eta_2\b y^{(1)}\nonumber\\ &+\dt\bA\diag(\bm\gamma)(\diag(\b y^n))^{s-1}(\diag(\b y^{(1)}))^{-s}(\eta_{3}\b y^n+\eta_{4}\b y^{(1)})-\bm\gamma,\nonumber\\
		\b 0=&\bm \Phi_{\bm \sigma}(\b y^n,\b y^{(2)},\bm \rho,\bm\gamma,\bm \sigma) = \bm\gamma+\zeta(\diag(\b y^{(2)}))(\diag(\bm \rho))^{-1}\b y^n-\bm \sigma,\nonumber\\
		\b 0=&\bm \Phi_{n+1}(\b y^n,\b y^{(1)},\bm\rho,\b y^{(2)},\b y^{n+1})= \alpha_{30}\b y^n+\alpha_{31}\b y^{(1)}\nonumber\\&+\alpha_{32}\b y^{(2)}+\dt\bA\diag(\b y^{n+1})(\diag(\bm \sigma)^{-1}(\beta_{30}\b y^n+\beta_{31}\b y^{(1)}+\beta_{32}\b y^{(2)})-\b y^{n+1},\label{eq:SSPMPRK3MatrixVector}
\end{align}}
where we omitted to write the arguments as functions of $\b y^n$. Moreover, the parameter $s$ is determined by \cite[Eq. (3.19)]{SSPMPRK3}, also see \cite{repoSSPMPRK} for the details of the computation. 

Now, we could  introduce  three stages $\b y^{(3)}$, $\b y^{(4)}$ and $\b y^{(5)}$ for quantities $\bm\rho$, $\bm\gamma $ and $\bm \sigma$ in order to keep the notation from \eqref{eq:FormularJacobian} as we did for MPRK43 schemes. However, this might be more confusing at this point. We instead introduce auxiliary Jacobians $\b D_{\bm\sigma}$ etc.\ in the same manner as for \eqref{eq:FormularJacobian}.

We want to point out that all functions from above are $\mathcal C^2$-maps for positive arguments. Thus, the map $\b g$, which is determined by solving linear systems, is in $\mathcal C^2$. Due to Remark \ref{rem:C2->C1}, the first derivatives are Lipschitz continuous for a sufficiently small neighborhood of $\b y^*$. 

Also, we can prove $\b g(\b v)=\b v$ for all $\b v\in \ker(\bA)\cap \R^N_{>0}$ as follows. We know that $\bm \Phi_1(\b y^*,\b y^*)=\b 0$, and hence,  $\b y^n=\b y^*$ implies $\b y^{(1)}=\b y^*$  as $\b y^{(1)}$ is uniquely determined by $\b y^n$. Analogously, we conclude $\bm \rho(\b y^*)=\b y^*$ as $n_1+n_2=1$. As a consequence, we conclude from $a_{20}+a_{21}=1$ at double precision that also $\b y^{(2)}(\b y^*)=\b y^*$. However, $\bm\gamma(\b y^*)=(\eta_1+\eta_2)\b y^*$, from which it follows that $\bm \sigma(\b y^*)=(\eta_1+\eta_2)\b y^*+\zeta\b y^*=\b y^*$ since $\eta_1+\eta_2=1-\zeta$. Finally $\b y^{n+1}(\b y^*)=\b g(\b y^*)=\b y^*$ follows because $\sum_{i=0}^2\alpha_{3i}=1$ is true at double precision. 

In the following we use $a_{20}+a_{21}=1$, $\sum_{i=0}^2\alpha_{3i}=1$ and $\alpha_{10}=1$ as well as the values of the functions evaluated at $\b y^*$ without further notice. 

Moreover, we can observe that $\b g$ conserves all linear invariants as follows. First, $\b n^T\bA=\b 0$ implies \[\b n^T\b y^{(1)}=\alpha_{10}\b n^T \b y^n+\beta_{10}\dt\b n^T\bA\b y^{(1)}=\b n^T\b y^n.\]
As a consequence, we obtain
\[\b n^T\b y^{(2)}=\alpha_{20}\b n^T \b y^n+\alpha_{21}\b n^T\b y^{(1)}+\b 0=(\alpha_{20}+\alpha_{21})\b n^T\b y^n=\b n^T\b y^n.\]
Altogether, we find that $\b g$ is linear invariants preserving due to
\[\b n^T\b g(\b y^n)=\b n^T\b y^{n+1}=\sum_{i=0}^2\alpha_{3i}\b n^T\b y^n+\b 0=\b n^T\b y^n.\] Hence, also in the third order case, the map $\b g$ satisfies all conditions for applying Theorem \ref{Thm:_Asym_und_Instabil} and Theorem \ref{Thm_MPRK_stabil}. Therefore, we are now interested in computing the Jacobian of $\b g$, which can be done by using the same techniques as for the second order SSPMPRK scheme. Since we use a slightly different notation, let us recall the formula for $\b D\b g(\b y^*)$.
Using the chain rule for the last equation of \eqref{eq:SSPMPRK3MatrixVector} and solving for $\b D\b g(\b y^*)$ yield
\begin{equation}\label{eq:Dg(y*)_Formula_SSP3}
	\begin{aligned}
		\b D\b g(\b y^*)=-(\b D^*_{n+1}\bm \Phi_{n+1})^{-1}(&\b D^*_n\bm \Phi_{n+1}+\b D^*_1\bm \Phi_{n+1}\b D^*\b y^{(1)}+\b D^*_2\bm \Phi_{n+1}\b D^*\b y^{(2)}
		\\&+\b D^*_{\sigma}\bm \Phi_{n+1}\b D^*\bm \sigma),
	\end{aligned}
\end{equation}
if $(\b D^*_{n+1}\bm \Phi_{n+1})^{-1}$ exists. Hence, we need formulae for $\b D^*\b y^{(1)}, \b D^*\b y^{(2)}$ and $\b D^*\bm \sigma$. We use the same strategies as for the second order scheme and obtain by means of the chain rule of the corresponding equation in \eqref{eq:SSPMPRK3MatrixVector} the formulae
\begin{equation}\label{eq:Dy-Dsigma}
	\begin{aligned}
		\b D^*\b y^{(1)}&=-(\b D_1^*\bm\Phi_1)^{-1}\b D_n^*\bm\Phi_1,\\
		\b D^*\b y^{(2)}&=-(\b D^*_2\bm \Phi_2)^{-1}(\b D^*_n\bm \Phi_2+\b D^*_1\bm \Phi_2\b D^*\b y^{(1)}+\b D^*_{\rho}\bm \Phi_2\b D^*\bm\rho),\\
		\b D^*\bm\sigma&=-(\b D^*_{\sigma}\bm \Phi_{\bm \sigma})^{-1}(\b D^*_n\bm \Phi_{\bm \sigma}+\b D^*_2\bm \Phi_{\bm \sigma}\b D^*\b y^{(2)}+\b D^*_{\rho}\bm \Phi_{\bm \sigma}\b D^*\bm\rho+\b D^*_{\bm \gamma}\bm \Phi_{\bm \sigma}\b D^*\bm\gamma),
	\end{aligned}
\end{equation}
provided that the inverses exist. However, to compute the last two Jacobians, we now require to have knowledge about $\b D^*\bm \rho$ and $\b D^*\bm\gamma$. These Jacobians can be obtained by
\begin{equation}\label{eq:Drho,Da}
	\begin{aligned}
		\b D^*\bm \rho&=-(\b D_{\bm\rho}^*\Phi_{\bm\rho})^{-1}(\b D^*_n\bm \Phi_{\bm\rho}+\b D^*_1\bm \Phi_{\bm\rho}\b D^*\b y^{(1)}),\\
		\b D^*\bm\gamma&=-(\b D^*_{\bm\gamma}\bm \Phi_{\bm\gamma})^{-1}(\b D^*_n\bm \Phi_{\bm\gamma}+\b D^*_1\bm \Phi_{\bm\gamma}\b D^*\b y^{(1)}),
	\end{aligned}
\end{equation}
if the expressions are defined. Starting off with the calculation of $\b D^*\b y^{(1)}$, we obtain
\begin{equation*}
	\begin{aligned}
		\b D^*_n\bm \Phi_1=\alpha_{10}\b I, \quad \b D^*_1\bm \Phi_1=\beta_{10} \dt\bA-\b I.
	\end{aligned}
\end{equation*}
Since $\beta_{10}>0$ we can use \eqref{eq:Dy-Dsigma} to conclude that
\begin{equation*}
	\b D^*\b y^{(1)}=-(\beta_{10} \dt\bA-\b I)^{-1}\cdot \alpha_{10}\b I=(\b I-\beta_{10} \dt\bA)^{-1}
\end{equation*}
is defined.
Next we focus on $\b D^*\bm \rho$ so that we can compute $\b D^*\b y^{(2)}$ afterwards. For this, we use again that diagonal matrices commute and that $\diag(\b v)\b w=\diag(\b w)\b v$ holds. Hence, we find
\begin{equation*}
	\begin{aligned}
		\b D^*_n\bm \Phi_{\bm\rho}&=-n_2\b I, \quad \b D^*_1\bm \Phi_{\bm\rho}=(n_1+2n_2)\b I, \quad \b D_{\bm\rho}^*\Phi_{\bm\rho}=-\b I,
	\end{aligned}
\end{equation*}
and due to \eqref{eq:Drho,Da},
\begin{equation*}
	\b D^*\bm \rho=-n_2\b I+(n_1+2n_2)(\b I-\beta_{10} \dt\bA)^{-1}.
\end{equation*}
The computation of the following Jacobians requires the same technique as described in equations \eqref{eq:ProductRuleDiag} and \eqref{eq:trick}, from which we get
\begin{equation*}
	\begin{aligned}
		\b D^*_n\bm \Phi_2&=\alpha_{20}\b I+\beta_{20}\dt\bA, & \b D^*_1\bm \Phi_2&=\alpha_{21} \b I+\beta_{21}\dt\bA,\\ \b D^*_{\rho}\bm \Phi_2&=-(\beta_{20}+\beta_{21})\dt\bA, & \b D^*_2\bm \Phi_2&=(\beta_{20}+\beta_{21}) \dt\bA-\b I,
	\end{aligned}
\end{equation*}
respectively.
Since $\beta_{20}+\beta_{21}>0$ the inverse of $\b D^*_2\bm \Phi_2$ exists, and thus, $\b D^*\b y^{(2)}$ is formally given by \eqref{eq:Dy-Dsigma}.

Next, we need $\b D^*\bm\gamma$ in order to find $\b D^*\bm \sigma$. Exploiting once again the ideas from \eqref{eq:ProductRuleDiag} and \eqref{eq:trick}, we obtain with $\bm\gamma(\b y^*)=(\eta_1+\eta_2)\b y^*$ the Jacobians
{\allowdisplaybreaks
	\begin{align*}
		\b D^*_n\bm \Phi_{\bm\gamma}&=\eta_1\b I+(\eta_1+\eta_2)\dt\bA((s-1)(\eta_3+\eta_4)+\eta_3),\\
		\b D^*_1\bm \Phi_{\bm \gamma}&=\eta_2 \b I+(\eta_1+\eta_2)\dt\bA(-s(\eta_3+\eta_4)+\eta_4),\\ \b D^*_{\bm\gamma}\bm \Phi_{\bm\gamma}&=(\eta_3+\eta_4) \dt\bA-\b I,
\end{align*}}
where $\b D^*_{\bm \gamma}\bm \Phi_{\bm\gamma}$ is nonsingular since $\eta_3+\eta_4>0$. Hence, with \eqref{eq:Drho,Da} even the Jacobian $\b D^*\bm\gamma$ can be determined.

Computing
\begin{equation*}
	\begin{aligned}
		\b D^*_n\bm \Phi_{\bm \sigma}&=\zeta\b I, \quad \b D^*_2\bm \Phi_{\bm \sigma}=\zeta \b I,\quad \b D^*_{\rho}\bm \Phi_{\bm \sigma}=-\zeta\b I, \quad \b D^*_{\bm\gamma}\bm \Phi_{\bm \sigma}=\b I,\quad \b D^*_{\bm \sigma}\bm \Phi_{\bm \sigma}=-\b I,
	\end{aligned}
\end{equation*}
we are able to obtain $\b D^*\bm \sigma$ from \eqref{eq:Dy-Dsigma}. Finally, the remaining Jacobians are given by
\begin{equation*}
	\begin{aligned}
		\b D^*_n\bm \Phi_{n+1}&=\alpha_{30}\b I+\beta_{30}\dt\bA, & \b D^*_1\bm \Phi_{n+1}&=\alpha_{31} \b I+\beta_{31}\dt\bA,\\
		\b D^*_2\bm \Phi_{n+1}&=\alpha_{32}\b I+\beta_{32}\dt\bA, & \b D^*_{\bm \sigma}\bm \Phi_{n+1}&=-\dt\bA\sum_{i=0}^2\beta_{3i},\\ \b D^*_{n+1}\bm \Phi_{n+1}&= \dt\bA\sum_{i=0}^2\beta_{3i}-\b I
	\end{aligned}
\end{equation*} with $\sum_{i=0}^2\beta_{3i}>0$, so that we are now in the position to compute $\b D\b g(\b y^*)$ using \eqref{eq:Dg(y*)_Formula_SSP3}. As all the matrices occurring within the expressions of the Jacobians above are either the identity matrix $\b I$ or the system matrix $\bA$ from \eqref{eq:PDS_Sys}, the stability function for the third order SSPMPRK scheme can easily be computed by calculating $\b D\b g(\b y^*)$ and substituting $\dt\bA$ by $\dt\lambda=z$, so that we end up with the stability function $R(\dt\lambda)=R(z)$ that reads
\begin{equation}\label{eq:StabfunSSPMPRK3}
	\begin{aligned}
		R(z)=&\frac{1}{1-z\sum_{i=0}^2\beta_{3i}}\Biggl[\alpha_{30}+\beta_{30}z+\frac{\alpha_{31}+\beta_{31}z}{1-\beta_{10}z}+(\alpha_{32}+\beta_{32}z)P(z)\\&-z\sum_{i=0}^2\beta_{3i}\Biggl(\zeta+\zeta P(z)-\zeta\left(\frac{n_1+2n_2}{1-\beta_{10}z}-n_2\right)\\
		&+\frac{1}{1-(\eta_3+\eta_4)z}\Biggl(\eta_1+(\eta_1+\eta_2)z\Bigl((s-1)(\eta_3+\eta_4)+\eta_3\Bigr)
		\\&+\frac{\eta_2+(\eta_1+\eta_2)z\bigl(-s(\eta_3+\eta_4)+\eta_4\bigr)}{1-\beta_{10}z}\Biggr)\Biggr) \Biggr],\\
		P(z)=&\frac{\alpha_{20}+\beta_{20}z+\frac{\alpha_{21}+\beta_{21}z}{1-\beta_{10}z}
			-(\beta_{20}+\beta_{21})z\left(\frac{n_1+2n_2}{1-\beta_{10}z}-n_2\right)}{1-(\beta_{20}+\beta_{21})z}.
	\end{aligned}
\end{equation}
Before a detailed investigation of the stability function $R$, we summarize the above derived results by means of the following proposition.
\begin{prop}\label{Prop:SSPMPRK3_Dg(y*)}
	Let $\b g\from\R^N_{>0}\to\R^N_{>0}$ be the generating map of SSPMPRK3($\eta_2$) when applied to the differential equation \eqref{eq:PDS_Sys} with $\bm 1\in \ker(\bA^T)$.
	Then any $\b y^*\in \ker(\bA)\cap \R^N_{>0}$ is a fixed point of $\b g\in \mathcal{C}^2(\R^N_{>0},\R^N_{>0})$, where the first derivatives of $\b g$ are Lipschitz continuous in an appropriate neighborhood of $\b y^*$. Moreover, all linear invariants are conserved and an eigenvalue $\lambda$ of $\bA$ corresponds to the eigenvalue $R(\dt\lambda)$ of the Jacobian of $\b g$ where $R$ is defined in \eqref{eq:StabfunSSPMPRK3}
	and the parameters are given in \eqref{eq:SSPMPRK3Parameters}.
\end{prop}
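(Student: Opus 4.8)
The plan is to organize the argument into the four assertions of the statement — fixed-point preservation, $\mathcal C^2$-regularity with locally Lipschitz first derivatives, conservation of linear invariants, and the spectral correspondence $\lambda\mapsto R(\dt\lambda)$ — following exactly the derivation carried out in the paragraphs preceding the proposition. The whole computation has in effect already been performed in the narrative, so the proof is essentially a matter of collecting those steps and justifying the invertibility assumptions used along the way.

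First I would rewrite SSPMPRK3 in the matrix-vector form \eqref{eq:SSPMPRK3MatrixVector}. Under the conservativity assumption $\bm 1\in\ker(\bA^T)$ the production and destruction terms are linear, $p_{ij}(\b y)=\lambda_{ij}y_j$ for $i\neq j$, and the relation \eqref{eq:-sum(dij)} lets me absorb the diagonal into $\bA$, so that every stage and the update are expressed through $\bA$, the diagonal matrices $\diag(\cdot)$, and their real powers. Each defining map $\bm\Phi_1,\bm\Phi_{\bm\rho},\bm\Phi_2,\bm\Phi_{\bm\gamma},\bm\Phi_{\bm\sigma},\bm\Phi_{n+1}$ is then a composition of smooth operations on strictly positive arguments, hence $\mathcal C^2$ on $\R^N_{>0}$. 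Since each stage is obtained by solving a linear system whose matrix is invertible for $\b y^n>\b 0$ (as in the MPRK case), the implicit function theorem yields $\b g\in\mathcal C^2$, and Remark~\ref{rem:C2->C1} then provides locally Lipschitz first derivatives on a neighborhood $\mathcal D$ of $\b y^*$.

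For the fixed-point property I would substitute $\b y^n=\b y^*\in\ker(\bA)$ and propagate through the stages using $\bA\b y^*=\b 0$ together with the consistency relations $\alpha_{10}=1$, $n_1+n_2=1$, $\alpha_{20}+\alpha_{21}=1$, $\eta_1+\eta_2=1-\zeta$ and $\sum_{i=0}^2\alpha_{3i}=1$ from \eqref{eq:SSPMPRK3Parameters}. These give successively $\b y^{(1)}=\bm\rho=\b y^{(2)}=\b y^*$, then $\bm\gamma=(\eta_1+\eta_2)\b y^*$ and $\bm\sigma=(\eta_1+\eta_2+\zeta)\b y^*=\b y^*$, and finally $\b g(\b y^*)=\b y^*$; uniqueness of the linear solves guarantees this is the only solution. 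The conservation of linear invariants follows analogously: for $\b n^T\bA=\b 0$ every $\dt\bA$-term is annihilated by $\b n^T$, so the same consistency relations yield $\b n^T\b y^{(1)}=\b n^T\b y^{(2)}=\b n^T\b y^{n+1}=\b n^T\b y^n$.

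Finally, for the spectral statement I would assemble $\b D\b g(\b y^*)$ from \eqref{eq:Dg(y*)_Formula_SSP3}, \eqref{eq:Dy-Dsigma} and \eqref{eq:Drho,Da} after computing each auxiliary Jacobian at the steady state. The crucial simplification is that all stages coincide with $\b y^*$ there, so differentiating the Patankar-denominator terms via the product rule \eqref{eq:ProductRuleDiag} and the rewriting $\diag(\b v)\b w=\diag(\b w)\b v$ from \eqref{eq:trick} collapses each $\b D_k^*\bm\Phi_\bullet$ into a scalar multiple of $\b I$ plus a scalar multiple of $\dt\bA$. Consequently $\b D\b g(\b y^*)$ is a rational function of $\bA$ alone, every eigenvector of $\bA$ is an eigenvector of $\b D\b g(\b y^*)$, and the eigenvalue $\lambda$ of $\bA$ maps to $R(\dt\lambda)$ with $R$ as in \eqref{eq:StabfunSSPMPRK3}, obtained by the formal substitution $\dt\bA\mapsto z$. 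The main obstacle is the organization and bookkeeping of this Jacobian assembly — in particular handling the derivatives of the $s$-th power denominators correctly and verifying the invertibility of $\b D_{n+1}^*\bm\Phi_{n+1}$ and of the intermediate matrices, which holds because $\sum_{i=0}^2\beta_{3i}>0$, $\eta_3+\eta_4>0$, $\beta_{20}+\beta_{21}>0$, $\beta_{10}>0$ and $\sigma(\bA)\subseteq\Cminus$ — rather than any conceptual difficulty.
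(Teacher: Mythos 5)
Your proposal is correct and follows essentially the same route as the paper: the paper's justification of Proposition~\ref{Prop:SSPMPRK3_Dg(y*)} is precisely the narrative derivation preceding it — matrix-vector rewriting via \eqref{eq:-sum(dij)}, $\mathcal C^2$-regularity of the $\bm\Phi$-maps plus Remark~\ref{rem:C2->C1}, fixed-point and invariant preservation from the consistency relations $\alpha_{10}=1$, $n_1+n_2=1$, $\alpha_{20}+\alpha_{21}=1$, $\eta_1+\eta_2=1-\zeta$, $\sum_{i=0}^2\alpha_{3i}=1$, and assembly of $\b D\b g(\b y^*)$ from \eqref{eq:Dg(y*)_Formula_SSP3}, \eqref{eq:Dy-Dsigma}, \eqref{eq:Drho,Da} with the invertibility conditions $\beta_{10},\,\beta_{20}+\beta_{21},\,\eta_3+\eta_4,\,\sum_{i=0}^2\beta_{3i}>0$ and $\sigma(\bA)\tm\Cminus$, followed by the substitution $\dt\bA\mapsto z$. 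The only cosmetic difference is that the paper obtains $\b g\in\mathcal C^2$ directly from the linear solves rather than invoking the implicit function theorem, and it notes that some consistency relations hold only at double precision.
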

Next, we will prove that the third order SSPMPRK scheme possesses stable fixed points for all $\eta_2\in[0,r_1]$ when applied to the test equation.
\begin{prop}
	The stability function $R(z)$ of the third order SSPMPRK scheme satisfies $R(0)=1$ and $\lvert R(z)\rvert <1$ for all $z\in \Cminus\setminus\{0\}$ up to double precision.
\end{prop}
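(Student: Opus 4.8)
The plan is to follow the same blueprint already used for \eqref{eq:R_MPRK}, \eqref{eq:Stab_fun_MPRK43(gamma)} and \eqref{eq:StabfunSSPMPRK2}: reduce the bound on all of $\Cminus$ to a bound on the imaginary axis by means of the Phragmén--Lindelöf principle from Remark~\ref{rem:Phragmen}, and reduce the latter to the sign of a single real polynomial via Lemma~\ref{lem:stabconditionR(rexp(phi))}. First I would clear denominators in \eqref{eq:StabfunSSPMPRK3} to write $R(z)=\frac{N(z)}{D(z)}$ with polynomials $N,D$ whose coefficients are the (floating point) combinations of the parameters from \eqref{eq:SSPMPRK3Parameters} and the free parameter $\eta_2\in[0,r_1]$. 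Evaluating at $z=0$ and using the relations $\alpha_{20}+\alpha_{21}=1$, $\sum_{i=0}^2\alpha_{3i}=1$, $n_1+n_2=1$ and $\eta_1+\eta_2=1-\zeta$, which hold up to double precision, gives $P(0)=1$ and hence $R(0)=1$; this is precisely the source of the qualifier \emph{up to double precision}, since these identities are only satisfied numerically.

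Next I would restrict to the imaginary axis. Since $R$ is rational, $\lvert N(\ii r)\rvert^2-\lvert D(\ii r)\rvert^2$ is a polynomial in $r$ containing only even powers, and by Lemma~\ref{lem:stabconditionR(rexp(phi))} with $\varphi=\tfrac\pi2$ the inequality $\lvert R(\ii r)\rvert<1$ is equivalent to $p_{\frac\pi2}(r)<0$. The task is therefore to verify $p_{\frac\pi2}(r)<0$ for all $r>0$. Because the coefficients depend on $\eta_2$, I would treat $\eta_2$ either by exploiting that the combinations $\eta_1+\eta_2=r_1$, $\alpha_{20}+\alpha_{21}$ and $\sum_{i=0}^2\beta_{3i}$ entering $R$ are constant in $\eta_2$, or---mirroring the grid argument used for MPRK43($\alpha,\beta$)---by sampling $\eta_2\in[0,r_1]$ and, for each sample, applying Sturm's Theorem \cite[Theorem 8.8.14]{C03} to confirm that $p_{\frac\pi2}$ has no positive root, together with a sign check (negativity of the leading coefficient, so that $\lim_{r\to\infty}p_{\frac\pi2}(r)=-\infty$, and negativity at one interior point) to upgrade \emph{no positive root} to \emph{strictly negative on} $(0,\infty)$.

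To invoke Remark~\ref{rem:Phragmen} I must also confirm that $\deg N\le\deg D$ and that $D(z)\neq 0$ on $\Cminus$. The latter holds because, before any cancellation, the poles of \eqref{eq:StabfunSSPMPRK3} sit at $z=1/\beta_{10}$, $z=1/(\beta_{20}+\beta_{21})$, $z=1/(\eta_3+\eta_4)$ and $z=1/\sum_{i=0}^2\beta_{3i}$, all of which are positive reals since each denominator sum is positive, so every pole lies strictly in the open right half-plane. With $\lvert R(\ii r)\rvert\le 1$ established on the imaginary axis and $R$ holomorphic and bounded on $\Cminus$, Remark~\ref{rem:Phragmen} yields $\lvert R(z)\rvert\le 1$ throughout $\Cminus$, and the maximum modulus principle then forces $\lvert R(z)\rvert<1$ for every $z$ with $\re(z)<0$, which together with the imaginary-axis bound completes the claim.

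The hard part will be the symbolic derivation of $N$ and $D$ from the deeply nested expression \eqref{eq:StabfunSSPMPRK3} and the subsequent control of $p_{\frac\pi2}$ across the entire admissible range of $\eta_2$: unlike the clean closed forms for MPE, MPRK22($\alpha$) and MPRK43($\gamma$), here the polynomial coefficients are opaque floating point numbers, so the argument is necessarily computer-assisted and its rigor rests on Sturm's Theorem applied to a finite but large family of sampled parameters, which is exactly why the conclusion is only asserted up to double precision.
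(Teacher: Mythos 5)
Your skeleton matches the paper's proof exactly: $R(0)=1$ from the double-precision parameter identities, absence of poles in $\Cminus$ from positivity of $\beta_{10}$, $\beta_{20}+\beta_{21}$, $\eta_3+\eta_4$ and $\sum_{i=0}^2\beta_{3i}$, reduction to the imaginary axis via Lemma~\ref{lem:stabconditionR(rexp(phi))} with $\varphi=\tfrac\pi2$, and the final lift to $\Cminus$ via Remark~\ref{rem:Phragmen}. The genuine gap is in your treatment of $\eta_2$. Your first branch --- arguing that the combinations entering $R$ are constant in $\eta_2$ --- is based on a false premise: $\eta_1$ and $\eta_2$ enter \eqref{eq:StabfunSSPMPRK3} separately (not only through $\eta_1+\eta_2$), and $\eta_3$ is an affine function of $\eta_2$, so the coefficients $n_j,d_j$ of $R$ are genuinely nonconstant rational functions of $\eta_2$. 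Your fallback --- sampling $\eta_2\in[0,r_1]$ and running Sturm's Theorem on each sample --- establishes the inequality only at the finitely many sampled parameters; it cannot prove the proposition, which (together with Corollary~\ref{Cor:SSPMPRK3stab}) asserts unconditional stability for \emph{every} $\eta_2\in[0,r_1]$. That would demote the result to the status of the numerical $\theta_{\text{num}}$ study done for MPRK43($\alpha,\beta$), which is precisely what the paper avoids here.

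The paper closes this gap by keeping $\eta_2$ symbolic throughout: it computes $n_j(\eta_2)$ and $d_j(\eta_2)$ as explicit rational functions of $\eta_2$ (Maple, $20$ digits), forms $p_{\frac\pi2}(r)$ whose constant term vanishes since $n_0=1$ at double precision, and observes that for $\varphi=\tfrac\pi2$ only the even-power coefficients $c_8,c_6,c_4,c_2$ survive. It then proves each nonzero $c_k(\eta_2)$ is negative \emph{uniformly} on $[0,r_1]$ by an elementary dominance argument: every positive term in the numerators carries a power of $\eta_2<\tfrac12$ and is therefore strictly smaller in magnitude than the accompanying negative constant, even after perturbing all rounded quantities by $10^{-2}$. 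Coefficient-wise negativity immediately gives $p_{\frac\pi2}(r)<0$ for all $r>0$ and all admissible $\eta_2$, with no root isolation, no sampling, and no Sturm sequences needed. If you adopt that uniform coefficient estimate in place of your sampling step, your proof goes through as written.
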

\begin{proof}
	It is straightforward to see that $R(0)=\alpha_{30}+\alpha_{31}+\alpha_{32}(\alpha_{20}+\alpha_{21})$ holds true. Up to double precision, we obtain $\alpha_{20}+\alpha_{21}=1$ and $\alpha_{30}+\alpha_{31}+\alpha_{32}=1$, so that $R(0)=1$. Also, as $\alpha_{ij},\beta_{ij},\eta_3+\eta_4>0$, see \eqref{eq:SSPMPRK3Parameters}, no poles of $R$ are located in $\Cminus$.
	Furthermore, by a technical calculation we can rewrite $R$ to receive
	\begin{equation*}
		R(z)=\frac{\sum_{j=0}^4n_jz^j}{\sum_{j=0}^4d_jz^j},
	\end{equation*}
	where, for $\eta_2\in [0,r_1]\tm [0,\frac12)$ the coefficients are given by 
	\begin{equation*}
		\begin{aligned}
			n_0=&\frac{0.47620819268131705757\eta_2 - 1.0537480911094115481}{0.47620819268131703\eta_2 - 1.0537480911094114871},\\
			n_1=&\frac{-3.1507612671062001337\eta_2 + 3.9798736646158920698}{0.47620819268131703\eta_2 - 1.0537480911094114871}\\
			&+\frac{0.61107641837494959323\eta_2^2}{0.47620819268131703\eta_2 - 1.0537480911094114871},\\
			n_2=&\frac{2.4343280828365809236\eta_2 - 2.5818776483048969774}{0.47620819268131703\eta_2 - 1.0537480911094114871}\\
			&+\frac{-0.57282016379130601724\eta_2^2}{0.47620819268131703\eta_2 - 1.0537480911094114871},\\
			n_3=&\frac{0.6536869584417787153\eta_2 - 0.81355615989342266462}{0.47620819268131703\eta_2 - 1.0537480911094114871}\\
			&+\frac{ - 0.1292603911580354457\eta_2^2}{0.47620819268131703\eta_2 - 1.0537480911094114871},\\
			n_4=&\frac{-0.59499575916146815582\eta_2 + 0.63887056198975790458 }{0.47620819268131703\eta_2 - 1.0537480911094114871}\\
			&+\frac{0.1384128438067575936\eta_2^2}{0.47620819268131703\eta_2 - 1.0537480911094114871},\\
			d_0=&1,\\
			d_1=&-4.7768739020212929733 + 1.2832127371313151768\eta_2,\\
			d_2=&6.7270587897458664634 - 2.4860903284764154151\eta_2,\\
			d_3=&-3.7332290665687486456 + 1.5730472371819288192\eta_2,\\
			d_4=& 0.71670702950202557445-0.32389312216150656420\eta_2,
		\end{aligned}
	\end{equation*}
	where $n_0=1$ at double precision, see \cite{repoSSPMPRK}. We want to mention here, that these values were computed with \emph{Maple~2021} and $\operatorname{Digits}=20$, which means that 20 digits were used when making calculations with software floating-point numbers.
	
	We investigate the polynomial $p_{\frac\pi2}(r)$ from Lemma~\ref{lem:stabconditionR(rexp(phi))} with $\eta_2$ being a parameter.
	At double precision, we obtain $n_0=1$, so that $n_0^2-1=0$, \ie $p_{\frac\pi2}(0)=0$. Next, our strategy is to prove that all nonzero coefficients of $r^k$, in the following denoted by $c_k$ are negative.
	
	For $\eta_2\leq r_1< \frac12$, it suffices for our argument to round to three decimal places in the following expressions, which can be reproduced using the Maple repository \cite{repoSSPMPRK} and read
	{\allowdisplaybreaks
		\begin{align*}
			c_8&\approx\frac{4.410(-0.168\eta_2^2+0.271\eta_2+0.046\eta_2^3-0.162-0.005\eta_2^4)}{(\eta_2-2.213)^2},\\
			c_6&\approx\frac{4.410(-0.790\eta_2^2 + 1.310\eta_2+0.210\eta_2^3 - 0.808 - 0.021\eta_2^4)}{(\eta_2 - 2.213)^2},\\
			c_4&\approx\frac{4.410(0.556\eta_2 - 0.442 + 0.032\eta_2^3 - 0.232\eta_2^2)}{(\eta_2 - 2.231)^2},\\
			10^{14}c_2&\approx\frac{\eta_2(\eta_2 - 1 - 0.2\eta_2^2 + 0.03\eta_2^3)}{(0.476\eta_2 - 1.054)^2}.
	\end{align*}}
	First of all, the denominators occurring in any of the above $c_k$ are positive. Also, positive terms in the numerator are multiplied with powers of $\eta_2<\frac12$ and thus are smaller than the absolute value of the corresponding constant, which is always negative. This holds true even if the rounding error is taken into account, i.\,e.\ after adding $10^{-2}$ to positive terms and subtracting it from negative expressions. This proves that $c_k<0$, and thus, $\abs{R(\ii y)}<1$ for all $y\in \R\setminus\{0\}$.
	
	Finally, we can conclude even $\lvert R(z)\rvert<1$ for all $z\in \overline{\C^-}\setminus\{0\}$ by  means of Remark~\ref{rem:Phragmen}.	
\end{proof}
As an immediate consequence of this proposition in combination with Theorem~\ref{Thm:_Asym_und_Instabil} and Theorem~\ref{Thm_MPRK_stabil}, we obtain the following results.
\begin{cor}\label{Cor:SSPMPRK3stab}
	The SSPMPRK3($\eta_2$) scheme is unconditionally stable for all $\eta_2\in [0,r_1]$, where $r_1\approx0.37$.
\end{cor}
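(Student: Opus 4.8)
The plan is to obtain Corollary~\ref{Cor:SSPMPRK3stab} as a direct application of Theorem~\ref{Thm_MPRK_stabil}\ref{it:Thma}, resting entirely on the groundwork already laid in Proposition~\ref{Prop:SSPMPRK3_Dg(y*)} and on the spectral bound on $R$ just established. First I would collect from Proposition~\ref{Prop:SSPMPRK3_Dg(y*)} that, for the conservative linear system \eqref{eq:PDS_Sys} with $\b 1\in\ker(\bA^T)$, the generating map $\b g$ of SSPMPRK3($\eta_2$) is a $\mathcal C^2$-map whose first derivatives are Lipschitz continuous in a neighborhood of any positive steady state $\b y^*\in\ker(\bA)\cap\R^N_{>0}$, that every such $\b y^*$ is a fixed point of $\b g$, that all linear invariants are conserved, and that $\sigma(\b D\b g(\b y^*))=\{R(\dt\lambda)\mid\lambda\in\sigma(\bA)\}$. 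These are exactly the regularity and fixed-point hypotheses needed to invoke Theorem~\ref{Thm_MPRK_stabil}, so the only remaining task is to check the eigenvalue condition of part~\ref{it:Thma}.

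Next I would split $\sigma(\bA)$ into the eigenvalue $0$ and the nonzero eigenvalues. Because $\mu_\bA(0)=\gamma_\bA(0)=k$, the value $\lambda=0$ contributes $R(0)=1$ to $\sigma(\b D\b g(\b y^*))$ with multiplicity $k$, in agreement with the $k$-fold eigenvalue $1$ already guaranteed by Theorem~\ref{Thm_MPRK_stabil} from the subspace $\ker(\bA)$ of fixed points. For the remaining $N-k$ eigenvalues I would invoke Remark~\ref{rem:Aneg}: since $\bA$ is a proper Metzler matrix, every nonzero $\lambda\in\sigma(\bA)$ satisfies $\re(\lambda)<0$. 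Hence, for an arbitrary but fixed $\dt>0$, the point $z=\dt\lambda$ lies in $\C^-$, and the just-proven estimate $\lvert R(z)\rvert<1$ for all $z\in\Cminus\setminus\{0\}$ gives $\lvert R(\dt\lambda)\rvert<1$. Thus all $N-k$ remaining eigenvalues of $\b D\b g(\b y^*)$ have modulus strictly less than one.

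With this in hand, Theorem~\ref{Thm_MPRK_stabil}\ref{it:Thma} applies and yields that $\b y^*$ is a Lyapunov stable fixed point of $\b g$. Since the reasoning is valid for every $\dt>0$ and every positive steady state $\b y^*\in\ker(\bA)\cap\R^N_{>0}$, the constant $c$ appearing in Definition~\ref{Def:uncondstab} may be chosen arbitrarily large, so the scheme is unconditionally stable for each admissible $\eta_2\in[0,r_1]$. (The hyperbolic situation $k=0$, in which the origin is the unique asymptotically stable steady state, is covered separately by Theorem~\ref{Thm:_Asym_und_Instabil} together with Remark~\ref{rem:Aneg}.)

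As for the main obstacle, essentially all of the analytic difficulty has already been absorbed into the preceding proposition, where $\lvert R(z)\rvert<1$ on $\Cminus\setminus\{0\}$ was proven through the sign analysis of the polynomial $p_{\frac\pi2}$ from Lemma~\ref{lem:stabconditionR(rexp(phi))} and the Phragm\'en--Lindel\"of argument of Remark~\ref{rem:Phragmen}. Consequently the corollary itself requires no further computation; the single point demanding care is the bookkeeping of multiplicities---pairing the $k$-fold eigenvalue $1$ arising from $\ker(\bA)$ with the eigenvalue $0$ of $\bA$---and observing that the \emph{strict} inequality $\re(\lambda)<0$ for nonzero $\lambda$ is what upgrades conditional to unconditional stability, since it forces $\dt\lambda\in\C^-$ for every $\dt>0$ rather than only for small step sizes.
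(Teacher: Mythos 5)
Your proposal is correct and follows essentially the same route as the paper, which derives this corollary as an immediate consequence of Proposition~\ref{Prop:SSPMPRK3_Dg(y*)}, the bound $\lvert R(z)\rvert<1$ on $\Cminus\setminus\{0\}$, Remark~\ref{rem:Aneg}, and Theorem~\ref{Thm_MPRK_stabil} (with Theorem~\ref{Thm:_Asym_und_Instabil} covering the hyperbolic case), exactly as you lay out. The only nuance you omit is that the spectral bound for SSPMPRK3 was established only up to double precision, a caveat the paper carries in its proposition but drops in the corollary as well.
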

\begin{cor}\label{Cor:SSPMPRK3stab1}
	Let $\b y^*$ be the unique steady state of the initial value problem \eqref{eq:PDS_Sys}, \eqref{eq:IC} with $\b 1\in \ker(\bA^T)$. Then there exists a $\delta >0$ such that $\Vert\b y^0-\b y^*\Vert<\delta$ implies the convergence of the iterates of of SSPMPRK3($\eta_2$) towards $\b y^*$ for all $\dt>0$ and $\eta_2\in [0,r_1]$.
\end{cor}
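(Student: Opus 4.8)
The plan is to follow the same three-step strategy used throughout this section for the stability functions of Patankar-type methods: evaluate $R$ at the origin, reduce the estimate $\lvert R(z)\rvert<1$ on $\Cminus\setminus\{0\}$ to the imaginary axis via the Phragmén--Lindel\"of principle, and finally verify the imaginary-axis bound using the polynomial criterion of Lemma~\ref{lem:stabconditionR(rexp(phi))}. First I would compute $R(0)$ directly from \eqref{eq:StabfunSSPMPRK3}: setting $z=0$ annihilates every term carrying an explicit factor of $z$ and gives $P(0)=\alpha_{20}+\alpha_{21}$, so that $R(0)=\alpha_{30}+\alpha_{31}+\alpha_{32}(\alpha_{20}+\alpha_{21})$. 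Invoking the relations $\alpha_{20}+\alpha_{21}=1$ and $\sum_{i=0}^2\alpha_{3i}=1$, which hold at double precision for the parameter set \eqref{eq:SSPMPRK3Parameters}, yields $R(0)=1$.

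Next I would establish that $R$ has no poles in $\Cminus$. The denominators appearing in \eqref{eq:StabfunSSPMPRK3} are products of the factors $1-\beta_{10}z$, $1-(\beta_{20}+\beta_{21})z$, $1-(\eta_3+\eta_4)z$ and $1-z\sum_{i=0}^2\beta_{3i}$. Since all of $\beta_{10}$, $\beta_{20}+\beta_{21}$, $\eta_3+\eta_4$ and $\sum_{i=0}^2\beta_{3i}$ are positive by \eqref{eq:SSPMPRK3Parameters}, each such factor is nonzero whenever $\re(z)\leq0$. Hence $R$ is holomorphic on $\Cminus$, and after writing it as a single fraction $R(z)=\frac{\sum_{j=0}^4 n_j z^j}{\sum_{j=0}^4 d_j z^j}$ it satisfies $\deg(\text{numerator})\leq\deg(\text{denominator})$. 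By Remark~\ref{rem:Phragmen} it therefore suffices to prove $\lvert R(\ii r)\rvert\leq1$ for all $r\in\R$ with strict inequality for $r\neq0$; real coefficients give the conjugate symmetry $\lvert R(\ii r)\rvert=\lvert R(-\ii r)\rvert$, so only $r>0$ needs attention, and the maximum modulus principle then upgrades the boundary bound to the strict inequality on all of $\C^-$.

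The crux is the imaginary-axis estimate, which I would carry out through Lemma~\ref{lem:stabconditionR(rexp(phi))} with $\varphi=\tfrac\pi2$. The lemma supplies a polynomial $p_{\frac\pi2}(r)$ with $\lvert R(\ii r)\rvert<1\Longleftrightarrow p_{\frac\pi2}(r)<0$; since $n_0=1$ at double precision one has $n_0^2-1=0$, so $p_{\frac\pi2}(0)=0$, matching $R(0)=1$. The goal is then to show that every remaining coefficient $c_k$ of $p_{\frac\pi2}$ is negative for all admissible $\eta_2\in[0,r_1]$. Each $c_k$ is a rational expression in $\eta_2$ whose denominator is a square, hence positive; in the numerator I would exploit the fact that every positive contribution carries a factor $\eta_2<\tfrac12$ (or a higher power of it) and is therefore dominated in absolute value by the negative constant term. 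Padding each positive term by the rounding tolerance $10^{-2}$ and shrinking each negative term by the same amount certifies $c_k<0$ uniformly in $\eta_2$, whence $\lvert R(\ii r)\rvert<1$ for $r\neq0$.

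The main obstacle is precisely this last step. Because the parameters \eqref{eq:SSPMPRK3Parameters} are only available numerically and the coefficients $c_k$ are intricate rational functions of the free parameter $\eta_2$, the negativity claim cannot be read off symbolically and must be certified by a careful rounding argument over the entire range $[0,r_1]$, so that the conclusion is only valid up to double precision. Once $c_k<0$ is secured for every relevant $k$, the bound $\lvert R(z)\rvert<1$ on $\Cminus\setminus\{0\}$ follows immediately from the Phragmén--Lindel\"of argument of Remark~\ref{rem:Phragmen}.
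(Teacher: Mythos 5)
Your proposal proves the wrong statement: everything you outline establishes the \emph{stability function} estimate $R(0)=1$ and $\abs{R(z)}<1$ for $z\in \Cminus\setminus\{0\}$, which is the content of the unnamed proposition immediately preceding this corollary in the paper, but it never returns to the claim actually being made, namely the local convergence of the iterates $\b y^{n+1}=\b g(\b y^n)$ of SSPMPRK3($\eta_2$) towards $\b y^*$. For the technique you describe (origin evaluation, pole check, Lemma~\ref{lem:stabconditionR(rexp(phi))} with $\varphi=\tfrac\pi2$, and Remark~\ref{rem:Phragmen}) your execution matches the paper's proof of that proposition; the gap is that this spectral bound by itself implies nothing about convergence here. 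The fixed point $\b y^*$ is non-hyperbolic — the presence of the linear invariant $\b 1\in\ker(\bA^T)$ forces the eigenvalue $1$ of $\b D\b g(\b y^*)$ with multiplicity at least $k\geq 1$ — and for non-hyperbolic fixed points of a \emph{nonlinear} map, eigenvalue information alone cannot decide stability or attractivity, as Example~\ref{exmp:nonhyper} demonstrates explicitly.

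What is missing is the entire reduction through the center-manifold machinery. Concretely, one must (i) invoke Proposition~\ref{Prop:SSPMPRK3_Dg(y*)}, which supplies the structural facts that every $\b y^*\in\ker(\bA)\cap\R^N_{>0}$ is a fixed point of $\b g$, that $\b g\in\mathcal C^2(\R^N_{>0},\R^N_{>0})$ with Lipschitz continuous first derivatives near $\b y^*$, that $\b g$ conserves all linear invariants, and that $\sigma(\b D\b g(\b y^*))=\{R(\dt\lambda)\mid \lambda\in\sigma(\bA)\}$ with $R$ from \eqref{eq:StabfunSSPMPRK3}; (ii) use Remark~\ref{rem:Aneg} to place every nonzero eigenvalue $\lambda$ of $\bA$ strictly in $\C^-$, so that your bound $\abs{R(\dt\lambda)}<1$ shows the remaining $N-k$ eigenvalues of the Jacobian lie inside the open unit disk for every $\dt>0$; and (iii) apply part b) of Theorem~\ref{Thm_MPRK_stabil}, which is the step that actually produces the $\delta>0$ and the convergence $\b y^n\to\b y^*$ for initial data $\b y^0\in H\cap D$ with $\norm{\b y^0-\b y^*}<\delta$. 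The hypothesis $\b y^0\in H$ of that theorem is met automatically, since $\b y^*$ is by definition the steady state of the IVP \eqref{eq:PDS_Sys}, \eqref{eq:IC} with initial datum $\b y^0$, so both share the same values of all linear invariants; and the linear-invariant preservation of $\b g$ from Proposition~\ref{Prop:SSPMPRK3_Dg(y*)} is exactly what makes part b) (rather than only the stability statement of part a)) applicable. Without steps (i)--(iii) your argument stops at a bound on a rational function and never reaches the iterates of the scheme.
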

\newpage
\subsection{Modified Patankar Deferred Correction}\label{sec:stab_MPDeC}
In this subsection we investigate \eqref{eq:explicit_dec_correction}. Since the index function $\gamma$ depends on the sign of $\theta_r^m$, we introduce the nonnegative part $\theta_{m,+}=\max\{0,\theta_r^m\}$ and nonpositive part $\theta_{m,-}=\min\{0,\theta_r^m\}$. It is worth mentioning that 
\begin{equation*}
	\begin{aligned}
		\theta_{r,\pm}^m=\frac{\theta_r^m\pm\lvert \theta_r^m\rvert}{2}
	\end{aligned}
\end{equation*}
and
\[\theta_r^m=\begin{cases}\theta_{r,-}^m,& \theta_r^m<0,\\
	\theta_{r,+}^m,& \theta_r^m\geq 0\end{cases}\]
as well as $\theta_{r,-}^m+\theta_{r,+}^m=\theta_r^m$.
With that, we split the sum appearing in \eqref{eq:explicit_dec_correction} into two sums containing $\theta_{r,+}^m$ and $\theta_{r,-}^m$, respectively. For the separated sums, we know the value of $\gamma(j,i,\theta_r^m)$ so that we introduce the positive part 
\begin{equation}\label{eq:prk}
	\begin{aligned}
		\b p^{r,(k)}(\b y^{r,(k-1)},\b y^{m,(k-1)},\b y^{m,(k)})
		=\bA\diag(\b y^{m,(k)})\left(\diag(\b y^{m,(k-1)})\right)^{-1} \b y^{r,(k-1)}
	\end{aligned}
\end{equation}
analogously as we did for SSPMPRK, as well as  the negative part $\b n^{r,(k)}$ given by
\begin{equation}\label{eq:nrkpij}
	\begin{aligned}
		n_i^{r,(k)}(\b y^{r,(k-1)},\b y^{m,(k-1)},\b y^{m,(k)})=\sum_{j=1}^N \Biggl(& p_{ij}(\b y^{r,(k-1)})
		\frac{y^{m,(k)}_{i}}{y_{i}^{m,(k-1)}}
		\\&- d_{ij}(\b y^{r,(k-1)})  \frac{y^{m,(k)}_{j}}{y_{j}^{m,(k-1)}} \Biggr)
	\end{aligned}
\end{equation}
for $i=1,\dotsc, N$, $r=0,\dotsc, M$ and $k=1,\dotsc,K$. Using $p_{ij}(\b y)=d_{ji}(\b y)=\lambda_{ij}y_j$ for $i\neq j$ and $p_{ii}(\b y)=d_{ii}(\b y)=0$ this can be rewritten as
\begin{equation}\label{eq:nrk}
	n_i^{r,(k)}(\b y^{r,(k-1)},\b y^{m,(k-1)},\b y^{m,(k)})=	\frac{y^{m,(k)}_{i}}{y_{i}^{m,(k-1)}}\sum_{\substack{j=1\\j\neq i}}^N \lambda_{ij}  y_j^{r,(k-1)}
	- y_i^{r,(k-1)}  \sum_{\substack{j=1\\j\neq i}}^N \lambda_{ji}\frac{y^{m,(k)}_{j}}{y_{j}^{m,(k-1)}}.
\end{equation}
Utilizing these vector fields, the iterates from \eqref{eq:explicit_dec_correction} satisfy
\begin{equation}\label{eq:Phis_MPDeC}
	\begin{aligned}
		\b 0 &=\bm \Phi_k^m(\b y^n,\b y^{1,(k-1)},\dotsc,\b y^{M,(k-1)},\b y^{m,(k)})\\
		\bm \Phi_k^m &=\b y^{m,(k)}-\b y^n-\sum_{r=0}^M \theta_{r,+}^m \dt\b p^{r,(k)}(\b y^{r,(k-1)},\b y^{m,(k-1)},\b y^{m,(k)})\\ &\hphantom{=\b y^{m,(k)}-\b y^n}-\sum_{r=0}^M \theta_{r,-}^m \dt\b n^{r,(k)}(\b y^{r,(k-1)},\b y^{m,(k-1)},\b y^{m,(k)})
	\end{aligned}
\end{equation}
for $k=1,\dotsc,K$ and $m=1,\dotsc,M$. 
Furthermore, analogously to the auxiliary Jacobians introduced in \eqref{eq:jacobians1} and \eqref{eq:jacobians2}, we write $\b D^*_x\bm \Phi_k^m$ to represent the Jacobian with respect to the entries of the vector $\b y^x$ for some $x$, evaluated at $(\b y^*,\dotsc, \b y^{m,(k)}(\b y^*))$. Finally, we introduce similar notations for the auxiliary Jacobians of $\b p^{r,(k)}$ and $\b n^{r,(k)}$ with respect to $\b y^x$. 

Also note that  MPDeC schemes are steady state preserving as plugging in $\b y^{m,(k)}=\b y^n=\b y^*\in \ker(\bA)$ into \eqref{eq:explicit_dec_correction} yields a true statement. Hence, $\b y^{m,(k)}(\b y^*)=\b y^*$ for all $k=1,\dotsc,K$ and $m=1,\dotsc, M$.

The next theorem summarizes further properties of the method and provides us a formula for the computation of $\b D\b g(\b y^*)$.
\begin{thm}\label{thm:mpdec}
	Let $\b g:\R^N_{>0}\to\R^N_{>0}$, implicitly given by the solution of \eqref{eq:Phis_MPDeC}, be the generating map of the MPDeC iterates when applied to \eqref{eq:PDS_Sys} with $\b 1\in \ker(\bA^T)$. Furthermore, let $\b y^*\in \ker(\bA)\cap \R^N_{>0}$ be a steady state of \eqref{eq:PDS_Sys}.
	
	Then, $\b g\in \mathcal C^2$ and the Jacobian of $\b g$ evaluated at $\b y^*$ is given by
	\begin{equation}\label{eq:Dg_MPDeC}
		\begin{aligned}
			\b D\b g(\b y^*)&=\b D^*_{n}\b y^{M,(K)},\\
			\b D^*_{n}\b y^{m,(k)}&=-(\b D^*_{m,(k)}\bm \Phi_k^m)^{-1}\left(\b D^*_{n}\bm\Phi_k^m+(1-\delta_{k1})\sum_{r=1}^M\b D^*_{r,(k-1)}\bm \Phi_k^m\b D^*_{n}\b y^{r,(k-1)}\right)
		\end{aligned}
	\end{equation}
	for $m=1,\dotsc,M$ and $k=1,\dotsc, K$. Thereby, $\delta_{ij}$ is the Kronecker delta and 
	\begin{equation}\label{eq:D*y0Phikm}
		\begin{aligned}
			\b D^*_{n}\bm \Phi^m_k= \begin{cases}
				-\left(\b I+\dt(\bA +\diag(\b y^*)\bA^T(\diag(\b y^*))^{-1}) \sum_{r=0}^M \theta_{r,-}^m\right), &k= 1,\\
				-(\b I+\theta_0^m\dt\bA), &k> 1,
			\end{cases}
		\end{aligned}
	\end{equation}
	as well as 
	\begin{equation}\label{eq:D*l,(s)Phikm}
		\begin{aligned}
			\b D^*_{l,(s)}\bm \Phi^m_k=\begin{cases}
				-\theta_l^m\dt\bA,\\
				\sum_{\substack{r=0}}^M\theta_{r,+}^m\dt\bA-\sum_{\substack{r=0}}^M\theta_{r,-}^m\dt(\diag(\b y^*)\bA^T(\diag(\b y^*))^{-1})-\theta_{m}^m\dt \bA, \\
				\b I-\sum_{\substack{r=0}}^M\theta_{r,+}^m\dt\bA+\sum_{\substack{r=0}}^M\theta_{r,-}^m\dt\diag(\b y^*)\bA^T(\diag(\b y^*))^{-1}, .
			\end{cases}
		\end{aligned}
	\end{equation}
	for
	\[s=\begin{cases}
		k- 1>0,\, 0<l\neq m,\\
		k-1>0,\, l=m,\\
		s=k,\, l=m,
	\end{cases}\] respectively.
\end{thm}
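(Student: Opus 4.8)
The plan is to compute $\b D\b g(\b y^*)$ by the same implicit-function-theorem strategy used for MPRK and SSPMPRK schemes, now applied to the correction-step formulation \eqref{eq:Phis_MPDeC}. First I would establish the regularity claim $\b g\in \mathcal C^2$: since each $\bm \Phi_k^m$ in \eqref{eq:Phis_MPDeC} is built from the bilinear-type vector fields $\b p^{r,(k)}$ and $\b n^{r,(k)}$ in \eqref{eq:prk}, \eqref{eq:nrk}, which are smooth in positive arguments, and since MPDeC requires solving only linear systems at each stage (the matrix multiplying the new iterate $\b y^{m,(k)}$ being an M-matrix analogue, invertible by the positivity arguments underlying Lemma~\ref{lem:MPRKpos}), the implicit function theorem yields that every $\b y^{m,(k)}$ is a $\mathcal C^2$-function of $\b y^n$, and hence so is $\b g=\b y^{M,(K)}$. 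The steady-state preservation $\b y^{m,(k)}(\b y^*)=\b y^*$ is already noted in the excerpt, so all auxiliary Jacobians are evaluated at the constant history $(\b y^*,\dotsc,\b y^*)$.

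Next I would derive the recursion \eqref{eq:Dg_MPDeC} by differentiating the defining relation $\b 0=\bm \Phi_k^m(\b y^n,\b y^{1,(k-1)},\dotsc,\b y^{M,(k-1)},\b y^{m,(k)})$ with respect to $\b y^n$ via the chain rule. Because $\bm \Phi_k^m$ depends on $\b y^n$ directly, on the previous-correction stages $\b y^{r,(k-1)}$ (only when $k>1$, which the factor $(1-\delta_{k1})$ encodes), and on the current unknown $\b y^{m,(k)}$, solving for $\b D^*_n\b y^{m,(k)}$ gives exactly the stated formula, provided $\b D^*_{m,(k)}\bm \Phi_k^m$ is invertible. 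I would verify invertibility by showing this Jacobian equals $\b I$ plus a $\dt$-multiple of $\bA$ and its conjugate $\diag(\b y^*)\bA^T(\diag(\b y^*))^{-1}$, both of which inherit spectra in $\Cminus$ from $\sigma(\bA)\tm\Cminus$, so the relevant matrix is nonsingular.

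The bulk of the work is computing the three auxiliary Jacobians \eqref{eq:D*y0Phikm} and \eqref{eq:D*l,(s)Phikm}. I would differentiate $\b p^{r,(k)}$ and $\b n^{r,(k)}$ separately. For $\b p^{r,(k)}$ the $\diag(\cdot)\b v=\diag(\b v)(\cdot)$ trick and the commutativity of diagonal matrices (as in \eqref{eq:ProductRuleDiag} and \eqref{eq:trick}) give clean derivatives: differentiating with respect to $\b y^{r,(k-1)}$ produces $\bA$, with respect to $\b y^{m,(k)}$ produces $\bA$, and with respect to the denominator $\b y^{m,(k-1)}$ produces $-\bA$, all evaluated at $\b y^*$. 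The key subtlety lies in $\b n^{r,(k)}$ from \eqref{eq:nrk}: the destruction sum has the stage indices transposed, so its Jacobian contributes the \emph{conjugated} matrix $\diag(\b y^*)\bA^T(\diag(\b y^*))^{-1}$ rather than $\bA$ itself; here the identity $-\sum_{j}d_{ij}=\lambda_{ii}y_i$ from \eqref{eq:-sum(dij)} is exactly what lets the diagonal terms close up correctly. Assembling the $\theta_{r,+}^m$-contributions (which carry $\bA$) and the $\theta_{r,-}^m$-contributions (which carry the conjugate) according to the split in \eqref{eq:Phis_MPDeC}, and isolating the special roles of $r=0$ (where $\theta_0^m$ multiplies $\b y^n$ for $k>1$, using $\theta_r^0=0$) and $r=m$ (where the stage coincides with the unknown), produces precisely the three cases listed.

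The main obstacle I anticipate is bookkeeping the index function $\gamma$ correctly through the differentiation: the whole reason for splitting into $\theta_{r,+}^m$ and $\theta_{r,-}^m$ is that the Patankar weighting switches numerator and denominator depending on the sign of the correction weight, and it is in the negative part that the transpose $\bA^T$ appears. Getting the diagonal entries to match — so that the conjugated matrix $\diag(\b y^*)\bA^T(\diag(\b y^*))^{-1}$ emerges rather than a mismatched expression — is the delicate point, and I would check it component-wise on \eqref{eq:nrk} before stating the closed forms. Once the three Jacobian cases are verified, substituting them into the chain-rule recursion and iterating over $k=1,\dotsc,K$ and $m=1,\dotsc,M$ yields $\b D\b g(\b y^*)=\b D^*_n\b y^{M,(K)}$, completing the proof.
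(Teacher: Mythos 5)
Your proposal is correct and follows essentially the same route as the paper's proof: the same regularity argument (fixed $\theta_r^m$, smooth $\b p^{r,(k)}$, $\b n^{r,(k)}$, and linear solves give $\b g\in\mathcal C^2$), the same chain-rule recursion with the $(1-\delta_{k1})$ factor handling $k=1$, and the same case-by-case differentiation of $\b p^{r,(k)}$ and $\b n^{r,(k)}$ using $\diag(\b v)\b w=\diag(\b w)\b v$ and \eqref{eq:-sum(dij)}, with the conjugated matrix $\diag(\b y^*)\bA^T(\diag(\b y^*))^{-1}$ emerging exactly from the $\theta_{r,-}^m$-weighted destruction sums and the special cases $r=0$ and $r=m$ treated separately. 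The only point where you go beyond the paper is the invertibility of $\b D^*_{m,(k)}\bm\Phi_k^m$, which the paper leaves implicit; note that your spectral argument needs the extra observation that a nonnegative combination of $\bA$ and its conjugate is again a Metzler matrix with zero column sums (spectra in $\Cminus$ are not preserved under sums of non-commuting matrices in general).
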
 
\begin{proof}
	Since the $\theta_r^m$ are fixed for a given scheme, the functions $\bm\Phi_k^m$ are in $\mathcal C^2$ and as a consequence of solving only linear systems, the map $\b g$ is also in $\mathcal C^2$. Furthermore, the formula \eqref{eq:Dg_MPDeC} follows analogously to \eqref{eq:FormularJacobian}, where we want to point out that the sum appearing in \eqref{eq:Dg_MPDeC} is multiplied with $0$ for $k=1$ since $\b y^{r,(k-1)}=\b y^n$ in this case. Hence, we only have to prove the formulae \eqref{eq:D*y0Phikm} and \eqref{eq:D*l,(s)Phikm}. For this, we compute the Jacobians of each addend of the sums in \eqref{eq:Phis_MPDeC} separately by considering \eqref{eq:prk} and \eqref{eq:nrk}.
	
	Let us start proving \eqref{eq:D*y0Phikm}, first considering $k=1$.  From \eqref{eq:prk}  and $\b y^{s,(0)}=\b y^n$ for all $s=0,\dotsc,M$ it follows that \[\b p^{r,(1)}(\b y^n,\b y^n,\b y^{m,(1)})=\b p^{r,(1)}(\b y^n,\b y^{m,(1)})=\bA\b y^{m,(1)}\] 
	and hence, $\b D^*_{n}\b p^{r,(1)}=\b 0$. 
	Moreover, \eqref{eq:nrk} for $k=1$ yields 
	\[ n_i^{r,(1)}(\b y^{n}, \b y^{n},\b y^{m,(1)})=	n_i^{r,(1)}(\b y^{n},\b y^{m,(1)})=	\frac{y^{m,(1)}_{i}}{y_{i}^{n}}\sum_{\substack{j=1\\j\neq i}}^N \lambda_{ij}  y_j^{n}
	- y_i^{n}  \sum_{\substack{j=1\\j\neq i}}^N \lambda_{ji}\frac{y^{m,(1)}_{j}}{y_{j}^{n}}.\]
	Hence, using $\bm 1\in \ker(\bA^T)$, we obtain
	\begin{equation}\label{eq:2aii}
		\begin{aligned}
			\frac{\partial}{\partial y_i^n} n_i^{r,(1)}(\b y^*,\b y^*)&= -\frac{1}{y_{i}^*}\sum_{\substack{j=1\\j\neq i}}^N \lambda_{ij}  y_j^{*}
			- \sum_{\substack{j=1\\j\neq i}}^N \lambda_{ji} =\frac{1}{y_{i}^*}\left(-\sum_{\substack{j=1\\j\neq i}}^N \lambda_{ij}  y_j^{*}
			+\lambda_{ii}y^{*}_{i}\right)\\
			&=\frac{1}{y_{i}^*}\Biggl(-\underbrace{\sum_{\substack{j=1}}^N \lambda_{ij}  y_j^{*}}_{(\bA\b y^*)_i=0}
			+2\lambda_{ii}y^{*}_{i}\Biggr)=2\lambda_{ii},
		\end{aligned}
	\end{equation}
	and for $q\neq i$ we find
	\begin{equation*}
		\begin{aligned}
			\frac{\partial}{\partial y_q^n} n_i^{r,(1)}(\b y^*,\b y^*)&= \lambda_{iq}
			+ \lambda_{qi}\frac{y^{*}_{i}}{y_{q}^{*}}. 
		\end{aligned}
	\end{equation*} 
	Altogether, we obtain 
	\begin{equation}\label{eq:A+diagAdiag}
		\b D^*_{n}\b n^{r,(1)}=\bA+\vec{
			\lambda_{11} & \lambda_{21}\tfrac{y_1^*}{y_2^*} &\dots & \lambda_{N1}\tfrac{y_1^*}{y_N^*} \\
			\lambda_{12}\tfrac{y_2^*}{y_1^*} & \ddots\hphantom{\tfrac{y_2^*}{y_1^*}}& & \vdots \\
			\vdots & & \ddots & \vdots\\ 
			\lambda_{1N}\tfrac{y_N^*}{y_1^*}& \dots &\dots & \lambda_{NN}
		}=\bA +\diag(\b y^*)\bA^T(\diag(\b y^*))^{-1},
	\end{equation}
	and thus,
	\begin{equation*}
		\b D^*_{n}\bm \Phi^m_1=-\left(\b I+(\bA +\diag(\b y^*)\bA^T(\diag(\b y^*))^{-1}) \Delta t\sum_{r=0}^M \theta_{r,-}^m\right).
	\end{equation*}
	Next, for $k> 1$ it follows from \eqref{eq:prk} that 
	\[\b D^*_{n}\b p^{r,(k)}=\delta_{r0} \bA. \]
	Similarly,  $\b D^*_{n}\b n^{r,(k)}=\b 0$ if $r\neq 0$. Furthermore,
	\[	n_i^{0,(k)}(\b y^{n},\b y^{m,(k-1)},\b y^{m,(k)})=	\frac{y^{m,(k)}_{i}}{y_{i}^{m,(k-1)}}\sum_{\substack{j=1\\j\neq i}}^N \lambda_{ij}  y_j^{n}
	- y_i^{n}  \sum_{\substack{j=1\\j\neq i}}^N \lambda_{ji}\frac{y^{m,(k)}_{j}}{y_{j}^{m,(k-1)}}\]
	yields
	\begin{equation*}
		\begin{aligned}
			\frac{\partial}{\partial y_i^n} n_i^{0,(k)}(\b y^*,\b y^*,\b y^*)&=- \sum_{\substack{j=1\\j\neq i}}^N \lambda_{ji}=\lambda_{ii}\\
			\frac{\partial}{\partial y_q^n} n_i^{0,(k)}(\b y^*,\b y^*,\b y^*)&=\lambda_{iq}, \quad i\neq q,
		\end{aligned}
	\end{equation*}
	so that $\b D^*_{n}\b n^{r,(k)}=\delta_{r0}\bA$. This results in
	\begin{equation*}
		\b D^*_{n}\bm \Phi^m_k=-\left(\b I+ (\theta_{0,-}^m+\theta_{0,+}^m)\Delta t\bA\right)=-\left(\b I+ \theta_{0}^m\Delta t\bA\right),
	\end{equation*}
	proving  \eqref{eq:D*y0Phikm}. 
	
	To derive \eqref{eq:D*l,(s)Phikm} consider first the case $s=k-1>0$ and $0<l\neq m$. From \eqref{eq:prk} it follows immediately that
	\begin{equation*}
		\b D^*_{l,(k-1)}\b p^{r,(k)}=\delta_{rl}\bA.
	\end{equation*}
	Moreover, \eqref{eq:nrk} yields
	\begin{equation*}
		\begin{aligned}
			\frac{\partial}{\partial y_i^{l,(k-1)}} n_i^{r,(k)}(\b y^*,\b y^*,\b y^*)&=-\delta_{rl} \sum_{\substack{j=1\\j\neq i}}^N \lambda_{ji}=\delta_{rl}\lambda_{ii},\\
			\frac{\partial}{\partial y_q^{l,(k-1)}} n_i^{r,(k)}(\b y^*,\b y^*,\b y^*)&=\delta_{rl}\lambda_{iq}, \quad i\neq q,
		\end{aligned}
	\end{equation*} 
	which means that $\b D^*_{l,(k-1)}\b n^{r,(k)}=\delta_{rl}\bA$ for $l\neq m$. In total \eqref{eq:Phis_MPDeC} gives us
	\begin{equation*}
		\b D^*_{l,(k-1)}\bm \Phi_k^m=-\dt (\theta_{l,+}^m+\theta_{l,-}^m)\bA=-\dt\theta_l^m\bA.
	\end{equation*}
	
	Next, we investigate the case of $s=k-1>0$ and $l=m$. Using once again $\diag(\b v)\b w=\diag(\b w)\b v$  and \eqref{eq:prk}, we obtain
	\begin{equation*}
		\begin{aligned}
			\b D^*_{m,(k-1)}\b p^{r,(k)}&=\b D^*_{m,(k-1)}\left(\bA\diag(\b y^{m,(k)})\diag\left(\b y^{m,(k-1)}\right)^{-1} \b y^{r,(k-1)}\right)\\&=-(1-\delta_{rm})\bA.
		\end{aligned}
	\end{equation*}
	Furthermore, recalling \eqref{eq:nrk}, \ie
	\begin{equation*}
		n_i^{r,(k)}(\b y^{r,(k-1)},\b y^{m,(k-1)},\b y^{m,(k)})=	\frac{y^{m,(k)}_{i}}{y_{i}^{m,(k-1)}}\sum_{\substack{j=1\\j\neq i}}^N \lambda_{ij}  y_j^{r,(k-1)}
		- y_i^{r,(k-1)}  \sum_{\substack{j=1\\j\neq i}}^N \lambda_{ji}\frac{y^{m,(k)}_{j}}{y_{j}^{m,(k-1)}},
	\end{equation*}
	we also distinguish between $r=m$ and $r\neq m$. In the first case we observe $n_i^{m,(k)}=n_i^{m,(k)}(\b y^{m,(k-1)},\b y^{m,(k)})$ and 
	\begin{equation*}
		\begin{aligned}
			\frac{\partial}{\partial y_i^{m,(k-1)}} n_i^{m,(k)}(\b y^*,\b y^*)&=-\frac{1}{y_i^*} \sum_{\substack{j=1\\j\neq i}}^N \lambda_{ij}y_j^*-\sum_{\substack{j=1\\j\neq i}}^N \lambda_{ji}\overset{\eqref{eq:2aii}}{=}2\lambda_{ii},\\
			\frac{\partial}{\partial y_q^{m,(k-1)}} n_i^{m,(k)}(\b y^*,\b y^*)&=\lambda_{iq}+\lambda_{qi}\frac{y_i^*}{y_q^*}\overset{\eqref{eq:A+diagAdiag}}{=}(\bA+\diag(\b y^*)\bA^T(\diag(\b y^*))^{-1})_{iq},
		\end{aligned}
	\end{equation*}
	for $i\neq q$,	which means that $	\b D^*_{m,(k-1)}\b n^{m,(k)}=\bA +\diag(\b y^*)\bA^T(\diag(\b y^*))^{-1}$. Turning to the case $r\neq m$, we find
	\begin{equation*}
		\begin{aligned}
			\frac{\partial}{\partial y_i^{m,(k-1)}} n_i^{r,(k)}(\b y^*,\b y^*,\b y^*)&=-\frac{1}{y_i^*} \sum_{\substack{j=1\\j\neq i}}^N \lambda_{ij}y_j^*\overset{\eqref{eq:2aii}}{=}\lambda_{ii},\\
			\frac{\partial}{\partial y_q^{m,(k-1)}} n_i^{r,(k)}(\b y^*,\b y^*,\b y^*)&=\lambda_{qi}\frac{y_i^*}{y_q^*}\overset{\eqref{eq:A+diagAdiag}}{=}(\diag(\b y^*)\bA^T(\diag(\b y^*))^{-1})_{iq}, \quad i\neq q,
		\end{aligned}
	\end{equation*}
	resulting in  $	\b D^*_{m,(k-1)}\b n^{r,(k)}=\diag(\b y^*)\bA^T(\diag(\b y^*))^{-1}$ for $r\neq m$. Altogether, we thus end up with
	\begin{equation*}
		\b D^*_{m,(k-1)}\bm \Phi^m_k=\sum_{\substack{r=0}}^M\theta_{r,+}^m\dt\bA-\sum_{\substack{r=0}}^M\theta_{r,-}^m\dt(\diag(\b y^*)\bA^T(\diag(\b y^*))^{-1})-\theta_{m}^m\dt \bA.
	\end{equation*}
	Finally, we have to consider the case $s=k$ and $l=m$, i.\,e.\ we have to compute $\b D^*_{m,(k)}\bm \Phi^m_k$.  Using $\diag(\b v)\b w=\diag(\b w)\b v$  and \eqref{eq:prk} once again we see that 
	\[\b D^*_{m,(k)}\b p^{r,(k)}=\bA. \]
	Furthermore, we obtain
	\begin{equation*}
		\begin{aligned}
			\frac{\partial}{\partial y_i^{m,(k)}} n_i^{r,(k)}(\b y^*,\b y^*,\b y^*)&=\frac{1}{y_i^*} \sum_{\substack{j=1\\j\neq i}}^N \lambda_{ij}y_j^*\overset{\eqref{eq:2aii}}{=}-\lambda_{ii},\\
			\frac{\partial}{\partial y_q^{m,(k)}} n_i^{r,(k)}(\b y^*,\b y^*,\b y^*)&=-\lambda_{qi}\frac{y_i^*}{y_q^*}\overset{\eqref{eq:A+diagAdiag}}{=}-(\diag(\b y^*)\bA^T(\diag(\b y^*))^{-1})_{iq}, \quad i\neq q,
		\end{aligned}
	\end{equation*}
	resulting in
	\begin{equation*}
		\b D^*_{m,(k)}\bm \Phi^m_k=\b I-\sum_{\substack{r=0}}^M\theta_{r,+}^m\dt\bA+\sum_{\substack{r=0}}^M\theta_{r,-}^m\dt\diag(\b y^*)\bA^T(\diag(\b y^*))^{-1}.
	\end{equation*}
	With this, we have finally proven Theorem \ref{thm:mpdec}.
	
\end{proof}
Focusing on Gauss--Lobatto nodes, a higher-order quadrature rule is applied\footnote{The $L^2$ operator inside the DeC framework is based on a collocation method with Lobatto nodes (also known as the RK Lobatto III A method). }. Here, we  use $M= \lceil\tfrac{K}{2}\rceil$ subintervals and K=p corrections. 
Recall that we denoted the $p$th order MPDeC method by MPDeC$(p)$ and indicated \gl\! and \eq\! nodes by using MPDeCGL and MPDeCEQ, respectively.
Note that MPDeC(1) is equivalent to the modified Patankar--Euler scheme  and MPDeC(2) is equivalent to MPRK22(1) for both, \gl\! and \eq\! nodes.

Due to $\b y^{n+1}=\b y^{M,(K)}$, MPDeC conserves all linear invariants, if $\theta_r^M\geq 0$ for all $r=0,\dotsc,M$ since in this case the index function yields $\gamma(j,i,\theta_r^M)=j$ and \eqref{eq:explicit_dec_correction} can be written as 
\[\b y^{n+1}-\b y^n-\sum_{r=0}^M\theta_r^M\dt \bA \diag(\b y^{n+1})(\diag(\b y^{M,(K-1)})^{-1}\b y^{r,(K-1)}=\b 0,\]
which means that $\b n^T\b y^{n+1}=\b n^T\b y^n$ for all $\b n\in \ker(\bA^T)$. Indeed, for equispaced nodes, $\theta_r^M$ with $r=0,\dotsc,M$ are the weights of the closed Newton--Cotes formulas for integrals over $I=[0,1]$. Hence, a negative $\theta_r^M$ occurs for the first time at $M=7$, \ie with MPDeCEQ$(8)$. In this case, we also have to consider $\b n^{r,(K)}(\b y^{r,(K-1)},\b y^{M,(K-1)},\b y^{n+1})$ given in \eqref{eq:nrkpij}, resulting in
\begin{equation*}
	\begin{aligned}
		\b n^T\b n^{r,(K)}
		&=\sum_{i,j=1}^Nn_i\frac{y_i^{n+1}}{y_i^{M,(K-1)}}p_{ij}(\b y^{r,(K-1)})- \sum_{i,j=1}^Nn_j\frac{y_i^{n+1}}{y_i^{M,(K-1)}}p_{ij}(\b y^{r,(K-1)}),
	\end{aligned}
\end{equation*}
where we switched indices and used $d_{ij}=p_{ji}$. We observe that $\b n^T\b n^{r,(k)}$ does not need to vanish for $\b n\notin\Span(\bm 1)$, so that the preservation of all linear invariants can not be guaranteed anymore for arbitrary systems $\b y'=\bA\b y$ and MPDeC($p$) with equispaced nodes with $p\geq 8$. However, as the system matrix of \eqref{eq:PDS_Sys} satisfies additional properties, additional research is required to answer this question.

Moreover, in the case of Gauss--Lobatto nodes, the values $2\theta_r^M$ for $r=0,\dotsc,M$ equal the weights of the corresponding Gauss--Lobatto quadrature, which are always positive. This gives us the following result.
\begin{lem}
	The MPDeCGL methods conserve all linear invariants when applied to \eqref{eq:PDS_Sys}.
\end{lem}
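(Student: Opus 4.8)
The plan is to reduce the assertion to the sufficient criterion already extracted in the paragraph preceding the lemma, namely that an MPDeC method preserves all linear invariants as soon as $\theta_r^M \ge 0$ holds for every $r = 0, \dots, M$, and then to confirm this sign condition for Gauss--Lobatto nodes by appealing to the positivity of the underlying quadrature weights. Recall that preserving every linear invariant means $\b n^T \b y^{n+1} = \b n^T \b y^n$ for all $\b n \in \ker(\bA^T)$, and that $\b y^{n+1} = \b y^{M,(K)}$, so only the final correction step $(m,k) = (M,K)$ of \eqref{eq:explicit_dec_correction} enters the computation.

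First I would recall, as established just above the lemma, that the nonnegativity $\theta_r^M \ge 0$ forces the index function to collapse to $\gamma(j,i,\theta_r^M) = j$ and $\gamma(i,j,\theta_r^M) = i$. The final step then admits the conservative matrix--vector form
\[
\b y^{n+1} - \b y^n - \sum_{r=0}^M \theta_r^M \dt\, \bA \diag(\b y^{n+1})\bigl(\diag(\b y^{M,(K-1)})\bigr)^{-1}\b y^{r,(K-1)} = \b 0,
\]
and multiplying on the left by any $\b n^T$ with $\b n^T \bA = \b 0$ annihilates the entire sum, yielding $\b n^T \b y^{n+1} = \b n^T \b y^n$. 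It then remains only to verify $\theta_r^M \ge 0$. Since $\theta_r^M = \int_0^1 \varphi_r(t)\,\dd t$ is the weight attached to node $t_{n,r}$ of the quadrature on $[0,1]$ defined by the subtime nodes, and since for Gauss--Lobatto nodes the values $2\theta_r^M$ equal the classical Gauss--Lobatto weights (the factor $2$ coming from the affine map of the reference interval onto $[0,1]$), which are strictly positive, we obtain $\theta_r^M > 0 \ge 0$ for every $r$. This closes the argument.

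The step I expect to require the most care is not a computation but a conceptual one: making explicit that the conservation of the full step hinges only on the signs of the weights $\theta_r^M$ at the terminal node $t_{n,M} = 1$, and is entirely insensitive to the intermediate subtime levels $m < M$. Indeed, the intermediate stages $\b y^{r,(K-1)}$ enter the final update only inside the term that is annihilated by $\b n^T$, so even if some $\theta_r^m$ with $m < M$ were negative (spoiling the individual conservativity of an inner correction), this would not affect $\b n^T \b y^{M,(K)}$. Recognizing this is precisely what lets the positivity of a single set of terminal quadrature weights settle the conservativity of the entire MPDeCGL step.
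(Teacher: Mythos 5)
Your proposal is correct and follows the paper's own argument essentially verbatim: reduce to the sufficient condition $\theta_r^M\geq 0$ established in the preceding paragraph (index function collapses, final step takes the conservative matrix--vector form, left-multiplication by $\b n^T$ with $\b n^T\bA=\b 0$ annihilates the sum), then verify the sign condition via the positivity of the Gauss--Lobatto quadrature weights $2\theta_r^M$. Your closing observation — that only the terminal weights $\theta_r^M$ matter because the intermediate stages enter the final update solely inside the term killed by $\b n^T$ — is exactly the (implicit) reason the paper's reduction to the last correction step is legitimate.
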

\begin{rem}\label{rem:Mpdec}
	From Theorem \ref{thm:mpdec}, we see that the Jacobian in general depends on $\b y^*$, if there exist negative correction weights $\theta_r^m$, \ie not being conditional stable does not necessarily result in instability in this case. For equispaced or Gauss--Lobatto points negative correction weights already occur for $K>2$. Hence, to study the stability of MPDeC schemes applied to general linear systems, one needs to locate the eigenvalues of the Jacobian, which possibly depend on $\b y^*$ themselves. Such an analysis is outside the scope of this work, which is why we will focus on the following class of problems.
	
	If $\bA$ is normal, then  $\bA$ and $\bA^T$ share the same eigenvectors and the corresponding eigenvalues are the complex conjugate of each other. Since $\bm 1\in \ker(\bA^T)$ this means that even $\bm1 \in \ker(\bA)$. Hence, we may discuss the stability of $\b y^*=\bm 1$. Then, we find
	\[\sigma\left(r_1(\bA)+r_2\left(\diag(\b y^*)\bA^T(\diag(\b y^*))^{-1}\right)\right)=\{r_1(\lambda)+r_2(\bar\lambda)\mid \lambda\in \sigma(\bA)\} \]
	for any rational maps $r_1,r_2$, which means that the spectrum of the Jacobian of the map $\b g$ generating  the MPDeC iterates can be written only in terms of the eigenvalues of $\bA$. Using \eqref{eq:Dg_MPDeC}, \eqref{eq:D*y0Phikm} and \eqref{eq:D*l,(s)Phikm}, the stability function $R_p$ of MPDeC($p$) for normal matrices $\bA$ and $\b y^*=\bm 1$ can be computed recursively by
	\begin{equation}\label{eq:Rmpdec}
		\begin{aligned}
			R^{m,(1)}(z)=&\frac{1+(z+\bar{z})\sum_{r=0}^M\theta_{r,-}^m}{1-\left(z \sum_{r=0}^M \theta_{r,+}^m-\bar{z}\sum_{r=0}^M \theta_{r,-}^m\right)},\\
			R^{m,(\hat k)}(z)=&\frac{1+\theta_0^mz+z\displaystyle\sum_{\substack{r=1\\r\neq m}}^M\theta_r^mR^{r,(\hat k-1)}(z)}{1-\left(z \sum_{r=0}^M \theta_{r,+}^m-\bar{z}\sum_{r=0}^M \theta_{r,-}^m\right)}\\
			&-\frac{\left( z \sum_{r=0}^M \theta_{r,+}^m-\bar{z} \sum_{r=0}^M \theta_{r,-}^m-z\theta_m^m\right)R^{m,(\hat k-1)}(z)}{1-\left(z \sum_{r=0}^M \theta_{r,+}^m-\bar{z}\sum_{r=0}^M \theta_{r,-}^m\right)},\\
			R_p(z)=&R^{M,(K)}(z),
		\end{aligned}
	\end{equation}
	for $\hat k=2,\dotsc,K$ and $m=1,\dotsc,M$. Note that if $\bA$ is symmetric it is also normal and we obtain $\sigma(\bA)\tm \R$, so that one can further simplify \eqref{eq:Rmpdec}	using $\theta^m_{r,+}+\theta^m_{r,-}=\theta_r^m$ to receive
	\begin{equation}\label{eq:Rmpdecreduced}
		\begin{aligned}
			R^{m,(1)}(z)&=\frac{1+2z\sum_{r=0}^M\theta_{r,-}^m}{1-z \sum_{r=0}^M \abs{\theta_{r}^m}},\\
			R^{m,(\hat k)}(z)&=\frac{1+\theta_0^mz+z\displaystyle\sum_{\substack{r=1\\r\neq m}}^M\theta_r^mR^{r,(\hat k-1)}(z)-z\left(\sum_{r=0}^M \abs{\theta_{r}^m}-\theta_m^m\right)R^{m,(\hat k-1)}(z)}{1-z \sum_{r=0}^M \abs{\theta_{r}^m}},\\
			R_p(z)&=R^{M,(K)}(z).
		\end{aligned}
	\end{equation} 
	It is also worth mentioning that for the system matrix  
	\begin{equation}\label{eq:A2x2}
		\bA=\begin{pmatrix*}[r]
			-a & b\\a&-b
		\end{pmatrix*}
	\end{equation} with $a,b>0$, used in \cite{IKM2122,IssuesMPRK}, we find that $\ker(\bA)=\Span(\b y^*)$ with $\tfrac{y_1^*}{y_2^*}=\tfrac{b}{a}$, and thus
	\[\diag(\b y^*)\bA^T(\diag(\b y^*))^{-1}=\begin{pmatrix*}[r]
		-a & a\frac{b}{a}\\
		b\frac{a}{b}&-b
	\end{pmatrix*}=\bA,\]
	so that the stability function $R_p$ in this case is also given by \eqref{eq:Rmpdecreduced}. 
\end{rem}
Deferred Correction schemes are described by an iterative process which can be compared with classical RK schemes with more stages \cite{abgrall2022relaxation}. As MPDeC and DeC share the same amount of stages, we thus know that MPDeCEQ(3) contains 5 stages. Furthermore, MPDeCEQ(4) has already 10 stages inside, resulting in rational function with polynomial degree 10 in the numerator and denominator.
Using Gauss--Lobatto nodes decreases the number of stages. For an MPDeCGL(4) we would end up with seven stages. 

Similarly as we did for MPRK43($\alpha,\beta$) we can estimate the stability region for MPDeC schemes. It turns out that MPDeCEQ$(p)$ and MPDeCGL$(p)$ are unconditionally stable for $p=1,2$, as they coincide with MPE and MPRK22(1), respectively. For $p=3,\dotsc,8$ we collect the lower bounds $\theta_{\text{num}}$ for the opening angle $\theta$ of the stability domain for normal system matrices $\bA$ in Table~\ref{tab:mpdecstab}. Thereby we show that $\theta_{\text{num}}$ actually satisfies the error bound $\theta_\text{num}\leq\theta<\theta_\text{num}+\frac{\pi}{500}$ by adding $\frac{\pi}{500}$ to $\theta_{\text{num}}$ and demonstrating that the absolute value of the stability function then exceeds $1$ for some $r=\lvert z\rvert$,  see Figure~\ref{fig:mpdecstab}. 
\begin{table}[!h]
	\centering
	{\def\arraystretch{1.5}\tabcolsep=5pt	\begin{tabular}{|c|c|c|}\hline
			$p$ & MPDeCEQ($p$) & MPDeCGL($p$) \\ \hline 
			1 & $\pi$ & $\pi$\\
			2 & $\pi$ & $\pi$\\
			3& $\frac{498}{500}\pi$ & $ \frac{498}{500}\pi$\\
			4& $\frac{496}{500}\pi$ & $\frac{496}{500}\pi$ \\
			5&$\frac{495}{500}\pi$ & $\frac{495}{500}\pi$ \\
			6 & $\frac{495}{500}\pi$ &  $\frac{494}{500}\pi$\\
			7 & $\frac{494}{500}\pi$ &  $\frac{494}{500}\pi$\\
			8 & $\frac{495}{500}\pi$& $\frac{494}{500}\pi$ \\ 
			\hline
	\end{tabular}}
	\caption{Estimate $\theta_{\text{num}}$ from \eqref{eq:theta} of the opening angle $\theta$ of the stability domain of MPDeC($p$) for problems \eqref{eq:PDS_Sys} with a normal system matrix. We have $\theta_\text{num}\leq\theta<\theta_\text{num}+\frac{\pi}{500}$, see Figure~\ref{fig:mpdecstab}.}\label{tab:mpdecstab} 
\end{table}
\begin{figure}[!h]
	\centering
	\begin{subfigure}[t]{0.495\textwidth}
		\includegraphics[width=\textwidth]{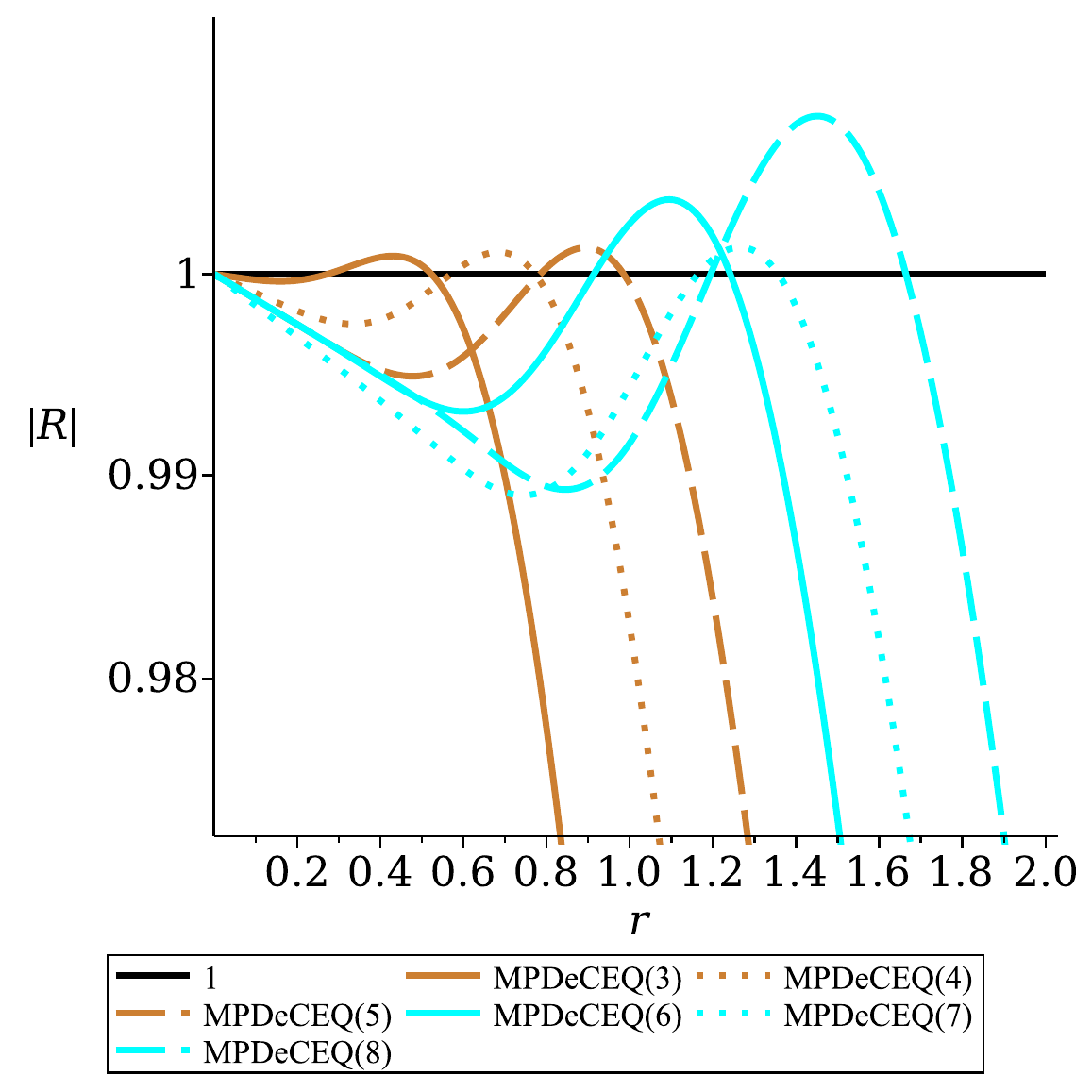}
		\subcaption{Equispaced nodes}
	\end{subfigure}
	\begin{subfigure}[t]{0.495\textwidth}%
		\includegraphics[width=\textwidth]{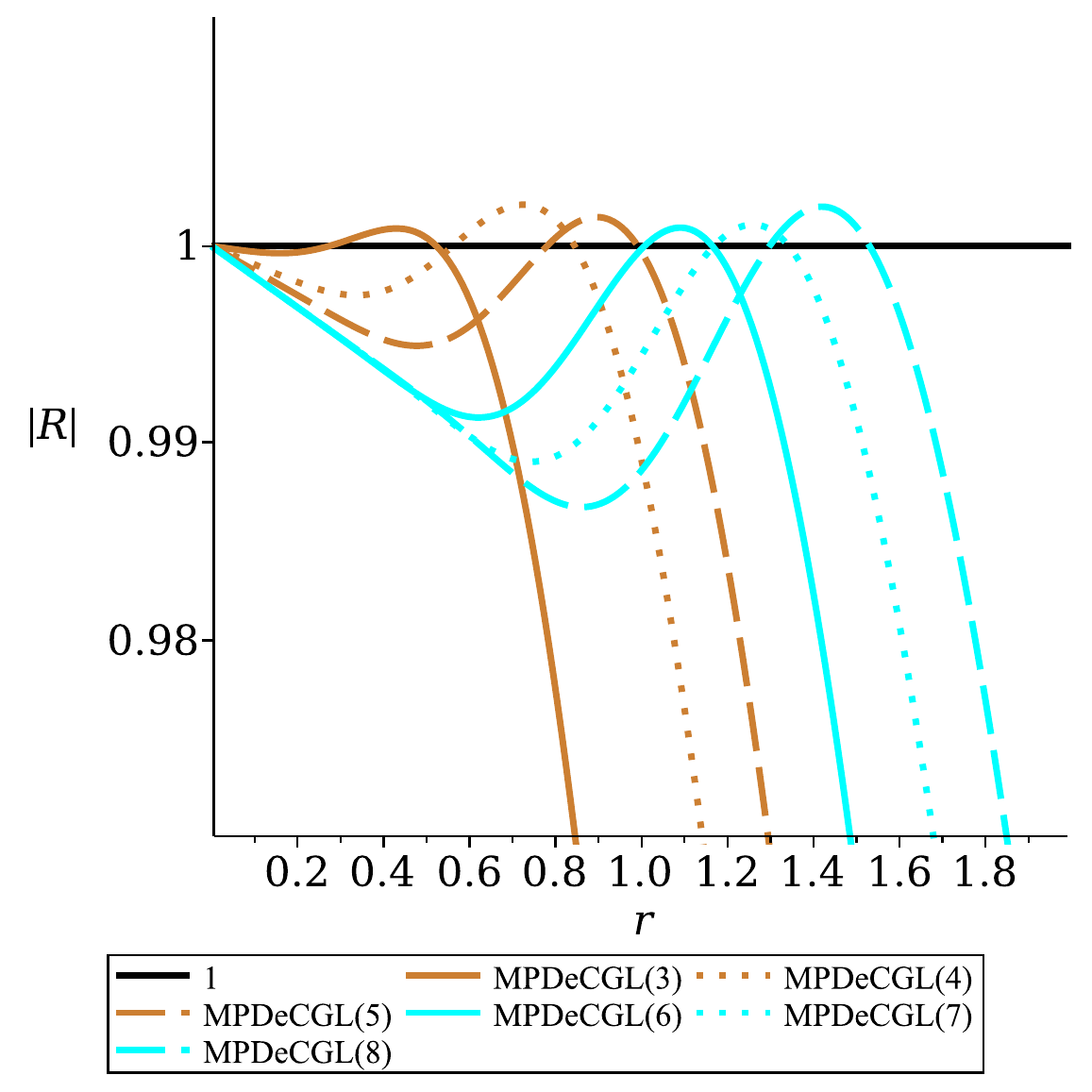}
		\subcaption{Gauss--Lobatto nodes}
	\end{subfigure}
	\caption{Plots of $\lvert R(re^{\ii \varphi})\rvert$ over $r$, where $\varphi=\pi-\frac{\theta_{\text{num}}+\frac{\pi}{500}}{2}$ corresponds to the opening angle $\theta_{\text{num}}+\frac{\pi}{500}$ with $\theta_{\text{num}}$ from Table~\ref{tab:mpdecstab}.}
	\label{fig:mpdecstab}
\end{figure}

To give a first insight in the stability properties of MPDeC methods of order higher than $8$, we analyze the reduced stability function \eqref{eq:Rmpdecreduced}. In both cases described in Remark \ref{rem:Mpdec}, the eigenvalues of $\bA$ leading to \eqref{eq:Rmpdecreduced} are real, which is why we
present the absolute value of the stability function over real $z$ in Figure~\ref{fig:stability}. 
To obtain a stable scheme, the absolute value of $R(z)$ has to be always smaller than one (the black line). In Figure~\ref{fig:GL}, we investigate MPDeC from order 4 to 14 using Gauss--Lobatto points. As can be recognized all MPDeC methods are stable using Gauss--Lobatto points. In Figure~\ref{fig:ESP}, the stability functions of MPDeC schemes from 4th to 14th order are depicted for equispaced nodes. Here, we recognize that MPDeCEQ(12) and MPDeCEQ(14) are unstable but MPDeCEQ(13) is stable.  However, this is not surprising since already for classical DeC methods using equidistant points has been problematic for high-order methods, cf.  \cite{dutt2000spectral,han2021dec,MPDeC, IssuesMPRK} and references therein. The reason for this is related with classical interpolation theory where it is known that equidistant points may lead to Runge's phenomenon. However, we would like to point out that our investigation also supports the numerical investigation in \cite{IssuesMPRK} where problems in MPDeCEQ have been recognized. 

\begin{figure}[tb]
	\centering 
	\begin{subfigure}[b]{0.495\textwidth}
		\includegraphics[width=\textwidth]{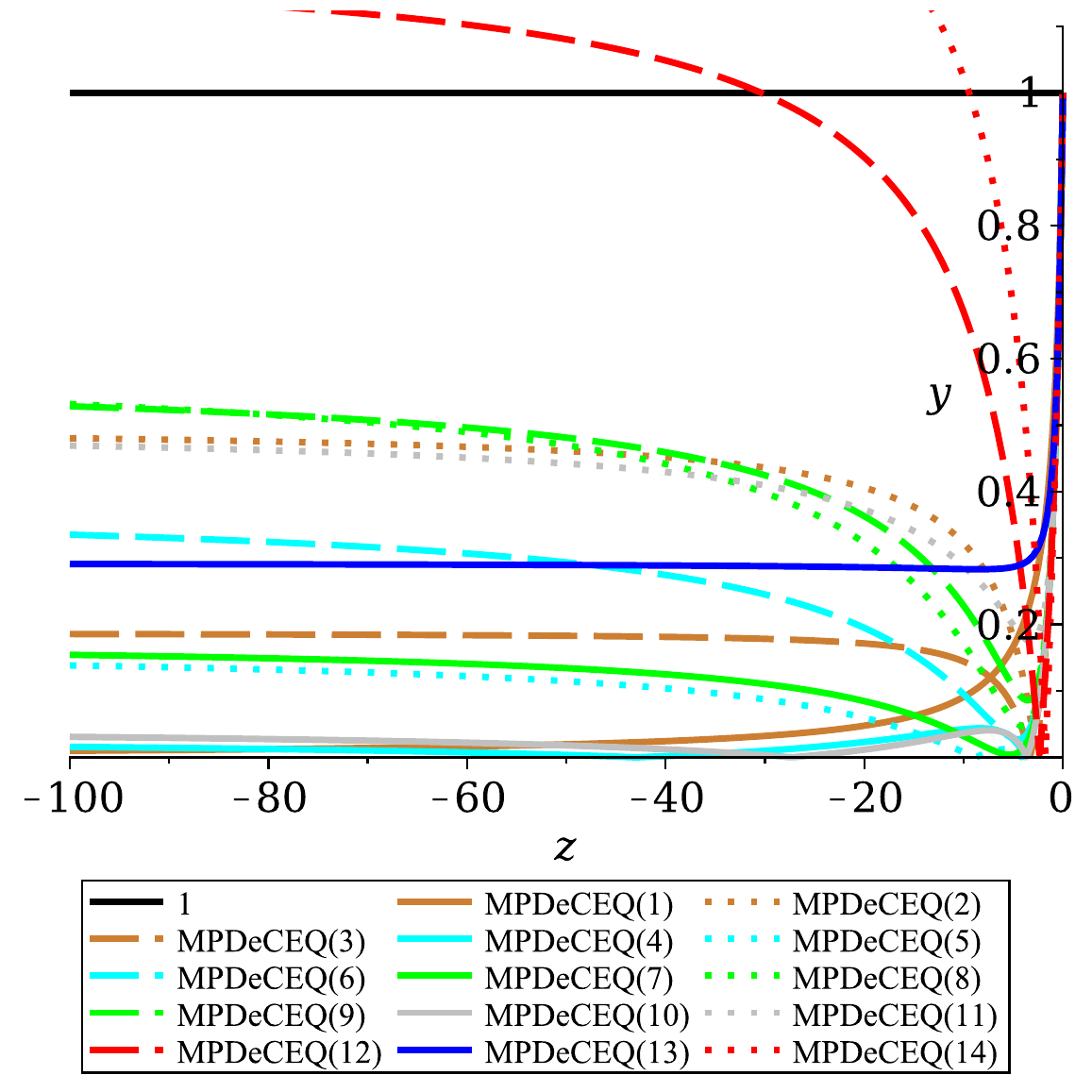} 
		\caption{Equispaced points}
		\label{fig:ESP}
	\end{subfigure}%
	\begin{subfigure}[b]{0.495\textwidth}
		\includegraphics[width=\textwidth]{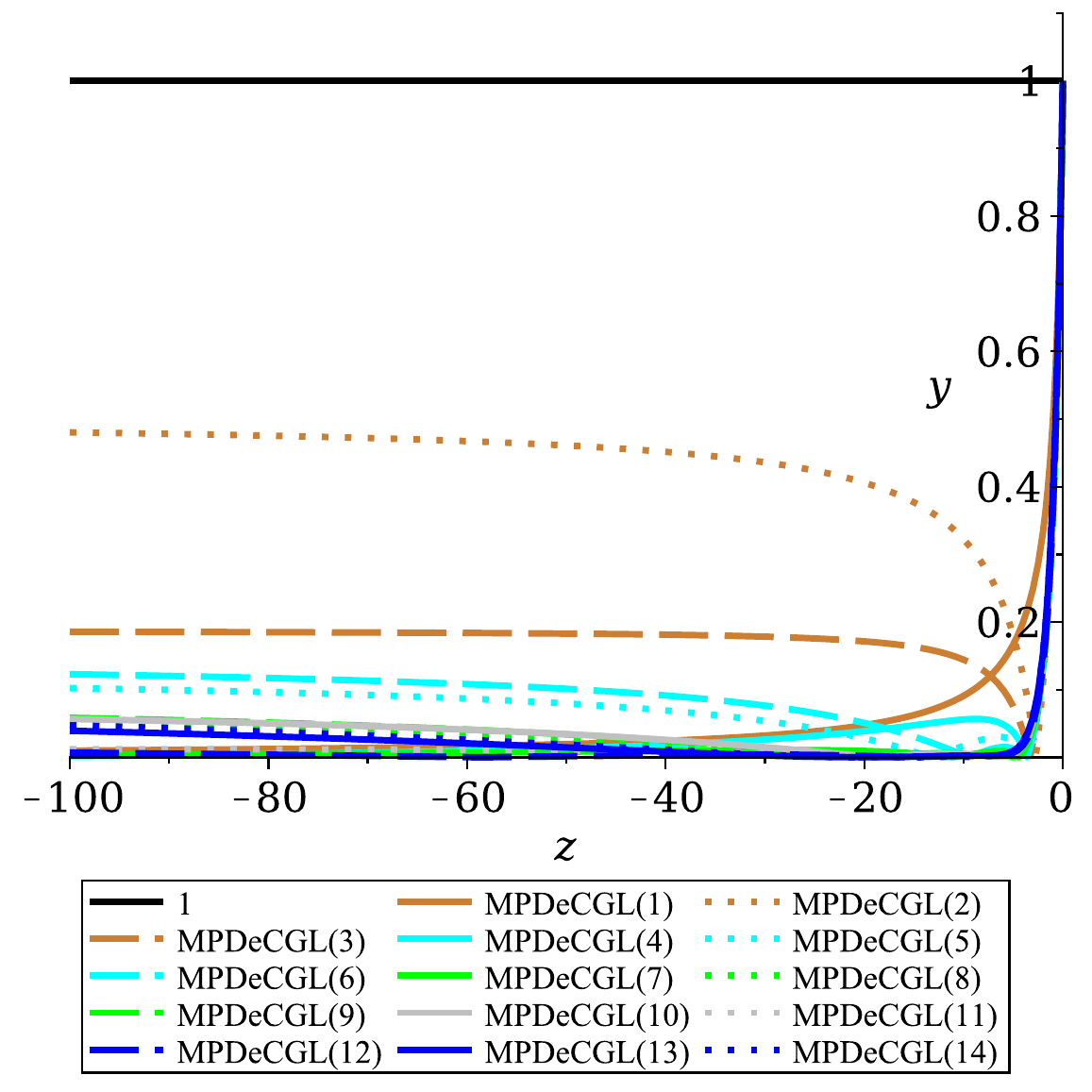} 
		\caption{Gauss--Lobatto points}
		\label{fig:GL}
	\end{subfigure}%
	\caption{Absolute value of the stability function over $z\leq 0$.}
	\label{fig:stability}
\end{figure}  
\newpage
\subsection{Geometric Conservative}\label{sec:stab_GeCo}

	We will start analyzing GeCo1 applied to a general positive linear test problem with stable steady states. Turning to GeCo2, we prove that already for the $2\times 2$ system \eqref{PDS_test}, \eqref{eq:IC} the stability domain of GeCo2 is bounded. Approaches for the analysis of GeCo2 for $N\times N$ systems will be discussed at the end of the respective subsection.
	
	\subsubsection{Stability of GeCo1}
	In this subsection, we investigate the stability properties of GeCo1, see \eqref{eq:GeCo1scheme}, 
	
	To that end, we first rewrite $\b y'=\bA\b y$ as a bio chemical system of the form
	\begin{equation}\label{eq:split_A}
		\b y'=	\bA\b y=\b f^{[P]}(\b y)-\b f^{[D]}(\b y)=\b S^+\b y-\b S^-\b y
	\end{equation}
	with $\b S^+,\b S^-\geq \b 0$. Since $\bA$ is a Metzler matrix, $\b S^-=(s^-_{ij})_{i,j=1,\dotsc,N}$ is a diagonal matrix and $f_i^{[D]}(\b y)=s_{ii}^-y_i$. Moreover, Remark \ref{rem:Aneg} states that at least one diagonal element of $\bA$ is negative, which results in 
	\begin{equation}
		\tr(\b S^-)>0.\label{eq:tr(S-)>0}
	\end{equation}
	With this in mind, let us recall the function $\varphi$ from \eqref{eq:phi}, that is
	\begin{equation*}
		\varphi(x)=\begin{cases}\frac{1-e^{-x}}{x},& x>0,\\
			1, &x=0
		\end{cases}
	\end{equation*} 
	and write the GeCo1 scheme \eqref{eq:GeCo1scheme} applied to \eqref{eq:split_A} as
	\begin{equation*}
		\begin{aligned}
			\mathbf{g}(\mathbf{y}^{n})=\mathbf{y}^{n+1} &=  \mathbf{y}^{n} + \dt \varphi\left(\dt\sum_{i=1}^N \dfrac{f_i^{[D]}(\b y)}{y_i^{n}} \right) \bA\mathbf{y}^{n}\\&=  \mathbf{y}^{n} + \dt\varphi(\dt\tr(\b S^-)) \bA\mathbf{y}^{n}.
	\end{aligned}\end{equation*}
	Due to \eqref{eq:tr(S-)>0}, the GeCo1 scheme can be rewritten as
	\begin{equation}
		\b y^{n+1}=\b g(\b y^n)=(\b I+\Phi(\dt)\bA)\b y^n,\quad \Phi(\dt)=\dt\varphi(\dt\tr(\b S^-))=\tfrac{1-e^{-\dt\tr(\b S^-)}}{\tr(\b S^-)}.\label{eq:ThmGeco1scheme}
	\end{equation}
	It is worth mentioning that this reasoning holds for all $k=\dim(\ker(\bA))\geq 0$. Also note that steady states of \eqref{eq:PDS_Sys} become fixed points of $\b g$, and that $\b g\in \mathcal C^\infty$ conserves all linear invariants, if there are any. Hence, we are in the position to apply Theorem~\ref{Thm_MPRK_stabil}. It is worth noting that the eigenvalues of the Jacobian of the GeCo1 map $\b g$ in general not only depend on $\dt\lambda$, but also on the trace of $\b S^-$.
	Nevertheless, we are able to prove that in the case of GeCo1, the remaining $N-k$ eigenvalues of $\b D\b g(\b y^*)$ lie inside the unit circle, resulting in the following theorem.
	\begin{thm}\label{Thm:GeCo1}
		If $k>0$ then	the steady state 
		$\b y^*$ of \eqref{eq:PDS_Sys}, \eqref{eq:IC} is a stable fixed point of GeCo1 for all $\Delta t>0$. Furthermore, there exists a $\delta >0$ such that $\Vert\b y^0-\b y^*\Vert<\delta$ implies the convergence of the iterates towards $\b y^*$ for all $\Delta t>0$.
		If $k=0$, then $\b y^*=\b 0$ is an asymptotically stable fixed point of GeCo1 for all $\dt>0$.
	\end{thm}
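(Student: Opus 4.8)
The starting point is the crucial observation that, by \eqref{eq:ThmGeco1scheme}, GeCo1 applied to \eqref{eq:PDS_Sys} is in fact a \emph{linear} one-step method $\b y^{n+1}=\b g(\b y^n)=\b R\b y^n$ with $\b R=\b I+\Phi(\dt)\bA$ and $\Phi(\dt)=\tfrac{1-e^{-\dt\tr(\b S^-)}}{\tr(\b S^-)}$. In particular $\b g\in\mathcal C^\infty$ has Lipschitz derivatives, $\b D\b g(\b y^*)=\b R$ is independent of the evaluation point, and $\b g$ preserves every linear invariant since $\b n^T\bA=\b 0$ implies $\b n^T\b R=\b n^T$. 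The plan is to first record two elementary bounds on $\Phi(\dt)$: from \eqref{eq:tr(S-)>0} we have $\tr(\b S^-)>0$ and $0<1-e^{-\dt\tr(\b S^-)}<1$, so $0<\Phi(\dt)<\tfrac{1}{\tr(\b S^-)}$ for all $\dt>0$. Writing $r=\min_j\lambda_{jj}<0$ as in Remark~\ref{rem:Aneg} and using that $\b S^-$ collects the destructive diagonal, i.e.\ $s_{jj}^-=\max\{0,-\lambda_{jj}\}$, one obtains $\tr(\b S^-)\ge -r=|r|$, and therefore the key estimate $\Phi(\dt)<\tfrac{1}{|r|}$ for all $\dt>0$.

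Because $\b R$ is a polynomial in $\bA$, its spectrum is $\{1+\Phi(\dt)\lambda\mid\lambda\in\sigma(\bA)\}$, so the whole problem reduces to locating $1+\Phi(\dt)\lambda$. For $\lambda=0$, which occurs with multiplicity $k=\mu_\bA(0)=\gamma_\bA(0)$ and whose Jordan blocks have size one, the corresponding eigenvalue of $\b R$ is exactly $1$ and remains semisimple. For $\lambda\ne 0$ I would exploit the disk bound of Remark~\ref{rem:Aneg}: the confinement $\sigma(\bA)\subseteq\{z\mid|z-r|\le|r|\}$ yields $|\lambda|^2\le 2r\re(\lambda)$, where moreover $\re(\lambda)<0$. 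Expanding $|1+\Phi\lambda|^2=1+2\Phi\re(\lambda)+\Phi^2|\lambda|^2$, it then suffices to check $\Phi|\lambda|^2<-2\re(\lambda)$; combining the disk bound with $\Phi<\tfrac{1}{|r|}$ gives $\Phi|\lambda|^2\le 2r\re(\lambda)\Phi<2r\re(\lambda)\tfrac{1}{|r|}=-2\re(\lambda)$, hence $|1+\Phi(\dt)\lambda|<1$ for every nonzero $\lambda\in\sigma(\bA)$ and every $\dt>0$.

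With these eigenvalue locations in hand the theorem follows by a case split. If $k>0$, all hypotheses of Theorem~\ref{Thm_MPRK_stabil} are met: $\ker(\bA)$ is a $k$-dimensional space of fixed points, $\b g\in\mathcal C^1$ has Lipschitz derivatives, and the remaining $N-k$ eigenvalues $1+\Phi(\dt)\lambda$ with $\lambda\ne 0$ all have modulus smaller than one. Thus part~\ref{it:Thma} yields Lyapunov stability of $\b y^*$, and since $\b g$ conserves all linear invariants, part~\ref{it:Thmb} yields the local convergence $\b y^n\to\b y^*$, both for every $\dt>0$. If $k=0$, then $\bA$ is invertible, the unique fixed point is $\b y^*=\b 0$, and all eigenvalues of $\b R$ satisfy $|1+\Phi(\dt)\lambda|<1$, i.e.\ $\rho(\b R)<1$; asymptotic stability then follows at once from Theorem~\ref{Thm:asymStablin}.

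The main obstacle is the sharp interplay carried out in the second paragraph: one must pair the spectral disk confinement of the Metzler matrix $\bA$ from Remark~\ref{rem:Aneg} with the step-size-uniform bound $\Phi(\dt)<\tfrac{1}{|r|}$ — the latter hinging precisely on $\tr(\b S^-)\ge|r|$ — so that the contraction $|1+\Phi(\dt)\lambda|<1$ is guaranteed simultaneously for all nonzero eigenvalues and all step sizes. This uniformity is exactly what upgrades mere stability to the \emph{unconditional} stability of GeCo1, and verifying that the two bounds conspire correctly (in particular that no eigenvalue of $\bA$ can escape the admissible region as $\dt$ varies) is the technically delicate point.
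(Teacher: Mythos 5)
Your proposal is correct and follows essentially the same route as the paper's own proof: identify $\b D\b g(\b y^*)=\b I+\Phi(\dt)\bA$, reduce stability to showing $\lvert 1+\Phi(\dt)\lambda\rvert<1$ for all $0\neq\lambda\in\sigma(\bA)$ and all $\dt>0$, combine the spectral disk confinement of Remark~\ref{rem:Aneg} with the bound $\tr(\b S^-)\geq\lvert r\rvert$, and then invoke Theorem~\ref{Thm_MPRK_stabil} (respectively the spectral radius criterion when $k=0$). The only genuine deviation is a pleasant simplification of one step: where the paper establishes $\min_i 2\lvert\re(\lambda_i)\rvert/\lvert\lambda_i\rvert^2\geq 1/\lvert r\rvert$ geometrically, via the angle construction and Thales's theorem (Figure~\ref{Fig:proof}), you obtain the same inequality by directly expanding $\lvert\lambda-r\rvert\leq\lvert r\rvert$ into $\lvert\lambda\rvert^2\leq 2r\re(\lambda)$ and chaining it with $\Phi(\dt)<1/\lvert r\rvert$, which is purely algebraic and avoids the geometric detour.
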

	\begin{proof}
		The Jacobian $\b D\b g(\b y^*)$ reads
		\begin{equation*}
			\b D\b g(\b y^*)=\b I+\Phi(\dt)\bA=\b I+\frac{1-e^{-\dt\tr(\b S^-)}}{\tr(\b S^-)}\bA
		\end{equation*} 
		and its eigenvalues are \[\mu=1+\Phi(\dt)\lambda\] with $\lambda\in \sigma(\bA)$.
		Hereby, we see that 
		in the case of $k>0$, any 
		$\b v\in \ker(\bA)\setminus\{\b 0\}$ is an eigenvector of the Jacobian $\b D\b g(\b y^*)$ with an associated eigenvalue of $1$.
		
		In order to investigate the location of the remaining $N-k$ eigenvalues of the Jacobian 
		for $k\geq 0$,
		we first numerate the distinct and nonzero eigenvalues of $\bA$ from \eqref{eq:PDS_Sys} by $\lambda_1,\dotsc,\lambda_m$.
		Now, the corresponding eigenvalues $\mu_i=1+\Phi(\dt)\lambda_i$ with $i=1,\dotsc,m$ lie inside the unit circle if and only if
		\[\lvert 1+\Phi(\dt)\lambda_i \rvert^2<1,\quad i=1,\dotsc,m,\]
		which can be written as
		\[(\re(\Phi(\dt)\lambda_i)+1)^2+\im (\Phi(\dt)\lambda_i)^2<1,\quad i=1,\dotsc,m,\]
		or equivalently,
		\[2\Phi(\dt)\re(\lambda_i)+\Phi(\dt)^2\lvert \lambda_i\rvert^2<0,\quad  i=1,\dotsc,m.\]
		Dividing by $\Phi(\dt)>0$ and exploiting $\sigma(\bA)\tm \Cminus$ gives
		\[\Phi(\dt)< -\frac{2\re(\lambda_i)}{\lvert \lambda_i\rvert^2}=\frac{2\lvert \re(\lambda_i)\rvert}{\lvert \lambda_i\rvert^2},\quad  i=1,\dotsc,m.\]
		Introducing 	
		\begin{equation}\label{eq:M_eig}
			M=\min_{i=1,\dotsc,m}\left\{2\tfrac{\lvert\re(\lambda_i)\rvert}{\lvert\lambda_i\rvert^2}\right\},
		\end{equation} we end up with the equivalent condition
		\[ \Phi(\dt)<M.\]
		Hence, after plugging in $\Phi(\dt)=\tfrac{1-e^{-\dt\tr(\b S^-)}}{\tr(\b S^-)}$,
		we multiply with its denominator $\tr(\b S^-)>0$, see \eqref{eq:tr(S-)>0}, to get
		\begin{equation*}
			\lvert \mu_i \rvert^2<1,\quad i=1,\dotsc,m
		\end{equation*}
		if and only if
		\begin{equation*}
			1-e^{-\dt\tr(\b S^-)}<M \tr(\b S^-).
		\end{equation*}
		Now, if $M \tr(\b S^-)\geq 1$, then \[M\tr(\b S^-)\geq 1>1-e^{-\dt\tr(\b S^-)}\] is true for all $\dt>0$, and hence, the remaining eigenvalues of $\b D\b g(\b y^*)$ associated with nonzero eigenvalues of $\bA$ lie inside the unit circle. We now aim to prove that 
		\[M \tr(\b S^-)\geq 1\]
		is indeed the case. 
		
		Due to \cite[Theorem 10, Corollary 11]{StabMetz} it holds that \[\sigma(\bA)\tm\mathcal B= \left\{ z\in \C \, \Big| \, \abs{z-r}\leq \abs r, r=\min_{j=1,\dotsc, N} \lambda_{jj}\right\}.\] Since $\bA$ is a proper Metzler matrix, see Remark \ref{rem:Aneg}, there exists an $l\in\{1,\dotsc,N\}$ such that
		\begin{equation}\label{eq:r<0}
			r=\min_{j=1,\dotsc,N}\lambda_{jj}=\lambda_{ll}<0.	
		\end{equation}
		Even more, we know $\re(\lambda)<0$ as well as $\arg(\lambda)\in(\tfrac\pi2,\tfrac32\pi)$ for all $0\neq\lambda\in\sigma(\bA)$.
		For any given $\lambda\in \sigma(\bA)\setminus\{0\}$, we define $\alpha=\pi-\arg(\lambda)\in(-\tfrac\pi2,\tfrac{\pi}{2})$, so that 
		\[ \cos(\alpha)=\frac{\abs{\re(\lambda)}}{\abs{\lambda}}\neq 0.\]
		Next, we choose $\theta<0$ satisfying
		\[\abs\lambda=\cos(\alpha)\abs\theta.\]
		A sketch of this geometry can be found in Figure \ref{Fig:proof}.
		\begin{figure}[!h]
			\centering
			\begin{scaletikzpicturetowidth}{0.4\textwidth}
				\begin{tikzpicture}[scale=\tikzscale]
					\tkzInit[xmin=-5,xmax=0,ymin=-3,ymax=3]
					\draw [-stealth](-7,0) -- (0.5,0) node[below]{$\re$};
					\draw [-stealth](0,-3.25) -- (0,3.25) node[right]{$\im$};
					\tkzDefPoint[label=below left:{$\theta$}](-4,-0){P}
					\tkzDefPoint[label=below right:{$2 r $}](-6.5,-0){B}
					\tkzDefPoint[label=below right:{$ r $}](-3.25,-0){X}
					\tkzDefPoint(0,0){Q}
					\tkzDefPoint(-2,0){D}
					\tkzDefPoint[label=above left:{$\lambda$}](-1,3^0.5){R}
					\tkzDrawSegments(P,Q Q,R R,P)
					\tkzDefPoint[label=below:{$\re(\lambda)$}](-1,0){N}

					\tkzMarkRightAngle[fill=lightgray](Q,N,R)
					\tkzMarkRightAngle[fill=lightgray](P,R,Q)
					\tkzDrawPoints[color=black](P,Q,R,N,B,X)
					\tkzDrawSegment(R,N)
					\pic [draw, -, "$\alpha$", angle eccentricity=0.65] {angle = R--Q--P};
					\tkzDrawCircles(X,B)
				\end{tikzpicture}
			\end{scaletikzpicturetowidth}
			\caption{Sketch of the geometric setup for $\arg(\lambda)\in(\tfrac\pi2,\pi)$.}\label{Fig:proof}
		\end{figure}
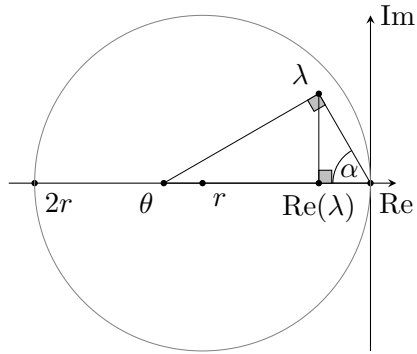
		With this, equation \eqref{eq:M_eig} becomes
		\[ M=\min_{i=1,\dotsc,m}\left\{2\tfrac{\lvert\re(\lambda_i)\rvert}{\lvert\lambda_i\rvert^2}\right\}=\min_{i=1,\dotsc,m}\left\{2\tfrac{\cos(\alpha_i)}{\lvert\lambda_i\rvert}\right\}=\min_{i=1,\dotsc,m}\left\{\tfrac{2}{\lvert \theta_i\rvert}\right\}.\]
		Moreover, with Thales's Theorem we can conclude that even $\theta_i\in \R^-$ is contained in $\mathcal B$, and thus, satisfies \[\abs{\theta_i}\leq 2\abs{\min_{j=1,\dotsc,N}\lambda_{jj}}.\]
		
		From \eqref{eq:r<0} it thus follows that
		\[M=\min_{i=1,\dotsc,m}\left\{\tfrac{2}{\lvert \theta_i\rvert}\right\}\geq\frac{2}{2\abs{\min_{j=1,\dotsc,N}\lambda_{jj}}}=\frac{1}{\abs{\lambda_{ll}}}. \]
		Additionally, setting $S=\{j\in\{1,\dotsc,N\}\mid \lambda_{jj}<0\}$ we find
		\[\tr(\b S^-)=-\sum_{j\in S}^N\lambda_{jj}=\sum_{j\in S}^N\lvert \lambda_{jj}\rvert \geq \abs{\lambda_{ll}},\]
		and thus
		\[M\tr(\b S^-)\geq\frac{1}{\abs{\lambda_{ll}}}\lvert \lambda_{ll}\rvert= 1,\]
		which finishes the proof as we have also proven that $\rho(\b D\b g(\b 0))<1$ in the case of $k=0$.
	\end{proof}
	With this theorem, a stability result for the GeCo1 scheme is provided for the first time. Having proved the unconditional stability of all fixed points of GeCo1 associated with steady states of the general $N\times N$ system of differential equations \eqref{eq:PDS_Sys}, we can conclude that GeCo1 mimics the stability behavior of the analytic solution close to a steady state solution for any chosen time step size $\dt>0$.	Whether or not this already suggests that the explicit GeCo1 scheme is even capable of solving stiff problems will be discussed in Section~\ref{subsec:GeCostiff}.
	
	\subsubsection{Stability of GeCo2}\label{subsec:StabGeCo}
	In this subsection we aim to prove that \eqref{eq:GeCo2scheme} applied to \eqref{PDS_test} can be described by a $\mathcal{C}^1$-map $\b g$ using Lemma \ref{Lem:diff} from the appendix, and to compute the spectrum of the corresponding Jacobian. To prove that the partial derivatives are even locally Lipschitz continuous, we use Lemma \ref{Lem:locallyLip} from the appendix. However, we will also prove that $\b g\notin \mathcal C^2$ for any neighborhood of $\b y^*$, extending the work \cite{gecostab}. This underlines the benefits discussed in Remark~\ref{rem:th2.9} on the stability theorems from \cite{IKM2122} and Theorem~\ref{Thm_MPRK_stabil}, published in \cite{izgin2022stability}.
	
	Let us investigate the GeCo2 scheme applied to \eqref{eq:IC}, \eqref{PDS_test} with \[\bA=\underbrace{\Vec{0 & bc\\a & 0}}_{=\mathbf S^+} - \underbrace{\Vec{ac & 0\\0 & b}}_{=\mathbf S^-} \] and $\mathbf r(\b y)=\b y$. This means that $\b f^{[D]}(\b y)=\mathbf S^-\mathbf r(\b y)=\vec{acy_1\\b y_2}$, and hence the GeCo2 scheme \eqref{eq:GeCo2scheme} reads
	\begin{equation}\label{eq:yGeco2}
		\begin{aligned}
			\displaystyle \mathbf{y}^{(2)}&=\mathbf{y}^{n}+\dt\varphi(\dt \tr(\b S^-)) \bA\mathbf{y}^{n}\\
			\displaystyle \mathbf{y}^{n+1} &= 
			\mathbf{y}^{n} + \dfrac{1}{2} \dt\varphi\left(\dt\left(\dfrac{w_1^+(\b y^n)}{y_1^n}+\dfrac{w_2^+(\b y^n)}{y_2^n}\right)\right)  \bA\left(\mathbf{y}^{n} +\mathbf{y}^{(2)}\right)\\
			&=\mathbf{y}^{n}\! +\! \dfrac{1}{2} \dt\varphi\!\left(\dt\!\left(\dfrac{w_1^+(\b y^n)}{y_1^n}\! +\!\dfrac{w_2^+(\b y^n)}{y_2^n}\right)\right)\!  \bA\left(2\mathbf{y}^{n}\! +\!\dt\varphi(\dt\tr(\b S^-)) \bA\mathbf{y}^{n}\right)\!,
		\end{aligned}
	\end{equation}
	where $	w_i^+(\b y^n)=\max(0,w_i(\b y^n))$ for $i=1,2$ and
	\begin{equation}
		\begin{aligned}\label{eq:w_MatrixForm}
			\mathbf w(\b y^n)&=2\varphi(\dt\tr(\b S^-))\bA\mathbf y^n - \bA\mathbf{y}^n-\bA\mathbf{y}^{(2)}\\&=\left(2 \varphi(\dt\tr(\b S^-))\bA-2\bA-\bA^2\dt\varphi(\dt\tr(\b S^-))\right)\b y^n.
		\end{aligned}
	\end{equation}
	
	We formulate a helpful lemma to understand some properties of $\b w$ and to express equation \eqref{eq:yGeco2} with $\b w$ rather than $w_1^+$ and $w_2^+$.
	\begin{lem}\label{Lem:w}
		The map $\b w$ from \eqref{eq:w_MatrixForm} with $\bA$ from \eqref{PDS_test} satisfies $w_1=-\tfrac1cw_2$, and we have
		\[w_1(\b y^n)\begin{cases}>0, &y^n_1>\tfrac{b}{a}y^n_2, \\
			=0, &y^n_1=\tfrac{b}{a}y^n_2,\\
			<0, & y^n_1<\tfrac{b}{a}y^n_2.\end{cases} \]
	\end{lem}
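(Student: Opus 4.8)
The plan is to compute $\b w(\b y^n)$ explicitly as a linear map acting on $\b y^n$ and to exploit the structure of the matrix $\bA$ from \eqref{PDS_test}. First I would observe that \eqref{eq:w_MatrixForm} expresses $\b w$ as $\b M\b y^n$ with $\b M=2\varphi(\dt\tr(\b S^-))\bA-2\bA-\dt\varphi(\dt\tr(\b S^-))\bA^2$. Since $\bA=\begin{psmallmatrix}-a & bc\\ ac & -b\end{psmallmatrix}$, the key algebraic fact is that the rows of $\bA$ are proportional: the first row is $(-a,bc)$ and the second is $(ac,-b)=-c\cdot(-a,bc)$. Hence every row of $\bA$ is a scalar multiple of $(-a,bc)$, with the second row being $-c$ times the first. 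Because $\bA^2$ and any polynomial in $\bA$ inherit this row-proportionality — more precisely, for any matrix $\b B$ whose rows satisfy (second row) $=-c\cdot$(first row), the product $\bA\b B$ will again have this property since $\bA$ has it — I would verify that $\b M$ itself has second row equal to $-c$ times its first row. This immediately yields $w_2=-c\,w_1$, equivalently $w_1=-\tfrac1c w_2$, which is the first claim.

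Next I would determine the sign of $w_1$. The cleanest route is to compute the first component of $\b M\b y^n$ directly. Writing $\phi_1=\varphi(\dt\tr(\b S^-))$ for brevity, the first row of $\b M$ is $(2\phi_1-2)(-a,bc)-\dt\phi_1(\bA^2)_{1,\cdot}$. I would compute $\bA^2$, note that its first row is again proportional to $(-a,bc)$ by the row-proportionality argument above, and collect the scalar coefficient multiplying $(-a,bc)$. This shows $w_1(\b y^n)=\kappa(-a y_1^n+bc\,y_2^n)$ for some scalar $\kappa$ depending on $\dt$, $a$, $b$, $c$, and I would argue $\kappa>0$ for all $\dt>0$. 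Granting $\kappa>0$, the sign of $w_1$ is the sign of $-a y_1^n+bc\,y_2^n$. Rearranging, $w_1>0 \iff a y_1^n<bc\,y_2^n$; however the claimed threshold in the lemma is $y_1^n>\tfrac ba y_2^n$, so I must reconcile the two. Here I suspect the intended relation uses the eigenvector/steady-state direction: the kernel-type direction for this non-conservative system is where the rows of $\bA$ annihilate $\b y$, and I would double-check whether the correct factored form is $(-a y_1+bc y_2)$ or $c(-a y_1 + b y_2)$-type, matching against the stated threshold $\tfrac ba$.

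The main obstacle I anticipate is exactly this reconciliation of the threshold and the sign of the prefactor $\kappa$. The factorization $w_1=\kappa(\text{linear in }y_1^n,y_2^n)$ is forced by row-proportionality, but pinning down \emph{which} linear form appears and confirming $\kappa>0$ uniformly in $\dt$ requires the explicit entries of $\bA^2$ and the positivity $\varphi>0$ (which holds since $\varphi(x)=\tfrac{1-e^{-x}}{x}>0$ for $x>0$ and $\varphi(0)=1$). I would compute $(\bA^2)_{11}=a^2+abc$ and $(\bA^2)_{12}=-abc-b^2c=-bc(a+b)$, extract the common factor $(a+b)$ or similar, and check that after combining $(2\phi_1-2)$ and $-\dt\phi_1$ terms the net coefficient of the linear form has a definite sign. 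Once $\kappa>0$ is established and the linear form is correctly identified with the threshold $y_1^n=\tfrac ba y_2^n$ (which is the steady-state ray of \eqref{PDS_test}, where both $w_1$ and $w_2$ must vanish by the steady-state-preservation of the scheme), the three-case sign statement follows at once.

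\begin{proof}[Proof sketch structure]
Write $\phi_1=\varphi(\dt\tr(\b S^-))>0$. Using \eqref{eq:w_MatrixForm}, $\b w(\b y^n)=\b M\b y^n$ with $\b M=(2\phi_1-2)\bA-\dt\phi_1\bA^2$. Since the second row of $\bA$ equals $-c$ times its first row, the same holds for $\bA^2$ and hence for $\b M$; therefore $w_2=-c\,w_1$, i.e.\ $w_1=-\tfrac1c w_2$. Computing the first row of $\b M$ explicitly and factoring, one obtains $w_1(\b y^n)=\kappa\,(a y_1^n-bc\,y_2^n)$ with $\kappa>0$ for all $\dt>0$; matching with the steady-state direction $a y_1=b y_2$ of \eqref{PDS_test} yields the stated sign behaviour about the threshold $y_1^n=\tfrac ba y_2^n$.
\end{proof}
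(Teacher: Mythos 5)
Your proposal rests on an arithmetic claim that is false as stated, and this matters beyond bookkeeping. For the matrix literally displayed in \eqref{PDS_test}, the identity $(ac,-b)=-c\cdot(-a,bc)$ you invoke forces $c^2=1$; for $c\neq 1$ that matrix has non-proportional rows, trivial kernel and no linear invariant, so neither your proportionality step nor the lemma's threshold $y_1^n=\tfrac{b}{a}y_2^n$ refers to anything. The lemma is really about the splitting fixed at the start of the GeCo2 subsection, $\bA=\b S^+-\b S^-=\begin{pmatrix}-ac&bc\\a&-b\end{pmatrix}$, whose second row equals $-\tfrac1c$ times its first (factor $-\tfrac1c$, not $-c$, so the proportionality one actually obtains is $w_2=-\tfrac1c w_1$, which is what the paper's own proof derives via the eigenvector $\bby=(1,-\tfrac1c)^T$), whose kernel is exactly the ray $y_1=\tfrac{b}{a}y_2$, and for which $(\bA\b y^n)_1=-ac\,(y_1^n-\tfrac{b}{a}y_2^n)$. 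With this matrix your unresolved ``reconciliation'' of the threshold disappears; as written, you leave it open, and your text even contradicts itself (the body has $w_1=\kappa(-ay_1^n+bc\,y_2^n)$, the sketch $w_1=\kappa(ay_1^n-bc\,y_2^n)$, both with $\kappa>0$). On the positive side, once the correct $\bA$ is used your route is genuinely cleaner than the paper's: $\bA$ has rank one, so $\bA^2=\lambda\bA$ with $\lambda=-(ac+b)$ and $\b M$ collapses to $\kappa\bA$ in one line, whereas the paper decomposes $\b y^n=\b y^*+s^n\bby$ along $\ker(\bA)\oplus\Span(\bby)$ and uses linearity of $\b w$ to reach the same reduction.

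The more serious gap is that the sign of the scalar prefactor --- the entire content of the three-case claim --- is asserted, not proven. Writing $\phi_1=\varphi(\dt\tr(\b S^-))$ and $z=\dt\lambda<0$, one gets $\kappa=2(\phi_1-1)-z\phi_1$, and since $\phi_1\in(0,1)$ for $\dt>0$ this is the sum of a strictly negative and a strictly positive term. Hence ``$\varphi>0$ plus the explicit entries of $\bA^2$'' cannot settle the sign; no amount of collecting coefficients does. The paper's proof supplies precisely this missing ingredient: it reduces everything to a transcendental function of $z$ (its $p(z)=-3z+2-e^z(2-z)$) and proves positivity for $z<0$ from $p(0)=0$ together with $p'(z)=-(3+e^z(1-z))<0$ for $z\le 0$. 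A monotonicity (or convexity) argument of this type is unavoidable in your approach as well, and your proposal contains none --- ``I would argue $\kappa>0$'' is exactly the step that constitutes the lemma. Until that analysis is carried out, and the matrix/threshold error above is repaired, the sign statement is not established.
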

	\begin{proof}
		First note that $y^n_1=\tfrac{b}{a}y^n_2$ is equivalent to $\b y^n\in \ker(\bA)$, and thus \eqref{eq:w_MatrixForm} yields $\b w(\b y^n)=\b 0$.
		
		Next, we focus on finding conditions for $\b y^n$ so that $w_1(\b y^n)>0$. For this, it is worth mentioning that for every $\b y^n>\b 0$, there exists a unique $\b y^*\in \ker(\bA)\cap \R^2_{>0}$ satisfying  $\b n^T\b y^n=\b n^T\b y^*$ with $\b n=(1,c)^T$, see \cite[Lemma 2.8]{IKM2122}. Hence, since $\b y^*>\b 0$ and $\bby=(1,-\tfrac1c)^T$ are linearly independent, there exists a unique $s^n\in\R$ such that $\b y^n=\b y^*+s^n\bby$. Also note that $\bA\bby=\lambda\bby$ with $\lambda=-(ac+b)<0$ and 
		\begin{equation}\label{eq:sn}
			s^n\begin{cases}
				>0, &y^n_1>\tfrac{b}{a}y^n_2,\\
				=0, &y^n_1=\tfrac{b}{a}y^n_2,\\
				<0, &y^n_1<\tfrac{b}{a}y^n_2.
			\end{cases}
		\end{equation}
		Thus, the linearity of $\b w$ and \eqref{eq:phi} lead to
		\begin{equation}\label{eq:w} 
			\begin{aligned}
				\b w(\b y^n)&=\b w(\b y^*)+\b w(s^n\bby)=\left(2 \varphi(-\dt\lambda)\lambda-2\lambda-\lambda^2\dt\varphi(-\dt\lambda)\right)s^n\bby\\
				&=\frac{1}{\dt}\left(2(1-e^{\dt\lambda}) -2\dt\lambda-\dt\lambda(1-e^{\dt\lambda})\right)s^n\bby\\
				&=\frac{1}{\dt}\left(2 -3\dt\lambda+ e^{\dt\lambda}(\dt\lambda-2)\right)s^n\bby.
			\end{aligned}
		\end{equation}
		Furthermore, introducing the function
		\[p(z)=-3z+2-e^z(2-z),\]
		we can rewrite \eqref{eq:w} to get
		\begin{equation}\label{eq:w_final}
			\b w(\b y^n)=\frac{1}{\dt}p(\dt\lambda)s^n\bby.
		\end{equation}
		Now, the first derivative of $p$ satisfies
		\[p'(z)=-(3+e^z(1-z))<0\]
		for all $z\leq0.$ Hence, the function $p$ is strictly decreasing for $z\leq 0$ and satisfies $p(0)=0$ proving that $p(\lambda\dt)>0$ for all $\dt>0$. 
		Therefore, with \eqref{eq:sn} it follows that $w_1(\b y^n)>0$ if $y_1^n>\frac{b}{a}y_2^n$. Similarly,  $w_1(\b y^n)<0$ holds if $y_1^n<\frac{b}{a}y_2^n$. Finally, note that  \eqref{eq:w_final} implies $w_1(\b y^n)=-\tfrac1cw_2(\b y^n)$.
	\end{proof} 
	
	As a consequence of this lemma we simplify \eqref{eq:yGeco2} by introducing \[H\colon \R^2_{>0}\to \R_{>0}\] with 
	\begin{equation*}
		\begin{aligned}
			H(\b x)&=\dt \varphi\left(\dt\left(\dfrac{w_1^+(\b x)}{x_1}+\dfrac{w_2^+(\b x)}{x_2}\right)\right)  =\begin{cases}\widetilde H_1(\b x),& x_1>\frac{b}{a} x_2,\\
				\dt,& x_1= \frac{b}{a} x_2,\\
				\widetilde H_2(\b x),& x_1< \frac{b}{a} x_2,
			\end{cases}\\
			\widetilde H_i(\b x)&=\dfrac{1-e^{-\dt \tfrac{w_i(\b x)}{x_i}}}{\tfrac{w_i(\b x)}{x_i}},\quad i=1,2
		\end{aligned}
	\end{equation*}
	and point out that $H$ is continuous, since $\varphi$ from \eqref{eq:phi} is in $ \mathcal C^2$ and $\b w\in \mathcal C^\infty$. As a result of Lemma \ref{Lem:w}, we even know that \[\widetilde H_i\in \mathcal C^\infty(\R^2_{>0}\setminus\ker(\bA))\] for $i=1,2$.
	
	The map $\b g$ defining the iterates of the GeCo2 scheme when applied to \eqref{eq:IC}, \eqref{PDS_test} is given by \eqref{eq:yGeco2} and can be written as
	\begin{equation*}
		\begin{aligned}
			\b g(\b x)&=\b x+ \dfrac{1}{2} H(\b x)  \bA\left(2\b x+\dt\varphi(\Delta \tr(\b S^-))\bA \b x\right).
		\end{aligned}
	\end{equation*}
	Introducing $\b G(\b x)=\bA\b x H(\b x)$ we obtain
	\begin{equation}\label{eq:gGeCo2}
		\b g(\b x)=\b x+\b G(\b x)+\frac12\dt\varphi(\dt\tr(\b S^-))\bA\b G(\b x).
	\end{equation}
	The following theorem uses this representation of $\b g$ to analyze the stability properties of GeCo2.\newpage
	\begin{thm}\label{Thm:geco2}
		Let $\b g$, given by \eqref{eq:gGeCo2}, be the generating function of the GeCo2 iterates $\b y^n$ when applied to \eqref{PDS_test}, \eqref{eq:IC}. Further, let $\b y^*>\b 0$ be a steady state solution of \eqref{PDS_test}. 
		\begin{enumerate}
			\item\label{itThm:geco} The map $\b g\in\mathcal C^1(\mathcal D)$ has Lipschitz continuous derivatives on a sufficiently small neighborhood $\mathcal D$ of $\b y^*$.
			Moreover, the stability function reads
			\begin{equation}\label{eq:stabfun_GeCo2}
				R(z)=1+z+\frac12 z^2\varphi(\dt\tr(\b S^-)).
			\end{equation} If $\lvert R(-\dt(ac+b))\rvert <1$, then $\b y^*$ is stable and there exists a $\delta>0$ such that $\vec{1\\c}^T\b y^0=\vec{1\\c}^T\b y^*$ and $\norm{\b y^0-\b y^*}<\delta$ imply $\lim_{n\to \infty}\b y^n=\b y^*$.  If $\lvert R(-\dt(ac+b))\rvert >1$, then $\b y^*$ is an unstable fixed point of GeCo2.
			\item\label{itThm:geco_c2} There holds $\b g\notin \mathcal C^2$ in any neighborhood of $\b y^*$.
		\end{enumerate}
	\end{thm}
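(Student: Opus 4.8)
The plan is to exploit the piecewise definition of $H$ entering \eqref{eq:gGeCo2}: although $H$ has a kink across $\ker(\bA)$, the factor $\bA\b x$ multiplying it vanishes to first order on $\ker(\bA)$, so the kink is invisible in the first derivative (consistent with part~\ref{itThm:geco}) but surfaces in the second. Since any $\mathcal C^2$-map restricts to a $\mathcal C^2$ function along every smooth curve, it suffices to restrict $\b g$ to a single line through $\b y^*$ and to show that this one-variable restriction is not twice continuously differentiable.

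First I would parametrise $\b x(s)=\b y^*+s\bby$ for small $s$, a line transverse to $\ker(\bA)=\Span(\b y^*)$, where $\bby$ is the eigenvector of $\bA$ for the eigenvalue $\lambda=-(ac+b)$. Using $\bA\b x(s)=s\lambda\bby$, $\bA\bby=\lambda\bby$ and $\b G(\b x)=\bA\b x\,H(\b x)$ in \eqref{eq:gGeCo2}, every term stays in $\Span(\bby)$ (equivalently, $\b g$ conserves $\b n^T\b y$ with $\b n=(1,c)^T$ and $\b n^T\bby=0$), so I can write
\[
\b g(\b x(s))=\b y^*+\psi(s)\bby,\qquad \psi(s)=s+\kappa\,s\,H(\b x(s)),\quad \kappa=\lambda\bigl(1+\tfrac12\dt\varphi(\dt\tr(\b S^-))\lambda\bigr).
\]
Thus the entire non-smoothness of the restriction is carried by the scalar $H(\b x(s))$, and the prefactor $s$ is exactly the vanishing of $\bA\b x(s)$ on $\ker(\bA)$.

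Next I would analyse $H$ along the line. By Lemma~\ref{Lem:w} and \eqref{eq:w_final}, $w_1(\b x(s))$ is a positive multiple of $s$, so $H(\b x(s))=\widetilde H_1(\b x(s))$ for $s>0$ and $H(\b x(s))=\widetilde H_2(\b x(s))$ for $s<0$; in both cases $H(\b x(s))=\dt\varphi(g_\pm(s))$ with $g_\pm$ smooth on the respective half-neighbourhood, $g_\pm(0)=0$ and one-sided slopes $g_+'(0)=P/y_1^*>0$, $g_-'(0)=-P/(cy_2^*)<0$, where $P=p(\dt\lambda)>0$. Since $\varphi'(0)=-\tfrac12\neq0$ from \eqref{eq:phi}, the one-sided limits $H'(0^\pm)=\dt\varphi'(0)g_\pm'(0)$ are finite but unequal, while $H$ itself is continuous with $H(0)=\dt$. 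Differentiating $\psi$ then gives $\psi'(0^+)=\psi'(0^-)=1+\kappa\dt$ (the kink is annihilated by the factor $s$, matching part~\ref{itThm:geco}), whereas $\psi''(0^\pm)=2\kappa H'(0^\pm)$, which I claim differ.

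The main obstacle is ensuring this second-order jump is genuine, and two points need care. First, the one-sided second derivatives must exist: since $g_\pm$ and $\varphi$ are smooth with bounded derivatives on each closed half-neighbourhood (the components $x_1,x_2$ remain positive), $H''$ is bounded on each side, so the term $s\,H''(s)$ occurring in $\psi''$ vanishes as $s\to0$, leaving $\psi''(0^\pm)=2\kappa H'(0^\pm)$. Second, $\kappa\neq0$ for every $\dt>0$: writing $\mu=\tr(\b S^-)=-\lambda>0$ one computes $1+\tfrac12\dt\varphi(\dt\mu)\lambda=\tfrac12(1+e^{-\dt\mu})>0$, hence $\kappa<0$. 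Consequently $\psi''(0^+)=-\kappa\dt P/y_1^*$ and $\psi''(0^-)=\kappa\dt P/(cy_2^*)$ are distinct, because $\kappa\dt P\,(1/y_1^*+1/(cy_2^*))\neq0$, so $\psi\notin\mathcal C^2$ at $s=0$ and therefore $\b g\notin\mathcal C^2$ in any neighbourhood of $\b y^*$.
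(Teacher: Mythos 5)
Your proposal proves only part~\ref{itThm:geco_c2} of the theorem, but for that part the argument is correct and takes a genuinely different route from the paper. The paper writes $\b g(\b x)=\b x+\b B\b G(\b x)$ with $\b B=\b I+\frac12\dt\varphi(\dt(ac+b))\bA$, shows $\b B$ is invertible, and then invokes Lemma~\ref{Lem:diff}~\ref{it:blemdiff} entrywise: the piecewise gradients satisfy $\lim_{\b x\to\b y^*}\nabla\widetilde H_1(\b x)\neq\lim_{\b x\to\b y^*}\nabla\widetilde H_2(\b x)$ (via $\nabla w_2\neq\b 0$ and $w_1=-\tfrac1c w_2$), so a first partial derivative of the entries of $\b D(\b B\b G)$ cannot exist at $\b y^*$. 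You instead restrict to the line $\b y^*+s\bby$ and show that the scalar restriction $\psi(s)=s+\kappa s H(\b x(s))$ has unequal one-sided second derivatives $\psi''(0^\pm)=2\kappa H'(0^\pm)$. I checked your computations: $g_+'(0)=P/y_1^*$, $g_-'(0)=-P/(cy_2^*)$, $H'(0^\pm)=\dt\varphi'(0)g_\pm'(0)$ with $\varphi'(0)=-\tfrac12$, and $\kappa=\lambda\cdot\tfrac12\bigl(1+e^{\dt\lambda}\bigr)<0$; they are all right, and your value $\tfrac12(1+e^{z})$ for the eigenvalue of $\b B$ on $\Span(\bby)$ in fact corrects a sign slip in the paper's displayed value $\tfrac32-\tfrac12 e^{z}$ (both are positive, so nothing downstream is affected). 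Your route is more elementary and self-contained, needing no auxiliary lemma, though like the paper's it exploits the explicit $2\times2$ geometry (one-dimensional kernel, explicit eigenvector).

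The genuine gap is part~\ref{itThm:geco}, which carries most of the content of the theorem and is nowhere established in your proposal. Your observation that the kink of $H$ is \enquote{invisible in the first derivative} only shows that the one-dimensional restriction $\psi$ is differentiable at $s=0$; it shows neither that $\b g$ is differentiable at $\b y^*$ as a map on $\R^2_{>0}$, nor that $\b g\in\mathcal C^1(\mathcal D)$ with \emph{Lipschitz continuous} first derivatives on a full neighborhood, which is exactly the regularity required to invoke Theorem~\ref{Thm_MPRK_stabil} (stability and local convergence on the invariant affine set) and Theorem~\ref{Thm:_Asym_und_Instabil} (instability). It also does not yield the Jacobian $\b D\b g(\b y^*)=\b I+\dt\bA+\frac12\dt^2\varphi(\dt\tr(\b S^-))\bA^2$ from which the stability function \eqref{eq:stabfun_GeCo2} is read off: your identity $\psi'(0)=1+\kappa\dt=R(\dt\lambda)$ gives the derivative only in the single direction $\bby$, not the full matrix and its spectrum. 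In the paper this half of the proof requires Lemma~\ref{Lem:diff}~\ref{it:alemdiff} to differentiate $\b G(\b x)=\bA\b x\,H(\b x)$ at kernel points, the splitting $\b D\b G(\b x)=\b B(\b x)+\b C(\b x)$, and an entrywise verification of the hypotheses of Lemma~\ref{Lem:locallyLip} (existence of $\lim_{\b x\to\b x_0}\nabla\widetilde H_k$ and $\lim_{\b x\to\b x_0}\nabla\partial_j\widetilde H_k$, using l'Hospital-type limits of $\Phi'$ and $\Phi''$), before the dynamical-systems theorems can be applied. Until you supply a two-dimensional regularity argument of this kind, the stated theorem is not proven; only its second assertion is.
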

	\begin{proof}
		\begin{enumerate}
			\item	We demonstrate that all assumptions of Theorem~\ref{Thm_MPRK_stabil} and Theorem~\ref{Thm:_Asym_und_Instabil} are fulfilled.
			
			From part \ref{it:alemdiff} of Lemma \ref{Lem:diff} from the appendix with $\mathbf \Phi(\b x)=\bA\b x$ and $\Psi(\b x)=H(\b x)$, it follows that the partial derivatives of $\b G(\b x)=\bA\b x H(\b x)$ on $\ker(\bA)$ exist and that $\b D\b G(\b x_0)=\Psi(\b x_0)\bA=\dt\bA$ holds for $i=1,2$ and all $\b x_0\in \ker(\bA)$.
			As a result of \eqref{eq:gGeCo2} we obtain
			\begin{equation*}
				\b D\b g(\b y^*)=\b I+\dt\bA+\frac{1}{2}(\dt)^2\varphi(\dt\tr(\b S^-))\bA^2
			\end{equation*}
			and the eigenvalues are given by $1$ and $R(-\dt(ac+b))$, where
			\begin{equation*}
				R(z)=1+z+\frac12 z^2\varphi(-\dt\tr(\b S^-)).
			\end{equation*}
			In total, we can write
			\begin{equation}\label{eq:DGGeCo2}
				\b D\b G(\b x)=\b B(\b x)+\b C(\b x)
			\end{equation}
			with
			\[\b B(\b x)=\bA H(\b x)\qta \b C(\b x)=\bA\b x\cdot\begin{cases}\nabla \widetilde H_1(\b x),& x_1>\frac{b}{a}x_2,\\
				\b 0^T ,& x_1=\frac{b}{a}x_2,\\
				\nabla \widetilde H_2(\b x),& x_1<\frac{b}{a}x_2. \end{cases} \] 
			Note that, if each entry of $\b B=(b_{ij})_{i,j=1,2}$ and $\b C=(c_{ij})_{i,j=1,2}$ satisfies the assumptions of Lemma \ref{Lem:locallyLip} from the appendix, we can conclude that $\b G\in \mathcal C^1(\mathcal D)$ in a sufficiently small neighborhood $\mathcal D$ of $\b y^*$ and that the first derivatives are Lipschitz continuous on $\mathcal{D}$. As a direct consequence of \eqref{eq:gGeCo2}, the same would then hold true for $\b g$.
			
			Now we show that the entries $b_{ij}$ and $c_{ij}$ of of the matrices $\b B$ and $\b C$ satisfy the assumptions of Lemma~\ref{Lem:locallyLip} from the appendix, that is 
			\begin{enumerate}
				\item $b_{ij}$ and  $c_{ij}$ are continuous on $\R^2_{>0}$,
				\item  $b_{ij}$ and  $c_{ij}$ are constant on $\ker(\bA)$,
				\item  $b_{ij}$ and  $c_{ij}$ are in $\mathcal C^1$ on $\R^2_{>0}\setminus\ker(\bA)$ and
				\item $\lim_{\b x\to\b x_0}\nabla b_{ij}(\b x)$ and $\lim_{\b x\to\b x_0}\nabla c_{ij}(\b x)$ exist for all $\b x_0\in \ker(\bA)\cap \R^2_{>0}$
			\end{enumerate}
			for $i,j\in\{1,2\}$. First, note that $\b B$ and $\b C$ are constant on $\ker(\bA)$, and due to $\widetilde H_k\in \mathcal C^2$ for $k=1,2$,  we find that each entry of the two matrices is continuously differentiable on $\R^2_{>0}\setminus\ker(\bA)$. Even more, since $H$ is continuous we know that $b_{ij}\in \mathcal C(\R^2_{>0})$ for  $i,j\in\{1,2\}$. 
			
			We want to point out that if $\lim_{\b x\to\b x_0}\nabla \widetilde H_k(\b x)$ exists, this proves the continuity of $c_{ij}$ as well as that $\lim_{\b x\to\b x_0}\nabla b_{ij}(\b x)$ exists for all $i,j\in \{1,2\}$.
			Furthermore, for $\b x\notin \ker(\bA)$ we find
			\begin{align*}
				\nabla c_{ij}(\b x)&= \nabla(\bA\b x\nabla \widetilde H_k(\b x))_{ij}=\nabla((\bA\b x)_i\tfrac{\partial}{\partial x_j}\widetilde H_k(\b x))\\
				&=(\bA\b e_i)^T \tfrac{\partial}{\partial x_j}\widetilde H_k(\b x)+(\bA\b x)_i \nabla (\tfrac{\partial}{\partial x_j}\widetilde H_k(\b x)).
			\end{align*}
			Thus, $\lim_{\b x\to\b x_0}\nabla c_{ij}(\b x)$ exists, if both limits, $\lim_{\b x\to\b x_0}\nabla \widetilde H_k(\b x)$ as well as $\lim_{\b x\to\b x_0}\nabla (\tfrac{\partial}{\partial x_j}\widetilde H_k(\b x))$ exist for $i,j,k\in\{1,2\}$. 
			To see that both limits exist for $\b x_0\in \ker(\bA)\cap \R^2_{>0}$, we introduce $\Phi(z)=\frac{1-e^{-\dt z}}{z}$, so that \[ \widetilde H_k(\b x)=\Phi(\tfrac{w_k(\b x)}{x_k}).\] 
			Hence, we have  $\Phi\in \mathcal C^2(\R\setminus\{0\})$ and
			\begin{equation}
				\begin{aligned}\label{eq:NablaH_i(x)}
					\nabla\widetilde H_k(\b x)=&\Phi'(\tfrac{w_k(\b x)}{x_k})\left(\frac{\nabla w_k(\b x)}{x_k}+w_k(\b x)\nabla\left(\frac{1}{x_k}\right) \right),\\
					\nabla\tfrac{\partial\widetilde H_k(\b x)}{\partial x_j} =&\Phi''(\tfrac{w_k(\b x)}{x_k})\left(\frac{\nabla w_k(\b x)}{x_k}+w_k(\b x)\nabla \left(\frac{1}{x_k}\right) \right)\!\left(\frac{\tfrac{\partial w_k(\b x)}{\partial x_j} }{x_k}+w_k(\b x)\tfrac{\partial\left(\frac{1}{x_k}\right)}{\partial x_j} \right)\\
					&+\Phi'(\tfrac{w_k(\b x)}{x_k})\nabla\left(\frac{\tfrac{\partial}{\partial x_j} w_k(\b x)}{x_k}+w_k(\b x)\tfrac{\partial}{\partial x_j}\left(\frac{1}{x_k}\right) \right).
				\end{aligned}
			\end{equation}
			As $\lim_{\b x\to\b x_0}\tfrac{w_k(\b x)}{x_k}=0$ for $\b x_0\in \ker(\bA)\cap \R^2_{>0}$, see \eqref{eq:w_MatrixForm}, we are interested in the limits of the first two derivatives of $\Phi$ at $z=0$. By l'Hospital's rule, a straightforward calculation yields
			\begin{equation}
				\lim_{z\to 0}\Phi'(z)=-\frac{(\dt)^2}{2} \qta \lim_{z\to 0}\Phi''(z)=\frac{(\dt)^3}{3}.\label{eq:limitPhi'}
			\end{equation}
			In addition, due to \eqref{eq:w_MatrixForm}, we know that $\nabla w_k$ is a constant function for $k=1,2$, which means that
			\begin{align*}
				\nabla\left(\frac{\frac{\partial}{\partial x_j} w_k(\b x)}{x_k}+w_k(\b x)\frac{\partial}{\partial x_j}\left(\frac{1}{x_k}\right) \right)=&\frac{\partial}{\partial x_j} w_k(\b x)\nabla\left(\frac{1}{x_k}\right)\\&+\nabla w_k(\b x)\frac{\partial}{\partial x_j}\left(\frac{1}{x_k}\right)\\&+ w_k(\b x)\nabla\left(\frac{\partial}{\partial x_j}\left(\frac{1}{x_k}\right)\right).
			\end{align*}
			It thus follows from \eqref{eq:NablaH_i(x)} and \eqref{eq:limitPhi'} that  $\lim_{\b x\to\b x_0}\nabla \widetilde H_k(\b x)$ as well as $\lim_{\b x\to\b x_0}\nabla (\tfrac{\partial}{\partial x_j}\widetilde H_k(\b x))$ exist for all $i,j,k\in\{1,2\}$ and each $\b x_0\in \ker(\bA)\cap \R^2_{>0}$.  This finishes this part of the proof.
			\item First note that \eqref{eq:gGeCo2} implies that
			\[\b g(\b x)=\b x+\b B\b G(\b x) \]
			with 
			\[\b B=\b I+\frac12\dt\varphi(\dt(ac+b))\bA.\]
			Hence, $\b g\in \mathcal C^2$ if and only if $\b B\b G\in \mathcal C^2$.
			In general, \eqref{eq:DGGeCo2} can be expressed by
			\begin{equation*}
				\b D\b G(\b x)=\bA H(\b x)+\bA\b x\begin{cases}\nabla \widetilde H_1(\b x),& x_1>\frac{b}{a}x_2,\\
					\b c(\b x) ,& x_1=\frac{b}{a}x_2,\\
					\nabla \widetilde H_2(\b x),& x_1<\frac{b}{a}x_2 \end{cases}
			\end{equation*}
			with an arbitrary function $\b c\colon \R^2_{>0}\to\R^2_{>0}$.  Our strategy is to use Lemma~\ref{Lem:diff} to conclude that the first partial derivative of the first column of \[\b B\bA\b x\begin{cases}\nabla \widetilde H_1(\b x),& x_1>\frac{b}{a}x_2,\\
				\b c(\b x) ,& x_1=\frac{b}{a}x_2,\\
				\nabla \widetilde H_2(\b x),& x_1<\frac{b}{a}x_2 \end{cases}\] does not exist at $\b y^*$. To that end, we prove that $\b T(\b x)=\b x$ satisfies \[\partial_j\b T(\b x)=\b e_j\notin\ker(\b B\bA),\] and that  \begin{equation}\label{neq:limNablaHi}
				\lim_{\b x\to\b y^*}\nabla \widetilde H_1(\b x)\neq\lim_{\b x\to\b y^*} \nabla \widetilde H_2(\b x),
			\end{equation}
			which shows the claim independently of $\b c(\b y^*)$. Indeed, the matrix $\b B$ satisfies $\b B\b y^*=\b y^*$ and $\b B\bby=\mu \bby$, where $\bby=(1,-1)^T$ and 
			\[\mu=1-\frac12\varphi(\dt(ac+b))\dt(ac+b). \]
			Using $z=-\dt(ac+b)<0$ we have
			\[\mu=1+\frac12(1-e^z)=\frac32-\frac12e^z >0,\]
			which means that $\b B$ is invertible, and hence $\ker(\b B\bA)=\ker(\bA)$. Therefore, we obtain $\partial_j\b T(\b x)\notin \ker(\bA)=\ker(\b B\bA)$.
			
			For proving \eqref{neq:limNablaHi} we use \eqref{eq:limitPhi'}, \eqref{eq:NablaH_i(x)} and $\b w(\b y^*)=\b 0$ to find
			\begin{equation*}
				\lim_{\b x\to\b y^*}\nabla \widetilde H_i(\b x)=-\frac{(\dt)^2}{2}\frac{\nabla w_i(\b y^*)}{y^*_i}.
			\end{equation*}
			Let us now suppose \eqref{neq:limNablaHi} is not satisfied and recall that $w_1=-\tfrac1c w_2$ holds true because of Lemma~\ref{Lem:w}. Hence, we would have
			\begin{equation}\label{eq:nablaw2/y2=pics.}
				\frac{\nabla w_2(\b y^*)}{y^*_2}=-\frac{\nabla w_2(\b y^*)}{cy^*_1}.
			\end{equation}
			From \eqref{eq:w_MatrixForm} it follows that $\nabla w_2$ is constant. We observe that $\nabla w_2(\b x)\neq \b 0$ for all $\b x$ as otherwise even $\b D \b w(\b x)=\b 0$ for all $\b x$, which contradicts Lemma~\ref{Lem:w} as \eqref{eq:w_MatrixForm} would imply that $\b w(\b y^n)=\b 0$ for all $\b y^n$. Hence, without loss of generality, assume $\partial_1w_2(\b x)\neq 0$. Then \eqref{eq:nablaw2/y2=pics.} implies
			\[ \partial_1w_2(\b x)(cy^*_1+y^*_2)=0,\]
			which is not true since $\partial_1w_2(\b x)\neq 0$ and $cy^*_1+y^*_2>0.$ Hence, \eqref{neq:limNablaHi} is true, so that Lemma~\ref{Lem:diff} implies $\b B\b G\notin \mathcal C^2$ in any neighborhood of $\b y^*$, and thus, the same holds for $\b g$.
		\end{enumerate}

	\end{proof}
	Note that part \ref{itThm:geco_c2} of Theorem~\ref{Thm:geco2} means that the assumptions of  \cite[Theorem 2.9]{IKM2122} are not fulfilled, while those of the generalization, Theorem~\ref{Thm_MPRK_stabil} are satisfied.
	\begin{rem}\label{rem:geco2}
		A numerical calculation shows that the stability function $R$ from \eqref{eq:stabfun_GeCo2} with $\dt\tr(\b S^-)=-z$ satisfies $\abs{R(z)}<1$ for $z\in (z^*,0]$ with $-3.9924\leq z^*\leq -3.9923$.  Hence, the stability region of GeCo2 when applied to \eqref{eq:IC}, \eqref{PDS_test} is almost twice as big as the one of the underlying Heun scheme which is $(-2,0]$. 
	\end{rem}
	To investigate $N\times N$ systems one needs to generalize Lemma~\ref{Lem:w} and Lemma~\ref{Lem:locallyLip} from the appendix, which is outside the scope of the present work.

\subsection{Generalized BBKS}\label{sec:stab_gBBKS}

When it comes to the analysis of gBBKS schemes, we face similar obstacles as for GeCo2. The aim of this work is to present results from \cite{gecostab} giving a first insight into the stability properties of these schemes. As done for GeCo2 we will discuss at the end of this section an ansatz to generalize the following analysis.
\subsubsection{Stability of first order gBBKS Schemes}\label{subsec:StabBBKS}
When applied to the system of differential equations \eqref{PDS_test}, \ie $\b y'=\bA\b y$ with $\bA=\vec{
	-a &\hphantom{-}bc\\ \hphantom{-}ac &-b}$, the first order gBBKS schemes \eqref{eq:gBBKS1Intro} are given by 
\begin{equation}\label{eq:gBBKS11}
	\b	y^{n+1}=\b y^n + \dt \bA\b y^n\Bigg(\prod_{m\in M^{n}} \frac{y^{n+1}_m}{\sigma^{n}_m}\Bigg)^{\ns r^{n}},\quad i=1,2,
\end{equation}
where
\begin{equation*}
	M^{n}=\{ m\in \{1,2\}\mid  (\bA\b y^n)_m<0\}.
\end{equation*}

In this section we investigate the stability properties of gBBKS schemes by first proving that the assumptions of Theorem \ref{Thm_MPRK_stabil} are met.
The existence and uniqueness of a function $\b g$ generating the iterates from \eqref{eq:gBBKS11}, i.\,e.\ $\b y^{n+1}=\b g(\b y^n)$, is already proven in \cite{gBBKS}. Thereby, $\b g$ is given by the unique solution to some equation 
\[\b F(\b x,\b g(\b x))=\b 0, \]
where $\b F\colon \R^2_{>0}\times  \R^2_{>0}\to \R^2$ with $(\b x,\b y)\mapsto\b F(\b x,\b y)$.  In the following we denote by 
\begin{equation*}
	\begin{aligned}
		\b D_\b x\b F(\b x,\b y)&=\frac{\partial \b F}{\partial \b x}(\b x,\b y),\\
		\b D_\b y\b F(\b x,\b y)&=\frac{\partial \b F}{\partial \b y}(\b x,\b y)
	\end{aligned}
\end{equation*}
the Jacobians of $\b F$ with respect to $\b x$ and $\b y$, respectively.

An intuitive way of proving $\b g\in \mathcal C^1(\mathcal D)$, where $\mathcal D$ is a neighborhood of a fixed point $\b y^*$ of $\b g$, is to use the implicit function theorem. Unfortunately, we will see in the following that in our case $\b F$ is not differentiable on $\mathcal D\times \mathcal D$. Since the existence and uniqueness of the map $\b g$ is already known here, the differentiability of $\b g$ can be obtained by weaker assumptions on $\b F$ as the next theorem states.
\begin{thm}[{\cite[Theorem 11.1]{LS14}}]\label{Thm:gDiff}
	Let $D\tm \R^2$ be open and $\b g\colon D\to D$ be continuous in $\b x_0$. Furthermore, let $\b F\colon D\times D\to \R^2$ with $(\b x,\b y)\mapsto\b F(\b x,\b y)$ be differentiable in $(\b x_0,\b g(\b x_0))^T$ and $\b D_\b y\b F(\b x_0,\b g(\b x_0))$ be invertible. Suppose that $\b F(\b x,\b g(\b x))=\b 0$ for all $\b x\in D$, then also $\b g$ is differentiable in $\b x_0$ and 
	\begin{equation*}
		\b D\b g(\b x_0)=-(\b D_\b y\b F(\b x_0,\b g(\b x_0)))^{-1}\b D_\b x\b F(\b x_0,\b g(\b x_0)).
	\end{equation*} 
\end{thm}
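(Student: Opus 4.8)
The plan is to prove this refined implicit function theorem directly from the definition of differentiability, exploiting that we only need $\b F$ to be differentiable at the single point $(\b x_0, \b g(\b x_0))$ and that the implicit solution $\b g$ already exists and is continuous at $\b x_0$. Writing $\b y_0 = \b g(\b x_0)$, the differentiability of $\b F$ at $(\b x_0, \b y_0)$ means
\[
\b F(\b x, \b y) = \b F(\b x_0, \b y_0) + \b D_\b x\b F(\b x_0,\b y_0)(\b x - \b x_0) + \b D_\b y\b F(\b x_0,\b y_0)(\b y - \b y_0) + \b r(\b x,\b y),
\]
where the remainder satisfies $\b r(\b x,\b y) = o(\norm{\b x - \b x_0} + \norm{\b y - \b y_0})$ as $(\b x,\b y)\to(\b x_0,\b y_0)$. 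First I would substitute $\b y = \b g(\b x)$ and use the two identities $\b F(\b x,\b g(\b x)) = \b 0$ and $\b F(\b x_0,\b y_0) = \b 0$ to cancel the constant terms, obtaining a relation purely between the increments $\b x - \b x_0$ and $\b g(\b x) - \b g(\b x_0)$. Solving for the latter via the assumed invertibility of $\b D_\b y\b F(\b x_0,\b y_0)$ yields the candidate expansion
\[
\b g(\b x) - \b g(\b x_0) = -\bigl(\b D_\b y\b F(\b x_0,\b y_0)\bigr)^{-1}\Bigl(\b D_\b x\b F(\b x_0,\b y_0)(\b x - \b x_0) + \b r(\b x,\b g(\b x))\Bigr),
\]
which already exhibits the asserted linear map as the only possible derivative.

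The main obstacle — and the whole content of the theorem — is that the remainder $\b r(\b x,\b g(\b x))$ is controlled by $\norm{\b x - \b x_0} + \norm{\b g(\b x) - \b g(\b x_0)}$, and a priori we have no bound on the second increment in terms of the first; this is exactly what the standard implicit function theorem would extract from a neighbourhood-wide $\mathcal C^1$ hypothesis that we are deliberately avoiding. To close this gap I would run a bootstrap argument. Abbreviating $\Delta_x = \norm{\b x - \b x_0}$ and $\Delta_y = \norm{\b g(\b x)-\b g(\b x_0)}$, the continuity of $\b g$ at $\b x_0$ guarantees that for $\b x$ sufficiently close to $\b x_0$ the pair $(\b x,\b g(\b x))$ lies in the region where the little-$o$ estimate applies, so that $\norm{\b r(\b x,\b g(\b x))} \le \epsilon(\Delta_x + \Delta_y)$ for any prescribed $\epsilon > 0$. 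Taking norms in the displayed expansion and writing $C_1 = \norm{(\b D_\b y\b F(\b x_0,\b y_0))^{-1}\b D_\b x\b F(\b x_0,\b y_0)}$, $C_2 = \norm{(\b D_\b y\b F(\b x_0,\b y_0))^{-1}}$ gives
\[
\Delta_y \le C_1\Delta_x + C_2\epsilon(\Delta_x + \Delta_y),
\]
and choosing $\epsilon$ small enough that $C_2\epsilon < \tfrac12$ lets me absorb the $\Delta_y$ on the right to conclude $\Delta_y \le K\Delta_x$ for a constant $K$ depending only on $C_1, C_2$. This linear bound on $\Delta_y$ is the crux of the whole proof.

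Finally I would feed the bound $\Delta_y \le K\Delta_x$ back into the remainder estimate: since $\norm{\b r(\b x,\b g(\b x))} = o(\Delta_x + \Delta_y) = o\bigl((1+K)\Delta_x\bigr) = o(\Delta_x)$, the expansion becomes
\[
\b g(\b x) - \b g(\b x_0) = -\bigl(\b D_\b y\b F(\b x_0,\b y_0)\bigr)^{-1}\b D_\b x\b F(\b x_0,\b y_0)(\b x - \b x_0) + o(\norm{\b x - \b x_0}),
\]
which is by definition the differentiability of $\b g$ at $\b x_0$ with the Jacobian stated in the theorem. In the application to gBBKS this will be invoked with $\b x_0 = \b y^*$ a fixed point, where the continuity of $\b g$ is available from \cite{gBBKS} and the invertibility of $\b D_\b y\b F$ will have to be verified separately for the concrete $\b F$ arising from \eqref{eq:gBBKS11}.
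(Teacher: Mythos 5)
The paper does not actually prove this statement: it is imported verbatim by citation from \cite{LS14} (Theorem~11.1 there), so there is no internal proof to compare against. Your argument is a correct, self-contained proof of the cited result, and it is the natural one for this kind of pointwise refinement of the implicit function theorem. The two places where such a proof can fail are both handled properly: first, you justify that the little-$o$ remainder estimate is applicable along the curve $(\b x,\b g(\b x))$ by invoking the continuity of $\b g$ at $\b x_0$ (without which $\norm{\b g(\b x)-\b y_0}$ need not be small when $\norm{\b x-\b x_0}$ is); second, the bootstrap step, choosing $\epsilon$ with $C_2\epsilon<\tfrac12$ so that the $\Delta_y$ term can be absorbed and one obtains the linear bound $\Delta_y\le K\Delta_x$, is exactly the crux, and your execution of it is sound. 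Feeding $\Delta_y\le K\Delta_x$ back into the remainder then gives $\b r(\b x,\b g(\b x))=o(\norm{\b x-\b x_0})$, which is the definition of differentiability of $\b g$ at $\b x_0$ with the stated Jacobian. Note also that your argument nowhere uses $D\tm\R^2$; it works verbatim in $\R^n\times\R^m$ with $\b D_\b y\b F$ square and invertible, which is consistent with the generality of the original reference.
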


%
%

Before we formulate the stability theorem for gBBKS1, we introduce some assumptions on the exponent $r^n$ as well as $\sigma_m^n$ from \eqref{eq:gBBKS11}.
In particular, $r^n>0$ and $\sigma_m^n>0$ may depend on $\b y^n$ and hence will be interpreted as functions $r^n=r(\b y^n)$ and $\sigma_m^n=\sigma_m(\b y^n)$. For the analysis of the gBBKS1 schemes we do not further specify the expressions for the functions $r$ or $\sigma_m$. Instead, we assume some reasonable properties such as that $r$ and $\sigma_m$ are positive for all $\dt\geq0$. Furthermore, we require $\sigma_m(\b v)=v_m$ whenever $\b v\in\ker(\bA)\cap \R^2_{>0}$ which is in agreement with the literature \cite{gBBKS,BBKS2007,BRBM2008}. To guarantee the regularity of the map generating the iterates $\b y^n$, we also assume that $r,\sigma_1$ and $\sigma_2$ are in $\mathcal C^2$. In total, we prove the following theorem.

\begin{thm}\label{Thm:gBBKS1}
	Let  $\b y^*>\b 0$ be a steady state solution of \eqref{PDS_test}, and assume $\sigma_1,\sigma_2,r\in \mathcal{C}^2(\R^2_{>0},\R_{>0})$. Further, let $\mathcal D$ be a sufficiently small neighborhood of $\b y^*$ and suppose that $\bm \sigma(\b v)=\b v$ for all $\b v\in C=\ker(\bA)\cap \mathcal D$. Then the map $\b g$ generating the iterates of the gBBKS1 family, implicitly given by \eqref{eq:gBBKS11}, satisfies $\b g(\b v)=\b v$ for all steady states $\b v\in  C$ and the following statements hold.
	\begin{enumerate}
		\item\label{it:aThmgbbks1} The map $\b g$ satisfies  $\b g\in \mathcal{C}^1(\mathcal D)$ and $\b D\b g(\b y^*)=\b I+\dt\bA$.
		\item\label{it:cThmgbbks1}  The first derivatives of $\b g$ are bounded and Lipschitz continuous on $\mathcal D$.
		\item\label{it:bThmgbbks1} The map $\b g$ does not belong to $\mathcal C^2$ for any open neighborhood of $\b y^*$, if $\dt\neq(ac+b)^{-1}$.
	\end{enumerate}
\end{thm}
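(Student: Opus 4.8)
The plan is to prove the three statements of Theorem~\ref{Thm:gBBKS1} in the order \ref{it:aThmgbbks1}, \ref{it:cThmgbbks1}, \ref{it:bThmgbbks1}, exploiting the structure already established for GeCo2 in Theorem~\ref{Thm:geco2}. The first observation is that any $\b v\in C=\ker(\bA)\cap\mathcal D$ is a fixed point of $\b g$: inserting $\b y^n=\b v$ into \eqref{eq:gBBKS11} gives $\b y^{n+1}=\b v+\dt\bA\b v(\cdots)=\b v$ because $\bA\b v=\b 0$, independently of the value of the bracketed factor and of the set $M^n$. This also shows that on $C$ the nonlinear weighting factor never actually contributes, which is the key to computing the Jacobian. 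For part~\ref{it:aThmgbbks1}, I would write the defining relation as $\b F(\b x,\b g(\b x))=\b 0$ with $\b F(\b x,\b y)=\b y-\b x-\dt\bA\b x\bigl(\prod_{m\in M(\b x)}\tfrac{y_m}{\sigma_m(\b x)}\bigr)^{r(\b x)}$ and apply Theorem~\ref{Thm:gDiff}, which only needs differentiability of $\b F$ at the single point $(\b y^*,\b g(\b y^*))=(\b y^*,\b y^*)$ together with invertibility of $\b D_{\b y}\b F(\b y^*,\b y^*)$. Since $\bA\b y^*=\b 0$, the partial Jacobian $\b D_{\b y}\b F(\b y^*,\b y^*)=\b I$ is trivially invertible, and $\b D_{\b x}\b F(\b y^*,\b y^*)=-\b I-\dt\bA$ because the derivative of the weighting factor is multiplied by $\bA\b x$, which vanishes at the steady state. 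Theorem~\ref{Thm:gDiff} then yields $\b D\b g(\b y^*)=-(\b I)^{-1}(-\b I-\dt\bA)=\b I+\dt\bA$.

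The subtle point in part~\ref{it:aThmgbbks1} is that the set $M(\b x)=\{m\mid(\bA\b x)_m<0\}$ is discontinuous across $\ker(\bA)$, exactly as the functions $\widetilde H_i$ were split across $\ker(\bA)$ in the GeCo2 proof. I would therefore argue, in analogy with Lemma~\ref{Lem:w}, that the scalar factor $s^n$ measuring the signed distance of $\b y^n$ from $\ker(\bA)$ along $\bby$ controls the sign of $(\bA\b y^n)_m$, so $M(\b x)$ is constant on each of the two open half-spaces bounded by $\ker(\bA)$ and the product factor equals $1$ on $\ker(\bA)$ itself. The limit of the weighting factor and its gradient as $\b x\to\b y^*$ from either side must be shown to exist; since the factor tends to $1$ with a finite-limit gradient (computed as in \eqref{eq:NablaH_i(x)} using l'Hospital-type limits), the differentiability at $\b y^*$ is recovered even though $\b g$ is only piecewise smooth away from $\ker(\bA)$. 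For part~\ref{it:cThmgbbks1}, I would invoke Lemma~\ref{Lem:locallyLip} from the appendix, verifying its four hypotheses (continuity on $\R^2_{>0}$, constancy on $\ker(\bA)$, $\mathcal C^1$ off $\ker(\bA)$, and existence of the one-sided gradient limits at points of $\ker(\bA)$) for each entry of $\b D\b g$, precisely mirroring the GeCo2 argument.

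For part~\ref{it:bThmgbbks1}, the strategy is to show that the second derivative fails to exist at $\b y^*$ because the two one-sided limits of the gradient of the weighting factor disagree, so $\b g\notin\mathcal C^2$. Writing the two branches of the product as exponentials, the first-order expansion of each branch near $\ker(\bA)$ produces a gradient whose limit from the two half-spaces differs unless a nondegeneracy condition fails; computing these limits (again via \eqref{eq:limitPhi'}-type expansions) and using $w_1=-\tfrac1c w_2$ from Lemma~\ref{Lem:w}, I expect the mismatch to be governed by a factor proportional to $1-\dt(ac+b)$, which vanishes exactly when $\dt=(ac+b)^{-1}$. This explains the excluded value in the statement: at that critical step size the two branches happen to match to higher order and the jump disappears, so the method cannot be ruled out of $\mathcal C^2$ by this argument. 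The main obstacle will be the bookkeeping in the final step—correctly identifying which components of the Jacobian carry the discontinuous gradient and showing that the jump is nonzero for $\dt\neq(ac+b)^{-1}$—since here, unlike GeCo2, the implicit definition of $\b g$ means the nonlinear factor depends on $\b y^{n+1}$ as well, so the expansion of the product must be carried out self-consistently using the already-established form $\b D\b g(\b y^*)=\b I+\dt\bA$ before extracting the second-order term.
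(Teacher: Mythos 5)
Your outline of parts \ref{it:aThmgbbks1} and \ref{it:cThmgbbks1} has the right skeleton (fixed points on $\ker(\bA)$, the branch structure of $M(\b x)$, $\b D_\b y\b F(\b y^*,\b y^*)=\b I$, $\b D_\b x\b F(\b y^*,\b y^*)=-\b I-\dt\bA$, Theorem~\ref{Thm:gDiff}), but you misstate the hypotheses of Theorem~\ref{Thm:gDiff}: besides differentiability of $\b F$ at $(\b y^*,\b g(\b y^*))$ and invertibility of $\b D_\b y\b F$ there, it requires $\b g$ to be \emph{continuous} at that point, and for an implicitly defined $\b g$ this is precisely the step that costs real work. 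The paper proves it by exploiting that the scheme is positive and preserves the linear invariant, so $\Vert\b g(\b x)\Vert_1$ is controlled by $\Vert\b x\Vert_1$ (see \eqref{eq:gnorm}), whence $H(\b x,\b g(\b x))$ is bounded and $\dt\bA\b x\,H(\b x,\b g(\b x))\to\b 0$; your proposal never addresses this. Moreover, the claim is $\b g\in\mathcal C^1(\mathcal D)$, not differentiability at the single point $\b y^*$: you also need the classical implicit function theorem on $\mathcal D\setminus\ker(\bA)$ (where $\det\b D_\b y\b F=1-v_i^{(i)}\neq 0$, see \eqref{eq:det()=1-v} and \eqref{eq:v_ineq1}) and continuity of the resulting derivative across $C$. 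A further mismatch: the ``l'Hospital-type limits'' \eqref{eq:limitPhi'}, \eqref{eq:NablaH_i(x)} you invoke are GeCo2 objects; the gBBKS1 weight $\widetilde H_i(\b x,\b y)=(y_i/\sigma_i(\b x))^{r(\b x)}$ is already $\mathcal C^2$ wherever defined, and the only non-smoothness is the switching of the index $i$ across $\ker(\bA)$, neutralized at fixed points by the vanishing factor $\bA\b x$ (Lemma~\ref{Lem:diff}~\ref{it:alemdiff}). Your part~\ref{it:cThmgbbks1} via Lemma~\ref{Lem:locallyLip} is a workable alternative to the paper's argument (which uses closure of bounded Lipschitz functions under sums, products and compositions applied to \eqref{eq:DxFtildegbbks1} and \eqref{eq:DyFtildeInversegBBKS1}), but only once the missing pieces of part~\ref{it:aThmgbbks1} are in place.

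Part \ref{it:bThmgbbks1} is where your plan would genuinely fail. Lemma~\ref{Lem:w} and the relation $w_1=-\tfrac1c w_2$ do not exist in the gBBKS1 setting (there is no function $\b w$ here), and your predicted obstruction is quantitatively wrong. The jump between the one-sided gradients of the composed weight $h_i(\b x)=\bigl(g_i(\b x)/\sigma_i(\b x)\bigr)^{r(\b x)}$ at $\b y^*$ equals
\[
\bm\mu=r(\b y^*)\left[\frac{\nabla g_1(\b y^*)-\nabla\sigma_1(\b y^*)}{y_1^*}-\frac{\nabla g_2(\b y^*)-\nabla\sigma_2(\b y^*)}{y_2^*}\right],
\]
and with $\b D\b g(\b y^*)=\b I+\dt\bA$ and the standard BBKS1 choice $\bm\sigma(\b x)=\b x$ this is $r(\b y^*)\dt\bigl[\tfrac{1}{y_1^*}(\lambda_{11},\lambda_{12})-\tfrac{1}{y_2^*}(\lambda_{21},\lambda_{22})\bigr]$; both rows of $\bA$ annihilate $\b y^*>\b 0$, hence are parallel with opposite sign patterns, so $\bm\mu$ is a \emph{nonzero} multiple of $\dt$ for every $\dt>0$ --- no factor $1-\dt(ac+b)$ appears and nothing cancels at $\dt=(ac+b)^{-1}$. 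Worse, for general admissible $\bm\sigma$ the jump depends on $\b D\bm\sigma(\b y^*)$, which the hypotheses do not determine (only its action on the kernel direction is fixed), so this mechanism cannot yield a conclusion valid for the whole family. The paper's argument is structurally different: assuming $\b g\in\mathcal C^2$, it uses \eqref{eq:Dg(x)gbbks1} and \eqref{eq:DyFtildeInversegBBKS1} to write $\partial_1 g_2$ on the two half planes as in \eqref{eq:partial_1g_2(x)}, computes $\partial_2\partial_1g_2(\b y^*)$ as a one-sided limit from each side using $\b v^{(i)}(\b y^*)=\b 0$, $\partial_jv^{(i)}_k(\b y^*)=\dt\lambda_{kj}r(\b y^*)/y_i^*$ from \eqref{eq:v^(i)} and $d_{jk}(\b y^*)=-(\b I+\dt\bA)_{jk}$, and equates the two limits; the contributions involving $\bm\sigma$ cancel between the two sides and what remains is exactly $(\b I+\dt\bA)_{11}=\tfrac{y_1^*}{y_2^*}(\dt\bA)_{21}$, i.e.\ $1=\dt(ac+b)$. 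Your plan, even with the bookkeeping carried out, would not arrive at this identity.
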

\begin{proof}
	Before we start the proof of \ref{it:aThmgbbks1}, we make some preparatory considerations.
	
	Since $(\bA\b y^n)_1=c(-ay_1^n+by_2^n)$ and $c(\bA\b y^n)_2=-(\bA\b y^n)_1$ we find
	\begin{equation*}
		M^n=\begin{cases}
			\{1\}, & y_1^n>\frac{b}{a}y^n_2,\\
			\emptyset, & y_1^n=\frac{b}{a}y^n_2,\\
			\{2\}, & y_1^n<\frac{b}{a}y^n_2.
		\end{cases}
	\end{equation*}
	Hence, when applied to \eqref{PDS_test}, \eqref{eq:IC} the scheme \eqref{eq:gBBKS11} turns into
	\begin{equation}\label{eq:gBBKS11_Test_Prob}
		\b y^{n+1}=\b y^n + \dt \bA \b y^n\begin{cases}
			\left(\frac{y^{n+1}_1}{\sigma^{n}_1}\right)^{r^n}, & y_1^n>\frac{b}{a}y^n_2,\\
			1, & y_1^n=\frac{b}{a}y^n_2,\\
			\left(\frac{y^{n+1}_2}{\sigma^{n}_2}\right)^{r^n}, & y_1^n<\frac{b}{a}y^n_2,
		\end{cases}
	\end{equation}
	where $(\frac{b}{a}y_2^n,y_2^n)^T\in \ker(\bA)$ is a steady state solution of \eqref{PDS_test}.
	
	Recall that the map $\b g$ generates the iterates $\b y^n$, that is $\b y^{n+1}=\b g(\b y^n)$. Hence, inserting $\b y^n=\b v\in C$ into equation \eqref{eq:gBBKS11_Test_Prob} yields $\b y^{n+1}=\b g(\b v)$ on the left and $\b v$ on the right, and thus $\b g(\b v)=\b v$. Furthermore, we introduce the function $\b F$ defined by
	\begin{equation}\label{eq:F(x).gBBKS1}
		\begin{aligned}
			\b F&\colon \R^2_{>0}\times \R^2_{>0}\to \R^2,\\ \b F(\b x,\b y)&=\b y-\b x-\dt\bA\b x H(\b x,\b y),\\
			H(\b x,\b y)&=\begin{cases}
				\widetilde H_1(\b x,\b y), & x_1>\frac{b}{a}x_2,\\
				1, & x_1=\frac{b}{a}x_2,\\
				\widetilde H_2(\b x,\b y), & x_1<\frac{b}{a}x_2,
			\end{cases} \\
			\widetilde H_i(\b x,\b y)&=\left(\frac{y_i}{\sigma_i(\b x)}\right)^{r(\b x)}, \quad i=1,2,
		\end{aligned}
	\end{equation}
	which satisfies $\b F(\b x,\b g(\b x))=\b 0$ for all $\b x>\b 0$.

	\begin{enumerate}
		\item We first show that $\b F$ is not differentiable on $\mathcal D\times \mathcal D$. For this, we choose $\b x_0\in C$ as well as $\b y_0>\b 0$ with $\frac{(\b y_0)_1}{(\b x_0)_1}\neq\frac{(\b y_0)_2}{(\b x_0)_2}$ and define $\Psi(\b x)=H(\b x,\b y_0)$. As a result of $\bm \sigma(\b x_0)=\b x_0$ and $r,\bm \sigma\in \mathcal C$ it follows that \[\lim_{h\searrow 0}\Psi(\b x_0+h\b e_1)=\lim_{h\searrow 0} \widetilde{H}_1(\b x_0+h\b e_1,\b y_0)=\left( \frac{(\b y_0)_1}{\sigma_1(\b x_0)}\right)^{r(\b x_0)}=\left( \frac{(\b y_0)_1}{(\b x_0)_1}\right)^{r(\b x_0)}.\]
		Analogously, we obtain
		\[\lim_{h\nearrow 0}\Psi(\b x_0+h\b e_1)=\lim_{h\nearrow 0} \widetilde{H}_2(\b x_0+h\b e_1,\b y_0)=\left( \frac{(\b y_0)_2}{\sigma_2(\b x_0)}\right)^{r(\b x_0)}=\left( \frac{(\b y_0)_2}{(\b x_0)_2}\right)^{r(\b x_0)},\]
		which  shows that $\Psi(\b x_0+h\b e_1)$ possesses several accumulation points as $h\to 0$, and hence, part \ref{it:blemdiff} of Lemma \ref{Lem:diff} from the appendix with $\mathbf \Phi(\b x)=\bA\b x$ implies that the $1$st partial derivative of $\b F$ does not exist.
		
		As mentioned above, this means that we can not apply the implicit function theorem to $\b F$ on $\mathcal D\times \mathcal D$ in order to prove that $\b g\in \mathcal C^1(\mathcal D)$. Nevertheless, $\b F$ is differentiable in $(\b x,\b y)\in E= \mathcal D\setminus\ker(\bA)\times \mathcal D$, since in this case we have
		\begin{equation}\label{eq:F(x,y):i=1,2}
			\b F(\b x,\b y)=\b y-\b x-\dt\bA\b x\left(\frac{y_i}{\sigma_i(\b x)}\right)^{r(\b x)}, \quad i=\begin{cases}
				1,& x_1>\frac{b}{a}x_2,\\
				2,& x_1<\frac{b}{a}x_2
			\end{cases} 
		\end{equation}
		with $\sigma_1,\sigma_2,r\in \mathcal{C}^2(\R^2_{>0},\R_{>0})$.
		In order to show that $\b g\in \mathcal C^1(\mathcal D\setminus \ker(\bA))$, we first show that the inverse of $\b D_\b y\b F(\b x,\b g(\b x))$ exists for all $\b x\in \mathcal D\setminus \ker(\bA)$. It is straightforward to verify that 
		\begin{equation*}
			\b D_\b y\b F(\b x,\b g(\b x))=\b I-\dt\bA \b x\nabla_\b y\widetilde H_i(\b x,\b g(\b x))=\b I-\dt\bA \b x\b e_i^T\frac{r(\b x)}{\sigma_i(\b x)}\left(\frac{g_i(\b x)}{\sigma_i(\b x)}\right)^{r(\b x)-1}
		\end{equation*}
		holds for $\b x\notin C$.
		Introducing the vectors
		\begin{equation}
			\b v^{(i)}(\b x)=\dt \bA\b x \frac{r(\b x)}{\sigma_i(\b x)}\left(\frac{g_i(\b x)}{\sigma_i(\b x)}\right)^{r(\b x)-1}=\dt \bA\b x \frac{r(\b x)}{g_i(\b x)}\left(\frac{g_i(\b x)}{\sigma_i(\b x)}\right)^{r(\b x)}\label{eq:v^(i)}
		\end{equation}
		for $i$ from \eqref{eq:F(x,y):i=1,2}, we can write the Jacobian in the compact form
		\begin{equation}\label{eq:DyFtilde=I-ve_i}
			\b D_\b y\b F(\b x,\b g(\b x))=\b I- \b v^{(i)}(\b x)\b e_i^T.
		\end{equation}
		Note that due to \eqref{eq:DyFtilde=I-ve_i},  the Jacobian of $\b F$ with respect to $\b y$ is a triangular matrix, depending on $i$ from \eqref{eq:F(x,y):i=1,2}. Nevertheless, in either case  we find
		\begin{equation}\label{eq:det()=1-v}
			\det(\b D_\b y\b F(\b x,\b g(\b x)))=1-v_i^{(i)}(\b x).
		\end{equation}
		Now, we know that $(\bA\b x)_i< 0$ for $i$ form \eqref{eq:F(x,y):i=1,2} by construction of the gBBKS schemes, which in particular means that
		\begin{equation}\label{eq:v_ineq1}
			v^{(i)}_i(\b x)\neq 1.
		\end{equation} As a result of \eqref{eq:det()=1-v}, \eqref{eq:v_ineq1} the inverse of $\b D_\b y\b F(\b x,\b g(\b x))$ exists.
		
		Considering a zero $(\b x_0,\b g(\b x_0))\in E$ of $\b F$, the implicit function theorem thus provides the existence of a unique $\mathcal C^1$-map $\widetilde{\b g}$ satisfying $\b F(\b x,\widetilde{\b g}(\b x))=\b 0$ in a sufficiently small neighborhood of $(\b x_0,\b g(\b x_0))$. Since $\b g$ and $\widetilde{\b g}$ are unique, we find $\b g=\widetilde{\b g}$, and since $\b x_0$ was arbitrary, we have shown that $\b g\in \mathcal C^1$ on $\mathcal D\setminus \ker(\bA)$, and in particular
		\begin{equation}\label{eq:Dg(x)}
			\b D\b g(\b x)=-(\b D_\b y\b F(\b x,\b g(\b x)))^{-1}\b D_\b x\b F(\b x,\b g(\b x))
		\end{equation}
		for $\b x\in\mathcal D\setminus \ker(\bA)$. It thus remains to show that $\b g\colon D\to D$ is also differentiable in any $\b x\in \ker(\bA)\cap \mathcal D=C$ and that the first derivatives are continuous in any $\b x\in C$. 
		
		To prove the differentiability of $\b g$ in any $\b x\in C$ we make use of Theorem~\ref{Thm:gDiff}, and hence we have to prove the following.
		\begin{enumerate}[label=\arabic*.]
			\item The map $\b g$ is continuous in any $\b x\in C$.
			\item The map $\b F$ is differentiable in $(\b x,\b g(\b x))$ for all $\b x\in C$.
			\item The Jacobian $\b D_\b y\b F(\b x,\b g(\b x))$ with respect to $\b y$ is invertible for all $\b x\in C$.
		\end{enumerate}
		If we have shown these properties, then Theorem \ref{Thm:gDiff} together with the considerations above implies that \eqref{eq:Dg(x)} even holds for all $\b x\in \mathcal D$. 		
		
		We first prove that $\b g$ is continuous on $C$. Since gBBKS schemes are positive and conserve all linear invariants, we find from \eqref{PDS_test} that
		\begin{equation}\label{eq:gnorm}
			\min\{1,c\}\Vert \b g(\b x)\Vert_1\leq  g_1(\b x)+cg_2(\b x)=x_1+cx_2\leq \max\{1,c\} \Vert \b x\Vert_1.
		\end{equation}
		Now, $\Vert\b x\Vert_1$ is bounded on a sufficiently small neighborhood $\mathcal D$ of $\b y^*$ as we can make sure that the closure of $\mathcal D$ is contained in the domain of $\b g$. And since norms on $\R^2$ are equivalent, we even find from \eqref{eq:gnorm} that $\Vert\b g\Vert$ is bounded on $C$. As a result, $H(\cdot,\b g(\cdot))$ is bounded on $\mathcal D$ since the reciprocal of $\bm \sigma\in \mathcal C^2$ as well as $r\in \mathcal C^2$ are bounded on a sufficiently small $\mathcal D$. It thus follows that $\bA\b x H(\b x,\b g(\b x))$ tends to $\b 0$ as $\b x\to \b y^*$. From \eqref{eq:F(x).gBBKS1} with $\b y=\b g(\b x)$ we therefore obtain \[\lim_{\b x\to\b y^*}\b g(\b x)=\b y^*=\b g(\b y^*), \]
		which means that $\b g\colon D\to D$ is continuous in all $\b x\in C$.
		
		Next, we show that $\b F$ is differentiable in $(\b x,\b g(\b x))$ for all $\b x\in C$.
		For this consider an $\b x_0\in C$ and set $\b y_0=\b g(\b x_0)$.
		Note that $\Psi=H(\cdot,\b y_0)$ is continuous in $\b x_0\in C$ with $\Psi(\b x_0)=1$ since $\b g(\b x_0)=\bm\sigma(\b x_0)=\b x_0$.
		
		In this case, part \ref{it:alemdiff} of Lemma \ref{Lem:diff} from the appendix with $\mathbf \Phi(\b x)=\bA\b x$ yields
		\begin{equation}\label{eq:DxF=-I-dtA}
			\b D_\b x\b F(\b x_0,\b g(\b x_0))=-\b I-\dt\bA.
		\end{equation}
		Furthermore, as $\bA\b x\widetilde H_i(\b x,\b y)=\b 0$ for all $\b x\in C$ and $\b y\in\R^2_{>0}$, it follows immediately that 
		\begin{equation}\label{eq:DyF=I}
			\b D_\b y\b F(\b x_0,\b g(\b x_0))=\b I,
		\end{equation}
		which shows that $\b F$ is partially differentiable in $(\b x_0,\b g(\b x_0))$. To prove that $\b F$ is differentiable in $(\b x_0,\b g(\b x_0))$, we show that the partial derivatives are continuous in $(\b x_0,\b g(\b x_0))$.
		Therefore, we consider the case $\b x\notin C$ and differentiate $\b F$ from \eqref{eq:F(x).gBBKS1} with respect to $\b x$ and $\b y$. We have
		
		\begin{equation}\label{eq:DxFtilde(x)}
			\b D_\b x\b F(\b x,\b g(\b x))=-\b I-\dt\left(\bA\widetilde H_i(\b x,\b g(\b x))+\bA\b x \nabla_\b x\widetilde H_i(\b x,\b g(\b x))\right),
		\end{equation}
		where the gradient denotes a row vector and
		\begin{equation*}
			i=\begin{cases}
				1, &x_1>\frac{b}{a}x_2, \\
				2, &x_1<\frac{b}{a}x_2.
			\end{cases}
		\end{equation*}
		Now, since $\b g,\bm \sigma>\b 0$ we can write $\widetilde H_i(\b x,\b g(\b x))=e^{r(\b x)\ln\left(\frac{g_i(\b x)}{\sigma_i(\b x)}\right)}$, from which it follows that 
		\begin{equation}\label{eq:nabla(g/sigma)}
			\nabla_\b x\widetilde H_i(\b x,\b g(\b x))=\widetilde H_i(\b x,\b g(\b x))\left(\nabla_\b x r(\b x)\ln\left(\frac{g_i(\b x)}{\sigma_i(\b x)}\right)-r(\b x)\frac{\nabla_\b x \sigma_i(\b x)}{\sigma_i(\b x)}\right)
		\end{equation}
		since
		\begin{equation*}
			\nabla_\b x \ln\left(\frac{y_i}{\sigma_i(\b x)}\right)=
			\nabla_\b x \ln(y_i)-\nabla_\b x\ln(\sigma_i(\b x))=-\frac{\nabla_\b x \sigma_i(\b x)}{´\sigma_i(\b x)}.
		\end{equation*}
		Plugging \eqref{eq:nabla(g/sigma)} into \eqref{eq:DxFtilde(x)}, we find
		\begin{equation}\label{eq:DxFtildegbbks1}
			\begin{aligned}
				\b D_\b x\b F(\b x,\b g(\b x))=-\b I-\dt\widetilde H_i(\b x,\b g(\b x))\Biggl(&\bA+\bA\b x\Biggl(\nabla_\b x r(\b x)\ln\left(\frac{g_i(\b x)}{\sigma_i(\b x)}\right)\\
				&-r(\b x)\frac{\nabla_\b x \sigma_i(\b x)}{\sigma_i(\b x)}\Biggr) \Biggr).
			\end{aligned}
		\end{equation}
		Furthermore, $\bA\b x_0=\b 0$ for $\b x_0\in C$ together with $\sigma_1,\sigma_2,r\in \mathcal C^2$ as well as equation \eqref{eq:DxFtildegbbks1} yield
		\begin{equation}\label{eq:limDxF=-I-dtA}
			\lim_{\b x\to\b x_0}\b D_\b x\b F(\b x,\b g(\b x))=-\b I-\dt\lim_{\b x\to\b x_0}\widetilde H_i(\b x,\b g(\b x))\bA=-\b I-\dt\bA.
		\end{equation}
		Moreover, due to \eqref{eq:DyFtilde=I-ve_i} and since $\b v^{(i)}$ is continuous with $\b v^{(i)}(\b x_0)=\b 0$ for $\b x_0\in C$, we find
		\begin{equation}\label{eq:limDyF=I}
			\lim_{\b x\to \b x_0}	\b D_\b y\b F(\b x,\b g(\b x))=\b I-\b v^{(i)}(\b x_0)\b e_i^T=\b I.
		\end{equation}
		As a result of \eqref{eq:DxF=-I-dtA}, \eqref{eq:limDxF=-I-dtA} and \eqref{eq:DyF=I}, \eqref{eq:limDyF=I}, we thus know that all partial first derivatives of $\b F$ are continuous in $(\b x_0,\b g(\b x_0))$ for all $\b x_0\in C$, which implies that $\b F$ is differentiable in $(\b x_0,\b g(\b x_0))$ for all $\b x_0\in C$.
		
		Finally, due to \eqref{eq:DyF=I} we know that $\b D_\b y\b F(\b x_0,\b g(\b x_0))$ is invertible for all $\b x_0\in C$.
		
		Altogether, all requirements of Theorem \ref{Thm:gDiff} are fulfilled, which implies that $\b g$ is differentiable on $C$ and that 
		\begin{equation}\label{eq:Dg(x)gbbks1}
			\b D\b g(\b x_0)=-(\b D_\b y\b F(\b x_0,\b g(\b x_0)))^{-1}\b D_\b x\b F(\b x_0,\b g(\b x_0))ö
		\end{equation} 
		holds for all $\b x_0\in C$. Moreover, all entries of the inverse of $\b D_\b y\b F(\b x,\b g(\b x))$ are continuous functions of $\b x$, which proves that $\b g\in \mathcal C^1(\mathcal D)$. Finally, \eqref{eq:DxF=-I-dtA} and \eqref{eq:DyF=I} yield \[\b D\b g(\b y^*)=\b I+\dt\bA.\]

		%
		\item
		In this part, we use the equations \eqref{eq:DxFtildegbbks1} and 	\begin{equation}\label{eq:DyFtildeInversegBBKS1}
			(\b D_\b y\b F(\b x,\b g(\b x)))^{-1}=\begin{cases}
				\frac{1}{1-v_1^{(1)}(\b x)}\begin{pmatrix*}[r]1 & 0\\ v_2^{(1)}(\b x) & 1-v_1^{(1)}(\b x)\end{pmatrix*}, & x_1>\tfrac{b}{a}x_2,\\
				\frac{1}{1-v_2^{(2)}(\b x)}\begin{pmatrix*}[r]1- v_2^{(2)}(\b x) &v_1^{(2)}(\b x)\\  0& 1\end{pmatrix*}, & x_1<\tfrac{b}{a}x_2\end{cases}
		\end{equation} to show that the first derivatives of $\b g$ are Lipschitz continuous on a sufficiently small neighborhood $\mathcal D$ of $\b y^*$. For this, we make use of the fact that the set of bounded Lipschitz continuous functions is closed under summation, multiplication and composition. Hence, all we need to prove is that each entry in the matrices \eqref{eq:DxFtildegbbks1} and \eqref{eq:DyFtildeInversegBBKS1} is bounded and Lipschitz continuous on $\mathcal D$, and to use the fact that the natural logarithm and each exponential function are locally Lipschitz continuous. 
		
		To bound the corresponding functions, we choose $\mathcal D$ in such a way that $g_i,\sigma_i$ and $1-v_i^{(i)}$ have an upper bound $C_i>0$ and lower bound $c_i>0$. This is possible by choosing $\overline{\mathcal D}\tm D$ since these functions are continuous at $\b y^*$ and satisfy $\b g(\b y^*)=\bm \sigma(\b y^*)=\b y^*>\b 0$ as well as $1-v_i^{(i)}(\b y^*)=1$. As a result, even the first two derivatives of $\bm \sigma$ and $r$ are bounded on $\mathcal{D}$. This way, we can compute the Lipschitz constants of $\bm \sigma$, its first derivatives and its reciprocal by using the mean value theorem, see \cite[Remark 8.12 (b)]{AE08} for the details.
		Analogously, $\b g$ as well as $\frac{1}{g_i}$ are bounded Lipschitz continuous functions for $i=1,2$ as their first derivatives are bounded on $\mathcal{D}$. By this reasoning, it is straightforward to verify that each matrix entry in \eqref{eq:DxFtildegbbks1} and \eqref{eq:DyFtildeInversegBBKS1} is a bounded Lipschitz continuous function.
		
		\item 
		Assume that $\b g\in\mathcal C^2$ for some appropriate neighborhood of $\b y^*$. Introducing \[
		d_{jk}(\b x)=\left(\b D_\b x \b F(\b x,\b g(\b x)\right)_{jk},\] equations \eqref{eq:DyFtildeInversegBBKS1} and \eqref{eq:Dg(x)gbbks1} 
		yield
		\begin{equation}\label{eq:partial_1g_2(x)}
			\partial_1g_2(\b x)=-\begin{cases}  \frac{v_2^{(1)}(\b x)}{1-v_1^{(1)}(\b x)}d_{11}(\b x)+d_{21}(\b x)&, x_1\geq \tfrac{b}{a}x_2 \\
				\frac{1}{1-v_2^{(2)}(\b x)}d_{21}(\b x)&,x_1\leq\tfrac{b}{a}x_2
			\end{cases}.
		\end{equation}
		Our strategy is to compute  $\partial_2\partial_1g_2(\b y^*)$ and derive $1=\dt(ca+b)$ from it. Using $\b v^{(i)}(\b y^*)=\b 0$ we get from \eqref{eq:partial_1g_2(x)}
		\begin{equation*}
			-\partial_2\partial_1g_2(\b y^*)=\partial_2v_2^{(1)}(\b y^*)d_{11}(\b y^*)+\partial_2d_{21}(\b y^*)
		\end{equation*}
		as well as
		\begin{equation*}
			-\partial_2\partial_1g_2(\b y^*)=\partial_2d_{21}(\b y^*)+\partial_2v_2^{(2)}(\b y^*)d_{21}(\b y^*).
		\end{equation*}
		As a result, we obtain
		\begin{equation}
			\partial_2v_2^{(1)}(\b y^*)d_{11}(\b y^*)=\partial_2v_2^{(2)}(\b y^*)d_{21}(\b y^*).\label{eq:proofb}
		\end{equation}
		Using \eqref{eq:v^(i)}, we find that
		\begin{equation*}
			\partial_jv_k^{(i)}(\b y^*)=\dt \lambda_{kj}\frac{r(\b y^*)}{y_i^*},
		\end{equation*}
		and from \eqref{eq:DxF=-I-dtA}, we know that $d_{jk}(\b y^*)=-(\b I+\dt\bA)_{jk}$, so that \eqref{eq:proofb} reads
		\begin{equation*}
			-\dt \lambda_{22}\frac{r(\b y^*)}{y_1^*}(\b I+\dt\bA)_{11}=-\dt \lambda_{22}\frac{r(\b y^*)}{y_2^*}(\b I+\dt\bA)_{21}.
		\end{equation*}
		Using the fact that $r>0$ and $\lambda_{22}\neq 0$, this equation reduces to
		\begin{equation*}
			(\b I+\dt\bA)_{11}= \frac{y_1^*}{y_2^*}(\b I+\dt\bA)_{21}=\frac{y_1^*}{y_2^*}(\dt\bA)_{21},
		\end{equation*}
		or equivalently,
		\begin{equation*}
			1=\dt\left(\tfrac{y_1^*}{y_2^*}\lambda_{21}-\lambda_{11}\right)\overset{\eqref{PDS_test}}{=}\dt(ca+b),
		\end{equation*}
		which finishes also this part of the proof. \qedhere
	\end{enumerate}
\end{proof}
It is worth mentioning that part \ref{it:bThmgbbks1} of the above theorem demonstrates, that in general $\b g\notin \mathcal C^2$. As a result we could not apply \cite[Theorem 2.9]{IKM2122}, however, the generalization Theorem \ref{Thm_MPRK_stabil} can be applied, which gives us the following statements due to $\b D\b g(\b y^*)=\b I+\dt\bA$.
\begin{cor}\label{cor:gBBKS1}
	Let $\b y^*>\b 0$ be an arbitrary steady state of \eqref{PDS_test}. Under the assumptions of Theorem \ref{Thm:gBBKS1}, the gBBKS1 schemes have the same stability function as the underlying Runge--Kutta method, i.\,e.\ $R(z)=1+z$ and the following holds.
	\begin{enumerate}
		\item If $\abs{R(-(ac+b)\dt)}<1$, then $\b y^*$ is a stable fixed point of each gBBKS1 scheme and there exists a $\delta>0$, such that $\vec{1\\ c}^T \b y^{0}=\vec{1\\ c}^T\b y^*$ and $\norm{\b y^0-\b y^*}<\delta$ imply $\b y^n\to \b y^*$ as $n\to \infty$.
		\item If $\abs{R(-(ac+b)\dt)}>1$, then $\b y^*$ is an unstable fixed point of each gBBKS1 scheme.
	\end{enumerate}
\end{cor}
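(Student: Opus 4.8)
The plan is to derive Corollary~\ref{cor:gBBKS1} as a direct application of Theorem~\ref{Thm:gBBKS1} together with the main stability Theorem~\ref{Thm_MPRK_stabil} and the instability statement in Theorem~\ref{Thm:_Asym_und_Instabil}. First I would collect the structural facts already established in Theorem~\ref{Thm:gBBKS1}: the generating map $\b g$ satisfies $\b g\in \mathcal C^1(\mathcal D)$ with Lipschitz continuous first derivatives on a sufficiently small neighborhood $\mathcal D$ of $\b y^*$ (parts \ref{it:aThmgbbks1} and \ref{it:cThmgbbks1}), every steady state $\b v\in C=\ker(\bA)\cap\mathcal D$ is a fixed point, and crucially $\b D\b g(\b y^*)=\b I+\dt\bA$. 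From this Jacobian I would read off the stability function by noting that the eigenvalues of $\b D\b g(\b y^*)$ are exactly $1+\dt\mu$ for $\mu\in\sigma(\bA)$, so with the standard substitution $z=\dt\lambda$ the stability function is $R(z)=1+z$, which is precisely that of the underlying explicit Euler (Runge--Kutta) method.

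Next I would analyze the spectrum of $\bA$ from \eqref{PDS_test}. A short computation gives $\sigma(\bA)=\{0,-(ac+b)\}$, since the trace is $-(a+b)$ and the matrix is singular (the columns being linearly dependent through the conservativity $\b n^T\bA=\b 0$ with $\b n=(1,c)^T$). Hence $\ker(\bA)$ is one-dimensional, i.e.\ $k=1>0$, and the single nonzero eigenvalue is $\lambda=-(ac+b)$. Correspondingly $\b D\b g(\b y^*)$ has eigenvalue $1$ (with eigenvector spanning $\ker(\bA)$) and the remaining eigenvalue $R(-(ac+b)\dt)=1-(ac+b)\dt$.

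For statement (a), I would verify that all hypotheses of Theorem~\ref{Thm_MPRK_stabil} are met: $\ker(\bA)$ is a $k=1$ dimensional subspace, every element of $C$ is a fixed point of $\b g$, and $\b g|_{\mathcal D}\in\mathcal C^1$ has Lipschitz continuous derivatives. When $\abs{R(-(ac+b)\dt)}<1$ the single remaining (non-unit) eigenvalue has absolute value less than one, so part~\ref{it:Thma} of Theorem~\ref{Thm_MPRK_stabil} yields that $\b y^*$ is Lyapunov stable. Since gBBKS1 schemes conserve all linear invariants (they preserve $\b n^T\b y$ with $\b n=(1,c)^T$), the hypothesis of part~\ref{it:Thmb} is also satisfied with $H=\{\b y\mid \b n^T\b y=\b n^T\b y^*\}$, giving the existence of $\delta>0$ such that $\b y^0\in H$ with $\norm{\b y^0-\b y^*}<\delta$ implies $\b y^n\to\b y^*$. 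For statement (b), when $\abs{R(-(ac+b)\dt)}>1$ the spectral radius of $\b D\b g(\b y^*)$ exceeds one, so Theorem~\ref{Thm:_Asym_und_Instabil}\ref{it:Thma} (its instability part) immediately gives that $\b y^*$ is unstable. The main obstacle here is not any single hard estimate but rather the careful bookkeeping of which prior results supply which hypothesis; in particular I would have to make explicit that the eigenvector associated with the unit eigenvalue spans $\ker(\bA)$ so that the ``remaining'' eigenvalue in the sense of Theorem~\ref{Thm_MPRK_stabil} is precisely $R(-(ac+b)\dt)$, and that the linear-invariant preservation of gBBKS1 is exactly the condition $\b g(\b y)\in H$ needed for the convergence claim.
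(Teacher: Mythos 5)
Your proposal is correct and takes essentially the same route as the paper, which states Corollary~\ref{cor:gBBKS1} as a direct consequence of Theorem~\ref{Thm:gBBKS1} (supplying $\b g\in\mathcal C^1(\mathcal D)$ with Lipschitz first derivatives, the fixed points on $\ker(\bA)\cap\mathcal D$, and $\b D\b g(\b y^*)=\b I+\dt\bA$) combined with Theorem~\ref{Thm_MPRK_stabil} for stability and invariant-constrained convergence and with Theorem~\ref{Thm:_Asym_und_Instabil} for instability, exactly as you arrange it. One small internal slip: for the matrix of \eqref{PDS_test} as it is actually used in the gBBKS and GeCo computations, namely $\bA=\left(\begin{smallmatrix}-ac & bc\\ a & -b\end{smallmatrix}\right)$, the trace is $-(ac+b)$ rather than $-(a+b)$, which is precisely what makes your asserted spectrum $\{0,-(ac+b)\}$ (and hence the corollary's argument $R(-(ac+b)\dt)$) consistent.
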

\vfill
\subsubsection{Stability of second order gBBKS schemes}
In this subsection we investigate  the gBBKS2($\alpha$) schemes \eqref{eq:gBBKS2Intro} applied to \eqref{PDS_test}, \eqref{eq:IC}, which can be written in the form
\begin{subequations}\label{eq:gBBKS22b}
	\begin{align}\label{eq:stage1_gBBKS22b}
		&\begin{aligned}
			\b y^{(2)} &=\b y^n+\alpha \dt \bA\b y^n \Bigg(\prod_{j\in J^{n}} \frac{y^{(2)}_j}{\pi^{n}_j}\Bigg)^{\ns q^{n}},
		\end{aligned}\\ 
		&\begin{multlined}[b][.7\columnwidth]
			\b y^{n+1} = \b y^n+\dt \left(\Big(1-\frac{1}{2\alpha}\Big) \bA\b y^n+ \frac{1}{2\alpha}\bA\b y^{(2)}\right)\Bigg(\prod_{m\in M^{n}} \frac{y^{n+1}_m}{\sigma^{n}_m}\Bigg)^{\ns r^{n}},\label{eq:finalstage_gBBKS22c}
		\end{multlined}  
	\end{align}
\end{subequations}
for $i=1,2$, $\alpha\geq \tfrac12$ and
\begin{equation*}
	\begin{aligned}
		J^{n}&=\left\{ j\in \{1,2\}\mid (\bA\b y^n)_j<0\right\},\\
		M^{n}&=\left\{ m\in \{1,2\}\;\Big |\;\Big(1-\frac{1}{2\alpha}\Big) (\bA\b y^n)_m+ \frac{1}{2\alpha}(\bA\b y^{(2)})_m<0\right\}.
	\end{aligned}
\end{equation*}
Similarly to the gBBKS1 case, we introduce functions $r,q,\bm \pi$ and $\bm \sigma$ to describe the dependence of the parameters on $\b y^n$. Note that $\bm \sigma$ can depend on $\b y^n$ as well as $\b y^{(2)}$, see \cite{gBBKS,BBKS2007,BRBM2008}, and thus will be described by a map $\bm\sigma\colon\R^2_{>0}\times \R^2_{>0}\to \R^2_{>0}$. 
\begin{thm}\label{Thm:gBBKS2}
	Let $\pi_1,\pi_2,r,q\in \mathcal{C}^2(\R^2_{>0},\R_{>0})$, $\bm \sigma\in \mathcal{C}^2(\R^2_{>0}\times \R^2_{>0},\R^2_{>0})$ and $\b y^*>\b 0$ be a steady state solution of \eqref{PDS_test}. Also, let $\mathcal D$ be a sufficiently small neighborhood of $\b y^*$ and suppose that $\bm \sigma(\b v,\b v)=\bm \pi(\b v)=\b v$ is fulfilled for all $\b v\in C= \ker(\bA)\cap \mathcal D$. Then the map $\b g$ generating the iterates of the gBBKS2($\alpha$) family satisfies $\b g(\b v)=\b v$ for all steady states $\b v\in  C$ and the following statements are true.
	\begin{enumerate}
		\item\label{it:aThm} The map $\b g$ satisfies $\b g\in \mathcal{C}^1(\mathcal D)$ and $\b D\b g(\b y^*)=\b I+\dt\bA+\frac{(\dt)^2}{2}\bA^2$.
		\item\label{it:bThmgbbks2}  The first derivatives of $\b g$ are bounded and Lipschitz continuous on $\mathcal D$.
		\item\label{it:cThmgbbks2} The map $\b g$ does not belong to $\mathcal C^2$ for any open neighborhood of $\b y^*$, if $\dt\neq(ac+b)^{-1}$.
	\end{enumerate}
\end{thm}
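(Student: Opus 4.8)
The plan is to prove Theorem~\ref{Thm:gBBKS2} following closely the structure of the proof of Theorem~\ref{Thm:gBBKS1}, since the gBBKS2($\alpha$) scheme is a two-stage analogue of gBBKS1. First I would rewrite the stage equations \eqref{eq:gBBKS22b} in a form amenable to the implicit function theorem. As in the first-order case, the index sets $J^n$ and $M^n$ select the unique component with negative update, and using $(\bA\b y)_1=c(-ay_1+by_2)$ together with $c(\bA\b y)_2=-(\bA\b y)_1$, one sees that $J^n$ (and likewise $M^n$) is $\{1\}$, $\emptyset$, or $\{2\}$ according to the sign of $y_1^n-\tfrac{b}{a}y_2^n$. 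I would thus introduce a function $\b F\colon \R^2_{>0}\times\R^2_{>0}\to\R^2$ with $\b F(\b x,\b g(\b x))=\b 0$, incorporating a power factor $\widetilde H_i$ built from $\bm\sigma$ and $r$ exactly as in \eqref{eq:F(x).gBBKS1}, but where the right-hand side now involves the intermediate stage $\b y^{(2)}=\b y^{(2)}(\b x)$. A preliminary subclaim is that the stage map $\b x\mapsto\b y^{(2)}(\b x)$ is itself $\mathcal C^1$ on $\mathcal D$ with bounded, Lipschitz derivatives and $\b D\b y^{(2)}(\b y^*)=\b I+\alpha\dt\bA$; this is proved by the same implicit-function-theorem argument applied to the first stage \eqref{eq:stage1_gBBKS22b}, which has the identical structure to the full gBBKS1 step with $\bA$ replaced by $\alpha\bA$ and $\bm\sigma$ by $\bm\pi$.

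For part~\ref{it:aThm} I would then argue, just as for gBBKS1, that $\b F$ fails to be differentiable across $\ker(\bA)$ because the one-sided limits of the power factor disagree, so the plain implicit function theorem is unavailable on all of $\mathcal D\times\mathcal D$; hence I invoke Theorem~\ref{Thm:gDiff}. The three hypotheses to verify are continuity of $\b g$ at steady states (from conservativity and positivity, via the norm bound \eqref{eq:gnorm} adapted to two stages), differentiability of $\b F$ at $(\b x_0,\b g(\b x_0))$ for $\b x_0\in C$ with continuous partials, and invertibility of $\b D_\b y\b F(\b x_0,\b g(\b x_0))=\b I$. Away from $\ker(\bA)$, differentiability of $\b g$ follows from the classical implicit function theorem once I check $\det(\b D_\b y\b F)\neq 0$, which again reduces to the construction of vectors $\b v^{(i)}$ as in \eqref{eq:v^(i)} and the fact that the selected component of the update is strictly negative. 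Using part~\ref{it:alemdiff} of Lemma~\ref{Lem:diff} from the appendix with $\bm\Phi(\b x)=\bA\b x$ for the final stage, combined with the chain rule through $\b y^{(2)}$, I would compute
\begin{equation*}
\b D\b g(\b y^*)=\b I+\dt\bA\left(\Big(1-\tfrac{1}{2\alpha}\Big)\b I+\tfrac{1}{2\alpha}\b D\b y^{(2)}(\b y^*)\right)=\b I+\dt\bA+\tfrac{(\dt)^2}{2}\bA^2,
\end{equation*}
where the last equality uses $\b D\b y^{(2)}(\b y^*)=\b I+\alpha\dt\bA$ and the cancellation of $\alpha$; this reproduces the stability function of Heun's method.

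Part~\ref{it:bThmgbbks2} I would handle exactly as in gBBKS1: by choosing $\overline{\mathcal D}\tm D$ I bound $\b g,\bm\sigma,\bm\pi,r,q$ and their first two derivatives, together with the reciprocals and the determinant factors $1-v_i^{(i)}$, away from zero and infinity, and then use that bounded Lipschitz functions are closed under sums, products, compositions, and under the locally Lipschitz maps $\ln$ and $\exp$, to conclude that every entry of the explicit formula for $\b D\b g$ is bounded and Lipschitz on $\mathcal D$. For part~\ref{it:cThmgbbks2} the strategy mirrors the gBBKS1 argument: assuming $\b g\in\mathcal C^2$, I compute a mixed second partial such as $\partial_2\partial_1 g_2(\b y^*)$ from both the $x_1>\tfrac{b}{a}x_2$ and $x_1<\tfrac{b}{a}x_2$ branches, equate the two expressions, insert the explicit derivatives $\partial_j v_k^{(i)}(\b y^*)$ and the entries of $\b I+\dt\bA+\tfrac{(\dt)^2}{2}\bA^2$, and derive the constraint $1=\dt(ac+b)$, contradicting $\dt\neq(ac+b)^{-1}$. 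I expect the main obstacle to be the two-stage bookkeeping: because $\bm\sigma$ and the final power factor depend on both $\b y^n$ and $\b y^{(2)}(\b y^n)$, every gradient in $\b D_\b x\b F$ carries an extra chain-rule term through $\b D\b y^{(2)}$, and the nonsmoothness occurs in the stage map as well as the final step, so I must ensure the kink loci align and that the second-derivative jump in part~\ref{it:cThmgbbks2} survives the additional smooth stage contribution rather than being cancelled by it.
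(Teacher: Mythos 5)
Your proposal is correct and follows essentially the same route as the paper's proof: the paper likewise observes that the first stage \eqref{eq:stage1_gBBKS22b} is exactly a gBBKS1 step with step size $\alpha\dt$ (so it simply cites Theorem~\ref{Thm:gBBKS1} to get $\b y^{(2)}\in\mathcal C^1$ with $\b D\b y^{(2)}(\b y^*)=\b I+\alpha\dt\bA$, rather than redoing the implicit-function argument), builds the same map $\b F$ with the power factor $\widetilde H_i$, verifies the three hypotheses of Theorem~\ref{Thm:gDiff}, and obtains $\b D\b g(\b y^*)=\b I+\dt\bA+\tfrac{(\dt)^2}{2}\bA^2$, with parts~\ref{it:bThmgbbks2} and~\ref{it:cThmgbbks2} handled exactly as you describe. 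Your closing worry is resolved as in the paper: $M^n=J^n$ (so the kink loci coincide), and in part~\ref{it:cThmgbbks2} one should take $\bm\Phi(\b x)=\bigl(1-\tfrac{1}{2\alpha}\bigr)\bA\b x+\tfrac{1}{2\alpha}\bA\b y^{(2)}(\b x)$ (not just $\bA\b x$) in Lemma~\ref{Lem:diff}, after which the resulting formulas have the same form as \eqref{eq:DyFtildeInversegBBKS1}, \eqref{eq:Dg(x)gbbks1} and the gBBKS1 contradiction argument carries over verbatim.
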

\begin{proof}
	Our main strategy is to follow the ideas used in the proof of  Theorem~\ref{Thm:gBBKS1}. For this, we first compute the sets $J^n$ and $M^n$ in the case of the linear test problem \eqref{PDS_test}. Using \eqref{eq:stage1_gBBKS22b}, we obtain 
	\[ \Big(1-\frac{1}{2\alpha}\Big) (\bA\b y^n)_m+ \frac{1}{2\alpha}(\bA\b y^{(2)})_m=(\bA\b y^n)_m\left(1+\alpha \dt\Bigg(\prod_{j\in J^{n}} \frac{y^{(2)}_j}{\pi^{n}_j}\Bigg)^{\ns q^{n}}\right),\]
	so that\[M^n=J^n=\begin{cases}
		\{1\}, & y_1^n>\frac{b}{a}y^n_2,\\
		\emptyset, & y_1^n=\frac{b}{a}y^n_2,\\
		\{2\}, & y_1^n<\frac{b}{a}y^n_2
		
	\end{cases}\] follows as in the case of gBBKS1.  Next, we define 
	\begin{equation}
		\begin{aligned}
			\b y^{(2)}(\b x)&=\b x-\dt\alpha\bA\b x\begin{cases}
				\left(\frac{y_1^{(2)}(\b x)}{\pi_1(\b x)}\right)^{q(\b x)},& x_1>\frac{b}{a}x_2,\\
				1, & x_1=\frac{b}{a}x_2,\\
				\left(\frac{y_2^{(2)}(\b x)}{\pi_2(\b x)}\right)^{q(\b x)},& x_1<\frac{b}{a}x_2
			\end{cases} \label{eq:1.gBBKS2}
		\end{aligned}
	\end{equation}
	and
	\begin{equation}\label{eq:defFgbbks2}
		\b F(\b x,\b y)=\b y-\b x-\dt\left(\Big(1-\frac{1}{2\alpha}\Big)\bA\b x+\frac{1}{2\alpha}\bA\b y^{(2)}(\b x)\right)H(\b x,\b y),
	\end{equation}
	where
	\begin{equation*}
		H(\b x,\b y)=\begin{cases}
			\widetilde H_1(\b x,\b y),& x_1>\frac{b}{a}x_2,\\
			1, & x_1=\frac{b}{a}x_2,\\
			\widetilde H_2(\b x,\b y),& x_1<\frac{b}{a}x_2
		\end{cases}
	\end{equation*}
	as well as
	\begin{equation}\label{eq:HtildegBBKS2}
		\widetilde H_i(\b x,\b y)=\left(\frac{y_i}{\sigma_i(\b x,\b y^{(2)}(\b x))}\right)^{r(\b x)},\quad i=1,2,
	\end{equation}
	and point out that the function $\b g$ generating the gBBKS2($\alpha$) iterates is the unique solution to 
	\begin{equation}
		\begin{aligned}\label{eq:0=FtildegBBKS2}
			\b 0&=\b F(\b x,\b g(\b x)).
		\end{aligned}
	\end{equation}
	Note that equation \eqref{eq:1.gBBKS2} represents the gBBKS1 schemes applied to \eqref{PDS_test} with a time step size of $\dt\alpha$. Hence, Theorem \ref{Thm:gBBKS1} implies that the function $\b y^{(2)}$ is a $\mathcal{C}^1$-map on $\mathcal D$ with Lipschitz continuous first derivatives and \[\b D\b y^{(2)}(\b y^*)=\b I+\dt\alpha \bA.\]
	Furthermore, $\b v\in\ker(\bA)$ implies $\b y^{(2)}(\b v)=\b v$, and thus, inserting $\b x=\b v$ into \eqref{eq:defFgbbks2}, \eqref{eq:0=FtildegBBKS2} yields $\b g(\b v)=\b v$.
	\begin{enumerate}
		\item Along the same lines as in the proof of Theorem \ref{Thm:gBBKS1} we see that the map $\b F$ is not differentiable on $\mathcal D\times \mathcal D$ since $\bm \sigma(\b x_0,\b y^{(2)}(\b x_0))=\b x_0$ holds for all $\b x_0\in  C$. However, $\b F$ is differentiable in $(\b x,\b y)\in E=\mathcal D\setminus\ker(\bA)\times \mathcal D$ since 
		\begin{equation}\label{eq:FgBBKS2}
			\b F(\b x,\b y)=\b y-\b x-\dt\left(\Big(1-\frac{1}{2\alpha}\Big)\bA\b x+\frac{1}{2\alpha}\bA\b y^{(2)}(\b x)\right)\left(\frac{y_i}{\sigma_i(\b x,\b y^{(2)}(\b x))}\right)^{r(\b x)}
		\end{equation}
		for 
		\begin{equation}\label{eq:igbbks2}
			i=\begin{cases}
				1&, x_1>\frac{b}{a}x_2,\\
				2&, x_1<\frac{b}{a}x_2
			\end{cases}
		\end{equation}
		and $r\in \mathcal{C}^2(\R^2_{>0},\R_{>0})$, $\bm \sigma\in \mathcal{C}^2(\R^2_{>0}\times \R^2_{>0},\R^2_{>0})$ as well as $\b y^{(2)}\in\mathcal C^1(\mathcal D)$. Following the proof of Theorem \ref{Thm:gBBKS1}, we show that $\b D_\b y\b F(\b x,\b g(\b x))$ is nonsingular in order to show that $\b g\in \mathcal C^1$ on $\mathcal D\setminus\ker(\bA)$.
		First note that for $\b x\notin C$ we have
		\begin{equation}\label{eq:DyFgbbks2}
			\begin{aligned}
				\b D_\b y\b F(\b x,\b g(\b x))&=\b I-\dt\left(\Big(1-\frac{1}{2\alpha}\Big)\bA\b x+\frac{1}{2\alpha}\bA\b y^{(2)}(\b x)\right)\nabla_\b y \widetilde H_i(\b x,\b g(\b x))
			\end{aligned}
		\end{equation}
		for $i$ from \eqref{eq:igbbks2}. Now,  \eqref{eq:HtildegBBKS2} yields
		\begin{equation*}
			\nabla_\b y \widetilde H_i(\b x,\b g(\b x))=\frac{r(\b x)}{g_i(\b x)}\widetilde H_i(\b x,\b g(\b x))\b e_i^T
		\end{equation*}
		with the $i$th unit vector $\b e_i\in \R^2$  as in the proof of Theorem \ref{Thm:gBBKS1}.
		In order to see that $\b D_\b y\b F(\b x,\b g(\b x))$ is invertible, we introduce
		\begin{equation*}
			\begin{aligned}
				\b v^{(i)}(\b x)&=\dt\left(\left(1-\frac{1}{2\alpha}\right)\bA\b x+\frac{1}{2\alpha}\bA\b y^{(2)}(\b x)\right)\frac{r(\b x)}{g_i(\b x)}\widetilde H_i(\b x,\b g(\b x))\\
			\end{aligned}
		\end{equation*}
		and rewrite \eqref{eq:DyFgbbks2} as
		\begin{equation}\label{eq:DyFtildeInversegBBKS2}
			\b D_\b y\b F(\b x,\b g(\b x))=\b I-\b v^{(i)}(\b x)\b e_i^T.
		\end{equation}
		Hence, we obtain \[ \det(\b D_\b y\b F(\b x,\b g(\b x)))=1-v_i^{(i)}(\b x).\] 
		Using \eqref{eq:1.gBBKS2}, we see that 
		\[\b v^{(i)}(\b x)=\dt\bA\b x\left(1+\alpha \dt\Bigg( \frac{y_i^{(2)}(\b x)}{\pi_i(\b x)}\Bigg)^{\ns q(\b x)}\right)\frac{r(\b x)}{g_i(\b x)}\widetilde H_i(\b x,\b g(\b x)), \]
		where $(\bA\b x)_i<0$ by definition of the gBBKS2($\alpha$) schemes. As a result we know $v_i^{(i)}(\b x)<0$, and hence $\det(\b D_\b y\b F(\b x,\b g(\b x)))\neq 0$ proving that $\b D_\b y\b F(\b x,\b g(\b x))$ is invertible. This together with the corresponding arguments of Theorem~\ref{Thm:gBBKS1} implies that $\b g\in \mathcal C^1$ on $\mathcal D\setminus \ker(\bA)$ and 	
		\begin{equation}\label{eq:Dg(x)proofgbbks2}
			\b D\b g(\b x)=-(\b D_\b y\b F(\b x,\b g(\b x)))^{-1}\b D_\b x\b F(\b x,\b g(\b x))
		\end{equation}
		for $\b x\in\mathcal D\setminus \ker(\bA)$.
		To apply Theorem \ref{Thm:gDiff}, we proceed as in the proof of  Theorem \ref{Thm:gBBKS1}, i.\,e.\ we have to show that
		\begin{enumerate}[label=\arabic*.]
			\item\label{it:1} the map $\b g$ is continuous in any $\b x\in C$.
			\item\label{it:2} the map $\b F$ is differentiable in $(\b x,\b g(\b x))$ for all $\b x\in C$.
			\item\label{it:3} the Jacobian $\b D_\b y\b F(\b x,\b g(\b x))$ with respect to $\b y$ is invertible for all $\b x\in C$.
		\end{enumerate}
		
		The continuity of $\b g$ follows along the same lines as in the case of gBBKS1, where we additionally use $\b y^{(2)}\in \mathcal C^1(\mathcal D)$ for bounding $H(\cdot,\b g(\cdot))$. 
		
		For proving the differentiability of $\b F$ in $(\b x,\b g(\b x))$ for all $\b x\in C$ we consider an arbitrary element $\b x_0\in C$. Note that $\Psi(\b x)=H(\b x,\b g(\b x_0))$ is continuous in $\b x_0$ with $\Psi(\b x_0)=1$. Furthermore, \[\mathbf \Phi(\b x)=\Big(1-\frac{1}{2\alpha}\Big)\bA\b x+\frac{1}{2\alpha}\bA\b y^{(2)}(\b x)\] satisfies $\mathbf \Phi(\b x_0)=\b 0$, which means that part \ref{it:alemdiff} of Lemma \ref{Lem:diff} from the appendix together with $\b D\b y^{(2)}(\b x_0)=\b I+\dt\alpha\bA$ yields
		\begin{equation}\label{eq:DxFtildegBBKS2}
			\begin{aligned}
				\b D_\b x\b F(\b x_0,\b g(\b x_0))&=-\b I-\dt\left(\Big(1-\frac{1}{2\alpha}\Big)\bA+\frac{1}{2\alpha}\bA\b D\b y^{(2)}(\b x_0)\right)\\
				&=-\b I-\dt\left(\bA+\frac{\dt}{2}\bA^2\right).
			\end{aligned}
		\end{equation}
		Also, since $\mathbf\Phi(\b x_0)H(\b x_0,\b y)=\b 0$ for all $\b y\in \R^2_{>0}$, we find
		\begin{equation}\label{eq:DyF=IgBBKS2}
			\begin{aligned}
				\b D_\b y\b F(\b x_0,\b g(\b x_0))&=\b I,
			\end{aligned}
		\end{equation}
		which shows that $\b F$ is partially differentiable in $(\b x_0,\b g(\b x_0))$. We now prove that the partial derivatives of $\b F$ are also continuous in $(\b x_0,\b g(\b x_0))$, which shows the differentiability of $\b F$ in $(\b x,\b g(\b x))$ for all $\b x\in C$. To that end, we consider $\b x\notin C$ and differentiate $\b F$ from \eqref{eq:FgBBKS2} with respect to $\b x$ and $\b y$. First, due to \eqref{eq:DyFgbbks2} and since $\b v^{(i)}$ is continuous with $\b v^{(i)}(\b x_0)=\b 0$ for $\b x_0\in C$, we find
		\begin{equation*}
			\lim_{\b x\to \b x_0}	\b D_\b y\b F(\b x,\b g(\b x))=\b I-\b v^{(i)}(\b x_0)\b e_i^T=\b I
		\end{equation*}
		proving the continuity of the partial derivatives in $(\b x_0,\b g(\b x_0))$ with respect to $\b y$. Furthermore, we have 
		\begin{equation*}
			\begin{aligned}
				\b D_\b x\b F(\b x,\b g(\b x))=&-\b I-\dt\left(\Big(1-\frac{1}{2\alpha}\Big)\bA+\frac{1}{2\alpha}\bA\b D\b y^{(2)}(\b x)\right)\widetilde H_i(\b x,\b g(\b x))\\
				&-
				\dt\left(\Big(1-\frac{1}{2\alpha}\Big)\bA\b x+\frac{1}{2\alpha}\bA\b y^{(2)}(\b x)\right)\nabla_\b x \widetilde H_i(\b x,\b g(\b x)),
			\end{aligned}
		\end{equation*}
		whose entries converge to those of $\b D_\b x\b F(\b x_0,\b g(\b x_0))$ from \eqref{eq:DxFtildegBBKS2} because of the following. First, we have $\b y^{(2)}(\b x_0)=\b x_0$ and $\widetilde{H}_i\in \mathcal C^1(\mathcal D\times \mathcal D)$, which means that the last addend disappears as $\b x\to\b x_0\in C$. Additionally, inserting $\b D\b y^{(2)}(\b x_0)=\b I+\alpha\dt\bA$ and $\lim_{\b x\to \b x_0}\widetilde{H}_i(\b x,\b g(\b x))=1$ yield \eqref{eq:DxFtildegBBKS2}.

		Finally, it follows from \eqref{eq:DyF=IgBBKS2} that the Jacobian $\b D_\b y\b F(\b x,\b g(\b x))$ with respect to $\b y$ is invertible for all $\b x\in C$. Hence, Theorem \ref{Thm:gDiff} together with the considerations above proves that $\b g$ is differentiable in all $\b x\in \mathcal D$. 
		
		Moreover, since $\b F$ is continuously differentiable in $(\b x,\b g(\b x))$, we find due to \eqref{eq:Dg(x)proofgbbks2} that even $\b g\in \mathcal C^1(\mathcal D)$ holds true. Furthermore, inserting \eqref{eq:DxFtildegBBKS2} and \eqref{eq:DyF=IgBBKS2} into formula \eqref{eq:Dg(x)proofgbbks2} yields
		\[\b D\b g(\b x)=-(\b D_\b y\b F(\b x,\b g(\b x)))^{-1}\b D_\b x\b F(\b x,\b g(\b x))=\b I+\dt\bA+\frac{(\dt)^2}{2}\bA^2.\]
		
		\item We know that $\b y^{(2)}\in\mathcal C^1$ has Lipschitz continuous first derivatives on $\mathcal D$ and that $\bm \sigma\in \mathcal C^2$. Hence, with
		\begin{equation*}
			\begin{aligned}
				\nabla_\b x\widetilde H_i(\b x,\b g(\b x))=&\widetilde H_i(\b x,\b g(\b x))\Biggl(\nabla_\b x r(\b x)\ln\left(\frac{g_i(\b x)}{\sigma_i(\b x,\b y^{(2)}(\b x))}\right)\\
				&-r(\b x)\frac{\nabla_\b x \sigma_i(\b x,\b y^{(2)}(\b x))+\nabla_\b y \sigma_i(\b x,\b y^{(2)}(\b x))\b D\b y^{(2)}(\b x)}{\sigma_i(\b x)}\Biggr),
			\end{aligned}
		\end{equation*}
		which is analogous to \eqref{eq:nabla(g/sigma)}, this part can be proven along the same lines as in the proof of part \ref{it:cThmgbbks1} of Theorem \ref{Thm:gBBKS1}.
		\item Since \eqref{eq:DyFtildeInversegBBKS2} and \eqref{eq:Dg(x)proofgbbks2} are of the form \eqref{eq:DyFtildeInversegBBKS1}, \eqref{eq:Dg(x)gbbks1}, this part is proven along the same lines as part \ref{it:bThmgbbks1} of Theorem \ref{Thm:gBBKS1}.
	\end{enumerate}
\end{proof}
This theorem together with Theorem \ref{Thm_MPRK_stabil} and Theorem \ref{Thm:_Asym_und_Instabil}  allows us to conclude the following statements from $\b D\b g(\b y^*)=\b I+\dt\bA+\tfrac12(\dt\bA)^2$.
\begin{cor}\label{cor:gBBKS2}
	Let $\b y^*>\b 0$ be an arbitrary steady state of \eqref{PDS_test}. Under the assumptions of Theorem \ref{Thm:gBBKS2}, the gBBKS2($\alpha$) schemes have the same stability function as the underlying Runge--Kutta method, i.\,e.\ $R(z)=1+z+\frac{z^2}{2}$ and the following holds.
	\begin{enumerate}
		\item If $\abs{R(-(ac+b)\dt)}<1$, then $\b y^*$ is a stable fixed point of each gBBKS2($\alpha$) scheme and there exists a $\delta>0$, such that $\b y^n\to \b y^*$ as $n\to \infty$ for all $\b y^0$ satisfying $\vec{1\\ c}^T \b y^{0}=\vec{1\\ c}^T\b y^*$ and $\norm{\b y^0-\b y^*}<\delta$.
		\item If $\abs{R(-(ac+b)\dt)}>1$, then $\b y^*$ is an unstable fixed point of each gBBKS2($\alpha$) scheme.
	\end{enumerate}
\end{cor}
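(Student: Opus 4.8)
The statement is an immediate consequence of Theorem~\ref{Thm:gBBKS2} combined with the general stability Theorem~\ref{Thm_MPRK_stabil} and the hyperbolic-case Theorem~\ref{Thm:_Asym_und_Instabil}, so the proof is short and consists mainly of verifying that the hypotheses of those theorems are met and then reading off the eigenvalues of the Jacobian. First I would invoke part~\ref{it:aThm} of Theorem~\ref{Thm:gBBKS2} to obtain $\b D\b g(\b y^*)=\b I+\dt\bA+\tfrac12(\dt\bA)^2$, together with parts~\ref{it:bThmgbbks2} and the fact that $\b g(\b v)=\b v$ for all steady states $\b v\in C$, which guarantees that $\b g\in \mathcal C^1(\mathcal D)$ with Lipschitz continuous first derivatives and that every element of $\ker(\bA)\cap \mathcal D$ is a fixed point. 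These are precisely the regularity and fixed-point-subspace assumptions required by Theorem~\ref{Thm_MPRK_stabil}.

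Next I would diagonalize the Jacobian using the eigenstructure of $\bA$ from \eqref{PDS_test}. Since $\b D\b g(\b y^*)$ is a polynomial in $\bA$, the eigenvectors of $\bA$ are eigenvectors of $\b D\b g(\b y^*)$, and the corresponding eigenvalues are $R(\dt\lambda)$ with $R(z)=1+z+\tfrac{z^2}{2}$ and $\lambda\in \sigma(\bA)$. The matrix $\bA$ from \eqref{PDS_test} has spectrum $\{0,-(ac+b)\}$, as can be read off from $\bA\b y^*=\b 0$ for $\b y^*\in\ker(\bA)$ and $\bA\bby=-(ac+b)\bby$ with $\bby=(1,-\tfrac1c)^T$ (the second relation is the same one used in the proof of Theorem~\ref{Thm:osci} and Lemma~\ref{Lem:w}). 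Hence $\sigma(\b D\b g(\b y^*))=\{1,\ R(-(ac+b)\dt)\}$, with the eigenvalue $1$ associated to the one-dimensional kernel $k=1$.

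With these eigenvalues in hand the two cases split cleanly. If $\abs{R(-(ac+b)\dt)}<1$, then the single remaining ($N-k=1$) eigenvalue lies strictly inside the unit disk, so part~\ref{it:Thma} of Theorem~\ref{Thm_MPRK_stabil} gives the Lyapunov stability of $\b y^*$; since gBBKS2 conserves the linear invariant $\vec{1\\c}^T\b y$, part~\ref{it:Thmb} of the same theorem applies on the affine set $H=\{\b y\mid \vec{1\\c}^T\b y=\vec{1\\c}^T\b y^*\}$ and yields the local convergence $\b y^n\to\b y^*$ under the stated initial condition. If instead $\abs{R(-(ac+b)\dt)}>1$, then the spectral radius of $\b D\b g(\b y^*)$ exceeds one, so Theorem~\ref{Thm:_Asym_und_Instabil} part~(b) makes $\b y^*$ an unstable fixed point. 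The only subtlety — and the single step that deserves explicit care rather than a one-line citation — is checking that the linear-invariant conservation hypothesis of part~\ref{it:Thmb} holds for gBBKS2, i.e.\ that $\vec{1\\c}^T\b g(\b y)=\vec{1\\c}^T\b y$; this follows because gBBKS schemes preserve all linear invariants by construction (the common positivity-preserving factor multiplies $\bA\b y^{(j)}$, and $\vec{1\\c}^T\bA=\b 0$), so that the iteration stays in $H$ and Theorem~\ref{Thm_MPRK_stabil} part~\ref{it:Thmb} is applicable.
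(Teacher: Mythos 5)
Your proposal is correct and follows exactly the paper's route: the paper derives this corollary in a single sentence by citing Theorem~\ref{Thm:gBBKS2} (which supplies the regularity, the fixed-point subspace, and $\b D\b g(\b y^*)=\b I+\dt\bA+\tfrac12(\dt\bA)^2$) together with Theorem~\ref{Thm_MPRK_stabil} for the stable/convergent case and Theorem~\ref{Thm:_Asym_und_Instabil} for the unstable case. Your additional details — reading off $\sigma(\b D\b g(\b y^*))=\{1,R(-(ac+b)\dt)\}$ from the eigenstructure of $\bA$ and checking that gBBKS2 conserves the invariant $\vec{1\\c}^T\b y$ so that part~\ref{it:Thmb} of Theorem~\ref{Thm_MPRK_stabil} applies — are precisely the implicit steps the paper leaves to the reader.
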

To summarize the presented analysis of gBBKS schemes, we conclude that the first and second order gBBKS schemes preserve the stability domain of the underlying Runge--Kutta method while preserving positivity. To generalize these results to $N\times N$ systems we need to exploit more properties of the particular choices of $r,q,\bm \pi$ and $\bm \sigma$ from the literature \cite{gBBKS}.

\newpage
\section{Summary of Stability Properties}
In the previous section we investigated several Patankar-type methods with respect to their stability. The purpose of this rather short section is to summarize our findings, see Table~\ref{tab:stabschemes}. We also recall that MPDeC(1) corresponds to MPE  and MPDeC(2) equals MPRK22($1$). 
Also, for more insights on the stability properties of MPRK43($\alpha,\beta$) we refer to Figure~\ref{fig:maxstab}, where a lower bound for the maximal opening angle of the stability domain is depicted for $(\alpha,\beta)$ pairs in the feasible domain in $[0,2]\times[0,\tfrac34]$ with a resolution of $102^2$ pairs per unit square. The opening angle estimate for MPDeC up to order $8$ can be found in Table~\ref{tab:mpdecstab}.
\begin{table}[!h]\centering
	\begin{tabular}{|c|c|c|}\hline
		Method & Parameter Specification & Unconditionally Stable? \\ \hline 
		MPE & -- &$\surd$\\
		MPRK22($\alpha$) & $\alpha\geq \frac12$ & $\surd$\\
		MPRK43($\alpha,\beta$)& $(\alpha,\beta)=(0.5,0.75)$ &$\surd$ \\
		MPRK43($\alpha,\beta$)& $(\alpha,\beta)=(1,0.5)$ &$\times$ \\
		MPRK43($\gamma$)& $\frac38\leq\gamma\leq \frac34 $ &$\surd$ \\
		SSPMPRK2($\alpha,\beta$) & $\alpha\leq \frac{1}{2\beta}$ & $\surd$\\
		SSPMPRK2($\alpha,\beta$) & $\alpha> \frac{1}{2\beta}$ & $\times$\\
		SSPMPRK3($\eta_2$) & $0\leq \eta_2\leq 0.37$ & $\surd$ \\ 
		MPDeC($p$)& $p\in\{1,2\}$& $\surd$\\
		MPDeC($p$)& $p=3,\dotsc,8$& $\times$\\
		MPDeCGL($p$)& $p=9,\dotsc,14$& ($\surd$)\\
		MPDeCEQ($p$)& $p=9,10,11,13$& ($\surd$)\\
		MPDeCEQ($p$)& $p\in\{12,14\}$& $\times$\\
		GeCo1 		& -- & $\surd$\\
		GeCo2 & -- & $\times$\\
		gBBKS1 & -- & $\times$\\
		gBBKS2$(\alpha)$ & $\alpha\geq \frac12$ & $\times$\\
		\hline
	\end{tabular}
	\caption{Overview on the stability properties of several Patankar-type methods from Chapter~\ref{chap:NumSchemes}. Here, \enquote{$(\surd)$} means that the unconditional stability is investigated only numerically and only for normal system matrices with real eigenvalues.}\label{tab:stabschemes} 
\end{table}

In the upcoming section we introduce several linear problems for testing the predictions contained in Table~\ref{tab:stabschemes} together with the corresponding numerical experiments. For a deeper insight into numerical experiments concerning oscillatory behavior we refer to \cite{ITOE22}.

\newpage
\section{Numerical Experiments}
As mentioned in the preceding section, this part of the thesis is dedicated to the numerical validation of the theoretical claims concerning stability, parts of which are summarized in Table~\ref{tab:stabschemes}. Thereby, we also incorporate the hypothesis mentioned and tested in \cite{izgin2023nonglobal} stating that the claimed properties of stability and convergence towards the steady state solution of \eqref{eq:PDS_Sys} are even of global nature for MPRK schemes that are based on a non-negative Butcher tableau. In fact, so far the only cases of MP methods where the stability properties were observed to be non-global are MPRK22($\alpha$) with $\alpha<\tfrac12$ and MPDeCEQ($p$) for some values $p\geq 8$. In all cases the schemes can be understood as MP methods based on RK schemes with a Butcher array containing also negative entries, see \cite{IOE22StabMP,izgin2023nonglobal}. In particular, we present the numerical experiments with MPDeCEQ(8) in this work to give an example of this phenomenon. 

For the numerical validation different test cases are of interest, which we will discuss in the following subsection.

\subsection{Test Problems}
In the following we only consider conservative problems, \ie the systems matrices we are going to introduce have an eigenvalue $\lambda=0$. Furthermore, most of the following test cases are chosen in such a way that all nonzero eigenvalues either lie in $\R^-$ or in $\C^-\setminus\R^-$. Moreover, as we are interested in testing part~\ref{it:Thmb} of Theorem~\ref{Thm_MPRK_stabil}, we also consider a test problem with two linear invariants. It is beneficial to consider these test cases rather than a single one with a spectrum in $\Cminus$ because this way we can test the stability domain of conditional stable methods at two distinct spots of the stability domain. Nevertheless, we will also include a test problem with real as well as complex eigenvalues for testing unconditionally stable schemes.

\subsubsection{Test problem with exclusively real eigenvalues}
The linear initial value problem 
\begin{equation}
	\b y'=100\Vec{-2& 1 &1\\1 &-4 &1\\1 &3 &-2}\b y,\quad  \b y(0)=\Vec{1\\ 9\\5}\label{eq:initProbReal}
\end{equation}
contains a system matrix, which has only positive off-diagonal elements and is therefore a Metzler matrix. Due to the positive initial values, this ensures that each component of the solution of the initial value problem is positive for all times. By a straightforward calculation of the eigenvalues $\lambda_1=0$, $\lambda_2=-300$ and $\lambda_3=-500$ of the system matrix as well as their associated eigenvectors, the solution reads \begin{equation}
	\b y(t)=c_1\begin{pmatrix}5\\ 3\\ 7\end{pmatrix}+c_2e^{-300t}\begin{pmatrix}-1\\ 0\\ 1\end{pmatrix}+c_3e^{-500t}\begin{pmatrix}0\\ -1\\ 1\end{pmatrix}\label{eq:exsolReal}
\end{equation}
with coefficients $c_1=1$, $c_2=4$ and $c_3=-6$ determined by the initial condition. Since only non-positive eigenvalues are present and the absolute values of the negative eigenvalues are large, there is a fast convergence to the equilibrium state 
\begin{equation*}
	\b y^*=\lim_{t\to\infty}\b y(t)=\Vec{5\\3\\7}
\end{equation*}
as depicted in Figure \ref{Fig:initProbReal}. Furthermore the zero eigenvalue is simple, and hence there exists exactly one linear invariant, which is given by $\bm 1^T\b y$ due to the fact that the sum of the elements in each column of the system matrix is always vanishing. This conservativity  can also be observed in Figure \ref{Fig:initProbReal}.

\begin{figure}[h!]
	\centering
	\includegraphics[width=0.5\textwidth]{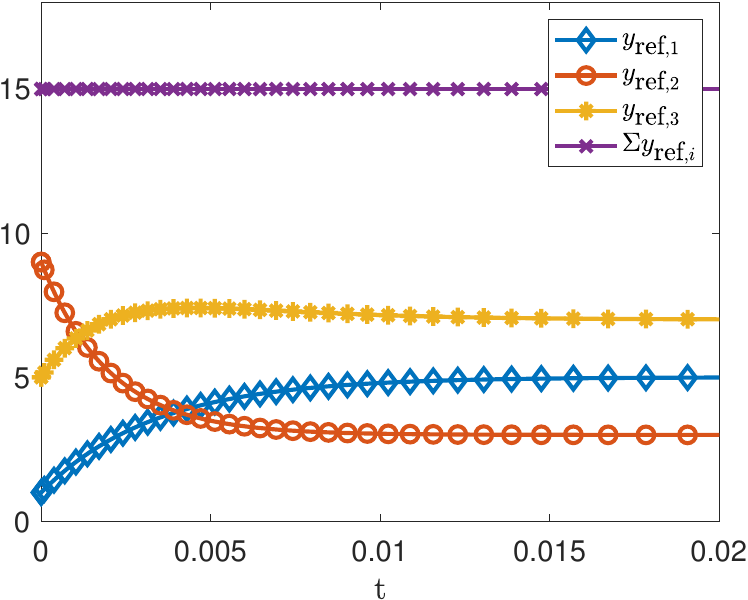}
	\caption{Exact solution \eqref{eq:exsolReal} of the initial value problem \eqref{eq:initProbReal} and the linear invariant $\bm 1^T\b y$.}\label{Fig:initProbReal}
\end{figure}

\subsubsection{Test problem with complex eigenvalues}

As a second test case, we consider the conservative system 
\begin{equation}
	\b y'=100\Vec{-4& 3 &1\\2 &-4 &3\\2 &1 &-4}\b y,\quad  \b y(0)=\Vec{9\\ 20\\8}\label{eq:initProbIm}.
\end{equation}
Again, the system matrix is a Metzler matrix, so that the solution of the initial value problem is always positive due to the positive initial conditions. Considering the eigenvalues $\lambda_1=0$ , $\lambda_2=100(-6+\ii)$ and $\lambda_3=\overline{\lambda_2}$ as well as the corresponding eigenvectors of the system matrix, the solution can be written in the form
\begin{equation}
	\begin{aligned}
		\b y(t)=&\begin{pmatrix}13\\ 14\\ 10\end{pmatrix}-2e^{-600t}\left(\cos \left(100t\right)\begin{pmatrix}-1\\ 0\\ 1\end{pmatrix}-\sin \left(100t\right)\begin{pmatrix}1\\ -1\\ 0\end{pmatrix}\right)\\&-6e^{-600t}\left(\cos \left(100t\right)\begin{pmatrix}1\\ -1\\ 0\end{pmatrix}+\sin \left(100t\right)\begin{pmatrix}-1\\ 0\\ 1\end{pmatrix}\right).\label{eq:exsolIm}
	\end{aligned}
\end{equation}
The nonzero complex eigenvalues have a negative real part with a large absolute value. Hence, one can expect a rapid convergence of the solution to the steady state given by
\begin{equation*}
	\b y^*=\lim_{t\to\infty}\b y(t)=\Vec{13\\14\\10}.
\end{equation*} 
Analogous to the first test case, the only linear invariant is $\bm 1^T\b y$, which is presented together with the exact solution in Figure \ref{Fig:initProbIm}.

\begin{figure}[h!]
	\centering
	\includegraphics[width=0.5\textwidth]{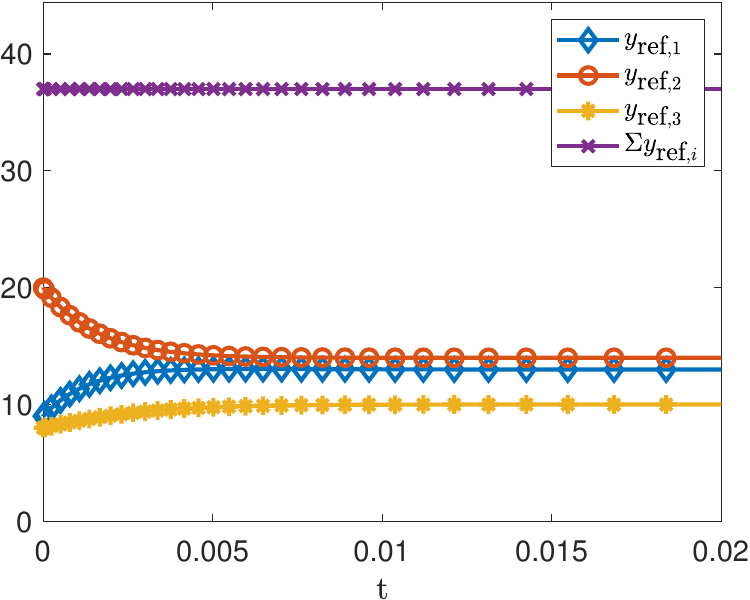}
	\caption{The exact solution \eqref{eq:exsolIm} of the initial value problem \eqref{eq:initProbIm} and the linear invariant $\bm 1^T\b y$.}\label{Fig:initProbIm}
\end{figure}

\subsubsection{Test problem with double zero eigenvalue}
Considering the linear initial value problem  
\begin{equation}
	\b y'=100\Vec{-2& 0 &0 &1\\0 &-4 &3& 0\\0 &4& -3 &0\\ 2 & 0&0&-1}\b y,\quad  \b y(0)=\Vec{4\\ 1\\9\\1},\label{eq:initProb4dim}
\end{equation}
we are faced with a Metzler matrix including a double zero eigenvalue $\lambda_1 = \lambda_2=0$. Therefore, besides $\bm 1^T\b y$, a second linear invariant $\b n^T\b y$ with $\b n=(1,2,2,1)^T$ is present. Due to the remaining eigenvalues $\lambda_3=-300$ and $\lambda_3=-700$ and the associated eigenvectors of all eigenvalues, the solution of the initial value problem writes
\begin{equation}
	\b y(t)=c_1\Vec{0\\1\\ \frac43\\0}+c_2\Vec{1\\0\\0\\2}+c_3e^{-700t}\Vec{0\\1\\-1\\0}+c_4e^{-300t}\Vec{1\\0\\ 0\\-1}\label{eq:exsol4dim}
\end{equation}
with coefficients
\begin{align*}
	c_1=\frac{30}{7},\quad c_2=\frac53, \quad c_3=-\frac{23}{7}\qta c_4=\frac73.
\end{align*}
Once again, a fast convergence to the equilibrium state
\begin{equation*}
	\b y^*=\lim_{t\to\infty}\b y(t)=c_1\Vec{0\\1\\ \frac43\\0}+c_2\Vec{1\\0\\0\\2}=\frac{1}{21}\Vec{7\\ 90\\ 120\\ 70}
\end{equation*}
takes place. The course of the solution together with the two linear invariants are shown in Figure \ref{Fig:initProb4dim}.
\begin{figure}[h!]
	\centering
	\includegraphics[width=0.5\textwidth]{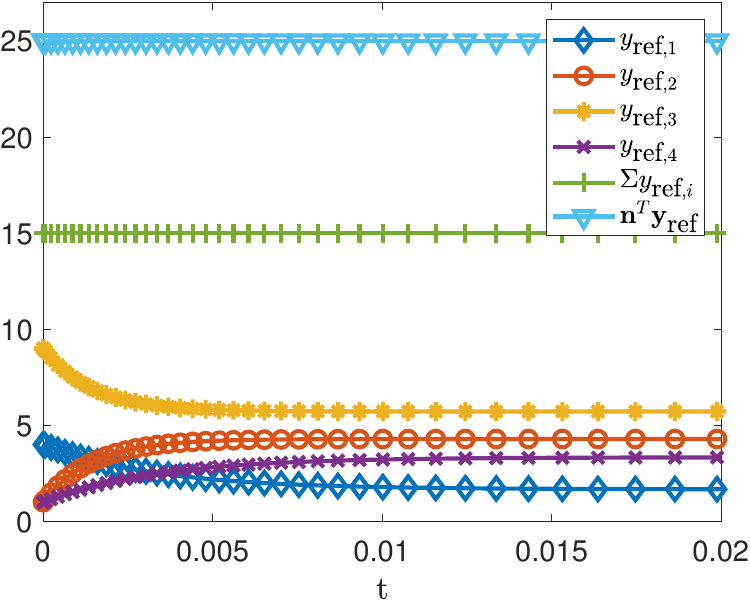}
	\caption{The exact solution \eqref{eq:exsol4dim} of the initial value problem \eqref{eq:initProb4dim} and the associated two linear invariants $\bm 1^T\b y$ and $\b n^T\b y$ with $\b n^T=(1,2,2,1)$.}\label{Fig:initProb4dim}
\end{figure}

At this point we want to note that the presented test cases represent stiff problems due to the occurrence of large absolute values of the corresponding eigenvalues. Hence, it is not surprising that the exact solution satisfies the inequality $\norm{\b y(t)-\b y^*}_2<2\cdot 10^{-2}$ at time $t=0.02$ for all of three problems. 

\subsubsection{Test Problem with mixed Eigenvalues}
Finally, we consider the initial value problem 
\begin{equation}\label{eq:testgeco1}
	\b y'=\bA\b y, \quad \b y(0)= \b y^0=(0,3,3,3,4)^T,
\end{equation}
where $\bA$ is the $5\times 5$ Metzler matrix
\begin{equation}\label{eq:Atest}
	\bA=\Vec{
		-4 & 2 & 1 & 2 & 2  \\
		1 & -4 & 1 & 0 & 2 \\
		0 & 0 & -4 & 2 & 0 \\
		2 & 2 & 2 & -4 & 0 \\
		1 & 0 & 0 & 0 & -4
	}.
\end{equation}
The spectrum of $\bA$ is given by $\sigma(\bA)=\{0,-5-\sqrt3,-5+\sqrt3,-5-\ii,-5+\ii\}\tm \overline{\C^-}$ including real as well as non-real eigenvalues. Furthermore, the kernel of $\bA^T$ is given by $\ker(\bA^T)=\Span(\b n)$ with
$\b n=(1,1,1,1,1)^T$. Hence, the total mass $\b n^T\b y(t)=\b n^T \b y^0=13$ is a linear invariant for the system, in correspondence of the initial value $\b y(0)=\b y^0$. The reference solution of the problem is depicted in Figure~\ref{Fig:lin5x5} and satisfies $\Vert \b y(t)-\b y^*\Vert_2<10^{-2}$ at time $t=1.61$.
\begin{figure}[h!]
	\centering
	\includegraphics[width=0.5\textwidth]{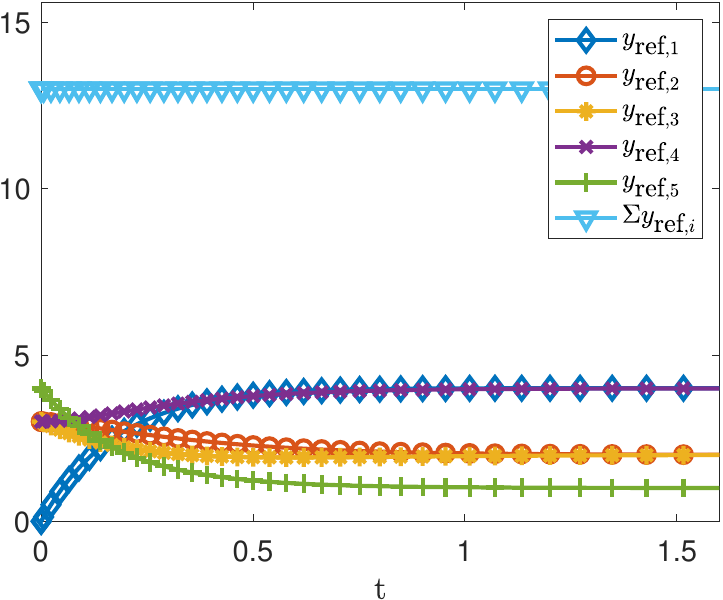}
	\caption{The reference solution of the initial value problem \eqref{eq:testgeco1}.}\label{Fig:lin5x5}
\end{figure}

We want to note that even though the stability functions of gBBKS and GeCo2 were obtained by analyzing a $2\times2$ system, we will see that the corresponding stability results are well reflected also for a larger system.

\subsection{Investigation of MPRK Schemes}
As in Chapter~\ref{chap:stab}, we consider here MPRK schemes up to order three. The stability analysis and numerical experiments for the fourth order MPRK method are left for future work.
In particular, the numerical experiments will be performed with MPE, MPRK22($\alpha$) for $\alpha\in\{0.5,1,5\}$, MPRK43($0.5,0.75$) and MPRK43($0.563$), all of which are proven to be unconditionally stable and locally converging towards the steady state solution. Hence, we consider the problem \eqref{eq:testgeco1} using a comparably large time step size of $\Delta t=5$. 

\subsubsection{MPE}
The results for MPE can be seen in Figure~\ref{fig:MPE}. As one can see, the method is stable and converging using the initial condition from \eqref{eq:testgeco1}. An error of around $10^{-14}$ is already obtained after $t=100$, that is after $20$ steps using $\dt=5$. Note again that this is a comparably large $\dt$ as the analytic solution satisfies $\norm{\b y(t)-\b y^*}_2<2\cdot 10^{-2}$ at time $t=1.61$. In Figure~\ref{fig:MPE4} on can see that the second linear invariant is also preserved.
\begin{figure}[!h]\centering
	\begin{subfigure}[t]{0.46\textwidth}
		\includegraphics[width=\textwidth]{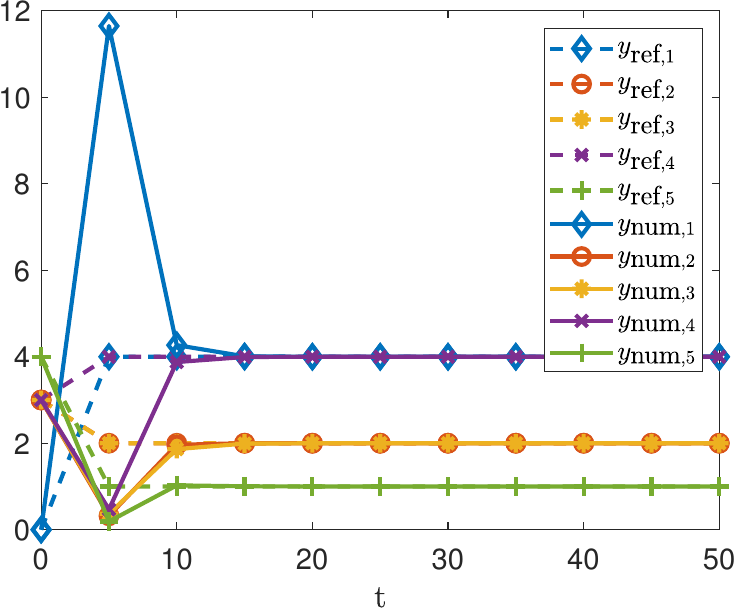}
	\end{subfigure}
	\begin{subfigure}[t]{0.49\textwidth}
		\includegraphics[width=\textwidth]{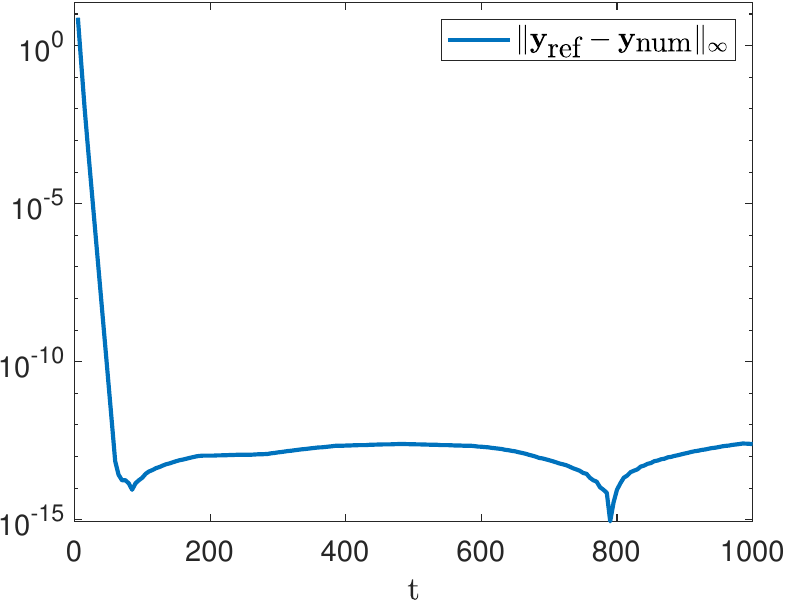}
	\end{subfigure}
	\caption{Numerical solution of \eqref{eq:testgeco1} and error plot using MPE. The dashed lines represent the reference solution.}\label{fig:MPE}
\end{figure}
\begin{figure}[!h]\centering
	\begin{subfigure}[t]{0.46\textwidth}
		\includegraphics[width=\textwidth]{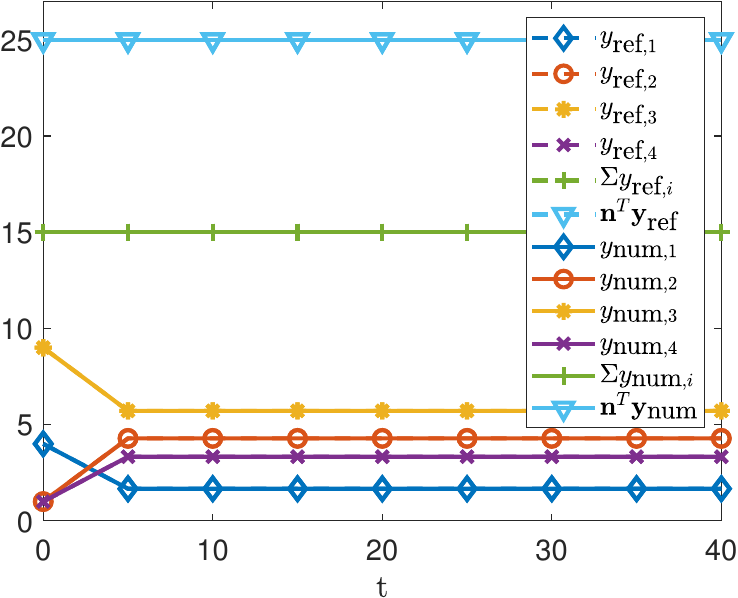}
	\end{subfigure}
	\caption{Numerical approximation of \eqref{eq:initProb4dim} using MPE. The dashed lines represent the exact solution \eqref{eq:exsol4dim} and coincide for this example with the numerical solution. The second linear invariant is determined by $\b n=(1,2,2,1)^T$.}\label{fig:MPE4}
\end{figure}

\subsubsection{MPRK22($\alpha$)}
In the Figures \ref{Fig:MPRK22} and \ref{Fig:MPRKinitProb4dim}, we compare the MPRK22($\alpha$) schemes for $\alpha\in\left\{\frac12,1,5\right\}$ and find that for $\alpha=1$ or $\alpha=5$ the methods produce errors near machine precision at $t=200$, \ie after around 40 steps, whereas for $\alpha=\frac12$ we cannot observe the convergence of the iterates towards $\b y^*$ within $t\in[0,50]$. Nevertheless, the results depicted on the top right show that even for the case $\alpha=\frac12$, the stability and convergence proved in Corollaries~\ref{Cor:MPRKstab} and \ref{Cor:MPRKstab1} can be confirmed numerically by extending the observation period. Moreover, the second linear invariant is also preserved, see Figure~\ref{Fig:MPRKinitProb4dim}.
\begin{figure}[!h]\centering
	\begin{subfigure}[t]{0.46\textwidth}
		\includegraphics[width=\textwidth]{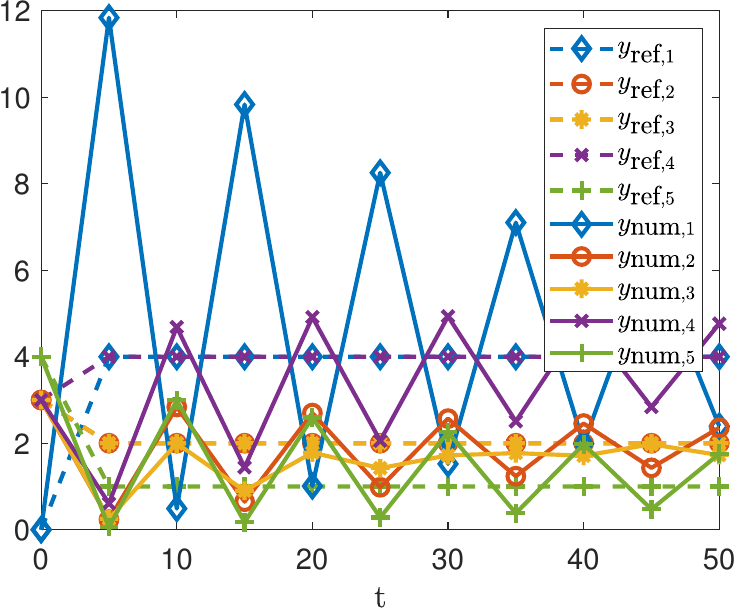}
		\subcaption{$\alpha=0.5$}
	\end{subfigure}
	\begin{subfigure}[t]{0.49\textwidth}
		\includegraphics[width=\textwidth]{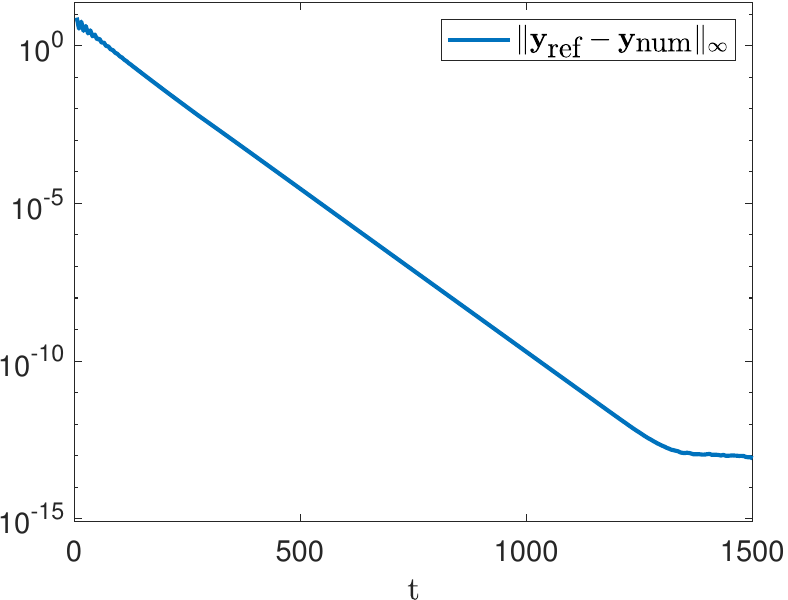}
		\subcaption{$\alpha=0.5$}
	\end{subfigure}\\
	\begin{subfigure}[t]{0.46\textwidth}
		\includegraphics[width=\textwidth]{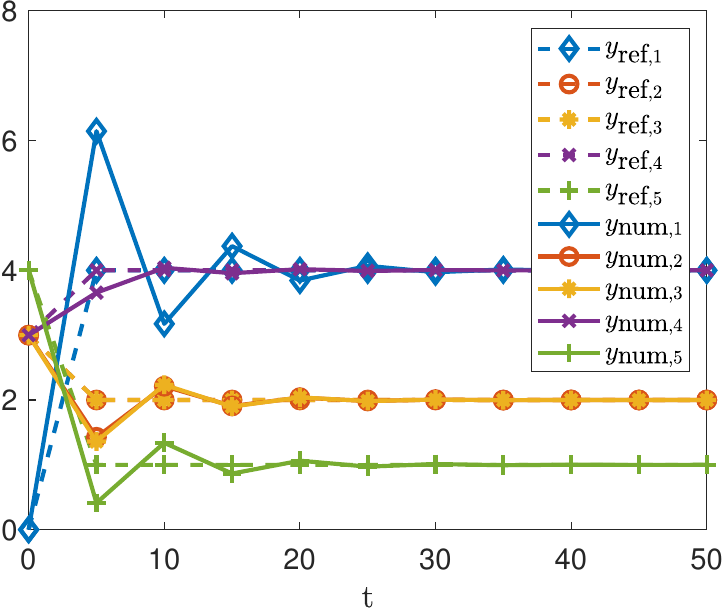}
		\subcaption{$\alpha=1$}
	\end{subfigure}
	\begin{subfigure}[t]{0.5\textwidth}
		\includegraphics[width=\textwidth]{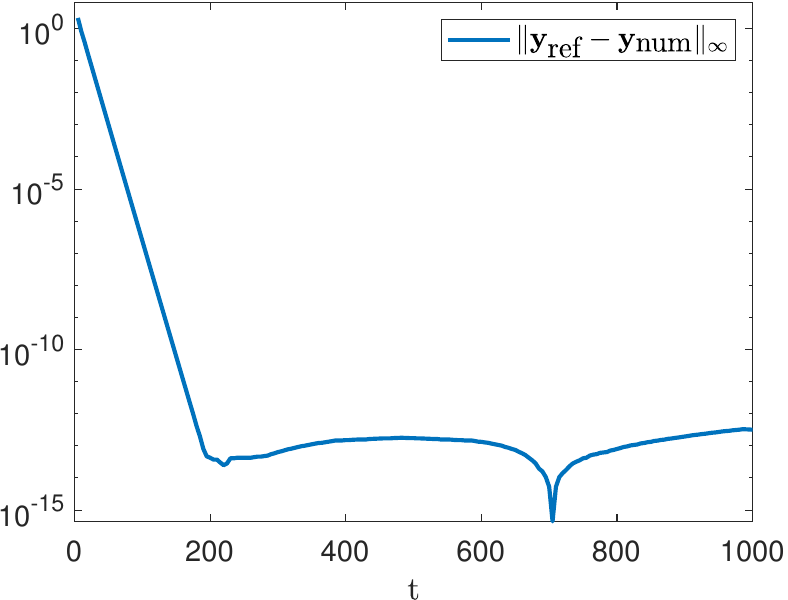}
		\subcaption{$\alpha=1$}
	\end{subfigure}\\
	\begin{subfigure}[t]{0.46\textwidth}
		\includegraphics[width=\textwidth]{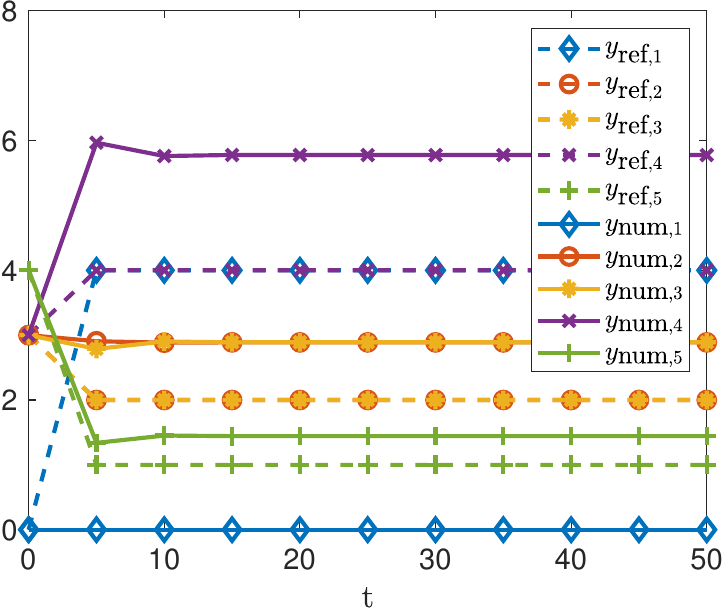}
		\subcaption{$\alpha=5$}
	\end{subfigure}
	\begin{subfigure}[t]{0.5\textwidth}
		\includegraphics[width=\textwidth]{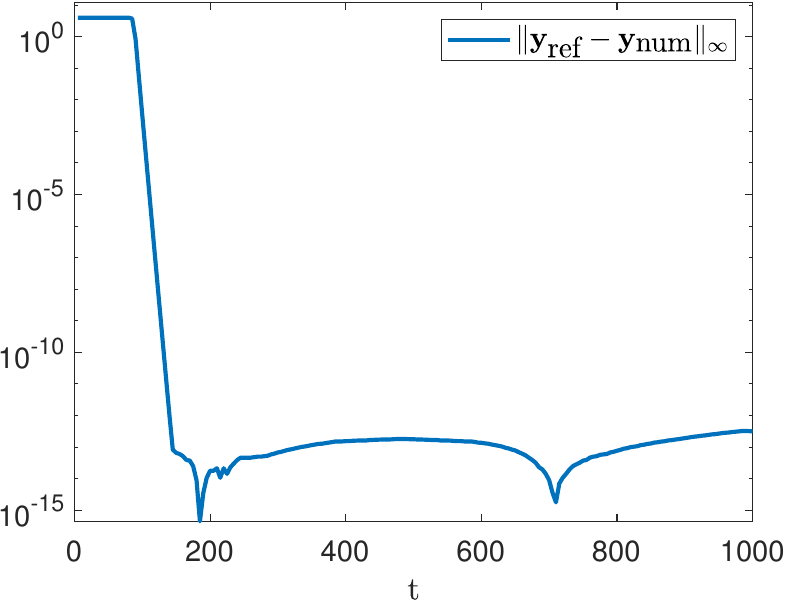}
		\subcaption{$\alpha=5$}
	\end{subfigure}
	\caption{Numerical solution of \eqref{eq:testgeco1} and error plots using MPRK22$(\alpha)$ schemes. The dashed lines represent the reference solution.}\label{Fig:MPRK22}
\end{figure}

\begin{figure}[!h]\centering
	\begin{subfigure}[t]{0.46\textwidth}
		\includegraphics[width=\textwidth]{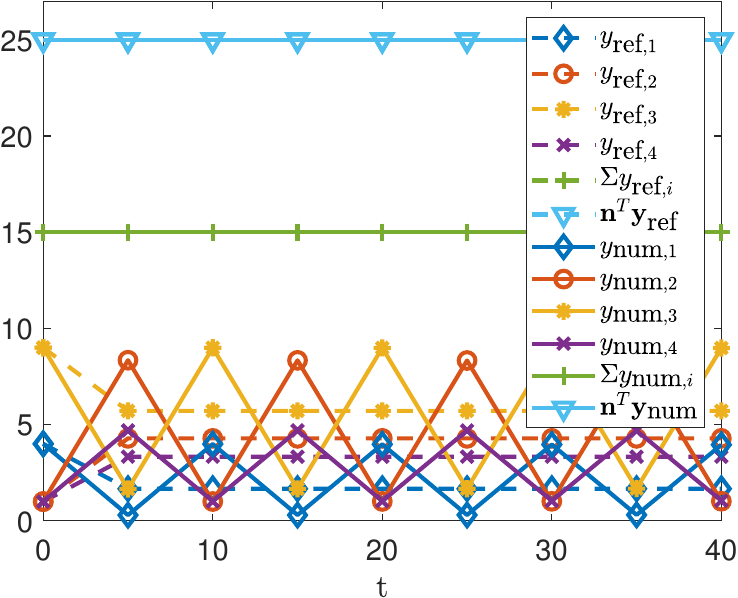}
		\subcaption{$\alpha=0.5$}
	\end{subfigure}
	\begin{subfigure}[t]{0.46\textwidth}
		\includegraphics[width=\textwidth]{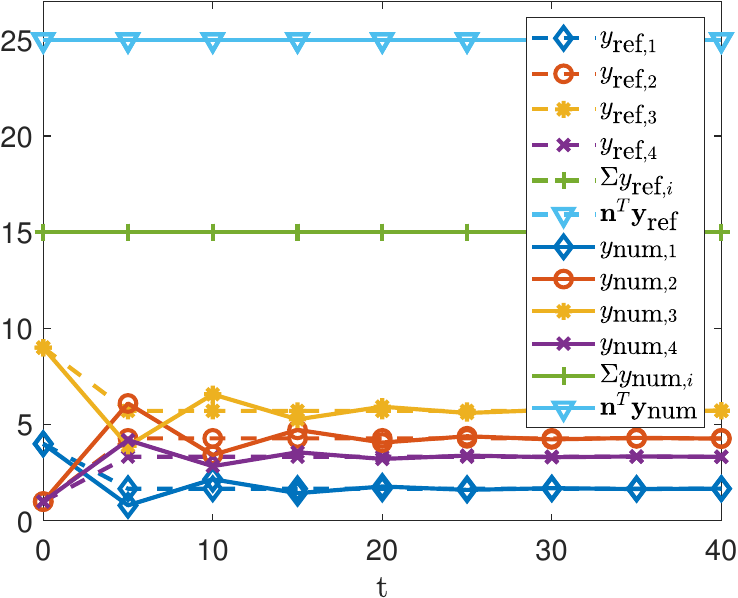}
		\subcaption{$\alpha=1$}
	\end{subfigure}
	\\
	\begin{subfigure}[t]{0.46\textwidth}
		\includegraphics[width=\textwidth]{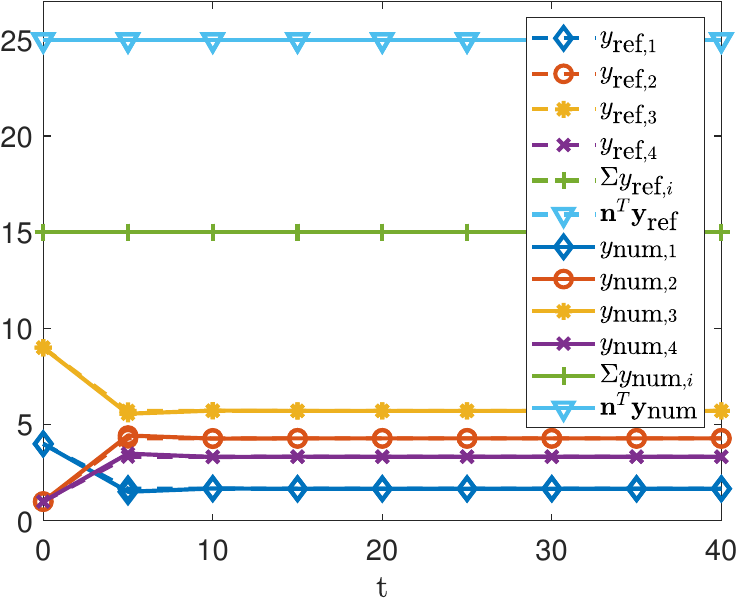}
		\subcaption{$\alpha=5$}
	\end{subfigure}
	\caption{Numerical approximations of \eqref{eq:initProb4dim} using MPRK22($\alpha$) schemes. The dashed lines indicate the exact solution \eqref{eq:exsol4dim} and $\b n=(1,2,2,1)^T$.}\label{Fig:MPRKinitProb4dim}
\end{figure}

\subsubsection{MPRK43($0.5,0.75$)} 
Similarly as before, all theoretical claims for MPRK43($0.5,0.75$) are well reflected in the numerical approximation of \eqref{eq:testgeco1}, see Figure~\ref{fig:MPRK43I} and Figure~\ref{fig:MPRK43I4}.
\begin{figure}[!h]\centering
	\begin{subfigure}[t]{0.46\textwidth}
		\includegraphics[width=\textwidth]{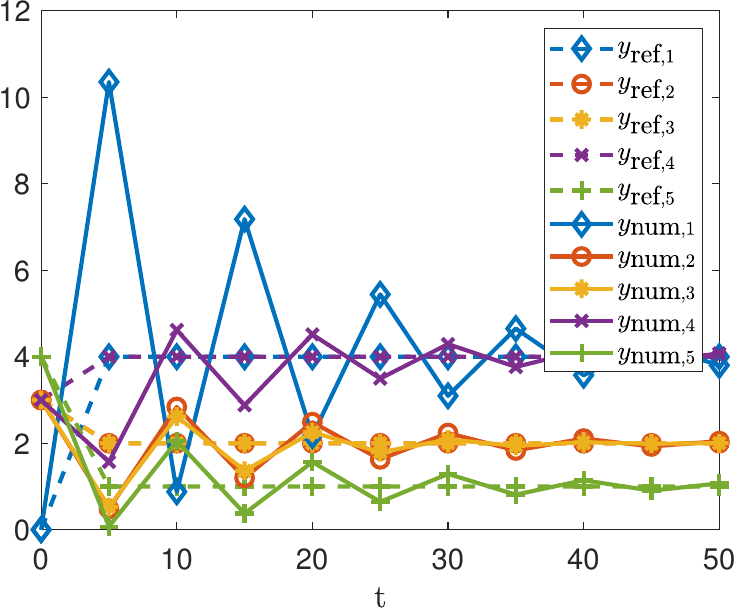}
	\end{subfigure}
	\begin{subfigure}[t]{0.49\textwidth}
		\includegraphics[width=\textwidth]{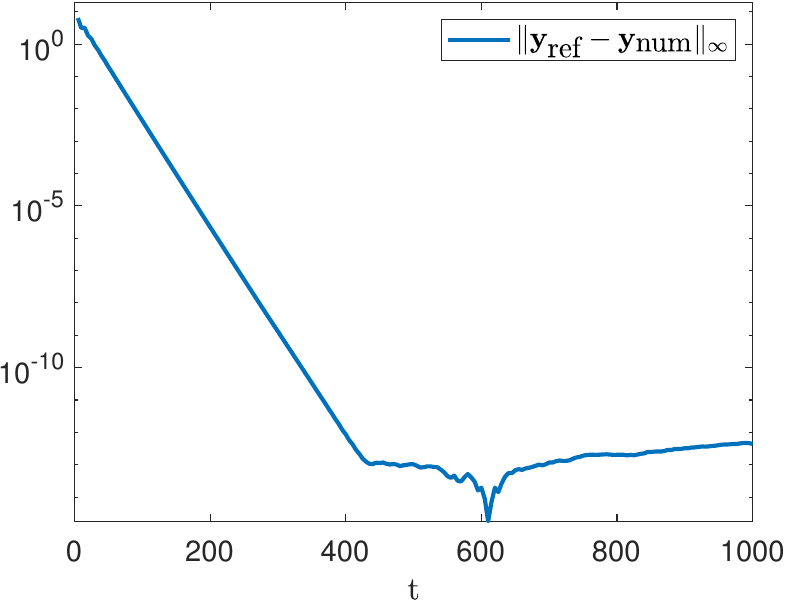}
	\end{subfigure}
	\caption{Numerical solution of \eqref{eq:testgeco1} and error plot using the third order MPRK43(0.5, 0.75) method. The dashed lines represent the reference solution.}\label{fig:MPRK43I}
\end{figure}
\begin{figure}[!h]\centering
	\begin{subfigure}[t]{0.46\textwidth}
		\includegraphics[width=\textwidth]{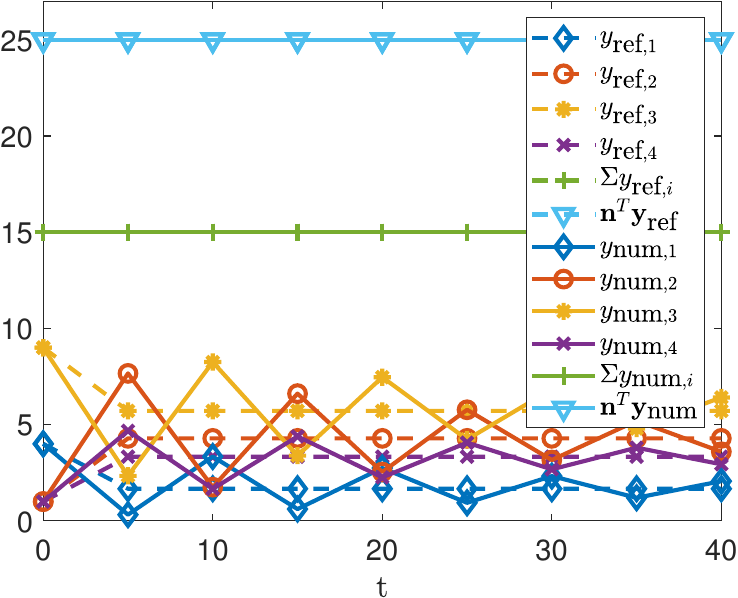}
	\end{subfigure}
	\caption{Numerical approximation of \eqref{eq:initProb4dim} using MPRK43(0.5, 0.75). The dashed lines represent the exact solution \eqref{eq:exsol4dim}  and $\b n=(1,2,2,1)^T$.}\label{fig:MPRK43I4}
\end{figure}
\subsubsection{MPRK43($0.563$)}
According to the investigation in \cite{ITOE22}, MPRK43$(\gamma$) has the largest $\dt$ bound for fulfilling the necessary condition for avoiding oscillations, if $\gamma\approx0.563$. This is why we restrict to this method hereafter. Since this method is also proven to be unconditionally stable, we proceed as for the previously discussed methods. The results can be found in Figure~\ref{fig:MPRK43II} and Figure~\ref{fig:MPRK43II4} and reflect well our theoretical claims from Corollary~\ref{Cor:MPRK43gamma}.
\begin{figure}[!h]\centering
	\begin{subfigure}[t]{0.46\textwidth}
		\includegraphics[width=\textwidth]{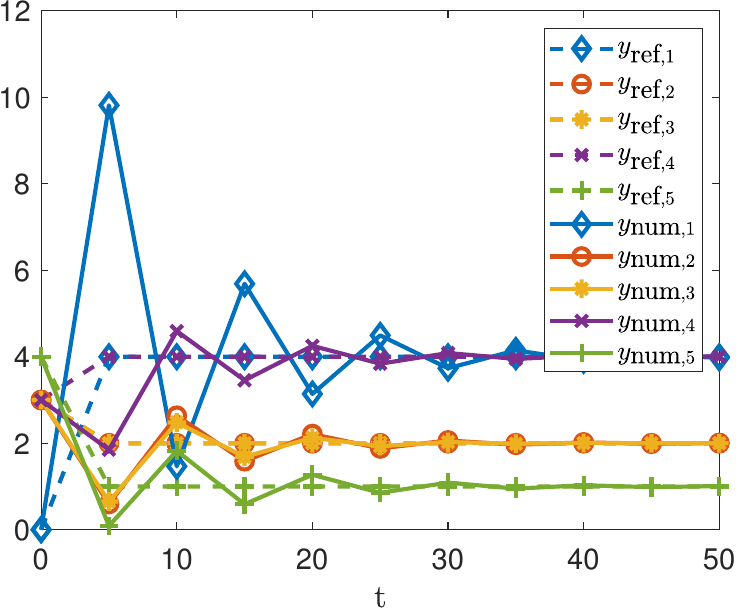}
	\end{subfigure}
	\begin{subfigure}[t]{0.49\textwidth}
		\includegraphics[width=\textwidth]{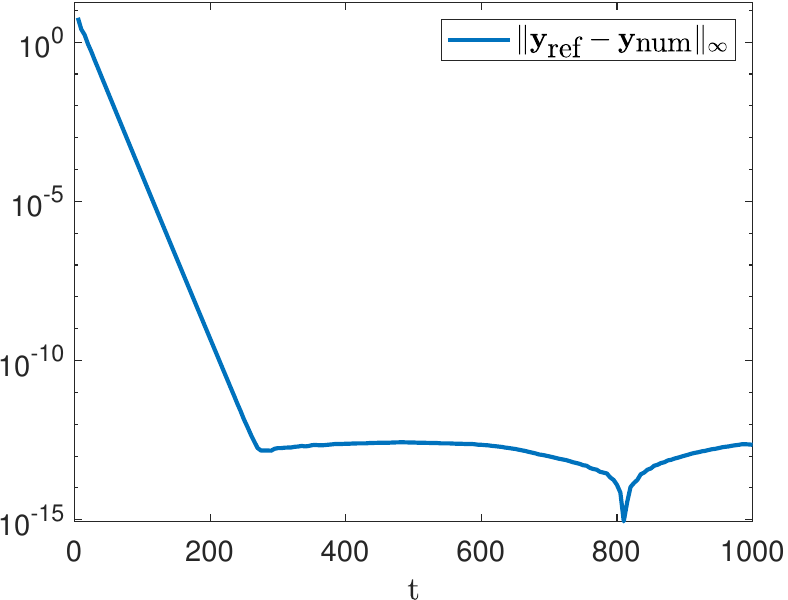}
	\end{subfigure}
	\caption{Numerical solution of \eqref{eq:testgeco1} and error plot using MPRK43(0.563). The dashed lines represent the reference solution.}\label{fig:MPRK43II}
\end{figure}
\begin{figure}[!h]\centering
	\begin{subfigure}[t]{0.46\textwidth}
		\includegraphics[width=\textwidth]{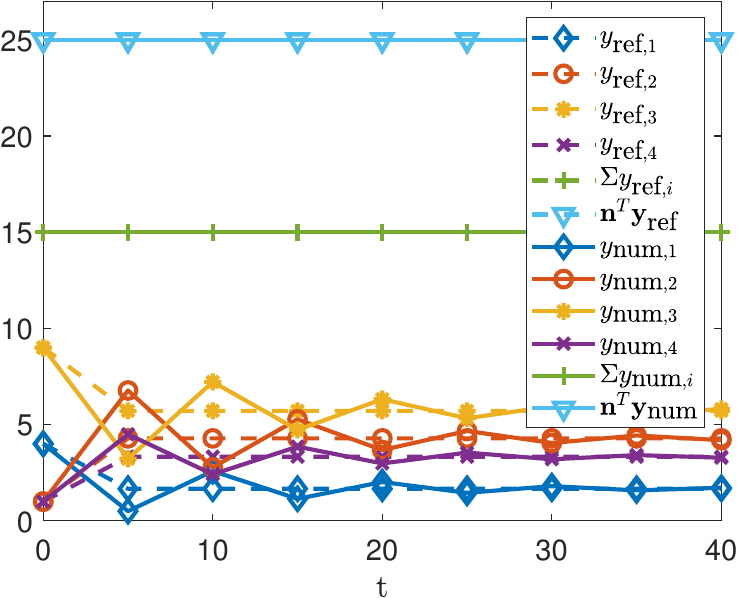}
	\end{subfigure}
	\caption{Numerical approximation of \eqref{eq:initProb4dim} using MPRK43(0.563). The dashed lines represent the exact solution \eqref{eq:exsol4dim}  and $\b n=(1,2,2,1)^T$.}\label{fig:MPRK43II4}
\end{figure}

\subsection{Investigation of SSPMPRK Schemes}
Hereafter, we confirm numerically that SSPMPRK schemes are stable as claimed in Corollary \ref{Cor:SSPMPRK2stab} and Corollary \ref{Cor:SSPMPRK3stab}. 
Furthermore, we investigate the local convergence to the steady state solution as stated in Corollary~\ref{Cor:SSPMPRK2stab1} and Corollary~\ref{Cor:SSPMPRK3stab1} by choosing $\b y^0=\b y(0)$ and $\Delta t=5$, if not stated otherwise. Indeed, in all experiments below the convergence in the stable case can be observed even for $\b y^0=\b y(0)$.

In particular, we are interested in the properties of SSPMPRK3($\frac{1}{3}$) which is the preferred scheme presented in \cite{SSPMPRK3}.
Moreover, we investigate SSPMPRK2($\alpha,\beta$) for three different pairs $(\alpha,\beta)$ covering all cases mentioned in Proposition~\ref{Prop:Stab_SSPMPRK2}.
For the case $\alpha>\frac{1}{2\beta}$ we choose the lower left vertex of the red rectangular from Figure \ref{Fig:alphabetagraph}, i.\,e.\ $(\alpha,\beta)=(0.2,3)$. In this case, we choose different time steps to demonstrate that the computed stability regions are correct. At this point we want to note that the eigenvalues of the system matrices from the test problems lie on the red or blue line depicted in Figure \ref{Fig:Stabmarker}. We scale the time step size $\Delta t$ in such a way that $\Delta t\rho(\b D\b g(\b y^*))=z_i$ for $i\in \{1,2,3,4\}$, respectively, so that for all test cases we consider the cases of stable as well as unstable fixed points.
\begin{figure}
	\centering
	\includegraphics[scale=0.7]{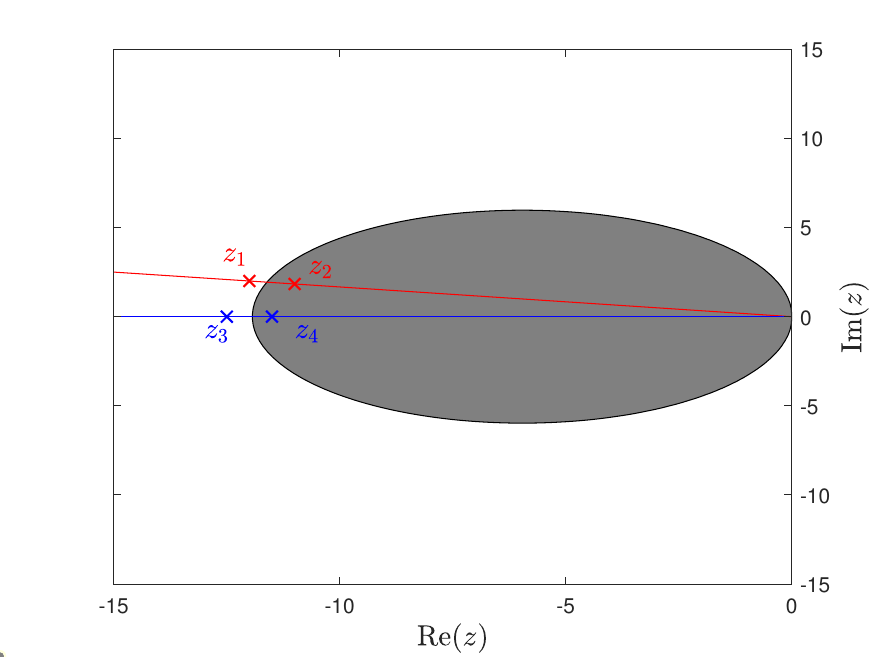}
	\caption{The stability region for SSPMPRK2$(0.2,3)$. The red line is the set $\{a(-6+\ii)\mid 2.5\leq a\leq 0\}$. In particular, the red marked complex numbers are $z_1=2(-6+\ii)$ and $z_2=\frac{11}{6}(-6+\ii)$.
		The blue line is the interval $[-15,0]$. In particular, the blue marked numbers are $z_3=-12.5$ and $z_4=-11.5$.}\label{Fig:Stabmarker}
\end{figure}

As a representative for the case $\alpha=\frac{1}{2\beta}$ we use $(\alpha,\beta)=(\frac12,1)$ which is the preferred choice presented in \cite{SSPMPRK2}. Finally, we choose $(\alpha,\beta)=(0.1,1)$ satisfying $\alpha<\frac{1}{2\beta}$.

\subsubsection{SSPMPRK2($\alpha, \beta$)}
In the subsequent figures, SSPMPRK2($\alpha, \beta$) schemes are used to solve the test problems. In all four figures \ref{Fig:SSPMPRKinitProbReal}, \ref{Fig:SSPMPRKinstabinitProbReal}, \ref{Fig:SSPMPRKinitProbIm}
and \ref{Fig:SSPMPRKinstabinitProbIm}, we can observe the same qualitative behavior. In Figure~\ref{Fig:SSPMPRKinitProbReal} and Figure~\ref{Fig:SSPMPRKinitProbIm}, the preferred choice of $(\alpha,\beta)=(\frac{1}{2},1)$ seems to be less damping than $(\alpha,\beta)=(0.1,1)$. However, in both cases a convergence towards the steady state solution can be observed. In Figure~\ref{Fig:SSPMPRKinstabinitProbReal} and Figure~\ref{Fig:SSPMPRKinstabinitProbIm}, the pair $(\alpha,\beta)$ lies in the critical region where the stability domain is bounded. If $\Delta t$ is chosen in such a way that $\Delta t\rho(\b D\b g(\b y^*))=z_i$ for $i=2$ or $i=4$, respectively, see Figure \ref{Fig:Stabmarker}, the numerical approximations behave as expected converging towards the corresponding steady state which is a stable fixed point of the method. However, increasing $\Delta t$ by approximately $2\cdot 10^{-3}$, we find that $\Delta t\rho(\b D\b g(\b y^*))=z_i$ for $i=1$ or $i=3$, respectively. As a result, even when we modify the starting vector to be $\b y^0=\b y^*+10^{-5}\b v$ with $\b v=(1,-2,1)^T$, the numerical approximation diverges from the steady state as predicted by the presented theory, can be observed. All parameters however lead to a scheme that also preserve the second linear invariant as Figure~\ref{Fig:SSPMPRKinitProb4Dim} suggests.
\begin{figure}[!h]\centering
	\begin{subfigure}[t]{0.46\textwidth}
		\includegraphics[width=\textwidth]{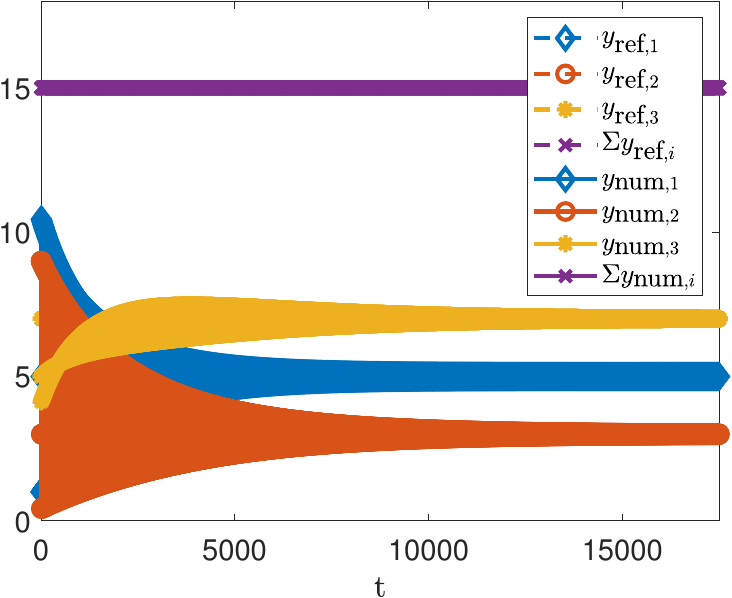}
		\subcaption{$(\alpha,\beta)=(\frac12,1)$, $\Delta t=5$}
	\end{subfigure}
	\begin{subfigure}[t]{0.475\textwidth}
		\includegraphics[width=\textwidth]{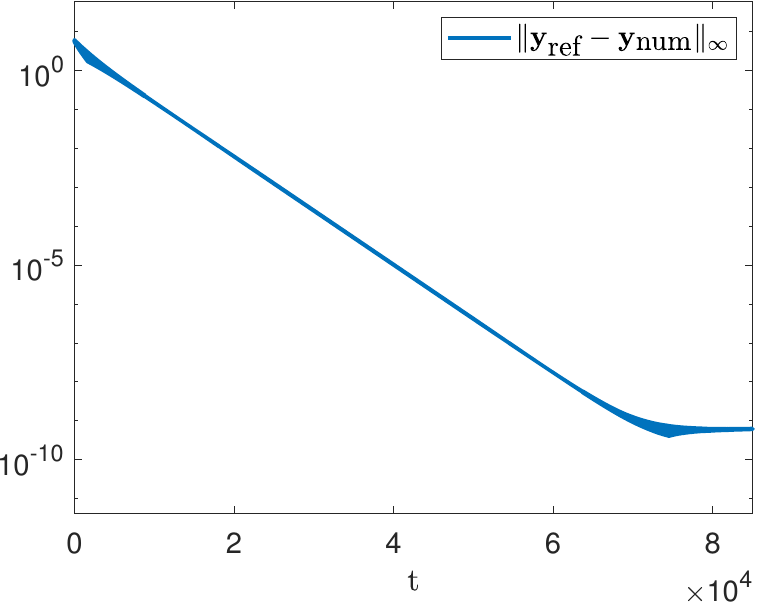}
		\subcaption{$(\alpha,\beta)=(\frac12,1)$, $\Delta t=5$}
	\end{subfigure}\\
	\begin{subfigure}[t]{0.46\textwidth}
		\includegraphics[width=\textwidth]{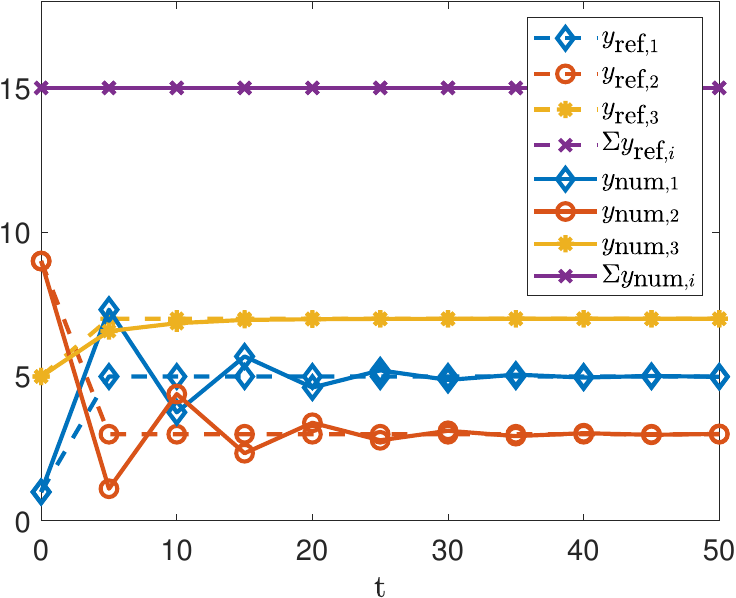}
		\subcaption{$(\alpha,\beta)=(0.1,1)$, $\Delta t=5$}
	\end{subfigure}
	\begin{subfigure}[t]{0.49\textwidth}
		\includegraphics[width=\textwidth]{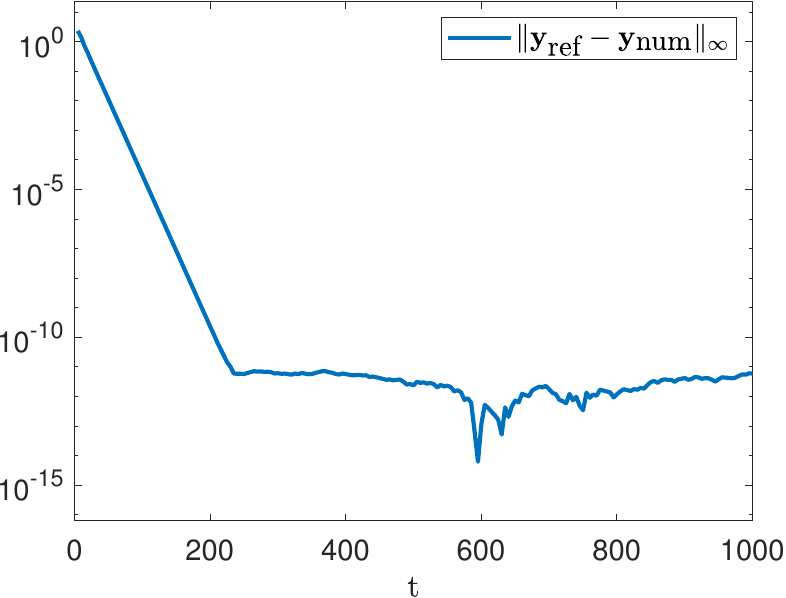}
		\subcaption{$(\alpha,\beta)=(0.1,1)$, $\Delta t=5$}
	\end{subfigure}
	\caption{Numerical approximations of \eqref{eq:initProbReal} using the SSPMPRK2 scheme. The dashed lines indicate the exact solution \eqref{eq:exsolReal}. }\label{Fig:SSPMPRKinitProbReal}
\end{figure}
\begin{figure}\centering
	\begin{subfigure}[t]{0.46\textwidth}
		\includegraphics[width=\textwidth]{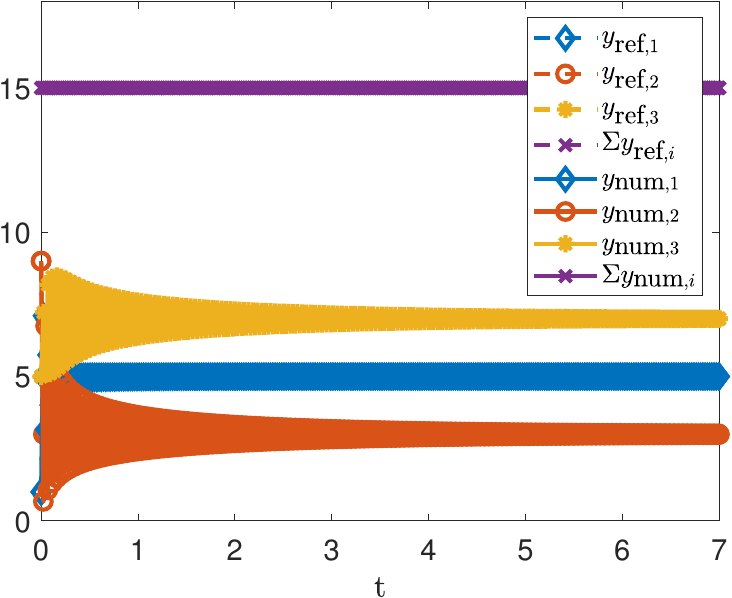}
		\subcaption{$\Delta t= 0.023$}
	\end{subfigure}
	\begin{subfigure}[t]{0.49\textwidth}
		\includegraphics[width=\textwidth]{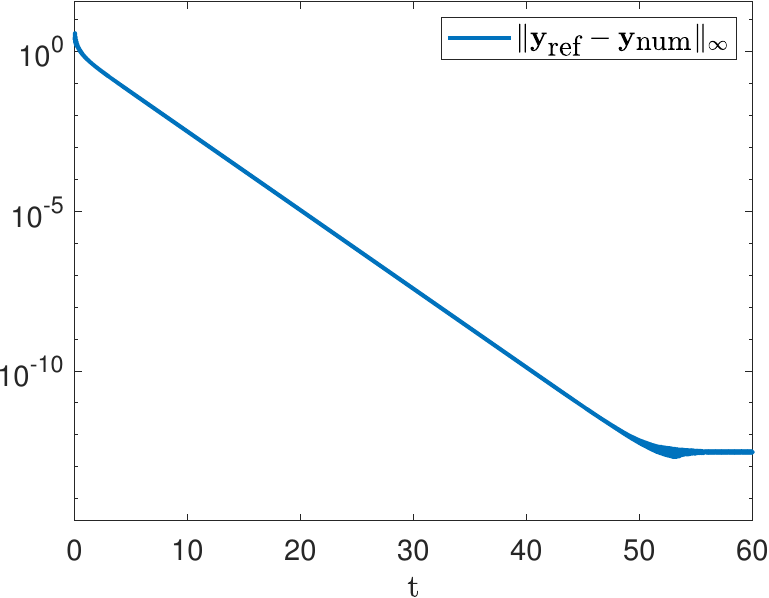}
		\subcaption{$\Delta t = 0.023$}
	\end{subfigure}\\
	\begin{subfigure}[t]{0.49\textwidth}
		\includegraphics[width=\textwidth]{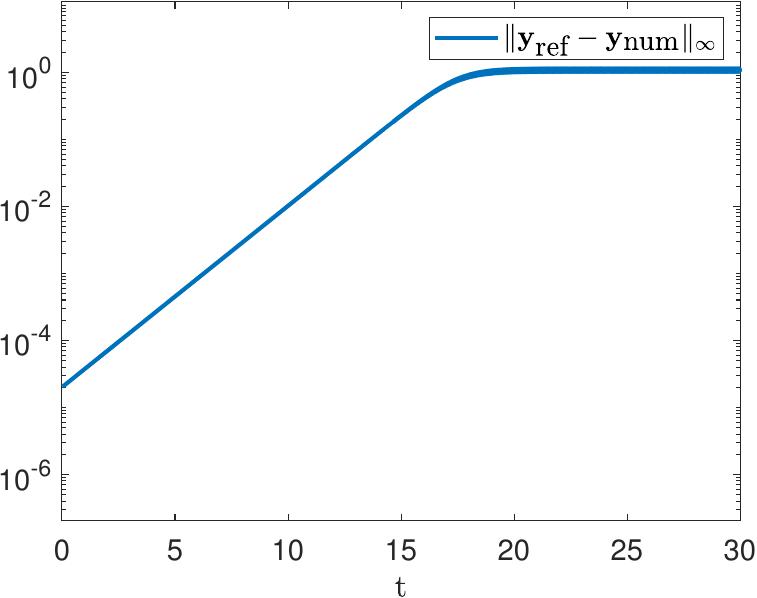}
		\subcaption{$\Delta t = 0.025$}\label{Subfig:DReal}
	\end{subfigure}
	\caption{Numerical approximations of \eqref{eq:initProbReal} using the SSPMPRK2($0.2,3$) scheme. The dashed lines indicate the exact solution \eqref{eq:exsolReal}. In \ref{Subfig:DReal}, we used $\b y^0=\b y^*+10^{-5}(1,-2,1)^T$. }\label{Fig:SSPMPRKinstabinitProbReal}
\end{figure}
\begin{figure}[!h]\centering
	\begin{subfigure}[t]{0.46\textwidth}
		\includegraphics[width=\textwidth]{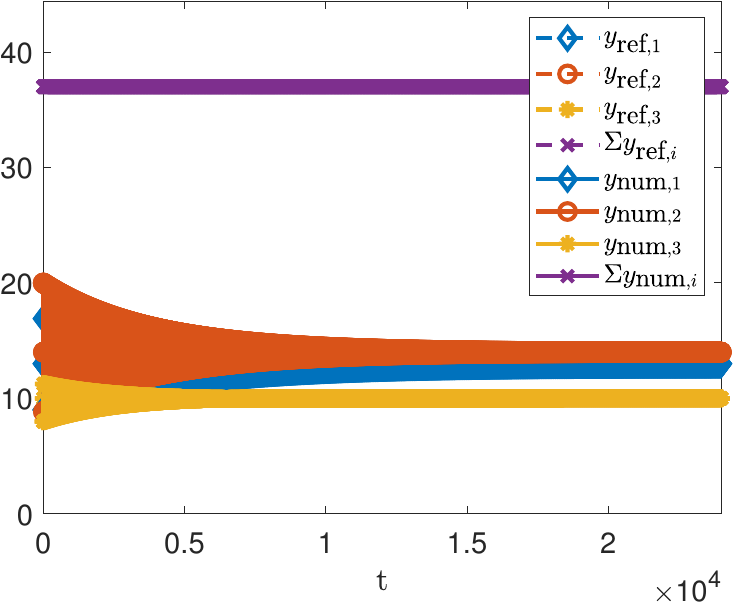}
		\subcaption{$(\alpha,\beta)=(\frac12,1)$, $\Delta t=5$}
	\end{subfigure}
	\begin{subfigure}[t]{0.48\textwidth}
		\includegraphics[width=\textwidth]{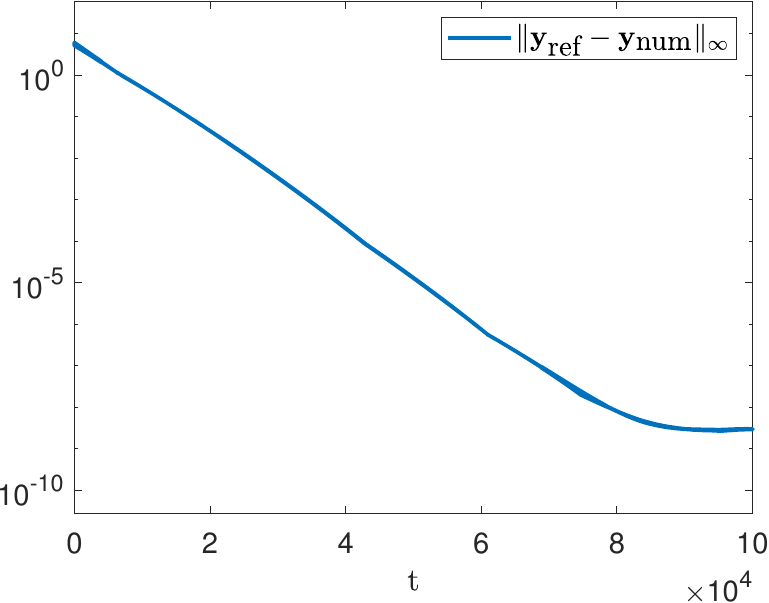}
		\subcaption{$(\alpha,\beta)=(\frac12,1)$, $\Delta t=5$}
	\end{subfigure}\\
	\begin{subfigure}[t]{0.46\textwidth}
		\includegraphics[width=\textwidth]{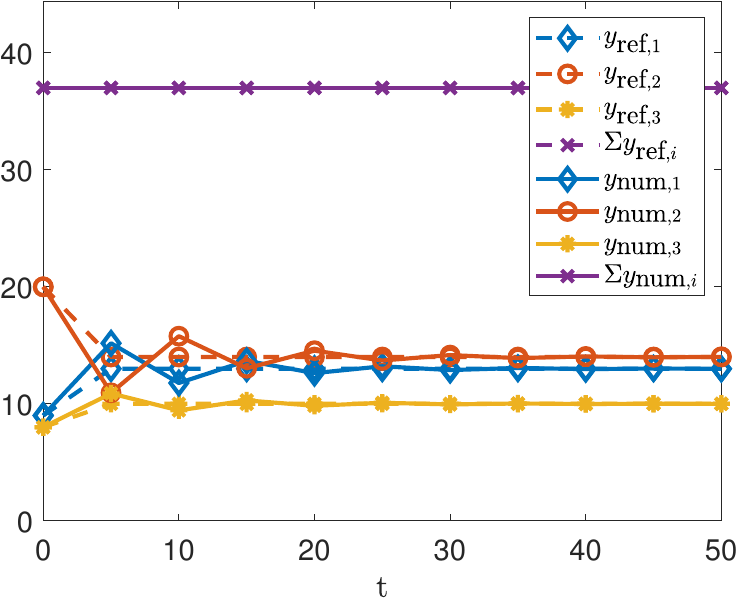}
		\subcaption{$(\alpha,\beta)=(0.1,1)$, $\Delta t=5$}
	\end{subfigure}
	\begin{subfigure}[t]{0.49\textwidth}
		\includegraphics[width=\textwidth]{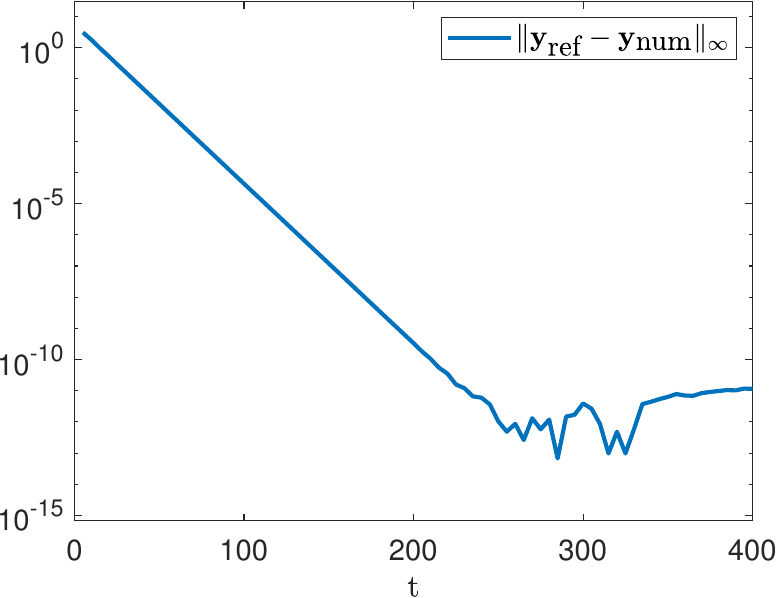}
		\subcaption{$(\alpha,\beta)=(0.1,1)$, $\Delta t=5$}
	\end{subfigure}
	\caption{Numerical approximations of \eqref{eq:initProbIm} using  the second order SSPMPRK scheme. The dashed lines indicate the exact solution \eqref{eq:exsolIm}.}\label{Fig:SSPMPRKinitProbIm}
\end{figure}
\begin{figure}[!h]\centering
	\begin{subfigure}[t]{0.46\textwidth}
		\includegraphics[width=\textwidth]{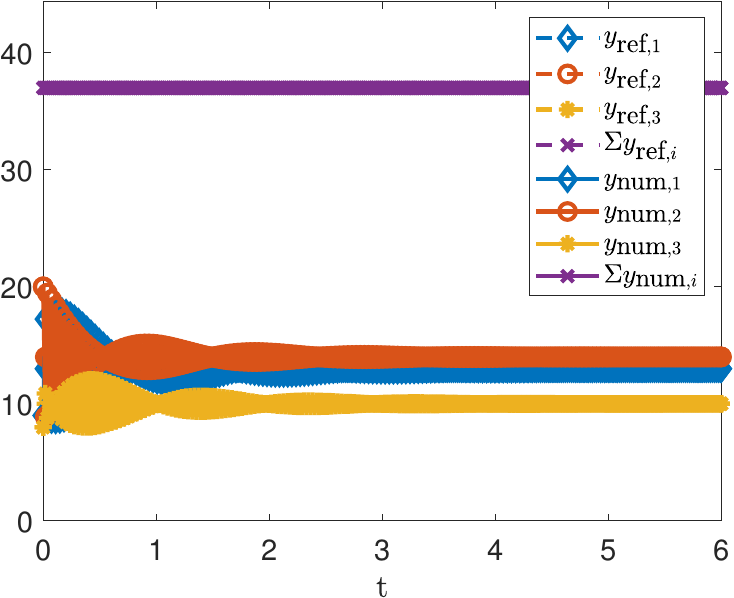}
		\subcaption{$\Delta t = 0.0183$}
	\end{subfigure}
	\begin{subfigure}[t]{0.49\textwidth}
		\includegraphics[width=\textwidth]{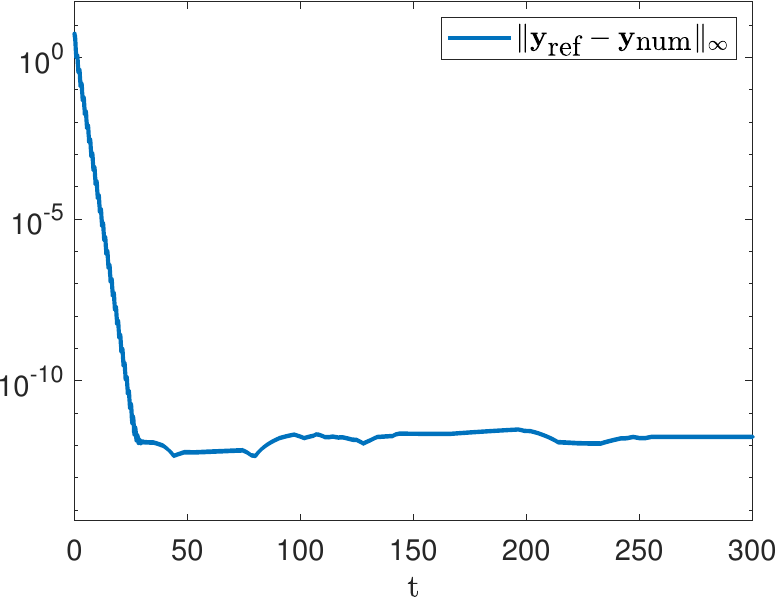}
		\subcaption{$\Delta t = 0.183$}
	\end{subfigure}\\
	\begin{subfigure}[t]{0.49\textwidth}
		\includegraphics[width=\textwidth]{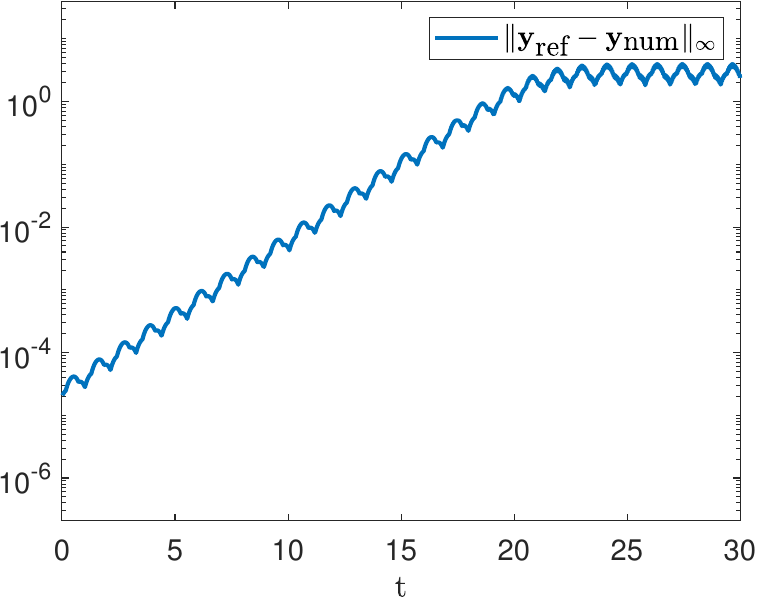}
		\subcaption{$\Delta t = 0.020$}\label{Subfig:DIm}
	\end{subfigure}
	\caption{Numerical approximations of \eqref{eq:initProbIm} using the SSPMPRK22($0.2,3$) scheme. The dashed lines indicate the exact solution \eqref{eq:exsolIm}. In \ref{Subfig:DIm}, the initial vector $\b y^0=\b y^*+10^{-5}(1,-2,1)^T$ is chosen. }\label{Fig:SSPMPRKinstabinitProbIm}
\end{figure}
\begin{figure}[!h]\centering
	\hspace{-0.8cm}
	\begin{subfigure}[t]{0.53\textwidth}
		\includegraphics[width=\textwidth]{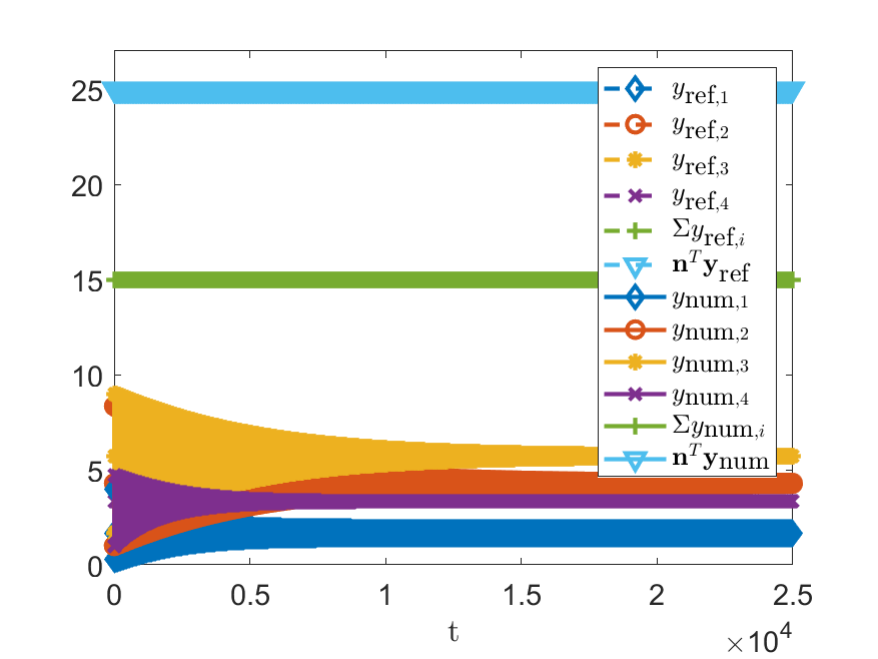}
		\subcaption{$(\alpha,\beta)=(\frac12,1)$, $\Delta t=5$}
	\end{subfigure}
	\begin{subfigure}[t]{0.46\textwidth}
		\includegraphics[width=\textwidth]{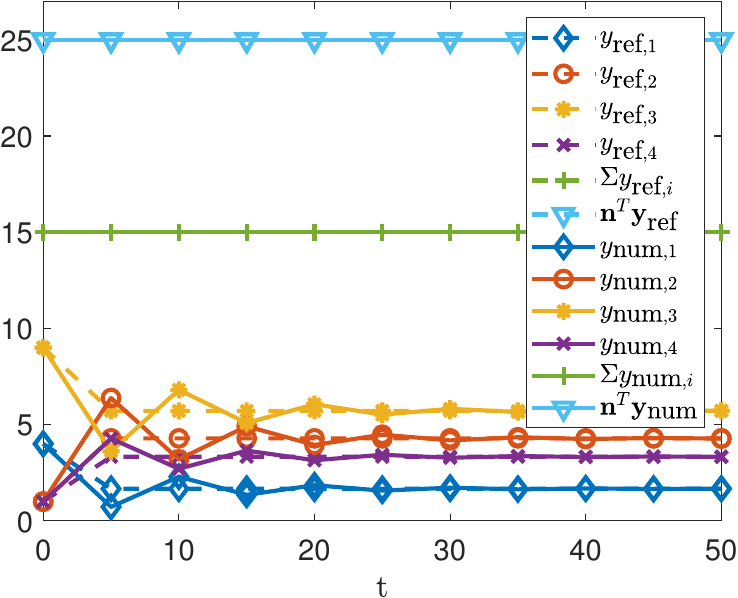}
		\subcaption{$(\alpha,\beta)=(0.1,1)$, $\Delta t=5$}	\end{subfigure}\\
	\begin{subfigure}[t]{0.46\textwidth}
		\includegraphics[width=\textwidth]{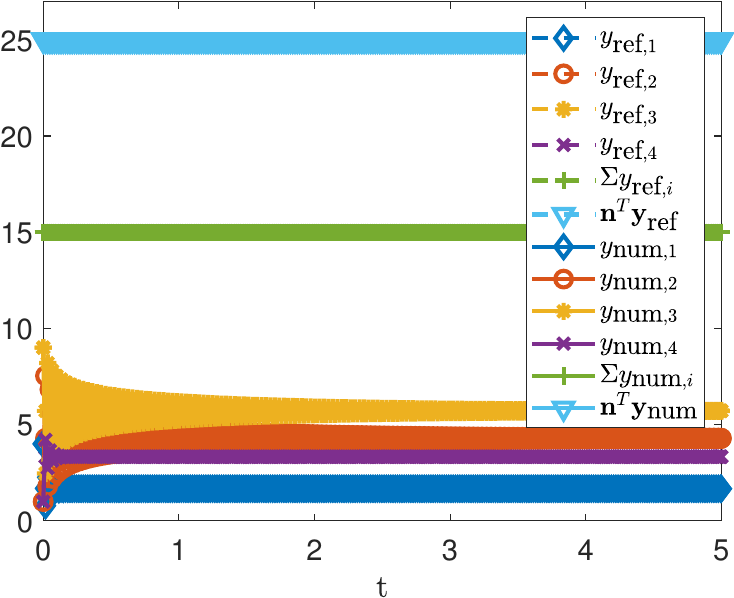}
		\subcaption{$(\alpha,\beta)=(0.2,3)$, $\Delta t = 0.0164$}
	\end{subfigure}
	\caption{Numerical approximations of \eqref{eq:initProb4dim} using the second order SSPMPRK scheme. The dashed lines indicate the exact solution \eqref{eq:exsol4dim}, where $\b n=(1,2,2,1)^T$.}\label{Fig:SSPMPRKinitProb4Dim}
\end{figure}
%

\subsubsection{SSPMPRK3($\frac13$)}
In Figure~\ref{Fig:SSPMPRK3initProblems}, the SSPMPRK3($\frac13$) scheme is used to integrate the test problems \eqref{eq:testgeco1} with mixed eigenvalues and \eqref{eq:initProb4dim} with a second linear invariant. The numerical experiments support the theoretical claims, \ie the fixed points seem to be stable and locally attracting. Moreover, all linear invariants are conserved by the method.

Altogether, the numerical experiments support very well the theoretical results from Section \ref{sec:stab_SSPMPRK} on SSPMPRK methods.
\begin{figure}[!h]\centering
	\begin{subfigure}[t]{0.46\textwidth}
		\includegraphics[width=\textwidth]{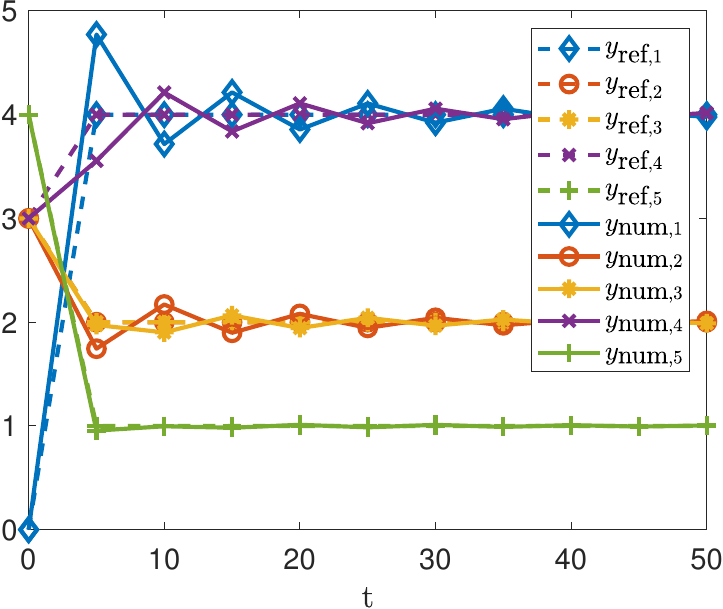}
	\end{subfigure}
	\begin{subfigure}[t]{0.49\textwidth}
		\includegraphics[width=\textwidth]{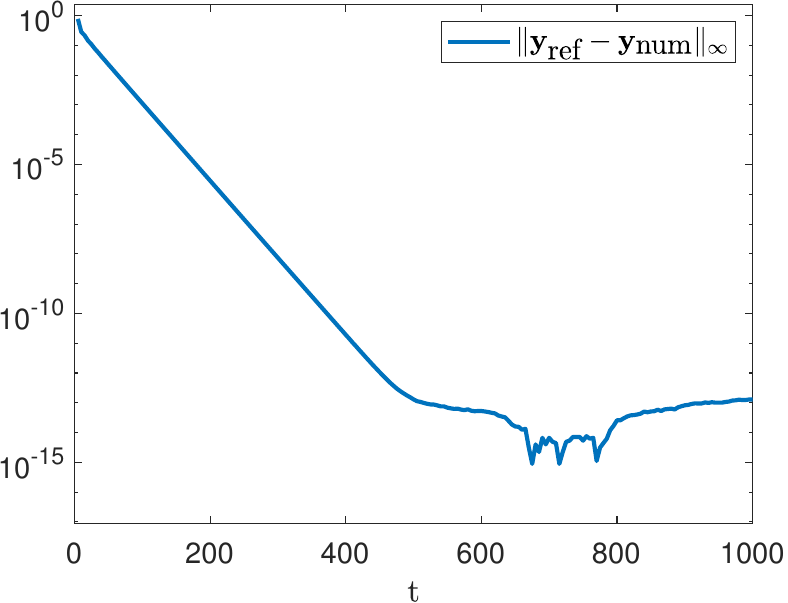}
	\end{subfigure}\\
	\begin{subfigure}[t]{0.46\textwidth}
		\includegraphics[width=\textwidth]{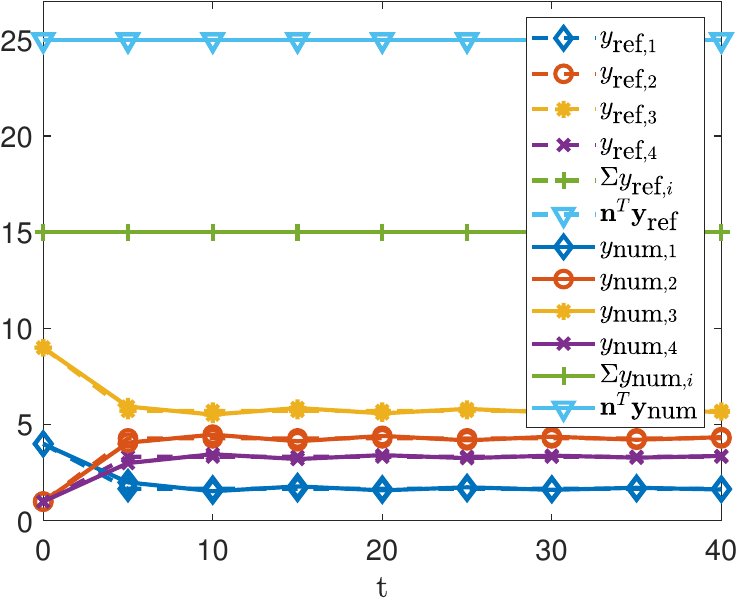}
	\end{subfigure}
	\caption{Numerical approximations of \eqref{eq:testgeco1} with error plot and \eqref{eq:initProb4dim} using SSPMPRK3($\frac13$) schemes and $\dt=5$. The dashed lines indicate the reference solutions and $\b n=(1,2,2,1)^T$.}\label{Fig:SSPMPRK3initProblems}
\end{figure}

\subsection{Investigation of MPDeC Schemes}
In this section we restrict to the investigation of MPDeC schemes with equidistant nodes and refer to \cite{IOE22StabMP} for the numerical experiments concerning MPDeCGL methods. As we have discovered in Figure~\ref{fig:stability}, MPDeCEQ($p$) for $p=12$ and $p=14$ have a bounded stability domain for problems with exclusively real eigenvalues. Moreover, it was observed in \cite[Figure B.9]{IssuesMPRK} that the iterates of MPDeCEQ$(8)$ only locally converge towards the steady state. This is in accordance with the presented theory, however, we did not observe this behavior within the numerical experiments of the previously discussed schemes. Nevertheless, MPDeCEQ$(8)$ is not the only scheme with that rather unpleasant property. Indeed, in \cite{izgin2023nonglobal}, which is based on the master thesis \cite{Schilling2023}, the authors demonstrate that this phenomenon also occurs with MPRK22$(\alpha)$ schemes for $\alpha<\frac12$. The common circumstance for both schemes is that both are based on RK methods with non-positive Butcher tableau. The resulting hypothesis was tested and supported with numerical experiments in \cite{izgin2023nonglobal}. 

Nevertheless, we want to mention that if we violate the stability condition, we can start arbitrary close to the steady state solution, and still, the iterates will not converge to $\b y^*$. 

Now, we reproduce the result from \cite{IssuesMPRK} investigating MPDeCEQ$(8)$, see Figure~\ref{fig:mpdeceq8}. Furthermore, we present experiments with the $12$th and $14$th order method when applied to \eqref{eq:initProbReal}, see Figure~\ref{fig:mpdec12} and Figure~\ref{fig:mpdec14}. In both cases the largest time step size is chosen such that $\lvert R(\dt^\pm \rho(\bA))\rvert =1\pm 0.1$ for the stable and unstable scenario, respectively, see Figure~\ref{fig:stability} for the graph of the stability functions. Since $\rho(\bA)=500$, the time step sizes for MPDeCEQ($12$) are \[\dt^+_{\eq(12)}\approx\frac{59}{500}=0.118\qta\dt^-_{\eq(12)}\approx\frac{20}{500}=0.04.\] In the case of MPDeCEQ($14$) they are \[\dt^+_{\eq(14)}=\approx\frac{12}{500}=0.024\qta\dt^-_{\eq(14)}\approx\frac{7.6}{500}=0.0152.\] Overall, the expected behavior can be observed. 
\begin{figure}[!h]\centering
	\begin{subfigure}[t]{0.49\textwidth}
		\includegraphics[width=\textwidth]{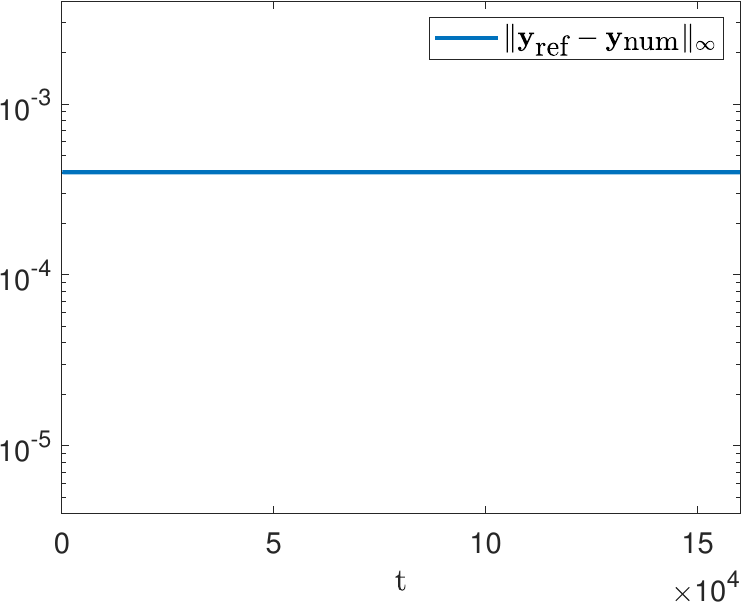}
		\subcaption{$\epsilon=0.1$}
	\end{subfigure}
	\begin{subfigure}[t]{0.498\textwidth}
		\includegraphics[width=\textwidth]{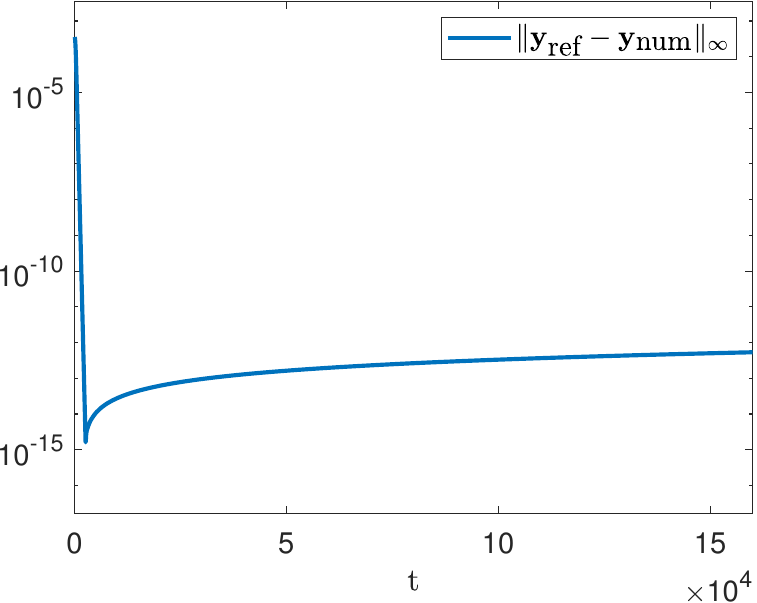}
		\subcaption{$\epsilon=0.01$}
	\end{subfigure}
	\caption{Error of the numerical solution of \eqref{eq:Testprob} using the MPDeCEQ$(8)$ scheme and $\b y^0=\b y^*+\epsilon(-1,1)^T$. }\label{fig:mpdeceq8}
\end{figure}
\begin{figure}[!h]\centering
	\begin{subfigure}[t]{0.47\textwidth}
		\includegraphics[width=\textwidth]{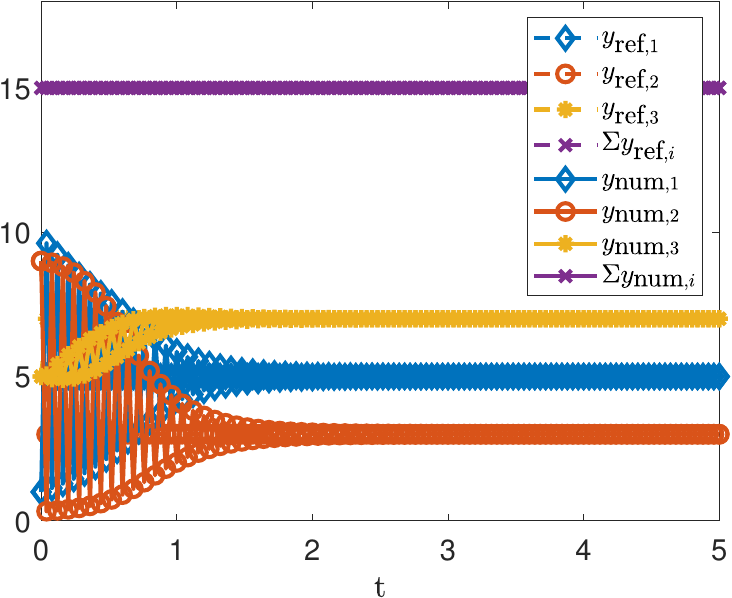}
		\subcaption{$\Delta t= 0.023$}
	\end{subfigure}
	\begin{subfigure}[t]{0.49\textwidth}
		\includegraphics[width=\textwidth]{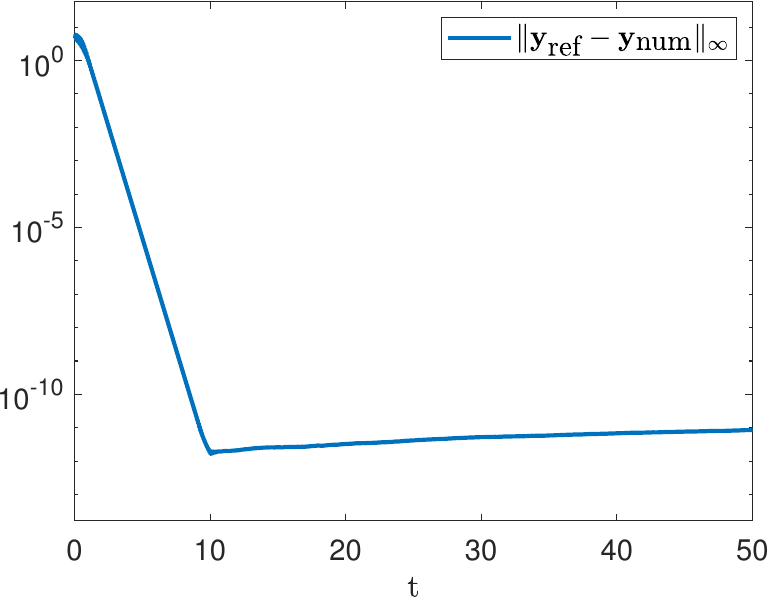}
		\subcaption{$\Delta t = 0.023$}
	\end{subfigure}\\
	\begin{subfigure}[t]{0.49\textwidth}
		\includegraphics[width=\textwidth]{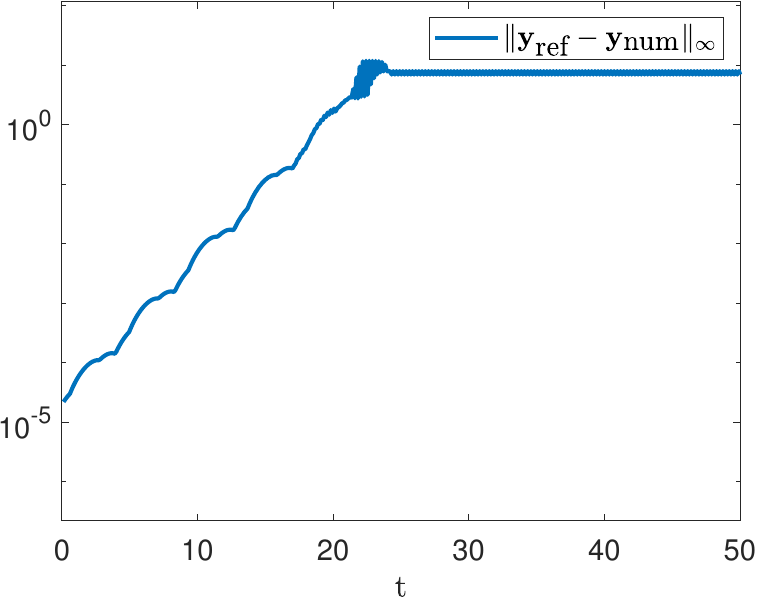}
		\subcaption{$\Delta t = 0.025$}\label{Subfig:mpdecDReal}
	\end{subfigure}
	\caption{Numerical approximations of \eqref{eq:initProbReal} using the MPDeCEQ($12$) scheme. The dashed lines indicate the exact solution \eqref{eq:exsolReal}. In Figure~\ref{Subfig:mpdecDReal}, we used $\b y^0=\b y^*+10^{-5}(1,-2,1)^T$. }\label{fig:mpdec12}
\end{figure}

\begin{figure}[!h]\centering
	\begin{subfigure}[t]{0.47\textwidth}
		\includegraphics[width=\textwidth]{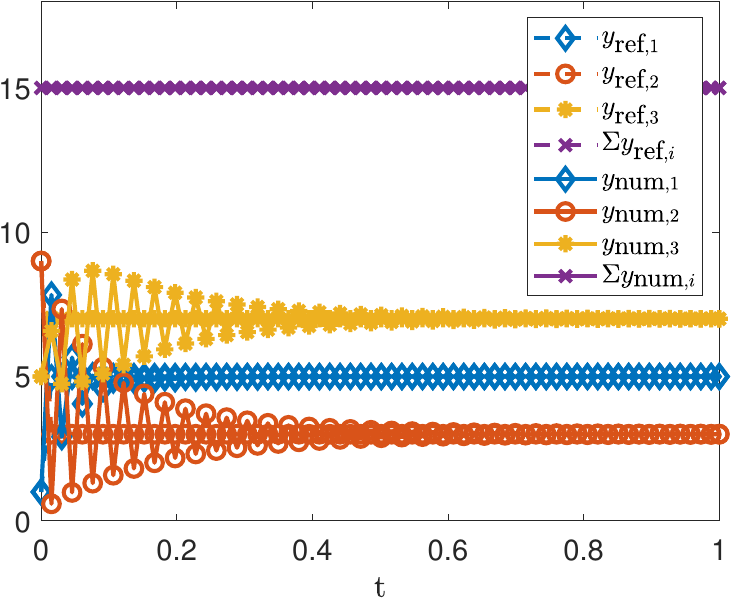}
		\subcaption{$\Delta t= 0.023$}
	\end{subfigure}
	\begin{subfigure}[t]{0.49\textwidth}
		\includegraphics[width=\textwidth]{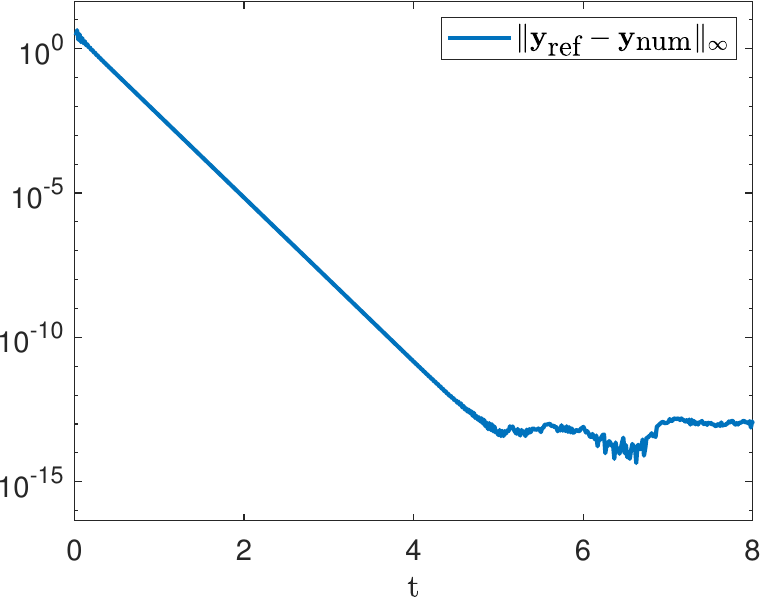}
		\subcaption{$\Delta t = 0.023$}
	\end{subfigure}\\
	\begin{subfigure}[t]{0.49\textwidth}
		\includegraphics[width=\textwidth]{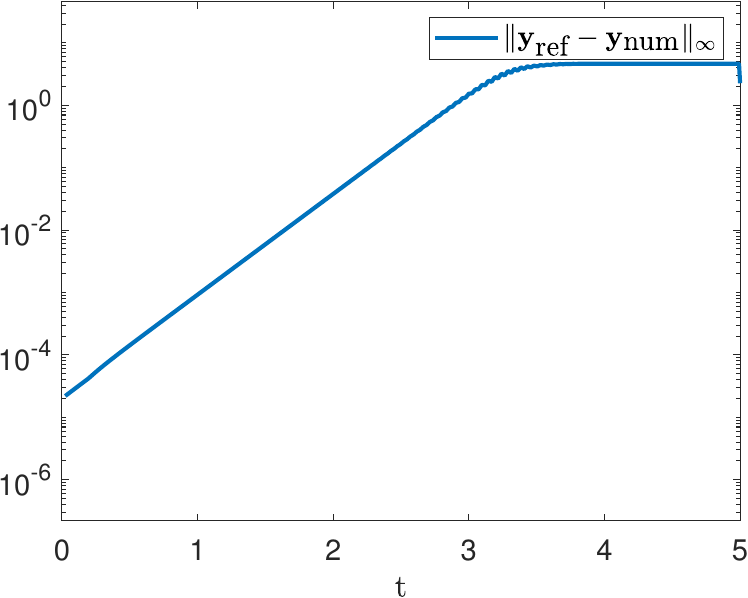}
		\subcaption{$\Delta t = 0.025$}\label{Subfig:mpdec14DReal}
	\end{subfigure}
	\caption{Numerical approximations of \eqref{eq:initProbReal} using the MPDeCEQ$14$) scheme. The dashed lines indicate the exact solution \eqref{eq:exsolReal}. In \ref{Subfig:mpdec14DReal}, the initial vector $\b y^0=\b y^*+10^{-5}(1,-2,1)^T$ is chosen. }\label{fig:mpdec14}
\end{figure}

\subsection{Investigation of GeCo Schemes}
\subsubsection{GeCo1}
Numerical solutions obtained by GeCo1 and the corresponding error plots are shown in Figure \ref{Fig:geco1}. In error plot \ref{errgeco11}, the convergence of the numerical solution to the steady state in the long run  can be seen, despite the low accuracy in the short run with the comparatively large time step of $\Delta t=5$. Hence, the result from Theorem~\ref{Thm:GeCo1} is well reflected here. Nevertheless, a shift of the numerical solution can be recognized for the chosen time step size. This can also be observed in Section~\ref{subsec:GeCostiff}, where we apply the method to increasingly stiff problems.



\begin{figure}
	\begin{subfigure}[t]{0.45\textwidth}
		\includegraphics[width=\textwidth]{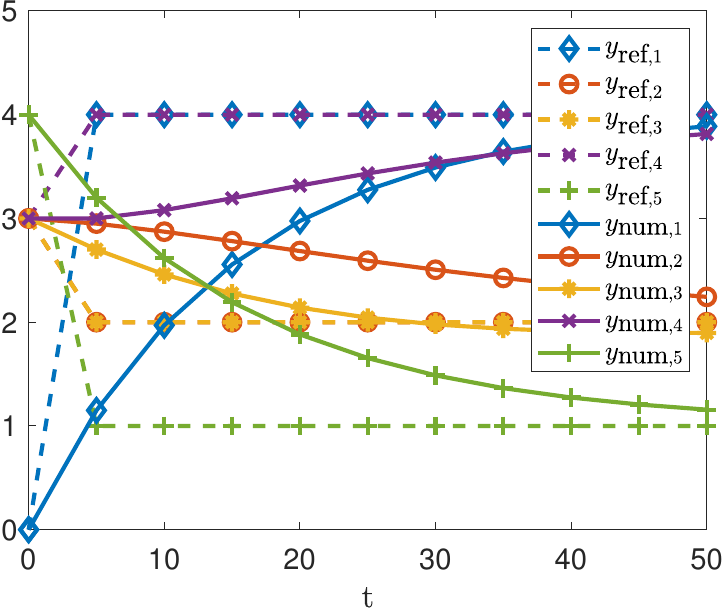}
		\subcaption{$\Delta t=5$ }\label{geco11}
	\end{subfigure}
	\begin{subfigure}[t]{0.49\textwidth}%
		\includegraphics[width=\textwidth]{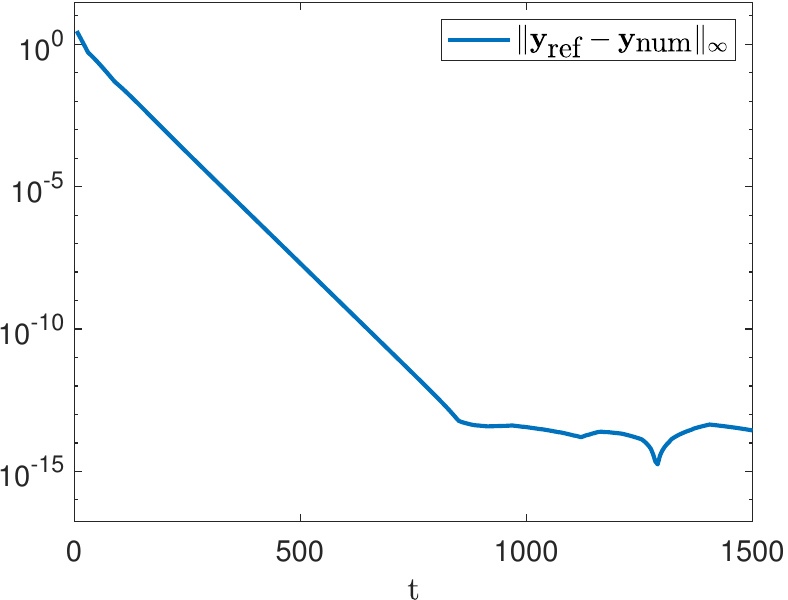}
		\subcaption{$\Delta t=5$}\label{errgeco11}
	\end{subfigure}
	\caption{Numerical approximations of \eqref{eq:testgeco1} and error plot using GeCo1.}
	\label{Fig:geco1}
\end{figure}

\subsubsection{GeCo2}
Based on the analysis for the system \eqref{PDS_test}, we use the function  \[R(z)=1+z+\frac12 z^2\varphi(\Delta t\tr(\b S^-))\]
even in the context of \eqref{eq:testgeco1} to determine the critical time step size $\Delta t_\text{GeCo2}$ of GeCo2.
For the system matrix \eqref{eq:Atest}, we find $\tr(\b S^-)=-\tr(\bA)=20$. A numerical calculation shows that $\abs{R(\Delta t\lambda)}<1$ for all $\lambda\in \sigma(\bA)\setminus\{0\}$ if $\Delta t<\Delta t_{\text{GeCo2}}\approx 0.3572$, where $\Delta t_{\text{GeCo2}}$ was rounded to five significant figures. Moreover, $\abs{R(\Delta t(-5-\sqrt{3}))}>1$ if $\Delta t>\Delta t_{\text{GeCo2}}$. 

In order to numerically confirm the stability results from Theorem~\ref{Thm:geco2} even in the context of the model problem \eqref{eq:testgeco1}, we solve the initial value problem \eqref{eq:testgeco1} using $\Delta t=\Delta t_{\text{GeCo2}}\cdot(1-10^{-3})\approx0.3569.$ The expected stable behavior of GeCo2 and the convergence of the iterates can be observed in Figures \ref{Fig:geco2a} and \ref{Fig:geco2b}. 
In order to demonstrate the expected divergence of the iterates when $\Delta t>\Delta t_\text{GeCo2}$ even for starting vectors that lie within a small neighborhood of the steady state solution, we choose $\Delta t=\Delta t_{\text{GeCo2}}\cdot(1+10^{-3})\approx0.3576$ and the initial value \[\widetilde{\b y}^0=\b y^*+10^{-5}\cdot(-2,1,1,-1,1)^T.\]  In Figure \ref{fig:geco2instab},  a small decrease of the error can observed before it increases to an error of approximately $10^{-3}$. Altogether, the numerical experiments reflect the expected behavior independent of $\b y^0$, at least for the selected model problem.
\begin{figure}
	\centering
	\begin{subfigure}[t]{0.46\textwidth}
		\includegraphics[width=\textwidth]{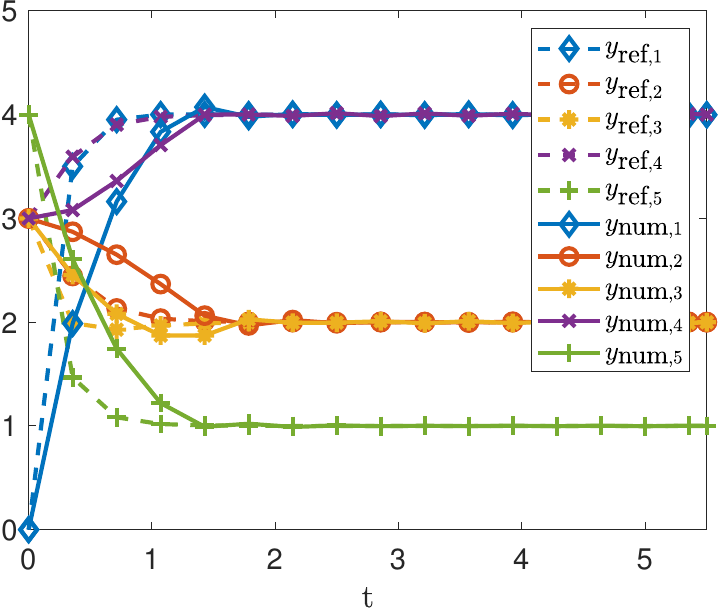}
		\subcaption{$\Delta t=\Delta t_{\text{GeCo2}}\cdot(1-10^{-3})$} \label{Fig:geco2a}
	\end{subfigure}
	\begin{subfigure}[t]{0.49\textwidth}%
		\includegraphics[width=\textwidth]{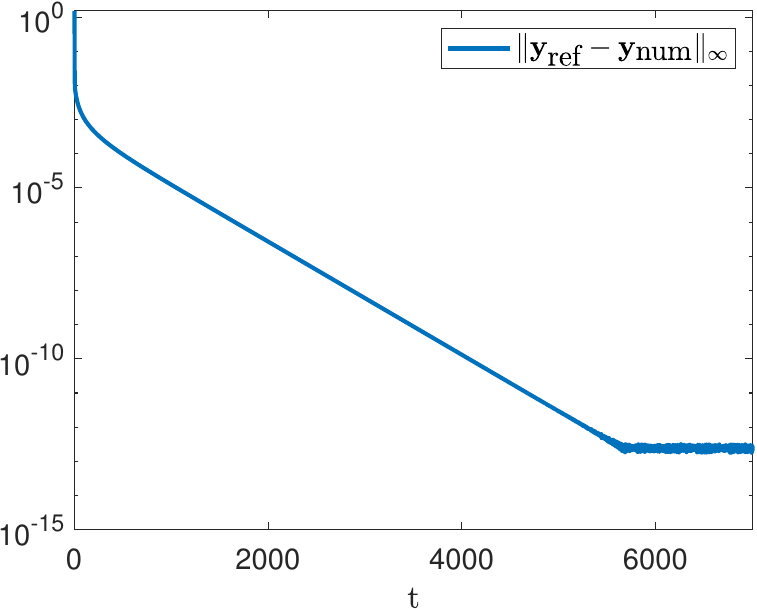}
		\subcaption{$\Delta t=\Delta t_{\text{GeCo2}}\cdot(1-10^{-3})$}\label{Fig:geco2b}
	\end{subfigure}\\
	\begin{subfigure}[t]{0.49\textwidth}
		\includegraphics[width=\textwidth]{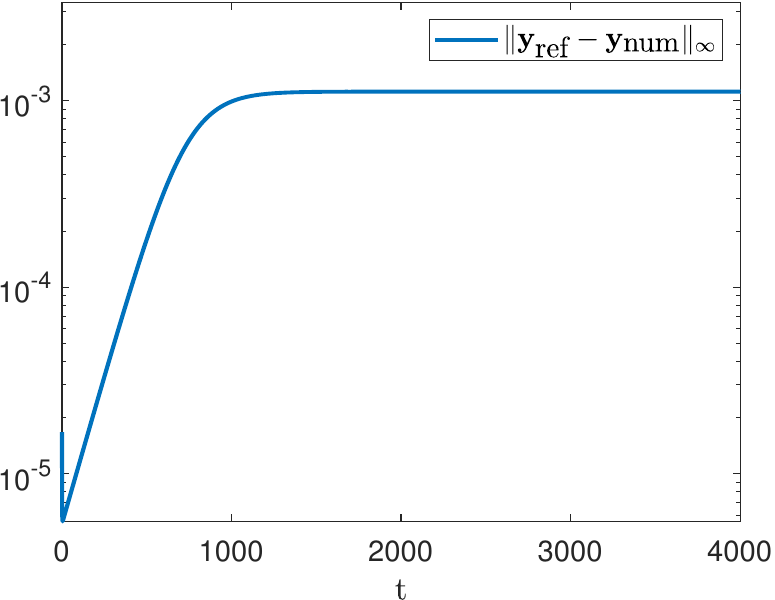}
		\subcaption{$\Delta t=\Delta t_{\text{GeCo2}}\cdot(1+10^{-3})$ }\label{fig:geco2instab}
	\end{subfigure}
	\caption{Numerical approximation of \eqref{eq:testgeco1} and error plots using GeCo2. In \ref{fig:geco2instab} the starting vector $\widetilde{\b y}^0=\b y^*+10^{-5}\cdot(-2,1,1,-1,1)^T$ was used.}
	\label{Fig:geco2}
\end{figure}

\subsection{Investigation of BBKS Schemes}
The stability functions of BBKS1 and BBKS2($1$) in the context of \eqref{PDS_test} are given by Theorem~\ref{Thm:gBBKS1} and Theorem~\ref{Thm:gBBKS2}, respectively. We apply the schemes to the initial value problem \eqref{eq:testgeco1} and test the stability for specific time step sizes. An elementary calculation reveals that the stability functions for both schemes satisfy $\abs{R(\Delta t\lambda)}<1$ for all $\lambda\in \sigma(\bA)\setminus\{0\}$ if $\Delta t < \Delta t_{\text{BBKS}}=\frac{5-\sqrt3}{11},$ and $\abs{R(\Delta t(-5-\sqrt3))}>1$  if $\Delta t>\Delta t_{\text{BBKS}}$. As we did for GeCo2, we investigate the BBKS schemes by varying the time step size around $\Delta t_{\text{BBKS}}$ by multiplying with $1\pm 10^{-3}$, respectively. Furthermore, we also choose $\widetilde{\b y}^0=\b y^*+10^{-5}\cdot(-2,1,1,-1,1)^T$ in the case $\Delta t>\Delta t_{\text{BBKS}}$ in order to highlight the expected divergence of the iterates.

In Figure \ref{Fig:mBBKS1} the numerical solutions of \eqref{eq:testgeco1}  and the error plots using BBKS1 are shown. In \ref{fig:mBBKS1a}, corresponding to the step size \[\Delta t=\Delta t_{\text{BBKS}}\cdot(1-10^{-3})\approx 0.2968,\] all components of the numerical solution tend to the reference solution in the long run, with an error between $10^{-13}$ and $10^{-12}$.  In the unstable case, see Figure \ref{fig:mBBKS1c}, when \[\Delta t =\Delta t_{\text{BBKS}}\cdot(1+10^{-3})\approx 0.2974,\] the error increases almost to $10^{-3}$. 
Similar conclusions can be deduced by looking at Figure~\ref{Fig:mBBKS2}, where the numerical solutions and the error plots of BBKS2($1$) are shown, in correspondence of the same step sizes used for BBKS1.

Altogether, the stability properties shown in Figures~\ref{Fig:mBBKS1} and \ref{Fig:mBBKS2} are in accordance with the stability results expected from the theory presented in Section~\ref{sec:stab_gBBKS}.
\begin{figure}[!h]
	\centering
	\begin{subfigure}[t]{0.46\textwidth}
		\includegraphics[width=\textwidth]{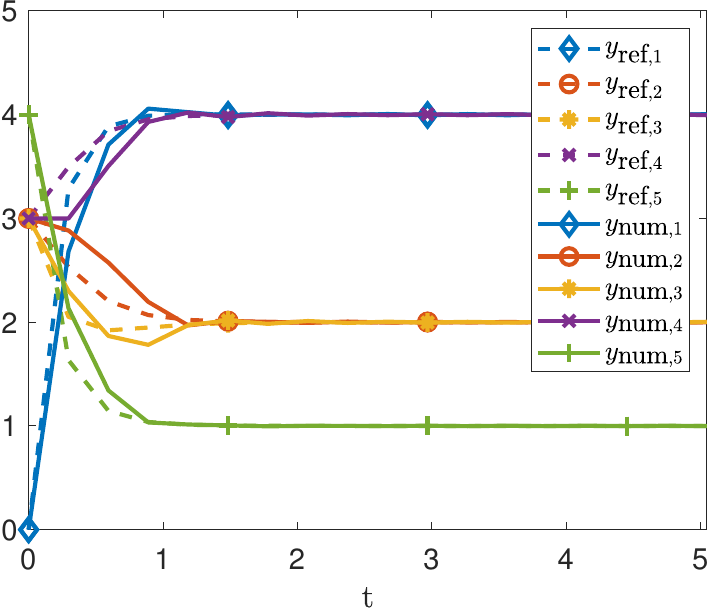}
		\subcaption{$\Delta t=\Delta t_{\text{BBKS}}\cdot(1-10^{-3})$}\label{fig:mBBKS1a}
	\end{subfigure}
	\begin{subfigure}[t]{0.49\textwidth}%
		\includegraphics[width=\textwidth]{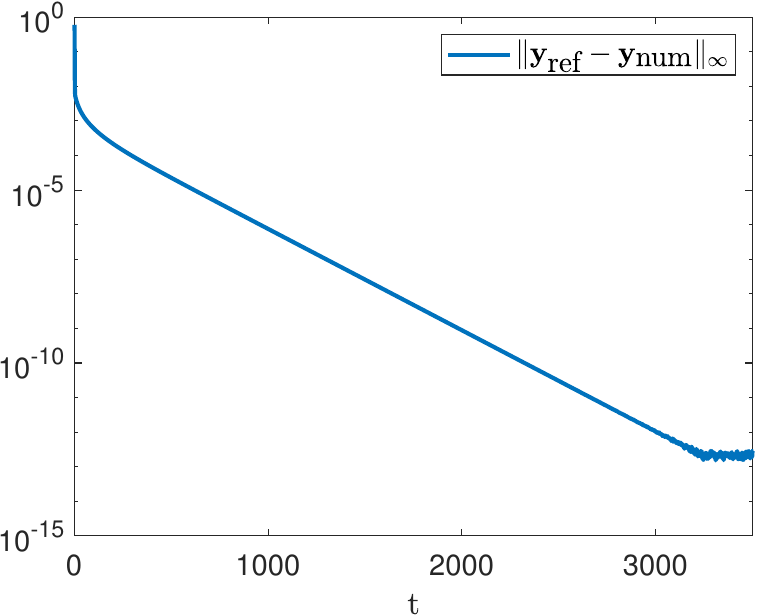}
		\subcaption{$\Delta t=\Delta t_{\text{BBKS}}\cdot(1-10^{-3})$}\label{fig:mBBKS1b}
	\end{subfigure}\\
	\begin{subfigure}[t]{0.49\textwidth}%
		\includegraphics[width=\textwidth]{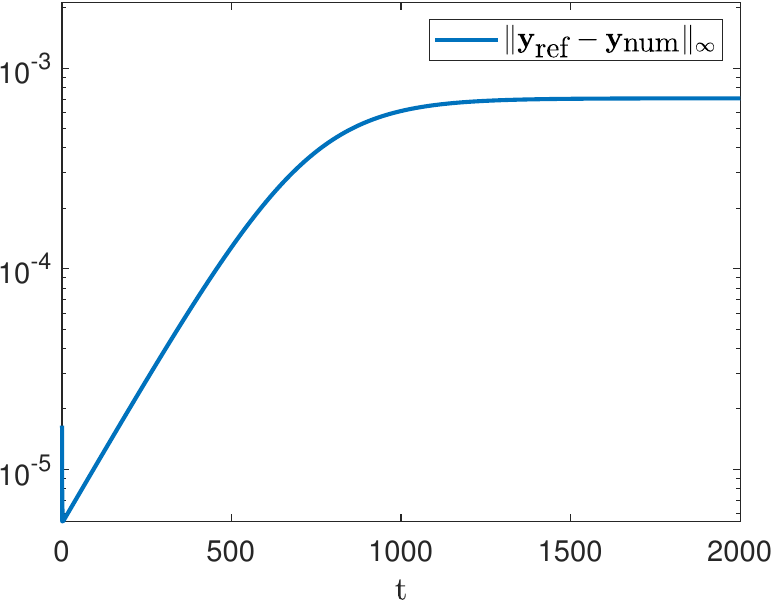}
		\subcaption{$\Delta t=\Delta t_{\text{BBKS}}\cdot(1+10^{-3})$}\label{fig:mBBKS1c}
	\end{subfigure}
	\caption{Numerical approximations of \eqref{eq:testgeco1} and error plots using BBKS1. The starting vector $\widetilde{\b y}^0=\b y^*+10^{-5}\cdot(-2,1,1,-1,1)^T$ was chosen in \ref{fig:mBBKS1c}.}
	\label{Fig:mBBKS1}
\end{figure}
\begin{figure}[!h]
	\centering
	\begin{subfigure}[t]{0.46\textwidth}
		\includegraphics[width=\textwidth]{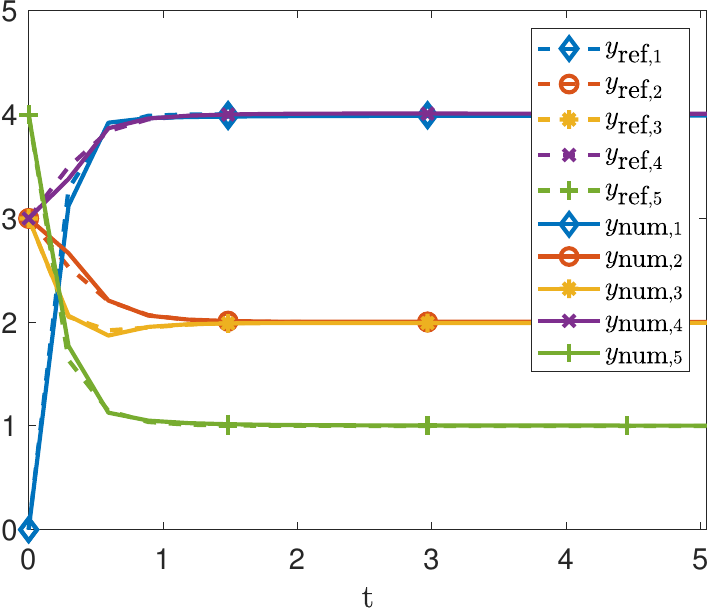}
		\subcaption{$\Delta t=\Delta t_{\text{BBKS}}\cdot(1-10^{-3})$}\label{fig:mBBKS2a}
	\end{subfigure}
	\begin{subfigure}[t]{0.49\textwidth}%
		\includegraphics[width=\textwidth]{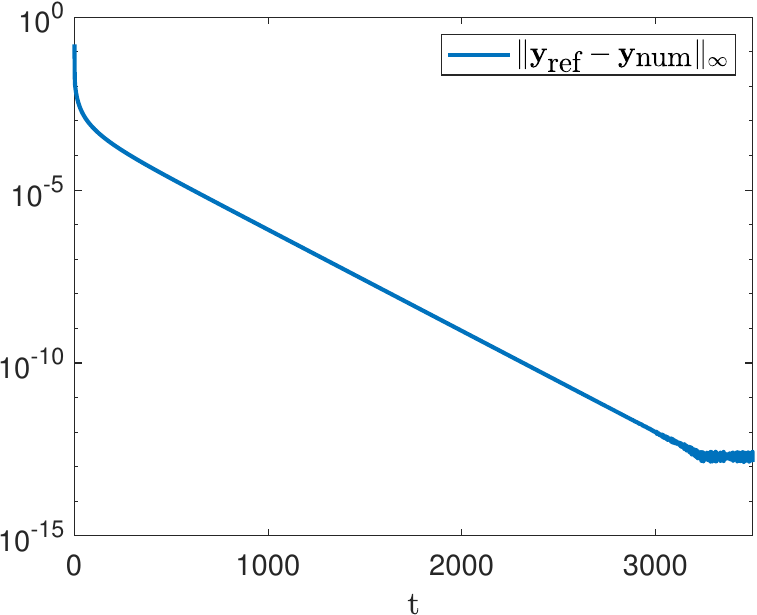}
		\subcaption{$\Delta t=\Delta t_{\text{BBKS}}\cdot(1-10^{-3})$}\label{fig:mBBKS2b}
	\end{subfigure}
	\begin{subfigure}[t]{0.49\textwidth}%
		\includegraphics[width=\textwidth]{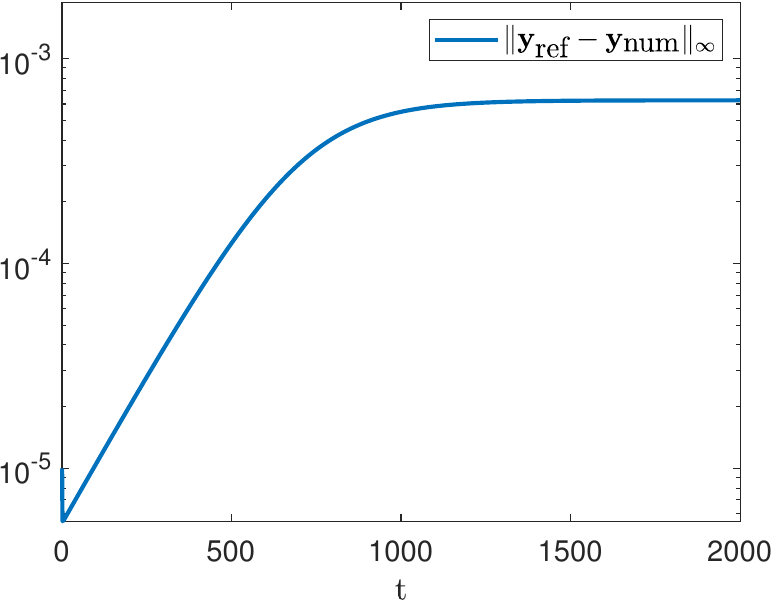}
		\subcaption{$\Delta t=\Delta t_{\text{BBKS}}\cdot(1+10^{-3})$}\label{fig:mBBKS2c}
	\end{subfigure}
	\caption{Numerical approximations of \eqref{eq:testgeco1} and error plots using BBKS2(1). In \ref{fig:mBBKS2c} the starting vector $\widetilde{\b y}^0=\b y^*+10^{-5}\cdot(-2,1,1,-1,1)^T$ was chosen.}
	\label{Fig:mBBKS2}
\end{figure}

\subsection{Applicability of GeCo1 to Stiff Problems}\label{subsec:GeCostiff}
Since the GeCo1 scheme is stable for arbitrary time step sizes, at least locally, this scheme might be able to solve stiff problems.
Unfortunately, this is not true as demonstrated in \cite{gecostab}. In the following we present the investigation from Kopecz performed therein.

To assess the usability for stiff problems, Kopecz \cite{gecostab} proposed to consider the linear initial value problem $\b y'=\bA\b y$, $\b y(0)=\b y^0$ with
\begin{equation}\label{eq:linsys_3x3}
	\bA=\Vec{-K & 0 & 0\\\hphantom{-}K & -1 & 0\\0 & \hphantom{-}1& 0},\quad \b y^0=\Vec{0.98\\ 0.01\\ 0.01}.
\end{equation}
This system becomes increasingly stiff as the value of $K>0$ is increased. For $K\ne 1$ the solution reads
\begin{align*}
	y_1(t)&=\frac{49e^{-K t}}{50},\quad y_2(t)=\frac{(99K-1)e^{-t}}{100(K-1)}-\frac{49Ke^{-K t}}{50(K-1)},\\
	y_3(t)&=1 -\frac{(99K-1)e^{-t}}{100(K-1)} +\frac{49e^{-K t}}{50(K-1)}.
\end{align*}
Defining $\hat{\b y}(t)=\lim_{K\to\infty}\b y(t)$ we find
\[
\hat y_1(t)=0,\quad \hat y_2(t)=\frac{99}{100}e^{-t},\hat  y_3(t)=1-\frac{99}{100}e^{-t}\]
for $t>0$.
In the limit $K\to\infty$, $y_2$ and $y_3$ should therefore be equal at approximately $t=0.7$. In Figure~\ref{fig:linsys_3x3}, we present the plots from \cite{gecostab} of GeCo1 solving \eqref{eq:linsys_3x3} for different values of $K$.
\begin{figure}
	\begin{subfigure}[t]{0.5\textwidth}
		\includegraphics[width=\textwidth]{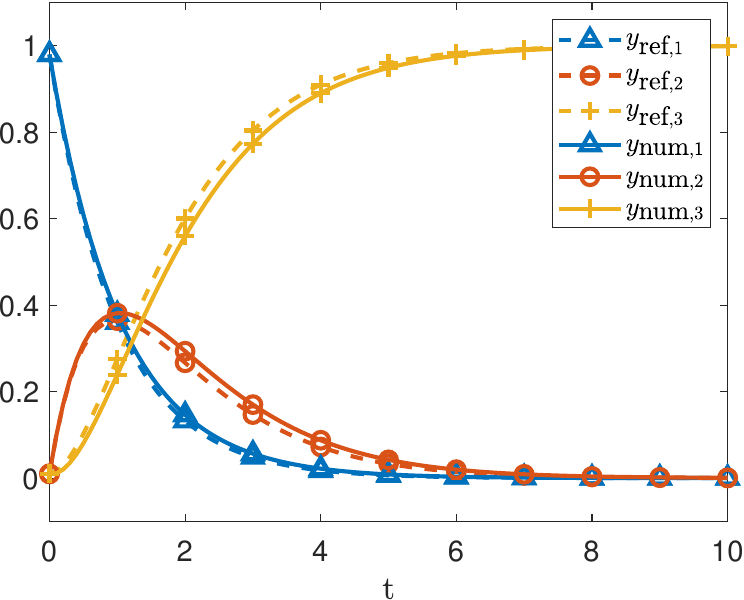}
		\subcaption{$K=1$}\label{subfig:geco1_1_1}
	\end{subfigure}%
	\begin{subfigure}[t]{0.5\textwidth}%
		\includegraphics[width=\textwidth]{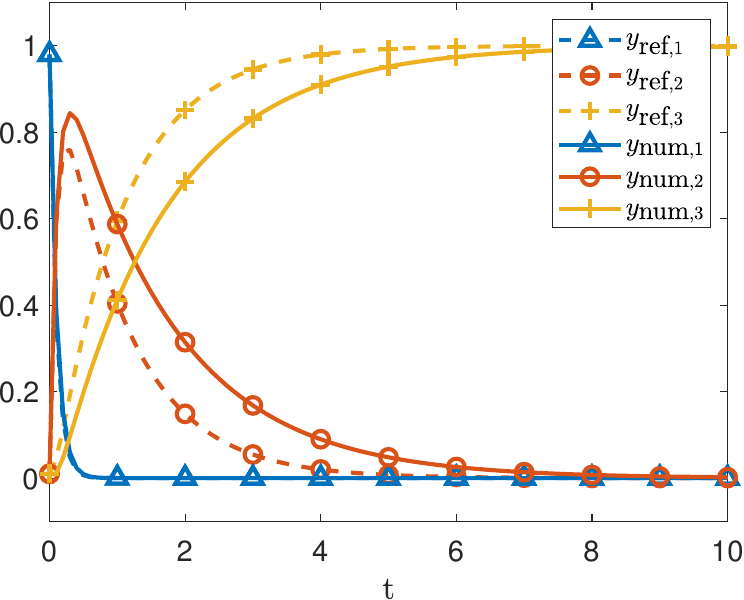}
		\subcaption{$K=10$}\label{subfig:geco1_10_1}
	\end{subfigure}
	
	\begin{subfigure}[t]{0.5\textwidth}
		\includegraphics[width=\textwidth]{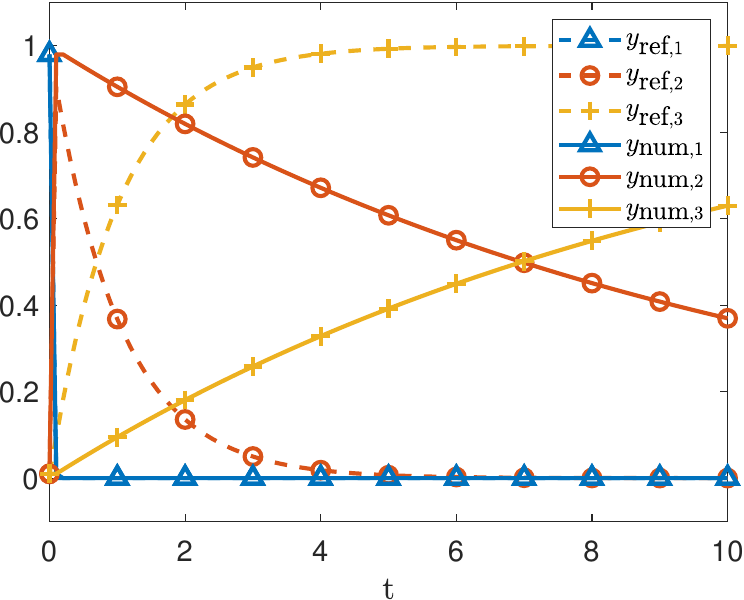}
		\subcaption{$K=100$}\label{subfig:geco1_100_1}
	\end{subfigure}%
	\begin{subfigure}[t]{0.5\textwidth}%
		\includegraphics[width=\textwidth]{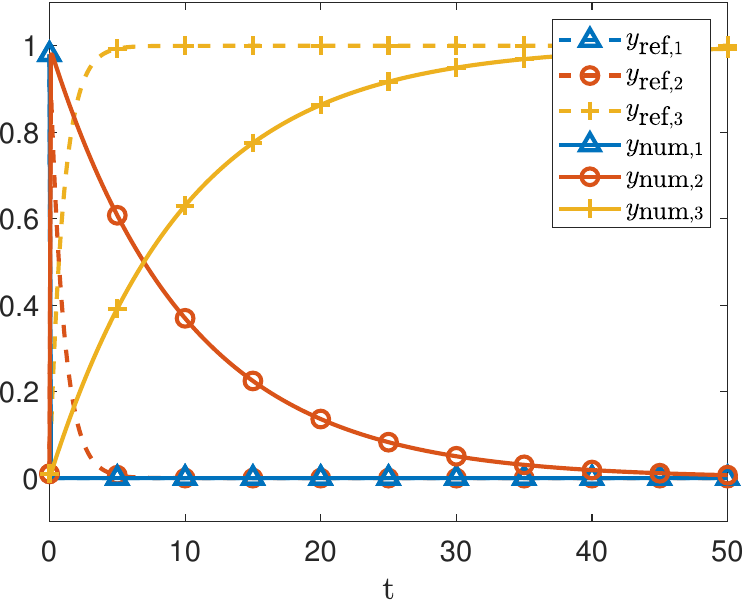}
		\subcaption{$K=100$}\label{subfig:geco1_100_1b}
	\end{subfigure}   
	
	\begin{subfigure}[t]{0.5\textwidth}
		\includegraphics[width=\textwidth]{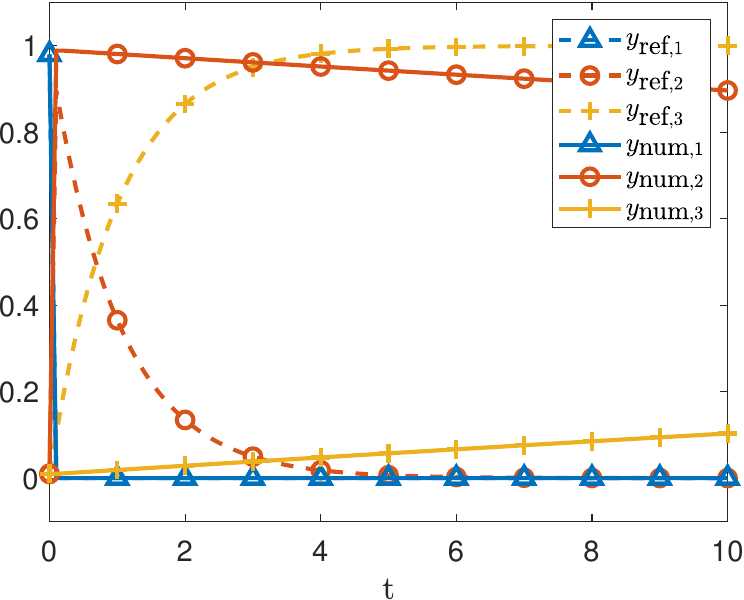}
		\subcaption{$K=1000$}\label{subfig:geco1_1000_1}
	\end{subfigure}%
	\begin{subfigure}[t]{0.5\textwidth}%
		\includegraphics[width=\textwidth]{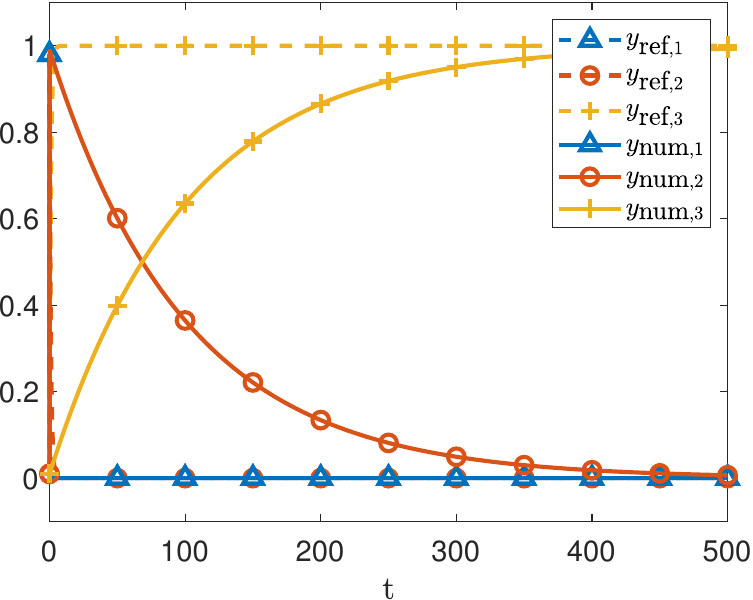}
		\subcaption{$K=1000$}\label{subfig:geco1_1000_1b}
	\end{subfigure}   
	\caption{Numerical solutions of \eqref{eq:linsys_3x3} computed with GeCo1 for different values of $K$ \cite{gecostab}. The step size used is $\Delta t=0.1$. The dashed lines indicate the reference solution.}
	\label{fig:linsys_3x3}
\end{figure}
As already observed in our previous numerical experiments, there is a significant phase error so that $y_2$ and $y_3$ are equal at about $t=7$ for $K=10$ and about $t=70$ for $K=100$, which is far from $t=0.7$. Hence, for increasingly stiff problems, GeCo1 gets less accurate if the time step size is not adapted correspondingly. Altogether, this means that GeCo1 can hardly be regarded as a stiff solver.

\chapter{Conclusion and Outlook}\label{chap:conclusion}
The present work dealt with two major topics concerning the numerical analysis of Runge--Kutta-like methods, namely their stability and order of convergence. 

We motivated and introduced modified Patankar schemes as a subclass of Runge--Kutta-like methods and emphasized their importance. The first major part of this thesis was then dedicated to providing a tool for deriving order conditions for MP methods. The proposed approach may yields implicit order conditions, which can be rewritten in explicit form using the NB-series of the stages \cite{NSARK}. The obtained explicit order conditions can be further reduced using Gröbner bases computations. With the presented approach, it was possible for the first time to obtain conditions for the construction of 3rd and 4th order GeCo as well as 4th order MPRK schemes. Moreover, we constructed a new 4th order MPRK method using our theory and validated the order of convergence numerically. Future work within this topic include the adaptation of this approach for further nonlinear methods such as SSPMPRK schemes and the construction of higher order schemes. In particular, constructing 4th order MPRK methods with a minimal number of stages is of interest. Furthermore, to investigate the order of GeCo and gBBKS methods in the context of non-autonomous problems is to the authors best knowledge still an open task.

The second major part was concerned with the stability of nonlinear time integrators preserving at least one linear invariant. We discussed how the given approach generalizes the notion of $A$-stability. The main difficulty in the analysis comes from the presence of linear invariants, so that any steady state of the corresponding linear system of ODEs resulted in a non-hyperbolic fixed point of the steady state preserving nonlinear method. Even though the investigation of non-hyperbolic fixed points in general is a case by case study, we were able to find an exception for steady states forming a subspace, as is the case for the linear test problem we considered. As a result, we were able to prove that investigating the Jacobian of the generating map is sufficient to understand the stability of the nonlinear method in a neighborhood of the steady state. This approach allowed for the first time the investigation of several modified Patankar schemes such as MPRK, SSPMPRK, MPDeC, GeCo and gBBKS methods which was performed in \cite{izgin2022stability,HIKMS22,IOE22StabMP,gecostab}, also presented and extended within this work. In particular, we tackled the question of unconditional stability for all of the above mentioned methods and summarized our findings in Table~\ref{tab:stabschemes}. In addition to that, we demonstrated that GeCo2 and gBBKS methods are not in $\mathcal C^2$ and proved asymptotic stability for GeCo1 schemes and, for some PDRS, also for MPRK methods. The investigation of MPRK schemes together with the analysis for gBBKS and GeCo2 methods applied to general linear systems represents a future research topic. In the particular case of MPRK schemes, we computed the stability function for arbitrary MPRK schemes in a way that can be easily adapted to the case of PDRS while we pointed out ideas how to generalize our findings for GeCo2 and gBBKS. Finally, our findings support the numerically observed robustness of MP methods while we were able to provide sharp bounds on the time step in the case of conditional stability. Moreover, it might be interesting to apply the presented stability theory in the context of linear multistep methods.

We also connected the approach coming from dynamical systems with that of \cite{IssuesMPRK} concerning oscillatory behavior of nonlinear methods. Here, the zeros of the respective stability function are interlinked with a necessary condition for avoiding oscillatory behavior, which was numerically validated in \cite{ITOE22}. 

Although the proven stability properties are initially local in nature, the work \cite{izgin2023nonglobal} suggests that they can be provably global if the underlying Butcher tableau contains only non-negative entries, while there are schemes with negative Butcher entries for which the stability properties are only local. To further investigate or even prove this claim is of high importance and will be part of my future research.

Also, the implications of this approach for the analysis of numerical methods in the context of partial differential equations (PDEs) is of interest. In particular, generalizing the main stability result, Theorem~\ref{Thm_MPRK_stabil}, to the infinite dimensional case promises interesting applications in the field of numerical analysis of PDEs, as refining the grid in space of the semi-discrete system corresponds to increasingly larger systems of ODEs.

It is also worth mentioning that there are two further tasks arsing naturally as future research topics. 

First, there is not much work available concerning the efficiency of modified Patankar schemes using a time step controller. To the authors knowledge, there is only \cite{KMP21adap}, where standard step size controller were applied to Patankar--Runge--Kutta methods. However, also considering more general controllers from digital signal processing	\cite{soderlind2006time,soderlind2006adaptive,S2002,soderlind2003digital,gustafsson1988pi, gustafsson1991control,G1994,Z1964} might result in even better performances. The exploration of such controllers is one of my future research topics. 

A second aspect related to efficiency is the construction of dense output formulae for MP schemes. The major task here is to provide not only an approximation for any point in time within a given order of accuracy but to force the approximation to be also positive and conservative, or linear invariant preserving in general. Following the idea from \cite{KLJK17}, it seems to be possible to construct second order dense output formulae, \ie for third order MP methods, using our approach of NB-series. However, in the same work the authors find a negative result, \ie using their approach there is no third order dense output formula for MP scheme based on a Butcher tableau with non-negative entries \cite[Theorem~1]{KLJK17}. However, as we wish to use only non-negative Butcher arrays for reasons of stability, we are forced to take a different approach for constructing even third order dense output formulae for MP methods. To construct such a formula together with the above mentioned properties is to my best knowledge still an open problem, yet of high importance. If such formulae are available they might be also useful to construct higher order MPRK methods since the PWDs need to be positive approximations to classical Runge--Kutta stages, that is to the exact solution at intermediate times.

\appendix
\chapter[Intermediate Results on Stability]{Intermediate Results for the Stability Analysis}
In this appendix, we present results with rather technical proofs.

\begin{lem}\label{lem:stabconditionR(rexp(phi))}
	Let $R(z)=\frac{\sum_{j=0}^4n_jz^j}{\sum_{j=0}^4d_jz^j}$ with $d_0=1$. Then $\abs{R(re^{\ii\varphi})}<1$ with $r>0$ and $\varphi\in [0,2\pi)$ is equivalent to 
	\begin{equation*}
		\begin{aligned}
			p_\varphi(r)=&(-d_4^{2}+n_4^{2}) r^{8}+2 (-d_3 d_4+n_3 n_4) \cos(\varphi) r^{7}\\
			&+(4 (-d_2 d_4+n_2 n_4) \cos(\varphi)^{2}+2 d_2 d_4-d_3^{2}-2 n_2 n_4+n_3^{2}) r^{6}
			\\&+(8 (-d_1 d_4+n_1 n_4) \cos(\varphi)^{3}+2 (3 d_1 d_4-d_2 d_3-3 n_1 n_4+n_2 n_3) \cos(\varphi)) r^{5}\\
			&+(16 (n_0 n_4-d_4) \cos(\varphi)^{4}+4 (-d_1 d_3-4 n_0 n_4+n_1 n_3+4 d_4) \cos(\varphi)^{2}\\
			&+2 d_1 d_3-d_2^{2}+2 n_0 n_4-2 n_1 n_3+n_2^{2}-2 d_4) r^{4}\\ 
			&+(8 (n_0 n_3-d_3) \cos(\varphi)^{3}+2 (-d_1 d_2-3 n_0 n_3+n_1 n_2+3 d_3) \cos(\varphi)) r^{3}\\
			&+(4 (n_0 n_2-d_2) \cos(\varphi)^{2}-d_1^{2}-2 n_0 n_2+n_1^{2}+2 d_2) r^{2}\\
			&+2 (n_0 n_1-d_1) \cos(\varphi) r+n_0^{2}-1<0
		\end{aligned}
	\end{equation*}
	Furthermore,  $\abs{R(re^{\ii\varphi})}>1$ is equivalent to $p_\varphi(r)>0$.
\end{lem}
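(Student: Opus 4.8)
The plan is to reduce the statement about the modulus of a rational function to a polynomial inequality by clearing denominators. Writing $N(z)=\sum_{j=0}^4 n_j z^j$ and $D(z)=\sum_{j=0}^4 d_j z^j$, so that $R=N/D$, the key observation is that for every $z$ with $D(z)\neq 0$ one has the elementary equivalences
\[
\abs{R(z)}<1 \iff \abs{N(z)}^2<\abs{D(z)}^2 \qta \abs{R(z)}>1\iff \abs{N(z)}^2>\abs{D(z)}^2,
\]
since $\abs{D(z)}^2>0$. Hence it suffices to show that $\abs{N(z)}^2-\abs{D(z)}^2=p_\varphi(r)$ when $z=re^{\ii\varphi}$.

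First I would expand the squared modulus of a general polynomial $P(z)=\sum_{j=0}^4 p_j z^j$ at $z=re^{\ii\varphi}$. Using $P(z)\overline{P(z)}=\sum_{j,k=0}^4 p_jp_k\, r^{j+k}e^{\ii(j-k)\varphi}$ together with the fact that the left-hand side is real, one obtains
\[
\abs{P(re^{\ii\varphi})}^2=\sum_{j,k=0}^4 p_jp_k\, r^{j+k}\cos\bigl((j-k)\varphi\bigr).
\]
Applying this identity to $P=N$ and $P=D$ and subtracting yields
\[
\abs{N(re^{\ii\varphi})}^2-\abs{D(re^{\ii\varphi})}^2=\sum_{j,k=0}^4 (n_jn_k-d_jd_k)\, r^{j+k}\cos\bigl((j-k)\varphi\bigr).
\]

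Next I would collect the terms according to the power $r^m$ with $m=j+k\in\{0,\dots,8\}$. Exploiting the symmetry of the summand under $(j,k)\mapsto(k,j)$, each off-diagonal pair combines to $2(n_jn_k-d_jd_k)\cos\bigl((j-k)\varphi\bigr)$, whereas the diagonal terms $j=k$ contribute with $\cos 0=1$. Replacing each $\cos(\ell\varphi)$ by its Chebyshev expansion in $\cos\varphi$, namely $\cos(2\varphi)=2\cos^2\varphi-1$, $\cos(3\varphi)=4\cos^3\varphi-3\cos\varphi$ and $\cos(4\varphi)=8\cos^4\varphi-8\cos^2\varphi+1$, turns every coefficient of $r^m$ into a polynomial in $\cos\varphi$. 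The normalization $d_0=1$ simplifies the lowest-order contributions; in particular the constant term reduces to $n_0^2-1$. A term-by-term comparison then identifies the resulting expression with $p_\varphi(r)$, which finishes the argument.

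The main obstacle I expect is purely the bookkeeping: there are nine powers of $r$ to track, and each requires combining several $(j,k)$ pairs and correctly substituting the multiple-angle formulas, so the risk lies in sign and index errors rather than in any conceptual difficulty. To keep the Chebyshev substitutions transparent and to match the stated coefficients one power at a time, I would organize the computation in a table indexed by $m=j+k$, so that the diagonal versus off-diagonal split and the conversion of $\cos(\ell\varphi)$ into powers of $\cos\varphi$ can be checked independently for each $m$.
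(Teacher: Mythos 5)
Your proposal is correct and follows essentially the same route as the paper, whose proof simply rewrites $1>\abs{R(re^{\ii\varphi})}^2$ as a ratio of $\abs{N}^2$ and $\abs{D}^2$ and declares the remaining expansion a straightforward calculation. Your cross-term identity $\abs{P(re^{\ii\varphi})}^2=\sum_{j,k}p_jp_k r^{j+k}\cos((j-k)\varphi)$ together with the Chebyshev substitutions is exactly that calculation, organized explicitly, so the only difference is that you spell out the bookkeeping the paper leaves implicit.
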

\begin{proof}
	A straightforward calculation rewriting \begin{equation*}
		1>\abs{R(re^{\ii\varphi})}^2=\frac{\left(\sum_{j=0}^4r^jn_j\cos(j\varphi)\right)^2+\left(\sum_{j=0}^4r^jn_j\sin(j\varphi)\right)^2}{\left(\sum_{j=0}^4r^jd_j\cos(j\varphi)\right)^2+\left(\sum_{j=0}^4r^jd_j\sin(j\varphi)\right)^2}
	\end{equation*}
	yields the result.
\end{proof}

The next statement provides us conditions under which the product of a scalar continuous function and a partially differentiable vector field is partially differentiable again, and conditions under which a partial derivative of the product does not exist. \newpage
\begin{lem}\label{Lem:diff}
	Let $D\tm \R^N$ be open and $\b e_i$ denote the $i$th unit vector in $\R^N$.  Furthermore, let  
	$\mathbf \Phi\from D\to\R^N$ be partially differentiable in $\mathbf x_0\in D$ with $\mathbf \Phi(\mathbf x_0)=\mathbf 0$ and let $\Psi\from D\to\R$.
	\begin{enumerate}
		\item\label{it:alemdiff}  If $\Psi$ is continuous in $\mathbf x_0$, then the product $\Psi \cdot\mathbf \Phi\from D\to\R^N$ is partially differentiable in $\mathbf x_0$ with
		\[\mathbf D(\Psi\cdot\mathbf \Phi)(\mathbf x_0)=\Psi(\mathbf x_0)\mathbf D\mathbf \Phi(\mathbf x_0).\]
		\item\label{it:blemdiff} If $\Psi(\b x_0+\b e_ih)$ has several accumulation points as $h\to0$ and $\partial_i\mathbf \Phi(\b x_0)\neq\b 0$, then the $i$th partial derivative of $\Psi\cdot \mathbf \Phi$ does not exists.
	\end{enumerate}
	
\end{lem}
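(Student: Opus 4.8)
The plan is to treat both parts through the single difference quotient that defines the $i$th partial derivative of the product at $\mathbf x_0$, exploiting the hypothesis $\mathbf \Phi(\mathbf x_0)=\b 0$ twice: once to see that $(\Psi\cdot\mathbf \Phi)(\mathbf x_0)=\b 0$, and once to reintroduce $\mathbf \Phi(\mathbf x_0)$ in the numerator so that a genuine difference quotient of $\mathbf \Phi$ appears. Concretely, for $h\neq 0$ I would write
\[
\frac{(\Psi\cdot\mathbf \Phi)(\mathbf x_0+\b e_ih)-(\Psi\cdot\mathbf \Phi)(\mathbf x_0)}{h}=\Psi(\mathbf x_0+\b e_ih)\,\frac{\mathbf \Phi(\mathbf x_0+\b e_ih)-\mathbf \Phi(\mathbf x_0)}{h},
\]
and abbreviate the second factor by $\mathbf q(h)$, which by the partial differentiability of $\mathbf \Phi$ satisfies $\mathbf q(h)\to\partial_i\mathbf \Phi(\mathbf x_0)$ as $h\to 0$.

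For part \ref{it:alemdiff}, I would simply pass to the limit $h\to 0$ in this factorization. The continuity of $\Psi$ at $\mathbf x_0$ gives $\Psi(\mathbf x_0+\b e_ih)\to\Psi(\mathbf x_0)$, while $\mathbf q(h)\to\partial_i\mathbf \Phi(\mathbf x_0)$, so the product converges to $\Psi(\mathbf x_0)\partial_i\mathbf \Phi(\mathbf x_0)$. Since this holds for every $i=1,\dotsc,N$, the product $\Psi\cdot\mathbf \Phi$ is partially differentiable at $\mathbf x_0$ and its matrix of partial derivatives is the column-wise assembly $\Psi(\mathbf x_0)\mathbf D\mathbf \Phi(\mathbf x_0)$, as claimed.

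For part \ref{it:blemdiff} I would argue by contradiction: assume the $i$th partial derivative of $\Psi\cdot\mathbf \Phi$ exists at $\mathbf x_0$, so that $\Psi(\mathbf x_0+\b e_ih)\,\mathbf q(h)$ converges to some limit $\mathbf L\in\R^N$ as $h\to0$. Because $\partial_i\mathbf \Phi(\mathbf x_0)\neq\b 0$, there is a component index $k$ with $(\partial_i\mathbf \Phi(\mathbf x_0))_k\neq 0$; hence $q_k(h)\neq 0$ for all sufficiently small $h\neq 0$, and I can isolate the scalar factor via $\Psi(\mathbf x_0+\b e_ih)=\bigl(\Psi(\mathbf x_0+\b e_ih)\,q_k(h)\bigr)/q_k(h)$. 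The numerator tends to $L_k$ and the denominator to the nonzero value $(\partial_i\mathbf \Phi(\mathbf x_0))_k$, so $\Psi(\mathbf x_0+\b e_ih)$ converges, \ie it has a single accumulation point as $h\to0$. This contradicts the hypothesis that $\Psi(\mathbf x_0+\b e_ih)$ has several accumulation points, forcing the $i$th partial derivative not to exist. The only delicate point — and the step I would take most care with — is the legitimacy of recovering $\Psi$ through a single nonzero component of the vector-valued quotient; everything else reduces to elementary limit algebra.
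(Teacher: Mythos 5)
Your proposal is correct and follows essentially the same route as the paper: the identical factorization of the difference quotient using $\mathbf\Phi(\mathbf x_0)=\mathbf 0$, followed by passing to the limit with the continuity of $\Psi$ for part (a). For part (b) you argue by contradiction, recovering $\Psi$ by dividing by a nonzero component of the vector quotient, which is simply the contrapositive of the paper's direct argument that several accumulation points of $\Psi$ propagate to the product when the quotient tends to a nonzero vector; your version has the small merit of making explicit the division step the paper leaves implicit.
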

\begin{proof}
	\begin{enumerate}
		\item Since $\mathbf \Phi(\mathbf x_0)=\mathbf 0$ we have
		\begin{equation}\label{eq:proofLemdiff}
			\begin{aligned}&\frac{\Psi(\mathbf x_0 + h\mathbf e_i)\mathbf \Phi(\mathbf x_0 + h\mathbf e_i)-\Psi(\mathbf x_0)\mathbf \Phi(\mathbf x_0)}{h}\\&\hphantom{lllllll}=\frac{\Psi(\mathbf x_0 + h\mathbf e_i)\mathbf \Phi(\mathbf x_0 + h\mathbf e_i)-\Psi(\mathbf x_0+h\mathbf e_i)\mathbf \Phi(\mathbf x_0)}{h}\\
				&\hphantom{lllllll}=\Psi(\mathbf x_0 + h\mathbf e_i)\cdot \frac{\mathbf \Phi(\mathbf x_0 + h\mathbf e_i)-\mathbf \Phi(\mathbf x_0)}{h}.
			\end{aligned}
		\end{equation}
		Passing to the limit $h\to 0$ on both sides shows
		\[\frac{\partial(\Psi\mathbf \Phi)}{\partial x_i}(\mathbf x_0)=\Psi(\mathbf x_0)\frac{\partial \mathbf\Phi}{\partial x_i}(\mathbf x_0),\quad i=1,\dots,N,\]
		and hence
		\[\mathbf D(\Psi\mathbf \Phi)(\mathbf x_0)=\Psi(\mathbf x_0)\mathbf D\mathbf \Phi(\mathbf x_0).\]
		\item If $\Psi(\b x_0+\b e_ih)$ possesses several accumulation points as $h\to0$, then this is also true for 
		\[\Psi(\mathbf x_0 + h\mathbf e_i)\cdot \frac{\mathbf \Phi(\mathbf x_0 + h\mathbf e_i)-\mathbf \Phi(\mathbf x_0)}{h}\]
		as $\frac{\partial \mathbf \Phi}{\partial x_i}(\b x_0)\neq\b 0$. As a result of \eqref{eq:proofLemdiff} we thus obtain that $\frac{\partial(\Psi\cdot \mathbf \Phi)}{\partial x_i}(\b x_0)$ does not exist.
	\end{enumerate}
\end{proof}
The last result of this section is concerned with sufficient conditions for a map $T\colon \R^2_{>0}\to \R$ to be locally Lipschitz continuous even though it is not in $\mathcal C^1$ on its entire domain.
\begin{lem}\label{Lem:locallyLip}
	Let $\bA\in \R^{2\times 2}$ be given by  \eqref{PDS_test} and set $D_1=\{\b x\in \R^2_{>0}\mid x_1>\frac{b}{a}x_2\}$, $D_2=\{\b x\in \R^2_{>0}\mid x_1<\frac{b}{a}x_2\}$ and $C=\ker(\bA)\cap \R^2_{>0}$. Let $T:\R^2_{>0}\to \R$ be continuous with $T\vert_C=\operatorname{const}$ and $T\vert_{D_i}\in \mathcal C^1$ for $i=1,2$. If $\lim_{\b x\to\b c}\nabla T(\b x)$ exists for any $\b c\in C$, then $T$ is locally Lipschitz continuous.
\end{lem}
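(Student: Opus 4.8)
The plan is to exploit that local Lipschitz continuity is a purely local property and that $\R^2_{>0}$ decomposes as the disjoint union $D_1\cup C\cup D_2$, where the two regions are \emph{open and convex} (each $D_i$ is the intersection of the open quadrant with an open half-plane bounded by the line $\{x_1=\tfrac ba x_2\}$) and $C$ is a segment of that line. On $D_1$ and $D_2$ the map $T$ is $\mathcal C^1$, hence locally Lipschitz by the mean value theorem, so the only points that require genuine work are the interface points $\b c\in C$. I would fix such a $\b c$ together with a closed ball $K=\overline{B_r(\b c)}\subset\R^2_{>0}$ (with $r$ small enough that $K$ lies in the open quadrant) and prove that $T$ is Lipschitz on the open ball $B_r(\b c)$; since every point of $\R^2_{>0}$ admits such a neighborhood, this yields the claim.

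First I would show that $\nabla T$ is bounded on $(D_1\cup D_2)\cap K$. Suppose not: then there is a sequence $\b x_n$ in this set with $\norm{\nabla T(\b x_n)}\to\infty$, and by compactness of $K$ a subsequence converges to some $\b x_*\in K$. If $\b x_*\in D_1\cup D_2$, continuity of $\nabla T$ there (from $T\vert_{D_i}\in\mathcal C^1$) contradicts the blow-up; if $\b x_*\in C$, the hypothesis that $\lim_{\b x\to\b x_*}\nabla T(\b x)$ exists forces $\nabla T$ to be bounded in a neighborhood of $\b x_*$, again a contradiction. Hence there is $M>0$ with $\norm{\nabla T(\b x)}\le M$ for all $\b x\in(D_1\cup D_2)\cap K$.

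Next comes the piecewise mean value estimate. Take $\b x,\b y\in B_r(\b c)$; by convexity the segment $[\b x,\b y]$ stays in $B_r(\b c)$. Because $D_1,D_2$ are convex and separated by the line containing $C$, this segment meets $C$ in at most one point. If it avoids $C$, it lies in a single $D_i$ and the mean value theorem gives $\abs{T(\b x)-T(\b y)}=\abs{\nabla T(\b\xi)\cdot(\b x-\b y)}\le M\norm{\b x-\b y}$. If it crosses $C$ at a single point $\b z$ (the case of both endpoints on $C$ being trivial since $T\vert_C=\operatorname{const}$), then $[\b x,\b z]$ and $[\b z,\b y]$ each have their open part inside a single $D_i$ while $T$ is continuous up to $\b z$; applying the mean value theorem on each half, with interior evaluation points lying in $D_1\cup D_2$, yields $\abs{T(\b x)-T(\b z)}\le M\norm{\b x-\b z}$ and $\abs{T(\b z)-T(\b y)}\le M\norm{\b z-\b y}$. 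Since $\b z\in[\b x,\b y]$ gives $\norm{\b x-\b z}+\norm{\b z-\b y}=\norm{\b x-\b y}$, the triangle inequality recombines these to $\abs{T(\b x)-T(\b y)}\le M\norm{\b x-\b y}$, so $T$ is Lipschitz on $B_r(\b c)$.

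The main obstacle is precisely the non-differentiability of $T$ across $C$: the mean value theorem cannot be applied directly to a segment straddling the interface. Its resolution rests on two structural facts — the convexity of $D_1$ and $D_2$, which guarantees that any segment crosses $C$ at most once, and the continuity of $T$ combined with the existence of the gradient limit at every point of $C$, which allows the mean value theorem to be invoked on each closed sub-segment with interior evaluation points in the differentiability region. The additivity of lengths along the segment then glues the two estimates together without increasing the Lipschitz constant.
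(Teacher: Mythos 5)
Your proof is correct and follows essentially the same route as the paper: bound $\nabla T$ near the interface using the limit hypothesis, split any segment at its unique crossing point with $C$, apply the mean value theorem on each piece, and recombine via additivity of lengths along the segment. The only cosmetic differences are that you obtain the gradient bound by a compactness argument rather than by continuously extending $\nabla T$ to the closed half-balls $\overline{B_\epsilon(\b v)}\cap\overline{D_i}$, and you treat endpoints lying on $C$ with the sharp form of the mean value theorem (continuity on the closed segment, differentiability on its interior) instead of the paper's approximation-by-sequences step.
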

\begin{proof}
	
	Note that $C=\partial{D_1}= \partial{D_2}$ and that $T$ is locally Lipschitz on $D_1$ and $D_2$ because $T\vert_{D_i}\in \mathcal C^1$ for $i=1,2$. As a first step, we prove that $T$ is also locally Lipschitz on $\overline{D_i}=D_i\cup C$. For this, we consider closed half balls \[H_{\epsilon,i}(\b v)=\overline{B_\epsilon(\b v)}\cap\overline{D_i},\] where $\b v\in C$ and $B_\epsilon(\b v)$ denotes the open ball with center $\b v$ and radius $\epsilon>0$.
	
	As the limit $\lim_{\b x\to\b c}\nabla T(\b x)$ exists for any $\b c\in C$, we can consider the continuous extension of $\nabla T$ to the set $H_{\epsilon,i}(\b v)$, denoted by $\widetilde{\b T}$. Thus, the mean value theorem and the Cauchy--Schwarz inequality yield
	\begin{equation}\label{eq:HLipschitz}
		\lvert T(\b x_1)-T(\b x_2)\rvert\leq \sup_{\b x\in\overline{B_\epsilon(\b v)}\cap D_i}\Vert \nabla T(\b x)\Vert_2 \Vert \b x_1-\b x_2\Vert_2=\max_{\b x\in H_{\epsilon,i}(\b v)}\Vert \widetilde{\b T}(\b x)\Vert_2 \Vert \b x_1-\b x_2\Vert_2
	\end{equation}
	for $\b x_1,\b x_2\in \overline{B_\epsilon(\b v)}\cap D_i$, which means that $T$ is Lipschitz continuous on $\overline{B_\epsilon(\b v)}\cap D_i$ for $i\in\{1,2\}$.
	
	Note that $T\vert_C=\operatorname{const}$ implies that $T$ is Lipschitz continuous on $C$. Hence, to prove the Lipschitz continuity on the closed half ball $H_{\epsilon,i}(\b v)$ it remains to consider the case $\b x_1\in C$ and $\b x_2\in \overline{B_\epsilon(\b v)}\cap D_i$ with $i\in \{1,2\}$. For this, we introduce a sequence $(\b x^n)_{n\in \N}\tm \overline{B_\epsilon(\b v)}\cap D_i$ with $\lim_{n\to\infty}\b x^n=\b x_1$. As $T$ is continuous we therefore find $N_0\in \N$ such that for all $n\geq N_0$ we have 
	\begin{equation}\label{eq:H(x^n)-H(x1)<1/n}
		\lvert T(\b x_1)-T(\b x^n)\rvert<\frac{1}{n}.
	\end{equation}
	Altogether, using $L_i=\max_{\b x\in H_{\epsilon,i}(\b v)}\Vert \widetilde{\b T}(\b x)\Vert_2$ we obtain from \eqref{eq:HLipschitz} and \eqref{eq:H(x^n)-H(x1)<1/n}
	\begin{equation*}
		\lvert T(\b x_1)-T(\b x_2)\rvert\leq \lvert T(\b x_1)-T(\b x^n)\rvert+\lvert T(\b x^n)-T(\b x_2)\rvert< \frac{1}{n}+L_i\Vert \b x^n-\b x_2\Vert_2,
	\end{equation*}
	and passing to the limit, we see that $T$ is even Lipschitz continuous on the closed half ball with a Lipschitz constant $L_i$.
	
	Next, we prove that for any $\b x\in D_1$ and $\b y\in D_2$ there exists a $\b z\in C$ such that 
	\begin{equation}
		\Vert \b x-\b y\Vert_2=\Vert \b x-\b z\Vert_2+\Vert \b z-\b y\Vert_2.\label{eq:triang.equal}
	\end{equation}
	That is to say that  $\b z$ lies on the straight line between $\b x$ and $\b y$. Indeed, setting 
	\[ \b z=\b x+c(\b y-\b x),\quad c=\frac{x_1-\frac{b}{a}x_2}{x_1-\frac{b}{a}x_2+\frac{b}{a}y_2-y_1},\]
	we find $c\in(0,1)$ as $\b x\in D_1$ and $\b y\in D_2$. Additionally, $\b z\in \ker(\bA)$ since
	\begin{align*}
		z_1-\frac{b}{a}z_2&=x_1+c(y_1-x_1)-\frac{b}{a}(x_2+c(y_2-x_2))\\
		&=x_1-\frac{b}{a}x_2-c\left(x_1-y_1+\frac{b}{a}y_2-\frac{b}{a}x_2\right)=0,
	\end{align*}
	and $\b z>\b 0$ since it is on the line between $\b x>\b 0$ and $\b y>\b 0$.
	
	Let us now prove that $T$ is Lipschitz continuous on $\overline{B_\epsilon(\b v)}$. For this, let $\b x\in D_1$ and $\b y\in D_2$, then choose $\b z\in C$ such that \eqref{eq:triang.equal} is satisfied. As a result we obtain
	\begin{equation*}
		\begin{aligned}
			\lvert T(\b x)-T(\b y)\rvert&\leq \lvert T(\b x)-T(\b z)\rvert+\lvert T(\b z)-T(\b y)\rvert\\&\leq\max\{L_1,L_2\} (\Vert \b x-\b z\Vert_2+\Vert \b z-\b y\Vert_2)=\max\{L_1,L_2\}\Vert \b x-\b y\Vert_2,
		\end{aligned}
	\end{equation*}
	and since $\b v\in C$ and $\epsilon>0$ are arbitrary, we have proven hat $T$ is locally Lipschitz continuous.
\end{proof}

\chapter[Intermediate Results on NB-Series]{Intermediate Results for Nonstandard NB-Series}\label{appendix:NB}
In this appendix we present and prove intermediate results that are analogous to statements in \cite{B16}.
We start by recalling Theorem 308A from \cite{B16}, for which we briefly introduce the notation.

Let $m\in \N$ and $I$ be a non-decreasing and finite sequence of integers from the set $\{1,2,\dotsc,m\}$ and $J_m$ the set of all such $I$, where we also include the empty sequence $\varnothing\in J_m$. If $I$ contains $k_j$ occurrences of $j$ for each $j=1,\dotsc, m$ then we define 
\[\hat\sigma(I)=\prod_{j=1}^mk_j!\]
and set $\hat\sigma(\varnothing)=1$.
Now let $\bm{\delta}^{(1)},\dotsc,\bm{\delta}^{(m)}\in \R^d$ and define for $I=(i_1,\dotsc,i_l)\in J_m $ the quantity $\lvert I\rvert=l$ as well as
\[\bm{\delta}^{I}=(\bm{\delta}^{(i_1)},\dotsc,\bm{\delta}^{(i_l)})\in (\R^d)^l,\] and we set $\bm{\delta}^{\varnothing}=\varnothing$ as well as $\lvert \varnothing\rvert=0.$ Next, for a map $\b f\in \mathcal C^{p+1}( \R^d, \R^d)$ we define $\b f^{(0)}(\b y)\varnothing=\b f(\b y)$ and \[\b f^{(l)}(\b y)\bm{\delta}^I=\sum_{j_1,\dotsc,j_l=1}^d\partial_{j_1\dotsc j_l}\b f(\b y)\delta^{(i_1)}_{j_1}\cdots\delta^{(i_l)}_{j_l},\quad 1\leq l\leq p+1,\]
which allows us to formulate \cite[Theorem 308A]{B16}, where we truncate the series using the Lagrangian remainder.
\begin{thm}\label{thm:308a} 
	Let $p\in \N$ and $f\in \mathcal C^{p+1}(\R^d,\R)$ as well as $\b y,\bm{\delta}^{(1)},\dotsc,\bm{\delta}^{(m)}\in \R^d$. Then
	\[ f\left(\b y +\sum_{i=1}^m\bm{\delta}^{(i)}\right)=\sum_{\substack{I\in J_m\\ \lvert I\rvert \leq p}}\frac{1}{\hat\sigma(I)} f^{(\lvert I\rvert)}(\b y)\bm{\delta}^I+  R_p\left(\b y+\sum_{i=1}^m\bm{\delta}^{(i)},\sum_{i=1}^m\bm{\delta}^{(i)}\right),\]
	where, using the multi index notation, we have
	\[R_p\left(\mathbf x,\mathbf a\right)=\sum_{\lvert \bm \alpha \rvert =p+1}\frac{\partial^{\bm \alpha} f(\bm \xi)}{\bm \alpha!}(\mathbf x-\mathbf a)^{\bm\alpha}\]
	with $\bm{\alpha}\in \N_0^d$ and $\xi_j$ between $x_j$ and $a_j$. 
\end{thm}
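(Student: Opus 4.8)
The plan is to reduce the statement to the classical one-dimensional Taylor theorem with Lagrange remainder and then to reorganize the resulting Taylor polynomial combinatorially. Setting $\b h=\sum_{i=1}^m\bm{\delta}^{(i)}$ and introducing the scalar auxiliary function $g(t)=f(\b y+t\b h)$, which belongs to $\mathcal C^{p+1}([0,1])$, the one-dimensional Taylor expansion about $t=0$ evaluated at $t=1$ reads
\[
f(\b y+\b h)=g(1)=\sum_{l=0}^p\frac{g^{(l)}(0)}{l!}+\frac{g^{(p+1)}(\tau)}{(p+1)!}
\]
for some $\tau\in(0,1)$. The chain rule gives $g^{(l)}(t)=\sum_{j_1,\dotsc,j_l=1}^d\partial_{j_1\dotsc j_l}f(\b y+t\b h)\,h_{j_1}\cdots h_{j_l}$, so that $g^{(l)}(0)=f^{(l)}(\b y)(\b h,\dotsc,\b h)$ with $\b h$ inserted $l$ times, while the final term is the Lagrange remainder which, written in multi-index notation with $\bm\xi=\b y+\tau\b h$, reproduces $R_p$.

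The combinatorial heart of the proof lies in expanding the factors $\b h=\sum_{i=1}^m\bm{\delta}^{(i)}$ inside $g^{(l)}(0)$. By multilinearity of the symmetric form $f^{(l)}(\b y)$,
\[
g^{(l)}(0)=\sum_{i_1,\dotsc,i_l=1}^m\ \sum_{j_1,\dotsc,j_l=1}^d\partial_{j_1\dotsc j_l}f(\b y)\,\delta^{(i_1)}_{j_1}\cdots\delta^{(i_l)}_{j_l},
\]
and since $f\in\mathcal C^{p+1}$, Schwarz's theorem makes the mixed partials symmetric, so the inner double sum depends only on the multiset $\{i_1,\dotsc,i_l\}$ and equals $f^{(l)}(\b y)\bm{\delta}^I$ for the unique non-decreasing $I\in J_m$ representing that multiset. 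Grouping the ordered index tuples $(i_1,\dotsc,i_l)\in\{1,\dotsc,m\}^l$ according to their associated $I$, where $I$ contains $k_j$ copies of $j$, the number of tuples mapping to $I$ is the multinomial coefficient $l!/\prod_{j=1}^mk_j!=l!/\hat\sigma(I)$. Hence
\[
\frac{g^{(l)}(0)}{l!}=\sum_{\substack{I\in J_m\\ \lvert I\rvert=l}}\frac{1}{\hat\sigma(I)}\,f^{(l)}(\b y)\bm{\delta}^I.
\]

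Summing this identity over $l=0,\dotsc,p$ merges the two-index bookkeeping into the single sum over $\{I\in J_m\mid \lvert I\rvert\le p\}$, which is precisely the polynomial part of the claimed expansion, and together with the remainder term from the first step this completes the proof. I expect the main obstacle to be the careful justification of the middle step: one must show rigorously that the passage from the sum over all ordered tuples in $\{1,\dotsc,m\}^l$ to the sum over non-decreasing sequences $I$ produces exactly the factor $l!/\hat\sigma(I)$, and that the symmetry of the higher derivatives genuinely renders $f^{(l)}(\b y)\bm{\delta}^I$ independent of the chosen ordering — this is the point at which the regularity assumption $f\in\mathcal C^{p+1}$ is indispensable. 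The remaining ingredients, namely the chain rule for $g^{(l)}$ and the conversion of the Lagrange term into the multi-index form of $R_p$, are routine.
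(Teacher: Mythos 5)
Your proof is correct, and there is nothing in the thesis to compare it against: the statement is quoted (with a Lagrangian truncation added) as Theorem 308A of Butcher's book \cite{B16}, and no proof is given in the paper. Your route --- reducing to the one-dimensional Taylor theorem for $g(t)=f(\b y+t\b h)$ with $\b h=\sum_{i=1}^m\bm{\delta}^{(i)}$, expanding $g^{(l)}(0)$ by multilinearity, invoking Schwarz symmetry so that the value depends only on the multiset of indices, and counting the ordered tuples over a fixed multiset by the multinomial coefficient $l!/\hat\sigma(I)$ --- is the standard argument, and every step is sound.

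One point should be made explicit rather than glossed over in your final sentence. What your argument actually yields as the remainder is
\[
\sum_{\lvert \bm\alpha\rvert=p+1}\frac{\partial^{\bm\alpha}f(\b y+\tau\b h)}{\bm\alpha!}\,\b h^{\bm\alpha},
\]
which in the notation of the theorem is $R_p\bigl(\b y+\sum_{i=1}^m\bm{\delta}^{(i)},\,\b y\bigr)$: the second argument of $R_p$ must be the expansion point $\b y$, so that $(\b x-\b a)^{\bm\alpha}=\b h^{\bm\alpha}$ and $\xi_j$ lies between $y_j$ and $y_j+h_j$. The theorem as printed calls $R_p$ with second argument $\sum_{i=1}^m\bm{\delta}^{(i)}$, which would make $(\b x-\b a)^{\bm\alpha}=\b y^{\bm\alpha}$ and turn ``$\xi_j$ between $x_j$ and $a_j$'' into a condition involving $h_j$ instead of $y_j$; that version is false (already for $d=m=p=1$, $f(x)=x^3$, $y=1$, $\delta^{(1)}=\epsilon$ small, the required remainder $3\epsilon^2+\epsilon^3$ equals $3\xi$ only for $\xi=\epsilon^2+\epsilon^3/3$, which does not lie between $\epsilon$ and $1+\epsilon$). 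So your proof establishes the intended statement and, as a by-product, exposes a typo in the printed one; you should state this explicitly rather than assert that the Lagrange term ``reproduces $R_p$'' as written.
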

The key observation is that this equality holds true for any values of $\bm{\delta}^{(i)}$, that is also for solution-dependent vectors $\bm{\delta}^{(i)}=\bm{\delta}^{(i)}(\b y^n,\dt)$.

Our aim is to apply Theorem \ref{thm:308a} to each addend of the right-hand side of the differential equations \eqref{ivp}. Following the idea from \cite[Lemma 310B]{B16}, we prove the following result.
\begin{lem}\label{lem:hFl}
	Let $p\in \N$ and $\Fnu\in \mathcal C^{p+1}$ for $\nu=1\dotsc,N$. Then 
	\begin{align*}
		\dt\Fnu&\left(\b y^n+\sum_{\tau\in NT_{p-1}}\theta(\tau,\b y^n,\dt)\frac{\dt^{\lvert\tau\rvert}}{\sigma(\tau)}\dF(\tau)(\b y^n) +\O(\dt^p)\right)\\
		&=\sum_{\tau\in NT_p}\widetilde\theta_\nu(\tau,\b y^n,\dt)\frac{\dt^{\lvert\tau\rvert}}{\sigma(\tau)}\dF(\tau)(\b y^n)+\O(\dt^{p+1}),
	\end{align*}
	where
	\begin{equation}\label{eq:widetildetheta}
		\widetilde\theta_\nu(\tau,\b y^n,\dt)=\begin{cases}
			\delta_{\nu\mu},& \tau=\rt[]^{[\mu]},\\
			\delta_{\nu\mu}\prod_{i=1}^l\theta(\tau_i,\b y^n,\dt),& \tau=[\tau_1,\dotsc,\tau_l]^{[\mu]}
		\end{cases}
	\end{equation}
	and $\delta_{\nu\mu}$ denotes the Kronecker delta.
\end{lem}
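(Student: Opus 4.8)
The plan is to follow Butcher's proof of Lemma 310B, applying the multivariate Taylor expansion from Theorem~\ref{thm:308a} to $\Fnu$ and then reorganizing the resulting sum over multisets of trees into a sum over colored rooted trees with root color $\nu$. First I would write the argument of $\Fnu$ as $\b y^n + \b a + \bm\epsilon$, where $\b a=\sum_{\tau\in NT_{p-1}}\bm\delta^{(\tau)}$ with $\bm\delta^{(\tau)}=\theta(\tau,\b y^n,\dt)\tfrac{\dt^{\lvert\tau\rvert}}{\sigma(\tau)}\dF(\tau)(\b y^n)$, so that $\b a=\O(\dt)$, and $\bm\epsilon=\O(\dt^p)$ is the error term appearing in the hypothesis. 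Since $\Fnu\in\mathcal C^{p+1}\subseteq\mathcal C^1$ is locally Lipschitz and the arguments stay in a fixed bounded neighborhood of $\b y^n$ for small $\dt$, the mean value theorem gives $\Fnu(\b y^n+\b a+\bm\epsilon)=\Fnu(\b y^n+\b a)+\O(\dt^p)$; after multiplying by $\dt$ the contribution of $\bm\epsilon$ is absorbed into the $\O(\dt^{p+1})$ remainder. This reduces the claim to an expansion of $\Fnu(\b y^n+\b a)$.

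Next I would apply Theorem~\ref{thm:308a} with the index set identified with the finite set $NT_{p-1}$ and the perturbations $\bm\delta^{(\tau)}$. This yields $\Fnu(\b y^n+\b a)=\sum_{\lvert I\rvert\le p}\tfrac{1}{\hat\sigma(I)}\,(\Fnu)^{(\lvert I\rvert)}(\b y^n)\,\bm\delta^{I}+R_p$, where $I$ ranges over multisets of trees from $NT_{p-1}$. Because each $\bm\delta^{(\tau)}$ carries a factor $\dt^{\lvert\tau\rvert}$ with $\lvert\tau\rvert\ge1$, the Lagrangian remainder satisfies $R_p=\O(\lVert\b a\rVert^{p+1})=\O(\dt^{p+1})$, hence $\dt\,R_p$ is negligible. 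The crucial identification is then carried out term by term: a multiset $I=\{\tau_1,\dotsc,\tau_l\}$ corresponds bijectively to the tree $\tau=[\tau_1,\dotsc,\tau_l]^{[\nu]}$ with root color $\nu$, and by the definition~\eqref{eq:elemdiff} of the elementary differentials, $(\Fnu)^{(l)}(\b y^n)\big(\dF(\tau_1)(\b y^n),\dotsc,\dF(\tau_l)(\b y^n)\big)=\dF(\tau)(\b y^n)$. Pulling the scalar factors $\theta(\tau_i)\tfrac{\dt^{\lvert\tau_i\rvert}}{\sigma(\tau_i)}$ out of the multilinear form and using $\sum_i\lvert\tau_i\rvert=\lvert\tau\rvert-1$ produces the power $\dt^{\lvert\tau\rvert-1}$ and the coefficient $\prod_{i=1}^l\theta(\tau_i)=\widetilde\theta_\nu(\tau)$.

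The main obstacle is the combinatorial bookkeeping of the symmetry factors, which I would isolate as a separate identity: writing $\tau=[\tau_1^{m_1},\dotsc,\tau_r^{m_r}]^{[\nu]}$ with distinct children, one has $\hat\sigma(I)=\prod_k m_k!$ and $\prod_{i=1}^l\sigma(\tau_i)=\prod_k\sigma(\tau_k)^{m_k}$, so that $\tfrac{1}{\hat\sigma(I)}\prod_{i=1}^l\tfrac{1}{\sigma(\tau_i)}=\tfrac{1}{\sigma(\tau)}$ by the definition~\eqref{eq:sigmagamma} of the symmetry. This is exactly what converts the Taylor coefficient into the NB-series coefficient $\tfrac{1}{\sigma(\tau)}$. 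I would also verify the two boundary aspects of the bijection: the empty multiset $I=\varnothing$ yields the order-one tree $\rt[]^{[\nu]}$ with term $\dt\,\Fnu(\b y^n)=\dt\,\dF(\rt[]^{[\nu]})(\b y^n)$, and every tree with $\lvert\tau\rvert\le p$ and root color $\nu$ arises in this way, since its children necessarily lie in $NT_{p-1}$, while the trees of order $p+1$ contribute only to the $\O(\dt^{p+1})$ remainder. Finally, multiplying the whole expansion by $\dt$ and extending the sum to all of $NT_p$ --- permissible because $\widetilde\theta_\nu(\tau)=\delta_{\nu\mu}\prod_i\theta(\tau_i)$ vanishes for trees whose root color $\mu\neq\nu$ --- yields the asserted identity.
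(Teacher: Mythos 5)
Your proposal is correct and follows essentially the same route as the paper's proof: both absorb the $\O(\dt^p)$ perturbation using the regularity of $\Fnu$, both apply Theorem~\ref{thm:308a} with the perturbations indexed by $NT_{p-1}$, and both match each multiset $I$ of children to a colored tree through the identity $\hat\sigma(I)\prod_i\sigma(\tau_i)=\sigma(\tau)$ combined with the recursive definition \eqref{eq:elemdiff} of the elementary differentials. The only difference is organizational --- the paper arranges the multiset-to-tree identification as an induction on $p$, whereas you carry out the bijection directly in one pass --- which is equally valid.
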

\begin{proof} Let $NT_{p-1}=\{\tau^{(i)}\mid i=1,\dotsc,\lvert NT_{p-1}\rvert\}$.
	We want to apply Theorem \ref{thm:308a} to each component of $\Fnu$ by first writing 
	\begin{equation*}
		\begin{aligned}
			\dt\Fnu&\left(\b y^n+\sum_{\tau\in NT_{p-1}}\theta(\tau,\b y^n,\dt)\frac{\dt^{\lvert\tau\rvert}}{\sigma(\tau)}\dF(\tau)(\b y^n) +\O(\dt^p)\right)\\&=\dt\Fnu\left(\b y^n+\sum_{j=1}^{\lvert NT_{p-1}\rvert}\bm{\delta}^{(j)}+\O(\dt^p)\right)\\
			&=\dt\Fnu\left(\b y^n+\sum_{j=1}^{\lvert NT_{p-1}\rvert}\bm{\delta}^{(j)}\right)+\O(\dt^{p+1})
		\end{aligned}
	\end{equation*}
	with
	\begin{equation}
		\bm{\delta}^{(j)}=\theta(\tau^{(j)},\b y^n,\dt)\frac{\dt^{\lvert \tau^{(j)}\rvert}}{\sigma(\tau^{(j)})}\dF(\tau^{(j)})(\b y^n). \label{eq:delta(i)}
	\end{equation}
	To that end, we first introduce for $I=(i_1,\dotsc,i_r)\in J_m$ with $m=\lvert NT_{p-1}\rvert$ the quantity $\Sigma_I=\sum_{j=1}^r\lvert\tau^{(i_j)}\rvert$ and set $\Sigma_\varnothing=0$. With that, Theorem \ref{thm:308a} and \eqref{eq:delta(i)} yield
	\begin{align*}
		\dt\Fnu&\left(\b y^n+\sum_{\tau\in NT_{p-1}}\theta(\tau,\b y^n,\dt)\frac{\dt^{\lvert\tau\rvert}}{\sigma(\tau)}\dF(\tau)(\b y^n) +\O(\dt^p)\right)\\&=\sum_{\substack{I\in J_m\\ \Sigma_I\leq p-1}}\frac{\dt}{\hat\sigma(I)}(\Fnu)^{(\lvert I\rvert)}(\b y^n)\bm{\delta}^I +\O(\dt^{p+1}).
	\end{align*}
	To prove the claim, we show that 
	\[\sum_{\substack{I\in J_m\\ \Sigma_I\leq p-1}}\frac{\dt}{\hat\sigma(I)}(\Fnu)^{(\lvert I\rvert)}(\b y^n)\bm{\delta}^I =\sum_{\tau\in NT_p}\widetilde\theta_\nu(\tau,\b y^n,\dt)\frac{\dt^{\lvert\tau\rvert}}{\sigma(\tau)}\dF(\tau)(\b y^n)\]
	by means of an induction.
	
	If $p=1$, we find $\frac{\dt}{\hat\sigma(\varnothing)}(\Fnu)^{(0)}(\b y^n)\varnothing=\dt\Fnu(\b y^n)$ and 
	\[\sum_{\mu=1}^N\widetilde\theta_\nu(\rt[]^{[\mu]},\b y^n,\dt)\frac{\dt^{\lvert\tau\rvert}}{\sigma(\rt[]^{[\mu]})}\dF(\rt[]^{[\mu]})(\b y^n)=\sum_{\mu=1}^N\delta_{\nu\mu}\dt\Fmu(\b y^n)=\dt\Fnu(\b y^n),\]
	so that
	\begin{equation*}
		\begin{aligned}
			\sum_{\substack{I\in J_m\\ \Sigma_I\leq 0}}\frac{\dt}{\hat\sigma(I)}(\Fnu)^{(\lvert I\rvert)}(\b y^n)\bm{\delta}^I&=\frac{\dt}{\hat\sigma(\varnothing)}(\Fnu)^{(0)}(\b y^n)\varnothing =\dt\Fnu(\b y^n)\\&=\sum_{\mu=1}^N\widetilde\theta_\nu(\rt[]^{[\mu]},\b y^n,\dt)\frac{\dt^{\lvert\tau\rvert}}{\sigma(\rt[]^{[\mu]})}\dF(\rt[]^{[\mu]})(\b y^n)\\& = \sum_{\tau\in NT_1}\widetilde\theta(\tau,\b y^n,\dt)\frac{\dt^{\lvert\tau\rvert}}{\sigma(\tau)}\dF(\tau)(\b y^n) 
		\end{aligned}
	\end{equation*} is true.
	By induction we can now assume that 
	\[\sum_{\substack{I\in J_m\\ \Sigma_I\leq p-2}}\frac{\dt}{\hat\sigma(I)}(\Fnu)^{(\lvert I\rvert)}(\b y^n)\bm{\delta}^I =\sum_{\tau\in NT_{p-1}}\widetilde\theta_\nu(\tau,\b y^n,\dt)\frac{\dt^{\lvert\tau\rvert}}{\sigma(\tau)}\dF(\tau)(\b y^n)\]
	holds true for some $p\geq 2$, so that it remains to show
	\begin{equation}\label{eq:induction}
		\sum_{\substack{I\in J_m\\ \Sigma_I=p-1}}\frac{\dt}{\hat\sigma(I)}(\Fnu)^{(\lvert I\rvert)}(\b y^n)\bm{\delta}^I =\sum_{\tau\in NT_p\setminus NT_{p-1}}\widetilde\theta_\nu(\tau,\b y^n,\dt)\frac{\dt^{\lvert\tau\rvert}}{\sigma(\tau)}\dF(\tau)(\b y^n)
	\end{equation}
	to finish the proof by induction. For this, let us consider an arbitrary element $\tau\in NT_p\setminus NT_{p-1}$, which can be written as
	\begin{equation*}
		\tau=[\tau_1,\dotsc,\tau_l]^{[\mu]}=[(\tau^{(i_1)})^{m_1},\dotsc,(\tau^{(i_j)})^{m_j}]^{[\mu]}, \quad \sum_{r=1}^j\lvert\tau^{(i_r)}\rvert m_r=p-1\footnote{The root node is not counted here.},
	\end{equation*}
	where we point out that $\tau^{(i_r)}\in NT_{p-1}$ for $r=1,\dotsc,j$. Without loss of generality, we can assume that $i_1<i_2<\dotsc <i_j$. 
	For each such $\tau$ we can define the uniquely determined and non-decreasing sequence \[ \hat I=(\underbrace{i_1,\dotsc,i_1}_{m_1 \text{times}},\underbrace{i_2,\dotsc,i_2}_{m_2 \text{times}},\dotsc,\underbrace{i_j,\dotsc,i_j}_{m_j \text{times}})\]
	satisfying $\hat I\in J_m$ and $\Sigma_{\hat I}=\sum_{r=1}^l\lvert \tau_r \rvert=\sum_{r=1}^j\lvert\tau^{(i_r)}\rvert m_r=p-1$, so that equation \eqref{eq:induction} follows by proving
	\begin{align*}
		\frac{\dt}{\hat\sigma(\hat I)}(\Fnu)^{(\lvert \hat I\rvert)}(\b y^n)\bm{\delta}^{\hat I} =\sum_{\mu=1}^N\frac{\dt^p\widetilde\theta_\nu([\tau_1,\dotsc,\tau_l]^{[\mu]},\b y^n,\dt)}{\sigma([(\tau^{(i_1)})^{m_1},\dotsc,(\tau^{(i_j)})^{m_j}]^{[\mu]})}\dF([\tau_1,\dotsc,\tau_l]^{[\mu]})(\b y^n),
	\end{align*}
	since then any addend on the left-hand side of \eqref{eq:induction} is uniquely associated with the sum over the different root colors of a tree $\tau\in NT_p\setminus NT_{p-1}$. 
	Using \eqref{eq:delta(i)} and the definitions of $\sigma$, $\dF$ and $\widetilde\theta_\nu$ from \eqref{eq:sigmagamma}, \eqref{eq:elemdiff} and \eqref{eq:widetildetheta}, we indeed find
	\begin{equation*}
		\begin{aligned}
			&\frac{\dt}{\hat\sigma(\hat I)}(\Fnu)^{(\lvert \hat I\rvert)}(\b y^n)\bm{\delta}^{\hat I}\\&=\frac{\dt\prod_{r=1}^j\left(\frac{(\theta(\tau^{(i_r)},\b y^n,\dt))^{m_r} \dt^{m_r\lvert \tau^{(i_r)}\rvert}}{(\sigma(\tau^{(i_r)}))^{m_r}}\right)}{\prod_{r=1}^jm_r!}  (\Fnu)^{(l)}(\b y^n)(\dF(\tau_1)(\b y^n),\dotsc,\dF(\tau_l)(\b y^n))  \\
			&= \frac{\dt^p\widetilde \theta_\nu([(\tau^{(i_1)})^{m_1},\dotsc,(\tau^{(i_j)})^{m_j}]^{[\nu]},\b y^n,\dt)}{\sigma([(\tau^{(i_1)})^{m_1},\dotsc,(\tau^{(i_j)})^{m_j}]^{[\nu]})}\dF([\tau_1,\dotsc,\tau_l]^{[\nu]})(\b y^n)\\
			&=\sum_{\mu=1}^N\frac{\dt^p\widetilde\theta_\nu([\tau_1,\dotsc,\tau_l]^{[\mu]},\b y^n,\dt)}{\sigma([(\tau^{(i_1)})^{m_1},\dotsc,(\tau^{(i_j)})^{m_j}]^{[\mu]})}\dF([\tau_1,\dotsc,\tau_l]^{[\mu]})(\b y^n)
		\end{aligned}
	\end{equation*}
	finishing the proof.
\end{proof}
With Lemma \ref{lem:hFl} we can prove the following result, which is the analogue to Lemma 313A in \cite{B16}. 
\begin{lem}\label{lem:hfnu}
	Define $d_i$ and $g_i^{[\nu]}$ for $i=1,\dotsc,s$ and $\nu=1,\dotsc, N$ as in \eqref{eq:pertcond}. Furthermore, let $p\in \N$ and $ \Fnu\in \mathcal C^{p+1}$ for $\nu=1,\dotsc, N$.  If
	\[\byi=\b y^n+\sum_{\tau\in NT_{p-1}}\frac{\dt^{\lvert \tau\rvert}}{\sigma(\tau)}d_i(\tau,\b y^n,\dt) \dF(\tau)(\b y^n)+\O(\dt^p)\]
	then
	\[\dt\Fnu(\byi)=\sum_{\tau\in NT_{p}}\frac{\dt^{\lvert \tau\rvert}}{\sigma(\tau)}g^{[\nu]}_i(\tau,\b y^n,\dt) \dF(\tau)(\b y^n)+\O(\dt^{p+1}).\]
\end{lem}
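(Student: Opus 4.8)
The plan is to recognize that Lemma~\ref{lem:hFl} has been set up precisely to perform this computation: it expands $\dt\Fnu$ evaluated at an NB-type argument whose tree coefficients are an \emph{arbitrary} scalar function $\theta(\tau,\b y^n,\dt)$, and returns an NB-series whose coefficients $\widetilde\theta_\nu$ are the composite quantities \eqref{eq:widetildetheta} built from $\theta$. Thus I would hold the stage index $i$ fixed throughout and simply invoke Lemma~\ref{lem:hFl} with the specialization $\theta(\tau,\b y^n,\dt)=d_i(\tau,\b y^n,\dt)$. The hypothesis of Lemma~\ref{lem:hFl} is then exactly the assumed expansion of $\byi$ given in the statement, and the regularity $\Fnu\in\mathcal C^{p+1}$ guarantees that the interior $\O(\dt^p)$ remainder is absorbed into the $\O(\dt^{p+1})$ error of the conclusion.

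Applying the lemma yields
\[\dt\Fnu(\byi)=\sum_{\tau\in NT_p}\frac{\dt^{\lvert\tau\rvert}}{\sigma(\tau)}\widetilde\theta_\nu(\tau,\b y^n,\dt)\dF(\tau)(\b y^n)+\O(\dt^{p+1}),\]
where, after renaming the product index of \eqref{eq:widetildetheta} from $i$ to $j$ (to avoid a clash with the fixed stage index $i$ now carried by $d_i$), the coefficient reads $\widetilde\theta_\nu(\rt[]^{[\mu]},\b y^n,\dt)=\delta_{\nu\mu}$ on a leaf and $\widetilde\theta_\nu([\tau_1,\dotsc,\tau_l]^{[\mu]},\b y^n,\dt)=\delta_{\nu\mu}\prod_{j=1}^l d_i(\tau_j,\b y^n,\dt)$ otherwise. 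The key step is then a direct identification: comparing these two formulas against the recursion defining $g_i^{[\nu]}$ in \eqref{eq:pertcond}, one sees that on a leaf both equal $\delta_{\nu\mu}$ and on a composite tree both equal $\delta_{\nu\mu}\prod_{j=1}^l d_i(\tau_j,\b y^n,\dt)$. Hence $\widetilde\theta_\nu(\tau,\b y^n,\dt)=g_i^{[\nu]}(\tau,\b y^n,\dt)$ for every $\tau$, and substituting this into the displayed expansion gives exactly the claimed identity.

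The genuine content of the argument lives in Lemma~\ref{lem:hFl}, which I may assume, so there is no substantial new obstacle here; the proof is a matter of correct bookkeeping rather than analysis. The only points requiring care are, first, that the substitution $\theta=d_i$ is legitimate — it is, since $\theta$ enters Lemma~\ref{lem:hFl} purely as the abstract scalars $\theta(\tau,\b y^n,\dt)$ with no regularity or structural assumptions imposed on it — and second, that the stage index $i$ and the root-color index $\nu$ are tracked consistently, so that the $\nu$-dependence of $\widetilde\theta_\nu$ (coming from the Kronecker delta matching the root color against the addend $\Fnu$) lines up with the superscript $[\nu]$ on $g_i^{[\nu]}$. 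Once these matches are verified, the statement follows immediately.
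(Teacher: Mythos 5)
Your proposal is correct and is essentially identical to the paper's own proof: the paper likewise invokes Lemma~\ref{lem:hFl} with the specialization $\theta(\tau,\b y^n,\dt)=d_i(\tau,\b y^n,\dt)$ and then identifies the resulting coefficients $\widetilde\theta_\nu$ from \eqref{eq:widetildetheta} with $g_i^{[\nu]}$ via the recursion \eqref{eq:pertcond}. Your additional care in renaming the product index to $j$ actually cleans up a minor notational clash (the stage index $i$ versus the product index $i$) that the paper's one-line proof glosses over.
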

\begin{proof}
	The claim follows using Lemma \ref{lem:hFl} with $\theta(\tau,\b y^n,\dt)=d_i(\tau,\b y^n,\dt)$, which gives us
	\begin{equation*}
		\begin{aligned}
			\widetilde\theta_\nu (\tau,\b y^n,\dt)&=\begin{cases}
				\delta_{\nu\mu},& \tau=\rt[]^{[\mu]},\\
				\delta_{\nu\mu}\prod_{i=1}^ld_i(\tau_i,\b y^n,\dt),& \tau=[\tau_1,\dotsc,\tau_l]^{[\mu]}
			\end{cases} \\
			&=g_i^{[\nu]}(\tau,\b y^n,\dt).
		\end{aligned}
	\end{equation*}
	\vspace{-\baselineskip}
\end{proof}
\section{Results for Reducing Order Conditions of NSARK methods}
As final intermediate results, we prove the following lemmas which are helpful to reduce the conditions for 3rd and 4th order MPRK and GeCo methods. Both families of schemes can be written in the form of an NSARK method with 
\begin{equation*}
	a_{ij}^{[\nu]}=a_{ij}\gamma_\nu^{(i)},\quad b_i^{[\mu]}=b_i\delta_\mu
\end{equation*}
for suitable solution-dependent functions $\delta_\mu$ and $\gamma_\nu^{(i)}$, which we previously referred to as NSWs. In the following we use these general functions to reduce the order conditions \eqref{eq:condp=3} and \eqref{eq:condp=4} for 3rd and 4th order, respectively.  As we assume for Theorem~\ref{thm:main} that $a_{ij}^{[\nu]}=\O(1)$ as $\dt\to 0$, it suffices to prove the following results.
\begin{lem}\label{lem:equivalent}
	Let $\b A, \b b, \b c$ be the coefficients of an explicit 3-stage RK scheme of order 3, and let $\gamma_\nu^{(i)}=\O(1)$ as $\dt\to 0$. Then the conditions
	\begin{subequations}\label{eq:MPRKcondp=3}
		\begin{align}
			\delta_\mu&=1 +\O(\dt^3), &\mu&=1,\dotsc,N,\\
			\sum_{i=2}^3 b_ic_i\gamma^{(i)}_\nu&=\frac12 +\O(\dt^2), &\nu&=1,\dotsc,N,\label{eq:cond3b}\\
			\sum_{i=2}^3 b_ic_i^2\gamma^{(i)}_\nu\gamma^{(i)}_\xi&=\frac13 +\O(\dt), &\nu,\xi&=1,\dotsc,N,\label{eq:cond3c}\\
			\sum_{i,j=2}^3 b_i a_{ij}c_j\gamma^{(i)}_\nu\gamma^{(j)}_\xi&=\frac16 +\O(\dt), &\nu,\xi&=1,\dotsc,N\label{eq:cond3d}
		\end{align}
	\end{subequations}
	and
	\begin{equation}\label{eq:MPRKcondp'=3}
		\begin{aligned}
			\delta_\mu&=1 +\O(\dt^3), &\mu&=1,\dotsc,N,\\
			\sum_{i=2}^3 b_ic_i\gamma^{(i)}_\nu&=\frac12 +\O(\dt^2), &\nu&=1,\dotsc,N,\\
			\gamma^{(i)}_\nu&=1+\O(\dt),&\nu&=1,\dotsc,N,\quad i=2,3
		\end{aligned}
	\end{equation} 
	are equivalent for any solution and step-size dependent values of $\delta_\mu$ and $\gamma^{(i)}_\nu$ for $i=2,3$ and $\mu,\nu=1,\dotsc,N$.
\end{lem}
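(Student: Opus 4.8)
The plan is to treat the two implications separately, noting first that the two condition sets share their first two lines verbatim, so only the third-order tree conditions \eqref{eq:cond3c}, \eqref{eq:cond3d} need to be matched against the single requirement $\gamma^{(i)}_\nu = 1 + \O(\dt)$. Throughout I would exploit the structure of an explicit $3$-stage scheme, namely $c_1 = 0$ and $a_{ij} = 0$ for $j \geq i$, together with the classical order-$3$ conditions $\sum_i b_i c_i = \tfrac12$, $\sum_i b_i c_i^2 = \tfrac13$ and $b_3 a_{32} c_2 = \tfrac16$ (the last being what $\sum_{i,j} b_i a_{ij} c_j = \tfrac16$ collapses to once $c_1 = 0$ is used).

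For the implication \eqref{eq:MPRKcondp'=3} $\Rightarrow$ \eqref{eq:MPRKcondp=3} I would simply substitute $\gamma^{(i)}_\nu = 1 + \O(\dt)$ into \eqref{eq:cond3c} and \eqref{eq:cond3d}. Since each factor is $1 + \O(\dt)$ and the coefficients are $\O(1)$, the products $\gamma^{(i)}_\nu\gamma^{(i)}_\xi$ and $\gamma^{(i)}_\nu\gamma^{(j)}_\xi$ equal $1 + \O(\dt)$, so the two sums reduce to $\sum_{i=2}^3 b_i c_i^2 + \O(\dt)$ and $b_3 a_{32} c_2 + \O(\dt)$ respectively; the classical conditions then deliver $\tfrac13 + \O(\dt)$ and $\tfrac16 + \O(\dt)$. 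This direction is purely mechanical.

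The reverse implication is the substantial one. First I would use the explicit structure to observe that in \eqref{eq:cond3d} the only surviving summand is $(i,j)=(3,2)$, so that condition reads $b_3 a_{32} c_2\,\gamma^{(3)}_\nu\gamma^{(2)}_\xi = \tfrac16 + \O(\dt)$; dividing by $b_3 a_{32} c_2 = \tfrac16$ yields the product relation $\gamma^{(3)}_\nu\gamma^{(2)}_\xi = 1 + \O(\dt)$ for all $\nu,\xi$. Because the $\gamma$'s are $\O(1)$ and this product stays near $1$, each factor is bounded away from $0$ for small $\dt$; comparing the relation across different indices then forces all $\gamma^{(2)}_\nu$ to agree up to $\O(\dt)$, and likewise all $\gamma^{(3)}_\nu$, reducing the problem to two scalars $q,p$ with $pq = 1 + \O(\dt)$. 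Feeding $p,q$ into \eqref{eq:cond3b} and \eqref{eq:cond3c} and invoking the classical conditions, I would show that $p = q = 1$ is forced up to $\O(\dt)$; the sign is pinned down because $\gamma^{(2)}_\nu$ and $\gamma^{(3)}_\nu$ share a sign (their product is positive) and a common negative sign would send the left-hand side of \eqref{eq:cond3b} to $-\tfrac12$, contradicting $+\tfrac12$.

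The hard part will be ruling out the spurious branch of this reduced algebraic system. Eliminating $q$ via $q = 1/p + \O(\dt)$ turns \eqref{eq:cond3b} into $(p-1)\,[\,b_3 c_3(p+1) - \tfrac12\,] = \O(\dt)$, which admits, besides $p = 1$, the second root $p = b_2 c_2/(b_3 c_3)$. The genuine content of the lemma is that \eqref{eq:cond3c} excludes this second root: substituting it produces the swapped combination $b_3^2 c_3^2/b_2 + b_2^2 c_2^2/b_3$ in place of the true $b_2 c_2^2 + b_3 c_3^2 = \tfrac13$, and these agree only when $b_2 = b_3$ or $b_2 c_2 = b_3 c_3$ (the latter meaning the spurious root already equals $1$). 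I expect the main technical care to lie exactly here — in verifying that for the order-$3$ coefficients under consideration the spurious root is either excluded by \eqref{eq:cond3c} or coincides with the admissible root $p=1$, so that in every case $\gamma^{(i)}_\nu = 1 + \O(\dt)$, and in upgrading the statement "$p$ approximately solves the limiting system" to the quantitative $\O(\dt)$ bound, which is where the boundedness-away-from-zero of the factors is needed.
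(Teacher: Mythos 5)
Your forward implication is the same (mechanical) argument as the paper's, but your reverse implication follows a genuinely different route: the paper lets $\Gamma^{(i)}_\nu$ denote accumulation points of $\gamma^{(i)}_\nu$, observes they must solve the limiting system $\sum_{i=2}^3 b_ic_i\Gamma^{(i)}_\nu=\tfrac12$, $\sum_{i=2}^3 b_ic_i^2(\Gamma^{(i)}_\nu)^2=\tfrac13$, $\Gamma^{(3)}_\nu\Gamma^{(2)}_\xi=1$, cites \cite[Lemma~7]{KM18Order3} for uniqueness of the solution $\Gamma^{(i)}_\nu=1$, and then asserts the rate $\gamma^{(i)}_\nu=1+\O(\dt)$ in one line. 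Your elimination $q=1/p+\O(\dt)$, the factorization $(p-1)\bigl[b_3c_3(p+1)-\tfrac12\bigr]=\O(\dt)$, and the discriminant identity showing that the spurious root survives \eqref{eq:cond3c} precisely when $(b_3-b_2)(b_3c_3-b_2c_2)=0$ make explicit what the paper hides behind the citation.

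However, the step you defer as "the hard part" is not a technicality that can be verified; it fails, and with it the lemma as stated. Third-order explicit $3$-stage schemes with $b_2=b_3$ and $c_2\neq c_3$ exist, e.g.\ $b_1=0$, $b_2=b_3=\tfrac12$, $c_{2,3}=\tfrac12\bigl(1\pm\tfrac{1}{\sqrt3}\bigr)$, $a_{21}=c_2$, $a_{32}=\tfrac{1}{3c_2}$, $a_{31}=c_3-a_{32}$. For such a tableau the constant choice $\delta_\mu=1$, $\gamma^{(2)}_\nu=c_3/c_2$, $\gamma^{(3)}_\nu=c_2/c_3$ satisfies \eqref{eq:cond3b}, \eqref{eq:cond3c}, \eqref{eq:cond3d} \emph{exactly}, since $b_2c_2\gamma^{(2)}_\nu+b_3c_3\gamma^{(3)}_\nu=b_2(c_2+c_3)=\tfrac12$, $b_2c_2^2(\gamma^{(2)}_\nu)^2+b_3c_3^2(\gamma^{(3)}_\nu)^2=b_2(c_2^2+c_3^2)=\tfrac13$ and $\tfrac16\gamma^{(3)}_\nu\gamma^{(2)}_\xi=\tfrac16$, yet $\gamma^{(i)}_\nu$ does not even converge to $1$. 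In the only remaining case with $b_2=b_3$, namely $b_2=b_3=\tfrac38$, $c_2=c_3=\tfrac23$ (the $\gamma=\tfrac38$ member of the paper's MPRK43($\gamma$) family), the limiting solution is unique but the \emph{rate} fails: $\gamma^{(2)}_\nu=1+\sqrt{\dt}$, $\gamma^{(3)}_\nu=1-\sqrt{\dt}$ satisfies \eqref{eq:cond3b} exactly and \eqref{eq:cond3c}, \eqref{eq:cond3d} up to $\O(\dt)$, but is not $1+\O(\dt)$. Hence no argument can close your gap; the equivalence requires an additional hypothesis such as $b_2\neq b_3$. Note that this is equally a blind spot of the paper's own proof: for $b_2=b_3$, $c_2\neq c_3$ the limiting system has the second solution exhibited above, so the uniqueness invoked there does not hold in the stated generality.

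Two further points so that your argument closes under the extra hypothesis $b_2\neq b_3$. First, when $b_2c_2=b_3c_3$ the two roots of your quadratic coincide and the bracket $b_3c_3(p+1)-\tfrac12$ vanishes at $p=1$, so "boundedness away from zero of the factors" is unavailable and the quadratic only yields $p=1+\O(\sqrt{\dt})$; there you must return to \eqref{eq:cond3c}: writing $p=1+\epsilon$ and $q=1/p+\O(\dt)$ gives $\tfrac12(c_3-c_2)\epsilon=\O(\dt)+\O(\epsilon^2)=\O(\dt)$, and since $b_2c_2=b_3c_3$ together with $b_2\neq b_3$ forces $c_2\neq c_3$, this upgrades $\epsilon$ to $\O(\dt)$. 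Second, your separate sign argument is superfluous: accumulation points of $p$ are roots of the limiting quadratic, so they are already pinned down by the factorization.
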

\begin{proof}
	It is easy to see that the conditions \eqref{eq:MPRKcondp=3} are fulfilled by any solution of \eqref{eq:MPRKcondp'=3}.
	To see that any solution of \eqref{eq:MPRKcondp=3} must satisfy \eqref{eq:MPRKcondp'=3}, consider the conditions from \eqref{eq:MPRKcondp=3} as $\dt\to 0$. From $a_{ij}^{[\nu]}=\O(1)$, any accumulation point of $\gamma^{(i)}_\nu$ is neither $\infty$ nor $-\infty$. In the following, we denote by $\Gamma^{(i)}_\nu$ an arbitrary accumulation point of  $\gamma^{(i)}_\nu$ as $\dt\to 0$. Moreover, since the underlying RK scheme is explicit with three stages, the only addend remaining on the left-hand side of \eqref{eq:cond3d} is 
	$b_3a_{32}c_2\gamma^{(3)}_\nu\gamma^{(2)}_\xi=\frac16\gamma^{(3)}_\nu\gamma^{(2)}_\xi.$
	Hence, for any accumulation point $\Gamma^{(i)}_\nu$, the conditions \eqref{eq:cond3b}, \eqref{eq:cond3c} with $\nu=\xi$, and \eqref{eq:cond3d} together with $c_1=0$ imply
	\begin{equation*}
		\begin{aligned}
			\sum_{i=2}^3 b_ic_i\Gamma^{(i)}_\nu&=\frac12, &\nu&=1,\dotsc,N,\\
			\sum_{i=2}^3 b_ic_i^2(\Gamma^{(i)}_\nu)^2&=\frac13, &\nu&=1,\dotsc,N,\\
			\Gamma^{(3)}_\nu\Gamma^{(2)}_\xi&=1, &\nu,\xi&=1,\dotsc,N.
		\end{aligned}
	\end{equation*}
	This system of equations possesses for any pair $(\nu,\xi)$ the unique solution $\Gamma^{(2)}_\xi=1$ and $\Gamma^{(3)}_\nu=1$ for all $\xi,\nu=1,\dotsc,N$, see \cite[Lemma 7]{KM18Order3}. Finally, \eqref{eq:cond3c} with $\nu=\xi$  thus implies that $\gamma^{(i)}_\nu=1+\O(\dt)$ proving that \eqref{eq:MPRKcondp=3} and \eqref{eq:MPRKcondp'=3} are equivalent.
\end{proof}
To come up with an analogue for $4$-stage RK methods of $4$th order, we can follow the same ideas as in the last proof, however, this time we need to come up with a substitute for \cite[Lemma 7]{KM18Order3}. The precise procedure is based on Gröbner bases computations as we will see in the proof of the following lemma.

\begin{lem}\label{lem:equivalent4}
	Let $\b A, \b b, \b c$ be the coefficients of an explicit 4-stage RK scheme of order 4, and let $\gamma_\nu^{(i)}=\O(1)$ as $\dt\to 0$. Then the conditions
	\begin{subequations}\label{eq:MPRKcondp=4}
		\begin{align}
			\delta_\mu&=1 +\O(\dt^4), &\mu&=1,\dotsc,N,\\
			\sum_{i=2}^4 b_ic_i\gamma^{(i)}_\nu&=\frac12 +\O(\dt^3), &\nu&=1,\dotsc,N,\label{eq:cond4b}\\
			\sum_{i=2}^4 b_ic_i^2\gamma^{(i)}_\nu\gamma^{(i)}_\xi&=\frac13 +\O(\dt^2), &\nu,\xi&=1,\dotsc,N,\label{eq:cond4c}\\
			\sum_{i,j=2}^4 b_i a_{ij}c_j\gamma^{(i)}_\nu\gamma^{(j)}_\xi&=\frac16 +\O(\dt^2), &\nu,\xi&=1,\dotsc,N,\label{eq:cond4d}\\
			\sum_{i,j=2}^4 b_ic_i a_{ij}c_j\gamma^{(i)}_\nu\gamma^{(i)}_\xi\gamma^{(j)}_\eta&=\frac18 +\O(\dt), &\nu,\xi,\eta&=1,\dotsc,N,\label{eq:cond4e}\\
			\sum_{i=2}^4 b_ic_i^3 \gamma^{(i)}_\nu\gamma^{(i)}_\xi\gamma^{(i)}_\eta&=\frac14 +\O(\dt), &\nu,\xi,\eta&=1,\dotsc,N,\label{eq:cond4f}\\
			\sum_{i,j,k=2}^4 b_ia_{ij}a_{jk}c_k\gamma^{(i)}_\nu\gamma^{(j)}_\xi\gamma^{(k)}_\eta&=\frac{1}{4!}+\O(\dt), &\nu,\xi,\eta&=1,\dotsc,N,\label{eq:cond4g}\\
			\sum_{i,j=2}^4 b_i a_{ij}c_j^2\gamma^{(i)}_\nu\gamma^{(j)}_\xi\gamma^{(j)}_\eta&=\frac{1}{12} +\O(\dt), &\nu,\xi,\eta&=1,\dotsc,N\label{eq:cond4h}
		\end{align}
	\end{subequations}
	and
	\begin{equation}\label{eq:MPRKcondp'=4}
		\begin{aligned}
			\delta_\mu&=1 +\O(\dt^4), &\mu&=1,\dotsc,N,\\
			\sum_{i=2}^4 b_ic_i\gamma^{(i)}_\nu&=\frac12 +\O(\dt^3), &\nu&=1,\dotsc,N,\\
			\gamma^{(i)}_\nu&=1+\O(\dt^2),&\nu&=1,\dotsc,N,\quad i=2,3,4
		\end{aligned}
	\end{equation} 
	are equivalent for any solution and step-size dependent values of $\delta_\mu$ and $\gamma^{(i)}_\nu$ for $i=2,3,4$ and $\mu,\nu=1,\dotsc,N$.
\end{lem}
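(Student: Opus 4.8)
The plan is to prove the two implications separately, following the blueprint of Lemma~\ref{lem:equivalent} but accounting for the extra stage weight $\gamma^{(4)}_\nu$ and the sharper target error $\O(\dt^2)$. First I would dispatch the easy direction, that \eqref{eq:MPRKcondp'=4} implies \eqref{eq:MPRKcondp=4}. Substituting $\gamma^{(i)}_\nu = 1 + \O(\dt^2)$ and $\delta_\mu = 1 + \O(\dt^4)$ into the left-hand sides, every product of $k$ non-standard weights collapses to $1 + \O(\dt^2)$, so each condition reduces to the corresponding classical order-condition sum multiplied by $1 + \O(\dt^2)$. Since $\b A, \b b, \b c$ define an order-4 scheme, these classical sums equal exactly $\tfrac{1}{\gamma(\tau)}$, and the scalar-quotient lemma handles the arithmetic. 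Tracking errors: \eqref{eq:cond4b} is literally the second line of \eqref{eq:MPRKcondp'=4}; the two-factor conditions \eqref{eq:cond4c}, \eqref{eq:cond4d} pick up the required $\O(\dt^2)$; and the three-factor conditions \eqref{eq:cond4e}--\eqref{eq:cond4h} hold with error $\O(\dt^2)\subseteq\O(\dt)$.

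\textbf{Reduction of the converse.} For \eqref{eq:MPRKcondp=4} implies \eqref{eq:MPRKcondp'=4}, the first two lines of \eqref{eq:MPRKcondp'=4} coincide with the first two lines of \eqref{eq:MPRKcondp=4}, so the only thing to establish is $\gamma^{(i)}_\nu = 1 + \O(\dt^2)$ for $i=2,3,4$. As in Lemma~\ref{lem:equivalent}, I would fix a color and pass to the limit $\dt\to 0$; since $a^{[\nu]}_{ij}=\O(1)$, the weights $\gamma^{(i)}_\nu$ are bounded, so every accumulation point $\Gamma^{(i)}_\nu$ is finite. Specializing the color indices $\nu,\xi,\eta$ (in particular to the diagonal $\nu=\xi=\eta$ and to suitable cross choices) and using that the explicit 4-stage tableau satisfies the order-4 conditions and $a_{ij}=0$ for $j\geq i$, the equalities \eqref{eq:cond4b}--\eqref{eq:cond4h} become, in the limit, a finite multilinear polynomial system satisfied by every accumulation point $\Gamma^{(i)}_\nu$.

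\textbf{The main obstacle.} The crux is to show that this limiting system admits the single admissible solution $\Gamma^{(i)}_\nu = 1$; uniqueness then forces $\gamma^{(i)}_\nu \to 1$. For the 3-stage case this was exactly the content of \cite[Lemma 7]{KM18Order3}, but with three stage weights and the eight trees of order at most $4$ such an ad hoc argument is no longer transparent. The plan is to compute a Gr\"obner basis of the ideal generated by the limiting equations, with the RK coefficients $b_i, c_i, a_{ij}$ treated as parameters tied by the order-4 relations, and to read off that the variety reduces to the point $(1,1,1)$. The delicate features are that the elimination is genuinely parametric, that the basis may a priori carry spurious zero-dimensional components, and that these must be discarded using $b_i\neq 0$, the distinctness of the $c_i$, and the feasibility/positivity of the tableau. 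Verifying that the reduced basis contains precisely the generators $\Gamma^{(i)}_\nu - 1$ is where the real work lies.

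\textbf{Upgrading to the rate.} Once $\gamma^{(i)}_\nu \to 1$ is known, I would promote the convergence to the rate $\O(\dt^2)$ by linearization. Writing $\gamma^{(i)}_\nu = 1 + \varepsilon^{(i)}_\nu$ with $\varepsilon^{(i)}_\nu \to 0$ and inserting this into the three sharpest conditions \eqref{eq:cond4b}, \eqref{eq:cond4c}, \eqref{eq:cond4d} (the latter two with $\nu=\xi$), the quadratic remainders are absorbed into $o(\norm{\bm\varepsilon})$ and one obtains a linear system $\b M\bm\varepsilon = \O(\dt^2)+o(\norm{\bm\varepsilon})$, whose matrix $\b M$ is built from the rows $(b_ic_i)_i$, $(b_ic_i^2)_i$ and the coefficients of $\sum_{i,j} b_i a_{ij} c_j(\varepsilon^{(i)}+\varepsilon^{(j)})$. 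Invertibility of $\b M$ for order-4 4-stage tableaux (again via distinct $c_i$ and nonzero $b_i$) lets me absorb the $o(\norm{\bm\varepsilon})$ term for small $\dt$ and conclude $\varepsilon^{(i)}_\nu = \O(\dt^2)$, which completes the equivalence.
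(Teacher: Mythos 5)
Your proposal follows essentially the same route as the paper's proof: the easy direction by substitution into the classical order-4 sums; the converse by passing to (finite) accumulation points $\Gamma^{(i)}_\nu$, reducing the limiting equalities with $\nu=\xi=\eta$ to a polynomial system whose only solution is $(1,1,1)$ via a Gr\"obner-basis computation; and then upgrading $\gamma^{(i)}_\nu=1+\O(\dt)$ to $1+\O(\dt^2)$ through the linear system built from \eqref{eq:cond4b}--\eqref{eq:cond4d}. The paper phrases this last step as a second (linear) Gr\"obner-basis computation showing that the homogeneous system in the first-order coefficients has only the trivial solution, which is exactly your invertibility of $\b M$; your quantitative absorption of the quadratic remainder is, if anything, a slightly cleaner way to extract the rate than the paper's ansatz $\gamma^{(i)}_\nu=1+x^{(i)}_\nu\dt+\O(\dt^2)$ with accumulation points of $(\gamma^{(i)}_\nu-1)/\dt$.

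One concrete caveat, however: the side conditions you plan to lean on to settle the algebra --- \enquote{distinctness of the $c_i$} and \enquote{feasibility/positivity of the tableau} --- are not available hypotheses. The lemma assumes only an explicit $4$-stage tableau of order $4$; such tableaux can have repeated abscissae (the classical RK4 method has $c_2=c_3=\tfrac12$), and no sign restriction is assumed (indeed Remark~\ref{rem:negativeMPRK} stresses that the order conditions are sign-independent). So any argument for the triviality of the limiting variety, or for the invertibility of $\b M$, that genuinely uses those hypotheses would fail on the most standard example. The required non-degeneracy must instead be extracted from the order-4 relations themselves --- $c_1=0$, $b_4a_{43}a_{32}c_2=\tfrac{1}{4!}$ (which is what collapses \eqref{eq:cond4g} to the single product equation $\Gamma^{(2)}_\nu\Gamma^{(3)}_\nu\Gamma^{(4)}_\nu=1$), $\sum_{i}b_ic_i=\tfrac12$, and so on --- and this is precisely what the paper's two Gr\"obner computations do, treating $b_i$, $c_i$, $a_{ij}$ as parameters constrained only by those relations. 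With that correction your plan goes through and coincides with the paper's proof.
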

\begin{proof}
	We first note that the conditions \eqref{eq:MPRKcondp=4} are fulfilled by any solution of \eqref{eq:MPRKcondp'=4}. 
	To see that any solution of \eqref{eq:MPRKcondp=4} must satisfy \eqref{eq:MPRKcondp'=4}, consider the conditions from \eqref{eq:MPRKcondp=4} as $\dt\to 0$. From $a_{ij}^{[\nu]}=\O(1)$, any accumulation point of $\gamma^{(i)}_\nu$ is neither $\infty$ nor $-\infty$. In the following, we denote by $\Gamma^{(i)}_\nu$ an arbitrary accumulation point of  $\gamma^{(i)}_\nu$ as $\dt\to 0$. Moreover, since the underlying RK scheme is explicit with four stages, the only addend remaining on the left-hand side of \eqref{eq:cond4g} is 
	\[b_4a_{43}a_{32}c_2\gamma^{(4)}_\nu\gamma^{(3)}_\xi\gamma^{(2)}_\eta=\frac{1}{4!}\gamma^{(4)}_\nu\gamma^{(3)}_\xi\gamma^{(2)}_\eta.\]
	Hence, for any accumulation point $\Gamma^{(i)}_\nu$, the conditions \eqref{eq:MPRKcondp=4} together with the order conditions for the underlying RK method and $\nu=\xi=\eta$ imply 
	{\allowdisplaybreaks
		\begin{align}
			\sum_{i=2}^4 b_ic_i\left(\Gamma^{(i)}_\nu-1\right)&=0, &\nu&=1,\dotsc,N,\nonumber\\
			\sum_{i=2}^4 b_ic_i^2\left((\Gamma^{(i)}_\nu)^2-1\right)&=0, &\nu&=1,\dotsc,N,\nonumber\\
			\sum_{i,j=2}^4 b_i a_{ij}c_j\left(\Gamma^{(i)}_\nu\Gamma^{(j)}_\nu-1\right)&=0, &\nu&=1,\dotsc,N,\nonumber\\
			\sum_{i,j=2}^4 b_ic_i a_{ij}c_j\left((\Gamma^{(i)}_\nu)^2\Gamma^{(j)}_\nu-1\right)&=0, &\nu&=1,\dotsc,N,\label{eq:redPGS_cond4}\\
			\sum_{i=2}^4 b_ic_i^3 \left((\Gamma^{(i)}_\nu)^3-1\right)&=0, &\nu&=1,\dotsc,N,\nonumber\\
			\Gamma^{(4)}_\nu\Gamma^{(3)}_\nu\Gamma^{(2)}_\nu-1&=0, &\nu&=1,\dotsc,N,\nonumber\\
			\sum_{i,j=2}^4 b_i a_{ij}c_j^2\left(\Gamma^{(i)}_\nu(\Gamma^{(j)}_\nu)^2-1\right)&=0, &\nu&=1,\dotsc,N.\nonumber
	\end{align}}
	In what follows we fix $\nu\in \{1,\dotsc,N\}$. Then, we compute a reduced Gröbner basis\footnote{We refer to our Maple repository \cite{GBrepository} for the computation of the Gröbner bases for this work.} of the corresponding polynomial ideal generated by the polynomials on the left-hand sides of \eqref{eq:redPGS_cond4} in the ring $\R[\Gamma^{(2)}_\nu,\Gamma^{(3)}_\nu,\Gamma^{(4)}_\nu]$, yielding $\{\Gamma^{(2)}_\nu-1,\Gamma^{(3)}_\nu-1,\Gamma^{(4)}_\nu-1\}$. Hence, $\Gamma^{(i)}_\nu=1$ for $\nu=1,\dotsc,N$ and $i=2,3,4$ is the unique solution to \eqref{eq:redPGS_cond4}. As a result, \eqref{eq:cond4f} with $\nu=\xi=\eta$ implies that $\gamma^{(i)}_\nu=1+\O(\dt)$. This already allows us to neglect the conditions \eqref{eq:cond4e} to \eqref{eq:cond4h} in the following as they are now fulfilled by  $\gamma^{(i)}_\nu=1+\O(\dt)$. Substituting the ansatz\footnote{Formally, $x^{(i)}_\nu$ is an arbitrary accumulation point of $\tfrac{\gamma^{(i)}_\nu-1}{\dt}$ as $\dt\to 0$, which due to  $\gamma^{(i)}_\nu=1+\O(\dt)$ cannot be $\pm \infty$. However, for the sake of simplicity, we refrain to introduce several $\gamma^{(i)}_\nu$ for every occurring accumulation point.} $\gamma^{(i)}_\nu=1+x^{(i)}_\nu \dt +\O(\dt^2)$ into the remaining conditions \eqref{eq:cond4b} to \eqref{eq:cond4d}, the resulting coefficients of $\dt$ must vanish, that is
	\begin{equation}\label{eq:redPGS_cond4b}
		\begin{aligned}
			\sum_{i=2}^4 b_ic_ix^{(i)}_\nu&=0,\\
			\sum_{i=2}^4 b_ic_i^22x^{(i)}_\nu&=0, \\
			\sum_{i,j=2}^4 b_i a_{ij}c_j(x^{(i)}_\nu+x^{(j)}_\nu)&=0.
		\end{aligned}
	\end{equation}
	We again compute a reduced Gröbner basis of the ideal generated by the left-hand side polynomials from \eqref{eq:redPGS_cond4b} in the polynomial ring $\R[x^{(2)}_\nu,x^{(3)}_\nu,x^{(4)}_\nu]$. The resulting Gröbner basis reads $\{x^{(2)}_\nu,x^{(3)}_\nu,x^{(4)}_\nu\}$ proving that the unique solution to the above polynomial system is given by $x^{(i)}_\nu=0$ for $\nu=1,\dotsc,N$ and $i=2,3,4$. With that we have demonstrated that $\gamma^{(i)}_\nu=1 +\O(\dt^2)$ which finishes the proof.
\end{proof}

\bibliographystyle{alpha}
\addcontentsline{toc}{chapter}{Bibliography}
	\bibliography{cas-refs}
\end{document}